\setlist[itemize]{leftmargin=*}
\setlist[enumerate]{leftmargin=*}
\definecolor{mygreen}{rgb}{0.09, 0.45, 0.27}
\definecolor{darkblue}{rgb}{0,0,0.4} 
\providecommand\@dotsep{5}
\def\listtodoname{List of Todos}
\def\listoftodos{\@starttoc{tdo}\listtodoname}
\newtheorem{lem}{Lemma}[section]               
\newtheorem{lemma}[lem]{Lemma}    
\newtheorem{theorem}[lem]{Theorem}
\newtheorem{proposition}[lem]{Proposition}
\theoremstyle{definition}
\newtheorem{definition}[lem]{Definition}
\theoremstyle{remark}
\newtheorem{remark}[lem]{Remark}
\newtheorem{example}[lem]{Example}
\newtheorem{convention}[lem]{Convention}
\numberwithin{equation}{section}
\newcommand{\RR}{\mathbb{R}}
\newcommand{\R}{\mathbb{R}}
\newcommand{\HalfSpace}{\mathbb{H}}
\newcommand{\Affine}{\mathbb{A}}
\newcommand{\ZZ}{\mathbb{Z}}
\newcommand{\Z}{\mathbb{Z}}
\newcommand{\FF}{\mathbb{F}}
\newcommand{\CC}{\mathbb{C}}
\newcommand{\C}{\mathbb{C}}
\newcommand{\NN}{\mathbb{N}}
\newcommand{\pt}{\mathrm{pt}}
\newcommand{\CP}{\mathbb{CP}}
\newcommand{\cp}{\mathit{CP}}
\newcommand{\smim}{\Sigma}
\newcommand{\mf}{\mathfrak}
\newcommand{\wh}{\widehat}
\newcommand{\wt}{\widetilde}
\newcommand{\ol}{\overline}
\newcommand{\del}{\partial}
\newcommand{\bdy}{\del}
\renewcommand{\emptyset}{\varnothing}
\newcommand{\maximal}{\mathfrak{m}}
\newcommand{\word}{\mathfrak{w}}
\newcommand{\smas}{\wedge}
\newcommand{\card}[1]{\left\vert{#1}\right\vert}
\newcommand{\set}[2]{\{#1\mid#2\}}
\newcommand{\domains}{\mathscr{D}}
\newcommand{\pdomains}{\mathscr{D}^+}
\newcommand{\periodic}{\mathscr{P}}
\newcommand{\pperiodic}{\mathscr{P}^+}
\newcommand{\rectangles}{\mathscr{R}}
\newcommand{\const}[1]{c_{#1}}
\newcommand{\coefficients}[1]{\mathbb{O}(#1)}
\newcommand{\Xcoefficients}[1]{\mathbb{X}(#1)}
\newcommand{\from}{\nobreak\mskip2mu\mathpunct{}\nonscript
  \mkern-\thinmuskip{:}\penalty300\mskip6muplus1mu\relax}
\newcommand{\into}{\hookrightarrow}
\newcommand{\defeq}{\mathrel{\vcenter{\baselineskip0.5ex
      \lineskiplimit0pt \hbox{\scriptsize.}\hbox{\scriptsize.}}}=}
\renewcommand{\th}{^{\text{th}}}
\newcommand{\fr}{{\operatorname{fr}}}
\newcommand{\Spinc}{\text{Spin}^{c}}
\newcommand{\Filt}{\mathcal{F}}
\newcommand{\diff}{\delta}
\newcommand{\Grid}{\mathbb{G}}
\newcommand{\Heeg}{\mathcal{H}}
\newcommand{\HF}{\mathit{HF}}
\newcommand{\CFK}{\mathit{CFK}}
\newcommand{\CFKhat}{\widehat{\CFK}}
\newcommand{\HFK}{\mathit{HFK}}
\newcommand{\HFKhat}{\widehat{\HFK}}
\newcommand{\HFhat}{\widehat{\mathit{HF}}}
\newcommand{\HFKnew}{{\HFK}^{\diamond}\!}
\newcommand{\CFL}{\mathit{CFL}}
\newcommand{\HFL}{\mathit{HFL}}
\newcommand{\CD}{\mathit{CD}}
\newcommand{\CDP}{\mathit{CDP}}
\newcommand{\DNlambda}{\mathscr{T}}
\newcommand{\DNlambdad}{\DNlambda^{\dagger}}
\newcommand{\Cp}{\mathit{GC}}
\newcommand{\Ct}{\widetilde{\mathit{GC}}}
\newcommand{\Chat}{\widehat{\mathit{GC}}}
\DeclareMathOperator{\Sym}{Sym}
\DeclareMathOperator{\sh}{sh}
\DeclareMathOperator{\Ob}{Ob}
\DeclareMathOperator{\Id}{Id}
\DeclareMathOperator{\Sq}{Sq}
\DeclareMathOperator{\colim}{colim}
\DeclareMathOperator{\Hom}{Hom}
\DeclareMathOperator{\image}{im}
\DeclareMathOperator{\Cone}{Cone}
\DeclareMathOperator{\hocolim}{hocolim}
\newcommand{\Cat}{\mathscr{C}}
\newcommand{\Bat}{\mathscr{B}}
\newcommand{\Dat}{\mathscr{D}}
\newcommand{\olsi}[1]{\,\overline{\!{#1}}}
\newcommand{\Moduli}{\mathcal{M}}
\newcommand{\bModuli}{\olsi{\mathcal{M}}}
\newcommand{\bM}{\bModuli}
\newcommand{\Noduli}{\mathcal{N}}
\newcommand{\bNoduli}{\olsi{\mathcal{N}}}
\newcommand{\bN}{\bNoduli}
\newcommand{\bX}{\olsi{X}}
\newcommand{\gr}{\mathrm{gr}}
\newcommand{\codim}[1]{\langle #1\rangle}
\newcommand{\basis}[1]{\langle #1\rangle}
\newcommand{\SphereS}{\mathbb{S}} 
\renewcommand{\Re}{\operatorname{Re}}
\renewcommand{\Im}{\operatorname{Im}}
\newcommand{\vp}{\vec p}
\newcommand{\Part}{\operatorname{Part}}
\def\Set{\mathscr{S}}
\def\Tube{\mathscr{T}}
\def\vM{\vec{M}}
\def\vN{\vec{N}}
\def\vP{\vec{P}}
\def\vlambda{\vec{\lambda}}
\def\veta{\vec{\eta}}
\def\Ta{\mathbb{T}_{\alpha}}
\def\Tb{\mathbb{T}_{\beta}}
\def\Os{\mathbb{O}}
\def\S{\mathbb{S}}
\def\tOmega{\tilde{\Omega}}
\def\EC{\operatorname{EC}}
\def\UE{\operatorname{UE}}
\def\IR{\operatorname{IR}}
\def\FR{\operatorname{FR}}
\def\ve{\vec{e}}
\def\vv{\vec{v}}
\def\vw{\vec{w}}
\def\vmu{\vec{\mu}}
\def\vnu{\vec{\nu}}
\def\dI{\delta^{\textrm{I}}}
\def\dII{\delta^{\textrm{II}}}
\def\dIII{\delta^{\textrm{III}}}
\def\dIV{\delta^{\textrm{IV}}}
\def\xid{x^{\Id}}
\def\bZ{\olsi{Z}}
\def\deltasupp{\delta\kern-2pt\operatorname{-supp}}
\def\thick{\operatorname{th}} 
\def\CDPd{\CDP_*^{\dagger}}
\def\nueps{\nu_{\epsilon}}
\def\E{\mathbb{E}}
\def\tD{\widetilde{D}}
\def\tE{\widetilde{E}}
\def\tF{\widetilde{F}}
\def\tdim{\operatorname{tdim}}
\def\Nbhd{\mathcal{N}}
\def\inte{\operatorname{int}}
\def\tZ{\widetilde{Z}}
\def\smo{\operatorname{sm}}
\def\L{\mathcal{L}}
\def\dold{\bdy^{\operatorname{old}}}
\def\gCtilde{g\widetilde{\mathit{GC}}}
\def\gChat{g\widehat{\mathit{GC}}}
\def\GS{\mathcal{X}}
\def\gGS{g\mathcal{X}}
\def\tGS{\widetilde{\GS}}
\def\gtGS{g\widetilde{\GS}}
\def\hGS{\widehat{\GS}}
\def\ghGS{g\widehat{\GS}}
\def\GSnew{\mathcal{X}^\diamond}
\def\tH{\widetilde{H}}
\def\Cell{\mathcal{C}}
\def\Dell{\mathcal{D}}
\def\Sp{S}
\def\hZ{\tfrac{1}{2}\Z}
\def\bK{\mathbb{K}}
\begin{document}

\title{A knot Floer stable homotopy type}

\author[Ciprian Manolescu]{Ciprian Manolescu}
\thanks{Manolescu was supported by NSF Grant DMS-2003488 and a Simons Investigator Award.}
\address{Department of Mathematics, Stanford University, Stanford, CA 94305}
\email{\href{mailto:cm5@stanford.edu}{cm5@stanford.edu}}

\author[Sucharit Sarkar]{Sucharit Sarkar}
\thanks{Sarkar was supported by NSF Grant DMS-2403558.}
\address{Department of Mathematics, University of California, Los Angeles, CA 90095}
\email{\href{mailto:sucharit@math.ucla.edu}{sucharit@math.ucla.edu}}



\date{}

\begin{abstract}
Given a grid diagram for a knot or link $K$ in $S^3$, we construct a filtered spectrum whose homology is the knot Floer homology of $K$. We conjecture that the filtered homotopy type of the spectrum is an invariant of $K$. Our construction does not use holomorphic geometry, but rather builds on the combinatorial definition of grid homology. We inductively define models for the moduli spaces of pseudo-holomorphic strips and disk bubbles, and patch them together into a framed flow category. The inductive step relies on the vanishing of an obstruction class that takes values in a complex of positive domains with partitions.
\end{abstract}
\maketitle
\tableofcontents

\section{Introduction}
In \cite{CJS}, Cohen, Jones, and Segal proposed the problem of lifting Floer homology to a Floer spectrum or pro-spectrum, in the sense of stable homotopy theory. Since then, stable homotopy refinements of Floer homology have been constructed in Seiberg-Witten theory \cite{MSpectrum, KLS, SS} and symplectic geometry \cite{Cohen, Kragh, AB, PS}. In a similar vein, there is a lift of Khovanov homology to a stable homotopy type \cite{LipshitzSarkar, LLS}.

The purpose of this paper is to construct a stable homotopy refinement of knot Floer homology. Knot Floer homology was developed by Ozsv\'ath-Szab\'o \cite{OS-knots} and Rasmussen \cite{RasmussenThesis}, and has many applications; see \cite{Msurvey, HomSurvey, OSSurvey} for some surveys. There is also a generalization to links, called link Floer homology \cite{OS-links}. Knot and link Floer homology were given a combinatorial description in \cite{MOS-combinatorial, MOSzT-hf-combinatorial}. This description is based on representing the link in terms of a grid diagram, as in Figure~\ref{fig:grid}, and counting empty rectangles on the grid. When defined combinatorially from a grid, link Floer homology is sometimes called {\em grid homology}. The corresponding chain complex is called a {\em grid complex}. There are many versions of grid complexes, depending on how we keep track of the $O$ and $X$ markings inside the rectangles. We refer to the book \cite{OSS-book} for an extensive treatment. 

\begin{figure}
  \centering
  \begin{tikzpicture}[scale=0.7]

    \foreach\grid in {0,1}{
      \begin{scope}[xshift=\grid*8cm]
    
    \foreach\i in {0,...,6}{
      \draw (0,\i)--++(6,0);
      \draw (\i,0)--++(0,6);
    };

    \foreach\i in {1,...,6}{
      \node[anchor=east] at (0,\i-1) {\small $\alpha_\i$};
      \node[anchor=north] at (\i-1,0) {\small $\beta_\i$};
    };
    \node[anchor=east] at (0,6) {\small $\alpha_1$};
    \node[anchor=north] at (6,0) {\small $\beta_1$};

    \begin{scope}[xshift=-0.5cm,yshift=-0.5cm]

      \ifnum\grid=1
      \draw[ultra thick] (1,1)--(5,1) (4,2)--(6,2) (5,3)--(2,3) (3,4)--(1,4) (2,5)--(4,5) (6,6)--(3,6);
      \foreach\i/\j in {2/4,3/5,4/3,5/2}{\fill[white] (\i,\j) circle (0.2cm);}
      \draw[ultra thick] (1,1)--(1,4) (2,5)--(2,3) (3,4)--(3,6) (4,2)--(4,5) (5,3)--(5,1) (6,6)--(6,2);

      \else
      \foreach \i/\j [count=\c from 1] in {6/6,3/4,1/1,5/3,2/5,4/2}{
        \node at (\i,\j) {$O_\c$};
      }
      \foreach \i/\j [count=\c from 1] in {3/6,1/4,5/1,2/3,4/5,6/2}{
        \node at (\i,\j) {$X_\c$};
      }

      \fi
    \end{scope}

    \end{scope}}
  \end{tikzpicture}
\caption {Left: A grid diagram. Right: The figure-eight knot that it represents, drawn on the same grid.}
\label{fig:grid}
\end{figure}

In this paper we let $\Grid$ denote a grid diagram representing a link $L$, and let $O_1$ be a fixed $O$-marking on $\Grid$. We will  work with a variant of the grid complex which we denote by $\Cp$, where we do not count rectangles going over $O_1$. The complex $\Cp$ is a module over $\Z[U_2, \dots, U_n]$, with one variable for each $O$-marking different from $O_1$. Further, $\Cp$ has several Alexander filtrations, one from each component of the link. The multi-filtered chain homotopy type of $\Cp$ is an invariant of the link with a distinguished component (the one containing $O_1$). It is closely related to the plus version of the link Floer complex;  see Section~\ref{sec:grid-complex-background} for the definitions. From $\Cp$ one can recover several other versions of grid complexes, such as $\Chat$ or $\Ct$.

From the grid diagram $\Grid$ we will construct a multi-filtered CW-spectrum $\GS(\Grid)$, whose associated chain complex is $\Cp$. The spectrum comes equipped with maps
$$ U_i : \GS(\Grid) \to \Sigma^2 \GS(\Grid), \ \ i=2, \dots, n,$$
where $\Sigma^2$ denotes the double suspension. 

\begin{theorem}
\label{thm:main}
Up to multi-filtered stable homotopy equivalence (commuting with the maps $U_i$), the spectrum $\GS(\Grid)$ is an invariant of the grid $\Grid$ and the distinguished marking $O_1$.
\end{theorem}

A variation of this construction produces a multi-filtered spectrum $\hGS(\Grid)$ with associated graded $\ghGS(\Grid)$. The reduced homology of $\ghGS(\Grid)$ is the hat flavor of grid homology (i.e., link Floer homology), $\widehat{\mathit{GH}}(\Grid)=\widehat{\HFL}(L)$. Just as link Floer homology splits according to Alexander gradings $h \in (\hZ)^{\ell}$, the spectrum $\ghGS(\Grid)$ decomposes into a wedge sum
\[
  \ghGS(\Grid) = \bigvee_{h \in (\hZ)^{\ell}} \ghGS(\Grid, h)
\]
such that
\[
  \tH_i(\ghGS(\Grid, h); \Z) = \widehat{\HFL}_{i}(L, h).
\]
There are also tilde versions of the spectrum, $\tGS(\Grid)$ and $\gtGS(\Grid)$. The homology of the latter,  $\widetilde{\mathit{GH}}(\Grid)=\widetilde{\HFL}(\Grid)$, is isomorphic to the direct sum of several copies of $\widehat{\HFL}(L)$.

The construction of $\GS(\Grid)$ (and its variants) is based on first building a framed flow category; once this is done, the machinery of Cohen, Jones and Segal \cite{CJS} automatically produces a spectrum. In \cite{CJS}, the framed flow category is obtained from moduli spaces of pseudo-holomorphic curves.
In our work, we do not use any holomorphic geometry, but rather build models $\Moduli([D])$ for these moduli spaces, inductively on their dimension, in a manner similar to the construction of the Khovanov stable homotopy type in \cite{LipshitzSarkar}.

The rectangles counted in the definition of grid homology are the positive domains associated to $0$-dimensional moduli spaces of holomorphic strips in the symmetric product of the grid. Whereas rectangles are domains of index $1$, in order to construct the spectrum $\GS(\Grid)$ we have to consider positive domains of arbitrary index. Indeed, each moduli space $\Moduli([D])$ in the framed flow category is associated to an equivalence class of positive domains $D$ on the grid, where two domains are equivalent if they differ by a periodic domain (a linear combination of vertical and horizontal annuli) which has coefficient zero on all $O$ markings. We only consider domains $D$ that do not cross the specified marking $O_1$.

\subsection{Bubbling}
The spaces $\Moduli([D])$ admit compactifications $\bModuli([D])$ which correspond to moduli spaces of broken holomorphic strips. Furthermore, each $\bModuli([D])$ will be the union of spaces $\bModuli_0(D)$ associated to positive domains $D$ in the equivalence class $[D]$, where the different $\bModuli_0(D)$ are glued along their common boundaries. These common boundaries correspond to moduli spaces of disk bubbles in symplectic geometry.
 
We are thus forced to also build models for the moduli spaces of
bubbles. This is one of the novel aspects of our
construction. Previously, stable homotopy refinements of Floer
homologies have mostly been done in the absence of bubbles. (One
notable exception is the work of Abouzaid and Blumberg \cite{AB},
which produces a lift of Hamiltonian Floer homology to Morava K-theory
allowing for bubbles.) In general situations where bubbles appear,
even Floer homology is not always well-defined since the differential
on the Floer complex may not square to zero.

In the link Floer complex (and, more generally, in Heegaard Floer complexes), bubbles appear but they cancel in pairs, so that the differential does square to zero. In the setting of grid diagrams, bubbles correspond to vertical and horizontal annuli, and the two annuli going through the same $O$-marking cancel each other out. In our construction of $\GS(\Grid)$, we implement a higher dimensional analogue of this cancellation: the spaces $\bModuli_0(D)$ by themselves are stratified spaces with a complicated structure, but after we glue them together the resulting $\bModuli([D])$ is a manifold-with-corners of the kind that is used to define a framed flow category.

To understand the strata in the compactifications $\bModuli_0(D)$, we will construct more general spaces $\bModuli_{\vN, \vlambda}(D)$, which are models for the moduli spaces of pseudo-holomorphic strips with disk bubbles attached. The bubble configuration is described by vectors
\[
  \vN = (N_2, \dots, N_n), \ \ \ \vlambda=(\lambda_2, \dots, \lambda_n)
\]
 where $N_j$ are non-negative integers, and $\lambda_j$ is an ordered partition of $N_j$. The number $N_j$ counts the bubbles going through the $j\th$ $O$-marking. These bubbles are grouped according to the partition $\lambda_j$, with those in the same part appearing at the same height on the boundary of the pseudo-holomorphic strip.

Each $\bModuli_{\vN, \vlambda}(D)$ is a stratified space. The local models for the strata are quite interesting, being based on a stratification of the symmetric product $\Sym^N(\CC)$ modulo translation  by $\R$. Specifically, we consider the stratification of $\Sym^N(\CC)/\R$ given by the signs of the imaginary parts of the $N$ complex numbers. For example, when $N=2$, we will encounter the 
Whitney umbrella
\[
  W = \{(a,b, c) \in \R^3 \mid b \leq 0, a^2b+c^2=0\}.
\]

We hope that these models for the moduli spaces of trajectories with bubbles are of independent interest, as they may appear in other settings. However, we warn the reader that the bubble configurations we use in this paper are different from the ones usually considered in the Gromov compactification in symplectic geometry. See Remark~\ref{rem:gromov} for more details.

\subsection{The inductive construction} We now sketch the construction of the spaces $\bModuli_{\vN, \vlambda}(D)$. These will come equipped with suitable embeddings (called {\em neat}) in Euclidean spaces, and also with normal framings. Since the spaces $\bModuli_{\vN, \vlambda}(D)$ are not manifolds, it is not immediate what we mean by framings. We will in fact distinguish two different collections of vector fields, the {\em internal} and {\em external} framings. More details on these can be found in Section~\ref{sec:neat}.

The construction of the spaces $\bModuli_{\vN, \vlambda}(D)$ goes as follows:
\begin{itemize}[leftmargin=*]

\item We first construct them when $D$ is trivial, and all the entries of $\vN$ are $0$'s and $1$'s. In this case we define $\bModuli_{\vN, \vlambda}(D)$ to be a permutohedron, and explain how to give it a normal framing;

\item We define the rest of the spaces $\bModuli_{\vN, \vlambda}(D)$ inductively on their dimension $k$. For the base case $k=0$, we define them to be points, and give them suitable framings;

\item For the inductive step, we suppose all spaces up to dimension
  $(k-1)$ have been constructed. To construct a $k$-dimensional space
  $\bModuli_{\vN, \vlambda}(D)$, we start with its (already
  constructed) boundary $\bdy \bModuli_{\vN, \vlambda}(D)$ and smooth
  it to get a $(k-1)$-dimensional framed manifold
  $\bdy' \bModuli_{\vN, \vlambda}(D)$;

\item From here we get an element
  $[\bdy' \bModuli_{\vN, \vlambda}(D)] \in \tOmega^{k-1}_\fr$, where
  $\tOmega^{k-1}_\fr$ is a slight variant of the usual framed cobordism
  group $\Omega^{k-1}_{\fr}$ (and, in fact, is isomorphic to
  $\Omega^{k-1}_\fr$);

\item We define a chain complex $\CDP_*$ whose generators are
  ``positive domains with partitions,'' i.e., triples
  $(D, \vN, \vlambda)$. We let $\CDP'_*$ be the quotient of $\CDP_*$
  by the subcomplex generated by $(D, \vN, \lambda)$ where $D$ is a
  chosen trivial domain, and $\vN$ is made of $0$'s and
  $1$'s. Altogether, the classes $[\bdy' \bModuli_{\vN, \vlambda}(D)]$
  produce an obstruction class
  \[
    \mf{o}_k\in\Hom(\CDP'_{k+1},\tOmega^{k-1}_\fr);
  \]

\item We show that $\mf{o}_k$ is a cocycle, and that $\CDP'_*$ is
  acyclic. It follows that $\mf{o}_k$ is the coboundary of some
  element $\mf{b}\in\Hom(\CDP'_{k},\tOmega^{k-1}_\fr)$;

\item We use $\mf{b}$ to adjust the definition of the $(k-1)$-dimensional
  moduli spaces that we had previously constructed, so that all cocycles
  $\mf{o}_k$ vanish. (We do not change the definition of any moduli
  spaces of dimension $(k-2)$ or lower.)

\item Then $\bdy' \bModuli_{\vN, \vlambda}(D)$ is framed
  null-cobordant. We fill it in arbitrarily to obtain the desired
  framed moduli space $\bModuli_{\vN, \vlambda}(D)$, and continue with
  the induction.
\end{itemize}

A key role in this construction is played by the complex $\CDP'_*$. To define $\CDP'_*$, we first introduce a chain complex $\CD_*$ generated by positive domains on the grid; this is a close cousin of the complex of positive pairs $\mathit{CP}^*$ used in \cite[Section 4]{MOT}. We then enhance $\CD_*$ by adding vectors of partitions to its generators; the result is the complex $\CDP_*$. We show that the homology of $\CDP_*$ is supported by triples $(D, \vN, \vlambda)$ where $D$ is a fixed trivial domain  and $\vN$ is made of $0$'s and $1$'s; hence, the quotient $\CDP'_*$ of $\CDP_*$ by these triples is acyclic. Thus, it is important that we first defined some moduli spaces by hand (to be permutohedra); otherwise we would have had to work with $\CDP_*$, which is not acyclic. 

Once the framed moduli spaces $\bModuli_{\vN, \vlambda}(D)$ are defined, the spectrum $\GS(\Grid)$ is obtained by a standard procedure from \cite{CJS,LipshitzSarkar}. 

We remark that for the simpler versions $\ghGS(\Grid)$, $\tGS(\Grid)$ and $\gtGS(\Grid)$, we only use moduli spaces $\bModuli_{0}(D)$ for domains $D$ that do not cross certain markings ($X$'s, $O$'s, or both). These spaces do not involve configurations of bubbles, because $D$ cannot contain a full row or column. Nevertheless, if we had tried to construct only these spaces $\bModuli_{0}(D)$, we would have run into the problem that the analogue of $\CDP'_*$  (using domains that do not cross the $X$-markings) is not acyclic. Thus, even if we were only interested in the simpler versions, we still had to build all the spaces $\bModuli_{\vN, \vlambda}(D)$ and discuss bubbling. 

\subsection{Further directions}
Theorem~\ref{thm:main} proves a weak form of invariance for $\GS(\Grid)$: that it depends only on the grid (and its special marking), not on the other choices made in its construction. We further conjecture that the stable homotopy type of $\GS(\Grid)$ is a link invariant, that is, it is independent of the choice of grid diagram $\Grid$ representing a given link. The proof of this is beyond the scope of the present paper. Invariance of grid homology is proved in \cite{MOSzT-hf-combinatorial} by checking the Cromwell-Dynnikov moves: cyclic permutation, commutation, and stabilization. We expect that a combination of those arguments with the techniques from this paper will yield invariance for $\GS(\Grid)$. The main challenge is to prove that suitable complexes of positive domains and partitions associated to the commutation and stabilization moves are acyclic.

Another limitation of our paper is that we only consider domains that
do not cross a given marking $O_1$. The reason for this is to ensure
the acyclicity of $\CDP'_*$. One can check that the analogue of
$\CDP'_*$ using all domains on the grid is not
acyclic~\cite{Tao}. Nevertheless, one can compute its homology and
attempt to get a handle on the analogues of the obstruction classes
$[\mf{o}_k]$. We expect that all versions of grid homology (including
those involving domains that go over $O_1$) admit stable homotopy
refinements, in the form of spectra or pro-spectra.

\subsection{Organization of the paper.} 
In Section~\ref{sec:background} we fix notation and review some facts about grid diagrams and grid homology. 

In Section~\ref{sec:CD} we define the complex $\CD_*$ whose generators are positive domains on the grid. 

In Section~\ref{sec:CDP} we define the complex $\CDP_*$ of positive domains with partitions, we compute its homology, and introduce the acyclic quotient $\CDP'_*$. 

In Section~\ref{sec:nmflds} we review $\langle n \rangle$-manifolds, the type of manifolds with corners that are used in framed flow categories. 

In Section~\ref{sec:stratified} we discuss different notions of stratified spaces, such as Whitney and Thom-Mather stratifications.

In Section~\ref{sec:local} we describe the local models for the stratified spaces that appear in this paper; these are generalizations of the Whitney umbrella. 

In Section~\ref{sec:moduli} we give examples of stratified spaces that can be associated to some simple domains on the grid. 

In 
Section~\ref{sec:strata} we list the strata that should be included in the compactification of each space $\Moduli_{\vN, \vlambda}(D)$. 

In Section~\ref{sec:neat} we introduce the notion of neat embedding for a space $\bModuli_{\vN, \vlambda}(D)$, and explain what we mean by internal and external framings. 

In Section~\ref{sec:group} we define the embedded framed cobordism group $\tOmega^k_\fr$, and show that it is isomorphic to the usual $\Omega^k_\fr$.

Section~\ref{sec:construction} is the heart of the paper, in which we construct the spaces $\bModuli_{\vN, \vlambda}(D)$ inductively. 

The case where $D$ is trivial and $\vN$ is made of $0$'s and $1$'s is relegated to Section~\ref{sec:permutohedron}, where we describe a neat embedding of the permutohedron, and give it a normal framing. 

In Section~\ref{sec:cjs} we review the Cohen-Jones-Segal construction of a spectrum from a framed Floer category. We then define $\GS(\Grid)$ and its variants.

In Section~\ref{sec:invariance} we prove our weak invariance result, Theorem~\ref{thm:main}. This section also introduces the notion of maps of normally framed flow categories.

Finally, in Section~\ref{sec:examples} we give some examples. We show that the filtered equivalence class of $\GS(\Grid)$ is determined by homological data (i.e., by the link Floer complex) for several families of knots, such as thin or L-space knots.

\subsection{Conventions.} Throughout the paper $\NN$ denotes the
natural numbers including $0$. We also let $\RR_+ = [0, \infty)$.

\subsection{Acknowledgments.} We would like to thank Mohammed Abouzaid, Mike Hill, Tyler Lawson, Mona Merling, Danny Ruberman, and Sander Kupers for helpful conversations, and Ciprian Bonciocat, Vinicius Ramos, Yan Tao, and the referee for comments on a previous version of the paper. We are particularly indebted to Robert Lipshitz who suggested a key idea for framing the permutohedron in Section~\ref{sec:permutohedron}.

\section{Background}
\label{sec:background}



\subsection{Grid diagrams}\label{sec:grid-background}

Definitions and notions related to grid diagrams have been listed in the following enumerated list. For details, see~\cite{MOS-combinatorial, MOSzT-hf-combinatorial, OSS-book}.
\begin{enumerate}[leftmargin=*,label=(G-\arabic*),ref=G-\arabic*]
\item An index-$n$ \emph{grid diagram} $\Grid$ consists of the torus, obtained
  from $[0,n]\times[0,n]$ by identifying opposite edges, $n$
  `horizontal' \emph{$\alpha$-circles}, $\alpha_1,\dots,\alpha_n$, with
  $\alpha_i$ being the image of $[0,n]\times\{i-1\}$, and $n$ `vertical'
  \emph{$\beta$-circles}, $\beta_1,\dots,\beta_n$, with $\beta_i$ being the
  image of $\{i-1\}\times[0,n]$.
\item The $n$ components of the complement of $\alpha$ circles are
  called \emph{horizontal annuli} or \emph{rows}, the $n$ components of the
  complement of $\beta$ circles are called \emph{vertical annuli} or \emph{columns}, and
  the $n^2$ components of the complement of $\alpha$ and $\beta$
  circles are called \emph{square regions}. 
\item Grid diagrams are decorated with $n$ \emph{$O$-markings},
  $O_1,\dots,O_n$, placed in $n$ distinct square regions so that each
  horizontal annulus has one $O$ marking and each vertical annulus
  has one $O$ marking. Let $H_i$, respectively $V_i$, be the horizontal,
  respectively vertical, annulus that contains $O_i$; since we are working on a torus, without loss
  of generality, we will assume that $O_1$ lies in the `top-right'
  square region $(n-1,n)\times(n-1,n)$.
\item We can also order and label the annuli more naturally, without
  regard for the position of the $O$'s. We define the horizontal
  annulus $H_{(i)}$ to be the image of $[0, n] \times (i-1, i)$, and
  the vertical annulus $V_{(i)}$ to be the image of
  $(i-1, i) \times [0,n].$
\item Grid diagrams will also be decorated with $n$ \emph{$X$-markings},
  $X_1,\dots,X_n$, placed in $n$ distinct square regions so that each
  horizontal annulus has one $X$ marking and each vertical annulus
  has one $X$ marking.
\item By joining the $O$ and $X$ markings by segments in each row and
  column, and letting the vertical segments be overpasses, we obtain a
  planar diagram for a link $L \subset S^3$. (For square regions
  containing both an $O$ and an $X$ marking, we put a small unknot in
  that region.) We say that $\Grid$ is a grid diagram presentation for
  the link $L$.
\item\label{item:permutation-generator} A \emph{generator} or a \emph{state} $x$ is a unordered
  $n$-tuple $(x_1,\dots,x_n)$ of points on the torus, so that each
  $\alpha$-circle contains some $x_i$ and each $\beta$-circle contains
  some $x_j$. The $x_i$'s are called the \emph{coordinates} of $x$. We
  sometimes view $x$ as a formal sum of its coordinates,
  $x_1+x_2+\dots+x_n$. Generators are in one-to-one correspondence
  with permutations of $\{1,2,\dots,n\}$, with permutation $\sigma$
  corresponding to the generator
  \[
  x^{\sigma}=(\alpha_{\sigma(1)}\cap\beta_1,\alpha_{\sigma(2)}\cap\beta_2,\dots,\alpha_{\sigma(n)}\cap\beta_n).
  \]
  The set of all generators on a grid diagram $\Grid$ is denoted $\S=\S(\Grid)$.
\item A \emph{domain} $D$ from a generator $x$ to a generator $y$ is a
  $2$-chain given by a $\ZZ$-linear combination of (the closures of)
  the square regions, with the property that $\bdy(\bdy
  D\cap\alpha)=y-x$. In this paper, we are only interested in domains
  that have coefficient zero at $O_1$, and we will let $\domains(x,y)$
  denote the set of domains from $x$ to $y$ that avoid $O_1$.
\item For any domain $D$, let $\coefficients{D}=(O_2(D),\dots,O_n(D))\in\ZZ^{n-1}$ be the
  vector that records the coefficients of $D$ at the $O$-markings;
  that is, component $O_i(D)$ is the
  coefficient of $D$ at $O_i$, for $2\leq i\leq n$. Similarly, we let $\Xcoefficients{D}=(X_1(D),\dots,X_n(D))\in \ZZ^n$ be the vector that records the coefficients of $D$ at the $X$-markings.
\item\label{item:domain-concatenation-grid} Given $D\in\domains(x,y),E\in\domains(y,z)$, by adding the
  underlying $2$-chains, we get a domain $D*E\in\domains(x,z)$.
\item A domain is said to be \emph{positive} if it has no negative
  coefficients. Let $\pdomains(x,y)\subset\domains(x,y)$ be the subset
  of positive domains. (Note that this includes the zero domain.)
\item For any generators $x,y$, the set $\domains(x,x)$ can be identified with
  $\domains(y,y)$ by identifying the underlying $2$-chains. We
  call either of these sets $\periodic$, the set of \emph{periodic
    domains}. Further, we denote by $\pperiodic$ the subset consisting
  of positive periodic domains (including zero). We have
  \[
  \periodic=\ZZ\basis{H_{(1)},\dots,H_{(n-1)},V_{(1)},\dots,V_{(n-1)}}\qquad\pperiodic=\NN\basis{H_{(1)},\dots,H_{(n-1)},V_{(1)},\dots,V_{(n-1)}}.
  \]
  Indeed, for any periodic domain, its multiplicity at the region
  $H_{(i)}\cap V_{(n)}$, respectively $H_{(n)}\cap V_{(i)}$, gives the coefficient of
  $H_{(i)}$, respectively $V_{(i)}$, in the above formula.
\item For every domain $D$, there is an associated integer $\mu(D)$
  called its Maslov index, defined as follows. For any point $p$ in
  the intersection of the $\alpha$ and $\beta$-circles, define the
  \emph{coefficient} of $D$ at $p$ to be average of the coefficients
  of $D$ at the four square regions adjacent to $p$. Then the Maslov
  index $\mu(D)$ is the sum of the coefficients of $D$ at the $n$
  coordinates of $x$ and the $n$ coordinates of $y$. The Maslov index
  satisfies the following properties:
  \begin{enumerate}
  \item For any $D\in\domains(x,y),E\in\domains(y,z)$, $\mu(D*E)=\mu(D)+\mu(E)$.
  \item For any $D\in\pdomains(x,y)$, $\mu(D)\geq 0$.
  \item If $D\in\pdomains(x,y)$, then $\mu(D)= 0$ if and only if $x=y$
    and $D$ is the trivial domain; let $\const{x}\in\pdomains(x,x)$
    denote the trivial domain.
  \item If $D\in\pdomains(x,y)$, then $\mu(D)= 1$ if and only if $D$ is a
    \emph{rectangle} in the torus: its `bottom-left' and `top-right' corners
    are coordinates of $x$ and its `bottom-right' and `top-left'
    corners are coordinates of $y$; the other $(n-2)$-coordinates of
    $x$ and $y$ agree and none of them lie in $D$. Let 
    \[
    \rectangles(x,y)=\set{D\in\pdomains(x,y)}{\mu(D)=1}.
    \] 
  \item\label{item:maslov-index-2} If $D\in\pdomains(x,y)$, then
    $\mu(D)= k$ if and only if $D$ has a (possibly non-unique)
    \emph{decomposition} into rectangles
    \[
      D=R_1*R_2*\cdots*R_k\qquad
      R_1\in\rectangles(x=w_0,w_1),R_2\in\rectangles(w_1,w_2),\dots,
      R_k\in\rectangles(w_{k-1},w_k=y).
    \]
    (This is \cite[Lemma 3.5]{SarkarShellable}, but the proof given in
    that paper is wrong, so we give a correct proof as
    Lemma~\ref{lem:decompose-into-rectangles} below.)  In particular,
    $D$ is a positive index-$2$ domain if and only if it can be
    decomposed into two rectangles; that is, it can be two rectangles,
    either overlapping like a cross or disjoint,
  \[
    \begin{tikzpicture}[scale=0.5]
      \begin{scope}
      \fill[black!30] (-1,0) rectangle (2,1);
      \fill[black!30] (0,-1) rectangle (1,2);
      \fill[black!60] (0,0) rectangle (1,1);
      \draw (0,-1) rectangle (1,2);
      \draw (-1,0) rectangle (2,1);
      \end{scope}
      \node at (3,0.5) {or};
      \begin{scope}[xshift=4cm]
      \draw[fill=black!30] (0,-1) rectangle (1,2);
      \draw[fill=black!30] (-1+2.5,0) rectangle (2+2.5,1);
      \node[anchor=west] at (2+2.5,0.5) {,};
      \end{scope}
    \end{tikzpicture}
  \]
  or a hexagon in one of four possible shapes,
  \[
    \begin{tikzpicture}[scale=0.75]
      \foreach \i in {0,1,2,3}{
        \begin{scope}[xshift=3*\i cm,rotate=90*\i]
      \draw[fill=black!30] (-1,-1)-- (1,-1)-- (1,0)-- (0,0)-- (0,1)-- (-1,1)--cycle;
      \end{scope}}
    \foreach \i in {0,1,2}{
        \begin{scope}[xshift=3*\i cm]
          \node at (1.5,0) {or};
      \end{scope}}
      \node[anchor=west] at (9+1,0) {,};
    \end{tikzpicture}
  \]
    or a horizontal annulus, or a vertical annulus. Note, in the first
    six cases, $D$ has exactly two decompositions into rectangles,
    while in the last two cases, $D$ has exactly one.
  \end{enumerate}
\item Generators carry a well-defined integer-valued grading---called
  the Maslov grading and denoted $\gr(x)$---so that for any domain
  $D\in\domains(x,y)$,
  \[
    \gr(x)-\gr(y)=\mu(D)-2\card{\coefficients{D}},
  \]
  where $\card{\coefficients{D}}=\sum_i O_i(D)$. 
 \item Generators also admit an Alexander grading $A(x) \in \ZZ$ with the property that for any $ D\in\domains(x,y)$,
 \[
 A(x) - A(y) = |\Xcoefficients{D}|-|\coefficients{D}|.
 \]
In fact, if $L$ is a link of $\ell$ components, we have an Alexander multi-grading $(A_1(x), \dots, A_\ell(x)) \in (\tfrac{1}{2}\ZZ)^\ell$ such that $A(x) = A_1(x) + \dots + A_{\ell}(x)$.
\item \label{item:signa} \emph{A sign assignment} $s$ is a function
  $s\from\bigcup_{x,y}\rectangles(x,y)\to\{\pm 1\}$ satisfying the
  following. For any $D\in\pdomains(x,y)$ with $\mu(D)=2$ that is not
  a horizontal or a vertical annulus (that is, one of the types
  pictured above), if $R_1*S_1$ and $R_2*S_2$ are the two
  decompositions of $D$ into rectangles, then
  \begin{equation}
  \label{eq:sRS}
  s(R_1)s(S_1)=-s(R_2)s(S_2).
  \end{equation}
  Furthermore, if $R*S$ is a decomposition of a horizontal annulus into rectangles, then
\begin{equation}
  \label{eq:RSh}
 s(R) s(S) = 1,
   \end{equation}
  and if $R*S$ is a decomposition of a vertical annulus into rectangles, then
  \begin{equation}
  \label{eq:RSv}
   s(R) s(S) = -1.
     \end{equation}
  \end{enumerate}

  As promised, we give a correct proof of the following lemma.
  \begin{lemma}\label{lem:decompose-into-rectangles}
    Let $D\in\pdomains(x,y)$ with $\mu(D)= k$.  Then $D$ has a
    (possibly non-unique) \emph{decomposition} into rectangles
    \[
      D=R_1*R_2*\cdots*R_k\qquad
      R_1\in\rectangles(x=w_0,w_1),R_2\in\rectangles(w_1,w_2),\dots,
      R_k\in\rectangles(w_{k-1},w_k=y).
    \]
  \end{lemma}

  \begin{proof}
    Let $p$ be a point of intersection of an $\alpha$ and a
    $\beta$-circle, and assume the coefficients of $D$ at the four square
    regions around $p$ are
    \[
      \begin{tikzpicture}[scale=0.8]
        \draw (-1,0) -- (1,0);
        \draw (0,-1) -- (0,1);
        \node at (0,0) {$\bullet$};
        \node at (-0.5,0.5) {$a$};
        \node at (0.5,0.5) {$b$};
        \node at (-0.5,-0.5) {$c$};
        \node at (0.5,-0.5) {$d$};
      \end{tikzpicture}
    \]
    From the condition $\bdy D\cap\alpha=y-x$, the following implications are immediate:
    \begin{align*}
      b+c-a-d&=1&\text{if $p$ is a coordinate of $x$ and not of $y$;}\\
      b+c-a-d&=-1&\text{if $p$ is a coordinate of $y$ and not of $x$;}\\
      b+c-a-d&=0&\text{otherwise.}
    \end{align*}
    
    We can assume $\mu(D)=k>0$, so  $D$ is not the
    constant domain. Thus, there is some coordinate of $x$, say $x_1$,
    such that the coefficient of $D$ at $x_1$ is positive. Therefore,
    the coefficient of $D$ at either the top-right or the bottom-left
    square region of $x_1$ must be non-zero. Assume, after rotating
    everything by $\pi$ if necessary, that the coefficient of the
    top-right square region is non-zero.  By translating in the torus
    if necessary, assume $x_1=\{0\}\times\{0\}$. Consider all rectangles $R$ that
    are contained in $D$ as 2-chains (that is, the 2-chain $D-R$ has
    non-negative coefficients) and that have $x_1$ as their
    bottom-left corner. Partially order such rectangles by inclusion,
    and let $[0,k-1]\times[0,l-1]$ be a maximal element under this
    partial order.

    Let $D_0$ be the restriction of $D$ to $[0,k]\times[0,l]$. It
    might not be a domain, but it is a non-negative $2$-chain
    supported on $[0,k]\times[0,l]$ such that its $(k-1)(l-1)$
    coefficients in $[0,k-1]\times[0,l-1]$ are strictly positive, and
    at least one of the $(l-1)$ coefficients in
    $[k-1,k]\times [0,l-1]$ is zero, and at least one of the $(k-1)$
    coefficients in $[0,k-1]\times[l-1,l]$ is zero. From non-negative
    2-chains satisfying such hypothesis, we will produce some
    coordinate of $x$ in $(0,k)\times (0,l)$.

    This is accomplished by induction on the sum of coefficients of
    $D_0$. Consider the rightmost 0-coefficient in
    $[0,k-1]\times[l-1,l]$, say it is at $[i-1,i]\times[l-1,l]$ with
    $i\leq k-1$. Consider at the topmost 0-coefficient in
    $[k-1,k]\times[0,l-1]$, say it is at $[k-1,k]\times[j-1,j]$ with
    $j\leq l-1$. If $i=k-1$ and $j=l-1$, then the coefficients near
    the vertex $\{k-1\}\times\{l-1\}$ looks like
    \[
      \begin{tikzpicture}[scale=0.8]
        \draw (-1,0) -- (1,0);
        \draw (0,-1) -- (0,1);
        \node at (0,0) {$\bullet$};
        \node at (-0.5,0.5) {$0$};
        \node at (0.5,0.5) {$b$};
        \node at (-0.52,-0.5) {$c>0$};
        \node at (0.5,-0.5) {$0$};
      \end{tikzpicture}
    \]
    and so this point is a coordinate of $x$.

  If not, we have either $i<k-1$ or $j<l-1$. The two cases are
  similar, so assume $j<l-1$. Look at the coefficients of $D$ on two
  sides of the horizontal line $[0,k]\times \{j\}$, numbered as follows:
  \[
    \begin{tikzpicture}[scale=0.8]
      \draw (0,0) -- (10,0);
      \foreach \i in {0,1,2,3}{
        \draw (\i,-1) -- (\i,1);
        \ifnum\i>0
        \node at (\i-0.5,0.5) {$a_{\i}$};
        \node at (\i-0.5,-0.5) {$b_{\i}$};
        \node at (\i,0) {$\bullet$};
        \fi
      }
      \foreach \i/\ii in {0/,1/k,2/k-1,3/k-2}{
        \draw (10-\i,-1) -- (10-\i,1);
        \ifnum\i>0
        \node at (10-\i+0.5,0.5) {$a_{\ii}$};
        \node at (10-\i+0.5,-0.5) {$b_{\ii}$};
        \node at (10-\i,0) {$\bullet$};
        \fi
      }
    \end{tikzpicture}
  \]
  Assume none of the points on this line are coordinates of $x$. Then we have
  $a_n-b_n\geq a_{n+1}-b_{n+1}$ for all $n=1,\dots,k-1$. By assumption
  $a_k>0$ and $b_k=0$, so using the previous inequality, we get
  $a_n>b_n$ for all $n$. Moreover, $b_n\geq 1$ for all $n<k$, so
  $a_n\geq 2$ for all $n<k$.

  Now consider the 2-chain $D_0'$ whose coefficients are same as that
  of $D_0$ except in the row $[0,k]\times[j,j+1]$; and on that row,
  the coefficients are $a_1-1,a_2-1,\dots,a_k-1$. (That is, $D'_0$ is
  obtained from $D_0$ by decreasing the coefficients in the row
  $[0,k]\times[j,j+1]$ by one.) By the inequalities obtained above,
  $D'_0$ also satisfies the hypothesis, and by induction, has a
  coordinate of $x$.

  Therefore, we conclude that the rectangle $[0,k-1]\times[0,l-1]$
  contains another coordinate of $x$, say $x_2$. Now consider all
  rectangles that are contained in $D$, have $x_1$ as their
  bottom-left corner, and has some coordinate of $x$ as their
  top-right corner. (The above argument shows that the set of such
  rectangles is non-empty.) Again partially order them by inclusion,
  and let $R$ be the minimal element under this partial order. Then
  $R\in\rectangles(x,z)$ for some $z$ and $D-R\in\pdomains(z,y)$, so
  we have a decomposition $D=R*(D-E)$. By induction, this produces the
  required decomposition of $D$ into rectangles.
\end{proof}

\subsection{Grid complexes}\label{sec:grid-complex-background} Let
$\Grid$ be a grid diagram decorated with $O$- and $X$-markings and
equipped with a sign assignment $s$, representing a link $L$; let
$L_1,\dots,L_\ell$ denote the components of $L$.  Assume the marking
$O_1$ (which we are avoiding) lies on the link component $L_1$.

To $\Grid$ one can associate chain complexes in various flavors, which
are typically called \emph{grid complexes}. The chain homotopy types
of these chain complexes (in appropriate senses) are invariants of
$(L,L_1)$, namely, the underlying link with a preferred component
$L_1$. We will concentrate on the following flavor.  As an Abelian
group, the chain group $\Cp=\Cp(\Grid)$ is freely generated by
elements of the form
\[
  [x, j_2, \dots, j_n], \ \ x \in \S, \ j_2, \dots, j_n \in \NN.
\]
The homological grading
of a generator is
\[
  \gr([ x, j_2, \dots, j_n]) = \gr(x) + 2j_2 + \dots + 2j_n.
\]
The $\ell$ Alexander gradings are defined as follows:
\[
  A_k([x,j_2,\dots,j_n])=A_k(x)+\sum_{\{i\mid O_i\in L_k\}} 2j_i.
\]
We equip $\Cp$ with the structure of a module over
$\ZZ[U_2, \dots, U_n]$, by letting $U_i$ act on
$[ x, j_2, \dots, j_n]$ by decreasing $j_i$ by $1$, if $j_i \geq 1$;
if $j_i=0$, then $U_i$ acts by zero. Notice that $U_i$ decreases 
homological grading by two, decreases the Alexander grading $A_k$ by one if $O_i\in L_k$, and preserves the other Alexander gradings. We can alternatively describe the generators of $\Cp$ as
\[
  U_2^{-j_2} \cdots U_n^{-j_n} x = [x, j_2, \dots, j_n].
\]
 
The differential on $\Cp$ is given by
\[
  \bdy([ x, j_2, \dots, j_n])=\sum_y \sum_{R\in\rectangles(x,y)}s(R)U^{\coefficients{R}}[y, j_2, \dots, j_n],
\]
where we used the notation
\[
  U^{\coefficients{R}} \defeq U_2^{O_2(R)} \cdots U_n^{O_n(R)}.
\]
Each of the $\ell$ Alexander gradings are either preserved or
decreased by the differential, so the complex $\Cp$ admits an
Alexander multi-filtration induced from these $\ell$ Alexander
gradings on the generators.

We can endow $\Cp$ with the structure of a
$\ZZ[U'_2,\dots,U'_\ell]$ module by picking a preferred $O$-marking
$O_{i_k}\in L_k$ and letting $U'_k$ act as $U_{i_k}$, for
$2\leq k\leq \ell$. (The module structure of the chain complex does
depend on this additional choice of a preferred $O$-marking on each
component.) This gives $\Cp$ the structure of a graded
$\ell$-filtered chain complex over $\ZZ[U'_2,\dots,U'_\ell]$. The
multi-filtered chain homotopy type of $\Cp$ over
$\ZZ[U'_2,\dots,U'_\ell]$ is an invariant of $(L,L_1)$. We will
outline a proof of this assertion below, and will assume the reader
is familiar with Heegaard Floer homology basics for this part.

Grid diagrams are particular examples of Heegaard diagrams for link
complements, and grid complexes correspond to link Floer
complexes. From any Heegaard diagram
$\Heeg=(\Sigma_g,\alpha_1,\dots,\alpha_{g+n-1},\allowbreak\beta_1,\dots,\beta_{g+n-1},\allowbreak
O_1,\dots,O_n,\allowbreak X_1,\dots,X_n)$ representing an 
$\ell$-component link $L \subset S^3$, one can define a link Floer
complex $\CFL^+(\Heeg)$, in a similar way as we did for $\Cp$, but
using pseudo-holomorphic disks instead of rectangles. We will study
$\CFL^+$ in full generality, allowing differentials to go over all
basepoints, including $O_1$. We will assume that we have picked the
preferred $O$-marking on $L_1$ to be $O_1$, so this becomes a module
over $\ZZ[U'_1,\dots,U'_\ell]$ with $U'_1$ acting as $U_1$. Our grid
complex $\Cp$ defined above is the kernel of the $U'_1$ action, which
is a subcomplex of $\CFL^+$ of the grid diagram.

These complexes were studied in \cite{OS-knots} when $L$ is a knot
and the Heegaard diagram has only two basepoints, in \cite{OS-links}
for general links, but when each link component has only two
basepoints, and in \cite{MOS-combinatorial} in general;
\cite{MOSzT-hf-combinatorial} and \cite{OSS-book} study the
specializations of these complexes to grid diagrams in more
detail. Of these, \cite{OS-knots} defines a version of knot Floer
complex $\CFK^+$ which is most closely related to our versions, but
the later papers study a version---denoted $\CFL^-$ in
\cite{OS-links,MOS-combinatorial}, $C^-$ in
\cite{MOSzT-hf-combinatorial}, and $\bm{\mathcal{GC}}^-$ in
\cite{OSS-book}---with generators
\[
  U_1^{j_1} \cdots U_n^{j_n} x, \ \ x \in \S, \ j_1, \dots, j_n \in \NN,
\]
and differentials going over both types of basepoints. It is
proved in \cite[Section 2]{MOS-combinatorial} that the filtered
chain homotopy type of $\CFL^-$ over $\ZZ[U'_1,\dots,U'_\ell]$ is an
invariant of $L$.

For completeness, we include here the invariance result for the plus
version.

\begin{proposition}
  \label{prop:HFLp}
  The multifiltered chain homotopy type of $\CFL^+(\Heeg)$ over $\Z[U_1', \dots ,U_{\ell}']$ is an invariant of the link $L$.
\end{proposition}

\begin{proof}
  If we restrict to Heegaard diagrams with only two basepoints on each
  link component, the argument is entirely similar to that in
  \cite[Theorem 4.7]{OS-links}; it involves checking invariance under
  isotopies, handleslides, and index one/two stabilizations. Note that
  in this case there is a single $U_k$ variable for each component
  $L_k$, and we can call it $U'_k$.

  Once we allow more basepoints, we also need to check invariance
  under index zero/three stabilizations. This was done for the minus
  version in \cite[Section 2]{MOS-combinatorial}. For the plus
  version, assuming without loss of generality that the stabilization
  occurs on the link component $L_1$ near the marking $O_1$, the
  arguments there show that the stabilized complex $C'$ is isomorphic
  to a mapping cone
  \begin{equation}
    \label{eq:cone}
    C[U_{n+1}^{-1}]  \xrightarrow{U_{n+1} - U_1} C[U_{n+1}^{-1}], 
  \end{equation}
  where $C$ is the complex for the diagram before stabilization,
  $U_{n+1}$ is the new variable corresponding to the new $O$-marking,
  and $U_1$ is the old variable for the marking $O_1$. We will prove
  that $C'$ is filtered chain homotopy to $C$, as a module over the
  old variables $\ZZ[U_1,\dots,U_n]$. Once this is done, the desired
  conclusion follows inductively: By \cite[Lemma
  2.4]{MOS-combinatorial}, we can choose any of the markings on a
  given component $L_k$ to be the new one. Hence, when we do induction
  on the number of markings on $L_k$, we can ensure that the preferred
  $O$-marking $O_{i_k}\in L_k$ is the oldest one, and so the above
  filtered chain homotopy equivalences will respect the
  $U_{i_k}=U'_k$-action on $\HFL^+$. 

  We first perform a change of basis on the stabilized complex $C'$,
  viewed as the mapping cone from Equation~\eqref{eq:cone}. Keep the
  right $C[U_{n+1}^{-1}]$ unchanged, but for the left
  $C[U_{n+1}^{-1}]$, replace the generator $U_1^{-j_1} \cdots U_{n+1}^{-j_{n+1}} x$ by
  \[
    (U_1^{-j_1} \cdots U_{n+1}^{-j_{n+1}} x)'\defeq (U_1^{-j_1}+U_1^{-j_1+1}U_{n+1}^{-1}+\dots+U_1^{-1}U_{n+1}^{-j_1+1}+U_{n+1}^{-j_1})U_2^{-j_2}\cdots U_{n+1}^{-j_{n+1}}x.
  \]
  Note that the change of basis isomorphism $z\mapsto z'$ respects the
  homological grading and the Alexander multigrading, and also the
  $U_1,\dots,U_n$-actions, but not the $U_{n+1}$-action.

  In terms of this new basis, the map in the mapping cone
  $C[U_{n+1}^{-1}]\to C[U_{n+1}^{-1}]$ sends a generator $z'$ on the
  left to the generator $U_{n+1}z$ on the right. Therefore, this
  complex decomposes into two summands. The first summand is generated
  by elements of the form $(U_1^{-j_1} \cdots U_{n+1}^{-j_{n+1}} x)'$
  with $j_{n+1}=0$ in the left $C[U^{-1}_{n+1}]$, and the second
  summand is generated by the entire right $C[U_{n+1}^{-1}]$ and
  elements of the form $(U_1^{-j_1} \cdots U_{n+1}^{-j_{n+1}} x)'$
  with $j_{n+1}>0$ in the left $C[U^{-1}_{n+1}]$. The first summand is
  isomorphic to $C$ (over $\ZZ[U_1,\dots,U_n]$, respecting all
  gradings), while the second summand is multifiltered chain homotopic
  to the zero chain complex.
\end{proof}

Specializing to grid diagrams, and taking the kernel of the $U'_1$
action, we get our $\ell$-filtered chain complex $\Cp$ over
$\ZZ[U'_2,\dots,U'_\ell]$, whose filtered chain homotopy type is an
invariant of $(L,L_1)$.

Here are a few other flavors of grid complexes. We may consider the
intersection of the kernels of each of the $U'_k$-actions. This is a
subcomplex $\Chat\subset\Cp$ generated by $[x,j_2,\dots,j_n]$ where
$j_{i_k}=0$ for all the preferred $O_{i_k}$ markings that we
picked. (If $L$ is a knot, this complex $\Chat$ is simply our original
complex $\Cp$.)  By adapting the proof of Proposition~\ref{prop:HFLp}
to this setting, we see that $\Chat$ is $\ell$-filtered chain homotopy
equivalent to $\widehat{\CFL}$, the hat flavor of link Floer chain
complex. In particular, if we take the homology of its associated
graded complex $\gChat$, we get $\widehat{\HFL}(L)$, the hat flavor of link
Floer homology.

If instead we ask for all $j_i$ to be zero (that is, the complex is
generated over $\ZZ$ by $x \in \S$), we have a complex denoted $\Ct$,
which is multifiltered chain homotopy equivalent to
$\Chat\otimes V_1\otimes\dots\otimes V_{n-\ell}$, where each $V_i$ is
a direct sum of two copies of $\ZZ$ supported in certain homological
and Alexander gradings. Therefore, the homology of its associated
graded complex $\gCtilde$ is
$\widehat{\HFL}(L) \otimes V_1\otimes\dots\otimes V_{n-\ell}$.

\section{The complex of positive domains}
\label{sec:CD}
In this section we will study a different chain complex, $\CD_*$, associated to
grid diagrams. Unlike the grid complex, the complex $\CD_*$ does not carry
any interesting topological information. Rather, it is the first step towards constructing a slightly more complicated complex, $\CDP_*$, which will be defined in Section~\ref{sec:CDP}. The obstruction classes that we will encounter while constructing our CW complex will
live in $\CDP_*$. 

\begin{definition}\label{def:complex-positive-domains}
  Given a grid diagram $\Grid$ and a sign assignment $s$, the
  \emph{complex of positive domains}, $\CD_*=\CD_*(\Grid)$, is freely generated
  over $\ZZ$ by the positive domains (avoiding $O_1$), with the
  homological grading being the Maslov index:
  \[
    \CD_k=\ZZ\basis{\set{(x,y,D)}{D\in\pdomains(x,y),\mu(D)=k}}.
  \]
  We will usually drop $x$ and $y$ from the notation for a generator of $\CD_*$, and just write it as $D$. 

The
  differential $\diff\from\CD_k\to\CD_{k-1}$, on a basis element
  $D\in\pdomains(x,y)$, is given as follows:
  \[
    \diff(D)=\sum_{\substack{(R,E)\in\rectangles(x,w)\times\pdomains(w,y)\\R*E=D}}\!\!\!\!\!\!s(R)E+(-1)^k\!\!\!\!\!\!\sum_{\substack{(E,R)\in\pdomains(x,w)\times\rectangles(w,y)\\E*R=D}}\!\!\!\!\!\!s(R)E.
  \]
  Note that $\CD_*$ is independent of the locations of the markings $O_2,\dots,O_n$ and $X_1, \dots,X_n$.
\end{definition}

\begin{lemma}
\label{lemma:d2}
  The complex from Definition~\ref{def:complex-positive-domains} is
  indeed a chain complex, that is, $\diff^2=0$.
\end{lemma}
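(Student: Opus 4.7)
The plan is to unwind $\diff^2(D)$ term by term and verify that every contribution cancels, with the sign-assignment axioms of item~\ref{item:signa} providing the essential input. Applying $\diff$ twice to a basis element $D$ of Maslov index $k$ and tracking the $(-1)^k$ factors from each half of $\diff$, I would separate the resulting terms into three groups based on the sides of $D$ from which the two rectangles are successively peeled: both from the left (yielding $D = R_1 * R_2 * F$), both from the right ($D = F * R_1 * R_2$), or one from each side ($D = R_1 * F * R_2$). The mixed terms cancel automatically: for each such $D = R_1 * F * R_2$, peeling $R_1$ first contributes sign $(-1)^{k-1}s(R_1)s(R_2)$ while peeling $R_2$ first contributes $(-1)^k s(R_1)s(R_2)$, and these add to zero.

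With the mixed terms gone, I would regroup the two same-side sums by the inner factor $F$ and the index-$2$ middle domain $G = R_1 * R_2$, then evaluate the inner sum $\sum_{G = R_1 * R_2} s(R_1)s(R_2)$ for each $G$. By the classification in item~\ref{item:maslov-index-2}, $G$ is either one of the six rectangle-pair shapes (cross, disjoint pair, or four hexagons) admitting exactly two decompositions, in which case \eqref{eq:sRS} forces the two signs to cancel, or else an annulus admitting a unique decomposition, whose sign is $+1$ for a horizontal annulus $H_{(i)}$ by \eqref{eq:RSh} or $-1$ for a vertical annulus $V_{(i)}$ by \eqref{eq:RSv}. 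Hence only annular middle factors survive; the restriction that $D$ avoids $X_n$ excludes $H_{(n)}$ and $V_{(n)}$ painlessly, since those annuli already contain $X_n$.

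To finish, I would match the surviving left-annulus and right-annulus sums. The key observation is that an annulus $H_{(i)}$ or $V_{(i)}$ is a periodic domain, and its underlying $2$-chain is the same regardless of the generator at which it is based; so for fixed $F$ the equations $D = H_{(i)} * F$ and $D = F * H_{(i)}$ both reduce to the single identity $D - F = H_{(i)}$ of $2$-chains, and similarly for $V_{(i)}$. The left and right sums therefore agree term by term, and their signed difference vanishes. The main obstacle I expect is the initial sign bookkeeping from the two applications of $\diff$; everything downstream reduces to combinatorial matching using the Maslov-$2$ classification and the commutativity of periodic $2$-chains.
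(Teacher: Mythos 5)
Your proposal is correct and follows essentially the same route as the paper's proof: expand $\diff^2$ into the four groups according to which side each rectangle is peeled from, cancel the mixed terms against each other, use \eqref{eq:sRS} to kill the non-annular index-$2$ middle factors, and match the surviving horizontal/vertical annulus contributions from the left-left and right-right sums via \eqref{eq:RSh}--\eqref{eq:RSv}. Your final step is in fact spelled out a bit more carefully than the paper's one-line "these two terms contribute with opposite signs, and hence cancel."
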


\begin{proof}
  The proof is essentially the same as the proof that the grid complex
  is a chain complex.
  \begin{align*}
    \diff^2(D)&=\sum_{\substack{(R,E)\in\rectangles(x,w)\times\pdomains(w,y)\\R*E=D}}\!\!\!\!\!\!s(R)\diff(E)+(-1)^k\!\!\!\!\!\!\sum_{\substack{(E,R)\in\pdomains(x,w)\times\rectangles(w,y)\\E*R=D}}\!\!\!\!\!\!s(R)\diff(E)\\
              &=\sum_{\substack{(R,S,F)\in\rectangles(x,w)\times\rectangles(w,z)\times\pdomains(z,y)\\R*S*F=D}}\!\!\!\!\!\!s(R)s(S)F+(-1)^{k-1}\!\!\!\!\!\!\sum_{\substack{(R,F,S)\in\rectangles(x,w)\times\pdomains(w,z)\times\rectangles(z,y)\\R*F*S=D}}\!\!\!\!\!\!s(R)s(S)F\\
              &\qquad{}+(-1)^k\!\!\!\!\!\!\sum_{\substack{(S,F,R)\in\rectangles(x,z)\times\pdomains(z,w)\times\rectangles(w,y)\\S*F*R=D}}\!\!\!\!\!\!s(R)s(S)F-\!\!\!\!\!\!\sum_{\substack{(F,S,R)\in\pdomains(x,z)\times\rectangles(z,w)\times\rectangles(w,y)\\F*S*R=D}}\!\!\!\!\!\!s(R)s(S)F.
  \end{align*}
  The second and the third terms cancel. For the first term, if the
  index-$2$ domain $R*S\in\pdomains(x,z)$ is not a horizontal
  annulus or a vertical annulus, then it has a unique other
  decomposition which contributes with the opposite sign. Therefore,
  it only contributes when $x=z$ and $R*S$ is a horizontal or a
  vertical annulus. Similarly, the fourth term only contributes when
  $z=y$ and $S*R$ is a horizontal annulus or a vertical
  annulus. These two terms contribute with opposite signs, and hence cancel.
\end{proof}

\begin{remark}
  A similar complex of positive pairs, denoted $\cp^*$, is defined in
  \cite[Section 4.1]{MOT}, and a certain obstruction class lives in
  its cohomology. Roughly, the complex $\cp^*$ is generated by pairs
  of generators such that there exists a positive domain between them;
  in other words, it is generated by positive domains modulo an
  equivalence relation given by adding or subtracting periodic
  domains. By contrast, the complex $\CD_*$ is generated by positive
  domains, without dividing by an equivalence relation.
\end{remark}

We will spend the rest of this section in showing that the complex $\CD_*$ has no
interesting homology.

\begin{proposition}\label{prop:pos-domain-cx-homology}
  The complex of positive domains, $\CD_*$, has homology $\ZZ$
  supported in grading $0$, generated by the trivial domain
  $\const{x}$ for some generator $x$.
\end{proposition}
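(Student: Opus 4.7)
The computation splits naturally into showing $H_0(\CD_*) \cong \ZZ$ and $H_k(\CD_*) = 0$ for all $k \geq 1$.

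For $H_0$: The group $\CD_0$ is freely generated by the trivial domains $\const{x}$, $x \in \S$. For a rectangle $R \in \rectangles(x, y)$, any decomposition $R = R_1 * E$ or $R = E * R_1$ with $R_1$ a rectangle and $E$ positive forces $R_1 = R$ and $E$ trivial (since $\mu(R_1) + \mu(E) = 1$ and both are nonnegative). Hence $\diff R = s(R)(\const{y} - \const{x})$. The augmentation $\epsilon \colon \CD_0 \to \ZZ$, $\const{x} \mapsto 1$, is a chain map and visibly vanishes on $\image \diff_1$. Conversely, the rectangle graph on $\S$ is connected (adjacent transpositions generate $\symGrp_n$, and each such transposition is realized by a sequence of rectangles), so $\image \diff_1 \supseteq \ker \epsilon$. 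Therefore $\epsilon$ induces the isomorphism $H_0(\CD_*) \cong \ZZ$, generated by any $[\const{x}]$.

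For $H_k = 0$ when $k \geq 1$: I plan to construct a contracting homotopy via an algebraic Morse matching. Fix a reference $x_0 \in \S$ and a total order $\prec$ on all rectangles of $\Grid$. Define a matching on the generators of $\CD_*$: each trivial domain $\const{x}$ with $x \neq x_0$ is paired with a rectangle along a spanning tree of the rectangle graph rooted at $x_0$; each non-trivial positive domain $D$ of Maslov index $k \geq 1$ is paired, via the $\prec$-minimal ``pivot rectangle'' $R(D)$ appearing as the leading factor in some decomposition of $D$, with a positive domain of adjacent Maslov index. Provided the matching is acyclic, the only critical generator is $\const{x_0}$ in degree $0$, and the collapsed Morse complex yields the claimed homology. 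Alternatively, one may introduce a filtration on $\CD_*$ by a numerical invariant of $D$ (such as lexicographic order on its coefficient vector at each square region) and show the associated spectral sequence collapses to $\ZZ$ in degree $0$.

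The main obstacle is engineering the matching (or filtration) to be simultaneously injective, acyclic, and compatible with the sign assignment. Recall from Lemma~\ref{lemma:d2} that $\diff^2 = 0$ relied on a subtle cancellation between the two decompositions of each ``cross''/``hexagonal'' index-$2$ domain, plus a separate cancellation between the two annulus contributions via~\eqref{eq:sRS}--\eqref{eq:RSv}. Any null-homotopy must respect both halves of $\diff$ --- first-rectangle and last-rectangle removals --- together with their opposite sign conventions. The canonical pivot must therefore be chosen robustly enough to avoid cyclic chains of matches near annuli and hexagonal decompositions; verification will likely require case analysis over the index-$2$ local pictures in item~\eqref{item:maslov-index-2}.
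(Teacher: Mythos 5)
Your computation of $H_0$ is correct: for a rectangle $R\in\rectangles(x,y)$ one indeed has $\diff(R)=s(R)(\const{y}-\const{x})$, the rectangle graph on $\S$ is connected, and the augmentation argument identifies $H_0\cong\ZZ$ generated by any $[\const{x}]$.

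The vanishing of $H_k$ for $k\geq 1$, however, is not proved; it is only a plan, and the obstacle you flag at the end is exactly where all the mathematical content of this proposition lives. You propose either an acyclic Morse matching via a ``$\prec$-minimal pivot rectangle'' or a filtration whose spectral sequence collapses, but you construct neither, and neither is routine. Two concrete difficulties your sketch does not address: (i) positive periodic domains (sums of rows and columns from a generator to itself) have index-$2$ subdomains --- the horizontal and vertical annuli --- admitting a \emph{unique} decomposition into rectangles, so the standard ``pair the two decompositions'' mechanism that makes pivot-matchings acyclic breaks down precisely there; (ii) even away from annuli, showing that a chosen pivot rectangle of $D$ actually occurs as the \emph{last} (or first) factor in some decomposition of $D$ is a nontrivial combinatorial statement about rearranging rectangle decompositions. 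The paper's proof resolves these by a cascade of filtrations: first by the multiplicity vectors $(A(D),B(D))$ in the rightmost column and topmost row (which isolates the periodic-domain contributions), then by the terminal generator $y$, and finally by Bruhat-order data on the symmetric group. The resulting associated graded pieces are identified with intervals $[m^{a,b,y},x^{\Id}]$ in the (opposite) Bruhat order, and their acyclicity is proved by pairing permutations $\theta\leftrightarrow\theta\tau_p$ for a fixed final transposition $\tau_p$. Establishing that $G^{a,b,y}$ is such an interval requires the rearrangement Lemma~\ref{lem:starting-rectangle} and the ``plausible triple'' analysis of Lemmas~\ref{lem:easy-technical-poset-lemma}--\ref{lem:main-technical-poset-lemma}, none of which has an analogue in your sketch. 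Until you exhibit an explicit acyclic matching (or filtration) and verify it against the annulus and hexagon cases, the claim $H_k=0$ for $k\geq 1$ remains unestablished.
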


In order to prove this, we need to define a few objects and establish
some of their properties, which we do in the following subsections.

\subsection{Decompositions into rectangles}  For any domain $D$, let
$A(D)\in\NN^{n-1}$ be the vector recording the coefficients of $D$ in
the rightmost vertical annulus; that is, the $i\th$ component of
$A(D)$ is the coefficient of $D$ at the region $H_{(i)}\cap
V_{(n)}$. Similarly, let $B(D)\in\NN^{n-1}$ be the vector recording the
coefficients of $D$ in the topmost horizontal annulus.

\begin{lemma}\label{lem:starting-rectangle}
  If $D\in\pdomains(x,y)$ contains no horizontal (respectively,
  vertical) annulus---that is, if $D*(-H_i)$ (respectively,
  $D*(-V_i)$) is not a positive domain for any $i$---and has
  $A(D)\neq 0$ (respectively, $B(D)\neq 0$), then there is a
  decomposition $D=E*R$, with $E\in\pdomains(x,z)$ and
  $R\in\rectangles(z,y)$ and $A(R)\neq 0$ (respectively,
  $B(R)\neq 0$).
\end{lemma}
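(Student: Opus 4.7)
My plan is to produce a rectangle $R\in\rectangles(z,y)$ with $A(R)\neq 0$ and $R\leq D$ as positive $2$-chains; setting $E:=D-R$ will then give a positive domain in $\pdomains(x,z)$ with $D=E*R$ and $\mu(E)=\mu(D)-1$, which is the desired decomposition. I will focus on the no-horizontal-annulus case, as the vertical case is entirely symmetric.

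The first step is to locate the row in which $R$ will sit. Because $D$ avoids $X_n\in H_{(n)}\cap V_{(n)}$, the coefficient of $D$ at $H_{(n)}\cap V_{(n)}$ is $0$, so the hypothesis $A(D)\neq 0$ provides some index $i_0<n$ with $A(D)_{i_0}>0$. I will pick $i_0$ extremally---for concreteness, as the largest such index---so that $D$ has coefficient $0$ at every $H_{(i)}\cap V_{(n)}$ with $i>i_0$. Let $p$ and $q$ denote the coordinates of $y$ on $\alpha_{i_0+1}$ and $\alpha_{i_0}$, respectively. I will then take $R$ to be the rectangle in row $H_{(i_0)}$ having top-left corner $p$ and bottom-right corner $q$, choosing, between the two such candidates on the torus, the one whose interior meets $V_{(n)}$.

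The main technical step, and the principal obstacle, is verifying that $R\leq D$. The tools available are: within any single row, the coefficient function of a domain is constant between consecutive $\beta$-circles and changes only across them, with the jumps controlled by $\partial D\cap\alpha = y-x$; the no-horizontal-annulus hypothesis forces the coefficient function of $D$ on $H_{(i_0)}$ to attain the value $0$ at some square; and the extremal choice of $i_0$, combined with $X_n$-avoidance, pins down that the right-hand boundary of the support of $D$ on $H_{(i_0)}$ is bracketed by $p$ and $q$. Carefully combining these constraints forces the coefficient of $D$ to be at least $1$ on every square covered by $R$, so $R\leq D$.

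Once $R\leq D$ is established, the conclusions follow readily: $R$ overlaps $V_{(n)}$ by construction, so $A(R)\neq 0$; and $E:=D-R$ is a positive $2$-chain with $\partial E\cap\alpha = (y-x)-(y-z) = z-x$, hence $E\in\pdomains(x,z)$ and $D=E*R$ is the required decomposition. The symmetric argument with rows and columns (and the roles of $\alpha$ and $\beta$) interchanged then handles the case where $D$ contains no vertical annulus and $B(D)\neq 0$.
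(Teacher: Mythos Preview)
Your direct construction of $R$ does not work: the height-one rectangle in $H_{(i_0)}$ with $y$-corners at $p$ and $q$ need not be contained in $D$. The constraints you list do control the jump of the coefficients of $D$ across $\alpha_{i_0+1}$ near $V_{(n)}$, but they say nothing about the location of the $y$-coordinate $q$ on $\alpha_{i_0}$. If $\partial D$ has no corners on $\alpha_{i_0}$ (so $x$ and $y$ agree there), then $q$ is an arbitrary point of $\alpha_{i_0}$ determined by the rest of the generator $y$, and there is no reason for it to sit at the edge of the support of $D$ in that row. Concretely, take $n=4$ and let $D$ be the $2\times 2$ rectangle occupying $H_{(2)}\cup H_{(3)}$ and $V_{(3)}\cup V_{(4)}$, with $y$-corners $p=\alpha_4\cap\beta_3$ (top-left) and $\alpha_2\cap\beta_1$ (bottom-right). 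The emptiness of $D$ forces the $y$-coordinate on $\alpha_3$ to be $q=\alpha_3\cap\beta_2$. Your $R$ then covers $H_{(3)}\cap V_{(1)}$, where $D$ has coefficient $0$, so $R\not\leq D$. (Of course the lemma still holds here: one may take $R=D$ itself.)

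The paper avoids this by an indirect argument. It starts from \emph{any} decomposition $D=R_1*\cdots*R_k$ into rectangles, lets $\iota$ be the largest index with $A(R_\iota)\neq 0$, and shows that if $\iota<k$ one can find another decomposition with $\iota$ increased by one: the index-$2$ domain $R_\iota*R_{\iota+1}$ is neither a vertical annulus (since $A(R_\iota)\neq 0=A(R_{\iota+1})$) nor a horizontal annulus (by hypothesis), so it has a second decomposition $S*T$, and a short case check shows $A(T)\neq 0$. Iterating pushes the rectangle with $A\neq 0$ to the last slot. This bubbling argument never needs to know where specific $y$-coordinates sit.
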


\begin{proof}
  We prove the case when $D$ contains no horizontal annulus and
  $A(D)\neq 0$. The other case is similar.

  By Lemma~\ref{lem:decompose-into-rectangles}, there exists at least one
  decomposition of $D$ into rectangles
  \[
    D=R_1*R_2*\cdots*R_n\qquad
    R_1\in\rectangles(x=w_0,w_1),R_2\in\rectangles(w_1,w_2),\dots,
    R_n\in\rectangles(w_{n-1},w_n=y).
  \]
  Since we assumed that $A(D)\neq 0$, there is some $i$
  such that $A(R_i)\neq 0$. Given such a decomposition
  $\maximal$ of $D$, let $\iota(\maximal)$ be the largest such $i$.

  We claim that if $\iota(\maximal)\neq n$, then there is some other
  decomposition $\maximal'$ with
  $\iota(\maximal')=\iota(\maximal)+1$. If $R_1,R_2,\dots,R_n$ are the
  rectangles appearing in $\maximal$, look at the domain
  \[
    H=R_{\iota(\maximal)}*R_{\iota(\maximal)+1}\in\pdomains(w_{\iota(\maximal)-1},w_{\iota(\maximal)+1}).
  \]
  By assumption $A(R_{\iota(\maximal)})\neq 0$ and
  $A(R_{\iota(\maximal)+1})= 0$. Therefore, $H$ is not a vertical
  annulus.  We have already assumed that $D$ does not contain any
  horizontal annulus, so $H$ is not a horizontal annulus either.
  Therefore, $H$ is either a (possibly non-disjoint) union of two
  rectangles or a hexagon, as pictured in
  Item~(\ref{item:maslov-index-2}).

  In each case, we claim that if $H=S*T$ is the (unique) other
  decomposition of $H$ into rectangles with
  $S\in\rectangles(w_{\iota(\maximal)-1},w')$ and
  $T\in\rectangles(w',w_{\iota(\maximal)+1})$, then $A(T)\neq 0$. This
  is clear in the first case when $H$ is a union of two rectangles. In
  the second case, depending on the shape of $H$ and how it intersects
  $V_{(n)}$, the rightmost vertical annulus (shown as a vertical line in the
  following figure), there are the following eight possibilities; in
  each case, we have shown a decomposition of $H=S*T$ with $A(T)\neq
  0$. (Two of the following configurations---the second and the
  eighth---cannot actually appear since they do not admit any
  decomposition $R_{\iota(\maximal)}*R_{\iota(\maximal)+1}$ with
  $A(R_{\iota(\maximal)+1})=0$.)
  \[
    \begin{tikzpicture}[scale=0.75]
      \foreach \i/\r in {0/0,1/0,2/1,3/1}{
        \foreach \j in {0,1}{
        \begin{scope}[yshift=3*\j cm, xshift=3*\i cm,rotate=90*\i]
      \draw[fill=black!30] (-1,-1)-- (1,-1)-- (1,0)-- (0,0)-- (0,1)-- (-1,1)--cycle;
      \end{scope}
      \begin{scope}[yshift=3*\j cm, xshift=3*\i cm]
      \draw (-0.5+\j,-1.5)--++(0,3);
      \begin{scope}[rotate=\r*90+\j*90]
        \draw (-1,0) --++(2,0);
      \end{scope}
      \end{scope}
    }}
    \end{tikzpicture}
  \]

  Therefore, if we look at the decomposition $\maximal'$
  \[
    D=R_1*\dots*R_{\iota(\maximal)-1}*S*T*R_{\iota(\maximal)+2}*\dots*R_n,
  \]
  then $\iota(\maximal')=\iota(\maximal)+1$. Consequently, there is
  some decompsition $\maximal$ with $\iota(\maximal)=n$. That is, $D$
  has a decomposition $E*R$, with $E\in\pdomains(x,z)$ and
  $R\in\rectangles(z,y)$ with $A(R)\neq 0$.
\end{proof}

\subsection{The partial order on generators}\label{sec:generator-poset} 
Let us first introduce some notation. In the symmetric group, we will denote by $\tau_p$ the adjacent transposition $(p, p+1)$. Further, in any partially ordered set, when $y \leq x$, we denote by $[y,x]$ the interval consisting of all $z$ with $y \leq z \leq x$.

Next, recall that the standard (strong) \emph{Bruhat order} on the symmetric group is defined as
follows. For any permutation $\sigma$, a \emph{reduced word} for
$\sigma$ is a minimal decomposition of $\sigma$ as a product of
adjacent transpositions. All reduced words for $\sigma$ have the same
length, which we denote $\card{\sigma}$. Define $\sigma\leq\tau$ if
some (not necessarily consecutive) substring of some (equivalently,
every) reduced word for $\tau$ is a reduced word for $\sigma$.

Now define the following partial order on the set $\S$ of generators:
\[
y\leq x\text{ if }\set{D\in\pdomains(x,y)}{A(D)=B(D)=0}\neq\emptyset.
\]

The relation between this partial order and the Bruhat order is
explained below.

\begin{enumerate}[leftmargin=*,label=(P-\arabic*),ref=P-\arabic*]
\item If $x^\sigma$ denotes the generator corresponding to the
  permutation $\sigma$ of $\{1,2,\dots,n\}$ from
  Item~(\ref{item:permutation-generator}), then $x^\sigma\leq x^\tau$
  if and only if $\sigma\geq \tau$, that is, the above order is the
  \emph{opposite} of the usual Bruhat order on the symmetric group.
 \item The poset has a unique maximum $x^{\Id}$, the generator
  corresponding to the identity permutation. For any permutation
  $\sigma$, there is a unique (positive) domain
  $D_\sigma\in\pdomains(x^{\Id}, x^\sigma)$ with
  $A(D_\sigma)=B(D_\sigma)=0$---that is, $D_\sigma$ avoids the
  rightmost vertical annulus and the topmost horizontal annulus.
\item For any reduced word $\sigma_1\sigma_2\cdots\sigma_k$ for
  $\sigma$, there is a decomposition of $D_\sigma=R_1*R_2*\dots*R_k$
  into rectangles so that, for all $1\leq i\leq k$, $R_i$ is a
  width-one rectangle supported in the vertical annulus $V_{(j)}$,
  where $\sigma_i$ is the adjacent permutation
  $\tau_j=(j,j+1)$. Therefore, minimal words for $\sigma$ correspond
  to decompositions of $D_\sigma$ into width-one rectangles. In
  particular, $\mu(D_\sigma)=\card{\sigma}$.
\item\label{item:remove-from-end-is-reduced} If
  $\sigma_1\sigma_2\cdots\sigma_k$ is a reduced word for $\sigma$, then
  $\sigma_1\sigma_2\cdots\sigma_{k-1}$ is a reduced word for
  $\sigma\sigma_k$. To wit, if $R_1*R_2*\dots*R_k$ is the decomposition
  of $D_\sigma$ into width-one rectangles corresponding to
  $\sigma_1\sigma_2\cdots\sigma_k$, then $R_1*R_2*\dots*R_{k-1}$ is a
  decomposition of $D_{\sigma\sigma_k}$ into width-one rectangles.
\item\label{item:add-to-end-is-reduced-easy} If the coordinate of
  $x^\sigma$ on $\beta_p$ lies to the bottom-left of the coordinate of
  $x^\sigma$ on $\beta_{p+1}$ for some $1\leq p<n$, then for any
  reduced word $\sigma_1\sigma_2\cdots\sigma_k$ of $\sigma$,
  $\sigma_1\sigma_2\cdots\sigma_k\tau_p$ is a reduced word for
  $\sigma\tau_p$.  The proof is similar to that in
  Item~(\ref{item:remove-from-end-is-reduced}). There is a rectangle
  $R\in\rectangles(x^\sigma,x^{\sigma\tau_p})$ with width one,
  supported in $V_{(p)}$ and avoiding $H_{(n)}$. If
  $D_\sigma=R_1*R_2*\dots*R_k$ is the decomposition into width-one
  rectangles corresponding to $\sigma_1\sigma_2\cdots\sigma_k$, then
  $D_{\sigma\tau_p}=R_1*R_2*\dots*R_k*R$ is a decomposition into
  width-one rectangles corresponding to
  $\sigma_1\sigma_2\cdots\sigma_k\tau_p$.
\item\label{item:last-transposition-geometric-meaning} For any $1\leq
  p<n$, the permutation $\sigma$ has a reduced word ending in the transposition
  $\tau_p$ if and only if the coordinate of $x^\sigma$ on $\beta_p$
  lies to the top-left of the coordinate of $x^\sigma$ on
  $\beta_{p+1}$.

  One direction is clear. If $\sigma$ has a reduced word ending in
  $\tau_p$, then $D_\sigma$ has a decomposition $E*R$, with
  $E\in\pdomains(x^{\Id},y)$ and $R\in\rectangles(y,x^\sigma)$ being a
  width-one rectangle supported in the vertical annulus $V_{(p)}$ (and
  avoiding the top horizontal annulus $H_{(n)}$), and hence the
  coordinate of $x^\sigma$ on $\beta_p$ lies to the top-left of the
  coordinate of $x^\sigma$ on $\beta_{p+1}$.  The proof for the other
  direction is similar to Lemma~\ref{lem:starting-rectangle}. Let
  $\word=\sigma_1\sigma_2\cdots\sigma_k$ be a reduced word for
  $\sigma$. By Item~(\ref{item:remove-from-end-is-reduced}),
  $\eta_i=\sigma_1\sigma_2\cdots\sigma_i$ is a reduced word. Call a
  permutation to be \emph{inverted} if its coordinate on $\beta_p$
  lies to the top-left of its coordinate on $\beta_{p+1}$. By
  assumption $\eta_k=\sigma$ is inverted, while $\eta_0=x^{\Id}$ is
  not. Let $\iota(\word)$ be the smallest $i$, so that
  $\eta_i,\eta_{i+1},\dots,\eta_k$ are all inverted. Since
  $\eta_{\iota(\word)-1}$ is not inverted, but
  $\eta_{\iota(\word)}=\eta_{\iota(\word)-1}\sigma_{\iota(\word)}$ is,
  we must have $\sigma_{\iota(\word)}=\tau_p$.

  If $\iota(\word)\neq k$, we will find a new reduced word $\word'$
  for $\sigma$ with $\iota(\word')=\iota(\word)+1$. Continuing, we
  will eventually find a word with $\iota=k$, and we will be done. 

  If $\sigma_{\iota(\word)+1}$ is a transposition that is far from
  $\tau_p$, then switching $\sigma_{\iota(\word)}=\tau_p$ and
  $\sigma_{\iota(\word)+1}$ works; that is,
  $\word'=\sigma_1\sigma_2\cdots\sigma_{\iota(\word)-1}\sigma_{\iota(\word)+1}\tau_p\sigma_{\iota(\word)+2}\cdots\sigma_k$
  has $\iota(\word')=\iota(\word)+1$. Now let us do the case
  $\sigma_{\iota(\word)+1}=\tau_{p-1}$ (the case $\tau_{p+1}$ is
  similar).  Note that $\tau_p\tau_{p-1} = (p-1, p+1)\cdot \tau_p$,
  where $(p-1,p+1)$ denotes the non-adjacent transposition. Therefore,
  $\eta_{\iota(\word)+1}=\sigma'\tau_p$, where
  $\sigma'=\sigma_1\sigma_2\cdots\sigma_{\iota(\word)-1}\cdot
  (p-1,p+1)$. Since we have assumed that
  $\eta_{\iota(\word)+1}$ is also inverted, the index-two domain
  corresponding to $\tau_p\tau_{p-1}=(p-1,p+1)\cdot \tau_p$ looks like
  the third hexagon from Item~(\ref{item:maslov-index-2}). Therefore,
  $\sigma'$ is not inverted, and therefore, for any reduced word
  $\sigma'_1\sigma'_2\cdots\sigma'_{\iota(\word)}$ for $\sigma'$, we
  have that $\sigma'_1\sigma'_2\cdots\sigma'_{\iota(\word)}\tau_p$ is
  a reduced word for $\eta_{\iota(\word)+1}$ (by
  Item~\ref{item:add-to-end-is-reduced-easy}), and hence
  $\word'=\sigma'_1\sigma'_2\cdots\sigma'_{\iota(\word)}\tau_p\sigma_{\iota(\word)+2}\cdots\sigma_k$
  has $\iota(\word')=\iota(\word)+1$.
\item\label{item:add-to-end-is-reduced} If $\sigma$ does not have a
  reduced word ending in the transposition $\tau_p$, then for any
  reduced word $\sigma_1\sigma_2\cdots\sigma_k$ of $\sigma$, we have that 
  $\sigma_1\sigma_2\cdots\sigma_k\tau_p$ is a reduced word for
  $\sigma\tau_p$. This follows immediately from
  Items~(\ref{item:add-to-end-is-reduced-easy})
  and~(\ref{item:last-transposition-geometric-meaning}).
\end{enumerate} 
  
 \subsection{Plausible triples}
\label{sec:plausible}
Given two partially ordered sets $S_1, \dots, S_m$, the \emph{product partial order} on $S_1 \times \dots \times S_m$ is given by 
\[
  (s_1, \dots, s_m) \leq (s_1', \dots, s_m') \ \iff \ (s_i \leq s_i' \text{ for all } i).
\]
We will give $\NN^{n-1}$ the product partial order coming from its factors.

In the proof of Proposition~\ref{prop:pos-domain-cx-homology} that
will be given in Section~\ref{sec:acyclic}, we will filter positive
domains according to the vectors $A(D), B(D)$ that capture their
multiplicties on the rightnots column and topmost row. In the process,
given a triple $(a,b,y)$, with $y \in \S$ and
$(a,b)\in\NN^{n-1}\times\NN^{n-1}$, we will be interested in the set
of generators
\[ 
G^{a,b,y}=\set{x \in \S}{\exists D\in\pdomains(x,y),A(D)=a,B(D)=b}.
\]
This is an upward closed subset: that is, if $x\in G^{a,b,y}$ and
$x\leq x'$, then $x'\in G^{a,b,y}$. Therefore, $G^{a,b,y}$ always
contains $x^{\Id}$. Moreover, if $D\in\pdomains(x,y)$ with
$A(D)\leq a$ and $B(D)\leq b$, then $x\in G^{a,b,y}$, since there
exists a (unique) periodic domain $E\in\pperiodic$ with
$(A(E),B(E))=(a-A(D),b-B(D))$, and therefore, $D*E\in\pdomains(x,y)$
satisfies the required condition. That is, $G^{a,b,y}$ has an
alternate description
\[ 
G^{a,b,y}=\set{x}{\exists D\in\pdomains(x,y),A(D)\leq a,B(D)\leq b}.
\]
In particular, since $\const{y}\in\pdomains(y,y)$, the set $G^{a,b,y}$ contains $y$, and hence all $z$ with $z \geq y$. 

We would like to understand in what cases $G^{a,b,y}$ contains more elements than just those in the interval $[y, x^{\Id}]$. An example is shown in Figure~\ref{fig:plausible}, where $z \leq y$ but the rectangle $R$ with $A(R)=a > 0$ and $B(R)=0$ makes it so that $z \in G^{a,0,y}.$

\begin{figure}
  \centering
  \begin{tikzpicture}[scale=4]
    \draw[fill=black!30] (0.7,0.3) rectangle (1,0.6) node[midway] {\small $R$};
    \draw[fill=black!30] (0.2,0.3) rectangle (0,0.6) node[midway] {\small $R$};

    \foreach \i/\j/\a in {0.7/0.3/north east,0.2/0.6/south west}{
      \node[fill=black,minimum width=3pt, minimum height=3pt,inner sep=0,outer sep=0] at (\i,\j) {};
      \node[anchor=\a] at (\i,\j) {\small $z$};
    }
    \foreach \i/\j/\a in {0.7/0.6/south east,0.2/0.3/north west}{
      \node[fill=black,circle,minimum width=3pt, minimum height=3pt,inner sep=0,outer sep=0] at (\i,\j) {};
      \node[anchor=\a] at (\i,\j) {\small $y$};
    }

    \draw[->] (0,0.3-0.05)--++(0.2,0) node[midway,anchor=north] {\small $\tau(R)$};
    \draw[->] (1,0.3-0.05)--++(-0.3,0);

    \draw[<->] (1,0.6+0.05)--++(-0.3,0) node[midway,anchor=south] {\small $\omega(R)$};
    
    \draw (0,0) rectangle (1,1);
  \end{tikzpicture}
\caption {The shaded rectangle $R$ is an $A$-witness.}
\label{fig:plausible}
\end{figure}

It turns out that the specific condition we need is {\em
  plausibility}, as defined below.  Consider triples $(a,b,y)$ with
$y$ a generator and $(a,b)\in\NN^{n-1}\times\NN^{n-1}$. Call such a
triple \emph{A-plausible} (respectively, \emph{B-plausible}) if there
exist $z$ and $R\in\rectangles(z,y)$ with $0< A(R)\leq a$
(respectively, $0< B(R)\leq b$); call such rectangles
\emph{A-witnesses} (respectively, \emph{B-witnesses}). Assign to any
such witness $R$ a pair $(\omega(R),\tau(R))\in\NN^2$, where
$\omega(R)$ is number of vertical annuli to the left of $V_{(n)}$, including $V_{(n)}$,
(respectively, horizontal annuli below $H_{(n)}$, including $H_{(n)}$) that $R$ intersects,
and $\tau(R)$ is the horizontal width (respectively, vertical height)
of $R$. See again Figure~\ref{fig:plausible}.

\begin{lemma}\label{lem:easy-technical-poset-lemma}
  If $(a,b,y)$ is neither A-plausible nor B-plausible, and
  $D\in\pdomains(x,y)$ with $(A(D),B(D))=(a,b)$, then $x\geq y$.
\end{lemma}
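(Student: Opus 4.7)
The plan is to construct, from the given $D\in\pdomains(x,y)$ with $A(D)=a$ and $B(D)=b$, a modified positive domain $D''\in\pdomains(x,y)$ satisfying $A(D'')=B(D'')=0$; by the definition of the partial order in Section~\ref{sec:generator-poset}, this is exactly the assertion $x\geq y$. The basic idea is to strip horizontal and vertical annuli from $D$ greedily, and then use Lemma~\ref{lem:starting-rectangle} to conclude that the residual $A$- and $B$-coefficients must vanish, since otherwise the lemma would produce a rectangle contradicting one of the two non-plausibility hypotheses.

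First I would produce $D_1\in\pdomains(x,y)$ from $D$ by repeatedly subtracting copies of the horizontal annuli $H_{(1)},\dots,H_{(n-1)}$ as long as the result remains a positive domain. This process terminates, and the resulting $D_1$ contains no horizontal annulus. Each subtraction of $H_{(i)}$ drops the $i\th$ component of $A$ by one and leaves $B$ unchanged, so $A(D_1)\leq a$ and $B(D_1)=b$ componentwise. If $A(D_1)\neq 0$, Lemma~\ref{lem:starting-rectangle} provides a decomposition $D_1=E_1*R_1$ with $R_1\in\rectangles(z,y)$ and $A(R_1)\neq 0$; since $R_1$ is a summand of the positive domain $D_1$, we have $0<A(R_1)\leq A(D_1)\leq a$, making $R_1$ an $A$-witness for $(a,b,y)$ and contradicting non-$A$-plausibility. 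Hence $A(D_1)=0$.

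Next, I would repeat the procedure with vertical annuli: let $D_2\in\pdomains(x,y)$ be obtained from $D_1$ by greedily subtracting $V_{(1)},\dots,V_{(n-1)}$. Since $V_{(j)}$ is disjoint from $V_{(n)}$ for $j<n$, this step does not affect $A$, so $A(D_2)=0$, while $B(D_2)\leq b$. By construction $D_2$ contains no vertical annulus, so the vertical case of Lemma~\ref{lem:starting-rectangle} applies: if $B(D_2)\neq 0$, it yields a rectangle $R_2\in\rectangles(z',y)$ with $0<B(R_2)\leq B(D_2)\leq b$, giving a $B$-witness and contradicting non-$B$-plausibility. Therefore $B(D_2)=0$ as well, and $D_2$ is the required domain.

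I do not anticipate a serious technical obstacle. The two points worth double-checking are that the terminal rectangle produced by Lemma~\ref{lem:starting-rectangle} really ends at $y$, so that it qualifies as a witness for the triple $(a,b,y)$, and that the componentwise inequalities $A(R_1)\leq A(D_1)$ and $B(R_2)\leq B(D_2)$ follow automatically from $R_i$ being a piece of a positive domain whose $A$- and $B$-vectors we have already bounded; both are essentially tautological.
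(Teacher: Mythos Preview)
Your proposal is correct and follows essentially the same approach as the paper's proof: strip off periodic annuli until none remain, then apply Lemma~\ref{lem:starting-rectangle} to force the residual $A$- and $B$-vectors to vanish, lest a witness appear. The only cosmetic difference is that the paper removes all horizontal and vertical annuli simultaneously (by maximizing the Maslov index of the periodic part $E$ in a decomposition $D=E*F$) and then invokes Lemma~\ref{lem:starting-rectangle} once, whereas you do the horizontal and vertical stripping in two separate passes; the content is the same.
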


\begin{proof}
  Look at decompositions $D=E*F$ where $E\in\pperiodic$ and
  $F\in\pdomains(x,y)$, and consider the one that maximizes the Maslov
  index of $E$. Then $F$ does not contain any vertical annulus or any
  horizontal annulus.

  By Lemma~\ref{lem:starting-rectangle}, if $(A(F),B(F))\neq(0,0)$,
  then $F$ has a decomposition $G*R$, with $G\in\pdomains(x,z)$ and
  $R\in\rectangles(z,y)$ with $(A(R),B(R))\neq (0,0)$. Then $R$ is a
  witness, which contradicts the hypothesis. Therefore, we must have
  $(A(F),B(F))=(0,0)$. Then $x\geq y$ due to the domain $F$, and we
  are done.
\end{proof}

\begin{lemma}\label{lem:main-technical-poset-lemma}
  If $(a,b,y)$ is A-plausible, and $D\in\pdomains(x,y)$ with
  $(A(D),B(D))=(a,b)$, then there exists an A-witness
  $R\in\rectangles(z,y)$ and $E\in\pdomains(x,z)$ with $E*R=D$. We may
  choose $R$ to be one that minimizes $\omega$ among all
  A-witnesses. In fact, we may choose $R$ to be the (unique) A-witness
  $R_0$ that minimizes the pair $(\omega,\tau)$, ordered
  lexicographically, among all A-witnesses.

  Analogous statements hold if $(a,b,y)$ is B-plausible.
\end{lemma}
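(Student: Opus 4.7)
The A-plausible and B-plausible cases are symmetric (swap the roles of rows and columns, $V_{(n)}$ with $H_{(n)}$), so I focus on the A-plausible case. The plan is in two stages: first produce \emph{some} decomposition $D = E * R$ with $R$ an $A$-witness, then iteratively replace $R$ by an $A$-witness of strictly smaller $(\omega, \tau)$ until reaching the unique minimum $R_0$.

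\textbf{Stage 1: producing any $A$-witness divisor.} Following the template of the proof of Lemma~\ref{lem:easy-technical-poset-lemma}, decompose $D = P * F$ with $P \in \pperiodic$ a maximal periodic subdomain and $F \in \pdomains(x,y)$ containing no full horizontal or vertical annulus. If $A(F) \neq 0$, then Lemma~\ref{lem:starting-rectangle} produces $F = G * R$ with $R \in \rectangles(w,y)$ and $A(R) \neq 0$; since $R$ is a subdomain of $D$, we have $A(R) \leq A(D) = a$, so $R$ is an $A$-witness. If instead $A(F) = 0$, then $A(P) = a \neq 0$ forces $P$ to contain some horizontal annulus $H_{(i)}$ with $a_i \geq 1$; decomposing $H_{(i)}$, viewed as a periodic domain based at $y$, into its unique pair of rectangles $H_{(i)} = R_1 * R_2$ exhibits $R_2 \in \rectangles(\cdot, y)$ as an $A$-witness (since $A(R_2) = \ve_i \leq a$), while $D - R_2 = (P - H_{(i)}) + F + R_1$ is manifestly a positive domain from $x$ to the required generator.

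\textbf{Stage 2: reducing to the minimum $R_0$.} Given $D = E * R$ with $R$ an $A$-witness and $R \neq R_0$, the goal is a new decomposition $D = E' * R'$ with $R'$ an $A$-witness satisfying $(\omega(R'), \tau(R')) < (\omega(R), \tau(R))$ lexicographically. Refine $E$ into rectangles $E = R_1 * \cdots * R_{k-1}$ via \cite[Lemma 3.5]{SarkarShellable} and consider the index-$2$ domain $R_{k-1} * R$. It is not a vertical annulus (since $A(R) > 0$); assuming it is not a horizontal annulus either, Item~(\ref{item:maslov-index-2}) provides a second decomposition $R_{k-1} * R = S * T$ with $T \in \rectangles(\cdot, y)$, and a case analysis across the eight shapes (disjoint pair, cross, six hexagons) shows the swap can be arranged to yield an $A$-witness $T$ with strictly smaller $(\omega, \tau)$. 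The horizontal-annulus case is handled by absorbing the offending annulus into $P$ and restarting Stage 1 with a different initial $R$. For uniqueness of $R_0$: with $y$ fixed, $(\omega, \tau)$ forces the left and right $\beta$-boundaries (right is $\beta_n$, left determined by $\tau$ with torus wrapping tracked by $\omega$), and then the coordinates of $y$ on these two $\beta$-circles determine the top and bottom $\alpha$-boundaries, so $R_0$ is unique when it exists.

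\textbf{Main obstacle.} The crux is Stage 2: the alternative decomposition $R_{k-1} * R = S * T$ is forced by the shape, and one must verify case-by-case that $T$ remains an $A$-witness with strictly smaller $(\omega, \tau)$. This may require first reshuffling $R_1 * \cdots * R_{k-1}$ to place a favorable rectangle adjacent to $R$, and carefully handling the degenerate situation when the only available swap produces a horizontal annulus. This combinatorial case analysis, together with the absorption trick for the annulus edge case, is the technical heart of the argument and what guarantees that the iteration terminates at $R_0$ rather than stalling at some other local minimum.
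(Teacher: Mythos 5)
Your two-stage plan (first find \emph{some} witness dividing $D$, then improve it to $R_0$) does not match the structure of the paper's argument, and the mismatch is not cosmetic: both stages have genuine gaps, and the gaps are exactly where the paper's interleaved induction does its work.

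In Stage 1, the case $A(F)=0$ is handled incorrectly. The horizontal annulus $H_{(i)}$, viewed as a domain from $y$ to $y$, has a \emph{unique} decomposition $H_{(i)}=R_1*R_2$ with $R_1\in\rectangles(y,w)$ and $R_2\in\rectangles(w,y)$, and exactly one of $R_1,R_2$ contains the region $H_{(i)}\cap V_{(n)}$. Which one it is depends on where the two $y$-coordinates on $\bdy H_{(i)}$ sit relative to $V_{(n)}$; it can perfectly well be $R_1$, in which case $A(R_2)=0$ and your $R_2$ is not an A-witness at all. This is precisely the hard case of the lemma. The paper resolves it not by exhibiting a witness by hand but by showing that the \emph{abstract} minimal witness $R_0$ (whose existence is the plausibility hypothesis) actually divides $D$: one passes to $D*(-R_2)$, which has lower Maslov index and is still A-plausible with the same minimal witness, and invokes the minimality statement at that lower index. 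This is why the paper cannot separate existence from minimality into sequential stages: the existence statement $(P_1^{n+1})$ is deduced from the minimality statement $(P_3^{n})$ one index down, while $(P_2^n)$ and $(P_3^n)$ are in turn deduced from $(P_1^n)$ and from $(P_2^{n-1})$, $(P_3^{n-1})$.

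In Stage 2, the progress step is asserted rather than proved. If you refine $E$ arbitrarily and swap the last rectangle $R_{k-1}$ with $R$, there is no reason the new final rectangle has smaller $(\omega,\tau)$ -- that depends entirely on which $R_{k-1}$ happens to be adjacent, and your ``reshuffling to place a favorable rectangle adjacent to $R$'' is exactly the missing content. The paper's route is: show that $E$ itself is A-plausible for a suitable triple, identify (by a case analysis of how the minimal witness $S$ of $(a,b,y)$ sits relative to the current $R$) which witness of $E$ achieves the minimal $\omega$, apply the inductive hypothesis to $E$ to realize that witness as the final rectangle of $E$, and only then perform the single swap $T*R=T'*R'$, which lands $R'$ at $\omega(R')=\omega(S)$ in one step. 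Without this, your iteration has no termination guarantee and can stall. (Your uniqueness argument for $R_0$ is fine: $(\omega,\tau)$ pins down the two vertical edges of the rectangle, and the $y$-coordinates on those two $\beta$-circles then pin down the horizontal edges.)
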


\begin{proof}
  Let us only consider the case for A-plausible. The other case is
  similar. We prove this by induction on the Maslov index of
  $D$. There are three statements in the problem, and for clarity, we
  write them out. Each statement is weaker than the next.

  \begin{itemize}
  \item[$(P_1^n)$] If $(a,b,y)$ is A-plausible, and
    $D\in\pdomains(x,y)$ with $\mu(D)=n$ and $(A(D),B(D))=(a,b)$, then
    there exists an A-witness $R\in\rectangles(z,y)$ and
    $E\in\pdomains(x,z)$ with $E*R=D$.
  \item[$(P_2^n)$] If $(a,b,y)$ is A-plausible, and
    $D\in\pdomains(x,y)$ with $\mu(D)=n$ and $(A(D),B(D))=(a,b)$, then
    there exists an A-witness $R\in\rectangles(z,y)$ and
    $E\in\pdomains(x,z)$ with $E*R=D$, and $R$ minimizes $\omega$
    among all A-witnesses.
  \item[$(P_3^n)$] If $(a,b,y)$ is A-plausible, and
    $D\in\pdomains(x,y)$ with $\mu(D)=n$ and $(A(D),B(D))=(a,b)$, then
    there exists an A-witness $R\in\rectangles(z,y)$ and
    $E\in\pdomains(x,z)$ with $E*R=D$, and $R$ minimizes
    $(\omega,\tau)$ among all A-witnesses.
  \end{itemize}

  The base case for the induction is either vacuous or trivial,
  depending on whether one starts at $n=0$ or $n=1$. We will do induction on $n$, and at each  step, we will first get $(P_1^n)$, then $(P_2^n)$, and then $(P_3^n)$. For this, we will make use of the following implications.
  \begin{enumerate}[leftmargin=*,label=(Ind-\arabic*)]
  \item $(P_1^n) \wedge (P_2^{n-1}) \Rightarrow (P_2^n)$. Consider the
    decomposition $D=E*R$ as provided by $(P_1^n)$, with
    $R\in\rectangles(z,y)$. Consider an A-witness $S$ that minimizes
    $\omega$. If $\omega(R)=\omega(S)$, we are done. Otherwise
    $\omega(S)<\omega(R)$; therefore, the top-left corner of $S$ lies
    outside $R$, and the configuration of $S$ (shaded), $R$ (striped),
    and $y$-coordinates (dots) looks like one of the follows (the
    vertical column $V_{(n)}$ is once again shown as a vertical line).
    \[
    \begin{tikzpicture}[scale=0.5]
      \begin{scope}[xshift=6cm]
      \draw[dashed, fill=black!30] (-1,4) rectangle (1,3);
      \draw[pattern=north west lines] (-2,1) rectangle (2,2);
      \foreach \i/\j in {-1/4,1/3,-2/2,2/1}{
        \fill[black] (\i,\j) circle (3pt);
        }
      \draw (0,-1) --++(0,6);
      \end{scope}
      \begin{scope}[xshift=2*6cm]
      \draw[dashed, fill=black!30] (-1,4) rectangle (3,1.5);
      \draw[pattern=north west lines] (-2,1) rectangle (2,2);
      \foreach \i/\j in {-1/4,3/1.5,-2/2,2/1}{
        \fill[black] (\i,\j) circle (3pt);
        }
      \draw (0,-1) --++(0,6);
      \end{scope}
      \begin{scope}[xshift=3*6cm]
      \draw[dashed, fill=black!30] (-1,4) rectangle (2,1);
      \draw[pattern=north west lines] (-2,1) rectangle (2,2);
      \foreach \i/\j in {-1/4,2/1,-2/2}{
        \fill[black] (\i,\j) circle (3pt);
        }
      \draw (0,-1) --++(0,6);
      \end{scope}
      \begin{scope}[xshift=4*6cm]
      \draw[dashed, fill=black!30] (-1,4) rectangle (1,0);
      \draw[pattern=north west lines] (-2,1) rectangle (2,2);
      \foreach \i/\j in {-1/4,1/0,-2/2,2/1}{
        \fill[black] (\i,\j) circle (3pt);
        }
      \draw (0,-1) --++(0,6);
      \end{scope}
    \end{tikzpicture}
    \]
    In each case, the domain $E\in\pdomains(x,z)$ produces an
    A-plausible triple $(z, A(E), B(E))$ and the following A-witness
    (shaded) has minimum $\omega$, which equals $\omega(S)$. The
    coordinates of $z$ are shown as black squares. The fourth case has
    been subdivided into two cases: in the first subcase, there are no
    $z$ coordinates in the interior of the striped rectangle, while in
    the second subcase, there are a few; in that subcase, the
    A-witness uses the leftmost of those extra $z$-coordinates.
    \[
    \begin{tikzpicture}[scale=0.5]
      \begin{scope}[xshift=6cm]
        \fill[black!20] (-1,4) rectangle (1,3);
      \draw[dashed] (-1,4) rectangle (1,3);
      \draw (-2,1) rectangle (2,2);
      \draw (0,-1) --++(0,6);
      \foreach \i/\j in {-1/4,1/3,-2/1,2/2}{
        \node[fill=black,minimum width=3pt, minimum height=3pt,inner sep=0,outer sep=0] at (\i,\j) {};
        }
      \end{scope}
      \begin{scope}[xshift=2*6cm]
        \fill[black!20] (-1,4) rectangle (2,2);
      \draw[dashed] (-1,4) rectangle (3,1.5);
      \draw (-2,1) rectangle (2,2);
      \draw (0,-1) --++(0,6);
      \foreach \i/\j in {-1/4,3/1.5,-2/1,2/2}{
        \node[fill=black,minimum width=3pt, minimum height=3pt,inner sep=0,outer sep=0] at (\i,\j) {};
        }
      \end{scope}
      \begin{scope}[xshift=3*6cm]
        \fill[black!20] (-1,4) rectangle (2,2);
      \draw[dashed] (-1,4) rectangle (2,1);
      \draw (-2,1) rectangle (2,2);
      \draw (0,-1) --++(0,6);
      \foreach \i/\j in {-1/4,-2/1,2/2}{
        \node[fill=black,minimum width=3pt, minimum height=3pt,inner sep=0,outer sep=0] at (\i,\j) {};
        }      
      \end{scope}
      \begin{scope}[xshift=4*6cm]
        \fill[black!20] (-1,4) rectangle (2,2);
      \fill[pattern=north east lines, pattern color=black!60] (1,2) rectangle (2,4);        
      \draw[dashed] (-1,4) rectangle (1,0);
      \draw (-2,1) rectangle (2,2);
      \draw (0,-1) --++(0,6);
      \foreach \i/\j in {-1/4,1/0,-2/1,2/2}{
        \node[fill=black,minimum width=3pt, minimum height=3pt,inner sep=0,outer sep=0] at (\i,\j) {};
        }
      \end{scope}
      \begin{scope}[xshift=5*6cm]
        \fill[black!20] (-1,4) rectangle (1.3,3.4);
      \fill[pattern=north east lines, pattern color=black!60] (1,2) rectangle (2,4);        
      \draw[dashed] (-1,4) rectangle (1,0);
      \draw (-2,1) rectangle (2,2);
      \draw (0,-1) --++(0,6);
      \foreach \i/\j in {-1/4,1/0,-2/1,2/2,1.3/3.4,1.6/2.7}{
        \node[fill=black,minimum width=3pt, minimum height=3pt,inner sep=0,outer sep=0] at (\i,\j) {};
        }
      \end{scope}
    \end{tikzpicture}
    \]
    Therefore, by $(P_2^{n-1})$, we have a decomposition $E=F*T$, with
    $F\in\pdomains(x,w)$ and $T\in\rectangles(w,z)$ with
    $\omega(T)=\omega(S)$. Therefore, the Maslov index 2 domain
    $H=T*R\in\pdomains(w,y)$ looks like one of the following (the
    decomposition $T*R$ and the $y$-coordinates are also shown).
    \[
    \begin{tikzpicture}[scale=0.5]
      \begin{scope}[xshift=6cm]
      \draw[fill=black!30] (-1,4) rectangle (3,3.5);
      \draw[fill=black!30] (-2,1) rectangle (2,2);
      \foreach \i/\j in {-1/4,3/3.5,-2/2,2/1}{
        \fill[black] (\i,\j) circle (3pt);
        }
      \draw (0,-1) --++(0,6);
      \end{scope}
      \begin{scope}[xshift=2*6cm]
      \draw[fill=black!30] (-1,4) -- (2,4) -- (2,1) -- (-2,1) -- (-2,2) -- (-1,2) -- cycle;
      \foreach \i/\j in {-1/4,-2/2,2/1}{
        \fill[black] (\i,\j) circle (3pt);
        }
      \draw (-1,2) -- (2,2);
      \draw (0,-1) --++(0,6);
      \end{scope}
      \begin{scope}[xshift=3*6cm]
      \fill[black!30] (-1,4) rectangle (0.5,-0.5);
      \fill[black!30] (-2,1) rectangle (2,2);
      \fill[black!60] (-1,1) rectangle (0.5,2);
      \draw (-1,4) rectangle (0.5,-0.5);
      \draw (-2,1) rectangle (2,2);
      \foreach \i/\j in {-1/4,0.5/-0.5,-2/2,2/1}{
        \fill[black] (\i,\j) circle (3pt);
        }
      \draw (0,-1) --++(0,6);
      \end{scope}
    \end{tikzpicture}
    \]
    In each case, the other decomposition $H=T'*R'$ satisfies
    $\omega(R')=\omega(S)$. Therefore, we have a decomposition
    $D=(F*T')*R'$, with $R'$ an A-witness minimizing $\omega$.
  \item $(P_2^n) \wedge (P_3^{n-1}) \Rightarrow (P_3^n)$. The proof is
    similar to (but easier than) the previous proof. Consider the
    decomposition $D=E*R$ as provided by $(P_2^n)$, with
    $R\in\rectangles(z,y)$ minimizing $\omega$. Consider an A-witness
    $S$ that minimizes $(\omega,\tau)$, ordered lexicographically. We
    must have $\omega(R)=\omega(S)$. If in addition,
    $\tau(R)=\tau(S)$, we are done. Otherwise $\tau(S)<\tau(R)$;
    therefore, the configuration of $S$ (shaded), $R$ (striped), and
    $y$-coordinates (dots) looks as follows.
    \[
    \begin{tikzpicture}[scale=0.5]
      \draw[dashed, fill=black!30] (-1,4) rectangle (1,0);
      \draw[pattern=north west lines] (-1,4) rectangle (2,2);
      \foreach \i/\j in {-1/4,1/0,2/2}{
        \fill[black] (\i,\j) circle (3pt);
        }
      \draw (0,-1) --++(0,6);
    \end{tikzpicture}
    \]
    Therefore, the domain $E\in\pdomains(x,z)$ is A-plausible and the
    following A-witness $T$ (shaded) has minimum $(\omega,\tau)$, which equals
    $(\omega(S),\tau(S))$.
    \[
    \begin{tikzpicture}[scale=0.5]
      \fill[black!20] (-1,2) rectangle (1,0);
      \draw[dashed] (-1,4) rectangle (1,0);
      \draw (-1,4) rectangle (2,2);
      \draw (0,-1) --++(0,6);
    \end{tikzpicture}
    \]
    By $(P_3^{n-1})$, we have a decomposition $E=F*T$ with
    $F\in\pdomains(x,w)$ and $T\in\rectangles(w,z)$. The Maslov index
    2-domain $H=T*R\in\pdomains(w,y)$ is a hexagon, and the other
    decomposition $H=T'*R'$ satisfies
    $(\omega(R'),\tau(R'))=(\omega(S),\tau(S))$. Therefore, we have a
    decomposition $D=(F*T')*R'$, with $R'$ the unique A-witness
    minimizing $(\omega,\tau)$.
  \item $(P_1^{n-1})\wedge (P_3^n) \Rightarrow
    (P_1^{n+1})$. Let $D$ be the given domain with $\mu(D)=n+1$, and
    consider the A-witness $R$ for $D$ which minimizes
    $(\omega,\tau)$. Since $R$ minimizes $(\omega,\tau)$, none of the 
    $y$-coordinates can lie in the interior of $R$.

    If $D$ does not contain any horizontal annuli, we are done by
    Lemma~\ref{lem:starting-rectangle}. Therefore, assume $D$ contains
    some horizontal annulus $H$. If $H$ is disjoint from the interior
    of $R$, then $D*(-H)$ is a domain with $\mu=n-1$, which
    is still A-plausible since it still contains the A-witness
    $R$. Therefore, by $(P_1^{n-1})$, it admits a decomposition $E*T$
    with $E\in\pdomains(x,w),T\in\rectangles(w,y)$ with $A(T)\neq 0$;
    consequently, $D$ has a decomposition $(E*H)*T$ and we are done.

    Therefore, we may assume that $D$ contains some horizontal annulus
    $H$ that intersects $R$.  Let $H=S*T$ with
    $S\in\rectangles(y,w),T\in\rectangles(w,y)$ be the unique
    decomposition of $H$ into rectangles. Exactly one of $A(S)$ and
    $A(T)$ is non-zero. If $A(T)\neq 0$, we are done, since $D$ then
    has a decomposition $(D*(-T))*T$. So we may assume $A(S)\neq 0$ and
    $A(T)=0$.

    Therefore, the configuration of the A-witness $R$ (shaded), the
    horizontal annulus $H$ (striped), and the $y$-coordinates (dots)
    looks as follows.
    \[
    \begin{tikzpicture}[scale=0.5]
      \draw[fill=black!30] (-1,4) rectangle (1,0);
      \fill[pattern=north west lines] (-4,2.5) rectangle (4,2);
      \draw (-4,2.5)--(4,2.5);
      \draw (-4,2)--(4,2);
      \foreach \i/\j in {-1/4,1/0,-2/2,2/2.5}{
        \fill[black] (\i,\j) circle (3pt);
        }
      \draw (0,-1) --++(0,6);
    \end{tikzpicture}
    \]
    Therefore, the domain $D*(-T)\in\pdomains(x,w)$ is A-plausible, with
    $R$ still being the A-witness that minimizes $(\omega,\tau)$. By
    $(P_3^n)$, $D*(-T)$ contains the rectangle $R$, and
    therefore, $D$ contains $R$ as well.\qedhere
  \end{enumerate}
\end{proof}

\subsection{Proof of acyclicity}
\label{sec:acyclic}
Given a triple $(a,b,y)$, with $y$ a generator and 
$(a,b)\in\NN^{n-1}\times\NN^{n-1}$, in Section~\ref{sec:plausible} we defined the following set of generators
\begin{align*}
G^{a,b,y}&=\set{x \in \S}{\exists D\in\pdomains(x,y),A(D)=a,B(D)=b}\\
&=\set{x \in \S}{\exists D\in\pdomains(x,y),A(D)\leq a, B(D)\leq b}.
\end{align*}
This is an upward closed subset that contains $y$.

\begin{lemma}\label{lem:g-poset-has-minimum}
  The set $G^{a,b,y}$ has a unique minimum $m^{a,b,y}$, so that $G^{a,b,y}$ is the interval $ [m^{a,b,y},x^{\Id}].$ Furthermore, $m^{a,b,y}$ equals
  $x^{\Id}$ if and only if $a=b=0$ and $y=x^{\Id}$.
\end{lemma}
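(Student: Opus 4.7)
My plan is to prove both assertions simultaneously by strong induction on $|a|+|b|$.

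The base case is $|a|+|b|=0$. Here the alternative description $G^{0,0,y}=\{x : y \leq x\}$ is literally the definition of the partial order, so $G^{0,0,y}=[y,x^{\Id}]$ has unique minimum $y$, which equals $x^{\Id}$ exactly when $y=x^{\Id}$.

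For the inductive step, suppose the result holds for all triples of smaller total weight. If $(a,b,y)$ is neither A- nor B-plausible, then Lemma~\ref{lem:easy-technical-poset-lemma}, together with upward closure and $y \in G^{a,b,y}$, gives $G^{a,b,y}=[y,x^{\Id}]$, so $m^{a,b,y}=y$. If $(a,b,y)$ is A-plausible, let $R_0 \in \rectangles(z,y)$ be the unique A-witness minimizing $(\omega,\tau)$ supplied by Lemma~\ref{lem:main-technical-poset-lemma}. That lemma yields the inclusion $G^{a,b,y}\subseteq G^{a-A(R_0),\,b-B(R_0),\,z}$, and the reverse inclusion follows by concatenation: given $E\in\pdomains(x,z)$ realizing the shifted triple, $E*R_0\in\pdomains(x,y)$ realizes $(a,b)$. (Note $B(R_0)\leq b$, since $R_0$ appears as a factor of some domain with $B$-value $b$, so the shifted triple has nonnegative entries.) Hence the two sets coincide, and since $A(R_0)+B(R_0)\geq 1$, the induction hypothesis produces a unique minimum that I take as $m^{a,b,y}$. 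The B-plausible case is symmetric.

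For the second claim, the ``if'' direction is the base case. Conversely, $y\in G^{a,b,y}=\{x^{\Id}\}$ forces $y=x^{\Id}$, so I must rule out $a\neq 0$ or $b\neq 0$. If $a_i\geq 1$ for some $i<n$, consider the height-one wrap-around rectangle along the row $H_{(i)}$ starting at the $x^{\Id}$-coordinate on $\beta_{i+1}$, passing through $V_{(n)}$, and ending at the $x^{\Id}$-coordinate on $\beta_i$. This rectangle lies in $\rectangles(x^{\tau_i},x^{\Id})$ with $A=e_i\leq a$ and $B=0$, placing $x^{\tau_i}\neq x^{\Id}$ in $G^{a,b,x^{\Id}}$ and contradicting $m^{a,b,x^{\Id}}=x^{\Id}$. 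A symmetric vertically-wrapping rectangle handles $b\neq 0$. The main obstacle was Lemma~\ref{lem:main-technical-poset-lemma}; once it is in hand, the remaining arguments reduce to bookkeeping and the explicit rectangle construction above.
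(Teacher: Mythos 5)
Your proof is correct and follows essentially the same route as the paper: induction on the weight of $(a,b)$, splitting on plausibility, reducing via the distinguished witness $R_0$ using Lemmas~\ref{lem:easy-technical-poset-lemma} and~\ref{lem:main-technical-poset-lemma}. The only (harmless) variations are cosmetic — you make the set equality $G^{a,b,y}=G^{a-A(R_0),b-B(R_0),z}$ explicit rather than just equating minima, and for the ``only if'' direction you exhibit a single wrap-around rectangle where the paper decomposes the periodic domain in $\pdomains(y,y)$ into rectangles; both arguments are valid.
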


\begin{proof}
We prove this by induction on $(a,b)$, viewed as an element of the
poset $\NN^{2n-2}$ under the product partial order. For the base case,
we have $G^{0,0,y}=[y,x^{\Id}]$, and so it has a unique minimum
$m^{0,0,y}\defeq y$.

Now consider the case $(a,b)\neq (0,0)$. For the first part, if
$(a,b,y)$ is neither A-plausible nor B-plausible, then it follows from
Lemma~\ref{lem:easy-technical-poset-lemma} that $m^{a,b,y}$ exists and
equals $y$. On the other hand, if $(a,b,y)$ is A-plausible
(respectively, B-plausible), and $R_0\in\rectangles(z,y)$ is the
unique A-witness (respectively B-witness) that minimizes
$(\omega,\tau)$, then it follows from
Lemma~\ref{lem:main-technical-poset-lemma} (using induction on
$(a,b)$) that $m^{a,b,y}$ exists and equals $m^{a-A(R_0),b,z}$
(respectively, $m^{a,b-B(R_0),z}$).

For $(a,b)\neq (0,0)$, the second part follows from the first
part. Let $D\in\pdomains(y,y)$ with $(A(D),B(D))= (a,b)$, and consider
some decomposition of $D$ into rectangles:
\[
D=R_1*R_2*\dots*R_n\qquad R_1\in\rectangles(y=w_0,w_1),R_2\in\rectangles(w_1,w_2),\dots, R_n\in\rectangles(w_{n-1},w_n=y).
\]
Clearly, $w_i\in G^{a,b,y}$ for all $i$ because of the domain
$R_{i+1}*\dots*R_n\in\pdomains(w_i,y)$. Since $(a,b)\neq (0,0)$, $D$
is non-trivial, and therefore, there is at least one rectangle, and
consequently, the set $\{w_0,\dots,w_n\}$ contains at least two
elements. Therefore, $G^{a,b,y}$ is not the one-element set
$\{x^{\Id}\}$.
\end{proof}

We are now ready to prove Proposition~\ref{prop:pos-domain-cx-homology}.
\begin{proof}[Proof of Proposition~\ref{prop:pos-domain-cx-homology}]
  The idea of the proof is to construct a sequence of filtrations on the
  chain complex, and to prove that various associated graded complexes are
  acyclic.

  For domain $D\in \CD_*$, let $(A(D),B(D))$ be its filtration grading
  in the product partial order on $\NN^{2n-2}$. It is clear that the
  differential either preserves $(A,B)$ or lowers it. For
  $(a,b)\in\NN^{n-1}\times\NN^{n-1}$, let $\CD^{a,b}_*$ be the
  associated graded complex in filtration grading $(a,b)$. We will
  prove that $\CD^{a,b}_*$ is acyclic if $(a,b)\neq (0,0)$, and
  $\CD_*^{0,0}$ has homology $\ZZ$ generated by $\const{x^{\Id}}$.

  Now put a new filtration grading on $\CD^{a,b}_*$ as follows. For
  any domain $D\in\pdomains(x,y)$ with $(A(D),B(D))=(a,b)$, define its
  filtration grading to be $y$, viewed as an element of the poset from
  Section \ref{sec:generator-poset}. The differential $\delta$ on
  \emph{the associated graded complex} $\CD^{a,b}_*$ either preserves
  $y$ or increases it. Now let $\CD^{a,b,y}_*$ be associated graded
  complex consisting of only those domains that end at $y$. Now, it
  is enough to show that $\CD^{a,b,y}_*$ is acyclic, unless $a=b=0$
  and $y=x^{\Id}$. When $a=b=0$
  and $y=x^{\Id}$, the homology is clearly $\ZZ$, 
  generated by the trivial domain $\const{x^{\Id}}$.

  The complex $\CD^{a,b,y}_*$ is generated by domains
  $D\in\pdomains(x,y)$ with $(A(D),B(D))=(a,b)$. Note, if there is
  such a domain, then $x\in G^{a,b,y}$ by definition, and conversely,
  for any $x\in G^{a,b,y}$, there is a unique such positive domain
  $D$. Note that $G^{a,b,y}=[m^{a,b,y},x^{\Id}]$ by
  Lemma~\ref{lem:g-poset-has-minimum}.  Therefore, the complex
  $\CD^{a,b,y}_*$ is isomorphic to the following complex (which
  resembles the grid complex from
  Section~\ref{sec:grid-complex-background}). It is generated by the
  elements of $[m^{a,b,y},x^{\Id}]$, and the differential on a
  generator is given by
  \[
  \diff(x)=\sum_{\substack{m^{a,b,y}\leq z< x\\R\in\rectangles(x,z)\\A(R)=B(R)=0}}s(R)z.
  \]
  
  If in the above formula we have $m^{a,b,y}= x^\sigma$, $x=x^\theta$ and $z=x^\eta$, for some permutations $\sigma,\theta,\eta$, then the
  condition
  \[
  m^{a,b,y}\leq z< x\text{ and }\exists R\in\rectangles(x,z)\text{ with }A(R)=B(R)=0
  \]
  is equivalent to
  \[
  \sigma\geq \eta>\theta\text{ and }|\eta|=|\theta|+1\text{ and }R=(-D_\theta)*D_\eta.
  \]
  Therefore, the complex $\CD^{a,b,y}_*$ is isomorphic to the following. It is
  generated by permutations in $[\Id,\sigma]$, and the differential on
  a generator is given by
  \[
  \diff(\theta)=\sum_{\substack{\sigma\geq \eta>\theta\\|\eta|=|\theta|+1}}s((-D_\theta)*D_\eta)\eta.
  \]

Now fix some reduced word $\sigma_1\sigma_2\cdots\sigma_k$ for
  $\sigma$. If $(a,b)\neq (0,0)$, then $m^{a,b,y}\neq x^{\Id}$ (once
  again, using Lemma~\ref{lem:g-poset-has-minimum}), and hence
  $k>0$. Let $\sigma_k$ be the transposition $\tau_p=(p,p+1)$. Define a grading
  on permutations by declaring its value on $\theta$ to be
  $\card{\theta}-1$ if $\theta$ has a reduced word ending in
  $\tau_p$, and $\card{\theta}$ otherwise. Since the differential
  increases the length $\card{\cdot}$ by one, this defines a
  filtration grading on above complex.

  We claim that the associated graded complex is a direct sum of
  two-generator acyclic complexes, and hence is acyclic. If $\eta$ and
  $\theta$ are in the same filtration grading and $\eta$ appears in
  $\diff(\theta)$, then the filtration grading must be
  $\card{\eta}-1=\card{\theta}$. Therefore, $\eta$ has a reduced word,
  say $\word$ of length $\ell$, ending in $\tau_p$; since
  $\theta<\eta$ with $\card{\theta}=\card{\eta}-1$, $\theta$ has a
  reduced word $\word'$ of length $\ell-1$ which is a sub-word of
  $\word$. But since $\theta$ does not have any reduced word ending
  in $\tau_p$, $\word'$ must be obtained from $\word$ by deleting
  $\tau_p$ from the end. That is, $\eta=\theta\tau_p$, and there is
  a (width-one) rectangle from $x^\theta$ to $x^\eta$.

  On the other hand, if $\theta$ does not have a reduced word ending
  in $\tau_p$, and $\theta\leq \sigma$, consider some reduced word
  $\word$ for $\theta$ that is a sub-word of
  $\sigma_1\sigma_2\cdots\sigma_k$, and hence a sub-word of
  $\sigma_1\sigma_2\cdots\sigma_{k-1}$. By
  Item~(\ref{item:add-to-end-is-reduced}), $\word\tau_p$ is a
  reduced word for $\theta\tau_p$; since it is a sub-word of
  $\sigma_1\sigma_2\cdots\sigma_k$, $\theta\tau_p\leq\sigma$. Similarly, if
  $\theta$ has a reduced word ending in $\tau_p$, by
  Item~(\ref{item:remove-from-end-is-reduced}), removing $\tau_p$
  from the end produces a reduced word for $\theta\tau_p$, and hence
  $\theta\tau_p<\theta$; so if $\theta\leq\sigma$, $\theta\tau_p\leq\sigma$
  as well. In either case, if $\theta\leq\sigma$,
  $\theta\tau_p\leq\sigma$. Therefore, $\theta$ and $\theta\tau_p$ span an
  acyclic summand of the associated graded complex. Therefore, the
  associated graded complex is acyclic, and this concludes the proof.
\end{proof}

\section{The complex of positive domains with partitions}
\label{sec:CDP}

\subsection{Ordered partitions}
\label{sec:part}
For $N \geq 0$, denote by $\Part(N)$ the set of ordered partitions of $N$ as sums of positive integers. Thus, an element $\lambda \in \Part(N)$ is of the form
\[
  \lambda = (\lambda_1, \dots, \lambda_m), \ m \geq 0,  \ \sum \lambda_j = N.
\]
We denote by $\ell(\lambda)=m$ the {\em length} of the partition. The quantity $N-\ell(\lambda)$ is called the {\em co-length}.

The number of ordered partitions of $N$ is $2^{N-1}$ for $N \geq 1$, and $1$ for $N=0$. Indeed, 
to each $\lambda \in \Part(N)$ we can uniquely associate an $(N-1)$-tuple
\begin{equation}
\label{eq:epsilonlambda}
 \epsilon(\lambda) = (\epsilon_1(\lambda), \dots, \epsilon_{N-1}(\lambda)) \in \{0,1\}^{N-1}
 \end{equation}
as follows: Consider $N$ objects (represented by bullets) in a row, with the first $\lambda_1$ in the first partition class, the next $\lambda_2$ in the second class, etc. We place a $0$ between objects in the same class, and a $1$ between objects in a different class. For example, the partition $2+3+1$ corresponds to the string $01001$:
\[
  ( \bullet \ 0 \  \bullet ) \ 1 \ ( \bullet \ 0 \  \bullet \ 0 \  \bullet)  \ 1 \ ( \bullet)
\]

For $\lambda= (\lambda_1, \dots, \lambda_m) \in \Part(N)$ and $\lambda'= (\lambda'_1, \dots, \lambda'_{m'}) \in \Part(N')$, we define their {\em concatenation}
\begin{equation}
\label{eq:concatenate}
 \lambda * \lambda' = (\lambda_1, \dots, \lambda_m, \lambda'_1, \dots, \lambda'_{m'}) \in \Part (N+N').
 \end{equation}

For $\lambda, \lambda' \in \Part(N)$, we write $\lambda \geq \lambda'$ if $\lambda$ is a {\em refinement} of $\lambda'=(\lambda'_1, \dots, \lambda'_m)$, that is, if there are partitions of each $\lambda'_j$ such that their concatenation gives $\lambda$. We have
\[
  \lambda \geq \lambda' \iff \epsilon(\lambda) \geq \epsilon(\lambda'),
\]
where on the right hand side we used the product partial order on $\{0,1\}^{N-1}$.

\begin{definition}
\label{def:EC}
If $\lambda, \lambda' \in \Part(N)$ are such that $\lambda \geq \lambda'$, we say that $\lambda$ is {\em finer} than $\lambda'$, and $\lambda'$ is {\em coarser} than $\lambda$. If $\lambda \geq \lambda'$ and $\ell(\lambda') = \ell(\lambda)- 1$, we say that $\lambda'$ is an {\em elementary coarsening} of $\lambda$. We denote by $\EC(\lambda)$ the set of elementary coarsenings of $\lambda$.
\end{definition}

If $\lambda'=(\lambda'_1, \dots, \lambda'_m) \in \EC(\lambda)$, then there is an index $k$ and $\lambda_k^1, \lambda_k^2 \geq 1$ such that
\[
  \lambda = (\lambda'_1, \dots, \lambda'_{k-1}, \lambda_k^1, \lambda_k^2, \lambda'_{k+1}, \dots, \lambda'_m), \ \ \lambda_k^1+ \lambda_k^2 = \lambda'_k.
\]
We define the {\em sign} of the elementary coarsening to be
\[
  s(\lambda, \lambda') = (-1)^{m+1-k}=(-1)^{\ell(\lambda)-k}.
\]
Alternatively, note that there is a unique $i \in \{1, \dots, N-1\}$ such that $\epsilon_i(\lambda) = 1$ and $\epsilon_i(\lambda')=0$; and for all $j \neq i$, we have $\epsilon_j(\lambda)=\epsilon_j(\lambda').$ We have
\[
  s(\lambda, \lambda') = (-1)^{1+\epsilon_{j+1}(\lambda) + \dots + \epsilon_{N-1}(\lambda)}.
\]

 \begin{definition}
 \label{def:UE}
 If $\lambda = (\lambda_1, \dots, \lambda_m) \in \Part(N)$, a {\em unit enlargement} of $\lambda$ is a partition $\lambda' \in \Part(N+1)$ of the form
 \[
   \lambda' = (\lambda_1, \dots, \lambda_{k-1}, 1, \lambda_{k}, \dots, \lambda_m)
 \]
 for some $k\in \{1, \dots, m\}$. The sign of the unit enlargement is defined to be
 \[
   s(\lambda, \lambda') = (-1)^{m+1-k}=(-1)^{\ell(\lambda')-k}.
 \]
 The set of unit enlargements of $\lambda$ is denoted $\UE(\lambda)$. 
 \end{definition}
 
 \begin{definition}
 \label{def:IRFR}
  If $\lambda = (\lambda_1, \dots, \lambda_m) \in \Part(N)$, the {\em initial reduction} of $\lambda$ is the partition 
  \[
    \lambda^- \defeq  (\lambda_2, \dots, \lambda_m) \in \Part(N - \lambda_1).
  \]
 The {\em final reduction} of $\lambda$ is the partition
 \[
   \lambda^+ \defeq  (\lambda_1, \dots, \lambda_{m-1}) \in \Part(N - \lambda_m).
 \]
The reductions are not well-defined when $N = 0$ (and $\lambda$ is the empty partition). We define the sets
\[
  \IR(\lambda) \defeq \begin{cases}
    \{ \lambda^-\} & \text{if } N > 0,\\
    \emptyset  & \text{if } N=0;
  \end{cases}
  \ \ \ \ \text{and} \ \ \ \
  \FR(\lambda) \defeq \begin{cases}
    \{ \lambda^+\} & \text{if } N > 0,\\
    \emptyset  & \text{if } N=0.
  \end{cases}
\]
\end{definition}
 
Let us now extend the order on partitions to the case where their sum may be different.
 \begin{definition}
 \label{def:orderpart}
 Suppose $\lambda\in \Part(N)$ and $\lambda' \in \Part(N')$ for $N' \geq N$. We write $\lambda \geq \lambda'$ if there is a partition $\eta \in \Part(N')$ such that $\eta \geq \lambda'$ and $\eta$ is obtained from $\lambda$ by $N'-N$ unit enlargements.
 \end{definition}
 
 For example, we have $\lambda=(2, 1) > \eta=(1, 2, 1) >  \lambda'=(1, 3).$

\begin{remark}
If $\lambda\in \Part(N)$ and $\lambda' \in \Part(N')$ satisfy $\lambda \geq \lambda'$, we must have the inequality
\begin{equation}
\label{eq:colength}
N -\ell(\lambda) \leq N'-\ell(\lambda').
\end{equation}
In other words, the co-length of partitions decreases with respect increasing partitons.
\end{remark}

\subsection{The new complex} \label{sec:new}
We now define a slightly more complicated complex,
$\CDP_*=\CDP_*(\Grid)$, associated to a grid diagram $\Grid$ and a
sign assignment $s$. We will call it the {\em complex of positive
  domains with partitions.} As an Abelian group, $\CDP_*$ is feely
generated by triples $(D, \vN, \vlambda)$ with
\[
D\in\pdomains(x,y), \vN=(N_2,\dots,N_{n})\in\NN^{n-1},
\vlambda=(\lambda_2, \dots, \lambda_{n}),  \lambda_j= (\lambda_{j,1}, \dots, \lambda_{j, m_j}) \in \Part(N_j).
\]
The intuition is that a triple of this type will be associated to a
configuration consisting of the following:
\begin{itemize}
\item A pseudo-holomorphic strip in
  $\Sym^n(T^2)$ with domain $D$. This consists of a map
  \[
    u\from(\RR\times[0,1],\RR\times0,\RR\times 1)\to(\Sym^n(T^2),\alpha_1\times\dots\times\alpha_n,\beta_1\times\dots\times\beta_n),
  \]
  up to translation in the first $\RR$ factor, which is
  $J$-holomorphic with respect to certain (1-parameter family of)
  almost complex structures on $\Sym^n(T^2)$ with
  $\lim_{s\to-\infty}u(s,t)=x,\lim_{s\to+\infty}u(s,t)=y$ and whose
  underlying $2$-chain on $T^2$ is $D$.
\item Several disk bubbles attached to the strip. There are $N_j$
  bubbles going through the marking $O_j$, and each can have as domain
  either the row $H_j$ or the column $V_j$. If the domain of a bubble
  is $H_j$ (respectively, $V_j$), it is attached to the strip at some
  point $(s,0)$ (respectively, $(s,1)$), and $s$ is called the
  \emph{height} of the bubble (which is only well-defined up to an
  overall translation). We do not distinguish between bubbles with
  domain $H_j$ and bubbles with domain $V_j$, and just record the
  total number $N_j$ of such bubbles.
\item Partitioning of the bubbles according to their heights. These
  bubbles are allowed to occur at the same height, and
  $\lambda_j=(\lambda_{j,1},\dots,\lambda_{j,m_j})$ is the
  partitioning of these $N_j$ bubbles so that the bubbles in the same
  partition class are attached to the strip at the same height.
  Further, the ordering of the partition classes corresponds to the
  ordering of the heights---the first $\lambda_{j,1}$ bubbles have the
  smallest height (closest to $x$), and the last $\lambda_{j,m_j}$
  bubbles have the largest height (closest to $y$).  For $j\neq j'$,
  we do not record the relative heights of the $N_j$ bubbles (with
  domain $H_j$ or $V_j$) and the $N_{j'}$ bubbles (with domain
  $H_{j'}$ or $V_{j'}$).
\end{itemize}
See Figure~\ref{fig:bubble-intuition}.

\begin{figure}
  \centering
  \begin{tikzpicture}

    \fill[black!30] (0,0) rectangle ++(6,1);
    \draw[thick] (0,0)--++(6,0) (0,1)--++(6,0);
    \node at (3,0.5) {\small $D$};
    \node[anchor=east] at (0,0.5) {\small $x$};
    \node[anchor=west] at (6,0.5) {\small $y$};

    \foreach\i/\t in {1/2,1.7/3,5.5/2}{
      \draw[thick,fill=black!30] (\i,0) to[out=-135,in=180] ++(0,-1) to[out=0,in=-45] ++(0,1);
      \node at ($(\i,0)+(0,-0.5)$) {\small $H_\t$};
    }
    \foreach\i/\t in {1/2,3.5/2}{
      \draw[thick,fill=black!30] (\i,1) to[out=135,in=180] ++(0,1) to[out=0,in=45] ++(0,-1);
      \node at ($(\i,1)+(0,0.5)$) {\small $V_\t$};
    }
  
    \foreach \i in {1,1.7,3.5,5.5}{\draw[dashed] (\i,0)--++(0,1);}
    
    \begin{scope}[xshift=8cm]

      \fill[black!30] (0,0) rectangle ++(6,1);
      \draw[thick] (0,0)--++(6,0) (0,1)--++(6,0);
      \node at (3,0.5) {\small $D$};
      \node[anchor=east] at (0,0.5) {\small $x$};
      \node[anchor=west] at (6,0.5) {\small $y$};

      \foreach\rot in {45,-45}{
        \begin{scope}[xshift=2cm,rotate=\rot]
          \draw[thick,fill=black!30] (0,0) to[out=-135,in=180] ++(0,-1) to[out=0,in=-45] ++(0,1);
          \node at (0,-0.5) {\small $H_2$};
        \end{scope}
      }
      
      \foreach\i/\t in {1/3,4/2,5/2}{
        \draw[thick,fill=black!30] (\i,1) to[out=135,in=180] ++(0,1) to[out=0,in=45] ++(0,-1);
        \node at ($(\i,1)+(0,0.5)$) {\small $V_\t$};
      }

      \foreach \i in {1,2,4,5}{\draw[dashed] (\i,0)--++(0,1);}
      
    \end{scope}
    
  \end{tikzpicture}
  \caption{Two pseudo-holomorphic strips with bubbles. We do not distinguish between bubbles with domain $H_j$ or $V_j$, nor do we record the relative heights of bubbles that pass through different $O_j$ markings; so both pictures correspond to the triple $(D,(N_2=4,N_3=1),(\lambda_2=(2,1,1),\lambda_3=(1)))$.}\label{fig:bubble-intuition}
\end{figure}

For future reference, set
\begin{align*}
  |\vN| &= N_2 + \dots + N_{n}\\
  |\vlambda| &= \ell(\lambda_2)+ \dots+ \ell(\lambda_{n}).
\end{align*}
and let $e_2,\dots,e_n$ denote the standard unit vectors in $\NN^{n-1}$.

The grading on $\CDP_*$ is given by
\begin{equation}\label{eq:CDP-grading}
\operatorname{gr}(D, \vN, \vlambda)= \mu(D) + |\vlambda|
\end{equation}
The differential $\delta\from \CDP_k \to \CDP_{k-1}$ has four kinds of terms: 
\begin{itemize}
\item {\bf Type I} terms, given by taking out a rectangle from the domain, just as in the complex $\CD_*$;
\item {\bf Type II} terms, given by boundary degenerations, i.e., taking out a row $H_j$ or a column $V_j$ from the domain $D$, and at the same time increasing $N_j$ by one, and changing $\lambda_j$ by a unit enlargement; 
\item {\bf Type III} terms, given by an elementary coarsening of one of the partitions $\lambda_j$. This corresponds to two groups of bubbles at two different heights reaching the same height.
\item {\bf Type IV} terms, given by taking the initial or final reduction of one of the partitions $\lambda_j$. This corresponds to removing a boundary degeneration, in the limit as its height goes to $-\infty$ (for initial reductions) or $+\infty$ (for final reductions). 
\end{itemize}

Precisely, we can write 
\begin{equation}
\label{eq:delta123}
 \delta = \dI + \dII + \dIII + \dIV
\end{equation}
such that, for $D \in \pdomains(x, y)$, we have  
\begin{align}
\label{eq:delta1}
\dI(D, \vN , \vlambda)& =\sum_{\substack{(R,E)\in\rectangles(x,w)\times\pdomains(w,y)\\R*E=D}}\!\!\!\!\!\!s(R)(E, \vN, \vlambda)+(-1)^{\mu(D)}\!\!\!\!\!\!\sum_{\substack{(E,R)\in\pdomains(x,w)\times\rectangles(w,y)\\E*R=D}}\!\!\!\!\!\!s(R)(E, \vN, \vlambda).\\
  \dII(D, \vN , \vlambda)&= (-1)^{\mu(D)+1} \sum_{j=1}^{n} (-1)^{|(\lambda_{j+1},\dots,\lambda_{n})|} \sum_{\substack{E\in\pdomains(x,y)\\E*H_j =D}} \! \sum_{\lambda'_j \in \UE(\lambda_j)} \!\!\!\!\! s(\lambda_j, \lambda'_j)(E, \vN+\ve_j, \vlambda') \label{eq:delta2}\\
  &\quad+ (-1)^{\mu(D)} \sum_{j=1}^{n} (-1)^{|(\lambda_{j+1},\dots,\lambda_{n})|} \sum_{\substack{E\in\pdomains(x,y)\\E*V_j =D}} \! \sum_{\lambda'_j \in \UE(\lambda_j)} \!\!\!\!\! s(\lambda_j, \lambda'_j)(E, \vN+\ve_j, \vlambda').\notag\\
\dIII(D, \vN , \vlambda) &=  (-1)^{\mu(D)+1}
     \sum_{j=1}^{n} (-1)^{|(\lambda_{j+1},\dots,\lambda_{n})|} \sum_{ \lambda'_j \in \EC(\lambda_j)}  s(\lambda_j, \lambda'_j) (D, \vN, \vlambda').\label{eq:delta3}\\
     \dIV(D, \vN , \vlambda) &=  (-1)^{\mu(D)+1}
     \sum_{j=1}^{n} (-1)^{|(\lambda_j,\dots,\lambda_n)|} \sum_{\lambda'_j \in \IR(\lambda_j)} (D, \vN - \lambda_{j, 1}\ve_j, \vlambda') \label{eq:delta4} \\
     &\quad + (-1)^{\mu(D)+1}
     \sum_{j=1}^{n} (-1)^{|(\lambda_{j+1},\dots,\lambda_{n})|}  \sum_{\lambda'_j \in \FR(\lambda_j)} (D, \vN - \lambda_{j, m_j}\ve_j, \vlambda').\notag
  \end{align}
In the expressions \eqref{eq:delta2}, \eqref{eq:delta3} and \eqref{eq:delta4} we used the notation
\[
  \vlambda' = (\lambda_1, \dots, \lambda_{j-1}, \lambda_j', \lambda_{j+1}, \dots, \lambda_{n}).
\]

\begin{lemma}
The complex $\CDP_*$ defined above is indeed a chain complex, i.e., $\delta^2=0$.
\end{lemma}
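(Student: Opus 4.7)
The plan is to expand $\delta^2 = (\dI + \dII + \dIII + \dIV)^2$ into its $16$ bilinear pieces and group the cancellations according to the underlying geometric picture: each contribution to $\delta^2$ should correspond to a codimension-$2$ degeneration in the moduli space of strips with disk bubbles, and each such degeneration bounds exactly two codimension-$1$ strata with opposite induced signs.

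I would first handle $\dI^2 = 0$. Since $\dI$ preserves $(\vN, \vlambda)$, this reduces directly to the calculation in Lemma~\ref{lemma:d2} for the complex $\CD_*$: generic positive index-$2$ subdomains have exactly two rectangle decompositions with opposite contributions via \eqref{eq:sRS}, while annular index-$2$ subdomains cancel via \eqref{eq:RSh} and \eqref{eq:RSv}, because a horizontal (resp.\ vertical) annulus removed from the ``front'' of $D$ cancels the same annulus removed from the ``back''.

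Next I would verify the three mixed identities $\dI\dII + \dII\dI = 0$, $\dI\dIII + \dIII\dI = 0$, and $\dI\dIV + \dIV\dI = 0$. Since $\dIII$ and $\dIV$ leave $D$ alone, and $\dII$ only subtracts a periodic domain from $D$, each bubble operator commutes with rectangle removal on the level of data whenever both orders can be performed. The sign flip needed for cancellation comes from the fact that $\mu$ drops by $1$ under $\dI$ and by $2$ under $\dII$, while $\dIII,\dIV$ preserve $\mu$: the prefactor $(-1)^{\mu(D)}$ of a bubble operation applied to $D-R$ becomes $(-1)^{\mu(D)-1}$, exactly negating the prefactor of the bubble operation applied first. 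A small case check is needed for $\dI\dII$: if a rectangle $R$ lies inside an annulus $H_j$ or $V_j$, neither ordering produces the corresponding term, so there is nothing to cancel.

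The main obstacle, which will require the bulk of the work, is the pure bubble identity $(\dII + \dIII + \dIV)^2 = 0$. For pairs of operations at different indices $j \ne j'$, commuting the two operations introduces precisely the extra sign needed to cancel the discrepancy arising from the $(-1)^{\ell(\lambda_1)+\cdots+\ell(\lambda_{j-1})}$ prefactors, since an operation at the smaller index can change the length of the intervening partition. At the same index $j$, I would group the cancellations as follows: (i) $\dII^2 = 0$, since two insertions ending at positions $p_A < p_B$ in the final partition can be reached by exactly two orderings with opposite signs; (ii) $\dIII^2 = 0$ as in the normalized bar complex, using $s(\lambda, \lambda') = (-1)^k$; (iii) two successive initial (resp.\ final) reductions in $\dIV^2$ do not vanish on their own, but cancel against the piece of $\dIII\dIV + \dIV\dIII$ in which a coarsening of the first (resp.\ last) two parts is immediately followed by the corresponding reduction of the merged part; (iv) the remaining cross terms $\dII\dIII + \dIII\dII$ and $\dII\dIV + \dIV\dII$ are handled by case analysis on whether the newly inserted $1$ participates in the subsequent coarsening or is itself consumed by the reduction. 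A convenient organizational device for (i)--(iv) is to encode each $\lambda_j$ by its binary string $\epsilon(\lambda_j) \in \{0,1\}^{N_j-1}$ from \eqref{eq:epsilonlambda}: unit enlargements insert a new coordinate into the string, elementary coarsenings flip a $1$ to $0$, and initial/final reductions truncate the string; the same-index identity then becomes a purely combinatorial statement about a boundary operator on binary strings, verifiable by direct inspection.
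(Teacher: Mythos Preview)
Your proposal is correct and follows essentially the same approach as the paper: both expand $\delta^2$ into its sixteen pieces, use Lemma~\ref{lemma:d2} for $(\dI)^2=0$, exploit the $(-1)^{\mu(D)}$ prefactor for the mixed $\dI$ terms, and recognize that the key nontrivial identity among the bubble operators is $(\dIV)^2 + \dIII\dIV + \dIV\dIII = 0$ rather than pairwise anticommutation. Your binary-string bookkeeping via $\epsilon(\lambda_j)$ is a clean organizational device the paper does not deploy here (though it appears later in the proof of Proposition~\ref{prop:CDPhomology}); otherwise the two arguments coincide.
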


\begin{proof}
We claim that each of $\dI$, $\dII$ and $\dIII$ squares to zero, and that any two of these differentials anti-commute with each other. We also claim that $\dIV$ anti-commutes with $\dI$ and $\dII$, and that we have
\begin{equation}
\label{eq:d4}
(\dIV)^2 + \dIII \dIV + \dIV \dIII=0. 
\end{equation}
Together, these claims will show that $\delta^2=0$.

Let us start with the differential $\dI$. This gave the complex $\CD_*$, and fact that $(\dI)^2 = 0$ was established in Lemma~\ref{lemma:d2}.

To see that $(\dII)^2=0$, note that in the expression $(\dII)^2(D, \vN, \vlambda)$ we encounter  terms of two kinds. Some are of the form 
\begin{equation}
\label{eq:termsone}
\pm (E, \vN + \ve_i + \ve_j, \vlambda''), \ \ i > j
\end{equation} such that $E$ is obtained from $D$ by deleting a (vertical or horizontal) annulus going through $O_i$ and another annulus through $O_j$. Also, $\vlambda''=(\lambda''_1, \dots, \lambda''_{n})$ is obtained from $\vlambda=(\lambda_1, \dots, \lambda_{n})$ by doing unit enlargements to $\lambda_i$ and $\lambda_j$. The terms of the form \eqref{eq:termsone} come in pairs, corresponding to the order in which we delete the two annuli (and do the respective unit enlargements). The presence of the sign $(-1)^{|(\lambda_{j+1}, \dots,  \lambda_{n})|} $ guarantees that these terms cancel in pairs.

Second, we also have terms of the form
\[
  \pm (E, \vN + 2\ve_j, \vlambda'')
\]
where $E$ is obtained from $D$ by deleting two annuli through the same $O_j$, and $\vlambda''$ is obtained from $\vlambda$ by doing two unit enlargements to the same partition $\lambda_j$. Again, these terms cancel in pairs, due to presence of the signs $s(\lambda_j, \lambda'_j)$ and $s(\lambda'_j, \lambda''_j)$, where $\lambda'_j$ is the intermediate partition. This completes the proof that $(\dII)^2=0$.

The proof that $(\dIII)^2=0$ is similar, with elementary coarsenings
instead of unit enlargements. Once again, the signs
$(-1)^{|(\lambda_{j+1}, \dots,  \lambda_{n})|} $ and
$s(\lambda_j, \lambda'_j)$ ensure that the resulting terms cancel out
in pairs.

The same kind of argument can be used to show that
\[
  (\dII \dIII + \dIII \dII)(D, \vN , \vlambda)  = 0.
\]

Next, let us check that
\[
  (\dI \dII + \dII \dI)(D, \vN , \vlambda) = 0.
\]
Here we obtain terms of the form $\pm (E, \vN + \ve_j, \vlambda')$, where $E$ is obtained from $D$ by deleting a rectangle $R$ and an annulus $H_j$ or $V_j$, and $\vlambda'$ is obtained from $\vlambda$ by doing a unit enlargement to $\vlambda_j$. The terms come in pairs, corresponding to which of the operations $\dI$ and $\dII$ we do first. To see that they cancel out, observe that they get the same sign contributions from the factor $(-1)^{\mu(D)}$ in \eqref{eq:delta1}; the same goes for the factors $(-1)^{|(\lambda_{j+1}, \dots,  \lambda_{n})|} $ and $s(\lambda_j, \lambda'_j)$ in \eqref{eq:delta2}. However, the contributions due to the factor $(-1)^{\mu(D)}$ in \eqref{eq:delta2} differ: for one term we get $(-1)^{\mu(D)}$, and for the other $(-1)^{\mu(E)}$, where $\mu(D) = \mu(E) +1$.

The proofs that
\begin{align*}
  (\dI \dIII + \dIII \dI)(D, \vN , \vlambda) &= 0,\\
  (\dI \dIV + \dIV \dI)(D, \vN , \vlambda) &= 0
\end{align*}
are similar. The cancellations are due to the signs $(-1)^{\mu(D)}$ in \eqref{eq:delta3} and \eqref{eq:delta4}.

Next, we check that 
\[
  (\dII \dIV + \dIV \dII)(D, \vN , \vlambda)  = 0.
\]
On the left hand side we obtain terms of the form $\pm (E, \vN + \ve_j -\lambda_{i, 1} \ve_i, \vlambda'')$ (from a unit enlargement combined with an initial reduction, in either order) and $\pm (E, \vN + \ve_j -\lambda_{i, m_i} \ve_i, \vlambda'')$ (from a unit enlargement combined with a final reduction, in either order). These terms cancel in pairs as follows:
\begin{itemize}
\item When $i \neq j$, the term from a unit enlargement followed by a reduction cancels with the one where the operations are done in the opposite order. The signs of the terms differ due to the presence of the $(-1)^{|(\lambda_{j+1}, \dots,  \lambda_{n})|}$ in \eqref{eq:delta2} and \eqref{eq:delta4};

\item When $i=j$, the term from a unit enlargement in position $k$,
  which is not the first ($k \geq 2$), followed by an initial
  reduction, cancels with the one from the initial reduction followed
  by a unit enlargement in position $k-1$. This is because of the
  extra sign $(-1)^{\ell(\lambda_j)}$ in the initial reduction term
   in \eqref{eq:delta4};

 \item When $i=j$, the term from a unit enlargement in position $k$,
   which is not the last ($k < m_j$), followed by a final reduction,
   cancels with the one from the final reduction followed by the same
   unit enlargement in position $k$. This is because of the sign
   $s(\lambda_j, \lambda_j')$ in \eqref{eq:delta2};

 \item When $i=j$, the term from a unit enlargement in the first
   position, followed by an initial reduction, cancels with the one
   from a unit enlargement in the last position, followed by a final
   reduction. Indeed, the latter term is $ (D, \vN, \vlambda)$ and the
   former term is
   $(-1)^{m_j} (-1)^{m_j + 1} (D, \vN, \vlambda) = -(D, \vN,
   \vlambda).$
\end{itemize}

Finally, we prove Equation~\eqref{eq:d4}. In the expression
\[
  ((\dIV)^2 + \dIII \dIV + \dIV \dIII)(D, \vN, \vlambda)
\]
we encounter terms of the form $(D, \vN, \vlambda'')$, where $\vlambda''$ is obtained from $\vlambda$ either by a combination of an elementary coarsening and a reduction, or by two reductions. Most of the time, these terms cancel each other in pairs corresponding to reversing the order of the two operations. There are, however, two special cases: 
\begin{itemize}
\item The term obtained by doing an elementary coarsening by combining the first two pieces of the partition $\lambda_j$, followed by an initial reduction of that partition, cancels with the term obtained by doing two initial reductions of $\lambda_j$;

\item Similarly, the term obtained by doing an elementary coarsening by combining the last two pieces of the partition $\lambda_j$, followed by a final reduction of that partition, cancels with the term obtained by doing two final reductions of $\lambda_j$.
\end{itemize}
Checking that the signs of the paired terms differ is a straightforward exercise.
\end{proof}

Recall from Proposition~\ref{prop:pos-domain-cx-homology} that the simpler complex $\CD_*$ has  homology generated by the constant domain $\const{\xid}$, for the generator $\xid$. In fact, the span $\langle \const{\xid} \rangle \cong \ZZ$ is a subcomplex of $\CD_*$, and its quotient complex is acyclic. We will now establish a similar result for $\CDP_*$.

\begin{definition}
We denote by $\CDPd \subset \CDP_*$ the subcomplex generated by triples $(c_{\xid}, \vN, \vlambda)$ with $\vN$ made only of $0$'s and $1$'s. We let $\CDP'_*$ be the quotient complex $\CDP_*/\CDPd$.  
\end{definition}

\begin{proposition}
\label{prop:CDPhomology}
$(a)$ The complex $\CDP'_*$ is acyclic, and therefore the inclusion of $\CDPd$ in $\CDP_*$ is a quasi-isomorphism.

$(b)$ For a grid diagram $\Grid$ of size $n$, the homology of $\CDPd$ (and hence also of $\CDP_*$) is isomorphic to $\ZZ^{2^{n-1}}$. Its rank in degree $k$ is ${n  \choose k}$.
\end{proposition}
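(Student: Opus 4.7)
For part $(b)$ it suffices to verify that $\CDPd$ is a subcomplex on which $\delta$ vanishes. On a generator $(\const{\xid},\vN,\vlambda)$ with $\vN\in\{0,1\}^n$, the trivial domain $\const{\xid}$ has Maslov index $0$ and so admits no rectangles and no annuli, hence $\dI=\dII=0$. Each $\lambda_j$ is either empty or the one-part partition $(1)$, which admits no elementary coarsening, so $\dIII=0$. Finally, for each $j$ with $N_j=1$ the initial and final reductions of $(1)$ both produce the empty partition, and the two associated signs in \eqref{eq:delta4} differ by $(-1)^{\ell((1))}=-1$, so $\dIV=0$ as well. Consequently $H_*(\CDPd)$ equals its chain group, with one generator per subset $S\subseteq\{1,\dots,n\}$ sitting in homological degree $|S|$; this gives rank $\binom{n}{k}$ in degree $k$ and total rank $2^n$, as claimed, provided $(a)$ establishes $H_*(\CDPd)\cong H_*(\CDP_*)$.

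For part $(a)$, the plan is to use the increasing filtration $F_p\CDP_*=\{(D,\vN,\vlambda):\mu(D)\leq p\}$. It is bounded below (as $\mu\geq 0$) and exhaustive in each homological degree (since $\mu(D)\leq\gr(D,\vN,\vlambda)$), and because $\dI,\dII$ strictly lower $\mu(D)$ while $\dIII,\dIV$ preserve $D$, each $F_p$ is a subcomplex. On the associated graded $E_0$ the surviving differential is $\dIII+\dIV$, which preserves $D$, so
\[
E_0\;\cong\;\bigoplus_D \ZZ\langle D\rangle\otimes K_*,
\]
where $K_*$ is the \emph{bubble complex} generated by the pairs $(\vN,\vlambda)$ with differential $\dIII+\dIV$.

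The crux is to compute $H_*(K_*)$. The Koszul signs $(-1)^{\ell(\lambda_1)+\cdots+\ell(\lambda_{j-1})}$ in \eqref{eq:delta3} and \eqref{eq:delta4} make $K_*\cong K^{(1)}\otimes\cdots\otimes K^{(n)}$, where each $K^{(j)}$ is the single-index complex generated by $(N,\lambda)$ with $N\geq 0$ and $\lambda\in\Part(N)$. To handle $K^{(j)}$, I filter it by $N$: since $\dIII$ preserves $N$ and $\dIV$ strictly lowers it, the associated graded in $N$-degree $k$ is $(\Part(k),\dIII)$; via the $\epsilon$-bijection of \eqref{eq:epsilonlambda} and the corresponding sign formula, this is, up to an overall shift, the Koszul complex $\bigotimes_{i=1}^{k-1}(\ZZ\to\ZZ)$, hence acyclic for $k\geq 2$, while for $k\in\{0,1\}$ it is a single generator in grading $k$ with vanishing differential. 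An induction on $N$ using the long exact sequence of the filtration pair then shows that the inclusion of $K^{(j)}_{\{0,1\}}:=\ZZ\langle(0,())\rangle\oplus\ZZ\langle(1,(1))\rangle$ (on which $\delta=0$) into $K^{(j)}$ is a quasi-isomorphism. Hence $H_*(K^{(j)})\cong\ZZ\oplus\ZZ[1]$, and
\[
H_*(K_*)\;\cong\;\bigoplus_{\vN\in\{0,1\}^n}\ZZ,
\]
placed in degree $|\vN|$ and represented by exactly the cycles $(\const{\xid},\vN,\vlambda)$ spanning $\CDPd$.

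Since $H_*(K_*)$ is free, $E_1\cong\CD_*\otimes H_*(K_*)$ with $d_1$ induced by $\dI\otimes 1$, so Proposition~\ref{prop:pos-domain-cx-homology} yields $E_2\cong\ZZ\langle\const{\xid}\rangle\otimes H_*(K_*)\cong\ZZ^{2^n}$. Higher differentials vanish for bidegree reasons: the surviving classes sit at filtration $\mu(D)=0$, and each $d_r$ for $r\geq 2$ would lower filtration by $r$, landing in $F_{-r}=0$. Thus $E_\infty=E_2$; since the $E_\infty$ generators are represented by the very cycles $(\const{\xid},\vN,\vlambda)$ that span $\CDPd$, the inclusion $\CDPd\hookrightarrow\CDP_*$ induces an isomorphism on homology, proving $\CDP'_*$ is acyclic. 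The main obstacle is the computation of $H_*(K^{(j)})$: correctly identifying the tensor-product decomposition of $K_*$ together with its Koszul signs and then executing the secondary filtration cleanly; once these are in hand, the outer spectral sequence is essentially formal.
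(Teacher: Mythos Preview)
Your proof is correct and takes a genuinely different route from the paper's.

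For part~(b), both arguments are essentially the same: the differential on $\CDPd$ vanishes because Types I--III are empty on trivial domains with $\vN\in\{0,1\}^n$, and the Type IV initial and final reductions cancel.

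For part~(a), the approaches diverge. The paper works directly with the quotient $\CDP'_*$ and peels off the four differential types one at a time, using a sequence of nested filtrations: first by $(A(D),B(D))$ to kill $\dII$, then by $|\vN|$ to kill $\dIV$, then by the endpoint $y$, and finally by $|\ell(\vlambda)|$ to kill $\dIII$. This reduces the problem to the intermediate complexes $\CD_*^{a,b,y}$ appearing inside the proof of Proposition~\ref{prop:pos-domain-cx-homology}, together with the acyclicity of the partition hypercube for $N\geq 2$. Your approach instead filters $\CDP_*$ by $\mu(D)$ once, recognises the associated graded as $\CD_*\otimes K_*$ for a universal ``bubble complex'' $K_*$, and computes $H_*(K_*)$ via the K\"unneth decomposition $K_*\cong\bigotimes_j K^{(j)}$ together with a secondary filtration by $N$ on each factor. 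The $E_1$ page is then handled by invoking Proposition~\ref{prop:pos-domain-cx-homology} as a black box, and degeneration at $E_2$ follows because everything is concentrated in filtration $0$.

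Your route is more streamlined: it cleanly separates the domain contribution from the partition contribution, and it uses Proposition~\ref{prop:pos-domain-cx-homology} as stated rather than reopening its proof. The paper's route has the advantage of being more uniform with how $\CD_*$ itself was handled, and it makes the role of the individual associated graded pieces $\CD_*^{a,b,y}$ more visible. Both arguments rest on the same two core computations: $H_*(\CD_*)\cong\ZZ$ and the acyclicity of the coarsening complex $(\Part(N),\dIII)$ for $N\geq 2$. One small point worth making explicit in your write-up: $K^{(j)}$ is infinitely generated in each degree, so the passage from $H_*(F'_p)\cong H_*(F'_{p-1})$ for all $p\geq 2$ to $H_*(K^{(j)})\cong H_*(F'_1)$ uses that homology commutes with the directed colimit of the exhaustive filtration; you allude to this but it could be stated more clearly.
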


\begin{proof} $(a)$ As in the proof of Proposition~\ref{prop:pos-domain-cx-homology}, we filter the complex $\CDP'_*$ by the quantity
  \[
    (A(D), B(D)) = (a, b) \in \NN^{2n-2}
  \]
  capturing the multiplicities of the domain $D$ in the rightmost
  column and the topmost row. This is a bounded below increasing
  filtration, and in the associated graded, the differential has no
  more Type II terms. Then, note that the quantity $|\vN|$ is kept
  constant by Type I and III terms, and decreased by Type IV
  terms. Thus, we can filter the associated graded complex by $|\vN|$
  (also a bounded below increasing filtration), and in the new
  associated graded, Type IV terms also disappear. Next, again
  following the proof of
  Proposition~\ref{prop:pos-domain-cx-homology}, we filter with
  respect to the endpoint $y$ of the domain $D$. The resulting
  associated graded complex breaks as a direct sum of finite
  dimensional complexes
  \[
    \CDP_*^{a, b, y, \vN}
  \]
  generated by triples $(D, \vN, \vlambda)$ with
  $D \in \pdomains(x, y)$ such that $(A(D), B(D))=(a, b)$. It suffices
  to show that all of these complexes are acyclic.

When $y \neq x^{\Id}$ or $(a,b)\neq (0,0)$, we filter
$\CDP_*^{a, b, y, \vN}$ with respect to the quantity
$|\ell(\vlambda)|$, and get rid of the Type III terms in the
differential. We are left with only Type I terms. The resulting
associated graded is a direct sum of complexes of the form
$\CD_*^{a, b, y}$, which were shown to be acyclic in the proof of
Proposition~\ref{prop:pos-domain-cx-homology}. We deduce that
$\CDP_*^{a, b, y, \vN}$ is acyclic.

So we are only left with case when $y = x^{\Id}$ and
$(a,b)=(0,0)$. Then the domain $D$ has to be the constant domain
$\const{\xid}$, and our complex $\CDP_*^{0,0, x^{\Id}, \vN}$ has only
Type III terms in the differential. Here,
$\vN=(N_2, \dots, N_{n}) \in \NN^{n}$ and, because of how we defined
$\CDP_*'$, we only consider the case when at least one $N_i$ is
$\geq 2$. We find that $\CDP_*^{0,0, x^{\Id}, \vN}$ is the tensor
product of complexes $\CDP_*(\const{\xid}, N_j)$, for $j=2, \dots, n$,
where $\CDP_*(\const{\xid}, N_j)$ is generated by all the partitions
of $N_j$. By the K\"unneth formula, it suffices to show that
$\CDP_*(\const{\xid}, N_j)$ is acyclic when $N_j \geq 2$.

Let us represent the partitions of $N_j$ by sequences
$(\epsilon_1, \dots, \epsilon_{N_j-1})$ as in
\eqref{eq:epsilonlambda}. We see that $\CDP_*(\const{\xid}, N_j)$ is a
hypercube complex, with the differential decreasing one of the
$\epsilon_k$ by $1$. In fact, we can describe
$\CDP_*(\const{\xid}, N_j)$ as the tensor product of $N_j-1$ complexes
of the form $\ZZ \xrightarrow{\cong} \ZZ$, which are acyclic. Thus,
$\CDP_*(\const{\xid}, N_j)$ is acyclic for $N_j \geq 2$, and the
conclusion follows.

$(b)$ Note that the differential on $\CDPd$ only has terms of Type IV,
corresponding to initial or final reductions. We can identify the
generators of $\CDPd$ with sequences
$\vN = (N_2, \dots, N_n) \in \{0, 1\}^{n-1}$.  The terms in the
differential come in pairs, corresponding to an initial and final
reduction that do the same thing: change a value of $N_j$ from $1$ to
$0$. The paired terms come with opposite signs, because
$\ell(\lambda_j)=1$. We get that $\CDPd$ is the tensor product of $(n-1)$
copies of the complex $ \ZZ \xrightarrow{0} \ZZ.$ The calculation of
the homology of $\CDPd$ now follows from the K\"unneth formula.
\end{proof}

\section{$\langle n \rangle$-manifolds}
\label{sec:nmflds}


\subsection{Definitions and examples}
We recall the definition of an $\langle n\rangle$-manifold, following J\"anich \cite{Janich}; see also \cite{Laures} and  \cite[Section 3.1]{LipshitzSarkar}. We will borrow the terminology from \cite[Definiton 3.2]{LLS}.

 We say that a map from a subset $S \subset \RR^k$ to $\RR^n$ is {\em smooth} if it is the restriction of a smooth map defined on an open set containing $S$. In particular, this allows us to define diffeomorphisms between open subsets of $\RR_+^k$. Then, following Cerf \cite{Cerf} and Douady \cite{Douady}, we define a $k$-dimensional {\em manifold with corners} to be a topological space $X$ along with a maximal atlas, where an atlas is a collection of charts $(U,\phi)$, where $U \subseteq X$ is open and $\phi$ is a homeomorphism from $U$ to an open subset of $\RR_+^k$, such that the sets $U$ cover $X$, and, for any two charts $(U, \phi)$ and $(V, \psi)$, the map 
 \[
   \phi \circ \psi^{-1}\from \psi(U \cap V) \to \phi(U \cap V)
 \]
 is a diffeomorphism. 
For $x \in X$, let $c(x)$ denote the number of coordinates in $\phi(x)$ which are zero, for some (and hence any) chart $(U, \phi)$ with $x \in U$. The codimension-$i$ boundary of $X$ is the subspace 
$\{x \in X \mid c(x) = i\}$; the usual boundary $\del X$ is the closure of the codimension-$1$ boundary. A {\em facet} is the closure of a connected component of the codimension-$1$ boundary of $X$. A {\em multifacet} of $X$ is a (possibly empty) union of disjoint facets of $X$.

A $k$-dimensional {\em multifaceted manifold} is a $k$-dimensional manifold with corners $X$ such that every $x \in X$ belongs to exactly $c(x)$ facets of $X$.  For example, a simple polytope in $\RR^m$ is a multifaceted manifold. By contrast, the following ``teardrop'' manifold with corners
\[
\begin{tikzpicture}[scale=0.7]
  \draw[fill=black!40] (0,0) -- (1,0) arc(-90:180:1cm) -- (0,1) -- cycle;
  \node[inner sep=0,outer sep=0,circle,fill=black,minimum width=4pt] at (0,0) {};
\end{tikzpicture}
\]
is not a multifaceted manifold, because the codimension-$2$ corner belongs to a single facet.

A $k$-dimensional {\em $\langle n\rangle$-manifold $X$} is a $k$-dimensional multifaceted manifold,  together with an ordered $n$-tuple $(\del_1 X, \dots, \del_n X)$ of multifacets of $X$ such that
\begin{itemize}
\item $ \bigcup_i \del_iX=\del X$ and
\item $\del_i X \cap \del_j X$ is a multifacet of both $\del_i X$ and $\del_j X$ for all $i \neq j$.
\end{itemize}

 For a subset $I \subset \{1, \dots, n\}$, we write
\begin{equation}
\label{eq:deliX}
\del_IX  \defeq  \bigcap_{i \in I} \del_iX, \ \ \ \ \mathring{\bdy}_I X \defeq  \del_IX \setminus \bigcup_{I \subsetneq J} \mathring{\bdy}_J X.
\end{equation}
The subsets $\mathring{\bdy}_I X$ are called the {\em strata} of $X$, and $\del_I X$ are the {\em closed strata}.

\begin{example}
The $n$-dimensional hypercube $X=[0,1]^n$ is an $\langle n\rangle$-manifold, with $\del_i X$ being the union of the two facets given by setting the $i\th$ coordinate to either $0$ or $1$.
\end{example}

\begin{example}
\label{ex:permuto}
Consider the $(n-1)$-dimensional permutohedron $\Pi_n$, defined as the convex hull of all points in $\R^n$ whose coordinates are a permutation of $(1,2,3,\dots, n)$. This is an $\langle n-1\rangle$-manifold, with the facets being the convex hulls of points for permutations that preserve a given partition of $\{1,2,3, \dots, n\}$ into two subsets $A$ and $B$. The boundary $\del_i \Pi_n$ consists of those facets for which $|A|=i$ and $|B|=n-i$. We refer to \cite[Example 0.10]{Ziegler}, \cite[Section 2]{Bloom} or \cite[Section 3.3]{LLS} for more details. 
\end{example}


\subsection{Neat embeddings and smoothings}
 \label{sec:neatk}
 Let
 \[
   \E(n,N) = \RR_+^n\times \R^{N}
 \]
 for some $N, n \geq 0$. 
We will describe a class of embeddings of $\langle n \rangle$-manifolds into $\E(n,N)$, called {\em neat}. Neat embeddings of $\langle n \rangle$-manifolds were defined by Laures in \cite{Laures} and used by Lipshitz and Sarkar in \cite{LipshitzSarkar} to construct a Khovanov stable homotopy type. Our definition here will be slightly different, in that we require more than the intersections of strata with the boundaries of $\E(n,N)$ being perpendicular; we ask that that the strata contain small product neighborhoods of a special form near the boundaries.

Let $X$ be a $\langle n \rangle$-manifold. Let $t_1, \dots, t_n$ be the coordinates on $\E(n,N)$ corresponding to the $\R_+$ factors. We view $\E(n,N)$ as a $\langle n \rangle$-manifold, with $\del_I \E(n,N)$ being given by $t_i =0$ for $i \in I$. We also let $\nueps(\del_I \E(n,N))$ be an $\epsilon$-neighborhood of $\del_I \E(n,N)$, given by $t_i \in [0, \epsilon)$ for $i \in I$. Finally, we let
\[
  \pi_I \from \E(n,N) \to \del_I \E(n,N)
\]
be the orthogonal projection.

\begin{definition}
\label{def:neatangle}
A smooth embedding of the $\langle n \rangle$-manifold $X$ into $\E(n,N)$ is called {\em neat} if
\begin{enumerate}
\item It respects the strata, i.e., for every $i$, we have $\del_i X = X \cap \del_i \E(n,N)$.
\item For every $I \subset \{1, \dots, n\}$, there exists $\epsilon > 0$ such that
  \[
    \nueps(\del_I \E(n,N)) \cap X = \nueps(\del_I \E(n,N)) \cap \pi_I^{-1}(\del_IX).
  \]
\end{enumerate}
\end{definition}

\begin{remark}
The condition that the embedding be smooth makes sense in terms of maps of smooth manifolds with corners. However, once we assume conditions (1) and (2), we can rephrase smoothness by simply asking for a topological embedding such that its restriction to every stratum $\mathring{\bdy}_I X$ is a smooth embedding into the corresponding stratum $\mathring{\bdy}_I \E(n,N)$.
\end{remark}

\begin{example}
\label{ex:neathex}
The permutohedron $\Pi_2$ is a hexagon, and Figure~\ref{fig:neathex} shows a neat embedding of that hexagon. The edges are perpendicular to $\RR^{N}$ at vertices, and in fact contain small perpendicular intervals. We then fill in the hexagon so that, near an edge contained in one of the two hyperplanes $0 \times \RR_+\times\RR^N$ or $\RR_+ \times 0\times\RR^N$, it contains the product of that edge and an interval $[0, \epsilon)$ in the direction perpendicular to that hyperplane. 
\end{example}
\begin{figure}
  \centering
  \begin{tikzpicture}[x={(3cm,0)},y={(0,2cm)},z={(-0.4cm,-0.2cm)}]

    \foreach\i in {1,...,6}{
      \coordinate (v\i) at (0,0,\i);
      \coordinate (vx\i) at (0.2,0,\i);
      \coordinate (vy\i) at (0,0.2,\i);
    }


    \path (v1)--(vx1) ..controls (0.5,0,1) and (0.5,0,2).. coordinate[pos=0.4] (mid12) (vx2)--(v2);
    \path (v3)--(vx3) ..controls (0.5,0,3) and (0.5,0,4).. coordinate[pos=0.4] (mid34) (vx4)--(v4);
    \path (v5)--(vx5) ..controls (0.5,0,5) and (0.5,0,6).. coordinate[pos=0.4] (mid56) (vx6)--(v6);

    \path (v1)--(vy1) ..controls (0,1.5,1) and (0,1.5,6)..  coordinate[pos=0.4] (start16) coordinate[pos=0.49] (startmid16) coordinate[pos=0.58] (mid16) coordinate[pos=0.67] (endmid16) coordinate[pos=0.76] (end16)  (vy6)--(v6);
    \path (v3)--(vy3) ..controls (0,0.5,3) and (0,0.5,2).. coordinate[pos=0.5] (mid23) (vy2)--(v2);
    \path (v5)--(vy5) ..controls (0,0.5,5) and (0,0.5,4).. coordinate[pos=0.5] (mid45) (vy4)--(v4);

    \fill[black!20] (v1)--(vy1) ..controls (0,1.5,1) and (0,1.5,6).. (vy6)--(v6)--cycle;
    \fill[black!20] (0,0,1.5)--(mid12) ..controls ($(mid12)+(0,0.3,0)$) and ($(start16)+(0.3,0,0)$).. (start16)--cycle;
    \fill[black!20] (0,0,3.5)--(mid34) ..controls ($(mid34)+(0,0.3,0)$) and ($(mid16)+(0.3,0,0)$).. (mid16)--cycle;
    \fill[black!20] (0,0,5.5)--(mid56) ..controls ($(mid56)+(0,0.3,0)$) and ($(end16)+(0.3,0,0)$).. (end16)--cycle;
    \fill[black!20] (v1)--(vx1) ..controls (0.5,0,1) and (0.5,0,2).. (vx2)--(v2);
    \fill[black!20] (v3)--(vx3) ..controls (0.5,0,3) and (0.5,0,4).. (vx4)--(v4);
    \fill[black!20] (v5)--(vx5) ..controls (0.5,0,5) and (0.5,0,6).. (vx6)--(v6);

    \draw[black!50,thin] (vx2) .. controls ($(vx2)+(0,0.3,0)$) and ($(vx3)+(0,0.3,0)$).. coordinate[pos=0.5] (saddle23) (vx3);

    \draw[black!50,thin] (mid23) ..controls ($(mid23)+(0.1,0,0)$) and ($(saddle23)+(-0.1,0,0)$).. (saddle23) ..controls ($(saddle23)+(0.1,0,0)$) and ($(startmid16)+(0.3,0,0)$).. (startmid16);

    \draw[black!50,thin] (vx4) .. controls ($(vx4)+(0,0.3,0)$) and ($(vx5)+(0,0.3,0)$).. coordinate[pos=0.5] (saddle45) (vx5);

    \draw[black!50,thin] (mid45) ..controls ($(mid45)+(0.1,0,0)$) and ($(saddle45)+(-0.1,0,0)$).. (saddle45) ..controls ($(saddle45)+(0.1,0,0)$) and ($(endmid16)+(0.3,0,0)$).. (endmid16);
    
    \draw[black!50,thin] (mid12) ..controls ($(mid12)+(0,0.3,0)$) and ($(start16)+(0.3,0,0)$).. (start16);
    \draw[black!50,thin] (mid34) ..controls ($(mid34)+(0,0.3,0)$) and ($(mid16)+(0.3,0,0)$).. (mid16);
    \draw[black!50,thin] (mid56) ..controls ($(mid56)+(0,0.3,0)$) and ($(end16)+(0.3,0,0)$).. (end16);

    \draw[thick] (v1)--(vx1) ..controls (0.5,0,1) and (0.5,0,2).. (vx2)--(v2);
    \draw[thick] (v3)--(vx3) ..controls (0.5,0,3) and (0.5,0,4).. (vx4)--(v4);
    \draw[thick] (v5)--(vx5) ..controls (0.5,0,5) and (0.5,0,6).. (vx6)--(v6);

    \draw[thick] (v1)--(vy1) ..controls (0,1.5,1) and (0,1.5,6).. (vy6)--(v6);
    \draw[thick] (v3)--(vy3) ..controls (0,0.5,3) and (0,0.5,2).. (vy2)--(v2);
    \draw[thick] (v5)--(vy5) ..controls (0,0.5,5) and (0,0.5,4).. (vy4)--(v4);

    \draw[->](0,0)--(2/3,0) node[pos=1,anchor=west] {\small $\RR_+$};
    \draw[->](0,0)--(0,1) node[pos=1,anchor=south] {\small $\RR_+$};
    \draw (0,0,-3)--(0,0,7) node[pos=0,anchor=south west] {\small $\RR^N$};

    \foreach\i in {1,...,6}{\fill[black] (v\i) circle (1.5pt);}
    
  \end{tikzpicture}
\caption {A neat embedding of the permutohedron $\Pi_2$. The hexagon is puffed out in the interior of $\R_+^2\times\RR^N$. The boundary of the hexagon is drawn thick, and some curves in the interior of the hexagon are shown by thin lines (to aid visualization).}
\label{fig:neathex}
\end{figure}

\begin{proposition}
\label{prop:Whembedding}
Let $X$ be an $\langle n \rangle$-manifold such that $\del X$ is compact. Then $X$ admits a neat embedding into $\E(n,N)$ for some $N$.
\end{proposition}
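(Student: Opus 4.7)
I prove this by induction on $n$. The base case $n=0$ is Whitney's embedding theorem: since $\partial X=\emptyset$, $X$ is an ordinary smooth manifold and admits a smooth embedding into some $\R^N$, which is vacuously neat.

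For the inductive step, a key input is the fact that every $\langle n\rangle$-manifold admits a \emph{compatible} family of collars $c_i\colon\partial_i X\times[0,\epsilon)\hookrightarrow X$, one per facet, commuting pairwise on overlaps and yielding compatible product neighborhoods $\partial_I X\times[0,\epsilon)^I$ of every closed stratum (see Laures~\cite{Laures} or \cite[Section~3.1]{LipshitzSarkar}). Viewing $\partial_n X$ as an $\langle n-1\rangle$-manifold with compact boundary (its facets being $\partial_i X\cap\partial_n X$ for $i<n$), the inductive hypothesis provides a neat embedding $\iota\colon\partial_n X\hookrightarrow\R^M\times\R_+^{n-1}$. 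Using the collar $c_n$, extend $\iota$ by the product formula $c_n(p,t)\mapsto(\iota(p),t)$ to a neat embedding of a collar neighborhood $U_n=c_n(\partial_n X\times[0,\epsilon))$ of $\partial_n X$ into $\E(M,n)$; compatibility of the collars guarantees condition~(2) of Definition~\ref{def:neatangle} at every deeper stratum $\partial_I X$ with $n\in I$.

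Next, apply the inductive hypothesis once more to the closure of $X'=X\setminus c_n(\partial_n X\times[0,\epsilon/3))$. After a small rounding of the new smooth boundary component $c_n(\partial_n X\times\{\epsilon/3\})$, carried out in a way that respects all other facets thanks to collar compatibility, $X'$ becomes an $\langle n-1\rangle$-manifold with compact boundary. Induction yields a neat embedding of $X'$ into $\R^{M'}\times\R_+^{n-1}$, which we promote to $\E(M',n)$ by fixing $t_n$ equal to a large positive constant. We then patch the embeddings of $U_n$ and of $X'$ on the overlap region $c_n(\partial_n X\times[\epsilon/3,\epsilon))$ by placing the two targets into orthogonal $\R^M$- and $\R^{M'}$-subspaces of a common $\R^{M+M'}$ and interpolating with a smooth cut-off $\chi$ depending only on the $t_n$-coordinate.

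The main obstacle is the rounding step that reorganises $X'$ into a genuine $\langle n-1\rangle$-manifold with compact boundary: the newly introduced cap $c_n(\partial_n X\times\{\epsilon/3\})$ meets each remaining facet $\partial_i X$ ($i<n$) along a corner locus that must be smoothed compatibly with the pre-existing collar structure and with the already-constructed embedding of $U_n$. Collar compatibility provides, near each such corner, a local product model in which the rounding can be performed without disturbing the $t_j$-product structure for $j\neq n$. With these compatibilities in place, the patched map $\Phi$ is injective by orthogonality of the two target subspaces, an immersion by a derivative check using the collar product structure, and satisfies condition~(2) of Definition~\ref{def:neatangle} at every closed stratum, since each of the two pieces satisfies it separately and the interpolating cut-off depends only on $t_n$.
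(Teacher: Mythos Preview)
The paper's proof is essentially a citation: it invokes Laures~\cite[Proposition~2.1.7]{Laures} for the compact case, observes that Laures' construction (collars $f_i\colon X\to\R_+$ combined with a Whitney-type embedding $g\colon X\to\R^N$, then taking the product $(g,f_1,\dots,f_n)$) already yields product neighborhoods near each stratum, and finally notes that only compactness of $\partial X$ is needed because once a neighborhood of $\partial X$ is neatly embedded, the extension to the interior is ordinary Whitney embedding of a non-compact manifold. There is no induction on $n$.

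Your approach is a genuine induction on $n$, which is a reasonable strategy, but the step where you turn $X'=X\setminus c_n(\partial_n X\times[0,\epsilon/3))$ into an $\langle n-1\rangle$-manifold by ``rounding'' is a real gap. The closure of $X'$ is an $\langle n\rangle$-manifold: it still has all the old facets $\partial_i X\cap X'$ for $i<n$, \emph{and} the cap $c_n(\partial_n X\times\{\epsilon/3\})$ as an $n$th facet. The cap meets each $\partial_i X$ ($i<n$) along a codimension-two corner, so you cannot simply forget it. ``Rounding'' that corner would either merge the cap with $\partial_i X'$ (destroying the $\langle n-1\rangle$ structure you need on the other facets), or push the cap off $\partial_i X$ (but then it must approach $\partial_n X$, which you have removed). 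In no interpretation does $X'$ become an honest $\langle n-1\rangle$-manifold with compact boundary to which your inductive hypothesis applies. If you want an inductive argument, you would need a relative version of the hypothesis (embedding an $\langle n-1\rangle$-manifold with a prescribed neat embedding already given on a collar of an additional smooth boundary component), and set up the induction accordingly; alternatively, you can bypass the issue entirely by following Laures and building the maps $f_i\colon X\to\R_+$ all at once from the compatible collars, rather than peeling off one facet at a time.
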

\begin{proof}
This was proved by Laures in \cite[Proposition 2.1.7]{Laures}, for $X$ compact. He used his definition of neat embedding, which only required the intersections of strata with the boundaries of $\E(n,N)$ to be perpendicular. However, an inspection of his proof shows that the resulting embedding is neat in our sense. Further, the compactness condition can be weakened to $\del X$ being compact. Indeed, the proof proceeds by constructing collar neighborhoods of the strata (by integrating vector fields), then neatly embedding a neighborhood of $\del X$, and then extending the embedding to the interior. The last step can also be done when $X$ is not compact, in a similar way to the proof that ordinary smooth manifolds can be embedded in Euclidean space. 
\end{proof}

\begin{remark}
With a little more work, one can also drop the compactness assumption on $\del X$ in Proposition~\ref{prop:Whembedding}. However, we will not need this more general statement.
\end{remark}

Note that the boundary of $\E(n,N)$ is given by the equation
$ t_1t_2 \cdots t_n = 0$, which can be smoothed into $ t_1t_2 \cdots t_n = \delta.$ Neat embeddings allow us to smooth the boundaries on $\langle n \rangle$-manifolds in a similar fashion.

\begin{definition}
\label{def:smoothn}
Let $X$ be a compact $\langle n \rangle$-manifold. Pick a neat embedding of $X$ into some $\E(n,N)$, and a value $\delta > 0$ smaller than all $\epsilon$ appearing (for different $I$) in Definition~\ref{def:neatangle}. Then, the subset 
\[
  \smo[X] = X \cap \{(t_1, t_2, \dots, t_n,x) \in \E(n,N) \mid t_1t_2 \cdots t_n \geq \delta\}
\]
is a smooth manifold with boundary, called a {\em smoothing} of $X$. The boundary $\del \smo[X]$ is called the {\em smoothed boundary} of $X$.
\end{definition}

\section{Stratified spaces} \label{sec:stratified}
There are many different definitions of stratified spaces in the literature. We will start with the following simple minded one.
\begin{definition}
A {\em stratified space} is a topological space $X$ together with a locally finite decomposition of $X$ into disjoint subsets, called {\em strata}, such that each stratum is equipped with the structure of a smooth manifold. The decomposition is called a {\em stratification} of $X$.
\end{definition}

It will be helpful to know that the stratified spaces we will encounter in this paper satisfy certain properties; in particular, we will need to be able to smooth the boundary of each stratum, to obtain manifolds with boundary. This can be done for {\em Thom-Mather stratified spaces}. In turn, to show that a space is Thom-Mather stratified, it suffices to show that it is Whitney stratified, so we will start by defining {\em Whitney stratifications.}

\subsection{Whitney stratified spaces}
Whitney stratified spaces were defined in \cite{Whitney}. See \cite{GWPL} for another exposition.

\begin{definition}
Let $X, Y \subseteq \R^n$ be smooth submanifolds, and let $x\in X$. We say that $Y$ is {\em Whitney regular over $X$ at $x$} if, whenever two sequences $(x_i)$ of points in $X$ and $(y_i)$ of points in $Y$, with $x_i \neq y_i$, are such that:
\begin{itemize}
\item both sequences $(x_i)$ and $(y_i)$ converge to $x$,
\item the sequences of tangent spaces $T_{y_i}Y$ converge to a subspace $T \subseteq \R^n$, and
\item the secant lines $\overrightarrow{x_i y_i}$ converge to a line $L \subseteq \R^n$,
\end{itemize}
then $L$ is contained in $T$.
\end{definition}

\begin{definition}
Let $M$ be a smooth $m$-dimensional manifold, $X, Y \subset M$ be smooth submanifolds, and $x \in X$.  We say that $Y$ is {\em Whitney regular over $X$ at $x$} if their images in $\R^n$ are so, under one (and therefore under any) coordinate chart for $M$ at $x$. 

We define the {\em bad set} $B(X, Y)$ to be the set of $x \in X$ such that $Y$ is not Whitney regular over $X$ at $x$. We say that $Y$ is {\em Whitney regular over $X$} if $B(X, Y) =\emptyset$.
\end{definition}

\begin{definition}
Let $M$ be a smooth manifold, and $V \subseteq M$ a subset. A {\em Whitney stratification} of $V$ is a stratification such that all the strata are smooth submanifolds of $M$, and they are regular over each other.
\end{definition}

The commonly given example of a non-Whitney stratification is that of the Whitney umbrella $\{(x, y, z) \in \R^3 \mid x^2z=y^2\}$, where one stratum $X$ is the $z$-axis and the other stratum $Y$ is its complement. Then $Y$ is not Whitney regular over $X$ at the origin. However, if we make the origin into a separate stratum of its own, we get a Whitney stratification. We will come back to the Whitney umbrella in Section~\ref{sec:ZN}; see Figure~\ref{fig:Whitney}. 

More generally, Thom \cite{Thom} showed that all semialgebraic sets admits Whitney stratifications. Recall that 
the class of {\em semialgebraic sets} of $\R^n$ is the smallest Boolean algebra of subsets of $\R^n$ which contains all sets of the form
\[
  \{ x\in \R^n \mid f(x) > 0\}
\]
with $f\from \R^n \to \R$ a polynomial function. For our purposes, we will need the following three results:

\begin{proposition}[Proposition (2.1) in \cite{GWPL}]
\label{prop:imagesa}
Let $f\from \R^n \to \R^m$ be a polynomial map, and $V \subseteq \R^n$ a semialgebraic set. Then the image $f(V)$ is semialgebraic.
\end{proposition}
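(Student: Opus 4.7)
The plan is to reduce this to the classical Tarski–Seidenberg projection theorem. First I would observe that the graph
\[
\Gamma_f = \{(x,y)\in \R^n\times \R^m \mid y = f(x)\}
\]
is semialgebraic, being cut out by the $m$ polynomial equations $y_i - f_i(x) = 0$. Consequently, the subset $\Gamma_f \cap (V \times \R^m)$ is semialgebraic (semialgebraic sets form a Boolean algebra, and $V\times \R^m$ is semialgebraic since $V$ is), and the image $f(V)$ is precisely the projection of this set onto the $\R^m$ factor. So the statement reduces to showing that the coordinate projection of a semialgebraic set is semialgebraic.

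Next, I would iterate this reduction: by factoring a projection $\R^{n+m}\to\R^m$ as a composition of $n$ coordinate projections $\R^{k+1}\to\R^k$ forgetting one variable at a time, it suffices to prove the statement in the one-variable case. So the heart of the argument is:
\begin{quote}
\emph{Key Lemma.} If $W\subseteq \R^{k+1}$ is semialgebraic and $\pi\from \R^{k+1}\to \R^k$ forgets the last coordinate, then $\pi(W)$ is semialgebraic.
\end{quote}

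To prove the Key Lemma, I would write $W$ as a finite union of basic semialgebraic sets, reducing to the case where $W$ is described by a finite list of polynomial sign conditions $p_1(x,t)\,\sigma_1\,0,\ldots,p_N(x,t)\,\sigma_N\,0$, with each $\sigma_i\in\{<,=,>\}$. Then $\pi(W)$ consists of those $x\in\R^k$ for which there exists $t\in\R$ satisfying all these sign conditions. The goal is to eliminate the existential quantifier and produce a Boolean combination of polynomial conditions in $x$ alone. I would carry this out by the standard sign-counting argument: for generic $x$, the signs of the $p_i(x,\cdot)$ are controlled on each of the intervals cut out by the real roots of $\prod_i p_i(x,t)$, and the number and arrangement of these roots is governed by Sturm's theorem (equivalently, by subresultants and discriminants), whose outputs are themselves polynomial expressions in $x$. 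Handling the locus where the leading coefficients of the $p_i$ (in $t$) vanish is done by a separate induction on the degrees in $t$.

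The main obstacle is the bookkeeping in this quantifier elimination step: organizing the cases where leading coefficients degenerate, and verifying that the Sturm/subresultant-based description of ``there exists $t$ with prescribed signs'' really is a finite Boolean combination of polynomial inequalities. Since this is the classical Tarski–Seidenberg theorem (and a careful treatment already appears in \cite{GWPL}), I would simply cite it rather than reproduce the elimination argument, and only include the graph-plus-projection reduction above as a proof.
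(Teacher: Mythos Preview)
Your proposal is correct and matches the paper's proof exactly: the paper also reduces to the Tarski--Seidenberg theorem for linear projections by passing to the graph of $f$ in $\R^n\times\R^m$ and projecting to $\R^m$. The extra sketch of the quantifier-elimination argument you include is more detail than the paper provides (it simply cites Tarski--Seidenberg), but your stated plan to cite it and keep only the graph-plus-projection reduction aligns precisely with the paper's treatment.
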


\begin{proof}
The Tarski-Seidenberg theorem says that the conclusion is true when $f$ is a linear projection. For the general statement, consider the graph of $f$ in $\R^n \times \R^m$. This is semialgebraic, and its linear projection to $\R^m$ is $f(V)$.
\end{proof}

\begin{proposition}[Whitney's theorem \cite{Whitney, Thom2}; Proposition (2.6) in \cite{GWPL}]
\label{prop:badset}
Let $X$, $Y$ be semialgebraic smooth submanifolds of $\R^n$. Then the bad set $B(X, Y)$ is semialgebraic, of dimension strictly smaller than the dimension of $X$.
\end{proposition}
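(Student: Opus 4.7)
My plan is to establish the two assertions separately, using the semialgebraic toolkit already invoked in the paper.

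For the semialgebraicity of $B(X,Y)$, I would exhibit it as the projection of a set cut out by polynomial (in)equalities, then invoke Proposition~\ref{prop:imagesa} (Tarski--Seidenberg). The Gauss map $\gamma_Y \colon y \mapsto T_y Y$ has semialgebraic graph $\Gamma_Y \subset Y \times \mathrm{Gr}(\dim Y, n)$, because tangent spaces to a semialgebraic smooth submanifold are locally cut out, inside the semialgebraic Grassmannian, by vanishing of partials of local defining polynomials for $Y$. Hence the closure $\overline{\Gamma_Y}$ is semialgebraic. Likewise, the secant map sending $(x,y)\in X\times Y$ with $x\neq y$ to the line $\overline{xy}\in \mathbb{RP}^{n-1}$ has semialgebraic graph $\Sigma$, whose closure $\overline{\Sigma}\subset \overline{X}\times\overline{Y}\times\mathbb{RP}^{n-1}$ is semialgebraic. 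Then
\[
B(X,Y) \;=\; \pi_X\!\left(\bigl\{(x,T,L)\in X\times\mathrm{Gr}\times\mathbb{RP}^{n-1}\,:\,(x,T)\in\overline{\Gamma_Y},\;(x,x,L)\in\overline{\Sigma},\;L\not\subset T\bigr\}\right)
\]
is semialgebraic by Proposition~\ref{prop:imagesa}, since the incidence condition $L\not\subset T$ is itself semialgebraic inside $\mathrm{Gr}\times \mathbb{RP}^{n-1}$.

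For the dimension bound I would argue by contradiction using the semialgebraic curve selection lemma and standard semialgebraic dimension theory. Suppose $\dim B(X,Y)=\dim X$. Since $B(X,Y)$ is semialgebraic, it admits a decomposition into smooth semialgebraic manifolds; a top-dimensional stratum of $B(X,Y)$ is then an open subset $U$ of some smooth submanifold of $X$ of the same dimension, and shrinking $U$ we may assume $U$ is open in $X$. Applying curve selection to the semialgebraic set
\[
S \;=\; \bigl\{(x,y,T,L)\in X\times Y\times\mathrm{Gr}\times\mathbb{RP}^{n-1}\,:\,x\in U,\;x\neq y,\;T=T_yY,\;L=\overline{xy},\;L\not\subset T\bigr\},
\]
based at any $x_0\in U$ lying in its closure via the $x$-coordinate, I would obtain a real-analytic arc $t\mapsto (x(t),y(t),T(t),L(t))$ defined on $(0,\varepsilon)$ with $x(t),y(t)\to x_0$, $T(t)\to T_0$, $L(t)\to L_0$, and $L_0\not\subset T_0$.

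The main obstacle is to derive a contradiction from this arc, and this is the technical heart of the argument. I would use Puiseux expansions: the difference $y(t)-x(t)$ admits an expansion $t^{\alpha}(v+o(1))$ for some $\alpha>0$ and nonzero $v\in \mathbb{R}^n$, and the normalization shows that the direction of $v$ equals $L_0$. Separately, since $\dot y(t)\in T_{y(t)}Y$ for all $t$ and $T(t)\to T_0$, the direction of the lowest-order term of $\dot y(t)$ lies in $T_0$; and a parallel Puiseux argument on $\dot x(t)\in T_{x(t)}X$ controls the contribution from $x(t)$. Combining these expansions---specifically, examining the order-by-order leading behavior of $y(t)-x(t)$ and integrating term-by-term---forces $L_0\subset T_0$, contradicting the defining condition of $S$. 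The delicate point is handling the case where $x(t)$ and $y(t)$ approach $x_0$ at comparable rates, so that the secant direction is not dominated by a single curve; here one uses that the $X$-tangent directions lie in $T_0$ as well (from Whitney (a)-regularity, which is a separate but easier genericity statement), so both contributions are absorbed into $T_0$.
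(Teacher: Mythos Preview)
The paper does not prove this proposition; it is stated as a classical result with citations to Whitney, Thom, and \cite[Proposition~(2.6)]{GWPL}, and is used as a black box in the proof of Proposition~\ref{prop:WhitneyZN}. So there is no proof in the paper to compare against.

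As for your sketch: the semialgebraicity argument is fine and is essentially the standard one. The dimension argument follows the classical outline (curve selection plus Puiseux analysis), but the step you flag as ``delicate'' is genuinely incomplete as written. Invoking Whitney~(a) at $x_0$ gives you that the leading direction of $x(t)-x_0$ lies in $T_0$, and your integration argument gives the same for $y(t)-x_0$. But when $x(t)-x_0$ and $y(t)-x_0$ have the same leading term, the difference $y(t)-x(t)$ has strictly higher order, and you then need the \emph{next} Puiseux coefficients to lie in $T_0$ as well---which neither your integration estimate nor Whitney~(a) supplies. The bound you get is $\mathrm{dist}(y(t)-x(t),T_0)=o(t^{\min(c,d)})$, whereas you need $o(t^{\alpha})$ with $\alpha$ possibly much larger than $\min(c,d)$.

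The standard resolutions (as in Mather's notes or \cite{GWPL}) introduce an auxiliary local retraction $\pi\colon U\to X$ and first prove the genericity of the condition~$(b_\pi)$ on secants $\overline{y_i\,\pi(y_i)}$; the point is that $y_i-\pi(y_i)$ and $\pi(y_i)-x_i$ are approximately orthogonal (one normal to $X$, one tangent), so no cancellation can occur when recombining them. Alternatively one can blow up along $X$ and argue on the exceptional divisor. Either way, something beyond ``order-by-order plus Whitney~(a)'' is required to close the gap.
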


\begin{proposition}[Proposition (1.2) in \cite{GWPL}]
\label{prop:products}
Let $V_1, \dots, V_m$ be Whitney stratified spaces. Then the product stratification on $V_1 \times \dots \times V_m$ (consisting of Cartesian products of the strata in each $V_i$) is a Whitney stratification.
\end{proposition}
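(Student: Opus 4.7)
The plan is to reduce immediately to the case $m=2$ by a trivial induction, since $V_1 \times \cdots \times V_m = (V_1 \times \cdots \times V_{m-1}) \times V_m$ and the product stratification is associative. So I would fix two Whitney stratified spaces $V_1 \subset M_1$, $V_2 \subset M_2$, choose strata $X_i, Y_i$ of $V_i$ with $Y_i$ Whitney regular over $X_i$, and verify Whitney regularity of $Y_1 \times Y_2$ over $X_1 \times X_2$ at a point $(x_1, x_2) \in X_1 \times X_2$. Since the condition is local, I would pass to local coordinate charts so that everything sits inside $\R^{n_1} \times \R^{n_2}$.

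Next, given sequences $p^i = (x_1^i, x_2^i) \in X_1 \times X_2$ and $q^i = (y_1^i, y_2^i) \in Y_1 \times Y_2$, both converging to $(x_1, x_2)$ with $p^i \neq q^i$, satisfying $T_{q^i}(Y_1 \times Y_2) \to T$ and $\overrightarrow{p^i q^i} \to L$, I would exploit the fact that $T_{q^i}(Y_1 \times Y_2) = T_{y_1^i} Y_1 \oplus T_{y_2^i} Y_2$. By compactness of the Grassmannian, after passing to a subsequence I can assume $T_{y_j^i} Y_j \to T_j$ for $j=1,2$, and then $T = T_1 \oplus T_2$ by continuity of the direct-sum operation. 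Setting $u_j^i = y_j^i - x_j^i$ and $v^i = (u_1^i, u_2^i)$, I will also pass to a subsequence so that $|u_j^i|/|v^i| \to r_j$ with $r_1^2 + r_2^2 = 1$, and so that whenever $u_j^i$ is not eventually zero, $u_j^i/|u_j^i| \to e_j$ for some unit vector $e_j$. The limit line $L$ is then spanned by $(r_1 e_1, r_2 e_2)$ (with the convention that the $j$th entry is $0$ when $r_j = 0$).

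The case analysis is the heart of the argument. If $r_1 > 0$, then eventually $x_1^i \neq y_1^i$, so Whitney regularity of $Y_1$ over $X_1$ at $x_1$, applied to the sequences $(x_1^i)$ and $(y_1^i)$, gives $e_1 \in T_1$; symmetrically if $r_2 > 0$ then $e_2 \in T_2$. In each subcase one concludes $(r_1 e_1, r_2 e_2) \in T_1 \oplus T_2 = T$. The only subtlety, and what I expect to be the main (minor) obstacle, is bookkeeping when one of the $r_j$ vanishes: if, say, $r_2 = 0$, the sequence in the second factor may contribute nothing to the secant in the limit, and one must not try to invoke Whitney regularity in the second factor (it may fail to apply if $u_2^i$ is eventually zero, or the relevant limit may not be in $T_2$). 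The point is that in that subcase $L = \mathrm{span}(e_1, 0) \subset T_1 \oplus \{0\} \subset T$, so only the regularity of $Y_1$ over $X_1$ is needed.

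Finally, the case $r_1 = r_2 = 0$ cannot occur since $r_1^2 + r_2^2 = 1$, so the three subcases exhaust all possibilities, and $L \subseteq T$ in each. Running this for every pair of strata $(X_1 \times X_2, Y_1 \times Y_2)$ of the product stratification then verifies the Whitney condition globally, completing the proof.
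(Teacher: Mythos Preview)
The paper does not supply its own proof of this proposition; it simply cites it as Proposition~(1.2) in \cite{GWPL}. Your argument is correct and is essentially the standard proof one finds in that reference: reduce to two factors, split both the tangent space and the secant direction componentwise, and invoke Whitney regularity in whichever factor actually contributes to the limiting secant. One small point worth making explicit: a general pair of strata in the product has the form $X_1\times X_2 \leq Y_1\times Y_2$ with $X_j \leq Y_j$, which allows $X_j = Y_j$ in one factor; your argument still goes through there because a smooth submanifold is trivially Whitney regular over itself (in a chart flattening $X_j$, the secants lie in $T_{x_j}X_j$). With that caveat noted, the proof is complete.
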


\subsection{Thom-Mather stratified spaces} \label{sec:TM}
The following definition is based on \cite[Section 8]{Mather}. 

\begin{definition}
\label{def:stratified}
A {\em Thom-Mather stratified space} is a triple $(V, \Set, \Tube)$ satisfying the following axioms:

\begin{enumerate}[leftmargin=*,label=(A-\arabic*),ref=(A-\arabic*)]
 \item\label{item:stratified-1} $V$ is a Hausdorff, locally compact, second countable topological space;

 \item  $\Set$ is a family of locally closed subsets of $V$, such that $V$ is the disjoint union of the members of $\Set$. The members of $\Set$ are called the {\em strata} of $V$, and their closures are called the {\em closed strata}.

 \item  Each stratum of $V$ (with the induced topology from $V$) is a topological manifold, and additionally equipped with a $C^{\infty}$ structure; 

 \item  The family $\Set$ is locally finite.

 \item  If $X,Y \in \Set$ and $X \cap \overline{Y} \neq \emptyset,$ then $X \subseteq \overline{Y}$. If this is the case, we write $X \leq Y$. If $X \leq Y$ and $X \neq Y$, we write $X < Y$.

 \item  $\Tube$ is a collection of triples $(T_X, \pi_X, \rho_X)$, one for each $X \in \Set$, where $T_X$ is an open neighborhood of $X$ in $V$ (called a {\em tubular neighborhood}), $\pi_X\from T_X \to X$ is a continuous retraction, and $\rho_X \from T_X \to [0, \infty)$ a continuous function.

 \item  $X=\{v\in T_X \mid \rho_X(v)=0\}.$
  
 \item \label{item:A8} For $X, Y \in \Set$, denote 
   \[
     T_{X,Y} = T_X \cap Y, \ \ \pi_{X,Y} = \pi_X|_{T_{X,Y}}, \ \ \rho_{X,Y} = \rho_X |_{T_{X,Y}}.
   \]
   Then, we require that for any distinct strata $X$ and $Y$ the mapping 
   \[
     (\pi_{X,Y}, \rho_{X,Y})\from T_{X,Y} \to X\times (0,\infty)
   \]
is a smooth submersion.

 \item\label{item:stratified-9}  For any strata $X, Y,$ and $Z$, we have
   \[
     \pi_{X} \pi_{Y} (v) = \pi_{X} (v), \ \ \rho_{X} \pi_{Y} (v) = \rho_{X} (v)
   \]
whenever both sides of the respective equation are defined.

 \item\label{item:stratified-10}  If $X,Y \in \Set$ satisfy $T_{X,Y} \neq \emptyset$, then $X \leq Y$.

 \item\label{item:stratified-11}  If $X, Y \in\Set$ are such that $T_X \cap T_Y \neq \emptyset$, then $X$ and $Y$ are comparable, i.e., we have $X\leq Y$ or $Y \leq X$.
\end{enumerate}
\end{definition}

\begin{remark}
  The terminology used in \cite{Mather} is {\em abstract stratified
    set}. This is required to only satisfy the conditions
  \ref{item:stratified-1}--\ref{item:stratified-9}, but it is noted
  there that every such set is equivalent to one that also satisfies
  \ref{item:stratified-10} and \ref{item:stratified-11}.
\end{remark}

\begin{remark}
The function $\rho_X$ is called the {\em tubular function} of $X$. Roughly, it is meant to play the role of the distance to $X$.
\end{remark}

\begin{remark}
  As noted in \cite{Mather}, the assumptions
  \ref{item:stratified-1}--\ref{item:stratified-10} above also have
  the following implications:
\begin{itemize}
\item The relation $\leq$ is a partial order on $\Set$;
\item $X \leq Y$ if and only if $T_{X,Y} \neq \emptyset$;
\item $X$ and $Y$ are comparable if and only if $T_X \cap T_Y \neq \emptyset$.
\end{itemize}
\end{remark}

\begin{remark} 
\label{rem:cone}
Another implication is that if $x$ is a point in a $k$-dimensional stratum $X$, then there is a neighborhood $V_x$ of $x$ in $V$ homeomorphic to $\R^k \times C(L)$, where $L$ is a stratified space and $C(L)=\bigl( L \times [0,1) \bigr) / \bigl( L \times \{0\} \bigr)$ is the open cone on $L$. Specifically, we can take $V_x$ to be the preimage of a chart in $X$ under the map $\pi_X$. If we identify
\[
  L \cong \{0\} \times L \times \{1/2\} \subset \R^k \times C(L) \cong V_x,
\]
then the stratification of $L$ is given by intersections with the strata $Y$ such that $X < Y$.

The space $L$ is called the {\em link} of $X$ at $x$. We will refer to $\R^k \times C(L)$ as the {\em local model} of $V$ around $x$, and to $C(L)$ as the {\em local model in the normal directions}.
\end{remark}

\begin{remark}
Given a stratum $X$ in a Thom-Mather stratified space, its closure $\bX$ and the boundary $\del X = \bX  \setminus X$ have induced Thom-Mather stratifications. The boundary $\del X$ is the union of all strata $Y$ such that $Y < X$.
\end{remark}

We now turn to examples of Thom-Mather stratified spaces. First, an $\langle n \rangle$-manifold $X$ can be made into a Thom-Mather stratified space as follows. Let $\del_iX$, for $i=1, \dots, n,$ be the distinguished collection of multifacets, and $\del_I X$ and $\mathring{\bdy}_I X$ be as in Equation~\eqref{eq:deliX}. We let $\mathring{\bdy}_I X $ be the strata in $X$ and $\del_I X$ are their closures. Observe that
\[
  {\bdy}_J X \subseteq {\bdy}_I X  \iff I \subseteq J.
\]
The tubular neighborhood of a stratum $\mathring{\bdy}_I X $ in an $\langle n \rangle$-manifold $X$ can be constructed by integrating a smooth vector field that is transverse to $\mathring{\bdy}_I X $, and vanishes at the boundary of $\mathring{\bdy}_I X $. 

\begin{example}
Consider the quadrant $X = \RR_+^2$, and view it as a $\langle 2 \rangle$-manifold with $\del_1X = 0\times \RR_+$ and $\del_2X = \RR_+\times 0.$ There are four strata:
\[
  \mathring{\bdy}_\emptyset X = (0, \infty)\times (0,\infty), \ \ \mathring{\bdy}_{\{1\}}X = 0\times (0,\infty), \ \ \mathring{\bdy}_{\{2\}}X= (0,\infty)\times 0, \ \ \mathring{\bdy}_{\{1,2\}} X = 0\times 0.
\]
Their tubular neighborhoods are shown in Figure~\ref{fig:quadrant}: that of $\mathring{\bdy}_{\{1,2\}} X$ is the lightly shaded quarter-disk, those of $\mathring{\bdy}_{\{1\}}X$ and $\mathring{\bdy}_{\{2\}}X$ are darkly shaded, and the tubular neighborhood of $\mathring{\bdy}_\emptyset X$ is $\mathring{\bdy}_\emptyset X$ itself.
\end{example}

\begin{figure}
  \centering
  \begin{tikzpicture}[scale=1.3]

    \fill[black!30] (0,0)--(1,0)arc(0:90:1)--cycle;
    \fill[black!50] (0,0)--(3,0)--(3,0.5)--cycle;
    \fill[black!50] (0,0)--(0,3)--(0.5,3)--cycle;

    \draw[ultra thick] (3,0)--(0,0)--(0,3);
    \draw (3,0.5)--(0,0)--(0.5,3) (1,0)arc(0:90:1);
    \fill[black] (0,0) circle (2pt);

    \node[anchor=south east,inner sep=0,outer sep=1pt] at (3,0.5) {\small $T_{\mathring{\bdy}_{\{2\}}X}$};
    \node[anchor=north west,inner sep=0,outer sep=1pt] at (0.5,3) {\small $T_{\mathring{\bdy}_{\{1\}}X}$};
    \node[anchor=south west,inner sep=0,outer sep=0] at (45:1) {\small $T_{\mathring{\bdy}_{\{1,2\}}X}$};
    
  \end{tikzpicture}
\caption {The Thom-Mather stratified space $X = [0, \infty)^2$, with tubular neighborhoods.}
\label{fig:quadrant}
\end{figure}

\begin{remark}
In \cite[Lemma 2.1.6]{Laures} it is proved that the {\em closed} strata $\del_IX$ in an $\langle n \rangle$-manifold admit a system of collar neighborhoods, of the form $\R^{|I|} \times \del_I X$. These are different from the tubular neighborhoods that we consider in their paper, but serve similar purposes. The collar neighborhoods from \cite[Lemma 2.1.6]{Laures} were used in the construction of the Khovanov stable homotopy type in \cite{LipshitzSarkar}. We do not use them here because they do not admit a straightforward generalization to other stratified spaces.
\end{remark}

A larger class of Thom-Mather stratified spaces is provided by the following result.
\begin{theorem}[Thom \cite{Thom}, Mather \cite{Mather}] \label{thm:TM}
The strata in Whitney stratified spaces admit tubular neighborhoods as in Definition~\ref{def:stratified}, and therefore Whitney stratified spaces can be turned into Thom-Mather stratified spaces.
\end{theorem}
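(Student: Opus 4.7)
The plan is to reprove Mather's theorem by induction on the depth of the stratification (the length of the longest chain $X = X_0 < X_1 < \cdots < X_k$ starting at $X$), constructing the tubular data $(T_X, \pi_X, \rho_X)$ for each stratum in order of increasing depth --- equivalently, from lower-dimensional strata outwards. Throughout, I would work locally by embedding neighborhoods of $V$ into a smooth ambient manifold $M$, which is possible because each stratum is a smooth manifold and the family $\Set$ is locally finite.

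First I would handle the minimal strata: for a stratum $X$ with no $Y < X$, view $X$ as a smooth submanifold of $M$ and take $T_X$ to be the intersection of a small tubular neighborhood of $X$ in $M$ (via the normal bundle) with $V$. Let $\pi_X$ be the normal projection and $\rho_X$ be the squared distance from $X$, cut off by a bump function. Axiom \ref{item:A8} --- that $(\pi_X, \rho_X)|_{T_{X,Y}}$ is a submersion for each $Y > X$ --- is where the Whitney (b) condition enters essentially: it forces $Y$ to become transverse to the normal slices of $X$ as one approaches $X$, so after shrinking $T_X$ the submersion property holds.

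For the inductive step, suppose tubular data has been constructed for all strata $Y < X$ satisfying the axioms. Starting with a candidate $(T_X, \pi_X, \rho_X)$ obtained from the normal bundle of $X$ as above, the substantive task is to adjust $\pi_X$ and $\rho_X$ so that the commutation relations \ref{item:stratified-9}, namely $\pi_Y \pi_X = \pi_Y$ and $\rho_Y \pi_X = \rho_Y$ on $T_Y \cap T_X$, hold for each $Y < X$. I would enforce these one $Y$ at a time, processing the $Y$'s in order of increasing dimension: for each $Y$, use Whitney (b) to build a \emph{controlled vector field} on $T_X$ --- tangent to every stratum $Z \geq X$ and lifting an appropriate vector field on $Y$ --- and integrate its flow to push $\pi_X, \rho_X$ into alignment with $\pi_Y, \rho_Y$ in a neighborhood of $Y$. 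Careful shrinking of $T_X$ at each stage ensures that modifications near one $Y$ do not undo those made near another. Finally, axioms \ref{item:stratified-10} and \ref{item:stratified-11} are arranged by further shrinking all $T_X$ so that overlaps only occur between comparable strata, which is possible by local finiteness of $\Set$.

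The main obstacle is the construction of these controlled vector fields together with the bookkeeping required to maintain all previously-established commutation relations under successive modifications. This is precisely the content of Mather's ``Notes on topological stability'' \cite{Mather}, and the Whitney (b) condition is essential at exactly this step: without it, a smooth vector field on $Y$ need not extend to a smooth stratum-preserving vector field on a neighborhood of $Y$ in $V$, and the recursion breaks down. The hypotheses that $V$ is Hausdorff, locally compact, and second countable are what permit the induction to be carried out globally via partitions of unity.
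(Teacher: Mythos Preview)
The paper does not prove this theorem at all; it is stated as a citation to the original work of Thom and Mather, with no proof given. So there is no ``paper's own proof'' to compare against.

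Your proposal is a reasonable high-level sketch of Mather's original argument, and you have correctly identified the main ingredients: induction on depth, normal-bundle tubular neighborhoods, the role of Whitney~(b) in guaranteeing the submersion condition, and the use of controlled vector fields to enforce the commutation relations~\ref{item:stratified-9}. That said, what you have written is an outline rather than a proof --- the sentence ``The main obstacle is the construction of these controlled vector fields together with the bookkeeping \ldots\ This is precisely the content of Mather's `Notes on topological stability''' is an honest admission that the actual work is being deferred to the reference. If the intent is to give a self-contained proof, you would need to actually carry out the construction of controlled lifts (which requires a careful local argument using Whitney~(b) and partitions of unity) and verify that the successive modifications do not interfere. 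If the intent is simply to indicate why the theorem is true and point to the literature, then what you have is adequate and matches what the paper itself does: cite Mather.
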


\subsection{Smoothings}
We now explain how one can smooth the boundary of a stratum in a Thom-Mather stratified space.
The lemma below is key: it allows us to find neighborhoods of the boundary that are submanifolds with corners. 

\begin{lemma}
\label{lemma:Nbhd}
Let $X$ be an $n$-dimensional stratum in a Thom-Mather stratified space $(V, \Set, \Tube)$. For any stratum $Y \subseteq \del X$, choose $\epsilon_Y > 0$ sufficiently small, inductively on the dimension of $Y$, so that $\epsilon_{Y} \ll \epsilon_Z$ when $Z < Y$. Consider the following closed neighborhood of $\del X$ in $\bX$:
\[
  \Nbhd = \bigcup_{Y < X} \bigl( \rho_{Y}^{-1}([0, \epsilon_Y]) \cap \bX).
\]
Then, the complement
\[
  M\defeq \bX \setminus \inte(\Nbhd)
\]
is an $n$-dimensional $\langle n \rangle$-manifold. 
\end{lemma}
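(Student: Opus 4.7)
The plan is to produce, around each point $p \in M$, a smooth corner chart of $\bX$ that matches the collection $\{\rho_Y\}_{Y<X}$, and then to assemble these charts into an $\langle n \rangle$-manifold structure on $M$ whose multifacets are indexed by the strata $Y<X$.

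Fix $p \in M$ and let $\mathcal{A}_p := \{Y<X : \rho_Y(p) = \epsilon_Y\}$ be the set of boundary strata ``active'' at $p$. Any two members of $\mathcal{A}_p$ both contain $p$ in their tubular neighborhoods, so by axiom \ref{item:stratified-11} they are comparable, forcing $\mathcal{A}_p$ to be a chain $Y_1 < Y_2 < \cdots < Y_k$. The inductive smallness $\epsilon_Y \ll \epsilon_Z$ for $Z\leq Y$ ensures that each $\pi_{Y_j}(p)$ lies in $T_{Y_i}$ for every $i<j$, so the control identity $\rho_{Y_i} \circ \pi_{Y_j} = \rho_{Y_i}$ from axiom \ref{item:stratified-9} applies on a neighborhood of $p$ for every such pair. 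Differentiating this identity shows that $d\rho_{Y_i}|_p$ vanishes on $\ker d\pi_{Y_j}|_p$ whenever $i<j$; combining this with the submersion property of each $(\pi_{Y_j},\rho_{Y_j})$ given by axiom \ref{item:A8}, a short induction on $k$ yields that $d\rho_{Y_1}|_p,\dots, d\rho_{Y_k}|_p$ are linearly independent in $T_p^{\!*}X$. The submersion theorem then furnishes a smooth chart of $X$ at $p$ with coordinates $(u,t_1,\dots,t_k)$, where $t_i = \rho_{Y_i} - \epsilon_{Y_i}$, in which $M$ is cut out by $\{t_1 \geq 0,\dots, t_k \geq 0\}$. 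This is an $n$-dimensional corner chart with $c(p)=k$.

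To finish, I would enumerate the strata $Y<X$ and declare $\bdy_Y M$ to be the closure in $M$ of $\{q \in M : \rho_Y(q) = \epsilon_Y\}$, invoking Remark~\ref{rem:inj} to reconcile the indexing with the label $\langle n \rangle$. The charts above show that each $p\in M$ lies in exactly $c(p)$ of these multifacets (one for each $Y \in \mathcal{A}_p$), that each $\bdy_Y M$ is a union of closed facets of $M$, and that pairwise intersections $\bdy_Y M \cap \bdy_{Y'} M$ are multifacets of each, so the $\langle n \rangle$-manifold axioms will all follow. The hard part will be the joint-submersion step together with verifying that axiom \ref{item:stratified-9} genuinely applies where invoked: both rest on the inductive smallness $\epsilon_Y \ll \epsilon_Z$ being strong enough to enforce the nested containments $\pi_{Y_j}(p) \in T_{Y_i}$ uniformly in $p$ and in $i<j$, which in turn uses the fact that near $Y_j$ the function $\rho_{Y_i}$ is close to its restriction to $Y_j$ by the control relations. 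Once these nestings are secured, the remainder of the argument reduces to an unpacking of the submersion theorem and a routine bookkeeping check.
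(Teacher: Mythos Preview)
Your approach is essentially the same as the paper's: identify the active strata at $p$ as a chain $Y_1<\cdots<Y_k$ via axiom~\ref{item:stratified-11}, then establish that $(\rho_{Y_1},\dots,\rho_{Y_k})$ is a submersion at $p$ using the control relations~\ref{item:stratified-9} and~\ref{item:A8}. The paper phrases the joint-submersion step slightly differently, factoring
\[
(\rho_{Y_1,X},\dots,\rho_{Y_j,X}) = (\rho_{Y_1,Y_j},\dots,\rho_{Y_{j-1},Y_j},\operatorname{id})\circ(\pi_{Y_j,X},\rho_{Y_j,X})
\]
and observing this is a composition of submersions, but your linear-independence argument via $d\rho_{Y_i}|_{\ker d\pi_{Y_j}}=0$ for $i<j$ is an equivalent unpacking of the same identity.

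There is one bookkeeping point where your proposal diverges and does not quite work as written. You index the multifacets by the strata $Y<X$ themselves and then appeal to Remark~\ref{rem:inj} to pass to the label set $\{1,\dots,n\}$. But Remark~\ref{rem:inj} only lets you \emph{enlarge} the label set via an injection, and there may well be more than $n$ strata in $\del X$. The paper instead sets
\[
\del_i M = \{x\in M : \rho_Y(x)=\epsilon_Y \text{ for some } Y<X \text{ with }\dim Y = i-1\},
\]
grouping strata by dimension. This works precisely because the active strata at any point form a chain and hence have pairwise distinct dimensions, all at most $n-1$; thus each point lies in exactly $c(p)$ of the $\del_i M$. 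Once you make this adjustment, your argument and the paper's coincide.
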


\begin{proof} 
It is convenient to set $\rho_Y(x) = \infty$ where $\rho_Y$ is undefined, i.e., for $x \not\in T_Y$. Then we can write
\[
  M = \bX \cap \bigcap_{Y < X} \rho_Y^{-1}([ \epsilon_Y, \infty]).
\]
For $i=1, \dots, n$, set
\[
  \del_i M = \{x \in M \mid \rho_Y(x) =\epsilon_Y \ \text{for some } Y < X \text{ with }  \dim Y = i-1 \}.
\]
To see that this turns $M$ into an $\langle n \rangle$-manifold, pick any point $x \in \del M = \bigcup_i \del_iM$, and consider the strata $Y < X$ that satisfy $\rho_Y(x) =\epsilon_Y$. Using \ref{item:stratified-11}, we see that $\leq$ is a total order on these strata, so the strata have different dimensions and we can label them by $Y_1 < Y_2 < \cdots < Y_k.$ Thus, $x$ lies at the intersection of the boundaries $\del_i M$ where $i=\dim Y_j$ for some $j$. Furthermore, near $x$, the subset $M \subseteq X$ is given by the inequalities
\[
  \rho_{Y_i} \geq \epsilon_{Y_i}, \ i=1, \dots, k.
\]
We claim that $x$ is a codimension $k$ corner, that is, the local model for $M$ near $x$ is $0 \in \R^{n-k} \times \R_+^k.$ For this, it suffices to show that the map 
\[
  (\rho_{Y_1, X}, \rho_{Y_2, X}, \dots, \rho_{Y_k, X})\from U(x) \to \R^k
\]
is a submersion at $x$. (Here, $U(x)$ is a neighborhood of $x$ in $X$.)

We prove by induction on $j \leq k$ that 
\[
  (\rho_{Y_1, X}, \rho_{Y_2, X}, \dots, \rho_{Y_j, X})\from U(x) \to \R^j
\]
is a submersion at $x$. The base case $j=1$ follows from ~\ref{item:A8}. For the inductive step, using \ref{item:stratified-9} , we write
\[
  (\rho_{Y_1, X}, \rho_{Y_2, X}, \dots, \rho_{Y_j, X})= (\rho_{Y_1, Y_j}, \rho_{Y_2, Y_j}, \dots, \rho_{Y_{j-1}, Y_j}, \operatorname{id}) \circ (\pi_{Y_j, X}, \rho_{Y_j, X}).
\]
Note that $(\pi_{Y_j, X}, \rho_{Y_j, X})$ is a submersion by~\ref{item:A8}. Using the inductive hypothesis for $j-1$ and the fact that the composition of submersions is a submersion, the claim follows.
\end{proof}

\begin{definition}
\label{def:smoothstrata}
Let $X$ be an $n$-dimensional stratum of a Thom-Mather stratified space, such that $\del X$ is compact. We define the {\em smoothing of $X$} to be the $n$-dimensional smooth manifold with boundary
\[
  \smo[X] \defeq \smo[M]
\]
where $M \subseteq X$ is the $\langle n \rangle$-manifold from Lemma~\ref{lemma:Nbhd}, and $\smo[M] \subseteq M$ was defined in Definition~\ref{def:smoothn}. 
\end{definition}

See Figure~\ref{fig:smooth} for an example.

\begin{figure}
  \centering
  \begin{tikzpicture}[scale=1.3]

    \fill[black!50] (0,0)--(1,0)arc(0:90:1)--cycle;
    \fill[black!50] (0,0)--(3,0)--(3,0.5)--cycle;
    \fill[black!50] (0,0)--(0,3)--(0.5,3)--cycle;

    \draw[ultra thick] (3,0)--(0,0)--(0,3);
    \fill[black] (0,0) circle (2pt);

    \node[anchor=east,inner sep=0,outer sep=1pt] at (0,1) {\small $\bdy X$};
    \node[anchor=north] at (0.25,3) {\small $\mathcal{N}$};

    \fill[black!30] (3,3)--(3,0.6)--(1.5,0.35) to[out=-170,in=-60] (30:1.1) arc(30:60:1.1) to[out=150,in=-110] (0.35,1.5)--(0.6,3)--(3,3);

    \node[anchor=center,inner sep=0,outer sep=1pt] at (2,2) {\small $\smo[X]$};
    
  \end{tikzpicture}
\caption {A smoothing of the stratified space from Figure~\ref{fig:quadrant}.}
\label{fig:smooth}
\end{figure}

\section{Local models}
\label{sec:local}
In this paper we will work with a certain kind of stratified spaces, where we have boundaries and corners as in $\langle n\rangle$-manifolds, but we also allow a different type of boundary, modeled on ``generalized Whitney umbrellas'' and called the {\em special boundary}. When we glue several of these spaces along their special boundaries, we will obtain an $\langle n\rangle$-manifold.

 The spaces will be constructed in Section~\ref{sec:construction}. For now, we will limit ourselves to describing the local models that appear in their stratifications. 

\subsection{The spaces $I_N$} \label{sec:IN}
Let us first consider the space
\[
  I_N = \Sym^N(\RR)/\RR,
\]
where $N \geq 0$ and $\Sym^N$ denotes the $N\th$ symmetric product, and $\RR$ acts by simultaneous translation on all factors. Recall from Section~\ref{sec:part} that $\Part(N)$ denotes the set of ordered partitions of $N$. For 
\[
  \lambda = (\lambda_1, \dots, \lambda_m) \in \Part(N),
\]
let $I(\lambda) \subseteq I_N$ be the subset consisting of $N$-tuples of real numbers (modulo $\RR$) such that the first $\lambda_1$ of these numbers coincide (take the same value $x_1$), the next $\lambda_2$ coincide (taking a value $x_2$), and so on, with $x_i < x_j $ for $i < j.$ The decomposition
\begin{equation}
\label{eq:IN}
 I_N = \bigsqcup_{\lambda \in \Part(N)}  I(\lambda)
 \end{equation}
gives $I_N$ the structure of a stratified space, with strata $I(\lambda)$. 

If $N>0$, the dimension of $I(\lambda)$ is $\ell(\lambda)-1$, where $\ell(\lambda)=m$ is the length of the partition. The local model of $I_N$ around a point in $I(\lambda)$ is 
\[
  \R^{m-1} \times I_{\lambda_1} \times \dots \times I_{\lambda_m}.
\]

The refinement order on partitions introduced in Section~\ref{sec:part} is relevant to the decomposition of $I_N$, because it tells us which strata are in the closures of other strata:
\begin{equation}
\label{eq:INcompare}
 I(\lambda) \leq I(\mu) \iff \lambda \leq \mu.
 \end{equation}

\begin{example}
When $N=0$ or $1$, there is a unique partition of $N$, and in both cases $I_N$ is a point. When $N=2$, we have $I_2 \cong [0, \infty)$, with the strata in the decomposition being $I_2(2)=\{0\}$ and $I_2(1,1) = (0, \infty)$. When $N=3$, one can check that $I_3\cong [0, \infty)^2$, with $I_3(3)$ being the origin, $I_3(1,2)$ and $I_3(2,1)$ being the two half-lines on the boundary, and $I_3(1,1,1) \cong (0, \infty)^2$. When $N \geq 4$, the topology of $I_N$ is more complicated; see \cite{BorsukUlam}.
\end{example}

\subsection{The spaces $Z_N$} \label{sec:ZN}
Next, consider the space
\[
  Z_N = \Sym^N(\CC)/\RR,
\]
where $\RR$ acts on the $\CC$ factors by translating the real parts. If we let the coordinates on each copy of $\CC$ be
\[
  z_j = x_j + i y_j, \ j=1, \dots, N,
\]
note that $\Sym^N(\CC)$ can be identified with $\CC^N$ using the elementary symmetric polynomials in $z_1, \dots, z_N$:
 \begin{equation}
\label{eq:coordtZN}
 s_1 = \sum_j z_j, \ \  s_2 = \sum_{j < l} z_{j} z_{l}, \ \dots, \ s_N = \prod_j z_j.
\end{equation}
 Further, dividing by $\RR$ is equivalent to setting $\Re(s_1)= x_1 + \dots + x_N=0.$ Therefore, if $N>0$, we can identify $Z_N$ with $\RR^{2N-1}$, with real coordinates being 
 \begin{equation}
\label{eq:coordZN}
  \Im(s_1), \Re(s_2), \Im(s_2), \dots, \Re(s_N), \Im(s_N).
  \end{equation}
We put a stratification on $Z_N \cong \RR^{2N-1}$, with the strata being given by  the signs of the imaginary parts $y_j = \Im(z_j)$, as well as by which real coordinates coincide for the indices $j$ with $y_j=0$. Precisely, consider the decomposition
\[
  Z_N = \bigsqcup_{\substack{p^-+p^0+p^+=N \\ \lambda \in \Part(p^0) }} Z(p^-, p^0, p^+; \lambda),
\]
with $Z(p^-, p^0, p^+; \lambda)$ consisting of the multisets $\{z_1, \dots, z_N\}$ where $p^-$ of the $y_j$'s are less than zero, $p^0$ are zero, $p^+$ are greater than zero, and the $p^0$ coordinates $x_j$ (those for which $y_j=0$) are split according to the partition $\lambda$, as in the decomposition~\eqref{eq:IN} of the space $I_{p^0}$. Observe that
\begin{equation}
\label{eq:ZN}
 Z(p^-, p^0, p^+; \lambda) \leq Z(q^-, q^0, q^+; \mu)  \iff (p^- \leq q^-, p^+ \leq q^+ \text{ and } \lambda \leq \mu),
 \end{equation}
where we used the order on partitions introduced in Definition~\ref{def:orderpart}. We will denote the closure of $Z(p^-, p^0, p^+; \lambda)$ by $\bZ(p^-, p^0, p^+; \lambda)$.
 
 We have
 \begin{equation}\label{eq:dimZN}
   \dim Z(p^-, p^0, p^+; \lambda) = 2p^-+2p^+ +\ell(\lambda) -1.
 \end{equation}
 For example, there is a unique zero dimensional stratum, namely $Z(0, N, 0; N).$
 
 Observe that the codimension of a stratum $Z(p^-, p^0, p^+; \lambda) \subset Z_N$ is $2p^0-\ell(\lambda)$, which is at least $p^0$. In particular, there are $N+1$ codimension zero strata, corresponding to $p^0=0$. 

 We let $Z(p^-, p^0, p^+)$ be the union of $Z(p^-, p^0, p^+; \lambda)$
 over all $\lambda \in \Part(p^0)$. In particular, $Z(0, N,0)$ is our
 old space $I_N$. Also, note that when $p^0=0$ or $1$, there is a
 unique partition $(p^0)$, so
 $Z(p^-, p^0, p^+)= Z(p^-, p^0, p^+; (p^0))$.

\begin{example}
\label{ex:Z1}
When $N=0$, $Z_0$ is a point. When $N=1$, we have
\[
  Z_1 = \Sym^1(\CC)/ \RR \cong \RR,
\]
with the three strata $(-\infty, 0), \{0\}$, and $(0,\infty).$ \end{example}
  
 \begin{example}
 \label{ex:Z2}
 When $N=2$, we look at  $Z_2=\Sym^2(\CC)/\RR$, with the coordinates on the two copies of $\CC$ being
 \[
   z_1 = x_1  + i y_1, \ \ \ z_2 = x_2 + i y_2.
 \]
We identify $\Sym^2(\CC)$ with $\CC^2$ using the symmetric polynomials $z_1 + z_2$ and $z_1 z_2$. After dividing by $\RR$-translation (that is, setting $x_1+x_2=0$), we are left with three real coordinates on $Z_2$:
\begin{align*}
 a &= \Im(z_1+z_2) = y_1 + y_2, \\
 b &= \Re(z_1z_2) = -(x_1^2 + y_1y_2),\\
 c &= \Im(z_1z_2) = x_1(y_2-y_1).
 \end{align*}
 
 Let $W \subset Z_2$ be the hypersurface given by the condition that at least one of $z_1$ and $z_2$ be real, i.e., $y_1y_2=0$. From here we get $b=-x_1^2 \leq 0$ and $c=\pm x_1a$, so $a^2b+c^2=0$:
 \[
   W = \{(a,b, c) \in \R^3 \mid b \leq 0, a^2b+c^2=0\}.
 \]
 This is the Whitney umbrella shown in Figure~\ref{fig:Whitney}. The complement of $W$ in $Z_2$ splits into three connected components:
 \begin{align*}
  Z(2,0,0) &= \{(a,b, c) \in \R^3 \mid a^2b+c^2<0\text{ and }a < 0\},\\
   Z(1,0,1) &= \{(a,b, c) \in \R^3 \mid a^2b+c^2>0\text{ or }b>0\}, \\
    Z(0,0,2) &= \{(a,b, c) \in \R^3 \mid a^2b+c^2<0\text{ and }a > 0\}, 
    \end{align*}
 corresponding to none, one, or two of the $y_j$ coordinates being positive, and the rest negative.

\begin{figure}
  \centering
  \begin{tikzpicture}[scale=1.2]

    \begin{scope}[z={(0.8cm,-0.2cm)},y={(0,-1cm)},x={(0.8cm,0.4cm)}]

    \draw[thick,fill=black!20] (2,0) ..controls(2,0,-0.25) and (2,1,-0.8).. (2,2,-1) -- (0,2) -- (-2,2,-1) ..controls(-2,1,-0.8) and (-2,0,-0.25).. coordinate[midway] (z110) (-2,0) -- cycle;
    \draw[thick,fill=black!50] (2,0) ..controls(2,0,0.25) and (2,1,0.8).. coordinate[midway] (z011) (2,2,1) -- (0,2) -- (-2,2,1) ..controls(-2,1,0.8) and (-2,0,0.25).. (-2,0) -- cycle;
    \draw[dashed] (2,0) ..controls(2,0,-0.25) and (2,1,-0.8).. (2,2,-1) -- (0,2) -- (-2,2,-1);

    \node at (2,2) {$Z(0,0,2)$};
    \node at (-2,2) {$Z(2,0,0)$};
    \node at (0.5,-1.5,0.5) {$Z(1,0,1)$};
    
    \draw[->] (-3.5,0)--(3.5,0) node[pos=1,anchor=south west] {$a$};
    \draw[->] (0,0)--(0,-2) node[pos=1,anchor=south] {$b$};
    \draw[->] (0,0,-3.5)--(0,0,3.5) node[pos=1,anchor=west] {$c$};

    \draw[ultra thick] (0,0)--(0,2.5) coordinate[midway] (z02011);

    \draw[->] (-2,0,-2) ..controls (-2,0,-1) and ($(z110)+(0,0,-1)$).. node[pos=0,anchor=east] {$Z(1,1,0)$} ($(z110)+(0,0,-0.05)$);
    \draw[->] (2,0,2) ..controls (2,0,1) and ($(z011)+(0.5,0,0)$) .. node[pos=0,anchor=west] {$Z(0,1,1)$} ($(z011)+(0.05,0,0)$);
    
    \draw[->] (0,2,2) ..controls(0,2,1) and ($(z02011)+(0,0,1)$).. node[pos=0,anchor=west]{$Z(0,2,0;(1,1))$} ($(z02011)+(0,0,0.05)$);
    \draw[->] (0,-1,-1) ..controls(0,-1,-0.5) .. node[pos=0,anchor=east] {$Z(0,2,0;(2))$} (0,-0.07,-0.07);
    \end{scope}

    \fill[black] (0,0) circle (2pt);
    
  \end{tikzpicture}
\caption{The Whitney umbrella inside $Z_2\cong\RR^3$.}
\label{fig:Whitney}
\end{figure}

The codimension-$1$ strata are the two halves of $W$: 
\begin{align*}
  Z(1,1,0) &= \{(a,b, c) \in \R^3 \mid a^2b+c^2=0, a < 0\},\\
   Z(0,1,1) &= \{(a,b, c) \in \R^3 \mid a^2b+c^2=0, a > 0\}.
   \end{align*}
This leaves the strata corresponding to $y_1=y_2=0$, which give the half-line
\[
    Z(0,2,0) = \{(0,b,0) \in \R^3 \mid b \leq 0\}.
\]
This is further decomposed into the codimension-$2$ stratum
$Z(0,2,0; (1,1))$, which is the open half-line, and the stratum
$Z(0,2,0; (2)),$ which is just the point $\{(0,0,0)\}.$
\end{example} 

\subsection{Models for internal framings} \label{sec:modelinternal} We
will now construct explicit framings for the normal bundles to the
strata in $Z_N$; assume $N>0$. These will be models for the internal framings in
Section~\ref{sec:neat} later.

\begin{convention}
\label{conv:framings}
A framing of a vector bundle is defined to be a smoothly varying,
ordered basis of the fibers. (We do not ask it to be orthonormal.) The
normal bundle to a submanifold $X \subset V$ is defined to be the
quotient $TV/TX$, which we can identify with any complement of $TX$ in
$TV$. Throughout this paper, when talking about a framing of the
normal bundle, we will always mean that such a complement has been
chosen, and we consider a framing of it; thus, the frame consists of
vectors in $TV$. We do not ask for the complement to be the orthogonal
complement. Of course, from a framing as above one can get one of the
orthogonal complement, and/or an orthonormal framing, by using the
Gram-Schmidt process. However, it is convenient to have the extra
flexibility.
\end{convention}

Let $\tZ_N = \Sym^N(\C)$, which we can identify with $\R \times Z_N$
by letting the first coordinate be $\Re(s_1)$ in the notation of
Equation~\eqref{eq:coordtZN}. The space $\tZ_N$ has a stratification with
strata
\begin{equation}\label{eq:tZ-strata}
  \tZ(p^-, p^0, p^+; \lambda)\cong\R \times Z(p^-, p^0, p^+; \lambda).
\end{equation}
The normal bundle of $Z(p^-, p^0, p^+; \lambda)$ in $Z_N$ is
identified with the normal bundle of $\tZ(p^-, p^0, p^+; \lambda)$ in
$\tZ_N$, so it suffices to frame the latter.

Fix a small $\epsilon>0$.  Consider any stratum
$\tZ(p^-,p^0,p^+;\lambda)\subset \tZ_N$ and a point $z$ in the
stratum. Assume $\lambda=(\lambda_1,\dots,\lambda_m)$. The point $z$
consists of an unordered tuple
\[
  \{z_1=x_1+iy_1,\dots,z_N=x_N+iy_N\}
\]
such that $p^\pm$ of them have imaginary part positive/negative, and
the remaining $p^0$ numbers---say $z_1,\dots,z_{p^0}$---have imaginary
part $0$ with their real parts satisfying
\[
  x_1=\dots= x_{\lambda_1}<x_{\lambda_1+1}=\dots=x_{\lambda_1+\lambda_2}<\dots<x_{p^0-\lambda_m+1}=\dots=x_{p^0}.
\]
For convenience, let us relabel $z_1,\dots,z_{p^0}$ as $z_{1,1},\dots,z_{1,\lambda_1},z_{2,1},\dots,z_{2,\lambda_2},\dots,z_{m,1},\dots,z_{m,\lambda_m}$, so the above inequality becomes
\[
  x_{1,1}=\dots= x_{1,\lambda_1}<x_{2,1}=\dots=x_{2,\lambda_2}<\dots<x_{m,1}=\dots=x_{m,\lambda_m}.
\]

We will pick a smoothly varying $z'\in \tZ(p^-,p^0,p^+;(1,1,\dots,1))$
that is $\epsilon$-close to $z$. For specificity, pick a smoothly
varying $\epsilon_z$ which is less than
$\min\{\epsilon,x_{2,1}-x_{1,1},\dots,x_{m,1}-x_{m-1,1}\}$, and define
$z'$ to be the unordered tuple
$\{z'_{1,1}=x'_{1,1},\dots,z'_{m,\lambda_m}=x'_{m,\lambda_m},z_{p^0+1},\dots,z_{N}\}$ with
\[
  x'_{j,l}=x_{j,1}+\frac{(2l-\lambda_j-1)\epsilon_z}{2\lambda_j}.
\]
(In words, $z'_{1,1},\dots,z'_{m,\lambda_m}$ still have imaginary part
$0$, but their real parts are distinct, and for each $j$, the real
parts $x'_{j,1},\dots,x'_{j,\lambda_j}$ are equally spaced in an
interval of size $<\epsilon_z$ centered at $x_{j,1}$.)

Recall that $\tZ_N\cong\RR^{2N}$ has real coordinates
$\Re(s_1),\Im(s_1),\dots,\Re(s_N),\Im(s_N)$ and so the tangent bundle
has a basis given by the infinitesimal variations
$\delta\Re(s_1),\delta\Im(s_1),\dots,\allowbreak\delta\Re(s_N),\allowbreak\delta\Im(s_N)$,
which are the unit vectors along those coordinate
directions. However, near the point $z'$ we will choose a different
coordinate system, \emph{tailored to the point $z'$}, and consequently
a different basis of the tangent bundle.

Consider disjoint neighborhoods
$U_{1,1}\ni z'_{1,1},\dots, U_{m,\lambda_m}\ni z'_{m,\lambda_m}$ in
$\CC$ and a neighborhood $U\ni\{z_{p^0+1},\dots,z_N\}$ in
$\Sym^{N-p^0}(\CC)$ which is disjoint from each
$U_{j,l}\times\Sym^{N-p^0-1}(\CC)$. Then
\[
W\defeq U_{1,1}\times\dots\times U_{m,\lambda_m}\times U=\prod_{l=1}^{\lambda_1} U_{1,l}\times\dots\times\prod_{l=1}^{\lambda_m} U_{m,l}\times U
\]
is an open neighborhood of $z'$ in $\tZ_N=\Sym^N(\CC)$.

Any point $w\in W$ has coordinates
$w_{j,l}=u_{j,l}+iv_{j,l}\in U_{j,l}$ and an unordered tuple
$\{w_{p^0+1}=u_{p^0+1}+iv_{p^0+1},\dots,w_N=u_N+iv_N\}\in U$. Let
$t_1,\dots, t_{N-p^0}$ be the elementary symmetric polynomials in
$w_{p^0+1},\dots,w_N$. Then our tailored coordinates on $U$ are
$\Re(t_1),\Im(t_1),\dots,\Re(t_{N-p^0}),\Im(t_{N-p^0})$, similar to
Equation~\eqref{eq:coordZN}. However, for each $j$, we pick different
tailored coordinates on $\prod_{l=1}^{\lambda_j} U_{j,l}$:
\[
  v_{j,1},u_{j,2}-u_{j,1},v_{j,2},u_{j,3}-u_{j,2},\dots,u_{j,\lambda_i}-u_{j,\lambda_{i-1}},v_{j,\lambda_i},\sum_{l=1}^{\lambda_j}u_{j,l}.
\]
For notational convenience, let $\Delta_{j,l}=u_{j,l+1}-u_{j,l}$.
Together, these give tailored local coordinates on $W$, which we reorder and write as:
\begin{gather*}
  v_{1,1},\Delta_{1,1},v_{1,2},\Delta_{1,2},\dots,\Delta_{1,\lambda_1-1},v_{1,\lambda_1},\\
  \cdots\\
  v_{m,1},\Delta_{m,1},v_{m,2},\Delta_{m,2},\dots,\Delta_{m,\lambda_m-1},v_{m,\lambda_m},\\
  \sum_{l=1}^{\lambda_1}u_{1,l},\dots,\sum_{l=1}^{\lambda_m}u_{m,l},\Re(t_1),\Im(t_1),\dots,\Re(t_{N-p^0}),\Im(t_{N-p^0}).
\end{gather*}

Their infinitesimal variations give a basis for the tangent bundle of
$W$. Of these, infinitesimals of the form $\delta v_{j,l}$ form a
basis of the normal bundle of $\tZ(p^-,p^+,p^0;(1,\dots,1))$, while
the rest form a basis for its tangent bundle.

Note that $\tZ(p^-,p^+,p^0;\lambda)<\tZ(p^-,p^+,p^0;(1,\dots,1))$; as
$\epsilon\to 0$, the point $z'\in \tZ(p^-,p^+,p^0;(1,\dots,1))$
approaches $z\in\tZ(p^-,p^+,p^0;\lambda)$. Each of the $2N$ vectors
listed above has a limit, and in the limit, the vectors from the last
row
\[
  \lim_{\epsilon\to 0}\delta\sum_{l=1}^{\lambda_1}u_{1,l}|_{z'},\dots,\lim_{\epsilon\to 0}\delta\sum_{l=1}^{\lambda_m}u_{m,l}|_{z'},\lim_{\epsilon\to 0}\delta\Re(t_1)|_{z'},\dots,\lim_{\epsilon\to 0}\delta\Im(t_{N-p^0})|_{z'}
\]
give a basis for the tangent space of $\tZ(p^-,p^+,p^0;\lambda)$ at
$z$. Similarly, the limits of the other vectors give normal vectors of
$\tZ(p^-,p^+,p^0;\lambda)$ in $\tZ_N$ at $z$, but they do not give a
basis. Specifically, we get
\begin{align*}
  \lim_{\epsilon\to 0}\delta v_{j,l_1}|_{z'}&=\lim_{\epsilon\to 0}\delta v_{j,l_2}|_{z'}&\text{for all $j,l_1,l_2$, and}\\
  \lim_{\epsilon\to 0}\delta \Delta_{j,l}|_{z'}&=0&\text{for all $j,l$.}
\end{align*}

Instead we pick a path $\tau\from[0,1]\to \ol{\tZ}(p^-,p^0,p^+;(1,1,\dots,1))$ with $\tau(0)=z$, $\tau(1)=z'$, and $\tau((0,1])$ lying inside the contractible subspace of $\tZ(p^-,p^0,(1,1,\dots,1))$ with coordinates satisfying
\begin{align*}
  v_{j,l}&=0&\text{for all $1\leq j\leq m,1\leq l\leq \lambda_j$,}\\
  \sum_{l=1}^{\lambda_j-1}\Delta_{j,l}&<\epsilon_z&\text{for all $1\leq j\leq m$,}\\
  w_j&=z_j&\text{for all $j>p^0$.}
\end{align*}
Pick an isotopy of $\tZ_N$ supported in a small neighborhood of this
path that sends $\tau(1)=z'$ to $\tau(0)=z$. 

\begin{definition}
\label{def:standardframe1}
The {\em standard frame} for the normal bundle to
$\tZ(p^-, p^0, p^+; \lambda)$ in $\tZ_N$ (equivalently,
$Z(p^-,p^0,p^+;\lambda)$ in $Z_N$) at any point $z$ is given by the
images under the above isotopy of the following vectors in
$T_{z'}\tZ_N$ (in this order):
\begin{gather*}
  \delta v_{1,1},-\delta\Delta_{1,1},\delta v_{1,2},-\delta\Delta_{1,2},\dots,-\delta\Delta_{1,\lambda_1-1},\delta v_{1,\lambda_1},\\
  \cdots\\
  \delta v_{m,1},-\delta \Delta_{m,1},\delta v_{m,2},-\delta \Delta_{m,2},\dots,-\delta \Delta_{m,\lambda_m-1},\delta v_{m,\lambda_m}.
\end{gather*}
While this depends on the choice of $\epsilon$, the smoothly varying
function $\epsilon_z$, the path $\tau$ staying within an $\epsilon_z$
neighborhood, and the isotopy supported near $\tau$, each of these
choices come from a contractible space, and so the different standard
frames at $z$ coming from different choices are canonically isotopic.
\end{definition}

\begin{remark}\label{rem:pause-to-justify}
We pause here to justify our choice of these complicated tailored
coordinate systems and the consequent standard frames. Consider any
two strata $Z(p^-,p^0,p^+;\lambda)<Z(q^-,q^0,q^+;\mu)$ of consecutive
dimensions. From Equations~\eqref{eq:ZN} and~\eqref{eq:dimZN}, we
conclude that there are only three possibilities
\begin{align*}
  (p^-,p^0,p^+)=(q^-,q^0,q^+)&\text{ and }\lambda\in\EC(\mu),\\
  (p^-,p^0,p^+)=(q^--1,q^0+1,q^+)&\text{ and }\lambda\in\UE(\mu),\\
  (p^-,p^0,p^+)=(q^-,q^0+1,q^+-1)&\text{ and }\lambda\in\UE(\mu).
\end{align*}
In each case, for any point $z\in Z(p^-,p^0,p^+;\lambda)$, one of the
normal vectors in the standard frame at $z$ is an inward or outward
normal vector to $Z(q^-,q^0,q^-;\mu)$; and for any nearby
$z'\in Z(q^-,q^0,q^+;\mu)$, the standard frame at $z'$ is composed of
the remaining vectors of the standard frame at $z$ (up to canonical
isotopy). For instance, in the first case, if
\[
\mu=(\mu_1,\dots,\mu_m),\qquad \lambda=(\mu_1,\dots,\mu_{k-1},\mu_k+\mu_{k+1},\mu_{k+2},\dots,\mu_m),
\]
then the normal vector $-\delta\Delta_{k,\mu_k}$ in the standard
frame is the outward normal to $Z(q^-,q^0,q^+;\mu)$. In the second
(respectively third) case, if
\[
\mu=(\mu_1,\dots,\mu_m),\qquad \lambda=(\mu_1,\dots,\mu_{k-1},1,\mu_{k},\dots,\mu_m),
\]
then the normal vector $\delta v_{k,1}$ in the standard frame is the
outward (respectively inward) normal to $Z(q^-,q^0,q^+;\mu)$.
\end{remark}

\begin{example}
  Consider the stratified space $Z_2=\Sym^2(\CC)/\RR$ from
  Example~\ref{ex:Z2}, viewed as the hyperplane $\Re(s_1)=0$ of the
  space $\tZ_2=\Sym^2(\CC)$ which has real coordinates
  \begin{align*}
    \Re(s_1)&=x_1+x_2\\
    a=\Im(s_1)&=y_1+y_2\\
    b=\Re(s_2)&=x_1x_2-y_1y_2\\
    c=\Im(s_2)&=x_1y_2+x_2y_1;
  \end{align*}
  its various strata $Z(p^-,p^0,p^+;\lambda)$ are the intersections
  of the corresponding strata $\tZ(p^-,p^0,p^+;\lambda)$ with this
  hyperplane.

  The stratum $\tZ(1,1,0)$ is a codimension one stratum. Near points
  in $Z(1,1,0)$ we use tailored coordinates $(y_1,x_1,x_2,y_2)$. For
  points actually on $Z(1,1,0)$, we have $y_1=x_1+x_2=0$ and $y_2<0$,
  so the coordinates are $(y_1=0,x_1=c/a,x_2=-c/a,y_2=a)$. The
  standard frame at any point $(a,b,c)\in Z(1,1,0)$ consists of the
  single vector $\delta y_1$; since
  \begin{align*}
    \delta a&=\delta y_1\\
    \delta b&=-y_2\delta y_1=-a(\delta y_1)\\
    \delta c&=x_2\delta y_1=-c/a(\delta y_1),
  \end{align*}
  the standard frame is the vector $(1,-a,-c/a)$. Observe that this is a
  normal vector pointing towards $Z(1,0,1)$ and away from $Z(2,0,0)$.

  A similar calculation shows that for any point $(a,b,c)$ in the
  other codimension one stratum $Z(0,1,1)$, the standard frame is
  again $(1,-a,-c/a)$, which is now a normal vector pointing towards
  $Z(0,0,2)$ and away from $Z(1,0,1)$.

  Near the one-dimensional stratum $Z(0,2,0;(1,1))$, the tailored local
  coordinates are $(y_1,y_2,x_1,x_2)$. For points actually on
  $Z(0,2,0;(1,1))$, $y_1=y_2=x_1+x_2=0$ and $x_1<x_2$, so the
  coordinates are $(y_1=0,y_2=0,x_1=-\sqrt{-b},x_2=\sqrt{-b})$. The
  standard frame at any point $(0,b,0)\in Z(0,2,0;(1,1))$ is
  $[\delta y_1,\delta y_2]$; we have
  \begin{align*}
    \delta a&=\delta y_1+\delta y_2\\
    \delta b&=-y_2\delta y_1-y_1\delta y_2=0\\
    \delta c&=x_1\delta y_2+x_2\delta y_1=-\sqrt{-b}(\delta y_2) +\sqrt{-b}(\delta y_1),
  \end{align*}
  and hence, the standard frame is
  $[(1,0,\sqrt{-b}),(1,0,-\sqrt{-b})]$. The two vectors
  point towards the two sheets of $Z(0,1,1)$ entering
  $Z(0,2,0;(1,1))$, and away from the two sheets of $Z(1,1,0)$
  entering $Z(0,2,0;(1,1))$. Moroever, for any nearby point in
  $Z(1,1,0)$ or $Z(0,1,1)$, the standard frame is $(1,-a,-c/a)$, which
  approaches one of the two vectors in the standard frame as we
  approach $Z(0,2,0;(1,1))$ (that is, as $a,c\to 0$ with $c/a = \pm \sqrt{-b}$).

  Finally for the zero-dimensional stratum $Z(0,2,0;(2))$, we pick a
  nearby point $z'=\{x_1+iy_1,x_2+iy_2\}\in Z(0,2,0;(2))$ with
  $y_1=y_2=x_1+x_2=0$ and $x_1<x_2$, but use tailored coordinates
  $(y_1,\Delta_1=x_2-x_1,y_2,\Re(s_1)=x_1+x_2)$ near $z'$; in terms of
  $b$, the tailored coordinates of $z'$ are
  $(y_1=0,\Delta_1=2\sqrt{-b},y_2=0,\Re(s_1)=0)$. We consider the frame $[\delta y_1,-\delta \Delta_1,\delta y_2]$; we calculate
  \begin{align*}
    a&=y_1+y_2\\
    \delta a&=\delta y_1+\delta y_2\\
    b&=x_1x_2-y_1y_2=(\Re(s_1)^2-\Delta_1^2)/4-y_1y_2\\
    \delta b&=-\Delta_1(\delta\Delta_1)/2-y_2(\delta y_1)-y_1(\delta y_2)=-\sqrt{-b}(\delta\Delta_1)\\
    c&=x_1y_2+x_2y_1=y_2(\Re(s_1)-\Delta_1)/2+y_1(\Re(s_1)+\Delta_1)/2\\
    \delta c&=(y_1-y_2)(\delta\Delta_1)/2+(\Re(s_1)+\Delta_1)(\delta y_1)/2+(\Re(s_1)-\Delta_1)(\delta y_2)/2=\sqrt{-b}(\delta y_1-\delta y_2),
  \end{align*}
  so the frame is given by
  $[(1,0,\sqrt{-b}),(0,\sqrt{-b},0),(1,0,-\sqrt{-b})]$. Instead of
  taking the limit as $b\to 0$ (which would not produce a
  $3$-dimensional frame), we take the path $(0,bt,0)$, $t\in[0,1]$,
  choose an isotopy supported near this path that sends $(0,b,0)$ to
  $(0,0,0)$, and consider the image of this frame under this
  isotopy. Assuming the isotopy near the path is an affine translation
  in the $a,b,c$ coordinates, the standard frame at $(0,0,0)$ is also
  given by
  $[(1,0,\sqrt{-b}),(0,\sqrt{-b},0),(1,0,-\sqrt{-b})]$. This depends
  on $b$, but $b$ varies on the contractible space $(-\epsilon,0)$ for
  some $\epsilon>0$, and the different frames for different choices
  of $b$ are canonically isotopic. Observe that the second vector
  points away from the stratum $Z(0,2,0;(1,1))$; and by construction,
  the other two vectors are canonically isotopic to the frame
  $[(1,0,\sqrt{-b}),(1,0,-\sqrt{-b})]$ at $z'$,
  which is the standard frame at $z'$.
  \end{example}



 \subsection{Local models from $Z_N$} \label{sec:localZN}
 Consider a stratum 
 \[
   Y =  Z(p^-, p^0, p^+; \lambda) \subseteq Z_N
 \]
 with $\lambda=(\lambda_1,\dots, \lambda_m)$. Around any point
 $y \in Y$, consider the affine subspace generated by the standard
 frame for the normal bundle to $Y$, as in
 Definition~\ref{def:standardframe1}. This subspace is the {\em
   local model for $Y$ in the normal directions}, and we have an isomorphism
\begin{equation}
\label{eq:locmodZN}
\L(Y) \defeq  Z_{\lambda_1} \times \dots \times Z_{\lambda_m}
\end{equation}
where the subspace of $\L(Y)$ generated by the vectors
$\delta v_{j,1},-\delta\Delta_{j,1},\dots,\delta v_{j,\lambda_j}$ is
isomorphic to $Z_{\lambda_j}$ by the linear map that sends these
vectors to the standard frame of the unique $0$-dimensional stratum
inside $Z_{\lambda_j}$.

The above decomposition allows us to understand the local models
around $y$ inside all the other strata. Using the local coordinate
system around $y$, we can identify $\L(Y)$ with a small disk inside
$Z_N$, transverse to $Y$ at $y$ and of complementary dimension. The
stratification of each $Z_{\lambda_i}$ induces a product
stratification of $\L(Y)$, and the stratification of $Z_N$,
intersected with $\L(Y)$, also produces a stratification of
$\L(Y)$. These two stratifications are canonically isomorphic. Indeed,
consider another stratum $X = Z(q^-, q^0, q^+; \mu) $ with $Y \leq
X$. Then the intersection
\[
  \L(Y; X) \defeq  \L(Y) \cap X
\]
is the union of the following strata in the product stratification of
$\L(Y)$:
\[
  Z_{\lambda_1}(q_1^-, q_1^0, q_1^+; \mu^1) \times \cdots \times Z_{\lambda_m}(q_m^-, q_m^0, q_m^+;\mu^m),\qquad q_i^-+q_i^0+q_i^+=\lambda_i,\mu^i\in\Part(\lambda_i),
\]
satisfying
\[
q^- = p^- + \sum q_i^-,  \qquad
q^0 = \sum q_i^0, \qquad
q^+ = p^+  + \sum q_i^+\qquad
  \mu = \mu^1 * \dots * \mu^m,
\]
where $*$ is the concatenation of partitions from
\eqref{eq:concatenate}. We call $\L(Y, X)$ the {\em local model for
  $Y$ in the normal directions inside $X$}.
 
 \begin{example}
   In Example~\ref{ex:Z1}, the stratum $Z(0,1,0)$ lives inside the closed
   stratum $\bZ(0, 0, 1)$ as $ 0 \in \R_+$. (Note that for a multifaceted
   manifold $X$, the inclusion of any multifacet $\del_i X\into X$
   also has the same local model, but we will treat those differently;
   in the stratified spaces $X$ that we will construct, the points with
   the local model $Z(0,1,0) \in \bZ(0, 0, 1)$ will be part of the special
   boundary, while points in the multifacets will be part of the
   ordinary boundary.)
 \end{example}

 \begin{example}\label{ex:Z020-inside-Z2}
   In Example~\ref{ex:Z2}, the local model of $Z(0,2,0;(1,1))$ inside
   $Z_2$ is same as $\RR\times \{0\}$ inside
   $\RR \times Z_1\times Z_1$, which is a stratified
   space with four $3$-dimensional strata, four $2$-dimensional
   strata, and a unique $1$-dimensional stratum:
   \[
     \begin{tikzpicture}[scale=0.6]
       \fill[black!20] (-1,-1) rectangle (1,1);
       \draw[thick] (-1,0) to (1,0);
       \draw[thick] (0,-1) to (0,1);
       \fill[black] (0,0) circle (4pt);
       \node[anchor=east] at (-1,0) {$\RR\times$};
     \end{tikzpicture}
   \]
   The local model of $Z(0,2,0;(1,1))$ inside $\bZ(0,0,2)$ is same as
   $\RR\times\{0\}$ inside
   $\RR\times\bZ(0,0,1)\times \bZ(0,1,0)\cong\RR\times\RR_+^2$, which
   is one of the four closed $3$-dimensional strata of
   $\RR\times Z_1\times Z_1$. (Note once again, codimension-$2$
   boundary inside a multifaceted manifold has the same local model,
   but will be treated differently.)  Finally, the local model of
   $Z(0,2,0;(1,1))$ inside $\bZ(0,1,1)$ is same as
   $\RR\times\{0\}$ inside
   $\RR\times \big(Z(0,1,0)\times \bZ(0,0,1)\cup \bZ(0,0,1)\times
   Z(0,1,0)\big)\cong\RR\times\{(x,y)\in\R_+^2\mid xy=0\}$, which is
   the union of two of the four closed $2$-dimensional strata of
   $\RR\times Z_1\times Z_1$.
 \end{example}
 
We can now prove the following.
 \begin{proposition}
 \label{prop:WhitneyZN}
 The stratification of $Z_N \cong \R^{2N-1}$ described in Section~\ref{sec:ZN} is a Whitney stratification.
 \end{proposition}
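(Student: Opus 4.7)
The plan is to argue by strong induction on $N$, combining the semialgebraic tools recalled in Section~\ref{sec:stratified} with the local product structure spelled out in Section~\ref{sec:localZN}. The cases $N=0,1$ are trivial (respectively a point and a line with a three-element stratification), so assume $N \geq 2$.

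The first step is to set up the semialgebraic machinery. Under the identification $Z_N \cong \R^{2N-1}$ via~\eqref{eq:coordZN}, each stratum $Z(p^-, p^0, p^+; \lambda)$ is the image, under the polynomial symmetric-function map, of the subset of $\Sym^N(\C)/\R$ cut out by sign conditions on the imaginary parts and equalities among the real parts of the $z_j$. By Proposition~\ref{prop:imagesa} each stratum is semialgebraic, and the tailored local coordinates of Section~\ref{sec:modelinternal} exhibit it as a smooth semialgebraic submanifold of $\R^{2N-1}$. This puts us in position to apply Proposition~\ref{prop:badset}.

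Next I would split by the dimension of the smaller stratum $Y$. The case $\dim Y = 0$, namely $Y = Z(0,N,0;N) = \{0\}$, is immediate: for any stratum $X \geq Y$ the bad set $B(Y,X)$ is semialgebraic of dimension strictly less than $\dim Y = 0$ by Proposition~\ref{prop:badset}, hence empty. This sidesteps the most delicate point, where the transverse slice is all of $Z_N$ and no reduction to smaller $N$ is possible. For $\dim Y \geq 1$, write $Y = Z(p^-,p^0,p^+;\lambda)$ with $\lambda = (\lambda_1, \ldots, \lambda_m)$; a quick case-split confirms $\lambda_i < N$ for every $i$ (either $p^-+p^+ > 0$, in which case $\lambda_i \leq p^0 < N$, or else $p^-=p^+=0$ and $\dim Y \geq 1$ forces $m \geq 2$, giving $\lambda_i \leq N-1$). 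Section~\ref{sec:localZN} then provides, near every $y \in Y$, a stratification-preserving diffeomorphism from a neighborhood of $y$ in $Z_N$ to an open subset of $\R^{\dim Y} \times \L(Y)$, where $\L(Y) = Z_{\lambda_1} \times \cdots \times Z_{\lambda_m}$ carries the product stratification. By the inductive hypothesis each $Z_{\lambda_i}$ is Whitney stratified, so Proposition~\ref{prop:products} makes $\R^{\dim Y} \times \L(Y)$ Whitney stratified; locality and smooth invariance of the Whitney conditions then transport this to $Z_N$ near $y$, completing the induction.

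The main obstacle I anticipate is making this local product diffeomorphism fully precise. The global coordinates on $Z_N$ given by the elementary symmetric polynomials in all $N$ entries do not factor through $\L(Y)$ when several entries coincide, so one has to pass to the tailored local coordinate system of Section~\ref{sec:modelinternal}, in which the local elementary symmetric polynomials within each coincidence group supply the desired product factorization. Verifying that this change of coordinates is smooth across the relevant strata and genuinely respects the stratification is the one piece of bookkeeping that deserves careful writing, but it is essentially built into the tailored coordinate construction.
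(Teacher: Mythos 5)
Your proof is correct and follows essentially the same route as the paper: reduce to checking Whitney regularity over the origin via the local product models $\L(Y) = Z_{\lambda_1}\times\cdots\times Z_{\lambda_m}$ together with Proposition~\ref{prop:products}, then handle the origin by observing that the strata are semialgebraic (Proposition~\ref{prop:imagesa}) so the bad set $B(\{0\},X)$ has negative dimension and is therefore empty (Proposition~\ref{prop:badset}). Your write-up just makes the induction on $N$ and the verification that each $\lambda_i<N$ explicit, which the paper leaves implicit.
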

 
 \begin{proof}
   Given the local models from Equation~\eqref{eq:locmodZN} and the
   fact that products of Whitney stratifications are Whitney
   (cf.~Proposition~\ref{prop:products}), it suffices to consider the
   origin $0 \in Z_N$, and show that all the bigger strata are Whitney
   regular over it.
 
 Consider the projection
 \[
   \pi\from \C^N \to Z_N, \ \ \  \pi(z_1, \dots, z_N) = (s_1, \dots, s_N)
 \]
 where $s_i$ are the elementary symmetric polynomials from
 \eqref{eq:coordtZN}. Any stratum $Y \subset Z_N$ is the image under
 $\pi$ of a subset of $\C^N$ given by linear equalities and
 inequalities. Since $\pi$ is a polynomial mapping,
 Proposition~\ref{prop:imagesa} implies that $Y$ is
 semialgebraic. Proposition~\ref{prop:badset} shows that the bad set
 $B(\{0\}, Y) \subset \{0\}$ is empty, and therefore $Y$ is Whitney
 regular over the origin.
 \end{proof}

 \subsection{More general local models}
\label{sec:generalLM}
We now complete the description of the local models that will appear in the stratified spaces in this paper. More generally than $Z_N$, let $\vN = (N_2, \dots, N_{n}) \in \NN^{n-1}$, and consider the space
\begin{equation}
\label{eq:ZvN}
Z_{\vN} \defeq  \left(\Sym^{N_2}(\CC)\times\dots\times\Sym^{N_{n}}(\CC)\right)/\RR,
\end{equation}
where we divided by the diagonal action of $\RR$. If $\vN=0$, this is a point; otherwise, this is homeomorphic to $\RR^{2|\vN|-1}$, and admits a decomposition into strata
\[
  Z(\vp^{\, \, -}, \vp^{\, \, 0}, \vp^{\, \, +}; \vlambda)
\]
where $\vp^{\, \ast}=(p^\ast_2, \dots, p^\ast_n)$ for $\ast \in \{-,0, +\}$ and $\vlambda=(\lambda_2, \dots, \lambda_n)$
satisfy
\[
  p_i^- + p_i^0 + p_i^+=N_i, \ \lambda_i \in \Part(p_i^0).
\]
Specifically, $Z(\vp^{\, \, -}, \vp^{\, \, 0}, \vp^{\, \, +}; \vlambda)$ is given by asking the imaginary parts of the coordinates in each $\Sym^{N_i}(\CC)$ to consist of $p_i^-$ negative numbers, $p_i^0$ zeros (with the corresponding real parts decomposed according to the partition $\lambda_i$), and $p_i^+$ positive numbers.

\begin{definition}
\label{def:standardframe2}
The {\em standard frame} for the normal bundle to a stratum $Z(\vp^{\, \, -}, \vp^{\, \, 0}, \vp^{\, \, +}; \vlambda) \subset Z_{\vN}$ is obtained by concatenating the standard frames for each $Z(p_i^-, p_i^0, p_i^+; \lambda_i)\subset Z_{N_i}$ described in Definition~\ref{def:standardframe1}.
\end{definition}


Even more generally, for the local models in Section~\ref{sec:moduli} we will consider products of the  ones considered above, as well as half-intervals $[0, \infty)$ that account for the usual (non-special) boundaries of $\langle n \rangle$-manifolds, and $\R$ factors that just correspond to some tangent directions inside the stratum. The most general model is of the form
\begin{equation}
\label{eq:mostgeneral}
 \R^a \times \R_+^{r-1}  \times Z_{\vN^1}  \times Z_{\vN^2} \times \dots \times Z_{\vN^r},
 \end{equation}
with the induced product stratification from its factors---where $\R$ has a single stratum, and $\R_+$ has two strata: $\{0\}$ and $(0, \infty)$. The local models will be based on the strata of this space inside the closures of bigger strata.

Note that we can define tailored local coordinates around any point in \eqref{eq:mostgeneral}, in a manner similar to what we did for $Z_N$ in Section~\ref{sec:modelinternal}.

\begin{definition}
\label{def:standardframe3}
The {\em standard frame} for the normal bundle to a stratum inside the space \eqref{eq:mostgeneral} is obtained by concatenating the standard frames for each of the factors, where for $\{0\} \subset \R^+$ we use the standard unit vector, and for the strata in each $Z_{\vN^i}$ we use the frames from Definition~\ref{def:standardframe2}.
\end{definition}

\begin{definition}
\label{def:lmn} 
Given strata
\[
  Y, X \subset  \R^a \times \R_+^{r-1}  \times Z_{\vN^1}  \times Z_{\vN^2} \times \dots \times Z_{\vN^r}
\]
 with $Y \leq X$, we let the {\em  local model for $Y$ in the normal directions inside $X$}, denoted $\L(Y; X)$, be the intersection of $X$ with a small ball in the linear subspace spanned by the standard frame for the normal bundle to $Y$ (in tailored local coordinates around any point of $Y$). The union of $\L(Y; X)$ over all $X \geq Y$ is denoted $\L(Y)$. 
\end{definition}

\begin{proposition}
\label{prop:productWhitneyZN}
 The stratification from Equation~\eqref{eq:mostgeneral}  is a Whitney stratification.
 \end{proposition}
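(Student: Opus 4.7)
The plan is to follow the strategy of Proposition~\ref{prop:WhitneyZN}, reducing everything to two already-proven ingredients: the fact that each $Z_N$ is Whitney-stratified (Proposition~\ref{prop:WhitneyZN}), and the fact that products of Whitney stratifications are Whitney (Proposition~\ref{prop:products}). By the latter, it suffices to show that each factor in \eqref{eq:mostgeneral} carries a Whitney stratification. The Euclidean factor $\R^a$ and the factor $\R_+^{r-1}$ (a product of half-lines with the obvious two-stratum structure) are trivially Whitney, so the real content is checking that each $Z_{\vN^i}$ is Whitney-stratified.

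First I would fix one such factor, write $\vN = (N_1, \dots, N_n)$, and work point by point. At a point $y$ in a stratum $Y = Z(\vp^{\,-}, \vp^{\,0}, \vp^{\,+}; \vlambda)$ of $Z_{\vN}$, I would set up the tailored local coordinates from Section~\ref{sec:modelinternal}: in each factor $\Sym^{N_j}(\C)$, group the entries of $y$ that coincide according to $\lambda_j$, take the elementary symmetric polynomials in the infinitesimal variations within each group, and impose the single real constraint coming from the diagonal $\R$-quotient. In these coordinates, a neighborhood of $y$ in $Z_{\vN}$ becomes a neighborhood of the origin in
\[
\R^{d} \times \prod_{j=1}^{n} \prod_{k=1}^{\ell(\lambda_j)} Z_{\lambda_{j,k}},
\]
where $d$ is the tangent dimension of $Y$ and each $Z_{\lambda_{j,k}}$ carries its stratification from Section~\ref{sec:ZN}, with the product stratification on the right matching the ambient stratification of $Z_{\vN}$ near $y$.

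Once this local model identification is in place, Proposition~\ref{prop:WhitneyZN} gives the Whitney property of each $Z_{\lambda_{j,k}}$, Proposition~\ref{prop:products} propagates it to the product, and the extra $\R^{d}$ factor does not disturb it. Since Whitney regularity is a purely local condition on pairs of strata, it then holds at $y$, and since $y$ was arbitrary, the entire $Z_{\vN}$ is Whitney-stratified.

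The hard part will be the local model identification itself: one must check that the tailored symmetric-polynomial coordinates actually give a stratum-preserving diffeomorphism, under which nearby strata $X \geq Y$ of $Z_{\vN}$ pull back bijectively to product strata of the displayed model, with the splitting of a coinciding real group in $\Sym^{N_j}(\C)$ into entries of various imaginary signs matching strata of the corresponding $Z_{\lambda_{j,k}}$. This is essentially bookkeeping with the definitions in Sections~\ref{sec:ZN} and~\ref{sec:modelinternal}, but it is the only place where something substantive beyond invoking the earlier propositions is required.
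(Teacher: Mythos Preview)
Your proposal is correct, but the paper's proof takes a different and shorter route. Rather than setting up tailored local coordinates at each point of $Z_{\vN}$ and identifying the local model as a product $\R^d \times \prod_{j,k} Z_{\lambda_{j,k}}$, the paper observes that since the Whitney condition is local, a stratification of $V/\R$ (for a free $\R$-action) is Whitney if and only if its pullback to $V$ is. Applying this in both directions: Proposition~\ref{prop:WhitneyZN} says $Z_{N} = \Sym^N(\CC)/\R$ is Whitney, hence so is $\Sym^N(\CC)$ with the pulled-back stratification; then $\prod_j \Sym^{N_j}(\CC)$ is Whitney by Proposition~\ref{prop:products}; and quotienting by the single diagonal $\R$ gives that $Z_{\vN}$ is Whitney. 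The remaining factors $\R^a$ and $\R_+^{r-1}$ are handled by one more application of Proposition~\ref{prop:products}.

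The trade-off: your approach is more hands-on and stays entirely within the local-model framework already built in Sections~\ref{sec:modelinternal}--\ref{sec:generalLM}, but it obliges you to verify (as you correctly flag) that the tailored coordinates give a stratum-preserving diffeomorphism onto the displayed product. The paper's approach sidesteps that verification entirely by passing to the cover $\prod_j \Sym^{N_j}(\CC)$, where the product structure is literal rather than merely local, and using the free-quotient principle to descend. This is both shorter and avoids the bookkeeping you anticipated as ``the hard part.''
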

 
 \begin{proof}
Since the Whitney condition is local, observe that a stratification of a space of the form $V/\R$ (where  $\R$ acts freely on $V$) is Whitney if and only if its pullback to $V$ is Whitney. In Proposition~\ref{prop:WhitneyZN} we established that the stratification of $Z_N = \Sym^n(\C)/\R$ is Whitney. Furthermore, Proposition~\ref{prop:products} says that the product of Whitney stratifications is Whitney. Combining these facts, we get the conclusion.
 \end{proof}

\section{Moduli spaces}
\label{sec:moduli}
Let us recall some notation from Section~\ref{sec:grid-background}.
Let $O_1,\dots,O_n$ and $X_1, \dots, X_n$ be the markings on the
grid. We only consider (positive) domains $D\in\pdomains(x,y)$ that do not go over
the marking $O_1$; let $\coefficients{D}\in\ZZ^{n-1}$ be the vector
that records the coefficients of $D$ at the remaining $O$-markings.
For each $j$, let $H_j$, respectively $V_j$, be the horizontal row,
respectively vertical column, that contains $O_j$. The set of periodic
domains $\periodic$ can be identified with $\domains(x,x)$ for any $x$,
and we have
$\periodic=\ZZ\basis{H_2,\dots,H_n,V_2,\dots,V_n}.$

Let $\DNlambda=\DNlambda(\Grid)$ be the set
of triples $(D,\vN,\vlambda)$ where $D\in\pdomains(x,y)$, and
\begin{gather*}
  \vN=(N_2,\dots,N_{n})\in\NN^{n-1},\\
  \vlambda=(\lambda_2, \dots, \lambda_{n}), \ \lambda_j \in \Part(N_j).
\end{gather*}
(These are the generators of the chain complex $\CDP_*(\Grid)$ from
Section~\ref{sec:new}.) For each $(D,\vN,\vlambda)\in\DNlambda$, we
will construct a stratified space
\[
  \bModuli_{\vN, \vlambda}(D)
\]
in Section~\ref{sec:construction}. This will come equipped with an
embedding in a Euclidean space, be Whitney stratified and hence (by
Theorem~\ref{thm:TM}) Thom-Mather stratified, and the local models for
the stratification will be those considered in
Section~\ref{sec:local}. In this section, we will present some model
spaces that could {\em potentially} play the role of
$\bModuli_{\vN, \vlambda}(D)$, in a few simple cases, as illustrated
in Figures~\ref{fig:Dind2}--\ref{fig:2row}.

\begin{figure}
  \centering
  \begin{tikzpicture}
    \foreach\l [count=\c from 0] in {a,b}{
      \begin{scope}[yshift=-2.5*\c cm]
        \node at (-2,0) {$(\l)$};

        \foreach \suba/\subb/\pos/\size/\decor in {0/0/center/0.7/0, 3.5/0.1/south/0.4/1,  5.5/0.1/south/0.4/2}{
        
          \node[anchor=\pos] at (\suba,\subb){
            \begin{tikzpicture}[scale=\size]

              \ifnum\c=0

              \draw (0,0)--++(2,0)--++(0,1)--++(-2,0)--++(0,-1) (1,1.5)--++(2,0)--++(0,1)--++(-2,0)--++(0,-1);

              \ifnum\decor=0
              \foreach \x/\y in {0/0,2/1,1/1.5,3/2.5}{\fill[black] (\x,\y) circle (0.12);}
              \foreach \x/\y in {2/0,0/1,1/2.5,3/1.5}{\draw[fill=white] (\x,\y) circle (0.12);}
              \fi

              \ifnum\decor=1
              \node[anchor=center] at (1,0.5) {\tiny 1};
              \node[anchor=center] at (2,2) {\tiny 2};
              \fi

              \ifnum\decor=2
              \node[anchor=center] at (1,0.5) {\tiny 2};
              \node[anchor=center] at (2,2) {\tiny 1};
              \fi

              \else

              \draw (0,0)--++(2,0)--++(0,1)--++(-1,0)--++(0,1)--++(-1,0)--++(0,-2);

              \ifnum\decor=0
              \foreach \x/\y in {0/0,2/1,1/2}{\fill[black] (\x,\y) circle (0.12);}
              \foreach \x/\y in {2/0,0/2,1/1}{\draw[fill=white] (\x,\y) circle (0.12);}
              \fi

              \ifnum\decor=1
              \draw(1,1)--(1,0);
              \node[anchor=center] at (0.5,1) {\tiny 1};
              \node[anchor=center] at (1.5,0.5) {\tiny 2};
              \fi

              \ifnum\decor=2
              \draw(1,1)--(0,1);
              \node[anchor=center] at (1,0.5) {\tiny 1};
              \node[anchor=center] at (0.5,1.5) {\tiny 2};
              \fi

              \fi
            \end{tikzpicture}};
          
        }
        
        \node at (2,0) {$\longrightarrow$};

        \draw[thick] (3.5,0)--++(2,0) node[pos=0]{$\bullet$} node[pos=1]{$\bullet$};

        \ifnum\c=1
        \node at (0,-1.3) {$D$};
        \node at (4.5,-1.3) {$\bModuli_0(D)$};
        \fi
      \end{scope}
    }
  \end{tikzpicture}
\caption {Domains of index two on the grid, and the associated moduli spaces $\bModuli_0(D)$. On the left hand side, the black dots are part of the initial point $x$ in each domain, and the white dots part of the final point $y$. The ends of the moduli space correspond to different decompositions $D = D^1 * D^2$. In each picture we indicate $D^i$ with the respective digit $i \in \{1,2\}$. }
\label{fig:Dind2}
\end{figure}

\begin{figure}
  \centering
  \begin{tikzpicture}
    
    \node at (0,0){
      \begin{tikzpicture}[scale=0.7]
        \draw (0,0)--++(2,0)--++(0,1)--++(-2,0)--++(0,-1) (0.67,1.5)--++(2,0)--++(0,1)--++(-2,0)--++(0,-1) (1.33,3)--++(2,0)--++(0,1)--++(-2,0)--++(0,-1);
        \foreach \x/\y in {0/0,2/1,0.67/1.5,2.67/2.5,1.33/3,3.33/4}{\fill[black] (\x,\y) circle (0.12);}
        \foreach \x/\y in {2/0,0/1,0.67/2.5,2.67/1.5,1.33/4,3.33/3}{\draw[fill=white] (\x,\y) circle (0.12);}
        \node at (1,0.5) {\small $C$};
        \node at (1.67,2) {\small $B$};
        \node at (2.33,3.5) {\small $A$};
      \end{tikzpicture}};

    \node at (2,0) {$\longrightarrow$};
    
    \begin{scope}[xshift=5.5cm,scale=2]
      
      \draw[fill=black!20] (0:1)--(60:1)--(120:1)--(180:1)--(240:1)--(300:1)--(0:1);
      \foreach \a in {0,120,240}{\begin{scope}[rotate=\a]
          \draw[ultra thick] (0:1)--(60:1) node[pos=0] {$\bullet$} node[pos=1] {$\bullet$};
        \end{scope}}
      
      \foreach \x/\y/\z/\r [count=\c from 0] in {A/C/B/1.2,C/A/B/1.3,C/B/A/1.3,B/C/A/1.2,B/A/C/1.3,A/B/C/1.3}{
        \node at (60*\c:\r) {\begin{tikzpicture}[scale=0.5]
            
            \draw (0,0) to[out=20,in=-20] ++(0,1) to[out=20,in=-20] ++(0,1) to[out=20,in=-20] ++(0,1);
            \draw (0,0) to[out=160,in=-160] ++(0,1) to[out=160,in=-160] ++(0,1) to[out=160,in=-160] ++(0,1);
            \node at (0,2.5) {\tiny $\x$};
            \node at (0,1.5) {\tiny $\y$};
            \node at (0,0.5) {\tiny $\z$};
            
          \end{tikzpicture}};
      }

      \foreach \x/\y/\r [count=\c from 0] in {AC/B/1.15,BC/A/1.15,AB/C/1.3}{
        \node at (30+120*\c:\r) {\begin{tikzpicture}[scale=0.5]
            
            \draw (0,0) to[out=20,in=-20] ++(0,1) to[out=20,in=-20] ++(0,2);
            \draw (0,0) to[out=160,in=-160] ++(0,1) to[out=160,in=-160] ++(0,2);
            \node at (0,2) {\tiny $\x$};
            \node at (0,0.5) {\tiny $\y$};
            
          \end{tikzpicture}};
      }     

      \foreach \x/\y/\r [count=\c from 0] in {A/BC/1.15,C/AB/1.3,B/AC/1.15}{
        \node at (-30+120*\c:\r) {\begin{tikzpicture}[scale=0.5]
            
            \draw (0,0) to[out=20,in=-20] ++(0,2) to[out=20,in=-20] ++(0,1);
            \draw (0,0) to[out=160,in=-160] ++(0,2) to[out=160,in=-160] ++(0,1);
            \node at (0,2.5) {\tiny $\x$};
            \node at (0,1) {\tiny $\y$};
            
          \end{tikzpicture}};
      }     

    \end{scope}
    
  \end{tikzpicture}
  \caption {A domain of index three on the grid, and the associated moduli spaces $\bModuli_0(D)$. For each edge and vertex on the boundary we show the corresponding decomposition $D=D^1 * D^2$ or $D=D^1 * D^2 * D^3$ by a picture of the trajectory breaking. (Holomorphic strips from $x$ to $y$ are drawn vertically, with $x$ at top and $y$ at bottom (and $\alpha$ on the left and $\beta$ on the right)---for example, the bottom edge corresponds to $(A + B) * C$.) The multifacet $\del_1\bModuli_0(D)$ is made of the thin edges, and the multifacet $\del_2\bModuli_0(D)$ is made of the thick edges.}
\label{fig:Dind3}
\end{figure}

\begin{figure}
  \centering
  \begin{tikzpicture}

    \node at (-0.5,1.25) {{\small $A+B$} $\longrightarrow$};
    
    \draw[thick] (1.5,0.5)--++(0,2) node[pos=1] {$\bullet$};
    \fill[blue!60] (1.5,0.5) circle (0.1);
    \node[anchor=east] at (1.5,2.5) {\begin{tikzpicture}[scale=0.5]
        \draw (0,0) to[out=20,in=-20] ++(0,1) to[out=20,in=-20] ++(0,1);
        \draw (0,0) to[out=160,in=-160] ++(0,1) to[out=160,in=-160] ++(0,1);
        \node[anchor=center] at (0,1.5) {\tiny $A$};
        \node[anchor=center] at (0,0.5) {\tiny $B$};
      \end{tikzpicture}};
    \node[anchor=east] at (1.5,0.5) {\begin{tikzpicture}[scale=0.5]
        \draw[fill=black!50] (0,0) to[out=70,in=-70] coordinate[midway] (r) ++(0,2) to[out=-110,in=110] coordinate[midway] (l) ++(0,-2);
        \draw (l) arc (0:360:0.5);        
        \node[anchor=center] at ($(l)+(-0.5,0)$) {\tiny $\mathit{AB}$};
      \end{tikzpicture}};
    
    \node at (-0.5,-1.25) {{\small $C+D$} $\longrightarrow$};
    \draw[thick] (1.5,-0.5)--++(0,-2) node[pos=1] {$\bullet$};
    \fill[blue!60] (1.5,-0.5) circle (0.1);
    \node[anchor=east] at (1.5,-2.5) {\begin{tikzpicture}[scale=0.5]
        \draw (0,0) to[out=20,in=-20] ++(0,1) to[out=20,in=-20] ++(0,1);
        \draw (0,0) to[out=160,in=-160] ++(0,1) to[out=160,in=-160] ++(0,1);
        \node[anchor=center] at (0,1.5) {\tiny $C$};
        \node[anchor=center] at (0,0.5) {\tiny $D$};
      \end{tikzpicture}};
    \node[anchor=east] at (1.5,-0.5) {\begin{tikzpicture}[scale=0.5]
        \draw[fill=black!50] (0,0) to[out=70,in=-70] coordinate[midway] (r) ++(0,2) to[out=-110,in=110] coordinate[midway] (l) ++(0,-2);
        \draw (r) arc (-180:180:0.5);        
        \node[anchor=center] at ($(r)+(0.5,0)$) {\tiny $\mathit{CD}$};
      \end{tikzpicture}};

    \draw (2,1.25) to[out=0,in=180] (3,0);
    \draw[->] (2,-1.25) to[out=0,in=180] (3,0);
    
    \draw[thick] (3.5,-2)--++(0,4) node[pos=0] {$\bullet$} node[pos=1] {$\bullet$};
    \fill[blue!60] (3.5,0) circle (0.1);
    \node[anchor=west] at (3.5,-2) {\begin{tikzpicture}[scale=0.5]
        \draw (0,0) to[out=20,in=-20] ++(0,1) to[out=20,in=-20] ++(0,1);
        \draw (0,0) to[out=160,in=-160] ++(0,1) to[out=160,in=-160] ++(0,1);
        \node[anchor=center] at (0,1.5) {\tiny $C$};
        \node[anchor=center] at (0,0.5) {\tiny $D$};
      \end{tikzpicture}};
    \node[anchor=west] at (3.5,2) {\begin{tikzpicture}[scale=0.5]
        \draw (0,0) to[out=20,in=-20] ++(0,1) to[out=20,in=-20] ++(0,1);
        \draw (0,0) to[out=160,in=-160] ++(0,1) to[out=160,in=-160] ++(0,1);
        \node[anchor=center] at (0,1.5) {\tiny $A$};
        \node[anchor=center] at (0,0.5) {\tiny $B$};
      \end{tikzpicture}};

    \begin{scope}[xshift=-5cm]

      \draw (-2,-1)--++(4.5,0) (-2,0)--++(4.5,0) (-1,-2)--++(0,4.5) (0,-2)--++(0,4.5);
      \draw[dashed] (0.5,-1)--++(0,1) (1.5,-1)--++(0,1) (-1,0.5)--++(1,0) (-1,1.5)--++(1,0);
      \foreach \i/\j in {0.5/-1,1.5/0,-1/0.5,0/1.5}{\fill[black] (\i,\j) circle (0.12*0.7);}
      \foreach \i/\j in {0.5/0,1.5/-1,0/0.5,-1/1.5}{\draw[fill=white] (\i,\j) circle (0.12*0.7);}

      \node at (-0.5,-0.5) {\small $O_i$};
      \node at (-0.5,-2) {\small $V_i$};
      \node at (-2,-0.5) {\small $H_i$};
      \node at (1,-0.5) {\small $A$};
      \node at (2,-0.5) {\small $B$};
      \node at (-0.5,1) {\small $C$};
      \node at (-0.5,2) {\small $D$};
    \end{scope}
    
  \end{tikzpicture}
\caption {Gluing the moduli spaces for the row and the column that contain the same marking $O_i$. The special boundary points are shown in blue. These blue points are associated to a configuration made of a trivial strip (shown in gray) and a boundary degeneration.}
\label{fig:row}
\end{figure}

\begin{figure}
  \centering
  \begin{tikzpicture}

    \begin{scope}[scale=2]
      \draw[fill=black!20] (60:1)--(120:1)--(180:1)--(240:1)--(300:1);
      \draw[ultra thick] (120:1)-- node[pos=0] {$\bullet$} node[pos=1] {$\bullet$}(180:1) (240:1)-- node[pos=0] {$\bullet$}(300:1);
      \draw[ultra thick,blue!60] (300:1)--(60:1);
      \fill[blue!60] (300:1) circle (0.05);
      \fill[blue!60] (60:1) circle (0.05);

      \foreach \x/\y/\z/\pos [count=\c from 2] in {C/A/B/south,A/C/B/east,A/B/C/north}{
        \node[anchor=\pos] (n\c) at (60*\c:1) {\begin{tikzpicture}[scale=0.5]
            \draw (0,0) to[out=20,in=-20] ++(0,1) to[out=20,in=-20] ++(0,1) to[out=20,in=-20] ++(0,1);
            \draw (0,0) to[out=160,in=-160] ++(0,1) to[out=160,in=-160] ++(0,1) to[out=160,in=-160] ++(0,1);
            \node[anchor=center] at (0,2.5) {\tiny $\x$};
            \node[anchor=center] at (0,1.5) {\tiny $\y$};
            \node[anchor=center] at (0,0.5) {\tiny $\z$};
          \end{tikzpicture}};
      }

      \node[anchor=east] at (n3) {{\small $A+B+C$} $\longrightarrow\quad$};
      
      \foreach \x/\y/\r [count=\c from 1] in {AC/B/1.15,AB/C/1.3}{
        \node at (30+120*\c:\r) {\begin{tikzpicture}[scale=0.5]
            \draw (0,0) to[out=20,in=-20] ++(0,1) to[out=20,in=-20] ++(0,2);
            \draw (0,0) to[out=160,in=-160] ++(0,1) to[out=160,in=-160] ++(0,2);
            \node at (0,2) {\tiny $\x$};
            \node at (0,0.5) {\tiny $\y$};
          \end{tikzpicture}};
      }     

      \foreach \x/\y/\r [count=\c from 1] in {C/AB/1.3,A/BC/1.15}{
        \node at (-30+120*\c:\r) {\begin{tikzpicture}[scale=0.5]
            \draw (0,0) to[out=20,in=-20] ++(0,2) to[out=20,in=-20] ++(0,1);
            \draw (0,0) to[out=160,in=-160] ++(0,2) to[out=160,in=-160] ++(0,1);
            \node at (0,2.5) {\tiny $\x$};
            \node at (0,1) {\tiny $\y$};
          \end{tikzpicture}};
      }     

      \node[anchor=south] at (60:1) {\begin{tikzpicture}[scale=0.5]

          \draw[fill=black!50] (0,0) to[out=70,in=-70] coordinate[midway] (r) ++(0,2) to[out=-110,in=110] coordinate[midway] (l) ++(0,-2);
          \draw (l) arc (0:360:0.5);        
          \node[anchor=center] at ($(l)+(-0.5,0)$) {\tiny $\mathit{AB}$};
          \draw (0,2) to[out=20,in=-20] ++(0,1) to[out=-160,in=160] ++(0,-1);
          \node[anchor=center] at (0,2.5) {\tiny $C$};
        \end{tikzpicture}};

      \node[anchor=north] at (-60:1) {\begin{tikzpicture}[scale=0.5]

          \draw[fill=black!50] (0,1) to[out=70,in=-70] coordinate[midway] (r) ++(0,2) to[out=-110,in=110] coordinate[midway] (l) ++(0,-2);
          \draw (l) arc (0:360:0.5);        
          \node[anchor=center] at ($(l)+(-0.5,0)$) {\tiny $\mathit{AB}$};
          \draw (0,0) to[out=20,in=-20] ++(0,1) to[out=-160,in=160] ++(0,-1);
          \node[anchor=center] at (0,0.5) {\tiny $C$};
        \end{tikzpicture}};
      
      \node[anchor=west] at (0:0.5) {\begin{tikzpicture}[scale=0.5]
          \draw (0,0) to[out=20,in=-20] coordinate[midway] (r) ++(0,1) to[out=-160,in=160] coordinate[midway] (l) ++(0,-1);
          \node[anchor=center] at (0,0.5) {\tiny $C$};
          \draw (l) arc (0:360:0.5);        
          \node[anchor=center] at ($(l)+(-0.5,0)$) {\tiny $\mathit{AB}$};
        \end{tikzpicture}};
      
    \end{scope}

    \begin{scope}[xshift=3cm,scale=2]
      \draw[fill=black!20] (60:1) to[out=0,in=120] (0:2) to[out=-120,in=0] (300:1);
      \draw[ultra thick] (0:2) to[out=-120,in=0] node[pos=0] {$\bullet$} (300:1);
      \draw[ultra thick,blue!60] (300:1)--(60:1);
      \fill[blue!60] (300:1) circle (0.05);
      \fill[blue!60] (60:1) circle (0.05);
      
      \node[anchor=west] (nn) at (0:2) {\begin{tikzpicture}[scale=0.5]
          \draw (0,0) to[out=20,in=-20] ++(0,1) to[out=20,in=-20] ++(0,1) to[out=20,in=-20] ++(0,1);
          \draw (0,0) to[out=160,in=-160] ++(0,1) to[out=160,in=-160] ++(0,1) to[out=160,in=-160] ++(0,1);
          \node[anchor=center] at (0,2.5) {\tiny $C$};
          \node[anchor=center] at (0,1.5) {\tiny $D$};
          \node[anchor=center] at (0,0.5) {\tiny $C$};
        \end{tikzpicture}};

      \node[anchor=west] at (nn) {$\quad\longleftarrow$ {\small $2C+D$}};

      \node at (-35:1.8) {\begin{tikzpicture}[scale=0.5]
          \draw (0,0) to[out=20,in=-20] ++(0,1) to[out=20,in=-20] ++(0,2);
          \draw (0,0) to[out=160,in=-160] ++(0,1) to[out=160,in=-160] ++(0,2);
          \node at (0,2) {\tiny $CD$};
          \node at (0,0.5) {\tiny $C$};
        \end{tikzpicture}};

      \node at (35:1.8) {\begin{tikzpicture}[scale=0.5]
          \draw (0,0) to[out=20,in=-20] ++(0,2) to[out=20,in=-20] ++(0,1);
          \draw (0,0) to[out=160,in=-160] ++(0,2) to[out=160,in=-160] ++(0,1);
          \node at (0,2.5) {\tiny $C$};
          \node at (0,1) {\tiny $CD$};
        \end{tikzpicture}};

      \node[anchor=south] at (60:1) {\begin{tikzpicture}[scale=0.5]

          \draw[fill=black!50] (0,0) to[out=70,in=-70] coordinate[midway] (r) ++(0,2) to[out=-110,in=110] coordinate[midway] (l) ++(0,-2);
          \draw (r) arc (-180:180:0.5);        
          \node[anchor=center] at ($(r)+(0.5,0)$) {\tiny $\mathit{CD}$};
          \draw (0,2) to[out=20,in=-20] ++(0,1) to[out=-160,in=160] ++(0,-1);
          \node[anchor=center] at (0,2.5) {\tiny $C$};
        \end{tikzpicture}};

      \node[anchor=north] at (-60:1) {\begin{tikzpicture}[scale=0.5]

          \draw[fill=black!50] (0,1) to[out=70,in=-70] coordinate[midway] (r) ++(0,2) to[out=-110,in=110] coordinate[midway] (l) ++(0,-2);
          \draw (r) arc (-180:180:0.5);        
          \node[anchor=center] at ($(r)+(0.5,0)$) {\tiny $\mathit{CD}$};
          \draw (0,0) to[out=20,in=-20] ++(0,1) to[out=-160,in=160] ++(0,-1);
          \node[anchor=center] at (0,0.5) {\tiny $C$};
        \end{tikzpicture}};

      \node[anchor=east] at (0:0.5) {\begin{tikzpicture}[scale=0.5]
          \draw (0,0) to[out=20,in=-20] coordinate[midway] (r) ++(0,1) to[out=-160,in=160] coordinate[midway] (l) ++(0,-1);
          \node[anchor=center] at (0,0.5) {\tiny $C$};
          \draw (r) arc (-180:180:0.5);        
          \node[anchor=center] at ($(r)+(0.5,0)$) {\tiny $\mathit{CD}$};
        \end{tikzpicture}};

    \end{scope}

    \draw (0,-3.5) to[out=-90,in=90] ++(3,-1);
    \draw[->] (6,-3.5) to[out=-90,in=90] ++(-3,-1);
    
    \begin{scope}[yshift=-6.5cm,xshift=2cm,scale=2]
      \draw[fill=black!20] (60:1)--(120:1)--(180:1)--(240:1)--(300:1) to[out=0,in=-120] (0:2) to[out=120,in=0] (60:1);
      \draw[ultra thick] (120:1)-- node[pos=0] {$\bullet$} node[pos=1] {$\bullet$}(180:1) (240:1)-- node[pos=0] {$\bullet$}(300:1) to[out=0,in=-120] node[pos=1] {$\bullet$} (0:2);
      \draw[ultra thick,blue!60] (300:1)--(60:1);
      \fill[blue!60] (300:1) circle (0.05);
      \fill[blue!60] (60:1) circle (0.05);
    \end{scope}

  \end{tikzpicture}
\caption {Reusing the rectangles $A,B,C,D$ from Figure~\ref{fig:row},
  we glue the moduli spaces $\bM_0(A+B+C)$ and $\bM_0(2C+D)$ along
  their special boundaries (the blue edges) to get a
  $\langle 2\rangle$-manifold, a $4$-gon. As in
  Figure~\ref{fig:Dind3}, the thin edges represent $\del_1\bModuli_0$,
  and the thick edges represent $\del_2\bModuli_0$.}
\label{fig:rowplus}
\end{figure}

\begin{figure}
  \centering
  \begin{tikzpicture}[scale=0.8]

    \foreach \i in {1,2,6,7}{\draw (\i,0) --++(0,8);\draw (0,\i) --++(8,0);}

    \foreach\i/\j/\k in {2.5/1/1,4.5/2/-1,3.5/6/1,5.5/7/-1}{
      \draw[dashed] (\i,\j)--++(0,\k);
      \draw[dashed] (\j,\i)--++(\k,0);
      \fill[black] (\i,\j) circle (0.12*0.7/0.8);
      \fill[black] (\j,\i) circle (0.12*0.7/0.8);
      \draw[fill=white] ($(\i,\j)+(0,\k)$) circle (0.12*0.7/0.8);
      \draw[fill=white] ($(\j,\i)+(\k,0)$) circle (0.12*0.7/0.8);
    }
    \node at (0,1.5) {\small $H_i$};
    \node at (1.5,0) {\small $V_i$};
    \node at (1.5,1.5) {\small $O_i$};

    \node at (0,6.5) {\small $H_j$};
    \node at (6.5,0) {\small $V_j$};
    \node at (6.5,6.5) {\small $O_j$};

    \foreach \i/\j/\l in {3.5/1.5/A,5.5/1.5/B,4.5/6.5/C,2.5/6.5/D,1.5/3.5/E,1.5/5.5/F,6.5/4.5/G,6.5/2.5/H}{\node at (\i,\j) {\small $\l$};}

  \end{tikzpicture}
\caption {Two rows and two columns on the grid.}
\label{fig:rowscolumns}
\end{figure}

\begin{figure}
  \centering
  \begin{tikzpicture}[scale=3]

    \begin{scope}[y={(-1.8cm,0.1cm)},x={(0.8cm,0.6cm)},z={(0cm,2.2cm)}]
    \def\a{0.3}
    \def\t{0.4}
    \pgfmathsetmacro\s{1-\a+4*\a*\t}
    \def\fake{-0.15}
    
    \coordinate (c-d-AB) at (0,1);
    \coordinate (CD-AB) at (0,0);
    \coordinate (CD-a-b) at (1,0);
    \coordinate (c-d-a-b) at (1,1);

    \coordinate (AB-c-d) at (0,1,1);
    \coordinate (AB-CD) at (0,0,1);
    \coordinate (a-b-CD) at (1,0,1);
    \coordinate (a-b-c-d) at (1,1,1);

    \coordinate (c-AB-d) at (0,1+\a,0.5);
    \coordinate (a-CD-b) at (1+\a,0,0.5);

    \coordinate(c-a-d-b) at (1+2*\a*\t,1+2*\a*\t,\t);
    \coordinate(a-c-b-d) at (1+2*\a*\t,1+2*\a*\t,1-\t);

    \coordinate (c-a-b-d) at (\s,1+\a,0.5);
    \coordinate (a-c-d-b) at (1+\a,\s,0.5);
    
    \draw (c-d-AB)-- coordinate[midway] (cd-AB) (CD-AB)--(CD-a-b);
    \draw (AB-c-d)-- coordinate[midway] (AB-cd) (AB-CD)--(a-b-CD)--(a-b-c-d)--(AB-c-d);
    \draw (AB-c-d)-- coordinate[midway] (cAB-d) (c-AB-d)-- coordinate[midway] (c-dAB)(c-d-AB) (a-b-CD)--(a-CD-b)--(CD-a-b);
    \draw[dashed] (CD-a-b)--(c-d-a-b)--(c-d-AB);
    \draw[dashed](c-a-d-b)--(c-d-a-b) (a-b-c-d)--(a-c-b-d);
    \draw[dashed] (c-AB-d)--(c-a-b-d) (a-CD-b)--(a-c-d-b);
    \draw[dashed] (c-a-b-d)--(c-a-d-b)--(a-c-d-b)--(a-c-b-d)--(c-a-b-d);
    \draw[thick,green!70!black] (AB-CD)-- coordinate[midway] (ABCD) (CD-AB);

    \foreach \coord in {c-d-AB,CD-a-b,c-d-a-b,AB-c-d,a-b-CD,a-b-c-d,c-AB-d,a-CD-b,c-a-d-b,a-c-b-d,c-a-b-d,a-c-d-b}{      
      \node at (\coord) {$\bullet$};
    }
    \foreach \coord in {AB-CD,CD-AB}{      
      \node[green!70!black] at (\coord) {\large$\bullet$};
    }

    \coordinate (front) at (0,1,0.45);
    \coordinate (right) at (1,0,0.8);
    \coordinate (bottom) at (0.5,0.5,0);
    \coordinate (bottomfake) at (0.5,0.5,\fake);
    \coordinate (top) at (0.5,0.5,1);

    \draw[latex-] (front) --++(-0.9,0,0) coordinate[pos=1] (ABcd) node[pos=0.6,anchor=south east] {\small front};
    \draw[latex-] (right) --++(0,-0.3,0) coordinate[pos=1] (CDab) node[pos=0.7,anchor=south] {\small right};
    \draw[latex-] (top) --++(0,0,0.3) coordinate[pos=1] (ab-cd) node[pos=0.7,anchor=west] {\small top};
    \draw[latex-,dashed] (bottom)--(bottomfake);
    \draw (bottomfake) --++(0,0,-0.2) coordinate[pos=1] (cd-ab) node[pos=0.6,anchor=west] {\small bottom};

    
  \end{scope}

    \foreach \coord/\pos/\x/\y/\z/\w in {
      a-b-c-d/south/A/B/C/D,
      a-c-b-d/south/A/C/B/D,
      c-a-b-d/east/C/A/B/D,
      a-c-d-b/west/A/C/D/B,
      c-a-d-b/north/C/A/D/B,
      c-d-a-b/north/C/D/A/B}{
      \node[anchor=\pos,inner sep=0,outer sep=2pt] at (\coord){\begin{tikzpicture}[scale=0.4]
          \draw (0,0)to[out=10,in=-10]++(0,1)to[out=10,in=-10]++(0,1)to[out=10,in=-10]++(0,1)to[out=10,in=-10]++(0,1);
          \draw (0,0)to[out=170,in=-170]++(0,1)to[out=170,in=-170]++(0,1)to[out=170,in=-170]++(0,1)to[out=170,in=-170]++(0,1);
          \node[anchor=center] at (0,3.5) {\tiny $\x$};
          \node[anchor=center] at (0,2.5) {\tiny $\y$};
          \node[anchor=center] at (0,1.5) {\tiny $\z$};
          \node[anchor=center] at (0,0.5) {\tiny $\w$};
        \end{tikzpicture}};
      }

    \foreach \coord/\pos/\x/\y/\z in {
      a-b-CD/south/A/B/CD,
      c-d-AB/north/C/D/AB}{
      \node[anchor=\pos,inner sep=0,outer sep=2pt] at (\coord){\begin{tikzpicture}[scale=0.4]
          \draw (0,2)to[out=10,in=-10]++(0,1)to[out=10,in=-10]++(0,1);
          \draw (0,2)to[out=170,in=-170]++(0,1)to[out=170,in=-170]++(0,1);
          \node[anchor=center] at (0,3.5) {\tiny $\x$};
          \node[anchor=center] at (0,2.5) {\tiny $\y$};
          \draw[fill=black!50] (0,0) to[out=70,in=-70] coordinate[midway] (r) ++(0,2) to[out=-110,in=110] coordinate[midway] (l) ++(0,-2);
          \draw (l) arc (0:360:0.5);
          \node[anchor=center] at ($(l)+(-0.5,0)$) {\tiny $\mathit{\z}$};
        \end{tikzpicture}};
      }

    \foreach \coord/\pos/\x/\y/\z in {
      AB-c-d/south/AB/C/D,
      CD-a-b/north/CD/A/B}{
      \node[anchor=\pos,inner sep=0,outer sep=2pt] at (\coord){\begin{tikzpicture}[scale=0.4]
          \draw (0,0)to[out=10,in=-10]++(0,1)to[out=10,in=-10]++(0,1);
          \draw (0,0)to[out=170,in=-170]++(0,1)to[out=170,in=-170]++(0,1);
          \node[anchor=center] at (0,1.5) {\tiny $\y$};
          \node[anchor=center] at (0,0.5) {\tiny $\z$};
          \draw[fill=black!50] (0,2) to[out=70,in=-70] coordinate[midway] (r) ++(0,2) to[out=-110,in=110] coordinate[midway] (l) ++(0,-2);
          \draw (l) arc (0:360:0.5);
          \node[anchor=center] at ($(l)+(-0.5,0)$) {\tiny $\mathit{\x}$};
        \end{tikzpicture}};
      }

    \foreach \coord/\pos/\x/\y/\z in {
      a-CD-b/west/A/CD/B,
      c-AB-d/east/C/AB/D}{
      \node[anchor=\pos,inner sep=0,outer sep=2pt] at (\coord){\begin{tikzpicture}[scale=0.4]
          \draw (0,0)to[out=10,in=-10]++(0,1)++(0,2)to[out=10,in=-10]++(0,1);
          \draw (0,0)to[out=170,in=-170]++(0,1)++(0,2)to[out=170,in=-170]++(0,1);
          \node[anchor=center] at (0,3.5) {\tiny $\x$};
          \node[anchor=center] at (0,0.5) {\tiny $\z$};
          \draw[fill=black!50] (0,1) to[out=70,in=-70] coordinate[midway] (r) ++(0,2) to[out=-110,in=110] coordinate[midway] (l) ++(0,-2);
          \draw (l) arc (0:360:0.5);
          \node[anchor=center] at ($(l)+(-0.5,0)$) {\tiny $\mathit{\y}$};
        \end{tikzpicture}};
      }

    \foreach \coord/\pos/\x/\y in {
      AB-CD/south/AB/CD,
      CD-AB/north/CD/AB}{
      \node[anchor=\pos,inner sep=0,outer sep=2pt] at (\coord){\begin{tikzpicture}[scale=0.4]
          \draw[fill=black!50] (0,0) to[out=70,in=-70] coordinate[midway] (r) ++(0,2) to[out=-110,in=110] coordinate[midway] (l) ++(0,-2);
          \draw (l) arc (0:360:0.5);
          \node[anchor=center] at ($(l)+(-0.5,0)$) {\tiny $\mathit{\y}$};
          \draw[fill=black!50] (0,2) to[out=70,in=-70] coordinate[midway] (r) ++(0,2) to[out=-110,in=110] coordinate[midway] (l) ++(0,-2);
          \draw (l) arc (0:360:0.5);
          \node[anchor=center] at ($(l)+(-0.5,0)$) {\tiny $\mathit{\x}$};
        \end{tikzpicture}};
      }

      \node[anchor=north,inner sep=0,outer sep=2pt] at (cd-AB){\begin{tikzpicture}[scale=0.4]
          \draw (0,2)to[out=10,in=-10]++(0,2);
          \draw (0,2)to[out=170,in=-170]++(0,2);
          \node[anchor=center] at (0,3) {\tiny $\mathit{CD}$};
          \draw[fill=black!50] (0,0) to[out=70,in=-70] coordinate[midway] (r) ++(0,2) to[out=-110,in=110] coordinate[midway] (l) ++(0,-2);
          \draw (l) arc (0:360:0.5);
          \node[anchor=center] at ($(l)+(-0.5,0)$) {\tiny $\mathit{AB}$};
        \end{tikzpicture}};

      \node[anchor=south,inner sep=0,outer sep=2pt] at (AB-cd){\begin{tikzpicture}[scale=0.4]
          \draw (0,0)to[out=10,in=-10]++(0,2);
          \draw (0,0)to[out=170,in=-170]++(0,2);
          \node[anchor=center] at (0,1) {\tiny $\mathit{CD}$};
          \draw[fill=black!50] (0,2) to[out=70,in=-70] coordinate[midway] (r) ++(0,2) to[out=-110,in=110] coordinate[midway] (l) ++(0,-2);
          \draw (l) arc (0:360:0.5);
          \node[anchor=center] at ($(l)+(-0.5,0)$) {\tiny $\mathit{AB}$};
        \end{tikzpicture}};

      \node[anchor=south east,inner sep=0,outer sep=2pt] at (cAB-d){\begin{tikzpicture}[scale=0.4]
          \draw (0,0)to[out=10,in=-10]++(0,1)to[out=10,in=-10]  coordinate[midway] (r)++(0,1);
          \draw (0,0)to[out=170,in=-170]++(0,1)to[out=170,in=-170] coordinate[midway] (l)++(0,1);
          \node[anchor=center] at (0,0.5) {\tiny $D$};
          \node[anchor=center] at (0,1.5) {\tiny $C$};
          \draw (l) arc (0:360:0.5);
          \node[anchor=center] at ($(l)+(-0.5,0)$) {\tiny $\mathit{AB}$};
        \end{tikzpicture}};

      \node[anchor=north east,inner sep=0,outer sep=2pt] at (c-dAB){\begin{tikzpicture}[scale=0.4]
          \draw (0,0)to[out=10,in=-10]coordinate[midway] (r)++(0,1)to[out=10,in=-10]  ++(0,1);
          \draw (0,0)to[out=170,in=-170] coordinate[midway] (l)++(0,1)to[out=170,in=-170] ++(0,1);
          \node[anchor=center] at (0,0.5) {\tiny $D$};
          \node[anchor=center] at (0,1.5) {\tiny $C$};
          \draw (l) arc (0:360:0.5);
          \node[anchor=center] at ($(l)+(-0.5,0)$) {\tiny $\mathit{AB}$};
        \end{tikzpicture}};
      
      \node[anchor=west,inner sep=0,outer sep=2pt] at (ABCD){\begin{tikzpicture}[scale=0.4]
          \draw[fill=black!50] (0,0) to[out=70,in=-70] ++(0,3) to[out=-110,in=110] coordinate[pos=0.3] (l1) coordinate[pos=0.7] (l2) ++(0,-3);
          \draw (l1) arc (0:360:0.5);
          \draw (l2) arc (0:360:0.5);
          \node[anchor=center] at ($(l1)+(-0.5,0)$) {\tiny $\mathit{AB}$};
          \node[anchor=center] at ($(l2)+(-0.5,0)$) {\tiny $\mathit{CD}$};
        \end{tikzpicture}};

    \foreach \coord/\pos/\x/\y in {
      ABcd/east/AB/CD,
      CDab/west/CD/AB}{
      \node[anchor=\pos,inner sep=0,outer sep=2pt] at (\coord){\begin{tikzpicture}[scale=0.4]
          \draw (0,0)to[out=10,in=-10]++(0,2);
          \draw (0,0)to[out=170,in=-170] coordinate[midway] (l)++(0,2);
          \node[anchor=center] at (0,1) {\tiny $\mathit{\y}$};
          \draw (l) arc (0:360:0.5);
          \node[anchor=center] at ($(l)+(-0.5,0)$) {\tiny $\mathit{\x}$};
        \end{tikzpicture}};
      }

    \foreach \coord/\pos/\x/\y in {
      ab-cd/south/AB/CD,
      cd-ab/north/CD/AB}{
      \node[anchor=\pos,inner sep=0,outer sep=2pt] at (\coord){\begin{tikzpicture}[scale=0.4]
          \draw (0,0)to[out=10,in=-10]++(0,2)to[out=10,in=-10]++(0,2);
          \draw (0,0)to[out=170,in=-170]++(0,2)to[out=170,in=-170]++(0,2);
          \node[anchor=center] at (0,3) {\tiny $\mathit{\x}$};
          \node[anchor=center] at (0,1) {\tiny $\mathit{\y}$};
        \end{tikzpicture}};
      }

  \end{tikzpicture}
\caption {For the picture in Figure~\ref{fig:rowscolumns}, we show the moduli space for $H_i+H_j=A+B+C+D$. This is a polyhedron with $9$ facets; two of these (the front and the right facet) form the special boundary. We show the configurations corresponding to each vertex, and to some of the facets (the top, bottom, right, and front one). We also show the configurations for the five edges along the front facet. (In particular, note that the green edge corresponds to two disk degenerations.) The configurations that correspond to the remaining edges and facets can be easily deduced.}
\label{fig:rowrow}
\end{figure}

\begin{figure}
  \centering
  \begin{tikzpicture}[scale=3]

    \begin{scope}[y={(-1.8cm,0.1cm)},x={(0.8cm,0.6cm)},z={(0cm,2.2cm)}]
      \def\curve{0.2}

    \coordinate (c-d-AB) at (0,1);
    \coordinate (CD-AB) at (0,0);
    \coordinate (CD-a-b) at (1,0);

    \coordinate (AB-c-d) at (0,1,1);
    \coordinate (AB-CD) at (0,0,1);
    \coordinate (a-b-CD) at (1,0,1);

    \coordinate (CD=AB) at (0,0,0.3);
    \coordinate (AB=CD) at (0,0,0.7);

    \coordinate (center) at (0,0,0.5);
    
    \draw (c-d-AB)-- (CD-AB)--(CD-a-b);
    \draw (AB-c-d)-- (AB-CD)--(a-b-CD);

    \draw[thick,green!70!black] (AB-CD)-- coordinate[midway] (ABCD) (AB=CD) (CD=AB)-- coordinate[midway] (CDAB) (CD-AB);
    \fill[green!30] (AB=CD) ..controls ($(AB=CD)+(\curve,-\curve,0)$) and ($(CD=AB)+(\curve,-\curve,0)$).. (CD=AB) ..controls ($(CD=AB)+(-\curve,\curve,0)$) and ($(AB=CD)+(-\curve,\curve,0)$).. (AB=CD);
    \draw[thick,green!70!black] (AB=CD) ..controls ($(AB=CD)+(\curve,-\curve,0)$) and ($(CD=AB)+(\curve,-\curve,0)$).. coordinate[midway] (AB--CD) (CD=AB);
    \draw[thick,green!70!black] (AB=CD) ..controls ($(AB=CD)+(-\curve,\curve,0)$) and ($(CD=AB)+(-\curve,\curve,0)$).. coordinate[midway] (CD--AB) (CD=AB);

    \foreach \coord in {c-d-AB,CD-a-b,AB-c-d,a-b-CD}{
      \node at (\coord) {$\bullet$};
    }
    \foreach \coord in {AB-CD,CD-AB,AB=CD,CD=AB}{      
      \node[green!70!black] at (\coord) {\large$\bullet$};
    }

    \coordinate (right) at (1,0,0.5); 
    \coordinate (front)  at (0,1,0.5);
    \coordinate (top)  at (0.8,0.8,1);

    
  \end{scope}

  \node[sloped,yslant=0.7,xscale=0.9] at (right) {\small right};
  \node[sloped,yslant=-0.05] at (front)  {\small front};
  \node[sloped,rotate=-55,yslant=1.3,yscale=1.2,xscale=0.7] at (top)  {\small top};

  \foreach \coord/\pos/\x/\y in {
    AB-CD/south/AB/CD,
    CD-AB/north/CD/AB}{
    \node[anchor=\pos,inner sep=0,outer sep=2pt] at (\coord){\begin{tikzpicture}[scale=0.4]
        \draw[fill=black!50] (0,0) to[out=70,in=-70] coordinate[midway] (r) ++(0,2) to[out=-110,in=110] coordinate[midway] (l) ++(0,-2);
        \draw (l) arc (0:360:0.5);
        \node[anchor=center] at ($(l)+(-0.5,0)$) {\tiny $\mathit{\y}$};
        \draw[fill=black!50] (0,2) to[out=70,in=-70] coordinate[midway] (r) ++(0,2) to[out=-110,in=110] coordinate[midway] (l) ++(0,-2);
        \draw (l) arc (0:360:0.5);
        \node[anchor=center] at ($(l)+(-0.5,0)$) {\tiny $\mathit{\x}$};
      \end{tikzpicture}};
  }

  \foreach \coord/\pos/\x/\y/\sign in {
    AB=CD/east/AB/CD/1,
    CD=AB/east/CD/AB/-1}{
    \draw[latex-] ($(\coord)+(-0.04cm,0.01*\sign cm)$) -- ++(-0.4cm,0.1*\sign cm) 
    node[anchor=\pos,inner sep=0,outer sep=2pt] {\begin{tikzpicture}[scale=0.4]
        \draw[-,fill=black!50] (0,0) to[out=70,in=-70] ++(0,2) to[out=-110,in=110] coordinate[midway] (l) ++(0,-2);
        \draw[-,fill=black!50] (l) arc (0:360:0.5);
        \draw[-] ($(l)+(-0.5,0)+(120:1)$) circle (0.5);
        \draw[-] ($(l)+(-0.5,0)+(-120:1)$) circle (0.5);
        \node[anchor=center] at ($(l)+(-0.5,0)+(120:1)$) {\tiny $\mathit{\x}$};
        \node[anchor=center] at ($(l)+(-0.5,0)+(-120:1)$) {\tiny $\mathit{\y}$};
      \end{tikzpicture}};
  }

      
  \foreach \coord/\pos/\x/\y in {
    ABCD/west/AB/CD,
    CDAB/west/CD/AB}{
    \node[anchor=\pos,inner sep=0,outer sep=2pt] at (\coord){\begin{tikzpicture}[scale=0.4]
        \draw[fill=black!50] (0,0) to[out=70,in=-70] ++(0,3) to[out=-110,in=110] coordinate[pos=0.3] (l1) coordinate[pos=0.7] (l2) ++(0,-3);
        \draw (l1) arc (0:360:0.5);
        \draw (l2) arc (0:360:0.5);
        \node[anchor=center] at ($(l1)+(-0.5,0)$) {\tiny $\mathit{\x}$};
        \node[anchor=center] at ($(l2)+(-0.5,0)$) {\tiny $\mathit{\y}$};
      \end{tikzpicture}};
  }

  \foreach \coord/\pos/\x/\y in {
    AB--CD/west/AB/CD,
    CD--AB/east/CD/AB}{
    \node[anchor=\pos,inner sep=0,outer sep=2pt] at (\coord) {\begin{tikzpicture}[scale=0.4]
        \draw[fill=black!50] (0,0) to[out=70,in=-70] ++(0,2) to[out=-110,in=110] coordinate[midway] (l) ++(0,-2);
        \draw (l) arc (0:360:0.5);
        \node[anchor=center] at ($(l)+(-0.5,0)$) {\tiny $\mathit{\x}$};
        \draw ($(l)+(-1.5,0)$) circle (0.5);
        \node[anchor=center] at ($(l)+(-1.5,0)$) {\tiny $\mathit{\y}$};
      \end{tikzpicture}};
  }


    \node[anchor=center,inner sep=0,outer sep=2pt] at (center) {\begin{tikzpicture}[scale=0.4]
        \draw[fill=black!50] (0,0) to[out=70,in=-70] ++(0,3) to[out=-110,in=110] coordinate[midway] (l) ++(0,-3);
        \draw[fill=white] (l) arc (0:360:1);
        \node[anchor=center] at ($(l)+(-1,0)$) {\tiny $\mathit{ABCD}$};
      \end{tikzpicture}};

  \end{tikzpicture}
\caption {If we consider the actual Gromov compactification of the moduli space, then the green edge from Figure~\ref{fig:rowrow} gets replaced by more complicated spaces of bubble trees, as shown here. We have shown the labels of the green vertices, the green edges, and the green facet. (The other labels are same as in Figure~\ref{fig:rowrow}.)}
\label{fig:gromov}
\end{figure}

\begin{figure}
\centering
  \begin{tikzpicture}[scale=3]

    \begin{scope}[y={(-1.8cm,0.1cm)},x={(0.8cm,0.6cm)},z={(0cm,2.2cm)}]
    \def\a{0.3}
    \def\t{0.4}
    \pgfmathsetmacro\s{1-\a+4*\a*\t}
    
    \coordinate (c-d-AB) at (0,1);
    \coordinate (CD-AB) at (0,0);
    \coordinate (CD-a-b) at (1,0);
    \coordinate (c-d-a-b) at (1,1);

    \coordinate (AB-c-d) at (0,1,1);
    \coordinate (AB-CD) at (0,0,1);
    \coordinate (a-b-CD) at (1,0,1);
    \coordinate (a-b-c-d) at (1,1,1);

    \coordinate (c-AB-d) at (0,1+\a,0.5);
    \coordinate (a-CD-b) at (1+\a,0,0.5);

    \coordinate(c-a-d-b) at (1+2*\a*\t,1+2*\a*\t,\t);
    \coordinate(a-c-b-d) at (1+2*\a*\t,1+2*\a*\t,1-\t);

    \coordinate (c-a-b-d) at (\s,1+\a,0.5);
    \coordinate (a-c-d-b) at (1+\a,\s,0.5);
    
    \draw (a-b-CD)--(a-b-c-d)--(AB-c-d);
    \draw (AB-c-d)-- coordinate[midway] (cAB-d) (c-AB-d)-- coordinate[midway] (c-dAB)(c-d-AB) (a-b-CD)--(a-CD-b)--(CD-a-b);
    \draw[dashed] (CD-a-b)--(c-d-a-b)--(c-d-AB);
    \draw[dashed](c-a-d-b)--(c-d-a-b) (a-b-c-d)--(a-c-b-d);
    \draw[dashed] (c-AB-d)--(c-a-b-d) (a-CD-b)--(a-c-d-b);
    \draw[dashed] (c-a-b-d)--(c-a-d-b)--(a-c-d-b)--(a-c-b-d)--(c-a-b-d);

    \draw[black!50,thin] (c-d-AB) ..controls (CD-AB) .. (CD-a-b);
    
    \foreach \coord in {c-d-AB,CD-a-b,c-d-a-b,AB-c-d,a-b-CD,a-b-c-d,c-AB-d,a-CD-b,c-a-d-b,a-c-b-d,c-a-b-d,a-c-d-b}{      
      \node at (\coord) {$\bullet$};
    }

    \coordinate (front) at (0,1,0.45);
    \coordinate (right) at (1,0,0.8);

    \draw[latex-] (front) --++(-0.9,0,0) coordinate[pos=1] (ABcd) node[pos=0.6,anchor=south east] {\small front};
    \draw[latex-] (right) --++(0,-0.3,0) coordinate[pos=1] (CDab) node[pos=0.7,anchor=south] {\small right};

    \coordinate (c) at (0.1,0.1,0.5);
    \draw (AB-c-d) ..controls ($(AB-c-d)+(0,-0.5,0)$) and ($(c)+(-0.2,0.2,0.5)$) .. (c);
    \draw (a-b-CD) ..controls ($(a-b-CD)+(-0.5,0,0)$) and ($(c)+(0.2,-0.2,0.5)$) .. (c);
    \draw (c-d-AB) ..controls ($(c-d-AB)+(0,-0.5,0)$) and ($(c)+(-0.2,0.2,-0.5)$) .. (c);
    \draw (CD-a-b) ..controls ($(CD-a-b)+(-0.5,0,0)$) and ($(c)+(0.2,-0.2,-0.5)$) .. (c);
    \coordinate (cc) at (0.7,0.7,0.5);
    \draw[thick,green!70!black] (c)--(cc) node[pos=0,green!70!black] {\large$\bullet$} node[pos=1,green!70!black] {\large$\bullet$} coordinate[midway] (chalf);
    
  \end{scope}

  \node[anchor=east,inner sep=0,outer sep=2pt] at (ABcd){\begin{tikzpicture}[scale=0.4]
      \draw (0,0)to[out=10,in=-10]coordinate[midway] (r)++(0,2);
      \draw (0,0)to[out=170,in=-170] coordinate[midway] (l)++(0,2);
      \node[anchor=center] at (0,1) {\tiny $\mathit{CD}$};
      \draw (l) arc (0:360:0.5);
      \node[anchor=center] at ($(l)+(-0.5,0)$) {\tiny $\mathit{AB}$};
    \end{tikzpicture}};

  \node[anchor=west,inner sep=0,outer sep=2pt] at (CDab){\begin{tikzpicture}[scale=0.4]
      \draw (0,0)to[out=10,in=-10]coordinate[midway] (r)++(0,2);
      \draw (0,0)to[out=170,in=-170] coordinate[midway] (l)++(0,2);
      \node[anchor=center] at (0,1) {\tiny $\mathit{AB}$};
      \draw (r) arc (-180:180:0.5);
      \node[anchor=center] at ($(r)+(0.5,0)$) {\tiny $\mathit{CD}$};
    \end{tikzpicture}};

  \node[anchor=east,inner sep=0,outer sep=2pt] at (cc){\begin{tikzpicture}[scale=0.4]
      \draw[fill=black!50] (0,0) to[out=70,in=-70] coordinate[midway] (r)++(0,3) to[out=-110,in=110] coordinate[midway] (l)  ++(0,-3);
      \draw (l) arc (0:360:0.5);
      \draw (r) arc (-180:180:0.5);
      \node[anchor=center] at ($(l)+(-0.5,0)$) {\tiny $\mathit{AB}$};
      \node[anchor=center] at ($(r)+(0.5,0)$) {\tiny $\mathit{CD}$};
    \end{tikzpicture}};
  
  \node[anchor=north,inner sep=0,outer sep=2pt] at (chalf){\begin{tikzpicture}[scale=0.4]
      \draw[fill=black!50] (0,0) to[out=70,in=-70] coordinate[pos=0.3] (r)++(0,3) to[out=-110,in=110] coordinate[pos=0.3] (l)  ++(0,-3);
      \draw (l) arc (0:360:0.5);
      \draw (r) arc (-180:180:0.5);
      \node[anchor=center] at ($(l)+(-0.5,0)$) {\tiny $\mathit{AB}$};
      \node[anchor=center] at ($(r)+(0.5,0)$) {\tiny $\mathit{CD}$};
    \end{tikzpicture}};

    \node[anchor=west,inner sep=0,outer sep=2pt] at (c){\begin{tikzpicture}[scale=0.4]
      \draw[fill=black!50] (0,0) to[out=70,in=-70] coordinate[midway] (r)++(0,2) to[out=-110,in=110]  ++(0,-2);
      \draw[fill=black!50] (0,2) to[out=70,in=-70] ++(0,2) to[out=-110,in=110] coordinate[midway] (l)  ++(0,-2);
      \draw[fill=white] (l) arc (0:360:0.5);
      \draw[fill=white] (r) arc (-180:180:0.5);
      \node[anchor=center] at ($(l)+(-0.5,0)$) {\tiny $\mathit{AB}$};
      \node[anchor=center] at ($(r)+(0.5,0)$) {\tiny $\mathit{CD}$};

      \node[anchor=center] at (1.5,2) {\tiny $=$};
      
      \draw[fill=black!50] (3,2) to[out=70,in=-70] coordinate[midway] (r)++(0,2) to[out=-110,in=110]  ++(0,-2);
      \draw[fill=black!50] (3,0) to[out=70,in=-70] ++(0,2) to[out=-110,in=110] coordinate[midway] (l)  ++(0,-2);
      \draw[fill=white] (l) arc (0:360:0.5);
      \draw[fill=white] (r) arc (-180:180:0.5);
      \node[anchor=center] at ($(l)+(-0.5,0)$) {\tiny $\mathit{AB}$};
      \node[anchor=center] at ($(r)+(0.5,0)$) {\tiny $\mathit{CD}$};

    \end{tikzpicture}};

  \end{tikzpicture}
\caption {The moduli space for $H_i+V_i=A+B+C+D$, where $H_i$ and
  $V_i$ are as in Figure~\ref{fig:row}. This is obtained from the
  polyhedron in Figure~\ref{fig:rowrow} by folding the green edge in
  half. (This can be visualized by pushing the midpoint of the green
  edge of Figure~\ref{fig:rowrow} towards the interior until it folds
  into half.) The front, bottom, top and right facets from
  Figure~\ref{fig:rowrow} now meet at a single point (the right green
  dot); moreover, the two edges on the top facet and the two edges on
  the bottom facet enter this vertex from exactly opposite
  directions. For simplicity, we only show the configurations for the
  green edge, its two endpoints, and the two facets that form
  the special boundary (the front and the right facet, which meet
  along the green edge from exactly opposite directions). The other
  labels are just as in Figure~\ref{fig:rowrow}, except that the $CD$
  disk degenerations are now to the right of the strips. Note that the
  moduli space shown here is not a convex polyhedron, but rather a
  stratified space, where the local picture near the left green dot is
  the Whitney umbrella from Figure~\ref{fig:Whitney}.}
\label{fig:rowcolumn}
\end{figure}

\begin{figure}
\centering
  \begin{tikzpicture}[scale=3]

    \begin{scope}[y={(-1.8cm,0.1cm)},x={(0.8cm,0.6cm)},z={(0cm,2.2cm)}]
    \def\s{0.3}
    \def\fakebottom{-0.15}
    \def\fakeright{0.55}
    \def\fakeleft{0.15}
    
    \coordinate (a-b-AB) at (0,1);
    \coordinate (AB-AB) at (0,0);
    \coordinate (AB-a-b) at (1,0);
    \coordinate (a-b-a-b) at (1,1);
    \coordinate (a-AB-b) at (1,1,1);

    \coordinate (center) at (0.5-\s,0.5-\s,0.5);
    
    \draw (a-b-AB)--(AB-AB)--(AB-a-b);
    \draw (a-b-AB)--(a-AB-b)--(AB-a-b);
    \draw[dashed] (AB-a-b)--(a-b-a-b)--(a-b-AB);
    \draw[dashed] (a-b-a-b)--(a-AB-b);

    \draw[thick,green!70!black] (AB-AB)-- coordinate[midway] (ABAB) node[pos=0, green!70!black] {\large $\bullet$} node[pos=1, green!70!black] {\large $\bullet$} (center);

    \foreach \coord in {a-b-AB,AB-a-b,a-b-a-b,a-AB-b}{      
      \node at (\coord) {$\bullet$};
    }

    \coordinate (front) at (0.2,0.9,0.2);
    \coordinate (rightback) at (1,0.9,0.5);
    \coordinate (rightfake) at (1+\fakeright,0.9,0.5);
    \coordinate (leftback) at (0.9,1,0.5);
    \coordinate (leftfake) at (0.9,1+\fakeleft,0.5);
    \coordinate (bottom) at (0.5,0.5,0);
    \coordinate (bottomfake) at (0.5,0.5,\fakebottom);

    \draw[latex-] (front) --++(-0.7,0,0.35) coordinate[pos=1] (ABab) node[pos=0.4,anchor=north east] {\small front};

    \draw[latex-,dashed] (rightback) --(rightfake);
    \draw (rightfake) --coordinate[pos=0.8] (aab-b) node[pos=0.1,anchor=west] {\small right back} ++(0.5,0,0);

    \draw[latex-,dashed] (leftback) --(leftfake);
    \draw (leftfake) --coordinate[pos=1] (a-abb) node[pos=0.1,anchor=north east] {\small left back} ++(0,0.4,0);
    
    \draw[latex-,dashed] (bottom)--(bottomfake);
    \draw (bottomfake) --++(0,0,-0.1) coordinate[pos=1] (ab-ab) node[pos=0.6,anchor=east] {\small bottom};

  \end{scope}

  \node[anchor=north,inner sep=0,outer sep=2pt] at (a-b-a-b){\begin{tikzpicture}[scale=0.4]
      \draw (0,0)to[out=10,in=-10]++(0,1)to[out=10,in=-10]++(0,1)to[out=10,in=-10]++(0,1)to[out=10,in=-10]++(0,1);
      \draw (0,0)to[out=170,in=-170]++(0,1)to[out=170,in=-170]++(0,1)to[out=170,in=-170]++(0,1)to[out=170,in=-170]++(0,1);
      \node[anchor=center] at (0,3.5) {\tiny $A$};
      \node[anchor=center] at (0,2.5) {\tiny $B$};
      \node[anchor=center] at (0,1.5) {\tiny $A$};
      \node[anchor=center] at (0,0.5) {\tiny $B$};
    \end{tikzpicture}};

  \node[anchor=north,inner sep=0,outer sep=2pt] at (a-b-AB){\begin{tikzpicture}[scale=0.4]
      \draw (0,2)to[out=10,in=-10]++(0,1)to[out=10,in=-10]++(0,1);
      \draw (0,2)to[out=170,in=-170]++(0,1)to[out=170,in=-170]++(0,1);
      \node[anchor=center] at (0,3.5) {\tiny $A$};
      \node[anchor=center] at (0,2.5) {\tiny $B$};
      \draw[fill=black!50] (0,0) to[out=70,in=-70] coordinate[midway] (r) ++(0,2) to[out=-110,in=110] coordinate[midway] (l) ++(0,-2);
      \draw (l) arc (0:360:0.5);
      \node[anchor=center] at ($(l)+(-0.5,0)$) {\tiny $\mathit{AB}$};
    \end{tikzpicture}};

  \node[anchor=north,inner sep=0,outer sep=2pt] at (AB-a-b){\begin{tikzpicture}[scale=0.4]
      \draw (0,0)to[out=10,in=-10]++(0,1)to[out=10,in=-10]++(0,1);
      \draw (0,0)to[out=170,in=-170]++(0,1)to[out=170,in=-170]++(0,1);
      \node[anchor=center] at (0,1.5) {\tiny $A$};
      \node[anchor=center] at (0,0.5) {\tiny $B$};
      \draw[fill=black!50] (0,2) to[out=70,in=-70] coordinate[midway] (r) ++(0,2) to[out=-110,in=110] coordinate[midway] (l) ++(0,-2);
      \draw (l) arc (0:360:0.5);
      \node[anchor=center] at ($(l)+(-0.5,0)$) {\tiny $\mathit{AB}$};
    \end{tikzpicture}};

  \node[anchor=south,inner sep=0,outer sep=2pt] at (a-AB-b){\begin{tikzpicture}[scale=0.4]
      \draw (0,0)to[out=10,in=-10]++(0,1)++(0,2)to[out=10,in=-10]++(0,1);
      \draw (0,0)to[out=170,in=-170]++(0,1)++(0,2)to[out=170,in=-170]++(0,1);
      \node[anchor=center] at (0,3.5) {\tiny $A$};
      \node[anchor=center] at (0,0.5) {\tiny $B$};
      \draw[fill=black!50] (0,1) to[out=70,in=-70] coordinate[midway] (r) ++(0,2) to[out=-110,in=110] coordinate[midway] (l) ++(0,-2);
      \draw (l) arc (0:360:0.5);
      \node[anchor=center] at ($(l)+(-0.5,0)$) {\tiny $\mathit{AB}$};
    \end{tikzpicture}};

  \node[anchor=north,inner sep=0,outer sep=2pt] at (AB-AB){\begin{tikzpicture}[scale=0.4]
      \draw[fill=black!50] (0,0) to[out=70,in=-70] coordinate[midway] (r) ++(0,2) to[out=-110,in=110] coordinate[midway] (l) ++(0,-2);
      \draw (l) arc (0:360:0.5);
      \node[anchor=center] at ($(l)+(-0.5,0)$) {\tiny $\mathit{AB}$};
      \draw[fill=black!50] (0,2) to[out=70,in=-70] coordinate[midway] (r) ++(0,2) to[out=-110,in=110] coordinate[midway] (l) ++(0,-2);
      \draw (l) arc (0:360:0.5);
      \node[anchor=center] at ($(l)+(-0.5,0)$) {\tiny $\mathit{AB}$};
    \end{tikzpicture}};

  \node[anchor=east,inner sep=0,outer sep=2pt] at (center){\begin{tikzpicture}[scale=0.4]
      \draw[fill=black!50] (0,0) to[out=70,in=-70] ++(0,2) to[out=-110,in=110] coordinate[midway] (l)  ++(0,-2);
      \draw (l) to[out=90,in=90] ($(l)+(-1.5,0.5)$) to[out=-90,in=180] (l);
      \draw (l) to[out=-90,in=-90] ($(l)+(-1.5,-0.5)$) to[out=90,in=180] (l);
      \node[anchor=center] at ($(l)+(-0.9,0.4)$) {\tiny $\mathit{AB}$};
      \node[anchor=center] at ($(l)+(-0.9,-0.4)$) {\tiny $\mathit{AB}$};
    \end{tikzpicture}};

  \node[anchor=east,inner sep=0,outer sep=2pt] at (ABAB){\begin{tikzpicture}[scale=0.4]
      \draw[fill=black!50] (0,0) to[out=70,in=-70] ++(0,3) to[out=-110,in=110] coordinate[pos=0.3] (l1) coordinate[pos=0.7] (l2) ++(0,-3);
      \draw[fill=white] (l1) arc (0:360:0.5);
      \draw[fill=white] (l2) arc (0:360:0.5);
      \node[anchor=center] at ($(l1)+(-0.5,0)$) {\tiny $\mathit{AB}$};
      \node[anchor=center] at ($(l2)+(-0.5,0)$) {\tiny $\mathit{AB}$};
    \end{tikzpicture}};
  
  \node[anchor=east,inner sep=0,outer sep=2pt] at (ABab){\begin{tikzpicture}[scale=0.4]
      \draw (0,0)to[out=10,in=-10]++(0,2);
      \draw (0,0)to[out=170,in=-170] coordinate[midway] (l)++(0,2);
      \node[anchor=center] at (0,1) {\tiny $\mathit{AB}$};
      \draw (l) arc (0:360:0.5);
      \node[anchor=center] at ($(l)+(-0.5,0)$) {\tiny $\mathit{AB}$};
    \end{tikzpicture}};

  \node[anchor=north,inner sep=0,outer sep=2pt] at (ab-ab){\begin{tikzpicture}[scale=0.4]
      \draw (0,0)to[out=10,in=-10]++(0,2)to[out=10,in=-10]++(0,2);
      \draw (0,0)to[out=170,in=-170]++(0,2)to[out=170,in=-170]++(0,2);
      \node[anchor=center] at (0,3) {\tiny $\mathit{AB}$};
      \node[anchor=center] at (0,1) {\tiny $\mathit{AB}$};
    \end{tikzpicture}};

  \node[anchor=east,inner sep=0,outer sep=2pt] at (a-abb){\begin{tikzpicture}[scale=0.4]
      \draw (0,0)to[out=10,in=-10]++(0,3)to[out=10,in=-10]++(0,1);
      \draw (0,0)to[out=170,in=-170]++(0,3)to[out=170,in=-170]++(0,1);
      \node[anchor=center] at (0,3.5) {\tiny $\mathit{A}$};
      \node[anchor=center] at (0,1.5) {\tiny $\mathit{ABB}$};
    \end{tikzpicture}};

    \node[anchor=south west,inner sep=0,outer sep=2pt] at (aab-b){\begin{tikzpicture}[scale=0.4]
      \draw (0,0)to[out=10,in=-10]++(0,1)to[out=10,in=-10]++(0,3);
      \draw (0,0)to[out=170,in=-170]++(0,1)to[out=170,in=-170]++(0,3);
      \node[anchor=center] at (0,0.5) {\tiny $\mathit{B}$};
      \node[anchor=center] at (0,2.5) {\tiny $\mathit{AAB}$};
    \end{tikzpicture}};

  \end{tikzpicture}
\caption {The moduli space for $2H_i=2A+2B$, where $H_i$ is as in
  Figure~\ref{fig:row}. This is obtained from a convex pyramid (with a
  quadrilateral base) by smoothing along the top half of the front
  edge, and pulling the midpoint of that edge (the top green dot)
  outwards, so that the local model of the top green dot inside the space is like
  $Z(0,2,0;(2))\subset \bZ(2,0,0)$ from Figure~\ref{fig:Whitney}. We labeled the
  configurations corresponding to each vertex and facet, as well as
  that for the green edge. The labels on the other edges can be easily
  deduced.}
\label{fig:2row}
\end{figure}

As alluded to in the beginning of Section~\ref{sec:new}, the space
$\bModuli_{\vN, \vlambda}(D)$ will be a model for the compactified
moduli space of pseudo-holomorphic strips in $\Sym^n(T^2)$ relative to
$\Ta=\alpha_1\times\dots\times\alpha_n,
\Tb=\beta_1\times\dots\times\beta_n$, modulo translation by $\R$, such
that:
\begin{itemize}
\item the strips have domain $D$;
\item each strip is equipped with $|\vN|= N_2 + \dots + N_{n}$ marked
  points on the $\alpha$ and $\beta$-boundaries, combined into groups
  of $N_j$ points, $j=2, \dots, n$. The $N_j$ points in the $j\th$
  group are meant to be the places where a holomorphic $\alpha$ or
  $\beta$ disk-degeneration with domain $H_j$ or $V_j$ has bubbled
  off;
\item for each $j=2, \dots, n$, the $N_j$ points in the $j\th$ group
  are partitioned according to $\lambda_j$. Points in the same part of
  $\lambda_j$ are supposed to be at the same height on the boundary of
  the strip.
\end{itemize}
See Figure~\ref{fig:bubble-intuition}.

There is a special case that we will not discuss in this paper, namely
when $D$ is trivial (that is, equal to $c_x$ for some $x \in \S$) and
$\vN=\vec{0}$. (In that case, the moduli space should be a point
divided by a trivial $\R$ action.) From now on we will always assume
that at least one of $D$ and $\vN$ is nonzero. Then, the dimension $k$
of $\bModuli_{\vN, \vlambda}(D)$ will be given by
\begin{equation}
\label{eq:dimension}
k=\mu(D)-1+|\vlambda|=\operatorname{gr}(D,\vN,\vlambda)-1
\end{equation}
where $\operatorname{gr}(D,\vN,\vlambda)$ is the homological grading
of $(D,\vN,\vlambda)$, as an element of the chain complex $\CDP_*$,
from Equation~\eqref{eq:CDP-grading}.

As we shall see in later sections, the strata of
$\bModuli_{\vN, \vlambda}(D)$ will be products of lower-dimensional
moduli spaces, corresponding to trajectory breaking or bubbling
off further $\alpha$- and $\beta$-degenerations. There will be a
single codimension-zero stratum in $\bModuli_{\vN, \vlambda}(D)$,
denoted $\Moduli_{\vN, \vlambda}(D)$. The strata that correspond to
some bubbles will comprise what we call the {\em special boundary} of
$\bModuli_{\vN, \vlambda}(D)$.

For simplicity, when $\vN=\vec{0}$, we will write
\[
  \bModuli_0(D)\defeq \bModuli_{\vec{0}, \vec{0}}(D).
\]

Let us put an equivalence relation on domains by 
\begin{equation}
\label{eq:eqD}
D\sim D' \ \iff \ (D-D'\in\periodic \text{ and } \coefficients{D}=\coefficients{D'}) \ \iff \ D-D'\in\ZZ\basis{H_2-V_2,\dots,H_n-V_n}.
\end{equation}

Note that an equivalence class of domains is specified by the initial and final points of the domain (call them $x$ and $y$), as well as the vector $\coefficients{D} = (m_2, \dots, m_{n})$. Our construction will ensure that the moduli spaces $\bModuli_0(D)$, over all $D$ in the same equivalence class, will glue together along their special boundaries, to produce a single $\langle k \rangle$-manifold, which we will denote 
\begin{equation}
\label{eq:modd}
 \bModuli([D])= \bModuli(x, U_2^{m_2} \dots U_{n}^{m_{n}} y).
 \end{equation}
 
 The construction of the stratified spaces
 $\bModuli_{\vN, \vlambda}(D)$ will be given in
 Sections~\ref{sec:construction}. For now, to help the reader get some
 intuition, we will discuss a few simple cases, and present some
 examples of spaces that could {\em potentially} play the role of
 $\bModuli_{\vN, \vlambda}(D)$ in those cases. We emphasize that these
 spaces are not actually what the later constructions will
 produce. The final constructions will be inductive and hard to make
 explicit. Rather, the spaces we describe in the examples below are
 merely some explicit spaces that satisfy the formal properties of
 $\bModuli_{\vN, \vlambda}(D)$.  Specifically, they have the right
 dimension, their strata have the correct local model and are indexed
 on the different possibilities for trajectory breaking and bubbles,
 and the spaces corresponding to domains in the same equivalence class
 can be glued together to form $\langle k \rangle$-manifolds.


\begin{example}
\label{ex:Dzero}
Suppose $D=c_x \in \pdomains(x,x)$ is the trivial domain. Then,
\[
  \bModuli_{\vN, \vlambda}(c_x)=\overline{\Bigl( \prod_{j} \Sym^{\ell(\lambda_{j})}(\R) \Bigr)/\R},
\]
where the compactification is induced from the compactification of $\R$ by $\overline{\R}=\{-\infty \} \cup \R \cup \{+\infty\}.$ 
\end{example}

\begin{example}
  Suppose $D\in\rectangles(x,y)$ is a rectangle. Then,
  \[
    \bModuli_{\vN, \vlambda}(D)=\overline{\prod_{j} \Sym^{\ell(\lambda_{j})}(\R)},
  \]
  with the compactification again induced from the same
  compactification of $\R$ by $\overline{\R}$. In particular, if
  $\vN=0$, $\bModuli_0(D)$ is a point.
\end{example}

\begin{example}
\label{ex:RR}
Let $D$ be a positive domain of index two on the grid, that is,
either: (a) the union of two rectangles or (b) an L-shaped hexagon, as
discussed in Item~(\ref{item:maslov-index-2}) in
Section~\ref{sec:grid-background}. Then $\bModuli_0(D)$ is an
interval, which can be viewed as a $1$-dimensional
$\langle 1 \rangle$-manifold. The two ends correspond to the different
ways of splitting $D$ into two domains of index one (trajectory
breaking). See Figure~\ref{fig:Dind2}.
\end{example}

\begin{example}
\label{ex:nodegs}
More generally, suppose $D$ is a positive domain of index $k+1$ that
does not contain any horizontal annulus $H_i$ or any vertical annulus
$V_j$, so that $\alpha$- and $\beta$-degenerations are
impossible. Then $\bModuli_0(D)$ is a $k$-dimensional
$\langle k \rangle$-manifold, with the boundary corresponding to
trajectory breaking. The $i$-colored multifacet
$\del_i(\bModuli_0(D))$ (for $i=1, \dots, k$) corresponds to
splittings of $D$ the form $D^1 \ast D^2$, where $\mu(D^1)=i$ and
$\mu(D^2)=k+1-i$. See Figure~\ref{fig:Dind3} for a picture of
$\bModuli_0(D)$ for an index three domain. In general, to an index $k+1$
domain made of $k+1$ disjoint rectangles one can associate the
$k$-dimensional permutohedron (cf. Example~\ref{ex:permuto}). Other
types of domains yield other $\langle k \rangle$-manifolds.
\end{example}

\begin{example}
\label{ex:HV}
When $H_i$ is a full row and $\vN=0$, we let $\bModuli_0(H_i)$ be an
interval, where one end corresponds to the decomposition into two
rectangles and the other end is the special boundary, corresponding to
an $\alpha$-degeneration. For the column $V_i$ that contains the same
$O_i$ marking as $H_i$, $\bModuli_0(V_i)$ is another interval, also
with one special boundary point, this time corresponding to a
$\beta$-degeneration. Gluing $\bModuli_0(H_i)$ to $\bModuli_0(V_i)$
along their special boundaries yields the $\langle 1 \rangle$-manifold
$\bModuli([H_i])= \bModuli([V_i])$. See Figure~\ref{fig:row}.
\end{example}

\begin{example}
  Figure~\ref{fig:rowplus} shows the spaces $\bModuli_0$ for two
  domains of index three: the column $V_i=C+D$ plus a rectangle $C$
  contained it, and the row $H_i=A+B$ plus the disjoint rectangle
  $C$. These domains would be glued together to produce the
  $\langle 2 \rangle$-manifold $\bModuli([A+B+C])= \bModuli([2C+D])$.
\end{example}

\begin{example}
  Suppose we have rows $H_i=A+B$, $H_j=C+D$, as well as columns
  $V_i=E+F$, $V_j=G+H$ as in Figure~\ref{fig:rowscolumns}. Then, the
  moduli space $\bModuli_0(H_i+H_j)$ is shown in
  Figure~\ref{fig:rowrow}, with special boundary consisting of the
  front and the right facet; those for the domains $H_i+V_j$,
  $V_i+H_j$ and $V_i+V_j$ are very similar. These four spaces glue
  together along their special boundaries to form the
  $\langle 3 \rangle$-manifold $\bModuli_0([H_i+H_j])$, which is a
  three-dimensional permutohedron. Note that in this gluing, the
  special edge drawn in green in Figure~\ref{fig:rowrow} is common to
  all four polyhedra.
\end{example}

\begin{remark} \label{rem:gromov} In symplectic geometry we encounter
  moduli spaces of bubble trees that we do not consider here.  For
  example, in Figure~\ref{fig:rowrow} the green edge corresponds to
  two disk degenerations, and as we move along the edge we change the
  relative heights where these two degenerations take place. In
  particular, there is a point in the middle that corresponds to the
  two degenerations happening at the same height. If we were to
  actually consider the Gromov compactification from symplectic
  geometry, instead of that point we would have a whole new
  (two-dimensional) facet, corresponding to degenerating an index four
  disk with domain $H_i+H_j=A+B+C+D$, as in
  Figure~\ref{fig:gromov}. Thus, our moduli
  space compactifications are different from the usual
  compactifications in symplectic geometry---ours is a quotient of the
  usual one, while the usual one is a blowup of ours. We do not delve into this issue further since in this
  paper we do not construct the actual moduli spaces from holomorphic geometry,
  but rather construct their models inductively by obstruction theory.
\end{remark}

\begin{example}
\label{ex:RK}
Let us return to the domains from
Figure~\ref{fig:row}, where $H_i=A+B$ is a row and $V_i=C+D$ is a
column. The space $\bModuli_0$ for the domain $H_i+V_i = A+B+C+D$ made
of a row and a column is shown in Figure~\ref{fig:rowcolumn}. In fact,
it is almost the same polyhedron as in Figure~\ref{fig:rowrow}, except
that the green edge is folded in half. The folding is due to the fact
that since $H_i$ and $V_i$ go through the same $O_i$ marking, we want
to identify the $AB$ and $CD$ disk degenerations. Thus, the point on
the green edge where the $AB$ degeneration is at a certain distance up
from the $CD$ degeneration, is identified with the point where the
$CD$ degeneration is on top of $AB$, at the same distance.

To be more precise, with the notation from Example~\ref{ex:Dzero}, the
green line in Figure~\ref{fig:rowrow} is the space
$\bModuli_{\vN, \vlambda}(0)$ with $\vN$ being the vector with $1$'s
in positions $i$ and $j$ (and $0$ otherwise), and $\vlambda$ the
unique possible vector of partitions. This space is the
compactification of
\[
  (\Sym^1(\R) \times \Sym^1(\R))/\R,
\]
which is just $\overline{\R}=\{-\infty \} \cup \R \cup \{+\infty\}.$

On the other hand, the (folded) green line in
Figure~\ref{fig:rowcolumn} is $\bModuli_{\vN, \vlambda}(0)$ with $\vN$
being the vector with a single $2$ in position $i$, and $\vlambda$
consisting of trivial partitions except for $\lambda_i = (1,1)$. This
is the compactification of
\[
  \Sym^2(\R)/\R.
\]

In particular, there is a special point (the left green dot in
Figure~\ref{fig:rowcolumn}) where the $AB$ and $CD$ degenerations
happen at the same height. The local model of special point inside the
space $\bModuli_0(H_i+V_i)$ is
\[
  Z(0,2,0;(2)) \subset \bZ(1,0,1)
\]
from Figure~\ref{fig:Whitney}. The green line corresponds to the thickened line in Figure~\ref{fig:Whitney}, and the front and right facets in Figure~\ref{fig:rowcolumn} meet along the green line, forming a Whitney umbrella.
\end{example}

\begin{example}
\label{ex:three}
Reusing the same configuration from Figure~\ref{fig:row}, we now consider
the domain $2H_i=2A+2B$ (a row with multiplicity two). The
corresponding space $\bModuli_0(2H_i)$ is pictured in
Figure~\ref{fig:2row}. This is glued with a similar space
$\bModuli_0(2V_i)$, as well as with the space $\bModuli_0(H_i+V_i)$
from Example~\ref{ex:RK}, to yield a single
$\langle 3 \rangle$-manifold $\bModuli_0([2H_i])$. Around the top
green dot, the gluing is modeled on the Whitney umbrella from
Figure~\ref{fig:Whitney}, with $\bModuli_0(2H_i)$,
$\bModuli_0(H_i+V_i)$ and $\bModuli_0(2V_i)$ playing the roles of
$Z(2,0,0)$, $Z(1,0,1)$ and $Z(0,0,2)$, respectively.
\end{example}

\section{The stratification}
\label{sec:strata}
We now describe the intended stratification of the spaces
$\bModuli_{\vN, \vlambda}(D)$, where
$(D,\vN,\vlambda)\in\DNlambda$. We will ensure that these spaces have
a unique codimension zero stratum, which we denote
$\Moduli_{\vN,\vlambda}(D)$, and we let $\bdy\Moduli_{\vN,\vlambda}(D)$
denote $\bModuli_{\vN,\vlambda}(D)\setminus
\Moduli_{\vN,\vlambda}(D)$. (These spaces are not topological manifolds
in general, and so this is not the boundary in any usual sense.)

\subsection{Enumeration of strata} \label{sec:enum}
We ask that $\bModuli_{\vN, \vlambda}(D)$ has the following strata:
  \begin{equation}\label{eq:moduli-bdy}
  \Moduli_{\vN^1+\coefficients{E^1}+\coefficients{F^1}, \vlambda^1}(D^1) \times \dots \times
   \Moduli_{\vN^r+\coefficients{E^r}+\coefficients{F^r}, \vlambda^r}(D^r),
   \end{equation}
   with $r \geq 1$ and generators $x=w_0,w_1,\dots,w_{r-1},w_r=y$, such that for each  $1\leq i \leq r,$
   \begin{gather*}
     D^i \in\pdomains(w_{i-1},w_{i}),\\
     E^i = \sum_{j=2}^{n} O_j(E^i) H_j \in\pperiodic \text{ is a sum of rows},\\
     F^i= \sum_{j=2}^{n} O_j(F^i) V_j \in\pperiodic \text{ is a sum of columns},
  \end{gather*}
satisfying
\[
  \sum_i (D^i+E^i+F^i)=D.
\]
Further,
\[
  \vN^i=(N^i_2,\dots,N^i_{n})\in\NN^{n-1},\ \ \sum_i \vN^i=\vN,
\]
and
\[
  \vlambda^i= (\lambda^i_2, \dots, \lambda^i_{n}), \ \ \ \lambda^i_j \in \Part(N^i_j+ O_j(E^i) +  O_j(F^i) )
\]
are such that there exist some other partitions
\begin{equation}\label{eq:veta-partition}
  \veta^i= (\eta^i_2, \dots, \eta^i_{n}), \ \ \eta^i_j \in \Part(N^i_j)
\end{equation}
with $\eta^i_j \geq \lambda^i_j$ (in the notation of Section~\ref{sec:local}) and
\[
  \eta^1_j * \dots * \eta^r_j = \lambda_j, \ j=2, \dots, n.
\]
Here, $*$ is the concatenation of partitions defined in
Equation~\eqref{eq:concatenate}. Note that, if $\veta^i$ exist, then they are
unique. This is because an ordered partition can be uniquely (if at
all) decomposed as a concatenation of partitions of specified sizes.

A few explanations are in order. In the description of the strata, the
$D^i$'s are the pieces in the trajectory breaking, the $E^i$'s  correspond to $\alpha$-boundary degenerations, and the
$F^i$'s to $\beta$-boundary degenerations. The points where the
boundary degenerations through $O_j$ are attached were originally
partitioned according to $\lambda_j$. When the trajectory breaks into
$r$ pieces, these points get split into $r$ groups, where the
$i\th$ group is partitioned according to $\eta^i_j$. Since we also
pick up extra boundary degenerations from the $E^i$ and $F^i$, we
add more points. We could also join some of the parts,
to make the partition $\eta^i_j$ coarser, since this is what happens
in lower dimensional strata; compare Equations~\eqref{eq:INcompare}
and~\eqref{eq:ZN}. The result of this process is the partition
$\lambda^i_j\leq \eta^i_j$.


The strata of $\bModuli_{\vN, \vlambda}(D)$ are required to satisfy
the following coherence relations with respect to their
closures. Given a stratum as in Equation~\eqref{eq:moduli-bdy}, its closure in
$\bModuli_{\vN, \vlambda}(D)$ should be the product of the closures of
its factors:
\begin{equation}
\label{eq:bproduct}
 \bModuli_{\vN^1+\coefficients{E^1}+\coefficients{F^1}, \vlambda^1}(D^1) \times \dots \times
   \bModuli_{\vN^r+\coefficients{E^r}+\coefficients{F^r}, \vlambda^r}(D^r).
   \end{equation}
Further, if for $i=1, \dots, r$ we have strata
\[
  \prod_{k=1}^{m_i}  \Moduli_{\vN^{i,k}+\coefficients{E^{i, k}}+\coefficients{F^{i,k}}, \vlambda^{i, k}}(D^{i, k}) \subset \bModuli_{\vN^i+\coefficients{E^i}+\coefficients{F^i}, \vlambda^i}(D^i)
\]
we ask that the inclusion of the product stratum
\[
  \prod_{k=1}^{m_1}  \Moduli_{\vN^{1,k}+\coefficients{E^{1, k}}+\coefficients{F^{1,k}}, \vlambda^{1, k}}(D^{1, k}) \times \dots \times \prod_{k=1}^{m_r}  \Moduli_{\vN^{r,k}+\coefficients{E^{r, k}}+\coefficients{F^{r,k}}, \vlambda^{r, k}}(D^{r, k})
\]
into $\bModuli_{\vN, \vlambda}(D)$ factors through Equation~\eqref{eq:bproduct}.

Let us also check that Equation~\eqref{eq:bproduct} ensures that
$\bModuli_{\vN,\vlambda}(D)$ has a unique codimension zero
stratum. Using the dimension formula~\eqref{eq:dimension}, the
codimension of the stratum described in Equation~\eqref{eq:moduli-bdy}
is
\begin{equation}
\label{eq:codimension}
r-1+ \sum_{i=1}^r \mu(E^i) + \sum_{i=1}^r \mu(F^i) + \sum_{j=2}^{n} \Bigl(\ell(\lambda_j) - \sum_{i=1}^r \ell(\lambda^i_j) \Bigr).
\end{equation}
We can write each of the last summands as 
\[
  \ell(\lambda_j)-  \sum_{i=1}^r \ell(\lambda^i_j) = \sum_{i=1}^r (\ell(\eta^i_j) - \ell(\lambda^i_j)).
\]
 Using the co-length inequality \eqref{eq:colength}, we have
 \[
   \ell(\eta^i_j) - \ell(\lambda^i_j) \geq - O_j(E^i) - O_j(F^i).
 \]
 Since $\mu(E^i)= 2\sum_j O_j(E^i)$ and $\mu(F^i)= 2\sum_j O_j(F^i)$, we deduce that the codimension given in Equation~\eqref{eq:codimension} is at least
 \[
   r-1+ \sum_{i=1}^r \sum_{j=2}^n \bigl( O_j(E^i) + O_j(F^i) \bigr) \geq r-1.
 \]
In particular, $\Moduli_{\vN, \vlambda}(D)$ appears as the unique codimension zero stratum, with $r=1$, $D^1=D$, $E^1 =F^1=0,\vlambda^1=\veta^1=\vlambda$.

\begin{example}
  If we consider the domain $2H_i=2A+2B$ from Example~\ref{ex:three}, the moduli space $\bModuli_0(2H_i)$ has the following (open) strata:
  \begin{itemize}
  \item Dimension 3: $\Moduli_0(2H_i)$;
  \item Dimension 2: $\Moduli_0(A)\times\Moduli_0(A+2B)$, $\Moduli_0(2A+B)\times\Moduli_0(B)$, $\Moduli_0(H_i)\times\Moduli_0(H_i)$, $\Moduli_{\coefficients{H_i},(1)_i}(H_i)$;
  \item Dimension 1: $\Moduli_0(A)\times\Moduli_0(B)\times\Moduli_0(H_i)$, $\Moduli_0(A)\times\Moduli_0(H_i)\times\Moduli_0(B)$, $\Moduli_0(H_i)\times\Moduli_0(A)\times\Moduli_0(B)$, $\Moduli_0(H_i)\times \Moduli_{\coefficients{H_i},(1)_i}(c_x)$,  $\Moduli_{\coefficients{H_i},(1)_i}(c_x)\times\Moduli_0(H_i)$, $\Moduli_0(A)\times\Moduli_{\coefficients{H_i},(1)_i}(B)$, $\Moduli_{\coefficients{H_i},(1)_i}(A)\times\Moduli_0(B)$, $\Moduli_{\coefficients{2H_i},(1,1)_i}(c_x)$;
  \item Dimension 0: $\Moduli_0(A)\times\Moduli_0(B)\times\Moduli_0(A)\times\Moduli_0(B)$, $\Moduli_0(A)\times\Moduli_0(B)\times\Moduli_{\coefficients{H_i},(1)_i}(c_x)$, $\Moduli_0(A)\times\Moduli_{\coefficients{H_i},(1)_i}(c_y)\times\Moduli_0(B)$, $\Moduli_{\coefficients{H_i},(1)_i}(c_x)\times \Moduli_0(A)\times\Moduli_0(B)$, $\Moduli_{\coefficients{H_i},(1)_i}(c_x)\times \Moduli_{\coefficients{H_i},(1)_i}(c_x)$, $\Moduli_{\coefficients{2H_i},(2)_i}(c_x)$.
  \end{itemize}
  See also Figure~\ref{fig:2row}.
\end{example}

\begin{remark}
\label{rmk:lambdaj}
In the above example, we denoted by $(\lambda)_i$ the vector
consisting of a partition $\lambda$ in position $i$, and trivial
partitions elsewhere.
\end{remark}

Using this stratification on the moduli spaces $\bModuli(D,\vN,\vlambda)$, we get the following partial order on the set $\DNlambda(\Grid)$. Declare
\begin{equation}\label{eq:DNlamba-poset}
  (D',\vN',\vlambda')\leq (D,\vN,\vlambda)
\end{equation}
if $\bModuli_{\vN,\vlambda}(D)$ has an open stratum
$\prod_i\Moduli_{\vN^i+\Os(\E^i)+\Os(F^i),\vlambda^i}(D^i)$ with
$(D',\vN',\vlambda')=(D^i,\vN^i+\Os(\E^i)+\Os(F^i),\vlambda^i)$ for
some $i$. This is easily seen to be a partial order: for
anti-reflexivity, note that if $(D',\vN',\vlambda')<(D,\vN,\vlambda)$,
then $\gr(D',\vN',\vlambda')<\gr(D,\vN,\vlambda)$. (This partial order
is related to the chain complex $\CDP_*$ from
Section~\ref{sec:new}. If $(D',\vN',\vlambda')$ appears in
$\delta(D,\vN,\vlambda)$ in any of the terms $\dI,\dII,\dIII,\dIV$
(from Equation~\eqref{eq:delta1}--\eqref{eq:delta4}), then
$(D',\vN',\vlambda')<(D,\vN,\vlambda)$ with
$\gr(D',\vN',\vlambda')=\gr(D,\vN,\vlambda)-1$; indeed, the partial
order from Equation~\eqref{eq:DNlamba-poset} is generated by these
atomic relations.)

\subsection{Codimension one strata} \label{sec:codim1} For future
reference, let us also describe the codimension-one strata of
$\bModuli_{\vN, \vlambda}(D)$. Using Formula~\eqref{eq:codimension}, a
similar analysis shows that this consists of strata of three possible
types, corresponding to trajectory breaking, boundary degeneration
(Type II), and coarsening of partitions (Type III); in order to stress
the correspondence with the different types of terms in the
differential $\delta$ on $\CDP_*$ (cf.~Remark~\ref{rem:1234}), we
have subdivided trajectory breaking into two further subtypes (Type I
and IV).

{\bf Type I} codimension-one strata correspond to $r=2$ and pure
trajectory breaking into two non-constant domains, with no additional
disk degenerations ($E^i=F^i=0$), and no coarsening of the
partitions:
\[
 \Moduli_{ \vN^1, \vlambda^1}(D^1) \times \Moduli_{ \vN^2, \vlambda^1}(D^2),
\]
where 
\begin{itemize}
\item $w$ is an intermediate generator, 
\item $D^1\in\pdomains(x,w)$ and $D^2\in\pdomains(w,y)$ are such that $D^1 + D^2 = D,$ and neither $D^1$ nor $D^2$ is trivial,
\item $ \vN^1, \vN^2 \in \NN^{n-1}$ are such that  $\vN^1 + \vN^2 = \vN,$
\item $\vlambda^i= (\lambda^i_2, \dots, \lambda^i_{n}), i=1,2$, are vectors of partitions such that 
$ \lambda^1_j * \lambda^2_j = \lambda_j$ for all $j=2, \dots, n$.
\end{itemize}

Note that, among these strata, the terms where one of the two factors is
zero-dimensional are when either $D^1$ or $D^2$ is a rectangle, and the partition corresponding to that rectangle is empty:
\begin{align*}
 \Moduli_{\vN, \vlambda}(D^1) \times \Moduli_{0}(R), & \ \ \text{with }D^1 \in \pdomains(x,w), R\in\rectangles(w,y), D^1+R = D,\\
  \Moduli_{0}(R) \times \Moduli_{\vN, \vlambda}(D^2), & \ \ \text{with }R\in\rectangles(x,w), D^2 \in \pdomains(w,y),  R+D^2 = D.
\end{align*}

\medskip {\bf Type II} codimension-one strata correspond to no
trajectory breaking ($r=1$) and a single boundary degeneration (with
domain row $H_j$ or column $V_j$ for some $2\leq j\leq n$), and no
coarsening of the partitions:
\begin{align*}
 \Moduli_{\vN + \ve_j, \vlambda'}(D^1), & \text{ with } D^1 + H_j = D, \\
  \Moduli_{\vN + \ve_j, \vlambda'}(D^1), & \text{ with } D^1 + V_j = D, 
\end{align*}
where
\[
  \vlambda'=(\lambda'_2, \dots, \lambda'_{n})
\]
is such that $\lambda_j' \in \UE(\lambda_j)$, and
$ \lambda'_s = \lambda_s$ for all $s \neq j$. Here, $\UE(\lambda_j)$
is the set of unit enlargements of $\lambda_j$
(cf. Definition~\ref{def:UE}).

\medskip {\bf Type III} codimension-one strata correspond to no
trajectory breaking ($r=1$) and no boundary degenerations, but rather
an elementary coarsening of a partition $\lambda_j$ (for some $j$):
\[
 \Moduli_{\vN, \vlambda'}(D), \text{ with } \vlambda'=(\lambda'_2, \dots, \lambda'_{n}),
\]
 where $\lambda_j' \in \EC(\lambda_j)$, and $ \lambda'_s = \lambda_s$
 for all $s \neq j$. Here, $\EC(\lambda_j)$ is the set of elementary
 coarsenings (cf. Definition~\ref{def:EC}).

\medskip {\bf Type IV} codimension-one strata correspond to pure
trajectory breaking $(r=2$) where one of the domains is constant, but
with no additional disk degenerations ($E^i=F^i=0$), and no coarsening
of the partitions:
\begin{gather*}
 \Moduli_{ \vN^1, \vlambda^1}(c_x) \times \Moduli_{ \vN^2, \vlambda^1}(D),\\
 \Moduli_{ \vN^1, \vlambda^1}(D) \times \Moduli_{ \vN^2, \vlambda^1}(c_y),
\end{gather*}
where 
\begin{itemize}
\item $ \vN^1, \vN^2 \in \NN^{n-1}$ are such that  $\vN^1 + \vN^2 = \vN,$
\item $\vlambda^i= (\lambda^i_2, \dots, \lambda^i_{n}), i=1,2$, are vectors of partitions such that 
$ \lambda^1_j * \lambda^2_j = \lambda_j$ for all $j=2, \dots, n$.
\end{itemize}

Note that, among these strata, the terms where one of the two factors is
zero-dimensional are when the vector of partitions corresponding to the constant domain consists of empty partitions and a single length-one partition:
\begin{gather*}  
 \Moduli_{N^1\ve_j,  (N^1)_j}(c_x) \times \Moduli_{\vN^2, \vlambda^2}(D), \\
  \Moduli_{\vN^1, \vlambda^1}(D) \times \Moduli_{N^2\ve_j, (N^2)_j}(c_y).
\end{gather*}
Here, $N^1,N^2$ (when written without the vector symbols) are natural
numbers, and $(N^1)_j,(N^2)_j$ denote vectors of partitions as in
Remark~\ref{rmk:lambdaj}.

\begin{remark}
\label{rem:1234}
The different types of strata correspond to different kinds of terms
in the differential $\delta$ on the complex $\CDP_*$;
cf.~Section~\ref{sec:new}. Type I strata, where one of the factors is
zero dimensional, correspond to terms of $\delta$ of Type I; Type II
corresponds to Type II; Type III to Type III; and Type IV, where one
of the factors is zero dimensional, to Type IV.
\end{remark}


\begin{definition}
\label{def:dels}
Let $X =\bModuli_{\vN, \vlambda}(D)$ be of dimension $k$. Define its
\emph{thick dimension} by the equation
\begin{equation}
  \label{eq:dimtilde}
  \tdim \bModuli_{\vN, \vlambda}(D) = \mu(D)-1 + 2|\vN|.
\end{equation}
(This definition is justified in the next section.) Let
$l=\tdim \bModuli_{\vN, \vlambda}(D)$. By analogy with the notation
for $\langle n \rangle$-manifolds in Section~\ref{sec:nmflds}, for
$i=1, \dots, l$, we let $\del_iX$ be the closure of the union of all
codimension-one strata of Types I and IV of the form
\[
  \Moduli_{ \vN^1, \vlambda^1}(D^1) \times \Moduli_{ \vN^2, \vlambda^1}(D^2)
\]
with
\[
  \tdim \Moduli_{ \vN^1, \vlambda^1}(D^1) = i-1.
\]
We also let the {\em special boundary} of $X$, denoted $\del_sX$, be the closure of the union of all codimension-one strata of Types II and III. 
\end{definition}

It is easy to see that every higher codimension stratum is contained in the closure of a codimension-one stratum. Therefore, altogether, the boundary of $X$ is
\[
  \del X = (\del_1 X \cup \dots \cup \del_{l}X) \cup \del_sX.
\]

\subsection{Local models} \label{sec:modelsmoduli} Let us describe the local models for how the strata  from Equation~\eqref{eq:moduli-bdy} should live inside the moduli spaces $\bModuli_{\vN, \vlambda}(D)$. Note that every $\bModuli_{\vN, \vlambda}(D)$ is itself a stratum of a space of the form
\[
  \bModuli_{0}(\tD),\]
with  $\tD =D+\tE+\tF, \tE\in\pperiodic\text{ a sum of rows},\tF\in\pperiodic\text{ a sum of columns},\Os(\tE) +\Os(\tF)=\vN.$

There are several possible choices of such $\tD$, depending on the
choice of $\tE,\tF$; indeed, if $\vN=(N_2,\dots,N_n)$, there are
exactly $(N_2+1)(N_3+1)\cdots(N_n+1)$ such choices. (For example, for
the green line $\Moduli_{2\ve_i,(1,1)_i}(c_x)$ in the Whitney umbrella
from Figures~\ref{fig:rowcolumn} and \ref{fig:2row} we have three such
choices, $2H_i,H_i+V_i,2V_i$.)

The dimension of each $\bModuli_{0}(\tD)$ is given by the thick
dimension $l=\tdim \bModuli_{\vN, \vlambda}(D)$. The different possible
$\tD$ are in the same equivalence class $[\tD]$. As mentioned in
Equation~\eqref{eq:modd}, we will glue these $\bModuli_{0}(\tD)$ along
their special boundaries to form an $l$-dimensional
$\langle l\rangle$-manifold
\[
  \bModuli([\tD])=\bigcup \bModuli_0(\tD),
\]
with $\bdy_i\bModuli([\tD])=\bigcup \bdy_i\bModuli_0(\tD)$, for each
$i=1,2,\dots,l$. That is, after gluing, $\bModuli([\tD])$ no longer has any special boundary, and $\bdy_i$ corresponds precisely to trajectory breaking into $(i-1)$ and $(l-i)$-dimensional pieces:
\[
  \bdy_i\bModuli([\tD])=\coprod_{\substack{[\tD^1]+[\tD^2]=[\tD]\\\tdim \Moduli([\tD^1]) = i-1}} \bModuli([\tD^1])\times \bModuli([\tD^2]).
\]

Recall from Section~\ref{sec:TM} that if we specify a tubular
neighborhood $T_X$ of a stratum $X$ inside a stratified space, the
tubular neighborhoods $\ol{T}_{X, Y}$ of $X$ inside other closed strata $\ol{Y}$ are
just given by intersecting $T_X$ with $\ol{Y}$. Thus, to understand the
local model of a stratum inside $\bModuli_{\vN, \vlambda}(D)$, it
suffices to consider its local model inside the bigger space
$\bModuli_{0}(\tD)$.

Using the notation of Section~\ref{sec:generalLM}, we ask that the local model in the normal directions for the stratum
\[
  X=\Moduli_{\vN^1+\coefficients{E^1}+\coefficients{F^1}, \vlambda^1}(D^1) \times \dots \times
  \Moduli_{\vN^r+\coefficients{E^r}+\coefficients{F^r}, \vlambda^r}(D^r)
\]
inside $\bModuli_0(\tD)$ is the same as the local model for 
\[
  \R^{a} \times \{0\} \times  Z(0,\vN^1+\Os(E^1)+ \Os(F^1), 0; \vlambda^1) \times \cdots \times Z(0, \vN^r+\Os(E^r)+ \Os(F^r), 0; \vlambda^r)
\]
inside the union of 
\[
  \R^{a} \times \R_+^{r-1} \times \bZ(\Os(E^1) + \Os(\tE^1), 0, \Os(F^1)+\Os(\tF^1)) \times  \dots \times  \bZ(\Os(E^r)+ \Os(\tE^r), 0, \Os(F^r)+ \Os(\tF^r)),
\]
over all possible choices of $\tE^i\in\pperiodic$ a sum of rows and $\tF^i\in\pperiodic$ a sum of columns satisfying
\[
  \Os(\tE^i) + \Os(\tF^i)=\vN^i, \ \ \sum_i \tE^i = \tE,\ \ \sum_i \tF^i = \tF.
\]
Here $a$ is given by
\begin{align*}
  a&=\dim X-\sum_i \dim Z(0,\vN^i+\Os(E^i)+\Os(F^i),0;\vlambda^i)\\
   &=\sum_i \mu(D^i) + \#\{i\mid \vN^i+\Os(E^i)+\Os(F^i)=0\}.
\end{align*}

Since $\bModuli([\tD])=\bigcup \bModuli_0(\tD)$, we will require that the local model in the normal directions for the stratum $X$ in $\bModuli([\tD])$ is the union of the above local models, which is same as the local model for
\[
  \R^{a} \times \{0\} \times  Z(0,\vN^1+\Os(E^1)+ \Os(F^1), 0; \vlambda^1) \times \cdots \times Z(0, \vN^r+\Os(E^r)+ \Os(F^r), 0; \vlambda^r)
\]
inside
\[
  \R^{a} \times \R_+^{r-1} \times Z_{\vN^1+\Os(E^1)+\Os(F^1)}\times\dots \times Z_{\vN^1+\Os(E^1)+\Os(F^1)}.
\]

We will come back to these local models in Section~\ref{sec:neat},
when we will describe neat embeddings of our moduli spaces. In the
meantime, we present the following illuminating example.

\begin{example}\label{ex:local-model-example}
  Continuing from Examples~\ref{ex:RK} and~\ref{ex:three}
  (Figures~\ref{fig:rowcolumn} and~\ref{fig:2row}), the local model of
  the green edge $\Moduli_{2\ve_i,(1,1)_i}(c_x)$ inside
  $\bModuli([2H_i])$ is same as the local model of
  $Z(0,2\ve_i,0;(1,1)_i)$ inside $Z_{2\ve_i}$, which in turn, is same
  as the local model of $\RR\times\{0\}$ inside
  $\RR\times Z_1\times Z_1$ (cf.~Example~\ref{ex:Z020-inside-Z2}); the
  latter space has four codimension zero closed strata, of which
  $\RR\times \bZ(1,0,0)\times\bZ(1,0,0)$ is the local model inside
  $\bModuli_0(2H_i)$, $\RR\times \bZ(0,0,1)\times\bZ(0,0,1)$ is the
  local model inside $\bModuli_0(2V_i)$, and
  $\RR\times \big((\bZ(1,0,0)\times\bZ(0,0,1))\cup
  (\bZ(0,0,1)\times\bZ(1,0,0))\big)$ is the local model inside
  $\bModuli_0(H_i+V_i)$.

  The local model of the green vertex $\Moduli_{2\ve_i,(2)_i}(c_x)$
  inside $\bModuli([2H_i])$ is same as the local model of the point
  $Z(0,2\ve_i,0;(2)_i)$ inside $Z_{2\ve_i}$, and its three codimension zero
  closed strata $Z(2\ve_i,0,0)$, $Z(\ve_i,0,\ve_i)$, $Z(0,0,2\ve_i)$
  are the local models inside $\bModuli_0(2H_i)$, $\bModuli_0(H_i+V_i)$,
  $\bModuli_0(2V_i)$, respectively.

  Finally, the local model of the other green vertex
  $\Moduli_{\ve_i,(1)_i}(c_x)\times \Moduli_{\ve_i,(1)_i}(c_x)$ inside
  $\bModuli([2H_i])$ is same as the local model of the point
  $\{0\}\times Z(0,\ve_i,0;(1)_i)\times Z(0,\ve_i,0;(1)_i)$ inside
  $\RR_+\times Z_{\ve_i}\times Z_{\ve_i}$; the latter space also has
  four codimension zero closed strata, of which
  $\RR_+\times \bZ(\ve_i,0,0)\times\bZ(\ve_i,0,0)$ is the local model
  inside $\bModuli_0(2H_i)$,
  $\RR_+\times \bZ(0,0,\ve_i)\times\bZ(0,0,\ve_i)$ is the local model
  inside $\bModuli_0(2V_i)$, and
  $\RR_+\times \big((\bZ(\ve_i,0,0)\times\bZ(0,0,\ve_i))\cup
  (\bZ(0,0,\ve_i)\times\bZ(\ve_i,0,0))\big)$ is the local model inside
  $\bModuli_0(H_i+V_i)$.

  See Figure~\ref{fig:local-models-example}.
\end{example}

\begin{figure}
  \centering
  \begin{tikzpicture}

    \foreach \i in {0,1,2}{
    
  \begin{scope}[xshift=5*\i cm,x={(1cm,0)},z={(0,2cm)},y={(0.3cm,0.5cm)}]

    \coordinate (top) at (0,0,1);
    \coordinate (bottom) at (0,0);

    \coordinate (topleft) at (-1,-1,1);
    \coordinate (topright) at (1,1,1);

    \node (mixed) at (-0.7,0.5,1.3) {\small $\bModuli_0(H_i+V_i)$};

    \ifnum\i=1

    \path (-2,0) ..controls(-2,0,0.5) and ($(topleft)+(-0.5,0.5)$).. coordinate[pos=0.8] (fakeleft) (topleft);
    \path (topright) ..controls($(topright)+(-0.5,0.5)$) and (0,2,0.5).. coordinate[pos=0.2] (fakeright)(0,2);
    
    \fill[black!50] ($(fakeleft)+(0,0,-0.2)$)--(fakeleft)--(top)--(fakeright)--++(0,0,-0.2)--cycle;
    \fill[black!20] (-2,0) ..controls(-2,0,0.5) and ($(topleft)+(-0.5,0.5)$)..(topleft) -- (top)--(bottom)--cycle;
    \fill[black!50] (2,0) ..controls(2,0,0.5) and ($(topright)+(0.5,-0.5)$).. (topright)..controls($(topright)+(-0.5,0.5)$) and (0,2,0.5)..(0,2)--cycle;
    \fill[black!50] (topleft) ..controls ($(topleft)+(0.5,-0.5)$) and (0,-2,0.5).. (0,-2) -- (bottom) -- (2,0) ..controls(2,0,0.5) and ($(topright)+(0.5,-0.5)$).. (topright)--cycle;

    \draw (-2,0) ..controls(-2,0,0.5) and ($(topleft)+(-0.5,0.5)$)..(topleft) ..controls ($(topleft)+(0.5,-0.5)$) and (0,-2,0.5).. (0,-2);
    \draw (2,0) ..controls(2,0,0.5) and ($(topright)+(0.5,-0.5)$).. (topright);
    \draw (0,-2)--(bottom)--(2,0) (-2,0)--(-0.7,0);
    \draw (fakeright)--(top)--(fakeleft);
    \draw[dashed] (-0.7,0)--(bottom)--(0,2) .. controls(0,2,0.5) and ($(topright)+(-0.5,0.5)$)..(topright);

    \node at (1,1) {\small $\bModuli_0(2V_i)$};
    \node at (-1,-1) {\small $\bModuli_0(2H_i)$};

    \node[green!70!black] at (top) {\large $\bullet$};

    \else

    \ifnum\i=2
    \fill[black!35] (-2,-2)--(2,-2)--(2,2)--(-2,2)--cycle;
    \fi
    
    \fill[black!50] (0,2)--(bottom)--(top)--(0,2,1)--cycle;
    \fill[black!20] (-2,0)--(2,0)--(2,0,1)--(-2,0,1)--cycle;
    \fill[black!50] (0,-2)--(bottom)--(top)--(0,-2,1)--cycle;

    \draw (-2,0)--(-2,0,1)--(2,0,1)--(2,0);
    \draw (0,-2)--(0,-2,1)--(0,2,1)--(0,2,0.5);
    \draw[dashed] (0,2,0.5)--(0,2);

    \ifnum\i=2

    \draw (-2,0)--(-2,-2)--(2,-2)--(2,2)--(1.4,2);
    \draw[dashed] (-2,0)--(-2,2)--(1.4,2);

    \draw[ultra thick] (-0.6,0)--(-2,0) (0,-2)--(bottom)--(2,0);
    \draw[dashed,ultra thick] (-0.6,0)--(bottom)--(0,2);

    \node[green!70!black] at (bottom) {\large $\bullet$};

    \else
    \draw (-0.6,0)--(-2,0) (0,-2)--(bottom)--(2,0);
    \draw[dashed] (-0.6,0)--(bottom)--(0,2);
    \fi

    \node at (1,1,0.9) {\small $\bModuli_0(2V_i)$};
    \node at (-1,-1,0.9) {\small $\bModuli_0(2H_i)$};

    \draw[->] (mixed) -- (-0.7,0.5,1);
    \draw[->] (mixed) to[out=-45,in=90] (1,-1,1);
    
    \fi
    
    \draw[ultra thick,green!70!black] (bottom)--(top);

  \end{scope}

  }
    
  \end{tikzpicture}
  \caption{The local models of $\Moduli_{2\ve_i,(1,1)_i}(c_x)$ (left), $\Moduli_{2\ve_i,(2)_i}(c_x)$ (middle), and $\Moduli_{\ve_i,(1)_i}(c_x)\times \Moduli_{\ve_i,(1)_i}(c_x)$ (right) inside $\bModuli([2H_i])$, from Example~\ref{ex:local-model-example}.}\label{fig:local-models-example}
\end{figure}
\section{Embeddings and framings}
\label{sec:neat}
The moduli spaces $\bModuli_{\vN, \vlambda}(D)$ will come equipped with suitable embeddings in
\[
  \E^d_l \defeq \R^d \times \R_+ \times \R^d \times \R_+ \times \cdots \times \R_+ \times \R^d \; \cong \; \R_+^{l}\times \R^{d(l+1)}
\]
and they will also be framed. Here, $d$ is some sufficiently large
integer depending only on the grid $\Grid$, whereas
$l = \tdim \bModuli_{\vN, \vlambda}(D)$ is the thick dimension given
by the Formula~\eqref{eq:dimtilde}; for convenience, we will
henceforth also assume that $d$ is even, so that the above isomorphism
is orientation-preserving. Note that $\E^d_l$ is $\E(l,d(l+1))$ in the
notation of Section~\ref{sec:neatk}.

\subsection{Neat embeddings of stratified spaces} \label{sec:neats}
In this section we will describe the required properties for the embedding of $X=\bModuli_{\vN, \vlambda}(D) \hookrightarrow \E^d_l$ that we plan to construct. By analogy with Section~\ref{sec:neatk}, an embedding with these properties will be called {\em neat}. We assume that the strata of $X$ are as described in Section~\ref{sec:strata}. 

\begin{definition}
  \label{def:neatmoduli}
  Let $X=\bModuli_{\vN,\vlambda}(D)$ with thick dimension $\tdim
  X=l$. A \emph{neat embedding} of $X$ consists of the following data:
  \begin{enumerate}[label=(NE-\arabic*),ref=(NE-\arabic*)]
  \item An equivalence class of a subspace
    $U=U_{\vN,\vlambda}(D)\subset \E^d_l$, called the
    \emph{thickening} of $X$, such that $U$ is an $l$-dimensional
    $\langle l\rangle$-manifold and the inclusion $U\into \E^d_l$ is a
    neat embedding; the equivalence relation is described below.
  \item A topological embedding
    $\iota=\iota_X\from X\into U$---which is a smooth
    embedding when restricted to each open stratum---satisying
    $\iota^{-1}(\bdy_i U)=\bdy_i X$ for all $i=1,\dots,l$.
  \item\label{item:local-model-identification} A stratification of $U$ with $\iota(X)$ as a closed stratum, and an
    identification of the local model of each open stratum of $X$ of the form
    \[
      Y= \Moduli_{\vN^1+\coefficients{E^1}+\coefficients{F^1}, \vlambda^1}(D^1) \times \dots \times
      \Moduli_{\vN^r+\coefficients{E^r}+\coefficients{F^r}, \vlambda^r}(D^r)
    \]
    inside $U$, with the local model of
    \[
      A_Y=\R^{a_Y} \times \{0\} \times  Z(0,\vN^1+\Os(E^1)+ \Os(F^1), 0; \vlambda^1) \times \cdots \times Z(0, \vN^r+\Os(E^r)+ \Os(F^r), 0; \vlambda^r)
    \]
    inside
    \[
      B_Y=\R^{a_Y} \times \R_+^{r-1} \times Z_{\vN^1+\Os(E^1)+\Os(F^1)}\times\dots \times Z_{\vN^1+\Os(E^1)+\Os(F^1)}.
    \]
    (Here
    $a_Y=\dim Y-\sum_i\dim Z(0,\vN^1+\Os(E^1)+ \Os(F^1), 0;
    \vlambda^1)$.) In more detail, for any point $y\in Y$, let $f_y$
    be the isomorphism from the local model of $\iota(Y)$ inside $U$
    to the local model of $A_Y$ inside $B_Y$; then this identification may
    be recorded by just remembering the pullback (under $f_y$) of the
    standard frame of $A_Y$ inside $B_Y$. This framing of the normal
    bundle of $Y$ inside $U$ is called the \emph{internal framing}. 
    Note that the internal framings are defined separately on the open
    strata, since different strata have different dimensions, and
    hence a different number of vectors in their internal framings.
  \item The equivalence relation on these subspaces $U$ is generated
    by the following atomic relation: $U\sim U'$ if $U\subset U'$, the
    embedding $X\into U'$ is induced from the embedding $X\into U$,
    the stratification of $U$ is induced from the stratification of
    $U'$, and the local models of each open stratum $Y$ inside $U$ and
    $U'$ are identified.
  \end{enumerate}
\end{definition}

\begin{remark}\label{rem:same-local-models}
  Definition~\ref{def:neatmoduli} is inspired from the local models
  presented in Section~\ref{sec:modelsmoduli}. The thickening $U$
  corresponds to a ``tubular'' neighborhood of $X$ in the larger space
  $\bModuli([\tD])$, where $\tD=D +\tE+\tF$ is as in
  Section~\ref{sec:modelsmoduli}. For each open stratum $Y$, the local
  model of $Y$ in $U$ is identified with that of $A_Y$ inside $B_Y$,
  which is the same as the local model of $Y$ inside
  $\bModuli([\tD])$.
\end{remark}

We will also require these neat embeddings to satisfy various
compatibility relations.
\begin{definition}\label{def:compatible-neat}
  Let $S$ be a downward closed subset of $\DNlambda$, with respect to
  the partial order from Equation~\eqref{eq:DNlamba-poset}.  A
  \emph{coherent neat embedding} of $S$ consists of neat embeddings of
  each $(D,\vN,\vlambda)\in S$, satisfying the following
  compatibility conditions. Fix $(D,\vN,\vlambda)\in S$, and
  let $X=\bModuli_{\vN,\vlambda}(D)$, $l=\tdim X$, and
  $U=U_{\vN,\vlambda}(D)$.  For any open stratum $Y$ of of the form
  \[
    Y= \Moduli_{\vN^1+\coefficients{E^1}+\coefficients{F^1}, \vlambda^1}(D^1) \times \dots \times
    \Moduli_{\vN^r+\coefficients{E^r}+\coefficients{F^r}, \vlambda^r}(D^r),
  \]
  set
  $Y_i=\Moduli_{\vN^i+\coefficients{E^i}+\coefficients{F^i},
    \vlambda^i}(D^i)$, $l_i=\tdim Y_i$, and
  $U_i=U_{\vN^i+\coefficients{E^i}+\coefficients{F^i},\vlambda^i}(D^i)$. Then
  \begin{itemize}
  \item There exists thickening $V_i$ of $\ol{Y}_i$, equivalent to
    $U_i$, and an open subset $V$ of $U$ containing $\ol{Y}$, such
    that
    \begin{equation}\label{eq:thickening-compatibility}
      V_1\times[0,\epsilon)\times V_2\times[0,\epsilon)\times\dots\times[0,\epsilon)\times V_r=V
    \end{equation}
    (for some small $\epsilon>0$), as subsets of
    $\E^d_{l_1}\times\RR_+\times\E^d_{l_2}\times\RR_+\times\dots\times\RR_+\times\E^d_{l_r}=\E^d_l$;
    moreover, the stratification of $V$ induced from that on $U$ 
    agrees with the product stratification of
    $V_1\times[0,\epsilon)\times\dots\times[0,\epsilon)\times V_r$.
  \item The embedding $\iota_X$, restricted to $\ol{Y}$, is the product embedding
    \(
      \big(\iota_{\ol{Y}_1},\dots,\iota_{\ol{Y}_r}\big),
    \)
    composed with the inclusion
    \[
      \E^d_{l_1}\times\E^d_{l_2}\times\dots\times\E^d_{l_r}\cong \E^d_{l_1}\times\{0\}\times\E^d_{l_2}\times\{0\}\times\dots\times\{0\}\times\E^d_{l_r}\into \E^d_l.
    \]
  \item For any point $y=(y_1,\dots,y_r)$ in $Y$, let $f_y$ be the
    identification of the local model of $Y$ inside $V$ with that of
    $A_Y$ inside $B_Y$ from Item~\ref{item:local-model-identification}
    of Definition~\ref{def:neatmoduli}.
    \begin{itemize}
    \item The restriction of $f_y$ to the local model of $Y$ inside
      $X$ should be an identification with the local model of $A_Y$ inside
      \[
        \RR^{a_Y}\times\R_+^{r-1}\times\prod_i\bZ(\Os(E^i),\vN^i,\Os(F^i);\veta^i).
      \]
      (Here $\veta^i$ is the vector of partitions from Equation~\eqref{eq:veta-partition}.)
    \item Identifying $V$ with
      $V_1\times[0,\epsilon)\times\dots\times[0,\epsilon)\times V_r$,
      the restriction of $f_y$ to the local model of $Y$ inside
      $V_1\times\{0\}\times\dots\times\{0\}\times V_r$ should be an
      identification with the local model of $A_Y$ inside
      \[
        \R^{a_Y} \times\{0\}\times \prod_iZ_{\vN^i+\Os(E^i)+ \Os(F^i)};
      \]
      moreover, this identification is required to agree with the
      product identification of $f_{y_i}$ from the local model of
      $Y_i$ inside $V_i$ with that of $A_{Y_i}$ inside $B_{Y_i}$.  In
      other words, the internal framing of $Y$ inside $V$ is obtained
      by concatenating the $(r-1)$ standard unit vectors in the
      $[0,\epsilon)$ factors of
      $V_i\times[0,\epsilon)\times\dots\times[0,\epsilon)\times V_r$
      and the internal framings of $Y_i$ inside $V_i$.
    \end{itemize}
  \end{itemize}
\end{definition}

In addition to internal framings (which is part of the data of neat
embeddings), we also have a notion of external framings for neat
embeddings.

\begin{definition}
  Suppose we have a neat embedding of $X=\bModuli_{\vN, \vlambda}(D)$
  into $\E^d_l$, with associated thickening $U$. An {\em external
    framing} of $X$ is a framing of the $d(l+1)$-dimensional normal bundle
  to $U$ in $\E^d_l$; in other words, a smoothly varying, ordered
  basis for a complement of $TU$ in $T\E^d_l$
  (see Convention~\ref{conv:framings}).
\end{definition}

Once again, we have a notion of compatibility of external framings.
\begin{definition}\label{def:compatible-ext-framing}
  Let $S$ be a downward closed subset of $\DNlambda$, along with a
  coherent neat embedding. A \emph{coherent external framing} of $S$
  consists of external framings of $\bModuli_{\vN,\vlambda}(D)$ for
  each $(D,\vN,\vlambda)\in S$ satisfying the following. Fix
  $(D,\vN,\vlambda)\in S$, and let
  $X=\bModuli_{\vN,\vlambda}(D)$ and $U=U_{\vN,\vlambda}(D)$.  For any
  open stratum
  $Y=\prod_i\Moduli_{\vN^i+\coefficients{E^i}+\coefficients{F^i},
    \vlambda^i}(D^i)$ of $X$, let
  $Y_i=\Moduli_{\vN^i+\coefficients{E^i}+\coefficients{F^i},
    \vlambda^i}(D^i)$ and
  $U_i=U_{\vN^i+\coefficients{E^i}+\coefficients{F^i},\vlambda^i}(D^i)$. Let
  $V_i$ be a thickening of $\ol{Y}_i$ equivalent to $U_i$,
  and let $V$ be the open subset of $U$ containing $\ol{Y}$ satisfying
  Equation~\eqref{eq:thickening-compatibility}. The the framing of the
  normal bundle of $U$, restricted to $V$, is the product framing of
  the normal bundles of $V_i$.
\end{definition}

\begin{example}
\label{ex:neathex2}
If $X = \Moduli_0(D)$ and $D$ does not contain any row or column (as
in Example~\ref{ex:nodegs}), then $X$ is a
$\langle k \rangle$-manifold, the thickening $U$ is just $X$ itself,
and the notion of neat embedding coincides with that for
$\langle k \rangle$-manifolds given in Section~\ref{sec:neatk}. For
example, the hexagon moduli space from Figure~\ref{fig:Dind3} is a
$\langle 2 \rangle$-manifold, and Figure~\ref{fig:neathex} shows a
neat embedding of that hexagon. The procedure we will use in
Section~\ref{sec:construction} to construct such a neat embedding with
external framings will be as follows. We start by choosing embeddings
of the zero-dimensional moduli spaces $\Moduli_0(A)$, $\Moduli_0(B)$,
and $\Moduli_0(C)$ in $\RR^d$ along with framings of their normal
bundles. The six black dots on the $\RR^{3d}$ line are products of
these moduli spaces (with product normal framings), corresponding to
permutations in the order in which they appear as vertices in
Figure~\ref{fig:Dind3}:
\[
  \Moduli_0(A) \times \Moduli_0(B) \times \Moduli_0(C),\ \ \Moduli_0(A) \times \Moduli_0(C) \times \Moduli_0(B), \dots
\]
We will then construct neat embeddings of the one-dimensional moduli
spaces $\Moduli_0(A+B)$, $\Moduli_0(B+C)$ and $\Moduli_0(A+C)$ in
$\RR_+\times\RR^{2d}$, along with a normal framing that extends the
framing on the boundary. By taking products of the zero- and
one-dimensional moduli spaces we get neat embeddings (with product
normal framings) of the edges of the hexagon. Finally, we extend to an
embedding of the entire hexagon, and frame its normal bundle. (This
procedure automatically ensures that the neat embeddings and external
framings are coherent.)
\end{example}

\begin{example}
  In Figure~\ref{fig:neatrow}, we show neat embeddings for the moduli
  spaces from Figure~\ref{fig:row}, corresponding to the row $H_i=A+B$
  and the column $V_i=C+D$, along with their external framings. The
  two moduli spaces are glued at the blue point
  $\bModuli_{1, (1)}(c_x)$ where we also specify a thickening of that
  point (the orange interval) and the internal frame at that point
  (the red vector). The two halves of the orange interval (or an
  equivalent smaller thickening) are the tubular neighborhoods of the
  blue point inside the two moduli spaces.
\end{example}
\begin{figure}
  \centering
  \begin{tikzpicture}[scale=2.5,x={(1cm,0)},z={(0,1cm)},y={(-0.4cm,-0.6cm)}]
    \draw[->] (-0.5,0)--(2,0) node[pos=1,anchor=west] {\small $\RR^d$};
    \draw[->] (0,-0.5)--(0,2)  node[pos=1,anchor=north east] {\small $\RR^d$};
    \draw[->] (0,0,0)--(0,0,1) node[pos=1,anchor=south] {\small $\RR_+$};

    \coordinate (A) at (0,0.4);
    \coordinate (C) at (0,1.8);
    \coordinate (B) at (0.4,0);
    \coordinate (D) at (1.8,0);
    \coordinate (AB) at (0.4,0.4);
    \coordinate (CD) at (1.8,1.8);

    \coordinate (O) at (1.1,1.1,1.5);

    \node[anchor=south east] at (A) {\small $A$};
    \node[anchor=south east] at (C) {\small $C$};
    \node[anchor=south] at (B) {\small $B$};
    \node[anchor=south] at (D) {\small $D$};

    \draw[dashed] (A)--(AB)--(B) (C)--(CD)--(D);

    \draw[thick] (AB) ..controls($(AB)+(0,0,0.3)$) and ($(O)+(-0.3,-0.3,0)$).. coordinate[pos=0.5] (H) ($(O)+(-0.1,-0.1,0)$)--++(0.2,0.2,0) ..controls ($(O)+(0.3,0.3,0)$) and ($(CD)+(0,0,0.3)$) ..  coordinate[pos=0.5] (V) (CD);

    \draw[line width=3pt,orange,opacity=0.7] (O)++(-0.3,-0.3)--($(O)+(0.4,0.4)$);    
    
    \draw[ultra thick,-latex,black!50] (AB)--++(0,0.3) node[pos=1,anchor=north east,inner sep=0,outer sep=0] {\small $1$}; 
    \draw[ultra thick,-latex,black!50] (AB)--++(0.3,0) node[pos=1,anchor=west,inner sep=0,outer sep=0] {\small $2$};

    \draw[ultra thick,-latex,black!50] (O)--++(0,0.3) node[pos=1,anchor=north east,inner sep=0,outer sep=0] {\small $1$}; 
    \draw[ultra thick,-latex,black!50] (O)--++(0,0,-0.3) node[pos=1,anchor=north,inner sep=0,outer sep=0] {\small $2$};

    \draw[ultra thick,-latex,black!50] (CD)--++(0,0.3) node[pos=1,anchor=north east,inner sep=0,outer sep=0] {\small $1$}; 
    \draw[ultra thick,-latex,black!50] (CD)--++(-0.3,0) node[pos=1,anchor=east,inner sep=0,outer sep=0] {\small $2$};

    \draw[ultra thick,-latex,black!50] (H)--++(0,0.3) node[pos=1,anchor=north east,inner sep=0,outer sep=0] {\small $1$}; 
    \draw[ultra thick,-latex,black!50] (H)--++(0.15,0.15,-0.15) node[pos=1,anchor=north,inner sep=0,outer sep=0] {\small $2$};

    \draw[ultra thick,-latex,black!50] (V)--++(0,0.3) node[pos=1,anchor=north east,inner sep=0,outer sep=0] {\small $1$}; 
    \draw[ultra thick,-latex,black!50] (V)--++(-0.3,-0.3,-0.3) node[pos=1,anchor=east,inner sep=0,outer sep=0] {\small $2$};

    \draw[ultra thick,-latex,red] (O)--++(0.3,0.3,0);
    \node[blue!60] at (O) {\Large $\bullet$};
   
    \foreach \a in {A,B,C,D,AB,CD}{\node at (\a) {$\bullet$};}

  \end{tikzpicture}
  \caption {Neat embeddings for the moduli spaces in
    Figure~\ref{fig:row}. The orange interval is the thickening of the
    blue dot, with the internal frame shown by the red arrow. The gray
    arrows (ordered $1$ and $2$) indicate the external framings.}
\label{fig:neatrow}
\end{figure}

\begin{example}
  In Figure~\ref{fig:neatfig} we show a neat embedding for the moduli
  space $\bModuli_0(2C+D)$ from Figure~\ref{fig:rowplus}, together
  with an external framing. We also show the thickening of the special
  (blue) boundary. A tubular neighborhood of this blue edge inside the
  moduli space $\bModuli_0(2C+D)$ is half of this thickening (or an
  equivalent smaller one). The other half of the thickening would be a
  tubular neighborhood of the blue edge inside the moduli space
  $\bModuli_0(A+B+C)$ from Figure~\ref{fig:rowplus}.
\end{example}

\begin{figure}
  \centering
  \begin{tikzpicture}[scale=2.5]

    \begin{scope}[x={(1cm,0)},z={(0,1cm)},y={(-0.4cm,-0.6cm)}]

      \coordinate (A) at (0,1,1.2);
      \coordinate (Ashift) at ($(A)+(0,0,-0.3)$);
      \coordinate (Aother) at ($(A)+(0,0,0.3)$);
      \coordinate (B) at (0,1.6);
      \coordinate (C) at (1.3,1.4);
      \coordinate (Cshift) at ($(C)+(-0.2,0.2)$);
      \coordinate (Cother) at ($(C)+(0.2,-0.2)$);
      \coordinate(ABC) at ($(B)+(0.4,-0.3,0.3)$);

      \draw[->] (0,0)--(1,0) node[pos=1,anchor=west] {\small $\RR_+$};
      \draw[->] (0,-0.5)--(0,2)  node[pos=1,anchor=north east] {\small $\RR^{3d}$};
      \draw[->] (0,0,0)--(0,0,1) node[pos=1,anchor=south] {\small $\RR_+$};

      \fill[black!25,opacity=0.5] (A)--(Ashift) ..controls ($(Ashift)+(0,0,-0.5)$) and ($(B)+(0,0,0.3)$).. coordinate[midway] (AB) (B)
      ..controls($(B)+(0.3,0)$) and ($(Cshift)+(-0.2,0.2)$).. coordinate[midway] (BC) (Cshift)--(C)
       ..controls($(C)+(0,0,0.5)$)and ($(A)+(0.5,0)$).. coordinate[midway] (AC) (A);

      \fill[orange,opacity=0.7] (Cshift)--(Cother) ..controls ($(Cother)+(0,0,0.5)$) and ($(Aother)+(0.5,0)$)..(Aother)
      --(Ashift) ..controls($(Ashift)+(0.5,0)$) and ($(Cshift)+(0,0,0.5)$).. coordinate[midway] (ACshift) (Cshift);

      \draw[thick] (A)--(Ashift) ..controls ($(Ashift)+(0,0,-0.5)$) and ($(B)+(0,0,0.3)$) ..(B)
      ..controls($(B)+(0.3,0)$) and ($(Cshift)+(-0.2,0.2)$).. (Cshift)--(C);

      \draw[ultra thick,blue!60] (C) ..controls($(C)+(0,0,0.5)$)and ($(A)+(0.5,0)$)..(A);

      \draw[ultra thick,-latex,black!60] (B)--++(0,0.3);      
      \draw[ultra thick,-latex,black!60] (A)--++(0,0.3);      
      \draw[ultra thick,-latex,black!60] (C)--++(0.25,0.25);      
      \draw[ultra thick,-latex,black!60] (AB)--++(0,0.4,0.3);
      \draw[ultra thick,-latex,black!60] (BC)--++(0.15,0.3);
      \draw[ultra thick,-latex,black!60] (AC)--++(0.1,0.4);
      \draw[ultra thick,-latex,black!60]  (ABC)--++(0.2,0.85,0.5);
      
      \draw[ultra thick,-latex,red] (A)--++(0,0,-0.2);
      \draw[ultra thick,-latex,red] (C)--++(-0.15,0.15);
      \draw[ultra thick,-latex,red] (AC)--++(-0.16,0,-0.16);

      \node at (B) {$\bullet$};

      \node[blue!60] at (A) {\Large $\bullet$};
      \node[blue!60] at (C) {\Large $\bullet$};

    \end{scope}    
    
  \end{tikzpicture}
\caption {A neat embedding for the triangular moduli space from the
  right hand side of Figure~\ref{fig:rowplus}. The gray arrows
  indicate the external framing. The thickening of the special (blue)
  boundary is shown in orange, with internal framing indicated by the
  red arrows.}
\label{fig:neatfig}
\end{figure}

\begin{example}
  In Figure~\ref{fig:neatrowcolumn} we show a neat embedding for the
  front facet of Figure~\ref{fig:2row}, that is, the moduli space for
  the domain $AB$ with the disk $AB$ attached. The thickening of the
  green edge, along with its internal frames, is also shown. (The
  internal frames have two or three vectors at the $1$- and
  $0$-dimensional strata of the green edge, respectively.) The external frames
  cannot be shown due to severe dimension reduction.
\end{example}

\begin{figure}
  \centering
    \begin{tikzpicture}[scale=3, x={(1cm,-0.2cm)},z={(0,1cm)},y={(0.6cm,0.66cm)}]

      \coordinate (e1) at (0.8,0);
      \coordinate (e2) at (0,0.4);
      \coordinate (e3) at (0,0,0.8);

      \coordinate (p) at ($(e1)+(e2)$);
      \coordinate (p2) at ($(p)+(-0.2,0)$);
      \coordinate (p1) at ($(p)+(0,-0.2)$);
      \coordinate (pm2) at ($(p)+(0.2,0)$);
      \coordinate (pm1) at ($(p)+(0,0.2)$);
      \coordinate (p12) at ($(p)+(210:0.2)$);
      \coordinate (pm12) at ($(p)+(30:0.2)$);

      \coordinate (q) at ($(p)+(0,0,0.7)$);
      \coordinate (q12) at ($(q)+(235:0.2)$);
      \coordinate (qm12) at ($(q)+(45:0.2)$);

      \coordinate (pq) at ($(p)!0.5!(q)$);

      \fill[black!25] (e2) ..controls ($(e2)+(0,0,0.2)$) and ($(e3)+(0,0.1,-0.1)$).. (e3) ..controls ($(e3)+(0.2,0.2,0.1)$) and ($(q12)+(-0.15,-0.15)$) .. (q12)--(q) -- (p)--(e2);
      \fill[black!40] (e1) ..controls ($(e1)+(0,0,0.3)$) and ($(e3)+(0.5,0,0.1)$).. (e3) ..controls ($(e3)+(0.2,0.2,0.1)$) and ($(q12)+(-0.15,-0.15)$) .. (q12)--(q) -- (p)--(e1);

      \node[anchor=east] at (0,0) {\small $\RR^{4d}$};
      \draw[->] (0,0)--(1,0) node[pos=1,anchor=west] {\small $\RR_+$};
      \draw[->] (0,0)--(0,1.4)  node[pos=1,anchor=south] {\small $\RR_+$};
      \draw[->] (0,0,0)--(0,0,1) node[pos=1,anchor=south] {\small $\RR_+$};

      \fill[orange,opacity=0.7] (p12) arc (210:390:0.2) --(qm12) ..controls($(qm12)+(0,0,0.2)$) and ($(q12)+(0,0,0.4)$).. (q12) --(p12);

      \draw[ultra thin,orange!70!black] (p12) -- (q12) ..controls ($(q12)+(0,0,0.4)$) and ($(qm12)+(0,0,0.2)$).. (qm12) -- (pm12);
      \draw[ultra thin, orange!70!black] (p) circle (0.2);
      \draw[ultra thin] (p1) ..controls ($(p1)+(0,0,0.3)$) and ($(q12)+(0.1,-0.1)$).. (q12);
      \draw[ultra thin] (p2) ..controls ($(p2)+(0,0,0.3)$) and ($(q12)+(-0.05,0.05)$).. (q12);
      \draw[ultra thin] (p)--(pm1) ..controls ($(pm1)+(0,0,0.3)$) and ($(qm12)+(-0.1,0.1)$).. coordinate[pos=0.35] (pq1) (qm12);
      \draw[ultra thin] (p)--(pm2) ..controls ($(pm2)+(0,0,0.3)$) and ($(qm12)+(0.05,-0.05)$).. coordinate[pos=0.35] (pq2) (qm12);

      \draw[ultra thin, black!60] (e3) ..controls ($(e3)+(0.2,0.2,0.1)$) and ($(q12)+(-0.15,-0.15)$) .. (q12)--(q) -- (qm12);
      
      \draw[thick] (e1)--(p);
      \draw[thick] (e2)--(p);
      \draw[thick] (e1) ..controls ($(e1)+(0,0,0.3)$) and ($(e3)+(0.5,0,0.1)$).. (e3);
      \draw[thick] (e2) ..controls ($(e2)+(0,0,0.2)$) and ($(e3)+(0,0.1,-0.1)$).. (e3);

      \draw[green!70!black, ultra thick] (p)--(q);

      \draw[ultra thick,-latex,red] (p)--(pm1)  node[pos=1,outer sep=0,inner sep=0,anchor=south west] {\small 2};
      \draw[ultra thick,-latex,red] (p)--(pm2)  node[pos=1,outer sep=0,inner sep=2pt,anchor=west] {\small 3};
      \draw[ultra thick,-latex,red] (p)++(0.01,0.01)-- node[pos=1,outer sep=0,inner sep=0,anchor=south west] {\small 1} ++(0,0,0.2);

      \draw[ultra thick,-latex,red] (pq)--(pq1) node[pos=1,outer sep=0,inner sep=0,anchor=south west] {\small 1};
      \draw[ultra thick,-latex,red] (pq)--(pq2) node[pos=1,outer sep=0,inner sep=2pt,anchor=west] {\small 2};
      
      \draw[ultra thick,-latex,red] (q)--++(0,0,0.2) node[pos=1,outer sep=0,inner sep=2pt,anchor=south] {\small 2};
      \draw[ultra thick,-latex,red] (q)--++($(pq1)-(pq)$) node[pos=1,outer sep=0,inner sep=0,anchor=south west] {\small 1};
      \draw[ultra thick,-latex,red] (q)--++($(pq2)-(pq)$) node[pos=1,outer sep=0,inner sep=2pt,anchor=west] {\small 3};

      \node at (e1) {$\bullet$};
      \node at (e2) {$\bullet$};
      \node at (e3) {$\bullet$};
      \node[green!70!black] at (p) {\Large $\bullet$};
      \node[green!70!black] at (q) {\Large $\bullet$};

    \end{tikzpicture}
    \caption {A neat embedding for the front facet of the moduli space
      shown in Figure~\ref{fig:2row}. The thickening of the green edge
      is shown in orange, along with its stratification (which is
      identical to the stratification from
      Figure~\ref{fig:local-models-example}). The internal frames
      along the green edge are shown by the (numbered) red arrows.}
    \label{fig:neatrowcolumn}
\end{figure}

\section{The embedded framed cobordism group}
\label{sec:group}
The obstruction classes that we will define during the construction
will naturally live inside a group
\[
  \tOmega^k_\fr=\colim_m \tOmega^k_{\fr, m},
\]
which we call the
\emph{embedded framed cobordism group}. 

Recall that the usual framed cobordism group
\[
  \Omega^k_\fr=\colim_m\Omega^k_{\fr,m}
\]
is defined as follows: the elements of $\Omega^k_{\fr, m}$ are the
equivalence classes of closed $k$-dimensional manifolds $M$ embedded
in $\RR^m$, together with a framing of the normal bundle; the
equivalence relation is given by framed cobordisms in
$\RR^m\times[0,1]$; and the group structure is
$[M_1]+[M_2]=[M \amalg M'_2]$, where $M'_2$ is a sufficiently large
translation of $M_2$. There is a natural map
$\sigma\from \Omega^k_{\fr,m}\to\Omega^k_{\fr,m+1}$, and
$\Omega^k_\fr$ is the colimit.

The group $\tOmega^k_{\fr, m}$ is defined similarly to $\Omega^k_{\fr,m}$,
except we require the framed cobordisms to also be embedded in
$\RR^m$. More precisely:
\begin{enumerate}[leftmargin=*,label=($\Omega$-\arabic*),ref=($\Omega$-\arabic*)]
\item \label{item:1}The elements of $\tOmega^k_{\fr, m}$ are the
  equivalence classes of closed $k$-dimensional manifolds $M$ embedded
  in $\RR^m$, together with a vector field $\vv$ (in $\RR^m$) along
  $M$ which is everywhere transverse to $TM$, and a framing of an
  $(m-k-1)$-dimensional complement of $TM\oplus\langle \vv\rangle$. We
  assume $m\geq 2k+3$. (Also, we will always follow
  Convention~\ref{conv:framings}: framings are not necessarily
  orthonormal, and complements are not necessarily orthogonal.)
\item\label{item:framed-cob-equivalence} The equivalence relation
  stipulates $(M_1,\vv_1)\sim (M_2,\vv_2)$ if there is an embedded
  framed cobordism in $\RR^m$ from $M_1$ to $M'_2$, which starts in
  the direction of $\vv_1$ and ends in the direction of $-\vv_2$.  Here,
  $M'_2$ is a translation of $M_2$ in a generic direction so that
  $M_1\cap M'_2=\varnothing$. We call a direction $\vec{e}\in S^{m-1}$
  {\em generic for} $(M,\vv)$ if  the projection $\pi\from \RR^m\to \RR^{m-1}$
  to the hyperplane perpendicular to $\vec{e}$ sends $M$ diffeomorphically unto an
  embedded submanifold of $\RR^{m-1}$ and $\vv$ to a vector field in
  $\RR^{m-1}$ along $\pi(M)$ which is everywhere transverse to the
  tangent space of $\pi(M)$. (A standard application of Sard's lemma
  shows that if $m\geq 2k+2$, then non-generic directions constitute a
  measure zero subset of $S^{m-1}$.) 
\item The group structure on $\tOmega^k_\fr$ is given by
  $[(M_1,\vv_1)]+[(M_2,\vv_2)]=[(M_1,\vv_1) \amalg (M'_2,\vv_2)]$, where $M'_2$
  is a translation of $M_2$ in a generic direction, as above.
\item \label{item:4}The zero element is the empty submanifold, and negation is given
  by reversing $\vv$, that is, $-[(M,\vv)]=[(M,-\vv)]$.
\end{enumerate}

The above definition deserves some justification, specifically to
show that $\sim$ defines a well-defined equivalence relation, and
that $(M,-\vv)$ is the inverse of $(M,\vv)$. The following lemmas are key.
\begin{lemma}\label{lemma:framed-cobordism-pushoff}
  Consider a framed $(M,\vv)$ as above and let $\vec{e}$ be a generic
  direction for $(M, \vv)$. Let $M'$ denote a pushoff in the direction of
  $\vec{e}$. Then there is an embedded framed cobordism (as
  described in Item~\ref{item:framed-cob-equivalence}) from $(M,\vv)$
  to $(M',\vec{e})$, for some normal framing of $(M',\vec{e})$.
\end{lemma}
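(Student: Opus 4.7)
The plan is to construct $W$ explicitly as a family of smooth curves indexed by the points of $M$, each curve traveling from a point $p\in M$ (initially tangent to $\vv_p$) to the corresponding point $p+L\vec{e}\in M'$ (ultimately tangent to $\vec{e}$). The framing of the normal bundle of $W$ is then obtained from the given framing on $M$ by extending along these curves, using that $W$ deformation retracts onto $M$.

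Concretely, by the genericity hypothesis the orthogonal projection $\pi\from\RR^m\to\RR^{m-1}$ perpendicular to $\vec{e}$ embeds $M$ and sends $\vv$ to a vector field transverse to $T\pi(M)$; hence there exists $\delta>0$ such that $(q,s)\mapsto \pi(q)+s\pi(\vv_q)$ embeds $M\times(-\delta,\delta)$ onto a tubular neighborhood of $\pi(M)$ in $\RR^{m-1}$. Choose $L>0$ large enough that $M$ and $M+L\vec{e}$ are disjoint, and choose smooth functions $\alpha,\beta\from [0,1]\to\RR$ so that the planar curve $\tau\mapsto(\alpha(\tau),\beta(\tau))$ is an embedded arc with $\sup|\alpha|<\delta$, $\alpha(0)=\alpha(1)=0$, $\alpha'(0)>0$, $\alpha'(1)=0$, $\beta(0)=0$, $\beta(1)=L$, $\beta'(0)=0$, $\beta'(1)>0$. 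For each $p\in M$ define
$$\gamma_p(\tau)=p+\alpha(\tau)\,\vv_p+\beta(\tau)\,\vec{e},\qquad \tau\in[0,1],$$
and set $W=\{\gamma_p(\tau):p\in M,\ \tau\in[0,1]\}$.

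The main obstacle is embeddedness of $W$, and this is precisely where the genericity of $\vec{e}$ enters. Suppose $\gamma_{p_1}(\tau_1)=\gamma_{p_2}(\tau_2)$; applying $\pi$ yields $\pi(p_1)+\alpha(\tau_1)\pi(\vv_{p_1})=\pi(p_2)+\alpha(\tau_2)\pi(\vv_{p_2})$, and the tubular neighborhood uniqueness (using $\sup|\alpha|<\delta$) forces $p_1=p_2$ and $\alpha(\tau_1)=\alpha(\tau_2)$. Comparing $\vec{e}$-components then gives $\beta(\tau_1)=\beta(\tau_2)$, whence $\tau_1=\tau_2$ since $(\alpha,\beta)$ is an embedded arc. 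A parallel computation, again using that $\alpha$ is uniformly small, shows that $\Phi(p,\tau)=\gamma_p(\tau)$ is an immersion, so $W$ is a smoothly embedded $(k+1)$-dimensional submanifold with $\partial W=M\sqcup M'$. By construction the inward tangent to $W$ at $p\in M$ is the positive multiple $\alpha'(0)\vv_p$ of $\vv_p$, and the inward tangent at $p+L\vec{e}\in M'$ is $-\beta'(1)\vec{e}$, so the cobordism starts in the direction of $\vv$ and ends in the direction of $-\vec{e}$, as required by Item~\ref{item:framed-cob-equivalence}.

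It remains to produce the framing. The normal bundle $\nu$ of $W$ in $\RR^m$ has rank $m-k-1$, and its restriction to $M$ is a complement of $TM\oplus\langle\vv\rangle$, which is already framed by the given data. Since $\Phi\from M\times[0,1]\to W$ is a diffeomorphism, $\nu$ is isomorphic to the pullback of $\nu|_M$ along the retraction $W\to M$, so the given framing extends to a smooth framing of $\nu$ over all of $W$ (for instance by parallel transport along the curves $\gamma_p$ followed by Gram--Schmidt). Its restriction to $M'$ is a framing of the complement of $TM'\oplus\langle\vec{e}\rangle$; together with $\vec{e}$ itself, this gives the desired normal framing of $(M',\vec{e})$, completing the construction of the embedded framed cobordism.
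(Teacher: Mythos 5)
Your construction is essentially the paper's: both build the cobordism as the union over $p\in M$ of planar arcs $p+\alpha(\tau)\vv_p+\beta(\tau)\vec e$ confined to thin strips in the planes $\operatorname{Span}(\vv_p,\vec e)$, with the tangent data at the two ends arranged exactly as Item~\ref{item:framed-cob-equivalence} requires, and the framing extended from $M$ across the product cobordism. The only difference is that you spell out, via the projection $\pi$ and a tubular neighborhood of $\pi(M)$ in $\RR^{m-1}$, the embeddedness/immersion claim that the paper simply asserts follows from genericity; this is a correct and welcome elaboration, not a different route.
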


\begin{proof}
  By rescaling if necessary, we can assume the pushoff $M'$ of $M$ is by the unit vector $\ve$. 
  
  Fix a smooth embedding $\gamma\from [0, 1] \to \R^2$ such that
  \[
    \gamma(0)=(0, 0), \ \ \gamma(1)=(1, 0), \ \ \gamma'(0)=(0, 1), \ \ \gamma'(1)=(1,0)
  \]
  and such that the image of $\gamma$ is contained in the strip $S_{\epsilon} = [0,1] \times [0,\epsilon)$, for $\epsilon > 0$ small. 

  For every $p \in M$, let $V_p = \operatorname{Span} (\ve, \vv_p)$ and let $A_p\from \R^2 \to V_p$ be the linear isomorphism that takes $(1, 0)$ to $\ve$ and $(0, 1)$ to $\vv_p$. If $\epsilon$ is sufficiently small, the genericity condition on $\ve$ guarantees that the union of all $p+A_p(S_{\epsilon})$ forms a smoothly embedded bundle over $M$, with fiber $S_{\epsilon}$. Then, the map 
  \[
    f\from [0, 1] \times M \to \R^{m}, \ \ f(t, p) = p+ A_p \circ \gamma(t)
  \]
  describes a smoothly embedded cobordism $S$ from $(M, \vv)$ to $(M', \ve)$. We
  can also choose a normal framing on this cobordism, which agrees
  with the given framing at $(M,\vv)$.  Figure~\ref{fig:framed-cobordism-pushoff}
  illustrates the proof.
\end{proof}

\begin{figure}
  \centering

  \begin{tikzpicture}[scale=2]

    \begin{scope}[xshift=-4cm]

      \coordinate (O) at (0,0);
      \coordinate (One) at (1,0);
      \coordinate (mid) at (0.5,0);

      \fill[black!30] (O) rectangle ($(One)+(0,0.4)$) node[pos=1,anchor=north east,color=black,inner sep=0,outer sep=1] {$S_{\epsilon}$};
      
      \draw[->] (-0.2,0) -- (1.2,0);
      \draw[->] (0,-0.2) -- (0,1.2);
      
      \node at (O) {$\bullet$};
      \node at (One) {$\bullet$};
      \node[anchor=north east] at (O) {$(0,0)$};
      \node[anchor=north] at (One) {$(1,0)$};
      
      \draw[thick] (O) ..controls (0,0.4) and (mid) .. node[pos=0.7,anchor=south,inner sep=0,outer sep=1] {$\gamma$} (One);

      \draw[->,thick] (2,0.2) -- ++(1,0) node[midway,anchor=south] {$A_p$};
    \end{scope}

    \coordinate (O) at (0,0);
    \coordinate (One) at (1,0);
    \coordinate (mid) at (0.5,0);

    \draw[thick,-latex,dashed] (One) --++ (1,0) node[pos=1,anchor=west] {$\vec{e}$};
    \draw[thick,-latex,dashed] (O)--(-1*0.6,2*0.6) node[pos=1,anchor=east] {$\vv$};

    \draw[thick] (O) ..controls (-1,2) and (mid).. coordinate[pos=0.33] (first) coordinate[pos=0.67] (last) (One);

    \draw[thick,-latex] (O) --++(210:0.4);
    \draw[thick,-latex] (first) --++(90:0.4);
    \draw[thick,-latex] (last) --++(60:0.4);   
    \draw[thick,-latex] (One)--++(90:0.4);
    
    \node at (O) {\Large $\bullet$};
    \node at (One) {\Large $\bullet$};

    \node[anchor=north] at (O) {$M$};
    \node[anchor=north] at (One) {$M'$};

  \end{tikzpicture}
  \caption{A picture illustrating the proof of
    Lemma~\ref{lemma:framed-cobordism-pushoff}; the notation is same
    from the lemma. The vector fields $\vv$ and $\vec{e}$ (at $M$ and
    $M'$) are shown by dashed arrows, and the normal framings are shown
    by solid arrows.}\label{fig:framed-cobordism-pushoff}
\end{figure}

\begin{lemma}\label{lemma:framed-cobordism-pushoff2}
  As in Lemma~\ref{lemma:framed-cobordism-pushoff}, consider $(M,\vv)$,
  a generic direction $\vec{e}$ for $(M, \vv)$, and a pushoff $M'$ in the direction
  of $\vec{e}$. Then there is an embedded framed cobordism from
  $(M,\vv)$ to $(M',\vv)$ (as described in
  Item~\ref{item:framed-cob-equivalence}), with the normal framing
  on $(M',\vv)$ the same as the given normal framing on $(M,\vv)$.
\end{lemma}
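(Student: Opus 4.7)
The construction will mirror that of Lemma~\ref{lemma:framed-cobordism-pushoff} very closely, with the only additional subtlety lying in arranging the normal framing to agree at both ends. Take a smooth embedding $\gamma: [0,1]\to\RR^2$ with image in $[0,1]\times[0,\epsilon)$ satisfying $\gamma(0) = (0,0)$, $\gamma(1) = (1,0)$, and $\gamma'(0) = \gamma'(1) = (0,1)$---exactly the $\gamma$ used in the previous lemma. Using the same linear maps $A_p$ (with $A_p(1,0) = \ve$ and $A_p(0,1) = \vv_p$), define $f(t,p) = p + A_p(\gamma(t))$; the image $W = f([0,1]\times M)\subset\RR^m$ is a smoothly embedded cobordism with $W\cap\{t=0\} = M$ and $W\cap\{t=1\} = M'$. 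By the boundary conditions on $\gamma'$, the inward-pointing tangent to $W$ at each boundary component is parallel to $\vv_p$, so $W$ already satisfies the ``starts in direction $\vv$, ends in direction $-\vv$'' requirement of Item~\ref{item:framed-cob-equivalence}; the only work left is the framing.

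To equip $W$ with a normal framing matching the hypotheses, we exploit the fact that $T_{f(t,p)}W$ at $t=0,1$ equals $T_pM \oplus \langle\vv_p\rangle$ (identified at $t=1$ under translation by $\ve$). Thus the normal subbundles of $W$ along $M$ and along $M'$ coincide as subspaces of $\RR^m$ under the natural translation, and the parallel-translated given framing at each point $p+\ve \in M'$ is automatically a valid normal framing there. Since $W$ deformation retracts onto $M$, its normal bundle is trivial, and our strategy is to extend the given framing on $M$ across $W$ so that it restricts to this translated framing at $M'$; a candidate extension is the ``constant'' one that assigns to each $f(t,p)$ the vectors of the given framing at $p$, which we will adjust in the middle of $W$ as needed to remain transverse to $T_{f(t,p)}W$.

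\textbf{Main obstacle.} The principal difficulty is in arranging the extension to satisfy both boundary conditions simultaneously: a generic extension across $W$ of the framing given on $M$ will differ from the translated framing at $M'$ by some map $M \to GL_{m-k-1}(\RR)$, and one must exhibit an extension for which this map is the identity. We expect this obstruction to vanish for a structural reason: the retraction $W \to M$ sending $f(t,p) \mapsto p$ identifies the normal bundle of $W$ at $p+\ve$ with the one at $p$, and under this identification the translated framing at $M'$ corresponds to the original framing at $M$, so the comparison map is homotopically trivial. Making this explicit---producing a smooth extension transverse to $TW$ throughout and verifying that it matches the translated framing at $M'$---should follow by a standard argument, using the dimension hypothesis $m\geq 2k+3$ to place us in the stable range where any residual discrepancy can be absorbed via an isotopy of the framing supported in a collar neighborhood of $M'\subset W$.
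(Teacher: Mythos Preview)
Your single-arc approach has a gap in the framing step. First a small point: a curve with $\gamma(1)=(1,0)$ and $\gamma'(1)=(0,1)$ necessarily has $\gamma_2(t)<0$ for $t$ just below $1$, so its image cannot lie in $[0,1]\times[0,\epsilon)$; you would need $[0,1]\times(-\epsilon,\epsilon)$ and an S-shaped curve. (Also, the curve in Lemma~\ref{lemma:framed-cobordism-pushoff} actually ends tangent to the $\ve$-direction, not the $\vv$-direction, since that cobordism is \emph{to} $(M',\ve)$---so it is not the same $\gamma$.) More seriously, your argument that the comparison map $g\colon M\to GL_{m-k-1}(\RR)$ is null-homotopic does not hold up. The retraction $r\colon W\to M$ identifies $\nu_W$ with $r^*(\nu_W|_M)$ only as abstract bundles; it does not single out a preferred isomorphism compatible with a chosen extension of $\phi$, so ``the translated framing corresponds to the original'' does not force $[g]=0$. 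The stable-range hypothesis merely stabilises $[M,GL_{m-k-1}]$ to $[M,GL]$, which need not vanish. And the ``constant extension, adjusted in the middle'' heuristic can genuinely fail: $\phi_p$ may fail to be transverse to $T_{f(t,p)}W$ along an entire hypersurface (for instance wherever $\gamma'(t)$ is parallel to $\ve$, if $\ve\in T_pM+\operatorname{span}(\phi_p)$), and patching across such a wall is exactly where monodromy can enter.

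The paper circumvents the obstruction by symmetry rather than by computing it. It takes a small pushoff $N$ of $M$ and the symmetric pushoff $N'$ of $M'$, uses Lemma~\ref{lemma:framed-cobordism-pushoff} to get a framed cobordism $F$ from $(M,\vv)$ to $(N,\ve)$, and lets $F'$ be the reflected copy of $F$, viewed as a cobordism from $(N',\ve)$ to $(M',\vv)$. Because $F'$ is literally the mirror of $F$, the induced framings on $N$ and $N'$ agree and those on $M$ and $M'$ agree. A straight translation $S$ from $(N,\ve)$ to $(N',\ve)$ carries the framing across unchanged, and $F\cup S\cup F'$ is the desired cobordism---with matching framings guaranteed by the symmetry, not by any obstruction analysis.
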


\begin{proof}
  Let $N$ be another pushoff of $M$ in the direction of $\vec{e}$; assume
  this pushoff is much smaller compared to the given pushoff
  $M'$. Let $N'$ be the symmetric pushoff of $M'$ in the direction
  of $-\vec{e}$. By Lemma~\ref{lemma:framed-cobordism-pushoff},
  there is an embedded framed cobordism $F$ from $(M,\vv)$ to
  $(N,\vec{e})$, for some normal framing of $(N,\vec{e})$. Consider
  the symmetric cobordism $F'$ from $(M',-\vv)$ to $(N',-\vec{e})$,
  and view it as a cobordism from $(N',\vec{e})$ to $(M',\vv)$. The
  framing of $F$ induces a framing of $F'$ by symmetry; in
  particular, the normal framings on $(N,\vec{e})$ and 
  $(N',\vec{e})$ agree, and the normal framings on $(M,\vv)$ and
  $(M',\vv)$ agree.

  Simply by translating along the $\vec{e}$ direction, we
  get an embedded framed cobordism $S$ from $(N,\vec{e})$ to
  $(N',\vec{e})$. Then the union $F\cup S\cup F'$ is a framed
  cobordism from $(M,\vv)$ to $(M',\vv)$ as required.
\end{proof}

\begin{lemma} \label{lemma:minus} Given a framed $(M, \vv)$ with a
  generic direction $\ve$, let $M'$ be a pushoff of $M$ in the
  direction of $\ve$, and let $(-M', -\vv)$ be obtained from
  $(M', -\vv)$ by changing the sign of one of the framing
  vectors. Then, there exists an embedded framed cobordism from
  $(M, \vv)$ to $(-M', -\vv)$.
\end{lemma}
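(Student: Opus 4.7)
The strategy is a U-turn cobordism in the plane spanned by $\vv$ and $\ve$, analogous to the proof of Lemma~\ref{lemma:framed-cobordism-pushoff} but with a curve that doubles back so the tangent vector reverses. Specifically, fix a smooth embedded curve $\gamma\co [0,1] \to \RR^2$ with $\gamma(0)=(0,0)$, $\gamma(1)=(1,0)$, $\gamma'(0)=(0,1)$, and $\gamma'(1)=(0,-1)$, contained in a compact region (e.g.\ a semicircular arc). For each $p \in M$, let $V_p = \Span(\vv_p, \ve)$ and let $A_p\co \RR^2 \to V_p$ be the linear isomorphism with $A_p(1,0) = \ve$ and $A_p(0,1) = \vv_p$. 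After rescaling $\gamma$ by a small factor and invoking the genericity of $\ve$, the map $f(t,p) = p + A_p(\gamma(t))$ is a smooth embedding whose image $F$ is a cobordism from $M$ to $M' = M + \ve$, with $\partial_t f$ equal to $\vv_p$ at $t=0$ and $-\vv_p$ at $t=1$; in particular, $F$ satisfies the direction conditions of the equivalence relation.

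To frame the normal bundle to $F$ in $\RR^m$, I will extend the given framing by ``rotating'' it. At each $(t,p)$, let $R_{t,p}$ be the linear map that acts as the identity on the orthogonal complement $V_p^{\perp}$ and rotates $V_p$ so that $\vv_p \mapsto A_p(\gamma'(t))/|A_p(\gamma'(t))|$. Take the normal framing at $f(t,p)$ to be $(R_{t,p} f_1, \dots, R_{t,p} f_{m-k-1})$. Since $R_{t,p}$ preserves the orthogonal complement of $TF$ at each point, this is a genuine framing of the normal bundle to $F$, and it interpolates between the original framing at $M$ (where $R_{0,p}$ is the identity) and an extended framing $F_1 = (R_{1,p} f_1, \dots, R_{1,p} f_{m-k-1})$ at $M'$.

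At $t=1$, $R_{1,p}$ equals $-\mathrm{Id}$ on $V_p$ and $\mathrm{Id}$ on $V_p^{\perp}$. Its induced action on the complement of $TM' \oplus \langle \vv \rangle$ has determinant $-1$, since the image of $V_p$ in this complement is the one-dimensional subspace spanned by the projection of $\ve$ (which is negated), while a complementary subspace is fixed. Hence $F_1$ differs from the naive translation of the original framing to $M'$ by a determinant-$(-1)$ linear automorphism---as does the target framing $F_0' = (-f_1, f_2, \dots, f_{m-k-1})$. Both therefore lie in the same connected component of the space of framings of the normal bundle. To conclude, append a trivial collar $M' \times [1, 1+\delta]$ (translated slightly off $F$ in the $-\vv$ direction so as to remain embedded) and interpolate the framing from $F_1$ to $F_0'$ along a continuous path within that component; the concatenated manifold is the desired embedded framed cobordism from $(M,\vv)$ to $(-M',-\vv)$.

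The subtlest step is checking that $f$ is a smooth embedding---that is, controlling both the self-transversality of the U-turn strip (no collisions between neighboring $V_p$'s) and its non-intersection with $M$. Both follow from the genericity of $\ve$ combined with a sufficiently small rescaling of $\gamma$, entirely analogous to the corresponding verification in the proof of Lemma~\ref{lemma:framed-cobordism-pushoff} (and this is where the hypothesis $m \geq 2k+3$ is used). Once the embedding is in place, the extension of the framing via $R_{t,p}$ and the final collar adjustment are formal.
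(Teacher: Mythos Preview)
Your overall strategy—a U-turn in the plane $V_p = \Span(\vv_p, \ve)$—is exactly the paper's idea, and the embedding argument goes through as you indicate. The gap is in the framing.

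The claim that ``$R_{t,p}$ preserves the orthogonal complement of $TF$'' is not correct in general: $R_{t,p}$ is the identity on $V_p^{\perp}$, but $T_pM$ need not lie in $V_p^{\perp}$, so $R_{t,p}$ need not preserve $TF \approx T_pM \oplus \langle A_p(\gamma'(t))\rangle$. Concretely, take $m=5$, $k=1$, $\vv_p=e_2$, $\ve=e_3$, $T_pM=\langle e_1+e_2\rangle$ (with $\vv$ locally constant), and choose the framing with $f_1=e_3-e_1$, $f_2=e_4$, $f_3=e_5$. At the top of the semicircle the rotation is $e_2\mapsto e_3$, $e_3\mapsto -e_2$, so $R_{t,p}(f_1)=-(e_1+e_2)\in T_pM\subset TF$; the rotated vectors fail to span a complement of $TF$. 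For the same reason, your determinant computation at $t=1$ is ill-posed: $R_{1,p}$ does not preserve $T_pM\oplus\langle\vv_p\rangle$, hence does not induce a map on the quotient.

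The paper repairs this with a preliminary normalization: since framings may be continuously deformed along a short cobordism, one may assume one framing vector $\vw$ lies in $V_p$, perpendicular to $\vv_p$ there. The remaining framing vectors then span a complement of $T_pM \oplus V_p$ (not merely of $T_pM\oplus\langle\vv_p\rangle$), and \emph{that} subspace is constant along the U-turn. The cobordism is framed by taking the $\vw$-slot to be the normal to the curve inside $V_p$ while keeping the other vectors fixed; at the far end this turns $\vw$ into $-\vw$ on the nose, and no collar adjustment is needed. Your rotation scheme becomes valid once this normalization is in place, since then $f_i\in V_p^{\perp}$ for $i\ge 2$ and $R_{t,p}(f_i)=f_i$.
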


\begin{proof}
  Note that when flowing $M$ we are allowed to continuously deform its
  framing. Thus, without loss of generality, we can assume that one of
  the framing vectors of $M$, call it $\vw$, lies in the plane spanned
  by $\ve$ and $\vv$; in fact, we can assume it to be perpendicular to
  $\vv$ in that plane.

  Let $M''$ be a smaller pushoff of $M$ in the direction of $\ve$, so
  that $M''$ is intermediate between $M$ and $M'$. Consider the
  cobordism $S$ from $(M, \vv)$ to $(M'', \ve)$ defined in
  Lemma~\ref{lemma:framed-cobordism-pushoff}, and compose it with the
  reverse of the cobordism from $(M', \vv)$ to $(M'', -\ve)$, provided
  by the same lemma. Altogether, we get a cobordism from $(M, \vv)$ to
  $(M', -\vv)$. Furthermore, we can choose one of the vector fields in
  the framing to be perpendicular to $S$ in the plane spanned by $\vv$
  and $\ve$, and let the other vectors stay constant. Following the
  framing, we see that the distinguished framing vector $\vw$ gets
  turned into $-\vw$; see Figure~\ref{fig:doublecob}. Thus, when
  taking into account the framing, the end of the cobordism is
  $(-M', -\vv)$.
\end{proof}

\begin{figure}
  \centering
  \begin{tikzpicture}[scale=2,xscale=1.2]
    \coordinate (O) at (0,0);
    \coordinate (One) at (1,0);
    \coordinate (Two) at (2,0);
    
    \draw[thick,-latex,dashed] (One) --++ (0.5,0) node[pos=0.8,anchor=north] {$\vec{e}$};
    \draw[thick,-latex,dashed] (O)--++(-1*0.5,2*0.5) node[pos=1,anchor=east] {$\vv$};
    \draw[thick,-latex,dashed] (Two)--++(1*0.5,-2*0.5) node[pos=1,anchor=west] {$-\vv$};

    \draw[thick] (O) ..controls (-1,2) and ($(O)!0.5!(One)$).. (One);
    \draw[thick] (Two) ..controls ($(Two)+(-1,2)$) and ($(Two)!0.5!(One)$).. (One);

    \draw[thick,-latex] (O) --++(210:0.4) node[anchor=east] {$\vw$};
    \draw[thick,-latex] (One)--++(90:0.4);
    \draw[thick,-latex] (Two)--++(30:0.4) node[anchor=west] {$-\vw$};
    
    \node at (O) {\Large $\bullet$};
    \node at (One) {\Large $\bullet$};
    \node at (Two) {\Large $\bullet$};

    \node[anchor=north west] at (O) {$M$};
    \node[anchor=north] at (One) {$M''$};
    \node[anchor=north east] at (Two) {$M'$};
  \end{tikzpicture}\\
  \caption {The cobordism in Lemma~\ref{lemma:minus}. Following the
    same convention from Figure~\ref{fig:framed-cobordism-pushoff},
    the vector fields $\vv$, $-\vv$, $\ve$ (at $M$, $M'$, $M''$) are
    drawn with dashed arrows, and the normal vector field is shown by
    solid arrows.}
  \label{fig:doublecob}
\end{figure}

Armed with these lemmas, we can prove:

\begin{proposition}
  Items \ref{item:1}-\ref{item:4} make $\tOmega^k_{\fr,m}$ into a well-defined Abelian group.  
\end{proposition}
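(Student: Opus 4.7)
The plan is to verify, in order, that $\sim$ is an equivalence relation, that $+$ descends to a well-defined binary operation on $\tOmega^k_{\fr,m}$, and that this operation makes $\tOmega^k_{\fr,m}$ an abelian group. Throughout, the dimension hypothesis $m\geq 2k+3$ ensures that the set of generic directions in $S^{m-1}$ is open and dense, hence path-connected; this is the main tool for sliding things around in $\RR^m$ without creating intersections.

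For the equivalence relation, reflexivity $(M,\vv)\sim(M,\vv)$ is immediate from Lemma~\ref{lemma:framed-cobordism-pushoff2}, whose cobordism from $(M,\vv)$ to a pushoff $(M',\vv)$ starts in direction $\vv$ and ends in direction $-\vv$ as required by Item~\ref{item:framed-cob-equivalence}. Symmetry is obtained by time-reversing a given cobordism and then prepending and appending pushoff cobordisms to put the endpoints back in the positions demanded by Item~\ref{item:framed-cob-equivalence}. Transitivity is by concatenation: given cobordisms witnessing $(M_1,\vv_1)\sim(M_2,\vv_2)$ and $(M_2,\vv_2)\sim(M_3,\vv_3)$, I would first translate $M_3$ far away in a generic direction, then splice the two cobordisms along their shared middle boundary using another pushoff cobordism.

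For well-definedness of $+$, I would check (a) independence of the generic translation direction used to define $M_2'$, by connecting two such directions through a path of generic directions in $S^{m-1}$ and using the resulting isotopy to build a product cobordism, and (b) invariance under replacing either summand by an equivalent representative, by taking the disjoint union of a cobordism on one summand with a trivial pushoff cobordism on the other. Commutativity follows from isotoping $M_2'$ past $M_1$ through generic positions, and associativity is immediate from the disjoint union interpretation. The empty submanifold is a strict two-sided identity.

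The inverse formula $-[(M,\vv)]=[(M,-\vv)]$ reduces to Lemma~\ref{lemma:framed-cobordism-pushoff2} once one unpacks the conventions: a cobordism from $(M_1,\vv_1)$ to $(M_2,\vv_2)$ may equally well be viewed as a null-cobordism of the disjoint union $(M_1,\vv_1)\sqcup(M_2',-\vv_2)$, since the inward normal to the cobordism at its two boundary components is $\vv_1$ and $-\vv_2$ respectively. Applied to Lemma~\ref{lemma:framed-cobordism-pushoff2}'s cobordism from $(M,\vv)$ to a pushoff $(M',\vv)$, this yields a null-cobordism of $(M,\vv)\sqcup(M',-\vv)$, and then reflexivity $(M',-\vv)\sim(M,-\vv)$ gives $[(M,\vv)]+[(M,-\vv)]=0$. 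Throughout, the main obstacle is the bookkeeping needed to match framings coherently across glued cobordisms while keeping everything embedded and disjoint in $\RR^m$; the hypothesis $m\geq 2k+3$ is exactly strong enough to make every needed genericity and perturbation argument go through.
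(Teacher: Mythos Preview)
Your reflexivity, transitivity, well-definedness, and inverse arguments are essentially correct and track the paper's proof. There is, however, a genuine gap in your treatment of symmetry.

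As you yourself note in the inverse argument, the inward normal to a cobordism from $(M_1,\vv_1)$ to $(M_2',\vv_2)$ is $\vv_1$ at $M_1$ and $-\vv_2$ at $M_2'$; this asymmetry is precisely what makes concatenation smooth and what makes your null-cobordism reinterpretation work. But it also means that time-reversal does \emph{not} simply swap the endpoints: if $S$ is a cobordism from $(M,\vv)$ to $(N',\vw)$, then the same $S$ viewed with its ends swapped is a cobordism from $(N',-\vw)$ to $(M,-\vv)$, because the inward normal at what is now the ``start'' is $-\vw$, not $\vw$. The pushoff cobordisms of Lemma~\ref{lemma:framed-cobordism-pushoff2} preserve the vector field, so prepending and appending them cannot repair this sign flip. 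You therefore obtain only $(N,-\vw)\sim(M,-\vv)$, and passing from this to $(N,\vw)\sim(M,\vv)$ is not automatic---it would amount to knowing that negation is a well-defined involution on equivalence classes, which is circular at this stage.

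The paper resolves this by also flipping one framing vector upon reversal, obtaining $(-N,-\vw)\sim(-M,-\vv)$, and then invoking Lemma~\ref{lemma:minus}, which supplies cobordisms witnessing $(N,\vw)\sim(-N,-\vw)$ and $(-M,-\vv)\sim(M,\vv)$; transitivity (established beforehand) then yields $(N,\vw)\sim(M,\vv)$. Your argument never mentions Lemma~\ref{lemma:minus}, and without it (or an equivalent construction) the symmetry step does not close.
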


\begin{proof}
  Let us start by showing that the relation $\sim$ is well-defined, i.e., it does not depend on which translation we choose in \ref{item:framed-cob-equivalence}. Consider $(M,\vv)$ and $(N,\vw)$, and assume there is an embedded framed
  cobordism $S$ from $(M,\vv)$ to $(N',\vw)$ for some generic pushoff
  $N'$. If $N''$ is another generic pushoff, then by
  Lemma~\ref{lemma:framed-cobordism-pushoff2}, there are embedded
  framed cobordisms $F$ from $(N',\vw)$ to $(N,\vw)$ and $F'$ from $(N,\vw)$
  to $(N'',\vw)$. The union $S\cup F\cup F'$ then is an immersed framed
  cobordism from $(M,\vv)$ to $(N'',\vw)$. However, since we assumed
  $m\geq 2k+3$, by perturbing the cobordism in the interior, we may
  assume it is embedded. 
  
  The proof that the relation $\sim$ is transitive is similar to the above argument. The statement that $\sim$ is reflexive is
  same as the statement that $(M,-\vv)$ is the inverse of $(M,\vv)$, and
  it is Lemma~\ref{lemma:framed-cobordism-pushoff2}. To see that $\sim$ is symmetric, note that if $(M, \vv)\sim (N, \vw)$, by reversing the cobordism and its framing we get that $(-N, -\vw)\sim (-M, -\vv)$; applying Lemma~\ref{lemma:minus}, we deduce that $(N, \vw) \sim (M, \vv)$.
  
  It is also not 
  hard to check that the group operation
  $[(M_1,\vv_1)]+[(M_2,\vv_2)]=[(M_1,\vv_1) \amalg (M'_2,\vv_2)]$ is
  well-defined, and commutative.
\end{proof}

There is a natural stabilization map 
\begin{equation}
  \label{eq:stabomega}
  \sigma\from  \tOmega^k_{\fr,m}\to \tOmega^k_{\fr,m+1}
\end{equation}
defined as follows. Given $(M,\vv)$ inside $\RR^m$ along with a
normal framing $\langle \vw_1,\dots,\vw_{m-k-1}\rangle$, consider it as
an element of $\tOmega^k_{\fr,m+1}$ by considering $M\times\{0\}$
inside $\RR^m\times\{0\}\subset \RR^{m+1}$, using the same vector
field $\vv$, and using the normal framing
$\langle \vw_1,\dots,\vw_{m-k-1},\vec{e}\rangle$, where $\vec{e}$ is the
positive unit normal vector in the new $\RR$ direction. 

We define $\tOmega^k_\fr$ to be the colimit of the groups $\tOmega^k_{\fr,m}$ under the
maps $\tOmega^k_{\fr,m}\to \tOmega^k_{\fr,m+1}$. It is worth comparing this new
group $\tOmega^k_\fr=\colim_m \tOmega^k_{\fr,m}$ with
the usual framed cobordism group
$\Omega^k_\fr=\colim_m \Omega^k_{\fr,m}$. 

\begin{proposition}
  \label{prop:compareomegas}
  The groups $\tOmega^k_\fr$ and $\Omega^k_\fr$ are isomorphic.
\end{proposition}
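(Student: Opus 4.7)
The plan is to exhibit the isomorphism via the natural forgetful map $\Phi_m: \tOmega^k_{\fr,m} \to \Omega^k_{\fr,m}$ defined by $\Phi_m([(M,\vv,(\vw_1,\ldots,\vw_{m-k-1}))]) = [(M,(\vv,\vw_1,\ldots,\vw_{m-k-1}))]$, treating the distinguished vector $\vv$ as the first framing vector. With this ordering convention, $\Phi_m$ commutes with both stabilization maps (each of which appends the new basis vector $\ve_{m+1}$ at the end of the framing), so it descends to a homomorphism $\Phi: \tOmega^k_\fr \to \Omega^k_\fr$ on the colimits. The goal is to show $\Phi$ is a bijection.

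For well-definedness of $\Phi_m$, given an embedded framed cobordism $W \subset \R^m$ representing $(M_1,\vv_1,\vec{F}_1) \sim (M_2',\vv_2,\vec{F}_2)$ in $\tOmega$, I would build an abstract framed cobordism $W' \subset \R^m \times [0,1]$ by placing $W$ at height $1/2$, attaching cylindrical collars $M_1 \times [0,1/2]$ and $M_2' \times [1/2,1-\epsilon]$, and using an isotopy in $\R^m \times [1-\epsilon,1]$ to carry the pushoff back to $M_2$. After smoothing corners, the normal framing extends smoothly, using $(\vv_i,\vec{F}_i)$ on the collars and $(\ve_{m+1},\vec{F})$ on $W \times \{1/2\}$, interpolated at the gluings by a smooth rotation in the $\vv_i,\ve_{m+1}$-plane. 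Surjectivity of $\Phi$ is immediate, since any $(M,(\vw_1,\ldots,\vw_{m-k})) \in \Omega^k_{\fr,m}$ is the image of $(M,\vw_1,(\vw_2,\ldots,\vw_{m-k})) \in \tOmega^k_{\fr,m}$.

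Injectivity is the main step. I would define the candidate inverse $\Psi_m: \Omega^k_{\fr,m} \to \tOmega^k_{\fr,m}$ by $\Psi_m(M,(\vw_1,\ldots,\vw_{m-k})) = (M,\vw_1,(\vw_2,\ldots,\vw_{m-k}))$, and verify its well-definedness on equivalence classes. Given an abstract $\Omega$-cobordism $W^\ast \subset \R^m \times [0,1]$ between two framed submanifolds, I would re-embed $W^\ast$ in $\R^m$ itself: by the relative Whitney embedding theorem (applicable since $\dim W^\ast = k+1$ and $m \geq 2k+3$), such an embedding exists extending the given boundary embeddings of $M_1$ and a generic pushoff $M_2'$ of $M_2$, with prescribed boundary tangent directions $\vv_i$ equal to the first framing vectors $\vw_1^{(i)}$. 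The remaining boundary framings $(\vw_2^{(i)},\ldots,\vw_{m-k}^{(i)})$ then extend to a normal framing of the re-embedded cobordism by a standard obstruction argument in the stable range, using that $W^\ast$ is parallelizable as a framed cobordism and the ambient dimension is large enough that the space of such extensions is nonempty. Since $\Psi_m$ is manifestly a set-theoretic inverse to $\Phi_m$, this completes the proof. The hard part will be the re-embedding step: coordinating the boundary tangent directions with the framing extension, both of which require careful application of differential-topological techniques in the stable range $m \geq 2k+3$.
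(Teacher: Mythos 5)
Your route is genuinely different from the paper's, and the difference is instructive. You work at a fixed ambient dimension $m$ and define mutually inverse forgetful/remembering maps $\Phi_m$, $\Psi_m$; with your ``$\vv$ first'' convention these commute with stabilization on the nose, so you avoid the $(-1)^m$ sign bookkeeping and the framing-rotation argument (Lemma~\ref{lem:rotation}) that the paper needs to make its squares commute. The paper instead shifts dimension: its map $f\from\Omega^k_{\fr,m}\to\tOmega^k_{\fr,m+1}$ uses the new coordinate direction as the distinguished vector field, so that an abstract framed cobordism in $\RR^m\times[0,1]$ literally \emph{is} an embedded framed cobordism in $\RR^{m+1}$, and well-definedness of both maps reduces to explicit corner-rounding cobordisms (Lemmas~\ref{lemma:framed-cobordism-pushoff}--\ref{lemma:minus}, Figure~\ref{fig:tricob}). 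What the dimension shift buys is precisely that the hardest step of your argument never arises. Your well-definedness argument for $\Phi_m$ (bend $W$ into $\RR^m\times[0,1]$, smooth the corners, rotate the framing in the $\vv_i$--$\ve_{m+1}$ plane) is essentially the paper's construction of $g$ and is fine modulo the same care with end-direction signs.

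The gap is in the well-definedness of $\Psi_m$, which carries the entire weight of injectivity. You must convert an abstract framed cobordism $W^*\subset\RR^m\times[0,1]$ into an embedded framed cobordism in $\RR^m$ in the sense of $\tOmega^k_{\fr,m}$. The relative embedding of $W^*$ into $\RR^m$ is indeed available since $m\geq 2k+3>2\dim W^*$, but the framing does not come along for free, and the reason you cite --- that ``$W^*$ is parallelizable as a framed cobordism'' --- is not the relevant fact. What the abstract cobordism hands you is an $(m-k)$-framing of $\nu(W^*\subset\RR^{m+1})\cong\nu(W^*\subset\RR^m)\oplus\varepsilon^1$, i.e.\ only a \emph{stable} framing of the normal bundle of the re-embedded $W^*$. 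You must destabilize it rel boundary, arranging that the split-off $\varepsilon^1$ summand coincides along $\del W^*$ with the prescribed tangent directions ($\vv_1$ at $M_1$, and the appropriate sign of $\vv_2$ at $M_2'$, per the convention in the definition of $\sim$). The obstructions to this relative destabilization are the homotopy classes of a map $(W^*,\del W^*)\to(S^{r},\ast)$ with $r=m-k-1\geq k+2>\dim W^*$; they lie in $H^{j}(W^*,\del W^*;\pi_{j}(S^{r}))$ for $j\leq k+1<r$ and hence vanish --- but this computation, and its coordination with the choice of boundary collar directions in the re-embedding, is the actual content of the proposition and must be written out. As it stands, the step is asserted rather than proved, and the justification offered points at the wrong obstruction.
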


First we need another lemma.

\begin{lemma}
  \label{lem:rotation}
  Consider a framed $(M, \vv)$ as in the definition of
  $\tOmega^k_{\fr, m}$, and let $\vw$ be one of the vector fields in
  the framing of $M$. Let $(M, -\vw)$ be framed by replacing the
  vector field $\vw$ with $\vv$. Then, $(M, \vv)$ and $(M, -\vw)$ map
  to the same element under the stabilization map
  $\tOmega^k_{\fr, m} \to \tOmega^{k}_{\fr, m+1}$.  Hence, $(M, \vv)$
  and $(M, -\vw)$ represent the same element in $\tOmega^k_{\fr}$.
\end{lemma}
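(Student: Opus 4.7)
The plan is to construct an explicit embedded framed cobordism in $\R^{m+1}$ by ``rotating'' the $(\vv, \vw)$-plane through the new stabilization direction $\ve$. After stabilization, both classes become framed submanifolds of $\R^{m+1}$: the first is $(M, \vv; \vw, \vw_2, \dots, \vw_{m-k-1}, \ve)$, and the second is $(M, -\vw; \vv, \vw_2, \dots, \vw_{m-k-1}, \ve)$. After a preliminary deformation within each class, I may assume $\vv_p \perp \vw_p$ pointwise on $M$. Fix a small $R > 0$ and pick a translation vector $\vec{t} \in \R^{m+1}$ close to $R\ve$ that is generic for $(M, \vv)$ (using the fact that non-generic directions are measure zero), and set $M' = M + \vec{t}$. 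The goal is to construct an embedded framed cobordism $W \subset \R^{m+1}$ from $(M, \vv)$ to $(M', -\vw)$ in the sense of item~\ref{item:framed-cob-equivalence}.

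At each $p \in M$ I consider the affine $3$-plane $p + \Span(\vv_p, \vw_p, \ve)$ and choose a smooth embedded curve $c_p$ in it, from $p$ to $p + \vec{t}$, with initial traversal tangent $\vv_p$ and terminal traversal tangent $-\vw_p$ (so that the inward tangent at $M'$ is $\vw_p$, matching $-\vv_2 = -(-\vw_p)$). Such a curve exists since the boundary conditions are a finite set of constraints; a smooth $p$-dependence is obtained by using a fixed template in the $(\vv_p, \vw_p, \ve)$-basis. For $R$ small relative to the injectivity radius of the normal bundle of $M \subset \R^m$, the union $W = \bigcup_{p \in M} c_p$ is an embedded $(k+1)$-dimensional submanifold of $\R^{m+1}$ with $\partial W = M \sqcup M'$.

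For the framing of $\nu_W$, I keep $\vw_2, \dots, \vw_{m-k-1}$ constant along $W$ and interpolate the remaining two framing vectors within each $3$-plane so that at $s=0$ they equal $(\vw_p, \ve)$ and at $s=1$ they equal $(\vv_p, \ve)$. The obstruction lives in $\pi_0$ of the space of $2$-frames of the rank-$2$ plane bundle transverse to $c_p'$ in the $3$-plane: such an interpolation exists precisely when the two endpoint frames induce the same orientation on $\nu_W$. A direct check shows this: the full frames $(c_p'(0), \vw_p, \ve) = (\vv_p, \vw_p, \ve)$ and $(c_p'(1), \vv_p, \ve) = (-\vw_p, \vv_p, \ve)$ differ by a transposition of the first two entries (sign $-1$) and a sign change of the first entry (sign $-1$), for a total sign of $+1$. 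Hence a smooth extension exists, completing the cobordism.

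The main obstacle is precisely this orientation bookkeeping: had the lemma stated $(M, \vw)$ in place of $(M, -\vw)$, the end-frames would have had opposite orientations and the cobordism could not exist, so the minus sign in the statement is essential. Beyond this, everything is routine: the large codimension $(m-k \geq k+3)$ guaranteed by the standing hypothesis $m \geq 2k+3$ makes embeddedness of $W$ automatic for small $R$ and generic $\vec{t}$, and the smooth choice of curve and framing is assembled via a fixed template in the $(\vv_p, \vw_p, \ve)$-coordinates at each point of $M$.
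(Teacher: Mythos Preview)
Your approach is essentially the same as the paper's: both construct an explicit embedded cobordism in $\R^{m+1}$ by sweeping out, at each $p\in M$, a curve in the span of $\vv_p,\vw_p,\ve$ with initial tangent $\vv_p$ and terminal tangent $-\vw_p$, and then extend the framing over the cobordism. The paper is more concrete where you are abstract: it writes the cobordism as $f(t,p)=p+A_p\circ\gamma(t)+\zeta(t)\ve$ for an explicit planar loop $\gamma$ (with $\gamma(0)=\gamma(1)=0$, $\gamma'(0)=(1,0)$, $\gamma'(1)=(0,-1)$) and a monotone $\zeta$, and gives the framing interpolation explicitly as the pushforward of $\gamma^\perp$, while keeping $\ve$ and the remaining $\vw_j$ constant. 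Your orientation computation $(\vv_p,\vw_p,\ve)\sim(-\vw_p,\vv_p,\ve)$ is exactly what makes that explicit rotation work.

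One small simplification: you do not need to perturb $\vec t$ away from $R\ve$ for genericity. Since $M\subset\R^m\times\{0\}$ and $\vv$ is tangent to $\R^m$, the new direction $\ve$ is automatically generic for $(M,\vv)$ in $\R^{m+1}$ (the projection along $\ve$ is just the identity on $\R^m$). The paper simply takes $M'=M+\ve$. Taking $\vec t=R\ve$ also keeps $p+\vec t$ honestly inside the affine $3$-plane $p+\operatorname{Span}(\vv_p,\vw_p,\ve)$, which your curve construction implicitly uses.
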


\begin{proof}
  Let $\ve$ be the new unit coordinate vector in $\R^{m+1}$, normal to
  $\R^m$. Under the stabilization map, we identify $M \subset \R^m$
  with $M \times \{0\} \subset \R^{m+1}$, and we add $\ve$ to the
  normal framings of $(M, \vv)$ and $(M, -\vw)$. Let $M'$ be the
  pushoff of $M$ in the direction $\ve$. Note that $\ve$ is a generic
  vector for $(M, \vv)$ in $\R^{m+1}$. Thus, it suffices to construct
  an embedded framed cobordism from $(M, \vv)$ to $(M', -\vw)$ in
  $\R^{m+1}$, which we do as follows.

  The argument is similar to that in the proof of Lemma~\ref{lemma:framed-cobordism-pushoff}. Fix a smooth embedding $\gamma\from [0, 1] \to \R^2$ such that
  \[
    \gamma(0)=\gamma(1)=(0, 0), \ \ \gamma'(0)=(1, 0), \ \ \gamma'(1)=(0, -1)
  \]
  and such that the image of $\gamma$ is contained in the ball $B(\epsilon)$ of radius $\epsilon$ around the origin, for $\epsilon > 0$ small. We let $\gamma^{\perp}$ be the normal vector field to the image of $\gamma$, obtained from $\gamma'$ by a counterclockwise rotation by $90^\circ$. For example, $\gamma^{\perp}(0)=(0, 1)$ and $\gamma^{\perp}(1)=(1,0)$.

  Fix also a smooth map $\zeta\from [0, 1] \to [0, 1]$ with 
  \[
    \zeta(0)=0, \ \ \zeta(1)=1, \ \ \zeta'(0)=\zeta'(1)= 0 \text{ and }  \zeta'(t) > 0 \text{ for } t \in (0, 1).
  \]

  For every $p \in M$, let $V_p = \operatorname{Span} (\vv_p, \vw_p)$
  and let $A_p\from \R^2 \to V_p$ be the linear isomorphism that takes
  $(1, 0)$ to $\vv_p$ and $(0, 1)$ to $\vw_p$. If $\epsilon$ is
  sufficiently small, the union of all $p+A_p(B(\epsilon))$ forms a
  smoothly embedded disk bundle over $M$ in $\R^m$. Then, the map
  \[
    f\from [0, 1] \times M \to \R^{m+1}, \ \ f(t, p) = p+ A_p \circ \gamma(t) + \zeta(t)\cdot \ve
  \]
  describes a smoothly embedded cobordism $S$ from $(M, \vv)$ to
  $(M, -\vw)$. For the normal framing on $S$, we use the pushforward
  of $\gamma^{\perp}$ under $A_p$ to interpolate between $\vw$ and
  $\vv$ as $t$ goes from $0$ to $1$. We also keep $\ve$ as part of the
  normal framing throughout the cobordism.
\end{proof}


\begin{proof}[Proof of Proposition~\ref{prop:compareomegas}]
  There is a natural map
  $f\from \Omega^k_{\fr, m} \to \tOmega^k_{\fr, m+1}$. Given a framed
  manifold $M\subset \RR^m$,  we let $f(M)$ be the same manifold, viewed
  inside $\RR^m\times\{0\}\subset \RR^{m+1}$, the vector field $\vv$ be 
  the constant positive unit vector field in the new $\RR$ direction,
  and the normal framing be the original framing of $M$, multiplied by $(-1)^m$. It is
  immediate from the definitions that if $M\sim N$ in
  $\Omega^k_{\fr,m}$, then $f(M)\sim f(N)$ in $\tOmega^k_{\fr,m+1}$. 
  
  Next, consider the following diagram:
  \begin{equation}
    \label{eq:hey}
    \vcenter{\hbox{\xymatrix{
      \Omega^k_{\fr, m} \ar[r]^f \ar[d]_{\sigma} &\tOmega^k_{\fr, m+1} \ar[d]^{\sigma}\\
      \Omega^k_{\fr, m+1}\ar[r]^f & \tOmega^k_{\fr, m+2}
    }}}
  \end{equation}
  where the vertical arrows are stabilization maps. We claim that the
  diagram \eqref{eq:hey} commutes after one more stabilization, i.e.,
  $\sigma \circ \sigma \circ f = \sigma \circ f \circ \sigma$. Indeed,
  suppose we have a manifold $M \subset \RR^m$ framed by the sequence
  of vectors $( \vw_1,\dots,\vw_{m-k})$. Let $\ve_{m+1}$ and
  $\ve_{m+2}$ dente the two new unit vectors when we stabilize from
  $\R^m$ to $\R^{m+2}$. The images of
  $[M, ( \vw_1,\dots,\vw_{m-k})] \in \Omega^k_{\fr, m}$ under the two
  possible compositions in \eqref{eq:hey} are
  \[
    (-1)^m[(M, ( \vw_1,\dots,\vw_{m-k}, \ve_{m+1})), \ve_{m+2}]\ \ \text{ and }  \ \ (-1)^{m+1}[(M, ( \vw_1,\dots,\vw_{m-k}, \ve_{m+2})), \ve_{m+1}].
  \]
  These become identical after one more stabilization, as proved in
  Lemma~\ref{lem:rotation}. From here it follows that the maps $f$
  induce a well-defined map
  \[
    \Omega^k_{\fr} \to \tOmega^k_{\fr}
  \]
  on the colimits. 

  There is also a natural map
  \begin{equation}\label{eq:tomega-to-omega}
    g\from \tOmega^k_{\fr, m} \to \Omega^k_{\fr, m}
  \end{equation}
  defined as follows. Given $(M,\vv)$ inside $\RR^m$ along with a
  normal framing $( \vw_1,\dots,\vw_{m-k-1})$, we map it to $M$ with the normal framing 
  \[
    ( \vw_1,\dots,\vw_{m-k-1}, (-1)^{m+1}\vv).
  \]
  (In $\Omega^k_{\fr,m}$, this is equivalent to the normal framing
  \[
    ( (-1)^k\vv,\vw_1,\dots,\vw_{m-k-1})
  \]
  of $M$.)  To see that the map $g$ is well-defined, let us consider
  $(M, \vv)$ as an element of $\tOmega^k_{\fr,m+1}$ as in the
  definition of the stabilization map \eqref{eq:stabomega}.  Let
  $M'=M\times\{1\}\subset \RR^{m+1}$ be the unit pushoff in the new
  $\vec{e}$ direction. By Lemma~\ref{lemma:framed-cobordism-pushoff},
  there is an embedded framed cobordism $S(M, \vv)$ from
  $(M\times\{0\},\vv)$ to $(M\times\{1\},\vec{e})$ in $\RR^{m+1}$;
  indeed, the proof of the lemma shows that the cobordism lies inside
  $\RR^m\times[0,1]$. Furthermore, the induced normal framing of
  $(M\times\{1\},\vec{e})$ in $\RR^{m+1}$ is
  $( \vw_1, \dots, \vw_{m-k-1}, -\vv)$.  Now, if we have a framed
  cobordism $W$ from $(M_1,\vv_1)$ to $(M_2,\vv_2)$ in $\RR^m$, we can
  treat it as a cobordism inside $\RR^m\times\{\frac{1}{2}\}$, and
  compose with the reverse cobordism $S(M_1, \vv_1)^r$ from
  $(M_1,\vec{e})$ to $(M_1,\vv_1)$ (viewed inside
  $\RR^m\times[0,\frac{1}{2}]$) and with $S(M_2, \vv_2)$ from
  $(M_2,\vv_2)$ to $(M_2,\vec{e})$ (viewed inside
  $\RR^m\times[\frac{1}{2},1]$); see Figure~\ref{fig:tricob}. This
  produces a framed cobordism in $\RR^m\times[0,1]$ from
  $M_1 \times \{0\}$ to $M_2 \times \{1\}$, where the last framing
  vectors are $-\vv_1$ and $-\vv_2$, respectively. After multiplying
  the framing on this cobordism by $(-1)^m$, we get a framed cobordism
  from $g(M_1,\vv_1)$ to $g(M_2,\vv_2)$. Thus, $g$ is
  well-defined. The presence of the $(-1)^{m+1}$ factor in the
  definition of $g$ ensures that it commutes with the stabilization
  maps, producing a map $\tOmega^k_{\fr} \to \Omega^k_{\fr}$ in the
  colimit.


  \begin{figure}
    \centering
    \begin{tikzpicture}[scale=4,x={(0.4cm,0.2cm)},y={(0,1cm)},z={(1cm,0)}]

      \coordinate (m1left) at (0.3,0.3);
      \coordinate (m1mid) at ($(m1left)+(0,0,1)$);
      \coordinate (m2right) at (0.7,0.7,2);
      \coordinate (m2mid) at ($(m2right)+(0,0,-1)$);
      \coordinate (mid) at (0.5,0.5,1);

      \draw[gray] (0,0)-- node[midway,anchor=north,text=black,outer sep=2ex] {\small$\RR^m\times\{0\}$} ++(1,0)--++(0,1)--++(-1,0)--++(0,-1);
      \draw[gray] (0,0,1)-- node[midway,anchor=north,text=black,outer sep=2ex] {\small$\RR^m\times\{1/2\}$} ++(1,0)--++(0,1)--++(-1,0)--++(0,-1);
      \draw[gray] (0,0,2)-- node[midway,anchor=north,text=black,outer sep=2ex] {\small$\RR^m\times\{1\}$} ++(1,0)--++(0,1)--++(-1,0)--++(0,-1);

      \draw[thick] (m1mid) ..controls ($(m1mid)+(0.2,-0.2)$) and ($(m1left)+(0,0,0.2)$).. node[pos=0.5,anchor=south] {\small $S(M_1,\vv_1)^r$} (m1left);
      \draw[thick] (m1mid) ..controls ($(m1mid)+(-0.2,0.2)$) and ($(mid)+(-0.2,0.3)$)..(mid).. controls ($(mid)+(0.2,-0.3)$) and ($(m2mid)+(0,-0.2)$) .. node[pos=0.7,anchor=west,inner sep=0] {\small $W$} (m2mid);
      \draw[thick] (m2mid) ..controls ($(m2mid)+(0,0.2)$) and ($(m2right)+(0,0,-0.2)$).. node[pos=0.5,anchor=south] {\small $S(M_2,\vv_2)$} (m2right);

      \draw[thick,-latex,dashed] (m1left)-- node[pos=0.8,anchor=south] {$\ve$} ++(0,0,0.25);
      \draw[thick,-latex,dashed] (m2right)-- node[pos=0.8,anchor=south] {$\ve$} ++(0,0,0.25);
      \draw[thick,-latex,dashed] (m1mid)-- node[pos=1,anchor=south] {$\vv_1$} ++(-0.2,0.2);
      \draw[thick,-latex,dashed] (m2mid)-- node[pos=1,anchor=south] {$\vv_2$} ++(0,0.2);

      \draw[ultra thick,-latex] (m1left) -- node[anchor=west,pos=0.8] {$-\vv_1$} ++(0.2,-0.2);
      \draw[ultra thick,-latex] (m1mid) -- node[anchor=south,pos=0.8] {$\ve$} ++(0,0,0.25);
      \draw[ultra thick,-latex] (m2mid) -- node[anchor=north,pos=0.8] {$\ve$} ++(0,0,0.25);
      \draw[ultra thick,-latex] (m2right) -- node[anchor=north,pos=1] {$-\vv_2$} ++(0,-0.2);

      \draw[thick,-latex] (m1left) -- ++(-0.15,-0.15);
      \draw[thick,-latex] (m1mid) --  ++(-0.15,-0.15);
      \draw[thick,-latex] (m2mid) -- ++(-0.4,0);
      \draw[thick,-latex] (m2right) -- ++(-0.4,0);

      \node at (m1left) {\Large $\bullet$};
      \node at (m1mid) {\Large $\bullet$};
      \node at (m2mid) {\Large $\bullet$};
      \node at (m2right) {\Large $\bullet$};

      \node[anchor=south] at (m1left) {\small $M_1$};
      \node[anchor=north west] at (m1mid) {\small $M_1$};
      \node[anchor=south east] at (m2mid) {\small $M_2$};
      \node[anchor=south] at (m2right) {\small $M_2$};

    \end{tikzpicture}\\
    \caption {The composed cobordism from
      Proposition~\ref{prop:compareomegas}. Following earlier
      conventions, the vectors $\ve,\vv_1,\vv_2,\ve$ (at
      $M_1\subset\RR^m\times\{0\},M_1\subset\RR^m\times\{1/2\},M_2\subset\RR^m\times\{1/2\},M_2\subset\RR^m\times\{1\}$,
      respectively) are drawn dashed, and the normal vectors are drawn
      solid, of which, the last normal vectors
      ($-\vv_1,\ve_1,\ve_1,-\vv_2$, respectively) are drawn thicker.}
    \label{fig:tricob}
  \end{figure}

  It is immediate that the composition $g\circ f$ is the stabilization 
  $\Omega^k_{\fr,m}\to\Omega^k_{\fr,m+1}$. In the other direction,
  $(f\circ g)(M,\vv)$ is equivalent in $(M,\vv)$ in $\tOmega^k_{\fr,m+1}$
  using the framed cobordism $S(M, \vv)$ from $(M\times\{0\},\vv)$ to
  $(M\times\{1\},\vec{e})$ in $\RR^m\times[0,1]$. It follows that the maps induced by $f$ and $g$ on the colimits are inverse to each other.
\end{proof}

\section{Constructing the moduli spaces}
\label{sec:construction}
We will construct the stratified spaces
$\bModuli_{\vN, \vlambda}(D)$, along with their embeddings and
framings, inductively by dimension. We will usually denote their
dimension and thick dimension by $k$ and $l$; and these moduli
spaces will be embedded in $\E^d_l$. For the reader's convenience,
we first outline the procedure in Subsection~\ref{sec:outline}, and
then give more details in the following subsections.

\subsection{Outline}
\label{sec:outline}
Let $\DNlambdad\subset\DNlambda$ be the downward closed subset
consisting of all triples $(c_{\xid},\vN,\vlambda)$ where $\vN$ is
just made of $0$'s and $1$'s, and let
$\DNlambda'=\DNlambda\setminus\DNlambdad$.  Recall from
Proposition~\ref{prop:CDPhomology} that $\DNlambdad$ generates a
subcomplex $\CDPd \subset \CDP_*$ such that the quotient complex
$\CDP'_*$ is acyclic.

As a first step, we will construct the moduli spaces
$ \bModuli_{\vN, \vlambda}(c_{\xid})$, for all
$(c_{\xid},\vN,\vlambda)\in\DNlambdad$. This construction is
independent of what we do in this section, so we will describe it later (in Section~\ref{sec:permutohedron}).

After this, we will construct the remaining spaces
$\bModuli_{\vN, \vlambda}(D)$ inductively on their dimension $k=\gr(D,\vN,\vlambda)-1$. 

Let
$\DNlambda'_{\leq k}=\{(D,\vN,\vlambda)\in\DNlambda'\mid
\gr(D,\vN,\vlambda)-1\leq k\}$ and
$\DNlambda'_{k}=\DNlambda'_{\leq k}\setminus \DNlambda'_{\leq k-1}$.

\begin{enumerate}[leftmargin=*,label=(C-\arabic*)]
\item \label{item:emb} Assume the moduli spaces
  $\bModuli_{\vN,\vlambda}(D)$ have already been constructed for all
  $(D,\vN,\vlambda)\in \DNlambdad\cup\DNlambda'_{\leq k-1}$, along with
  coherent neat embeddings and coherent external framings.
\item \label{item:exponentiate} For any
  $(D,\vN,\vlambda)\in\DNlambda'_{k}$, its $(k-1)$-dimensional
  boundary $\bdy\bModuli_{\vN, \vlambda}(D)$ has already been
  constructed. This has a stratified thickening, and we use it to
  construct an open neighborhood $V$ of
  $\bdy\bModuli_{\vN, \vlambda}(D)$ inside
  $\bModuli_{\vN, \vlambda}(D)$ (with a coherent neat embedding and
  external framing).
\item \label{item:smooth} The neighborhood $V$ is a Whitney stratified space
  with compact boundary $\del V =\bdy\bModuli_{\vN, \vlambda}(D)$. By
  smoothing a smaller open neighborhood $V''$, we construct a new open
  neighborhood $V'$ of
  $\bdy\bModuli_{\vN, \vlambda}(D)$ in $\bModuli_{\vN,\vlambda}(D)$,
  such that $\ol{V'}\setminus V'$ is a smooth $(k-1)$-dimensional manifold, denoted
  $\bdy'\bModuli_{\vN, \vlambda}(D)$.
\item \label{item:framedboundary} 
  Frame $\bdy'\bModuli_{\vN, \vlambda}(D)$ using the internal and external framings of $V$. We also equip
  $\bdy'\bModuli_{\vN, \vlambda}(D)$ with a vector field $\vv$, which is the
  outer normal to $\ol{V}'$. Thus, we obtain an element
  $[\bdy'\bModuli_{\vN, \vlambda}(D)]$ in the framed cobordism group
  $\tOmega^{k-1}_{\fr}$.
\item \label{item:ok} This element
  $[\bdy'\bModuli_{\vN, \vlambda}(D)]$ is the obstruction to filling
  in $\bdy'\bModuli_{\vN,\vlambda}$ to construct
  $\bModuli_{\vN, \vlambda}(D)$ (with coherent neat embedding and
  coherent external framing).  Putting together all
  $(D,\vN,\vlambda)$'s, we have an obstruction class in the form of a
  cochain
  \[
    \mf{o}_k\in\Hom(\CDP'_{k+1},\tOmega^{k-1}_\fr),\qquad \mf{o}_k(D, \vN, \vlambda) = [\bdy'\bModuli_{\vN, \vlambda}(D)].
  \]
\item \label{item:cocycle} We prove that $\mf{o}_k$ is a cocycle.
\item \label{item:change} Since $\CDP'$ is acyclic, it follows that
  $\mf{o}_k$ is a coboundary of some element
  $\mf{b}\in\Hom(\CDP'_{k},\tOmega^{k-1}_\fr)$. Change the
  $(k-1)$-dimensional spaces by $-\mf{b}$. (Note that we don't change
  any lower dimensional spaces.) After this change, the new
  $(k-1)$-dimensional boundaries $\bdy'\bModuli_{\vN, \vlambda}(D)$
  are framed null-cobordant.
\item \label{item:fill} Fill in each
  $\bdy'\bModuli_{\vN, \vlambda}(D)$ arbitrarily to obtain the desired
  moduli space $\bModuli_{\vN, \vlambda}(D)$, embedded in $\E^d_l$,
  with a normal framing.
\item \label{item:newinternal} We split the normal framings to the
  moduli spaces $\Moduli_{\vN, \vlambda}(D)$ into internal and
  external framings, and construct thickenings using the internal
  framings.
\item \label{item:continue} This finishes the induction step. Now
  $\bModuli_{\vN,\vlambda}(D)$ have been constructed for all
  $(D,\vN,\vlambda)\in \DNlambdad\cup\DNlambda'_{\leq k}$, along with
  coherent neat embeddings and coherent external framings.  (The next
  step of the induction---when we are constructing the
  $(k+1)$-dimensional moduli spaces---might require modifying the
  just-constructed $k$-dimensional spaces, but none of the smaller
  dimensions.)
\end{enumerate}

\subsection{The base case} \label{sec:base}  Let us recall the 
formulas~\eqref{eq:dimension} and \eqref{eq:dimtilde} for the
dimension $k$ and the thick dimension $l$ of the moduli spaces
$\Moduli_{\vN, \vlambda}(D)$:
\begin{equation}
\label{eq:dimk}
k = \mu(D) - 1 + \sum \ell(\lambda_j),
\end{equation}
\begin{equation}
\label{eq:diml}
l  = \mu(D)-1 + 2\sum N_j. 
\end{equation}
The base case in the induction corresponds to moduli spaces with
$k=0$. (For the base case we only need to do Steps~\ref{item:fill}
and~\ref{item:newinternal} from the outline.) From the above formula
we see that there are two kinds of such moduli spaces:
\begin{itemize}
\item those with $\mu(D)=1$ and trivial $\vlambda$ (that is,
  $\vN = \vec{0}$); then $D$ must be a rectangle $R$ on the grid, and
  we are looking at the moduli spaces $\Moduli_0(R)$;
\item those with $\mu(D)=0$ and $\lambda_j = (N)$ for some $j$, where
  $N$ denotes $N_j$, and we have $N_i = 0$ for all $i \neq j$; then
  $D$ is the constant domain $c_x$ for some $x \in \S$, and we write
  the moduli spaces as $\Moduli_{N\ve_j, (N)_j}(c_x)$, with the
  notation from Remark~\ref{rmk:lambdaj}.
\end{itemize}

The moduli spaces of the first kind have thick dimension $0$. We
define them to be single points, embedded in $\E^d_0 = \R^d$ in any
way, and framed so that the resulting element in
$\Omega^0_{\fr} \cong \Z$ is the sign $s(R) \in \{\pm 1\}$ from
\eqref{item:signa}. For instance, if $\ve_1,\dots,\ve_d$ denote the
standard unit vectors in $\RR^d$, we may frame the point
$\Moduli_0(R)$ by $[(-1)^{s(R)}\ve_1,\ve_2,\dots,\ve_d]$. Since the
thick dimension is zero, the point itself is its own thickening, so
this completes both Steps~\ref{item:fill} and \ref{item:newinternal}.

The moduli spaces of the second kind have thick dimension $l=2N-1$. We
define them to be single points as well, embedded arbitrarily in the
interior of $\E^d_l$, and framed positively. For instance, if
$\ve^+_1,\dots,\ve^+_l$ denote the standard unit vectors in $\R_+^l$,
and $\ve_1,\dots,\ve_{d(l+1)}$ denote the standard unit vectors in
$\R^{d(l+1)}$ (as before), then we may choose to frame the point
$\Moduli_{N\ve_j, (N)_j}(c_x)\in\mathring{\E}^d_l\cong\mathring{\R}_+^l\times\R^{d(l+1)}$ by
$[\ve^+_1,\dots,\ve^+_l,\ve_1,\dots,\ve_{d(l+1)}]$. This completes
Step~\ref{item:fill}.

For Step~\ref{item:newinternal}, declare the first $l$ vectors to be
the internal frame, and the rest to be the external frame. For the
thickening, we choose an open embedding of the local model $Z_N$ in
the interior of $\E^d_l$, with the origin in $Z_N$ mapped to
$\Moduli_{N\ve_j, (N)_j}(c_x)$, such that the standard frame at the
origin maps to the internal frame at $\Moduli_{N\ve_j, (N)_j}(c_x)$.
For instance, we may choose to embed a neighborhood of the origin in
$Z_N$ to the interior of $\E^d_l$ by an affine map, with the origin
mapping to $\Moduli_{N\ve_j, (N)_j}(c_x)$, and the standard frame
mapping to the internal frame. Declare the thickening
$U_{N\ve_j,(N)_j}(c_x)$ to be the image of this embedding, with the
induced stratification, and extend the external framing at
$\Moduli_{N\ve_j, (N)_j}(c_x)$ to a normal framing of this entire
thickening.

Note that $\DNlambdad$ contains exactly $(n-1)$ domains
$(c_{\xid},\ve_j,(1)_j)$, $j=2,\dots,n$ with $\gr=1$. Their
$0$-dimensional moduli spaces have already been constructed (although
that construction is described in the next section), along with neat
embeddings and external framings. When the reader compares the
construction here and the one from Section~\ref{sec:permutohedron}, it
should be clear that they agree.

\subsection{Boundaries and their neighborhoods}
\label{sec:bdries}
We now give more detailed explanations for some of the steps in the
outline of the induction above. In this subsection we discuss
Steps~\ref{item:emb}---\ref{item:smooth}.

Step~\ref{item:emb} just states the induction hypothesis. We assume we
have constructed moduli spaces $\bModuli_{\vN,\vlambda}(D)$ for
all $(D,\vN,\vlambda)\in \DNlambdad\cup\DNlambda'_{\leq k-1}$, along with
coherent neat embeddings (Definition~\ref{def:compatible-neat}) and
coherent external framings
(Definition~\ref{def:compatible-ext-framing}).

For Step~\ref{item:exponentiate}, recall that strata of $\bdy\bModuli_{\vN, \vlambda}(D)$ are of the form
\begin{equation}
  \label{eq:str3}
  Y= \Moduli_{\vN^1+\coefficients{E^1}+\coefficients{F^1}, \vlambda^1}(D^1) \times \dots \times
  \Moduli_{\vN^r+\coefficients{E^r}+\coefficients{F^r}, \vlambda^r}(D^r).
\end{equation}
Since $\dim Y\leq k-1$, by our induction hypothesis, each
$\bModuli_{\vN^i+\coefficients{E^i}+\coefficients{F^i},
  \vlambda^i}(D^i)$ already comes with a neat embedding (including a
stratified thickening $U_i$) and internal and external framings in
$\E^d_{l_i}$, where $l_i$ is its thick dimension. Altogether, we
obtain an embedding of the product
\[ U_1 \times U_2 \times \dots \times U_r\ \hookrightarrow \ \E^d_{l_1} \times \{0\} \times \E^d_{l_2} \times \{0\} \times \dots \times   \{0\} \times \E^d_{l_r} \subset \E^d_l,\]
where 
\[
  l =\tdim \bModuli_{\vN, \vlambda}(D) = l_1 + \dots + l_r + (r-1)
\]
and we identify
\begin{equation}
\label{eq:edli}
 \E^d_l \cong  \E^d_{l_1} \times \R_+ \times \E^d_{l_2} \times \R_+ \times \dots \times   \R_+ \times \E^d_{l_r}.
 \end{equation}
Let
\[
  U(\ol{Y}) = U_1 \times [0, \epsilon_Y) \times U_2 \times  [0, \epsilon_Y)\times \dots \times  [0, \epsilon_Y) \times U_r \subset \E^d_l,
\]
with the product stratification, and define the {\em thickening of the
  boundary $\bdy\bModuli_{\vN, \vlambda}(D)$} to be
\[
  U=U(\bdy\bModuli_{\vN, \vlambda}(D)) = \bigcup_{Y \leq \bdy\bModuli_{\vN, \vlambda}(D)} U(\ol{Y}),
\]
where $\epsilon_Y > 0$ are chosen so that $\epsilon_Y \ll \epsilon_Z$
for $Z < Y$. Since the stratifications of the thickenings of all
moduli spaces up to dimension $k-1$ are coherent, they induce a
stratification of $U$, with each $Y$ as an open stratum, whose local
model in this thickening is same as the local model $Y$ in
$\bModuli([\tD])$ (where $\tD =D+\tE+\tF$, with $\tE,\tF$ a sum of
rows, columns satisfying $\Os(\tE) +\Os(\tF)=\vN$),
cf.~Definition~\ref{def:neatmoduli}
Item~\ref{item:local-model-identification} and
Remark~\ref{rem:same-local-models}. Similarly, since the external
framings of all these $U(\ol{Y})$ are coherent, they induce an
external framing of $U$ in $\E^d_l$.

Thus, we have constructed a stratified thickening
\[
  \bdy\bModuli_{\vN, \vlambda}(D) \hookrightarrow U \subset \E^d_l
\]
with an external framing.
Let $V=V(\bdy\bModuli_{\vN, \vlambda}(D))$ be the union of the closed
strata of $U$ that correspond to $\bModuli_{\vN,\vlambda}(D)$. Then
$V$ is a stratified open neighborhood of
$\bdy\bModuli_{\vN,\vlambda}(D)$ inside
$\bModuli_{\vN,\vlambda}(D)$. This neighborhood $V$ also has the same
thickening $U$ and the same external framing. That is, we have constructed
$\bModuli_{\vN,\vlambda}(D)$ near its boundary, thus completing
Step~\ref{item:exponentiate}.

Next, for Step~\ref{item:smooth}, the stratified space $U$ is Whitney
and hence Thom-Mather stratified. Therefore, each stratum
$Y\leq \bdy\bModuli_{\vN,\vlambda}(D)$ has an open tubular
neighborhood of the form $\rho_Y^{-1}([0,\epsilon_Y))$ inside $U$
(similar to Lemma~\ref{lemma:Nbhd}). Taking their union over $Y$ (and
ensuring $\epsilon_Y\ll \epsilon_Z$ for $Z<Y$) produces an open
neighborhood $U''$ of $\bdy\bModuli_{\vN,\vlambda}(D)$ inside $U$. Its
complement $U\setminus U''$ is an $l$-dimensional
$\langle l\rangle$-manifold with compact boundary, and has a smoothing
$\smo[U\setminus U'']$ (similar to
Definition~\ref{def:smoothstrata}). The complement of the smoothing,
\[
  U'=U\setminus \smo[U\setminus U''],
\]
is a new open neighborhood of $\bdy\bModuli_{\vN,\vlambda}(D)$ inside
$U$. (By construction, we have $\ol{U}''\subset U'\subset\ol{U}'\subset U$.)

Define $V''=U''\cap V$ and $V'=U'\cap V$. Then $\ol{V}''$ is exactly
the closed tubular neighborhood of
$\bdy V=\bdy\bModuli_{\vN,\vlambda}(D)$ inside $V$ from
Lemma~\ref{lemma:Nbhd}, and $V'$ is the complement of its smoothing
from Definition~\ref{def:smoothstrata}. See Figure~\ref{fig:smooth}
once again, with the whole square playing the role of $V$, $\bdy X$
playing the role of $\bdy V=\bdy\bModuli_{\vN,\vlambda}(D)$,
$\mathcal{N}$ playing the role of $\ol{V}''$, and the complement of
$\smo[X]$ playing the role of $V'$. By construction,
$\bdy'\bModuli_{\vN,\vlambda}(D)=\ol{V}'\setminus V'$ is a smooth
$(k-1)$-dimensional manifold. This completes Step~\ref{item:smooth}.

\subsection{Obtaining a cochain}\label{sec:obtaining-cochain}
In this subsection, we discuss Steps~\ref{item:framedboundary} and \ref{item:ok}.




For Step~\ref{item:framedboundary}, note that
$\bdy'\bModuli_{\vN, \vlambda}(D)$ is a smooth $(k-1)$-dimensional
submanifold of $\inte(\E^d_l)$, and we can identify $\inte(\E^d_l)$
with a Euclidean space. We want $\bdy'\bModuli_{\vN, \vlambda}(D)$ to
give an element of $\tOmega^{k-1}_{\fr}$. For this, we equip it with $\vv$
(the outer normal to $\ol{V}'$), and we are left to specify a normal framing
to
\[
  T(\bdy'\bModuli_{\vN, \vlambda}(D)) \oplus \langle  \vv \rangle = TV|_{\bdy'\bModuli_{\vN, \vlambda}(D)}.
\]
Consider the thickening $U$ of $\bModuli_{\vN,\vlambda}(D)$ near its
boundary, constructed in Step~\ref{item:exponentiate} (in
Section~\ref{sec:bdries}). The normal bundle of $V$ inside $U$ is
equipped with an internal frame (cf.~Definition~\ref{def:neatmoduli}
Item~\ref{item:local-model-identification}), and the normal bundle of
$U$ has an external frame. Concatenating the internal frame and the
external frame, we get a framing of the normal bundle of $V$ inside
$\E^d_l$, thus completing Step~\ref{item:framedboundary}.

For Step~\ref{item:ok}, by collecting all these elements
$[\bdy'\bModuli_{\vN,\vlambda}(D)]\in\tOmega^{k-1}_{\fr}$ for all
$(D,\vN,\vlambda)\in\DNlambda'_{k}$, we get an cochain
$\mf{o}_k\in\Hom(\CDP'_{k+1},\tOmega^{k-1}_\fr)$. We remark that
$\mf{o}_k(D,\vN,\vlambda)$ is precisely the obstruction to filling in
$\bdy'\bModuli_{\vN,\vlambda}(D)$. (This will discussed in more detail
for Steps~\ref{item:fill} and \ref{item:newinternal}, after we make
the obstruction class vanish, and actually fill in
$\bdy'\bModuli_{\vN,\vlambda}(D)$.)

\subsection{The cocycle condition}\label{sec:cocycle-condition}
This subsection is devoted to Step \ref{item:cocycle}. We split the
discussion into two cases, according to whether $k=1$ or $k \geq
2$. In the case $k=1$, we obtain a stronger conclusion:
\begin{proposition}
\label{prop:cocycle1}
We have $\mf{o}_1 = 0 \in\Hom(\CDP'_{2},\tOmega^0_\fr)$.
\end{proposition}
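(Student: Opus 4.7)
The plan is to observe that the statement holds for a simple dimensional reason: every generator of $\CDP'_1$ indexes a zero-dimensional moduli space, and such a space has empty (smoothed) boundary.

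First I would enumerate the generators of $\CDP'_1$. A generator $(D, \vN, \vlambda)$ satisfies $\gr(D, \vN, \vlambda) = \mu(D) + |\ell(\vlambda)| = 1$, giving two cases: either $\mu(D) = 1$ with $|\ell(\vlambda)| = 0$, so that $D$ is a rectangle $R$ and $\vN = \vec{0}$; or $\mu(D) = 0$ with $|\ell(\vlambda)| = 1$, so that $D = c_x$ for some generator $x$ and $\vlambda$ is trivial except for a single partition $\lambda_j = (N)$ with $N \geq 1$. The subcomplex $\CDPd$ removes only the triples $(c_{\xid}, \ve_j, (1)_j)$ from this list, so all remaining triples represent nonzero generators of $\CDP'_1$.

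By the dimension formula~\eqref{eq:dimension}, $\dim \bModuli_{\vN, \vlambda}(D) = \gr(D, \vN, \vlambda) - 1 = 0$ for each such generator. These zero-dimensional moduli spaces were constructed explicitly in the base case of the induction (Section~\ref{sec:base}) as single framed points embedded in the interior of a suitable $\E^d_l$. A quick check against the stratum enumeration of Section~\ref{sec:strata} confirms that no codimension-one strata arise: Types II and III are vacuous (no row or column fits inside a rectangle, $c_x$ admits no row or column removal, and the partitions $\vec 0$ or $(N)$ admit no elementary coarsening), while any Type I splitting would involve an undefined trivial factor of the form $\bModuli_{\vec 0, \vec 0}(c_{(\cdot)})$. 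Hence $\bdy\bModuli_{\vN, \vlambda}(D) = \varnothing$, and so the smoothed boundary $\bdy'\bModuli_{\vN, \vlambda}(D)$ of step~\ref{item:smooth} is also empty.

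The empty framed manifold represents the zero class of $\tOmega^0_\fr$, so $\mf{o}_0(D, \vN, \vlambda) = 0$ for every generator of $\CDP'_1$, whence $\mf{o}_0 = 0$ as a cochain. There is no real obstacle to overcome here; the proposition simply records that the base of the obstruction induction is vacuous. The substantive work of verifying the cocycle condition begins only with $k \geq 1$, which the subsequent proposition handles.
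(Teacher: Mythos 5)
Your argument proves a vacuous statement, not the one the proposition is actually making. You have read the obstruction cochain as being evaluated on triples $(D,\vN,\vlambda)$ whose moduli spaces are $0$-dimensional, so that $\bdy'\bModuli_{\vN,\vlambda}(D)$ is empty and there is nothing to check. But note the type mismatch this creates: the value $\mf{o}_k(D,\vN,\vlambda)=[\bdy'\bModuli_{\vN,\vlambda}(D)]$ is declared to live in $\tOmega^k_\fr$, which forces $\bdy'\bModuli_{\vN,\vlambda}(D)$ to be $k$-dimensional and hence $\bModuli_{\vN,\vlambda}(D)$ to be $(k+1)$-dimensional. For $k=0$ the relevant moduli spaces are therefore the \emph{one}-dimensional ones (index-$2$ domains with $\vN=\vec 0$; horizontal/vertical annuli; a rectangle with a single nontrivial partition $(N)_j$; a constant domain with two markings $(N)_i,(M)_j$; and a constant domain with $\lambda_j=(N,M)$), and $\mf{o}_0$ records the class in $\tOmega^0_\fr\cong\ZZ$ of their smoothed boundaries, each of which consists of \emph{two} signed points. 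Your reading also contradicts Remark~\ref{rem:zero}: the proposition is invoked there to conclude that Step~\ref{item:change} is unnecessary for $k=0$, i.e.\ that the $0$-dimensional moduli spaces never need to be corrected so that the $1$-dimensional ones can be filled in; a vacuous statement about empty boundaries of points would give no such conclusion.

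The missing content is precisely the sign cancellation. For each of the five kinds of $1$-dimensional moduli spaces one must identify the two codimension-one strata (Type I trajectory breakings, and in two of the cases a Type II boundary degeneration or a Type III coarsening), recall that the $0$-dimensional moduli spaces were framed in Section~\ref{sec:base} using the sign assignment $s$, and verify case by case that the two boundary points contribute opposite signs to $\tOmega^0_\fr$. This is where the axioms \eqref{eq:sRS}, \eqref{eq:RSh} and \eqref{eq:RSv} of a sign assignment enter --- for instance, the two decompositions of an index-$2$ domain cancel by \eqref{eq:sRS}, while for a horizontal (resp.\ vertical) annulus the Type I point cancels against the Type II point by \eqref{eq:RSh} (resp.\ \eqref{eq:RSv}), with the orientation of the latter determined by the outward normal to the smoothed collar. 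None of this appears in your proposal, so the proof as written does not establish the proposition.
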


\begin{proof}
  We seek to show that for every $1$-dimensional moduli space
  $\bModuli_{\vN, \vlambda}(D)$, the smoothed boundary
  $\bdy'\bModuli_{\vN, \vlambda}(D)$ represents the zero element in
  $\tOmega^0_{\fr}$. Using
  Formula~\eqref{eq:dimk}, we find that $1$-dimensional moduli spaces
  are of one of the following kinds (using the notation from
  Remark~\ref{rem:1234}):
\begin{enumerate}
\item\label{item:signcheck-case1} $\bModuli_0(D)$, where $D$ is a positive domain of index $2$ on
  the grid, which is either a disjoint union of two rectangles or an L-shaped
  hexagon (cf.~Item~(\ref{item:maslov-index-2}) from
  Section~\ref{sec:grid-background}). Then, $D$ has two distinct
  representations as concatenations of two rectangles, and therefore
  the boundary $\del \bModuli_0(D)$ consists of two Type I strata of
  the form $\Moduli_0(R_1) \times \Moduli_0(R_2)$;
  cf.~Example~\ref{ex:RR} and Figure~\ref{fig:Dind2};

\item\label{item:signcheck-case2} $\bModuli_0(D)$, where $D$ is either a horizontal annulus $H_j$
  or a vertical annulus $V_j$. Then, the boundary $\del \bModuli_0(D)$
  consists of a Type I stratum $\Moduli_0(R_1) \times \Moduli_0(R_2)$
  and a Type II stratum $\Moduli_{\ve_j, (1)_j}(c_x)$;
  cf.~ Example~\ref{ex:HV} and Figure~\ref{fig:row};

\item\label{item:signcheck-case3} $\bModuli_{N\ve_j, (N)_j}(R)$, where $R$ is a rectangle (of
  index $1$) from $x$ to $y$, $N>0$, and
  $j \in \{2, \dots, n\}$. Then, $\bdy\bModuli_{\vN, \vlambda}(D)$
  consists of two strata of Type IV, namely
  $\Moduli_{N\ve_j, (N)_j}(c_x) \times \Moduli_0(R)$ and
  $\Moduli_0(R) \times \Moduli_{N\ve_j, (N)_j}(c_y)$;

\item\label{item:signcheck-case4} $\bModuli_{N\ve_i + M\ve_j, (N)_{i} + (M)_{j}} (c_x)$, where
  $c_x$ is a constant domain and $N, M>0$, $i \neq j$. Then,
  $\bdy\bModuli_{\vN, \vlambda}(D)$ has two strata of Type IV, namely
  $\Moduli_{N\ve_i, (N)_{i} } (c_x) \times \Moduli_{M\ve_j, (M)_{j} }
  (c_x)$ and $\Moduli_{M\ve_j, (M)_{j} } (c_x)\times\allowbreak \Moduli_{N\ve_i, (N)_{i} } (c_x)$;

\item\label{item:signcheck-case5} $\bModuli_{(N+M)\ve_j, (N,M)_j}(c_x)$, where
  $c_x$ is a constant domain and $N, M>0$, Then,
  $\bdy\bModuli_{\vN, \vlambda}(D)$ consists of the stratum
  $ \Moduli_{N\ve_i, (N)_{i} } (c_x) \times \Moduli_{M\ve_i, (M)_{i} }
  (c_x)$ of Type IV, and the stratum
  $ \Moduli_{(N+M)\ve_i, (N+M)_{i} } (c_x)$ of Type III.
\end{enumerate}

In all the above situations, the boundaries
$\bdy\bModuli_{\vN, \vlambda}(D)$ consist of two $0$-dimensional
strata, which are points according to the construction in
Section~\ref{sec:base}. Hence, the smoothings
$\bdy'\bModuli_{\vN, \vlambda}(D)$ also consist of two points.

By a case by case analysis, using the definitions in
Section~\ref{sec:base} and the properties \eqref{eq:sRS},
\eqref{eq:RSh}, \eqref{eq:RSv} of the sign assignment on rectangles,
we will check that the two points in
$\bdy'\bModuli_{\vN, \vlambda}(D)$ come with opposite signs in
$\tOmega^0_\fr \cong \Omega^0_{\fr} \cong \Z$, so they sum up to
$0$. Recall from Step~\ref{item:framedboundary}
(Section~\ref{sec:obtaining-cochain}) that each point is framed by
concatenating the internal and external framings. Further recall from
Equation~\eqref{eq:tomega-to-omega} that the isomorphism
$\tOmega^0_\fr \to \Omega^0_\fr$ is given by concatenating the vector
$\vec{v}$ with the framing of the point. That is, each point, as an
element in $\Omega^0_\fr$, is framed by concatenating $\vec{v}$, its
internal frame, and its external frame. For convenience, assume the
$0$-dimensional moduli spaces are framed explicitly as in
Section~\ref{sec:base}.

In Case~(\ref{item:signcheck-case1}), that the two points in
$\R_+\times\R^{2d}$ come with opposite signs is a consequence of
Equation~\eqref{eq:sRS} (with $\vec{v}=\ve_1^+$ in both points).

 In
Case~(\ref{item:signcheck-case2}), the Type I stratum is positively
(respectively negatively) oriented if $D=H_j$ (respectively $D=V_j$),
according to Equation~\eqref{eq:RSh} (respectively~\eqref{eq:RSv}),
with $\vec{v}=\ve_1^+$; for the Type II stratum, there is no internal
frame, the external frame is the standard (positive) frame for
$\R^{2d}$, and the vector $\vec{v}$ comes from the inward normal
vector of the point $Z(0,1,0)$ in $\bZ(1,0,0)$ (respectively
$\bZ(0,0,1)$), which is the opposite (respectively same) as the
internal frame $\delta v_{1,1}$ of $Z(0,1,0)$
(cf.~Section~\ref{sec:modelinternal}), and hence maps to the frame
$-\ve^+_1$ (respectively $\ve^+_1$) of $\R_+$. 

In
Case~(\ref{item:signcheck-case3}), for either point in
$\R_+^{2N}\times\R^{(2N+1)d}$, its external frame is a frame for
$\R^{(2N+1)d}$ with sign $s(R)$; the point near the boundary
$\Moduli_{N\ve_j, (N)_j}(c_x) \times \Moduli_0(R)$ (respectively
$\Moduli_0(R) \times \Moduli_{N\ve_j, (N)_j}(c_y)$) has internal frame
$[\ve^+_1,\dots,\ve^+_{2N-1}]$ and $\vec{v}=\ve^+_{2N}$ (respectively
internal frame $[\ve^+_2,\dots,\ve^+_{2N}]$ and $\vec{v}=\ve^+_{1}$)
and hence represents $-s(R)$ (respecively $s(R)$) in
$\Omega^0_\fr$.

In Case~(\ref{item:signcheck-case4}), for either point
in $\R_+^{2N+2M-1}\times\R^{(2N+2M)d}$, its external frame is the
standard (positive) frame for $\R^{(2N+2M)d}$; assuming $i<j$, the
point near the boundary
$\Moduli_{N\ve_i, (N)_{i} } (c_x) \times \Moduli_{M\ve_j, (M)_{j} }
(c_x)$ (respectively $\Moduli_{M\ve_j, (M)_{j} } (c_x)$
$\times \Moduli_{N\ve_i, (N)_{i} } (c_x)$) has internal frame
$[\ve^+_1,\dots,\ve^+_{2N-1},\allowbreak \ve^+_{2N+1},\dots,\ve^+_{2N+2M-1}]$ and
$\vec{v}=\ve^+_{2N}$ (respectively
$[\ve^+_{2M+1},\dots,\ve^+_{2N+2M-1},\ve^+_{1},\dots,\ve^+_{2M-1}]$
and $\vec{v}=\ve^+_{2M}$) and hence represents $-1$ (respectively $1$)
in $\Omega^0_\fr$. 

Finally in Case~(\ref{item:signcheck-case5}), for
either point in $\R_+^{2N+2M-1}\times\R^{(2N+2M)d}$, the external
frame is the standard (positive) frame for $\R^{(2N+2M)d}$ and
the internal frame is
$[\ve^+_1,\dots,\ve^+_{2N-1},\ve^+_{2N+1},\allowbreak\dots,\allowbreak\ve^+_{2N+2M-1}]$
(similar to the above case); for the the point near
$ \Moduli_{N\ve_i, (N)_{i} } (c_x) \times \Moduli_{M\ve_i, (M)_{i} }$,
$\vec{v}=\ve^+_{2N}$, but for the point near
$\Moduli_{(N+M)\ve_i,(N+M)_i}(c_x)$, the vector $\vec{v}$ comes from
the inward normal vector of the point $Z(0,N+M,0;(N+M))$ in
$\bZ(0,N+M,0;(N,M))$, which is the negative of the internal frame
$-\delta \Delta_{1,N}$ of $Z(0,N+M,0;(N+M))$, and hence maps to the
vector $-\ve^+_{2N}$ in $\RR_+^{2N+2M-1}$.
\end{proof}

For $k \geq 2$, we have:
\begin{proposition}
\label{prop:cocycle}
The element $\mf{o}_k\in\Hom(\CDP'_{k+1},\tOmega^{k-1}_\fr)$ is a cocycle.
\end{proposition}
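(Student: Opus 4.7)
To prove that $\mf{o}_k$ is a cocycle, we need to show that for each triple $(D,\vN,\vlambda)$ one degree higher in $\CDP'_*$---corresponding to a moduli space $\bModuli_{\vN,\vlambda}(D)$ one dimension higher than those being built at the current inductive step---we have
$$
  \sum_{(D',\vN',\vlambda')\in\delta(D,\vN,\vlambda)} \pm[\bdy'\bModuli_{\vN',\vlambda'}(D')] \ = \ 0 \ \text{in}\ \tOmega^k_\fr.
$$
The plan is to exhibit an embedded framed null-cobordism realizing this identity.

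Although $\bModuli_{\vN,\vlambda}(D)$ itself has not yet been constructed, its stratified boundary $\bdy\bModuli_{\vN,\vlambda}(D)$ has been built by the inductive hypothesis, complete with neat embeddings and internal and external framings on all of its pieces. In accordance with Remark~\ref{rem:1234}, the codimension-one strata of $\bdy\bModuli_{\vN,\vlambda}(D)$ fall into two kinds: (a) single moduli spaces arising from type II or III strata, or from type I strata in which one factor is zero-dimensional (a rectangle or a point-type boundary degeneration)---these correspond exactly to the terms appearing in $\delta(D,\vN,\vlambda)$; and (b) product strata $Y_1\times Y_2$ with both factors of positive dimension, which do not contribute to the differential.

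The key idea is that the type-(b) product strata glue together the smoothed boundaries $\bdy'\bModuli_{\vN',\vlambda'}(D')$ of the neighboring type-(a) strata into a single embedded framed $(k+1)$-dimensional manifold with boundary. Concretely, at each codimension-two corner where a type-(a) stratum meets a type-(b) stratum, the Whitney-umbrella-type local models of Section~\ref{sec:local} and the smoothing construction of Definition~\ref{def:smoothstrata} supply a collar interpolation whose boundary is precisely the corresponding piece of $\bdy'\bModuli_{\vN',\vlambda'}(D')$. Assembling all of the type-(b) regions and applying Steps~\ref{item:exponentiate} and~\ref{item:smooth} one dimension higher than before, we obtain an embedded framed $(k+1)$-dimensional submanifold of $\E^d_l$ whose oriented boundary is the signed disjoint union of the $\bdy'\bModuli_{\vN',\vlambda'}(D')$'s.

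The principal difficulty will be the sign bookkeeping: we must verify that the orientations induced on the boundary pieces by the embedding and framing data agree with the signs appearing in $\delta$, which combine the sign assignment $s(R)$ on rectangles from~\eqref{eq:sRS}--\eqref{eq:RSv}, the partition signs $s(\lambda_j,\lambda'_j)$ for elementary coarsenings and unit enlargements, and the $(-1)^{\mu(D)}$ and $(-1)^{\ell(\lambda_1)+\cdots+\ell(\lambda_{j-1})}$ prefactors in~\eqref{eq:delta1}--\eqref{eq:delta4}. This geometric calculation will mirror, piece by piece, the algebraic cancellations that establish $\delta^2=0$ in Section~\ref{sec:new}: each cancelling pair of terms there corresponds to a type-(b) product stratum (or a higher-codimension stratum) which furnishes a local framed cobordism between the associated boundary components. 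Proposition~\ref{prop:cocycle1} provides the base of the induction in a hands-on way when $k=0$, and the bookkeeping carried out there is a template for the sign analysis in the general case.
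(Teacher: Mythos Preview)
Your overall strategy matches the paper's: the already-built strata of codimension $\geq 1$ in the $(k+2)$-dimensional space $\bModuli_{\vM,\vmu}(E)$ (what the paper calls the \emph{old boundary} $\dold\bModuli_{\vM,\vmu}(E)$, consisting of all codimension $\geq 2$ strata together with the Type~I codimension-one products with both factors positive-dimensional) admit a neighborhood $V^\ast$ built exactly as in Step~\ref{item:exponentiate}, and smoothing it produces a $(k+1)$-manifold $\bdy''\bModuli_{\vM,\vmu}(E)$ whose boundary is the disjoint union of the $\bdy'\bModuli_{\vN,\vlambda}(D)$ over all $(D,\vN,\vlambda)$ in $\delta(E,\vM,\vmu)$. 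That is precisely the null-cobordism witnessing $\mf{o}_k(\delta(E,\vM,\vmu))=0$.

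Where your sketch diverges from the paper is in the handling of framings. You propose a case-by-case sign analysis mirroring the cancellations in $\delta^2=0$; the paper instead avoids this entirely. The point is that on each boundary piece $\bdy'\bModuli_{\vN,\vlambda}(D)$, two natural framings arise: as an element of $\tOmega^k_\fr$ it carries the outer normal $\vv$ to $W(D,\vN,\vlambda)$ as distinguished vector and the outer normal $\vw$ to $W^\ast$ as part of the normal framing, whereas as part of $\bdy(\bdy''\bModuli_{\vM,\vmu}(E))$ the roles of $\vv$ and $-\vw$ are swapped. The paper invokes Lemma~\ref{lem:rotation} (the ``rotation'' lemma for $\tOmega^k_\fr$) to conclude these represent the same class, which is what makes the cobordism a \emph{framed} null-cobordism in the sense of $\tOmega^k_\fr$. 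Your proposal does not identify this mechanism, and the sign-bookkeeping you gesture at would not by itself establish framing compatibility in $\tOmega^k_\fr$---you would still need something like Lemma~\ref{lem:rotation} to reconcile the two perspectives.
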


\begin{proof}
We need to show that $\delta \mf{o}_k$ evaluates to zero on any generator $(E, \vM, \vmu)\in \DNlambda'_{k+1}$. This is equivalent to 
\[
  \mf{o}_k(\delta (E, \vM, \vmu)) = 0.
\]
Write
\[
  \delta (E, \vM, \vmu) = \sum s_{D, \vN, \vlambda} (D, \vN, \lambda),
\]
where $s_{D, \vN, \vlambda} \in \{\pm 1\}$ and
$(D, \vN, \vlambda) \in \DNlambda'_{k}$. (This is the differential
in the quotient complex $\CDP'_*=\CDP_*/\CDPd$, so if
Formula~\eqref{eq:delta123} applied on $(E,\vM,\vmu)$ produces any
triples from $\DNlambdad$, we suppress them.  Also, here and later,
when we sum over $(D, \vN, \vlambda)$, we consider only the triples in
$\DNlambda'_{k}$ that appear in $\delta (E, \vM, \vmu)$.)  We aim 
to prove:
\[
\sum_{(D, \vN, \vlambda)}s_{D, \vN, \vlambda}\cdot \mf{o}_k(D, \vN, \lambda)  = 0,
\]
that is,
\begin{equation}
\label{eq:sDNL}
 \sum_{(D, \vN, \vlambda)} s_{D, \vN, \vlambda} [\bdy'\bModuli_{\vN, \vlambda}(D)] = 0.
\end{equation}
(It is possible certain triples $(D,\vN,\vlambda)$ appear twice in
$\delta (E,\vM,\vmu)$---for instance $(c_x,2\ve_j,(1,1)_j)$ appears
twice in $\delta(H_j,\ve_j,(1)_j)$---but we consider each appearance
as a separate instance.)

Consider the $(k+1)$-dimensional moduli space
$\bModuli_{\vM, \vmu}(E)$. Of course, this has not yet been
constructed in our inductive procedure. Nevertheless, as mentioned in
Section~\ref{sec:codim1}, we know that its codimension-1 strata are
supposed to be of three types: products (Type I and IV) and single
moduli spaces (Type II and III). Furthermore, let us distinguish
between the products where one of the factors is zero-dimensional, and
those were both factors are positive dimensional. When a factor is
zero-dimensional, it must be a single point (see
Section~\ref{sec:base} and Remark~\ref{rem:zero} below), and therefore
the product can be identified with the other factor, which is some
$(k-1)$-dimensional moduli space $\Moduli_{\vN, \vlambda}(D)$. The
triples $(D, \vN, \vlambda)$ that appear in $\delta (E, \vM, \vmu)$
come from products (Type I and IV) where one factor is $0$-dimensional
and the other is $\Moduli_{\vN, \vlambda}(D)$ with
$(D,\vN,\vlambda)\notin\DNlambdad$, as well as from Type II and III
strata $\Moduli_{\vN, \vlambda}(D)$ (also with
$(D,\vN,\vlambda)\notin\DNlambdad$); see Remark~\ref{rem:1234}.

Recall that all the moduli spaces of dimension up to $(k-1)$ have
already been constructed, as well as moduli spaces of
$(D,\vN,\vlambda)\in\DNlambdad$. Let us define the {\em old boundary}
of $\bModuli_{\vM, \vmu}(E)$, denoted $\dold \bModuli_{\vM, \vmu}(E)$,
to be the union of all strata of $\bModuli_{\vM, \vmu}(E)$ of
codimension $2$ or higher (that is, dimension $(k-1)$ or lower),
together with the Types I and IV strata where neither factor of the
product is $0$-dimensional (and therefore both factors are of
dimension $(k-1)$ or lower), together with the Types I and IV strata
where one factor is $0$-dimensional and the other factor is
$\Moduli_{\vN, \vlambda}(D)$ for some $(D,\vN,\vlambda)\in\DNlambdad$,
together with Types II and III strata $\Moduli_{\vN,\vlambda}(D)$ for
some $(D,\vN,\vlambda)\in\DNlambdad$. By repeating what we did for
Step~\ref{item:exponentiate} in Section~\ref{sec:bdries}, we see that
$\dold \bModuli_{\vM, \vmu}(E)$ has already been constructed in our
inductive procedure, together with its embedding in $\E^d_l$ (where
$l=\tdim \bModuli_{\vM, \vmu}(E)$), its stratified thickening $U$, and
internal and external framings. Indeed, if we repeat the construction
for Step~\ref{item:smooth}, by taking the union of closed strata of
$U$ that correspond to $\bModuli_{\vM,\vmu}(E)$, we get a neighborhood
$W$ of $\dold \bModuli_{\vM, \vmu}(E)$ inside
$\bModuli_{\vM, \vmu}(E)$. As before, we take a smaller Thom-Mather
neighborhood $U''$, smooth its complement $U\setminus U''$, thereby
producing another neighborhood $U'$ (with
$\ol{U}''\subset U'\subset\ol{U}'\subset U$). By intersecting these
with $W$, we get similar neighborhoods $W',W''$ of
$\dold\bModuli_{\vM,\vmu}(E)$ inside $\bModuli_{\vM,\vmu}(E)$.

Consider any Type I or IV stratum of the form
$\pt\times \bModuli_{\vN,\vlambda}(D)$ with
$(D,\vN,\vlambda)\notin\DNlambdad$. Then the neighborhoods
$\pt\times V,\pt\times V',\pt\times V''$ of
$\pt\times\bdy\bModuli_{\vN,\vlambda}(D)$ inside
$\pt\times \bModuli_{\vN,\vlambda}(D)$ (as constructed in
Section~\ref{sec:bdries}) are given by intersecting this stratum with
$W,W',W''$, respectively. Denote these neighborhoods
$\wt{V},\wt{V}',\wt{V}''$, and let
$\wt{\bdy}'\bModuli_{\vN,\vlambda}(D)=\pt\times\bdy'\bModuli_{\vN,\vlambda}(D)=\ol{\wt{V}}'\setminus\wt{V}'$.  A similar discussion holds for any Type I or IV stratum of the form
$\bModuli_{\vN,\vlambda}(D)\times\pt$. Similarly, for any Type II or
III stratum $\bModuli_{\vN,\vlambda}(D)$, the neighborhoods $V,V',V''$
of $\bdy\bModuli_{\vN,\vlambda}(D)$ inside
$\bModuli_{\vN,\vlambda}(D)$ are given by intersecting this stratum
with $W,W',W''$, respectively. For consistency, denote these
neighborhoods $\wt{V},\wt{V}',\wt{V}''$ as well, and let
$\wt{\bdy}'\bModuli_{\vN,\vlambda}(D)=\bdy'\bModuli_{\vN,\vlambda}(D)$.

Define $\bdy''\bModuli_{\vM,\vmu}(E)=\ol{W}'\setminus W'$. Then
$\bdy''\bModuli_{\vM,\vmu}(E)$ is a compact $k$-dimensional smooth manifold in $\inte(\E^d_l)$ with boundary
\begin{equation}\label{eq:bdy-of-bdy''}
 \bdy\big(\bdy''\bModuli_{\vM,\vmu}(E)\big)= \coprod_{(D,\vN,\vlambda)}\wt{\bdy}'\bModuli_{\vN,\vlambda}(D),
\end{equation}
see Figure~\ref{fig:second}. 
\begin{figure}
  \centering
  \begin{tikzpicture}[scale=1.2]
    \begin{scope}[z={(0.6cm,0.3cm)},y={(0,-1cm)},x={(0.8cm,-0.2cm)}]

      \draw[thick,line join=round,fill=black!60] (-1,2,0.5) -- (-0.4,0.8,0.2)
      ..controls (-1.6,0.8,0.8) and (-0.8,0,0.1).. (-0.8,0) ..controls
      (-0.8,0,-0.1) and (-0.7,0.8,-0.35).. (-0.4,0.8,-0.2)--(-1,2,-0.5) -- (0,2)--(-1,2,0.5);
      \draw[thick] (-1,2,-0.5) ..controls (-1.2,2,0) .. (-1,2,0.5);
      \draw[thick] (-0.4,0.8,-0.2) ..controls (-0.5,0.8,0) .. (-0.4,0.8,0.2);

      \fill[pattern=north east lines] (-1,2,0.5) -- (-0.4,0.8,0.2)
      ..controls (-1.6,0.8,0.8) and (-0.8,0,0.1).. (-0.8,0) -- (0,0) -- (0,2)--(-1,2,0.5);

      \fill[red!50,opacity=0.5] (-2,2,1) ..controls (-1,1,0.5) and
      (-1.2,0,0.1).. (-1.2,0) ..controls (-1.2,0,-0.1) and
      (-1,1,-0.5).. (-2,2,-1) .. controls (-2.2,2,0).. (-2,2,1);
      \draw[ultra thick,line join=round,red!50!black] (-2,2,1) ..controls (-1,1,0.5) and
      (-1.2,0,0.1).. (-1.2,0) ..controls (-1.2,0,-0.1) and
      (-1,1,-0.5).. (-2,2,-1);
      \draw[thin,red!50!black] (-2,2,-1) .. controls (-2.2,2,0).. (-2,2,1);

      \draw (2,2,-1) to[out=90,in=-100,looseness=0.5] (1,2,0.5);
      \draw (2,2,-1) -- (0,2);
      \draw (0,2) -- (-2,2,-1) ..controls(-2,1,-0.8) and (-2,0,-0.25).. coordinate[midway] (z110) (-2,0) -- (2,0);
      \draw (2,0) ..controls(2,0,0.25) and (2,1,0.8).. coordinate[midway] (z011) (2,2,1) -- (0,2) -- (-2,2,1) ..controls(-2,1,0.8) and (-2,0,0.25).. (-2,0);

      \draw[ultra thick] (0,0)--(0,2) coordinate[midway] (z02011);
      \node at (0,0) {\Large $\bullet$};

    \end{scope}

    \begin{scope}[xshift=6cm,z={(0.6cm,0.3cm)},y={(0,-1cm)},x={(0.8cm,-0.2cm)}]

      \fill[red!50,opacity=0.5] (-2,2,-1) ..controls (-1,1,-0.5) and
      (-1.2,0,-0.1).. (-1.2,0) .. controls (-1.2,-1) and (1.2,-1).. (1.2,0,0)
      ..controls (1.2,0,-0.1) and (1,1,-0.5)..(2,2,-1)
      .. controls (0,2,-2).. (-2,2,-1);
      \draw[ultra thick,line join=round,red!50!black] (1.2,0) ..controls (1.2,0,-0.1) and
      (1,1,-0.5).. (2,2,-1);
      \draw[thin,red!50!black] (-1.2,0,0) .. controls (0,0,-1.2).. (1.2,0,0);

      \fill[white]  (0,0)--(-0.8,0) ..controls
      (-0.8,0,-0.1) and (-0.7,0.8,-0.35).. (-0.4,0.8,-0.2)--(-1,2,-0.5) -- (0,2)--(0,0);
      \fill[white] (1,2,-0.5)--(0,0)--(0,2)--cycle;
      \fill[draw,thick,pattern=north east lines] (0,0)--(-0.8,0) ..controls
      (-0.8,0,-0.1) and (-0.7,0.8,-0.35).. (-0.4,0.8,-0.2)--(-1,2,-0.5) -- (0,2)--(0,0);
      \fill[draw,thick,pattern=north east lines] (1,2,-0.5)--(0,0)--(0,2)--cycle;
      
      \draw[thick,line join=round,fill=black!60] (-1,2,0.5) -- (-0.4,0.8,0.2)
      ..controls (-1.6,0.8,0.8) and (-0.8,0,0.1).. (-0.8,0) ..controls (-0.8,-0.8) and (0.8,-0.8,-0.1).. (0.8,0)
      ..controls (0.8,0,0.05) and (0.7,0.8,0.35)..
      (0.4,0.8,0.2)--(1,2,0.5);

      \draw[thick,fill=black!60] (-1,2,0.5) ..controls (0,2,1) .. (1,2,0.5)--(0,2)--cycle;
      \draw[thick,fill=black!60] (-1,2,-0.5) ..controls (0,2,-1) .. (1,2,-0.5)--(0,2)--cycle;
      \draw[thick] (-0.4,0.8,0.2) ..controls (0,0.8,0.4)..(0.4,0.8,0.2);
      \draw[thick] (-0.8,0) ..controls (0,0,0.4)..(0.8,0);

      \fill[red!50,opacity=0.5] (-2,2,1) ..controls (-1,1,0.5) and
      (-1.2,0,0.1).. (-1.2,0) .. controls (-1.2,-1) and (1.2,-1).. (1.2,0,0)
      ..controls (1.2,0,0.1) and (1,1,0.5)..(2,2,1)
      .. controls (0,2,2).. (-2,2,1);

      \draw[ultra thick,line join=round,red!50!black] (2,2,1) ..controls (1,1,0.5) and
      (1.2,0,0.1).. (1.2,0);      
      \draw[ultra thick,line join=round,red!50!black] (-2,2,1) ..controls (-1,1,0.5) and
      (-1.2,0,0.1).. (-1.2,0) ..controls (-1.2,0,-0.1) and
      (-1,1,-0.5).. (-2,2,-1);

      \draw[thin,red!50!black] (-2,2,-1) .. controls (0,2,-2).. (2,2,-1);
      \draw[thin,red!50!black] (-2,2,1) .. controls (0,2,2).. (2,2,1);
      \draw[thin,red!50!black] (-1.2,0,0) .. controls (0,0,1.2).. (1.2,0,0);
      \draw[thin,red!50!black] (-1.2,0,0) .. controls (-1.2,-1) and (1.2,-1).. (1.2,0,0);

      \draw (2,2,-1) to[out=90,in=-100,looseness=0.5] (1,2,0.5);
      \draw (2,2,-1) -- (0,2);
      \draw (0,2) -- (-2,2,-1) ..controls(-2,1,-0.8) and (-2,0,-0.25).. coordinate[midway] (z110) (-2,0) -- (-0.8,0) (0.8,0)-- (2,0);
      \draw (2,0) ..controls(2,0,0.25) and (2,1,0.8).. coordinate[midway] (z011) (2,2,1) -- (0,2) -- (-2,2,1) ..controls(-2,1,0.8) and (-2,0,0.25).. (-2,0);


    \end{scope}

  \end{tikzpicture}\\
  \caption {This is the Whitney umbrella from
    Figure~\ref{fig:Whitney}. On the left, we assume locally
    $\bModuli_{\vM, \vmu}(E)=Z(2, 0, 0)$,
    $\bModuli_{\vN, \vlambda}(D)=Z(1,1,0)$, and
    $\bdy \bModuli_{\vN, \vlambda}(D),\allowbreak\dold \bModuli_{\vM,
      \vmu}(E)=Z(0, 2, 0;(1, 1))$ (drawn in thick).  The neighborhood
    $W''$ is shown in gray, and its intersection with
    $\bModuli_{\vN, \vlambda}(D)$ is the neighborbood $\wt{V}''$
    (shown striped); $\wt{\bdy}'\bModuli_{\vN,\vlambda}(D)$ is the thick
    red curve, and $\bdy''\bModuli_{\vM,\vmu}(E)$ is the transparent
    pink surface. On the right, we assume locally
    $\bModuli_{\vM, \vmu}(E)=Z(1, 0, 1)$,
    $\bModuli_{\vN_1, \vlambda_1}(D_1)=Z(1,1,0)$,
    $\bModuli_{\vN_2, \vlambda_2}(D_2)=Z(0,1,1)$, and
    $\bdy \bModuli_{\vN_1, \vlambda_1}(D_1),\allowbreak\bdy
    \bModuli_{\vN_2, \vlambda_2}(D_2),\allowbreak\dold \bModuli_{\vM,
      \vmu}(E)=Z(0, 2, 0;(1, 1))$ (not visible); the colors are
    same as before.}
\label{fig:second}
\end{figure}

Similarly to Step~\ref{item:framedboundary}, we frame
$\bdy''\bModuli_{\vM,\vmu}(E)$ by concatenating the outward normal
vector (of $\bdy''\bModuli_{\vM,\vmu}(E)$ in $\ol{W}'$), the internal
frame (of $W$ inside $U$) and the external frame (of $U$ inside
$\E^d_l$). Define
$[\wt{\bdy}'\bModuli_{\vN,\vlambda}(D)]\in\tOmega^{k-1}_\fr$ be to be
manifold $\wt{\bdy}'\bModuli_{\vN,\vlambda}(D)$, embedded in
$\inte(\E^d_l)$, with frame induced from that of
$\bdy''\bModuli_{\vM,\vmu}(E)$, and the vector field $\vec{v}$ the
inward normal vector of $\wt{\bdy}'\bModuli_{\vN,\vlambda}(D)$ in
$\bdy''\bModuli_{\vM,\vmu}(E)$. Equation~\eqref{eq:bdy-of-bdy''}
implies
\[
\sum_{(D,\vN,\vlambda)} [\wt{\bdy}'\bModuli_{\vN,\vlambda}(D)]=0.
\]
Equation~\eqref{eq:sDNL} now follows from a sign check
\begin{equation}\label{eq:cocycle-sign-check}
  [\wt{\bdy}'\bModuli_{\vN,\vlambda}(D)]=(-1)^{k+1}s_{D,\vN,\vlambda}[\bdy'\bModuli_{\vN,\vlambda}(D)],
\end{equation}
which occupies the rest of the proof.

Let $\vw$ denote the inward normal vector field of
$\wt{\bdy}'\bModuli_{\vN,\vlambda}(D)$ in
$\bdy''\bModuli_{\vM,\vmu}(E)$, $\vv$ denote the outward normal vector
field of $\wt{\bdy}'\bModuli_{\vN,\vlambda}(D)$ in $\ol{\wt{V}}'$. Let
$l_D=\tdim\bModuli_{\vN,\vlambda}(D)$ and $U_D$ be the
$l_D$-dimensional thickening of $\bdy'\bModuli_{\vN,\vlambda}(D)$.
Apply the isomorphism $\tOmega^{k-1}_\fr \to \Omega^{k-1}_\fr$ from
Equation~\eqref{eq:tomega-to-omega}, and we treat
$(-1)^{k-1}[\wt{\bdy}'\bModuli_{\vN,\vlambda}(D)]\in\Omega^{k-1}_{\fr,l+d(l+1)}$
as the manifold $\wt{\bdy}'\bModuli_{\vN,\vlambda}(D)\subset\E^d_l$
framed as
\[
[\vw,\vv,\text{ internal frame of $W\subset U$},\text{ external frame of $U\subset\E^d_l$}],
\]
equivalently as
\[
[\vv,-\vw,\text{ internal frame of $W\subset U$},\text{ external frame of $U\subset\E^d_l$}].
\]
Similarly, treat
$(-1)^{k-1}[\bdy'\bModuli_{\vN,\vlambda}(D)]\in\Omega^{k-1}_{\fr,l_D+d(l_D+1)}$
as the manifold $\bdy'\bModuli_{\vN,\vlambda}(D)\subset\E^d_{l_D}$ framed as
\[
[\vv,\text{ internal frame of $V\subset U_D$},\text{ external frame of $U_D\subset \E^d_{l_D}$}],
\]
or by stabilizing, as the manifold $\bdy'\bModuli_{\vN,\vlambda}(D)\times 0$ in $\E^d_{l_D}\times(\R_+\times\E^d_{l-l_D})\cong\E^d_l$ framed as
\[
[\vv,\text{ internal frame of $V\subset U_D$},\text{ external frame of $U_D\subset \E^d_{l_D}$},\text{ standard frame of $\E^d_{l_D}\subset\E^d_{l}$}].
\]
Also recall that we had assumed that $d$ is even, so there is an even
number of vectors in each external frame, and we will freely reshuffle
their orders without affecting any signs.

The sign check is fairly easy for Type II and Type III strata. In
those cases, $l=l_D$, $U=U_D$ and $\pm\vw$ is one of the vectors of the
internal frame of $V\subset U$, and rest of the vectors constitute the
frame of $W\subset U$ (cf.~Remark~\ref{rem:pause-to-justify}), so we
just need to calculate the position $P+1$ where $-\vw$ appears in the
the internal frame of $V\subset U$ and with what sign $S\in\{\pm
1\}$. As in the proof of Proposition~\ref{prop:cocycle1}, the outward
vector $-\vw$ appears with sign $S=-1$ if it is Type II stratum coming
some vertical annulus $V_j$, and otherwise appears with sign $S=1$. By
inspecting Formulas~\eqref{eq:delta2} and~\eqref{eq:delta3}, we see
$(-1)^PS=s_{D,\vN,\vlambda}(-1)^{|\vmu|+\mu(E)+1}$ in both cases; and
we have $|\vmu|+\mu(E)-1=k+1$ from Formula~\eqref{eq:dimension}.

For Type I and Type IV strata of the form
$\bModuli_{\vN,\vlambda}(D)\times\pt$, there are embeddings
\[
  \wt{\bdy}'\bModuli_{\vN,\vlambda}(D)=\bdy'\bModuli_{\vN,\vlambda}(D)\times 0\times\pt\into \E^d_{l_D}\times\R_+\times\E^d_{l-l_D-1}.
\]
In Type I, $l-l_D=1$, locally $U\cong U_D\times\R_+\times\pt$,
$\vlambda=\vmu$, and the point is framed with sign $s(R)$ in $\R^d$;
in Type IV, $l-l_D=2|\vM-\vN|$, locally
$U\cong U_D\times\R_+\times(\R_+^{l-l_D-1})$ (assuming the thickening
of $\pt$ inside $\E^d_{l-l_D-1}\cong\R_+^{l-l_D-1}\times\R^{(l-l_D)d}$
is $\R_+^{l-l_D-1}\times0$), $|\vlambda|=|\vmu|-1$, and the point is
framed positively. The outward vector $-\vw$ is the negative of the
unit vector of the $\R_+$ factor, and to bring it to the second
position requires a sign of $(-1)^{|\vlambda|}$; for Type IV, there is
an additional sign of $(-1)^{|(\lambda_{j+1},\dots,\lambda_n)|}$ to
bring the $l-l_D-1$ odd number of vectors of the internal frame of
$\pt\subset\E^d_{l-l_D-1}$ to the correct position. In either case,
after inspecting Formulas~\eqref{eq:delta1} and~\eqref{eq:delta4}, one
gets the sign $s_{D,\vN,\vlambda}(-1)^{|\vmu|+\mu(E)+1}$.

For Type I and Type IV strata of the form
$\pt\times\bModuli_{\vN,\vlambda}(D)$, we need to recall the following
fact: If $M^m\subset \R^a, N^n\subset \R^b$ are framed manifolds, then
the framed manifolds $M\times N,N\times M\subset\R^{a+b}$---each with
product framing---are related by the sign $(-1)^{mn+mb+na}$;
therefore, for framed manifolds
$M^m\subset \R^a, N^n\subset \R^b,P^p\subset\R^c$, $M\times N\times P$
and $P\times N\times M$ are related by the sign
$(-1)^{mn+np+pm+m(b+c)+n(c+a)+p(a+b)}$. Now for a Type I stratum, we have
\[
  \wt{\bdy}'\bModuli_{\vN,\vlambda}(D)=\Moduli_0(R)\times 0\times\bdy'\bModuli_{\vN,\vlambda}(D)\into \R^d\times\R_+\times\E^d_{l-1},
\]
and we may assume, up to a sign $s(R)$, that it is framed as
\[
  [\text{standard frame in $\R^d$},\vw=e^+_1,\vv,\text{ internal frame of $V\subset U_D$},\text{ external frame of $U_D\subset\E^d_{l-1}$}].
\]
By the above discussion, the latter is same as
\[
  \bdy'\bModuli_{\vN,\vlambda}(D)\times 0\times\Moduli_0(R)\into \E^d_{l-1}\times\R_+\times\R^d,
\]
framed as
\[
  [\vv,\text{ internal frame of $V\subset U_D$},\text{ external frame of $U_D\subset\E^d_{l-1}$},\ve^+_{l-1},\text{ standard frame in $\R^d$}],
\]
up to a further sign of $(-1)^{\dim\bdy'\bModuli_{\vN,\vlambda}(D)}=(-1)^{k-1}$. Instead for a Type IV stratum, 
\[
  \wt{\bdy}'\bModuli_{\vN,\vlambda}(D)=\Moduli_{N\ve_j,(N)_j}(c_x)\times 0\times\bdy'\bModuli_{\vN,\vlambda}(D)\into \E^d_{2N-1}\times\R_+\times\E^d_{l_D},
\]
framed as
\[
  [\text{standard frame in $\E^d_{2N-1}$},\vw=e^+_{2N},\vv,\text{ internal frame of $V\subset U_D$},\text{ external frame of $U_D\subset\E^d_{l_D}$}],
\]
up to a sign of
$(-1)^{|(\mu_1,\dots,\mu_{j-1})|}=s_{D,\vN,\vlambda}(-1)^{|\vmu|+\mu(E)+1}$
which comes from bringing the odd-dimensional internal frame of $\pt\in \E^d_{2N-1}$
to the front.  We switch the order of the three factors again, but
this time, do not incur any additional sign.
\end{proof}

\subsection{Concluding the induction}
This section carries out the remaining steps of the induction. In
Step~\ref{item:change}, we get a cochain
$\mf{b}\in\Hom(\CDP'_{k},\tOmega^{k-1}_\fr)$ satisfying
$\delta\mf{b}=\mf{o}_k$. For each $(F,\vP,\vnu)\in\DNlambda'_{k-1}$, we
pick a framed manifold $M(F,\vP,\vnu)\subset \inte(\E^d_l)$ (where
$l=\tdim\bModuli_{\vP,\vnu}(F)$), such that $[M(F,\vP,\vnu)]\in\Omega^{k-1}_\fr$ is
the image of $-\mf{b}(F,\vP,\vnu)\in\tOmega^{k-1}_\fr$ under the
isomorphism $\tOmega^{k-1}_\fr\to\Omega^{k-1}_\fr$. Change all these
$(k-1)$-dimensional moduli spaces by taking disjoint union with
$M(F,\vP,\vnu)$, that is, define
\[
\bModuli^{\text{new}}_{\vP,\vnu}(F)=\bModuli_{\vP,\vnu}(F)\amalg M(F,\vP,\vnu).
\]
This new piece $M(F,\vP,\vnu)$ is equipped with a framing of its
normal bundle, but does not yet have a stratified thickening with
internal framings, nor an external framing. We declare the first
$l-k+1$ vectors of its frame to be the internal frame, and the
remaining $d(l+1)$ vectors to be the external frame. To construct the
stratified thickening, we reuse our strategy from
Section~\ref{sec:base}; fix a point $p$ in the open stratum
$Z(\vec{0},\vP,\vec{0};\vnu)\subset Z_{\vP}$ (from
Equation~\eqref{eq:ZvN}), and a small open disk $D_p$ around $p$ in
the normal direction (that is, in the affine subspace spannned by the
standard frame at $p$); for each point $q\in M(F,\vP,\vnu)$, embed
$D_p$ in $\inte(\E^d_l)$ by an affine map sending $p$ to $q$ and the
standard frame at $p$ to the internal frame at $q$; assuming $D_p$ is
small enough, the union of these affine embeddings of $D_p$ forms the
stratified thickening of $M(F,\vP,\vnu)$.  (This construction of the
thickening is similar to---but simpler than---what we will do in
Step~\ref{item:newinternal} below.)

This modification changes $\bdy'\bModuli_{\vN,\vlambda}(D)$ for
$(D,\vN,\vlambda)\in\DNlambdad_{k+1}$. If we write the differential in $\CDP'_*$ as
\[
  \delta(D,\vN,\vlambda)=\sum s_{F,\vP,\vnu}(F,\vP,\vnu),
\]
with $s_{F,\vP,\vnu}\in\{\pm 1\}$, then the terms that appear in the
sum correspond to certain strata of $\bdy\bModuli_{\vN,\vlambda}(D)$ (as
in proof of Proposition~\ref{prop:cocycle}). Changing such a stratum
by taking disjoint union with $M(F,\vP,\vnu)$ has the effect of also
changing $\bdy'\bModuli_{\vN,\vlambda}(D)$ by taking disjoint union with
$M(F,\vP,\vnu)$ (if Type II, III) or by taking disjoint union with
$\pt\times M(F,\vP,\vnu)$ or $M(F,\vP,\vnu)\times\pt$ (if Type I, IV);
let us denote it $\wt{M}(F,\vP,\vnu)$ for uniformity. Then
\[
  \bdy^{\prime,\text{new}}\bModuli_{\vN,\vlambda}(D)=\bdy'\bModuli_{\vN,\vlambda}(D)\amalg\coprod_{(F,\vP,\vnu)}\wt{M}(F,\vP,\vnu).
\]
A sign check (left to the reader) similar to that in
Proposition~\ref{prop:cocycle} shows that $[\wt{M}(F,\vP,\vnu)]$,
viewed as an element in $\tOmega^{k-1}_\fr$, differs from
$[M(F,\vP,\vnu)]$, viewed as an element of $\Omega^{k-1}_\fr$, by
precisely the sign $s_{F,\vP,\vnu}$. Therefore in $\tOmega^{k-1}_\fr$ we get
\[
  \mf{o}_{k}^{\text{new}}(D,\vN,\vlambda)=\mf{o}_k(D,\vN,\vlambda)-\sum_{(F,\vP,\vnu)}s_{F,\vP,\vnu}\mf{b}(F,\vP,\vnu)=0.
\]

\begin{remark}
\label{rem:zero}
In view of Proposition~\ref{prop:cocycle1}, we see that Step \ref{item:change} is unnecessary when $k=1$. Thus, the $0$-dimensional moduli spaces are not changed in the process, and they will always remain single points, as they were defined in Section~\ref{sec:base}. 
\end{remark}

In Step~\ref{item:fill}, once we have that
$\bdy'\bModuli_{\vN, \vlambda}(D)$ is framed null-cobordant, we choose
a filling $\Moduli'_{\vN, \vlambda}(D) \subset \inte(\E^d_l)$ and
define
\[
  \bModuli_{\vN, \vlambda}(D) \defeq \ol{V}' \cup \Moduli'_{\vN, \vlambda}(D).
\]

Finally, for Step~\ref{item:newinternal}, note that, by construction,
$\bModuli_{\vN, \vlambda}(D)$ comes equipped with a framing of its
normal bundle in $\E^d_l$. As before, declare the first $l-k$ vectors
to be the internal frame and the remaining $d(l+1)$ vectors to be the
external frame. For the stratified thickening, observe that the
thickening $U$ is already defined around
$\ol{V}'\subset\bModuli_{\vN,\vlambda}(D)$.  Fix a point $p$ in the
open stratum $Z(\vec{0},\vN,\vec{0};\vlambda)\subset Z_{\vN}$, and let
$D_p$ be a small open disk around $p$ in the normal direction (spanned
by the standard frame). By construction, for any point in
$\ol{V}'\setminus\bdy\bModuli_{\vN,\vlambda}(D)$, the local model in
the normal direction inside $U$ agrees with $D_p$. For every point
$q\in\bdy'\bModuli_{\vN,\vlambda}(D)$, fix an embedding $D_p\into U$
sending $p$ to $q$ that identifies $D_p$ with the local model at
$q$. Since the stratified thickening $U$ is already defined around $q$
by induction, we may not be able to ensure that this is an affine
embedding; but we may choose these embeddings smoothly over
$\bdy'\bModuli_{\vN,\vlambda}(D)$ to get a codimension-one embedding
$\iota\from\bdy'\bModuli_{\vN,\vlambda}(D)\times D_p\into U$,
respecting the stratifications. We may assume $\image(\iota)$ divides
$U$ into two pieces, and that $U'$ is one the pieces.

Push forward the $(l-k)$-dimensional standard frame (in the affine
spaces $D_p$) to get $(l-k)$ pairwise commuting linearly independent
vector fields on $\image(\iota)$. In some neighborhood
$N\subset\inte(\E^d_l)$ of
$\Moduli'_{\vN,\vlambda}(D)\cup_{\bdy'\bModuli_{\vN,\vlambda}(D)}\image(\iota)$,
extend the internal frame of $\Moduli'_{\vN,\vlambda}(D)$ and these
vector fields on $\image(\iota)$ to $(l-k)$ pairwise commuting vector
fields; if $N$ is small enough, the extended vector fields are also
linearly independent. By integrating along these vector fields, we get
an embedding $\Moduli'_{\vN,\vlambda}(D)\times D_p\into N$, and define
the stratified thickening $U_{\vN,\vlambda}(D)$ to be union of
$\ol{U}'$ and the image of this embedding (along $\image(\iota)$).

Step~\ref{item:continue} just states the conclusion of the induction step and needs no additional explanation.

\subsection{Gluing moduli spaces}
\label{sec:gluing}
Recall the equivalence relation on domains given by Equation~\eqref{eq:eqD}:
\[
  D\sim D' \ \iff \ (D-D'\in\periodic \text{ and }
  \coefficients{D}=\coefficients{D'} ).
\]
Now that we have constructed all the framed moduli spaces
$\bModuli_{\vN, \vlambda}(D)$, we glue $\bModuli_0(D)$ for all $D$ in
the same equivalence class. The results are the required moduli spaces
$\bModuli([D])$ as in Equation~\eqref{eq:modd}, which are
$\langle l \rangle$-manifolds neatly embedded in $\E^d_l$, and
equipped with normal (external) framings there. The fact that we can
glue the different $\bModuli_0(D)$ is automatic since we constructed
them along with their thickenings $U_0(D)\subset\E^d_l$, and the local
models of $\bModuli_0(D)\subset U_0(D)$ were defined to be the same as
our required local models of $\bModuli_0(D)\subset\bModuli([D])$,
cf.~Remark~\ref{rem:same-local-models}.

\section{Embedding and framing the permutohedra}
\label{sec:permutohedron}

In this section we construct the moduli spaces
$\bModuli_{\vec{N},\vec{\lambda}}(D)$ (along with coherent neat
embeddings and external framings), where $D=c_{\xid}$, the constant
domain from the fixed generator $\xid$ to itself, and each entry in
$\vec{N}$ is $0$ or $1$ (so $\vec{\lambda}$ is a trivial
partition). These correspond to the triples
$(D, \vN, \vlambda)\in\DNlambdad$ generating the subcomplex
$\CDPd \subset \CDP$ that carries all the homology of $\CDP$;
cf. Proposition~\ref{prop:CDPhomology}.

Suppose $\vN$ is as above and let $|\vec{N}|=n$. (In this section, $n$ no longer denotes the grid index.) We will use the
notation
\begin{equation}\label{eq:XI-subset}
  I= \set{i}{N_i=1}=\{p_1<\dots<p_n\}
\end{equation}
and
\begin{equation}\label{eq:XI-moduli}
  X_I\defeq\bModuli_{\vec{N},\vec{\lambda}}(c_{\xid}).
\end{equation}
Note that $X_I$ is supposed to be an $(n-1)$-dimensional
$\codim{n-1}$-manifold. Its thick dimension is $2n-1$.

We will define $X_I$ to be the permutohedron $\Pi_n$, which is the
convex hull of the $n!$ points in $\R^n$ obtained by permuting the
coordinates of $(1, 2, \dots, n)$; cf.~Example~\ref{ex:permuto}. We
will then neatly embed and frame $\Pi_n$ inside
\[
  \RR^d\times\mathring{\RR}_+\times\RR^d\times\RR_+\times\RR^d\times\dots\times\RR_+\times\RR^d\times\mathring{\RR}_+\times\RR^d\cong\mathring{\RR}_+^n\times\RR_+^{n-1}\times\RR^{2dn} \subset \E^d_{2n-1},
\]
coherently with respect to the neat embeddings and framings of the
lower dimensional strata. We will do this for the case $d=0$,
so we do not actually need to construct the external frames. For
larger $d$, we can then simply compose with the standard inclusion
\[
  \mathring{\RR}_+^n\times\RR_+^{n-1}\into
\mathring{\RR}_+^n\times\RR_+^{n-1}\times\RR^{2dn}, \ \ x\mapsto
(x,0),
\]
and define the external frame to be the standard frame of the new
$\R^{2dn}$.

We will also only construct the internal frames (coherently), and will
not construct the thickenings. This is because the local model (in the
normal direction) of $X_I$ inside its thickening is the product
$\prod_I Z_1$, where $Z_1$ is a line, cf.~Equation~\eqref{eq:locmodZN} and Example~\ref{ex:Z1}. Therefore, if we are
given the internal frames at each point $x\in X_I$, we can define the
thickening by simply taking the union of small open disks in the
affine planes spanned by the internal frames.

All that remains to do is to embed $X_I=\Pi_n$ inside
$\mathring{\RR}_+^n\times\RR_+^{n-1}$, along with its internal frame,
which consists of $n$ vectors fields $(v_{p_1}, \dots,v_{p_n})$
indexed by elements of $I$, and everything should be done coherently with respect to the data for
the lower-dimensional strata. For this we will use the fact that the
quotient of $\RR^n\times\Pi_n$ by the symmetric group $S_n$ is
diffeomorphic to $\mathring{\RR}_+^n\times\RR^{n-1}_+$. We will then
embed $X_I$ inside $\RR^n\times\Pi_n$ by the map
$x\mapsto ((p_1,\dots,p_n),x)$ and then quotient by the symmetric
group action. The construction of $X_I$ is described in
Section~\ref{sec:permutahedron-as-moduli-space}, and its embedding and
framing, as outlined above, is described in detail in
Section~\ref{sec:final-step-for-permutahedron}. The main work is
in Section~\ref{sec:quotient-by-sn}, which is devoted to proving
$(\RR^n\times\Pi_n)/S_n\cong\mathring{\RR}_+^n\times\RR_+^{n-1}$.

\subsection{The permutohedron}\label{sec:permutohedron-basic}

In this section, we will collect well-known facts about the permutohedron
$\Pi_n$. See for instance \cite[Example~0.10]{Ziegler} and
\cite[Section~3.3]{LLS}.
\begin{enumerate}[leftmargin=*,label=($\Pi$-\arabic*),ref=($\Pi$-\arabic*)]
\item Letting $S_n$ denote the group of permutations of
  $\{1,2,\dots,n\}$, the permutohedron $\Pi_n$ is
  the convex hull of the $n!$ points
  $v_\sigma=(\sigma^{-1}(1),\dots,\sigma^{-1}(n))$ in $\RR^n$, for
  $\sigma\in S_n$.
\item The permutohedron $\Pi_n$ is $(n-1)$-dimensional and lies in
  the hyperplane
  \[
    \Affine^{n-1}=\{(x_1,\dots,x_n)\in\RR^n\mid \sum_j
    x_j=n(n+1)/2\}
  \]
  and the points $v_\sigma$ are its vertices.
\item\label{item:permuto-halfspace} For any non-empty proper subset $S\subset\{1,2,\dots,n\}$ of
  cardinality say $k$, let $\HalfSpace_S\subset \Affine^{n-1}$ denote
  the half-space $\{(x_1,\dots,x_n)\in\Affine^{n-1}\mid \sum_{j\in S}
  x_j\geq k(k+1)/2\}$. Then $\Pi_n$ is also the intersection of
  the $2^n-2$ half-spaces $\HalfSpace_S$, and the facets of 
  $\Pi_n$ are $F_S=\Pi_n\cap \bdy \HalfSpace_S$. 
  \item\label{item:permuto-facet-vertices} The vertices in the facet $F_S$ are precisely the $v_\sigma$ so
  that $\{\sigma(1),\sigma(2),\dots,\sigma(k)\}=S$.
\item\label{item:permuto-codim-n-manifold} The permutohedron carries the structure of $\codim{n-1}$-manifold by declaring
  \[
    \bdy_k
    \Pi_n=\displaystyle\bigcup_{ \{S,\ |S|=k \}} F_S.
  \]
\item\label{item:permuto-facet-identify} Each of the facets $F_S\subset \bdy_k\Pi_n$ can be
  identified with products of lower dimensional permutohedra
  $\Pi_k\times \Pi_{n-k}$. Identify $\RR^n$ with 
  $\prod_{ j\in\{1,\dots,n\}}\RR$, and using the
  linear ordering of the elements of $S$ and
  $S^c=\{1,2,\dots,n\}\setminus S$, identify $\RR^k$ and $\RR^{n-k}$
  with $\prod_{ j\in S}\RR$ and
  $\prod_{ j\in S^c}\RR$, respectively. Then the map
  \[
  \begin{tikzpicture}[xscale=7]
  \node (left) at (0,0) {$\displaystyle\prod_{ j\in S}\RR\times\displaystyle\prod_{ j\in S^c}\RR$}; 
  \node (right) at (1,0) {$\displaystyle\prod_{ j\in S}\RR\times\prod_{ j\in S^c}\RR
    \cong\prod_{ j\in\{1,\dots,n\}}\RR$};
  \draw[->] (left)--(right) node[pos=0.5,anchor=south] {\tiny$+(0,\dots,0,k,\dots,k)$};
  \end{tikzpicture}
  \]
  identifies $\Pi_k\times\Pi_{n-k}\subset\RR^k\times\RR^{n-k}$
  with the facet $F_S\subset \bdy_k\Pi_n\subset\RR^n$.
\item We will also need the action of $S_n$ on $\Pi_n$. Consider
  the left action of $S_n$ on $\RR^n$ given by:
  \[
    \sigma\cdot(x_1,\dots,x_n)=(x_{\sigma^{-1}(1)},\dots,x_{\sigma^{-1}(n)}).
  \]
  This restricts to an action on $\Affine^{n-1}$ and $\Pi_n$. On
  the vertices of the permutohedron, $S_n$ acts by
  $\sigma\cdot v_\tau=v_{\sigma\tau}$.
\end{enumerate}
See Figure~\ref{fig:permuto-basics} for an illustration of some of
these concepts.

\def\theta{22}
\pgfmathsetmacro{\costheta}{cos(\theta)}%
\pgfmathsetmacro{\sintheta}{sin(\theta)}%
\begin{figure}
  \centering
  \begin{tikzpicture}[scale=0.8,%
    x={(-0.4cm,-0.6cm)},
    y={(1cm,0)},
    z={(0cm,1cm)}]

    \draw[thick,gray,->] (0,0,0)--(4,0,0) node[pos=1,anchor=north east] {$x_1$};
    \draw[thick,gray,->] (0,0,0)--(0,4,0) node[pos=1,anchor=west] {$x_2$};
    \draw[thick,gray,->] (0,0,0)--(0,0,4) node[pos=1,anchor=south] {$x_3$};

    \foreach\i in {1,2,...,6}{
      \foreach\j in {1,2,...,6}{
      }}

    \foreach \sigma/\x/\y/\z/\pos in {
      123/1/2/3/south,
      213/2/1/3/south east,
      132/1/3/2/west,
      321/3/2/1/north,
      231/3/1/2/east,
      312/2/3/1/north west}{

      \coordinate (v\sigma) at (\x,\y,\z);
      \node at (v\sigma) {$\bullet$};
      \node[anchor=\pos] at (v\sigma) {$v_{[\sigma]}$};
    }

    
    \foreach \s/\sigma\tau/\pos/\sep in {
      1/123/132/south west/0,
      2/213/231/east/2,
      3/312/321/north west/0}{

      \draw[thick] (v\sigma) -- (v\tau) node[midway,anchor=\pos,inner sep=\sep pt] {$F_{\{\s\}}$};
      }

    \foreach \s/\t/\sigma\tau/\pos/\sep in {
      1/2/123/213/south/4,
      1/3/132/312/west/2,
      2/3/231/321/north east/0}{

      \draw[ultra thick] (v\sigma) -- (v\tau) node[midway,anchor=\pos,inner sep=\sep pt] {$F_{\{\s,\t\}}$};
      }

      \draw[ultra thin,dashed] ($(v213)!0.5!(v123)$)--($(v321)!0.5!(v312)$);
      
      \node[gray] at (2,2,2) {$\bullet$};
    
  \end{tikzpicture}
  \caption{The permutohedron $\Pi_3\subset\RR^3$. Here the
    permutations $\sigma\in S_3$ are denoted as
    $[\sigma(1)\sigma(2)\sigma(3)]$. The facets in $\bdy_1\Pi_3$ are
    drawn thin and the facets in $\bdy_2\Pi_3$ are drawn thick. The
    action of $S_3$ is also shown; $S_3$ is generated by the
    transposition $[213]$ and the $3$-cycle $[231]$, and they act on
    $\Pi_3$ by reflection across the dashed line, and by positive
    rotation by $120^\circ$ around the gray center,
    respectively.}\label{fig:permuto-basics}
\end{figure}

\subsection{Construction of the moduli spaces}\label{sec:permutahedron-as-moduli-space}
We define the moduli spaces $X_I$ from
Equation~\eqref{eq:XI-moduli} as permutohedra: 
\begin{equation}\label{eq:XI-definition}
  X_I=\Pi_n\subset\RR^n\cong \prod_I \RR,
\end{equation}
where for convenience, we have identified the ambient space $\RR^n$
with $\prod_I\RR$ using the linear ordering of the
elements of $I$ (cf.~Equation~\eqref{eq:XI-subset}), that is, using
the bijection $\{1,\dots,n\}\to I$, $i\mapsto p_i$. (It is understood
that for distinct subsets $I,I'$ of cardinality $n$, the spaces
$X_{I}$ and $X_{I'}$ are \emph{different} copies of $\Pi_n$; for
this it might be useful to regard them as living in different ambient
spaces $\prod_I\RR$ and $\prod_{I'}\RR$.)

We need to check that the stratification on $X_I$ is as described in
Section~\ref{sec:enum}. Indeed, it follows from
Items~\ref{item:permuto-codim-n-manifold}
and~\ref{item:permuto-facet-identify} that $X_I$ is a
$\codim{n-1}$-manifold with $\bdy_k X_I$ identified with
\[
  \coprod_{\substack{J\subset I\\|J|=k}} X_J\times X_{I\setminus J}.
\]
Moreover, these identifications are coherent, that is, for any
$k<\ell$, the two identifications of $\bdy_{\{k,\ell\}}X_I$ with
\[
  \coprod_{\substack{K\subset J\subset I\\|K|=k,|J|=\ell}} X_K\times
  X_{J\setminus K}\times X_{I\setminus J}
\]
are the same. See for instance \cite[Lemma~3.17]{LLS}.

\subsection{Quotienting by the symmetric group}\label{sec:quotient-by-sn}

In this section, we will consider the diagonal actions by the
symmetric group on $\RR^n\times\RR^n$, $\RR^n\times\Affine^{n-1}$, and
$\RR^n\times\Pi^{n-1}$.  We will prove that the quotient
$\big(\RR^n\times\Pi_n\big)/S_n$ is diffeomorphic to
$\RR^n\times\RR^{n-1}_+$; this is stated more precisely as
Proposition~\ref{prop:homeo} below.

Let $\pi\from \RR^n\times\RR^n\to\big(\RR^n\times\RR^n\bigr)/S_n$
denote the projection to the quotient.  Consider the well-known
homeomorphism
\[
\psi_n\from \bigl( \R^n \times \R^{n} \bigr) / S_n \to \R^{2n}
\]
given by the Vi{\`e}te relations. In more detail, let $\alpha_1,
\dots, \alpha_n, \beta_1, \dots, \beta_{n}$ be the coordinates on $
\RR^n \times \RR^n$, which we group into complex variables
$\alpha_j+i\beta_j$, and let $a_1,\dots,a_n,b_1,\dots,b_n$ be the
coordinates of the target $\RR^{2n}$, which we also group into complex
variables $a_j+ib_j$. Then the function $\psi_n$ is induced by the
smooth function $\psi_n\circ\pi\from \RR^n\times\RR^n\to\RR^{2n}$, which is given by
equating the coefficients of the polynomial
\begin{equation}\label{eq:Pz}
 \prod_{j=1}^n(z-\alpha_j-i\beta_j) = z^n - (a_1+ib_1)z^{n-1} + (a_2+ib_2)z^{n-2} - \dots + (-1)^n ( a_n + ib_n).
\end{equation}

Note that $\psi_n$ restricts to a homeomorphism
$\big(\RR^n\times\Affine^{n-1}\big)/S_n\to\RR^{2n-1}$, which we also
denote by $\psi_n$. To wit, $\RR^n\times\Affine^{n-1}$ is the subspace
given by $\beta_1 + \dots + \beta_n = n(n+1)/2$, and so its image is
given by the subspace $b_1 = n(n+1)/2$.

\begin{proposition}
\label{prop:homeo}
The image of $\big(\RR^n\times\Pi_n\bigr)/S_n$ under $\psi_n$ is a
smooth submanifold with corners of $\RR^{2n-1}$, and there is a
diffeomorphism
\begin{equation}\label{eq:Psi}
\Psi_n\from\RR^n\times\RR^{n-1}_+\to\psi_n(\big(\RR^n\times\Pi_n\bigr)/S_n).
\end{equation}
Moreover, the composition
$\iota\circ\Psi_n\from\RR^n\times\RR_+^{n-1}\to\RR^{2n-1}$ (where $\iota$ is the inclusion) is a proper
smooth embedding and the composition
$\Psi_n^{-1}\circ\psi_n\circ\pi\from\RR^n\times\Pi_n\to\RR^n\times\RR^{n-1}_+$
is a smooth map of $\codim{n-1}$ manifolds which is a smooth covering
map away from the big diagonal (i.e., the subset on which the
$S_n$-action is not free). That is, we have the following diagram
\begin{equation}\label{eq:Psi-diagram}
\vcenter{\hbox{\begin{tikzpicture}[xscale=4]
  \node (rr) at (0,0) {$\RR^n\times\RR^n$};
  \node (ra) at (0,1) {$\RR^n\times\Affine^{n-1}$};
  \node (rp) at (0,2) {$\RR^n\times\Pi_n$};
  \node (rrq) at (1,0.5) {$\bigl(\RR^n\times\RR^n\bigr)/S_n$};
  \node (raq) at (1,1.5) {$\bigl(\RR^n\times\Affine^{n-1}\bigr)/S_n$};
  \node (rpq) at (1,2.5) {$\bigl(\RR^n\times\Pi_n\bigr)/S_n$};
  \node (r2n) at (2,0) {$\RR^{2n}$};
  \node (r2n1) at (2,1) {$\RR^{2n-1}$};
  \node (psirp) at (2,2) {$\psi_n(\bigl(\RR^n\times\Pi_n\bigr)/S_n)$};
  \node (rnrn1) at (3,2) {$\RR^n\times\RR^{n-1}_+$};

  \draw[->,thin] (rr) -- (rrq) node[midway,anchor=south] {\tiny $\pi$};
  \draw[->,thin] (ra) -- (raq) node[midway,anchor=south] {\tiny $\pi$};
  \draw[->,thin] (rp) -- (rpq) node[midway,anchor=south] {\tiny $\pi$};

  \draw[->,thick] (rr) -- (r2n);
  \draw[->,thick] (ra) -- (r2n1);
  \draw[->,thick] (rp) -- (psirp);

  \draw[-latex,thin] (rrq) -- ($(r2n)!0.25!(rrq)$) node[midway,anchor=south] {\tiny $\psi_n$};
  \draw[-latex,thin] (raq) -- ($(r2n1)!0.25!(raq)$) node[midway,anchor=south] {\tiny $\psi_n$};
  \draw[-latex,thin] (rpq) -- ($(psirp)!0.4!(rpq)$) node[midway,anchor=south] {\tiny $\psi_n$};
  \draw[-latex,thick] (rnrn1) -- (psirp) node[midway,anchor=south] {\tiny $\Psi_n$};

    \draw[right hook->,thick] (rp) -- (ra);
  \draw[right hook->,thick] (ra) -- (rr);

  \draw[right hook->,thin,preaction={draw=white, -, line width=6pt}] (rpq) -- (raq);
  \draw[right hook->,thin,preaction={draw=white, -, line width=6pt}] (raq) -- (rrq);

  \draw[right hook->,thick] (psirp) -- (r2n1);
  \draw[right hook->,thick] (r2n1) -- (r2n);
  
\end{tikzpicture}}}
\end{equation}
Here, the thick arrows are smooth and the solid-head arrows are
homeomorphisms (and the solid-head thick arrow is the diffeomorphism
$\Psi_n$) and all the embeddings are proper.

Moreover, these maps will be compatible with the identifications of
facets $F_S\subset \bdy_k\Pi_n$ with $\Pi_k\times\Pi_{n-k}$
from Item~\ref{item:permuto-facet-identify}. Specifically, the
following diagram will commute:
\begin{equation}\label{eq:Psi-compatible}
\vcenter{\hbox{\begin{tikzpicture}[xscale=4,yscale=-1]
  \node (fs) at (1,1) {$\RR^n\times F_S$};
  \node (piemb) at (1,3) {$\RR^n\times\RR^{n-1}_+$};

  \node (pi) at (2,1) {$\RR^n\times\Pi_n$};
  \node (pimod) at (2,2) {$\big(\RR^n\times\Pi_n\big)/S_n$};
  \node (pimodemb) at (2,3) {$\RR^{2n-1}$};

  \node (fssplit) at (0,1) {$(\RR^k\times \Pi_k)\times(\RR^{n-k}\times \Pi_{n-k})$};
  \node (fsemb) at (0,2) {$(\RR^k\times \RR^{k-1}_+)\times(\RR^{n-k}\times \RR^{n-k-1}_+)$};
  \node (fsembemb) at (0,3) {$\RR^n\times (\RR^{k-1}_+\times\{0\}\times \RR^{n-k-1}_+)$};

  \draw[->] (fs) -- (fssplit) node[midway,anchor=south] {\tiny$\cong$};
  \draw[right hook->] (fs) -- (pi);
  \draw[->] (pi) -- (pimod) node[midway,anchor=west] {\tiny$\pi$};
  \draw[->] (pimod) -- (pimodemb) node[midway,anchor=west] {\tiny$\psi_n$};
  \draw[->] (fssplit) -- (fsemb) node[midway,anchor=east] {\tiny$(\Psi_k^{-1}\psi_k\pi,\Psi_{n-k}^{-1}\psi_{n-k}\pi)$};
  \draw[->] (fsemb) -- (fsembemb) node[midway,anchor=east] {\tiny$\cong$};
  \draw[right hook->] (fsembemb) -- (piemb);
  \draw[right hook->] (piemb) -- (pimodemb) node[midway,anchor=south] {\tiny$\Psi_n$};
  
\end{tikzpicture}}}
\end{equation}
Here, the identification
$\RR^k\times\RR^{n-k}\stackrel{\cong}{\longrightarrow}\RR^n$ in the bottom-left vertical
arrow is the usual one. However, the identification
$\RR^n\stackrel{\cong}{\longrightarrow}\RR^k\times\RR^{n-k}$ on the top-left horizontal arrow is
similar to the one from Item~\ref{item:permuto-facet-identify}; that
is, we identify $\RR^n$, $\RR^k$, $\RR^{n-k}$ with
$\prod_{ j\in\{1,\dots,n\}}\RR$,
$\prod_{ j\in S}\RR$ and $\prod_{
  j\in S^c}\RR$, respectively, and then identify
$\prod_{ j\in\{1,\dots,n\}}\RR$ with
$\prod_{ j\in S}\RR\times\prod_{
  j\in S^c}\RR$.
\end{proposition}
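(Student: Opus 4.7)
The plan is to proceed by induction on $n$, exploiting the facet identifications from Item~\ref{item:permuto-facet-identify}. The base case $n=1$ is immediate, since $\Pi_1$ is a point and $\Psi_1\colon\RR\to\RR$ is the identity.

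For the inductive step, I would first identify the facets of $(\RR^n\times\Pi_n)/S_n$ with products of lower-dimensional quotients. For each $s\in\{1,\dots,n-1\}$, the union $\bigcup_{|S|=s}\RR^n\times F_S$ forms a single $S_n$-orbit; since the stabilizer of $F_S$ is $S_s\times S_{n-s}$ and $F_S\cong\Pi_s\times\Pi_{n-s}$, the resulting $s$-th facet of the quotient equals
\[
((\RR^s\times\Pi_s)/S_s)\times((\RR^{n-s}\times\Pi_{n-s})/S_{n-s}),
\]
and by the inductive hypothesis this is diffeomorphic via $\Psi_s\times\Psi_{n-s}$ to the $s$-th facet of $\RR^n\times\RR_+^{n-1}$. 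I would use these identifications to define $\Psi_n$ on $\bdy(\RR^n\times\RR_+^{n-1})$. Consistency at codimension-$\geq 2$ corners, as well as the compatibility diagram~\eqref{eq:Psi-compatible}, would follow from applying the inductive compatibility to pairs of facets meeting at the corner.

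Next I would extend $\Psi_n$ smoothly to the interior. On the codimension-zero stratum of $(\RR^n\times\Pi_n)/S_n$, the restriction of $\psi_n\circ\pi$ is a smooth embedding: where all the $z_j=\alpha_j+i\beta_j$ are distinct (a dense open subset), the complex Jacobian $\prod_{j<k}(z_j-z_k)$ is nonzero, so $\psi_n$ is an immersion; near points where some $z_i=z_j$ coincide within $\Pi_n^\circ$, the local Vi{\`e}te map realizes the classical diffeomorphism $\Sym^k(\CC)\cong\CC^k$ and smooths out the quotient singularity. Hence the image of the top stratum is an open smooth $(2n-1)$-submanifold of $\RR^{2n-1}$, and a collar-neighborhood argument for manifolds with corners extends $\Psi_n$ from the boundary to a global diffeomorphism onto the image. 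Properness of $\iota\circ\Psi_n$ follows from the continuous dependence of roots on coefficients: a divergent sequence in $\RR^n\times\RR_+^{n-1}$ corresponds to polynomials with diverging roots, whose coefficients must also diverge.

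The technical heart of the argument is verifying that the image is globally a smooth submanifold with corners of $\RR^{2n-1}$, rather than only a topological manifold glued from smooth pieces. The main obstacle is showing that near each corner (the intersection of several facets), the smooth structures from the various facets and from the interior mesh consistently to produce the expected local model $\RR^{2n-1-k}\times\RR_+^k$ at a codimension-$k$ corner. I would verify this by directly analyzing the local behavior of $\psi_n$ at points of $\RR^n\times\Pi_n$ where the imaginary parts lie on a face of $\Pi_n$ corresponding to an ordered partition $(S_1,\dots,S_r)$, using the block-diagonal structure of the stabilizer $S_{s_1}\times\cdots\times S_{s_r}$ and invoking the inductive hypothesis on each factor.
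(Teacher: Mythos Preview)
Your inductive framework and treatment of the boundary facets are essentially the same as the paper's, but there is a genuine gap at the extension step. You write that ``a collar-neighborhood argument for manifolds with corners extends $\Psi_n$ from the boundary to a global diffeomorphism onto the image.'' A collar only produces a diffeomorphism from a neighborhood of $\bdy(\RR^n\times\RR_+^{n-1})$ onto a neighborhood of $\psi_n\bigl((\RR^n\times\bdy\Pi_n)/S_n\bigr)$; it says nothing about what lies beyond the collar. Concretely, after smoothing the corners one has a properly embedded copy of $\RR^{2n-2}$ inside $\RR^{2n-1}$, and one must show that the closure of the correct complementary component is diffeomorphic to $\RR^{2n-2}\times\RR_+$. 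This is not automatic: it is a Sch\"onflies-type statement in high dimensions, and the paper invokes exactly that machinery --- Kirby's local flatness result and the topological Sch\"onflies theorem to get a homeomorphism, and then Siebenmann's proper $h$-cobordism theorem to promote it to a diffeomorphism (this is the content of Proposition~\ref{prop:proper-h-cobordism}). The case $n=2$ has to be handled separately by an explicit formula because the $h$-cobordism argument requires $m=2n-2\geq 4$.

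You also locate the technical heart in verifying that the image is a smooth submanifold with corners. In the paper's approach this is not proved independently; rather it is a \emph{consequence} of constructing $\Psi_n$ as a diffeomorphism. The actual work goes into (i) checking that the boundary map $\Psi_|$ is well-defined, injective, an immersion, and proper --- using unique factorization of $P(z)$ according to the sizes of the imaginary parts of its roots --- and then (ii) the $h$-cobordism extension above. Your local analysis via the block stabilizer $S_{s_1}\times\cdots\times S_{s_r}$ is along the right lines for (i), but does not touch (ii).
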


\begin{proof}
The construction of the diffeomorphism from Equation~\eqref{eq:Psi} is
inductive. For the base case $n=1$, the maps $\pi$ and $\psi_1$ are
identity (after naturally identifying the relevant spaces with $\RR$),
and we may define $\Psi_1$ to be the identity as well.

We explicitly also do the next case $n=2$ to help build
intuition. (Also, a portion of the proof for $n>2$ does not generalize
to $n=2$). Recall that $\RR^2\times\Pi_2$ is the subset of
$\RR^2\times\RR^2$ given by
\[
\beta_1\geq 1,\qquad\beta_2\geq 1,\qquad\beta_1+\beta_2=3,
\]
and $\RR^2\times\bdy\Pi_2$ is given by $\beta_1=1$ (and hence
$\beta_2=2$) or $\beta_2=1$ (and hence $\beta_1=2$). We want to
satisfy the compatibility from Equation~\eqref{eq:Psi-compatible}, so
we need to analyse the image of $\bigl(\RR^2\times\bdy\Pi_2\bigr)/S_2$
under $\psi_2$. Since we are quotienting by the action of $S_2$, we may
assume $\beta_1=1$ and $\beta_2=2$.

From Equation~\eqref{eq:Pz}, the image of the boundary is given by the
coefficients of the polynomial
\[
z^2 - (a_1+ib_1)z + (a_2+ib_2)= (z-\alpha_1-i)(z-\alpha_2-2i) =
z^2-(\alpha_1+\alpha_2+3i)z+(\alpha_1\alpha_2-2)+i(2\alpha_1+\alpha_2).
\]
As we already observed, this lies in the subspace $\RR^3\subset\RR^4$
given by $b_1=3$. The image is a parametrized surface in this $\RR^3$, given by
\begin{equation}\label{eq:parametrized-surface-n2}
a_1=\alpha_1+\alpha_2,\qquad a_2=\alpha_1\alpha_2-2,\qquad b_2=2\alpha_1+\alpha_2.
\end{equation}
Here, $a_1,a_2,b_2$ are the coordinates of $\RR^3$ and
$\alpha_1,\alpha_2$ are the parameters. We may solve for 
$\alpha_1,\alpha_2$ in terms of $a_1,b_2$, and so this surface can
also be written as $\{a_2=3a_1b_2-2a_1^2-b_2^2-2\}$. This is a graph
of 2-variable function in $a_1,b_2$, and therefore represents a
properly embedded $\RR^2$ via the map
$(a_1,b_2)\mapsto(a_1,3a_1b_2-2a_1^2-b_2^2-2,b_2)$. Furthermore, its
complement has two components, say $A=\{a_2<3a_1b_2-2a_1^2-b_2^2-2\}$
and $B=\{a_2>3a_1b_2-2a_1^2-b_2^2-2\}$, and the closure of each is a
properly embedded $\RR^2\times\RR_+$ via the maps
$(a_1,b_2,t)\mapsto(a_1,3a_1b_2-2a_1^2-b_2^2-2-t,b_2)$ and
$(a_1,b_2,t)\mapsto(a_1,3a_1b_2-2a_1^2-b_2^2-2+t,b_2)$,
respectively.

On the ambient $\RR^3$, define the continuous function
\[
\smim_1\from\RR^3\to\RR
\]
as the minimum of the two imaginary parts of the two roots of $z^2 -
(a_1+3i)z + (a_2+ib_2)$. Then the given surface
$\psi_2(\bigl(\RR^2\times\bdy\Pi_2\bigr)/S_2)$ is precisely the subspace
$\{\smim_1=1\}$, while $\psi_2(\bigl(\RR^2\times\Pi_2\bigr)/S_2)$ is the
subspace $\{\smim_1\geq 1\}$. By the intermediate value theorem, the
latter is one of the closures $\bar{A}$ or $\bar{B}$, which is indeed diffeomorphic
to $\RR^2\times\RR_+$. Indeed, we can determine that it is $\bar{A}$
since the value of $\psi_2$ at any specific point of
$\bigl(\RR^2\times\mathring{\Pi}_1\bigr)/S_2$ (say
$\alpha_1=\alpha_2=0,\beta_1=\beta_2=3/2$) lies in $A$ (at the point
$a_1=b_2=0,a_2=-9/4$).

Although in this case we were explicitly able to see the parametrized
surface as a properly embedded $\RR^2\subset\RR^3$, there is a more
abstract argument which also works. The abstract argument is also
needed to check compatibility.  Let us take $S=\{1\}$ (respectively,
$S=\{2\}$), and start with the point
$((\alpha_1,\alpha_2),(1,2))\in\RR^2\times F_S$ (respectively,
$((\alpha_1,\alpha_2),(2,1))\in\RR^2\times F_S$) in the top-middle
vertex of Equation~\eqref{eq:Psi-compatible}. Starting left and
following four arrows, we get
\[
((\alpha_1,\alpha_2),(1,2))\mapsto ((\alpha_1,1),(\alpha_2,1))\mapsto
((\alpha_1),(\alpha_2))\mapsto
((\alpha_1,\alpha_2),(0))\mapsto((\alpha_1,\alpha_2),(0))\in\RR^2\times\RR^1_+
\]
(respectively,
\[
((\alpha_1,\alpha_2),(2,1))\mapsto ((\alpha_2,1),(\alpha_1,1))\mapsto
((\alpha_2),(\alpha_1))\mapsto
((\alpha_2,\alpha_1),(0))\mapsto((\alpha_2,\alpha_1),(0))\in\RR^2\times\RR^1_+),
\]
while starting right, and following three arrows, we get
\[
((\alpha_1,\alpha_2),(1,2))\mapsto((\alpha_1,\alpha_2),(1,2))\mapsto[((\alpha_1,\alpha_2),(1,2))]\mapsto(\alpha_1+\alpha_2,\alpha_1\alpha_2-2,2\alpha_1+\alpha_2)
\]
(respectively,
\[
((\alpha_1,\alpha_2),(2,1))\mapsto((\alpha_1,\alpha_2),(2,1))\mapsto[((\alpha_2,\alpha_1),(1,2))]\mapsto(\alpha_2+\alpha_1,\alpha_2\alpha_1-2,2\alpha_2+\alpha_1)).
\]
Therefore, in either case, the compatibility condition tells us that
the map $\Psi_2\from\RR^2\times\RR_+\to\RR^3$ must restrict on the
boundary to the given parametrization from
Equation~\eqref{eq:parametrized-surface-n2}. Letting $\Psi_|$ denote
the map $\RR^2\to\RR^3$ given by
Equation~\eqref{eq:parametrized-surface-n2}, we then have to prove the
following:
\begin{enumerate}[leftmargin=*]
\item \textbf{Properly embedded:} We have to prove
  $\Psi_|\from\RR^2\to\RR^3$ is a proper smooth embedding, that is, it is a
  proper map which is an immersion and a diffeomorphism onto its image.

  For injectivity, note that image of $\Psi_|$ is the subspace
  $\{\smim_1=1\}$ of $\RR^3$, which consists of monic quadratic
  polynomials whose one root has imaginary part $1$ and the other root
  has imaginary part $2$; and such polynomials \emph{uniquely}
  factorize as $(z-\alpha_1-i)(z-\alpha_2-2i)$.

  To show it is an immersion, consider the map
  $\psi_2\circ\pi\from\RR^2\times\RR^2\to\RR^4$ given by the Vi\`ete
  relations, as shown in the bottom row of
  Equation~\eqref{eq:Psi-diagram}. It is well-known that away from the
  diagonal this is a local diffeomorphism (that is, roots of a
  polynomial are smooth functions of its coefficients), so
  $d(\psi_2\circ\pi)$ has rank $4$ at each point. Our map $\Psi_|$ is
  the restriction to the $2$-dimensional affine subspace
  $\beta_1=1,\beta_2=2$ (which lies in the complement of the
  diagonal), and so $d\Psi_|$ has rank $2$ at each point.

  The statement that the inverse map is smooth is again just the
  statement that the roots of a polynomial are smooth functions of its
  coefficients.

  Finally, $\Psi_|$ is automatically proper since it is an embedding
  with closed image $\{\smim_1=1\}\subset\RR^3$.
\item \textbf{Extendable:} We have to prove the proper smooth embedding
  $\Psi_|\from\RR^2\to\RR^3$ extends to a proper smooth embedding
  $\Psi_2\from\RR^2\times\RR_+\to\RR^3$ with image
  $\bar{A}=\{\smim_1\geq
  1\}=\psi_2(\bigl(\RR^2\times\Pi_2\bigr)/S_2)$. This is the part where
  the general proof for $n>2$ does not work, so we have to fall back
  to our explicit computation from before and simply set
  \begin{equation}\label{eq:Psi2-explicit-map}
    \Psi_2(\alpha_1,\alpha_2,t)=(\alpha_1+\alpha_2,\alpha_1\alpha_2-2-t,2\alpha_1+\alpha_2).
  \end{equation}
\end{enumerate}

Let us now do the general case for $n \geq 3$. We will use the
following.
\begin{proposition}\label{prop:proper-h-cobordism}
  Assume $m \geq 4$.  Let $i\from\RR^m \hookrightarrow \RR^{m+1}$ be a
  proper smooth embedding. Then the embedding splits $\RR^{m+1}$ into
  two pieces, each the image of a proper smooth embedding of $\RR^m \times \RR_+$; moreover,
  the embeddings of $\RR^m\times\RR_+$ may be chosen to agree with $i$
  on the boundary.
\end{proposition}

\begin{proof}
The Jordan-Brouwer theorem says that the complement of $i(\RR^m)$ has
two connected components. Let $A$ be the closure of one of these
components. Then $A$ is a smooth manifold with boundary, and we need
to show it is a properly embedded $\RR^m \times \RR_+$.

By taking one-point compactifications, we can extend $i$ to an
embedding of $S^m$ into $S^{m+1}$, smooth away from a point. By a
result of Kirby \cite{KirbyLF}, since $m \geq 4$, the embedding cannot
fail to be locally flat at exactly one point. Therefore, it is locally
flat. By the topological Sch\"onflies theorem \cite{Brown, Mazur,
  Morse}, $S^m$ splits $S^{m+1}$ into two pieces, each homeomorphic to
$B^m$. After removing the point at infinity, we get that $A$ is
homeomorphic to $\RR^m \times \RR_+.$ We will show that $A$ is
diffeomorphic to $\RR^m\times\RR_+$.

We also know that $A$ is smooth, and $\del A$ is a properly embedded
$\RR^m$. Using the tubular neighborhood theorem for proper smooth embeddings,
choose a tubular neighborhood $V$ of $\del A$ in $A$, with closure
$\bar V$, such that we have a diffeomorphism
\[
\phi_1\from \bar V
\to \RR^m \times [0, \epsilon].
\]
By the same argument as for $A$ (taking one-point compactifications),
we find that $A \setminus V$ is homeomorphic to $\RR^m \times
\RR_+$. Let
\[
\phi_2\from (A\setminus V) \to \RR^m \times [\epsilon, \infty)
\]
be a homeomorphism.

Let us identify $\RR^m \times \{\epsilon\}$ with $\RR^m$, and view the
restriction of $\phi_1 \circ \phi_2^{-1}$ as a self-homeomorphism $h\from \RR^m
\to \RR^m$. This can be extended to a self-homeomorphism
\[
  \tilde h\from\RR^m \times [\epsilon, \infty) \to \RR^m
  \times[\epsilon,\infty), \qquad \tilde h(x, t) = (h(x), t).
\]
By replacing $\phi_2$ with $\tilde h\circ \phi_2$, we can assume
without loss of generality that $\phi_1$ and $\phi_2$ coincide on the
boundary $\del (A \setminus V)$. Let
$\bar{\phi}\from A\to \RR^m\times[0,\infty)$ be the homeomorphism
obtained by gluing $\phi_1$ and $\phi_2$.

Consider the product $A \times \RR_+$, which is homeomorphic to
$\RR^m\times\RR_+\times\RR_+$ by
$\bar{\phi}\times\operatorname{id}$. This is an $(m+1)$-dimensional
smooth manifold with corners. Fix a strictly increasing smooth
function $f\from[0,\epsilon)\to\RR_+$ with $f(0)=1$, $f'(0)=0$, and
$\lim_{t\to\epsilon}f(t)=\infty$, for instance
$f(t)=\sec\big(\frac{\pi t}{2\epsilon}\big)$. Inside the half-strip
$V\times\RR_+$, consider the graph $\Gamma$ of the smooth composition
function
\[
  V\stackrel{\phi_1}{\longrightarrow}
\RR^m\times[0,\epsilon)\stackrel{\pi_2}{\longrightarrow}[0,\epsilon)
\stackrel{f}{\longrightarrow}\RR_+.
\]
Note that $\Gamma$ is diffeomorphic to $\RR^m\times\RR_+$ and splits
the collar $V \times \R_+$ into two pieces $B$ (above $\Gamma$) and
$C$ (below $\Gamma$), as in Figure~\ref{fig:ProperHCob}.

\begin{figure}
  \centering
  \begin{tikzpicture}[scale=1.3,xscale=0.75]
    \draw[thick] (0,0) -- (4,0) node[pos=1,anchor=north
    east] {$A$} node[pos=0.25,anchor=north,color=black]
    {$V$};

    \draw[->] (0,0) -- (0,3) node[pos=1,anchor=east] {$\RR_+$};

    \draw (2,0)--++(0,3);

    \foreach \i in {0,1}{
      \node[anchor=east] at (0,\i) {\i};
    }

    \node[anchor=south west] at (0,1) {$\Gamma$};
    
    \draw[thick] (0,1) ..controls (1,1) and (1.95,1).. (1.95,3);

    \foreach \i/\j/\l in {1/2/B,1/0.5/C,3/1.25/D}{
      \node at (\i,\j) {$\l$};
      }
    
    \foreach \i/\j in {0/0,2/0,0/1}{
      \node at (\i,\j) {$\bullet$};
      }
  \end{tikzpicture}
  \caption {The smooth manifold (with corners) $A\times\RR_+$. The
    region $C \cup D$ is a proper h-cobordism with boundary, from $A$
    to $\Gamma$.}
  \label{fig:ProperHCob}
\end{figure}

Let $D = (A \setminus V) \times \R_+$. Then $C \cup D$ is a proper
cobordism with boundary, from $A$ to $\Gamma.$ In fact, the
homeomorphism $\bar{\phi} \times \operatorname{id}$ sends $C\cup D$ to
$\RR^m\times E$, where $E\subset\RR^2_+$ is the region in the first
quadrant that lies below and to the right of the graph of the function
$f(t),0\leq t<\epsilon$. Since $E$ is diffeomorphic to
$\RR_+\times[0,1]$, we find that $C \cup D$ is homeomorphic to
$\RR^m \times \R_+ \times [0,1]$. Thus, $C \cup D$ is a proper h-cobordism with
boundary, with inclusion of either end a simple
equivalence, and has dimension $m+2 \geq 6$.  Siebenmann's
proper h-cobordism theorem \cite{Siebenmann} (applied to cobordisms of
manifolds with boundary, where the boundary cobordism is a cylinder,
cf.~\cite[Footnote~1 on pp.~484]{Siebenmann}) implies that $C \cup D$
is diffeomorphic to a cylinder. 
Therefore, $A$ is diffeomorphic to $\Gamma\cong\RR^m \times \RR_+$.

If $j\from \RR^m\times\RR_+\stackrel{\cong}{\longrightarrow}A$ is any
diffeomorphism, then let $g\from \RR^m\to \RR^m$ denote the
self-diffeormorphism $j^{-1}\circ i$; extend this to a
self-diffeomorphism
$\tilde{g}\from \RR^m\times\RR_+\to\RR^m\times\RR_+$ by
$\tilde{g}(x,t)=(g(x),t)$. Then
$j\circ \tilde{g}\from \RR^m\times\RR_+\to A$ is a diffeomorphism that
agrees with given embedding $i$ on the boundary.  Moreover, the
composition
$\RR^m\times\RR_+\stackrel{j\circ \tilde{g}}{\longrightarrow}
A\into\RR^{m+1}$ is a smooth embedding with closed image, so it is a proper
embedding.
\end{proof}

We will also need the following in order to glue diffeomorphisms on
the boundary.
\begin{proposition}\label{prop:collaring-unique}
  Let $i\from\RR^m\times\RR_+ \into \RR^{m+1}$ be a proper
  smooth embedding. Let $j\from\RR^m\times[0,1]\into\RR^{m+1}$ be a collar
  neighborhood of $i(\RR^m\times\{0\})$ inside $i(\RR^m\times\RR_+)$
  which agrees with $i$ on $\RR^m\times\{0\}$. Then there exists a
  proper smooth embedding $i'\from\RR^m\times\RR_+\into \RR^{m+1}$ with the
  same image as $i$ which agrees with $j$ on some open neighborhood of
  $\RR^m\times\{0\}$ inside $\RR^m\times [0,1]$.
\end{proposition}

\begin{proof}
  By pulling back by the diffeomorphism $i$, we may assume $i$ is the
  identity map. That is, we have a collar neighborhood
  $j\from\RR^m\times[0,1]\to\RR^m\times\RR_+$ with $j(x,0)=(x,0)$ for
  all $x\in\RR^m$, and we want to construct a diffeomorphism
  $i'\from\RR^m\times\RR_+\to\RR^m\times\RR_+$ which agrees with $j$
  on some open set around $\RR^m\times\{0\}$.

  Apply the local collaring uniqueness theorem \cite[Theorem A.1]{KS-book} with $M=\RR^m\times\{0\}$,
  $W=\RR^m\times\RR_+$, $C=\varnothing$, $D=M$,
  $f\from\RR^m\times[0,1]\to\RR^m\times\RR_+$ the given collar
  neighborhood $j$, and $g\from\RR^m\times[0,1]\to\RR^m\times\RR_+$
  the standard inclusion. The required diffeomorphism $i'$ is then the
  map $h_1\from\RR^m\times\RR_+\to\RR^m\times\RR_+$.
\end{proof}


  
  

We will now construct a proper smooth embedding
$\Psi_n\from\RR^n\times\RR_+^{n-1}\to\RR^{2n-1}$ with image
$\psi_n(\bigl(\RR^n\times\Pi_n\bigr)/S_n)$, as in
Equation~\eqref{eq:Psi-diagram}. Moreover, it will respect the
$\codim{n-1}$-manifold structures, so it will map
$\RR^n\times\bdy_J\RR_+^{n-1}$ to
$\psi_n(\bigl(\RR^n\times\bdy_J\Pi_n\bigr)/S_n)$ for any
$J\subset\{1,2,\dots,n-1\}$, where $\bdy_J$ denotes
$\displaystyle\bigcap_{j\in J}\bdy_j$.

The map $\Psi_n$ is required to satisfy the compatibility condition from
Equation~\eqref{eq:Psi-compatible}; therefore, it is already defined
on $\RR^n\times\bdy\RR^{n-1}_+$ respecting the stratification given by
the closed strata $\RR^n\times\bdy_J\RR^{n-1}_+$. This statement
deserves further details.

Define continuous functions $\smim_k\from\RR^{2n-1}\to\RR$, $k=1,\dots,n-1$,
as follows: $\smim_k(a_1,\dots,a_n,b_2,\dots,b_n)$ is the sum of the
$k$ smallest imaginary parts among the $n$ roots of the polynomial
\[
  z^n - (a_1+in(n+1)/2)z^{n-1} + (a_2+ib_2)z^{n-2} - \dots + (-1)^n ( a_n +
  ib_n).
\]
Recall from Item~\ref{item:permuto-halfspace} that the permutohedron
$\Pi_n\subset\Affine^{n-1}$ is the intersection of the half-spaces
$\HalfSpace_S=\{(x_1,\dots,x_n)\in\Affine^{n-1}\mid \sum_{j\in S}
x_j\geq k(k+1)/2\}$; therefore,
$\psi_n(\big(\RR^n\times\Pi_n\big)/S_n)$ is precisely the subspace
\[
  \{\smim_1\geq 1,\smim_2\geq 3,\dots,\smim_k\geq
  k(k+1)/2,\dots,\smim_{n-1}\geq (n-1)n/2\}\subset \RR^{2n-1}.
\]
Moreover, if $|S|=k$, then $F_S=\Pi_n\cap\bdy \HalfSpace_S$, and
therefore $\psi_n(\big(\RR^n\times\bdy_k\Pi_n\big)/S_n)$ is precisely the
subspace
\[
  \{\smim_1\geq 1,\smim_2\geq 3,\dots,\smim_k=
  k(k+1)/2,\dots,\smim_{n-1}\geq (n-1)n/2\}\subset \RR^{2n-1}.
\]
More tersely, define the continuous function $\bar\smim\from\RR^{2n-1}\to\RR$ by
\begin{equation}\label{eq:bar-smim}
  \bar\smim=\min\big\{\smim_1-1,\smim_2-3,\dots,\smim_k-k(k+1)/2,\dots,\smim_{n-1}-(n-1)n/2\big\}.
\end{equation}
Then $\psi_n(\big(\RR^n\times\bdy\Pi_n\big)/S_n)=\{\bar\smim=0\}$ and
$\psi_n(\big(\RR^n\times\Pi_n\big)/S_n)=\{\bar\smim\geq 0\}$.

Consider the subset $S=\{1,2,\dots,k\}$, and
the corresponding facet $F_S\subset\bdy_k\Pi_n$. Fix any point
$(\alpha_1,\dots,\alpha_n,\beta_1,\dots,\beta_n)\in\RR^n\times F_S$; so we have
\[
  \beta_1+\dots+\beta_k=k(k+1)/2,\qquad\beta_{k+1}+\dots+\beta_n=\big(n(n+1)-k(k+1)\big)/2.
\]
Starting from this point on the top-middle vertex of
Equation~\eqref{eq:Psi-compatible}, and following three arrows on the
right, we get the point
$(a_1,\dots,a_n,b_2,\dots,b_n)\in\RR^{2n-1}$ by equating the
coefficients of the polynomial---call it $P(z)$---from
Equation~\eqref{eq:Pz}. However, starting left, the first arrow takes
it to the point
\[
  ((\alpha_1,\dots,\alpha_k,\beta_1,\dots,\beta_k),(\alpha_{k+1},\dots,\alpha_n,\beta_{k+1}-k,\dots,\beta_n-k))\in (\RR^k\times\Pi_k)\times(\RR^{n-k}\times\Pi_{n-k}).
\]
Under the map $\psi_k\circ\pi$, the point
$(\alpha_1,\dots,\alpha_k,\beta_1,\dots,\beta_k)$ maps to
$(c_1,\dots,c_k,d_2,\dots,d_k)\in\RR^{2k-1}$ given by equating the
coefficients of the polynomial $Q_0(z)$, again from
Equation~\eqref{eq:Pz} (but with $k$ instead of $n$, $c_j$ instead of
$a_j$, and $d_j$ instead of $b_j$). Under the map
$\psi_{n-k}\circ\pi$, the point
$(\alpha_{k+1},\dots,\alpha_n,\beta_{k+1}-k,\dots,\beta_n-k)$ maps to
the point
$(\tilde{e}_1,\dots,\tilde{e}_{n-k},\tilde{f}_2,\dots,\tilde{f}_{n-k})\in\RR^{2n-2k-1}$
given by equating the coefficients of the polynomial
\[
\tilde{Q}_{1}(z)=\prod_{j=k+1}^n(z-\alpha_j-i(\beta_j-k))=z^{n-k}-(\tilde{e}_1+i\tilde{f}_1)z^{n-k-1}+\dots+(-1)^{n-k}(\tilde{e}_{n-k}+i\tilde{f}_{n-k}),
\]
with $\tilde{f}_1=\frac{(n-k)(n-k+1)}{2}$.  Let
$(e_1,\dots,e_{n-k},f_2,\dots,f_{n-k})\in\RR^{2n-2k-1}$ be given by
the polynomial $Q_{1}(z)$ from Equation~\eqref{eq:Pz} (but with
$n-k$ instead of $n$, $e_j$ instead of $a_j$, $f_j$ instead of $b_j$,
and roots $\alpha_j+i\beta_j$ for $k<j\leq n$):
\[
Q_1(z)=\prod_{j=k+1}^n(z-\alpha_j-i\beta_j)=z^{n-k}-(e_1+if_1)z^{n-k-1}+\dots+(-1)^{n-k}(e_{n-k}+if_{n-k}),
\]
with ${f}_1=\frac{n(n+1)-k(k+1)}{2}$.
Then $(e_1,\dots,e_{n-k},f_2,\dots,f_{n-k})$ can be obtained from $(\tilde{e}_1,\dots,\tilde{e}_{n-k},\allowbreak\tilde{f}_2,\dots,\tilde{f}_{n-k})$ by applying a diffeomorphism
\begin{equation}\label{eq:Xi-n-k}
  \xi_{n,k}\from\RR^{2n-2k-1}\stackrel{\cong}{\longrightarrow}\RR^{2n-2k-1},
\end{equation}
namely, by equating the coefficients of the polynomial $Q_{1}(z)=\tilde{Q}_{1}(z-ik)$:
\begin{align*}
  &z^{n-k}-(e_1+i\frac{n(n+1)-k(k+1)}{2})z^{n-k-1}+\dots+(-1)^{n-k}(e_{n-k}+if_{n-k})\\
  &\qquad\qquad=(z-ik)^{n-k}-(\tilde{e}_1+i\frac{(n-k)(n-k+1)}{2})(z-ik)^{n-k-1}+\dots+(-1)^{n-k}(\tilde{e}_{n-k}+i\tilde{f}_{n-k}).
\end{align*}

Assume
\begin{align*}
  \Psi^{-1}_k(c_1,\dots,c_k,d_2,\dots,d_{k})&=(s_1,\dots,s_k,t_1,\dots,t_{k-1})\in\RR^k\times\RR^{k-1}_+\\
  \Psi^{-1}_{n-k}(\tilde{e}_1,\dots,\tilde{e}_{n-k},\tilde{f}_2,\dots,\tilde{f}_{n-k})&=(s_{k+1},\dots,s_n,t_{k+1},\dots,t_{n-1})\in\RR^{n-k}\times\RR^{n-k-1}_+.
\end{align*}
Then the compatibility condition from
Equation~\eqref{eq:Psi-compatible} states that $\Psi_n$ must be
defined on $\RR^n\times\bdy_k\RR^{n-1}_+$ as follows:
\[
  \Psi_n((s_1,\dots,s_n),(t_1,\dots,t_{k-1},0,t_{k+1},\dots,t_{n-1}))=(a_1,\dots,a_n,b_2,\dots,b_{n}).
\]
This map has an explicit description in terms of $\Psi_k$ and
$\Psi_{n-k}$. We have
\begin{align*}
  \Psi_k(s_1,\dots,s_k,t_1,\dots,t_{k-1})&=(c_1,\dots,c_k,d_2,\dots,d_{k})\in\RR^{2k-1}\\
  \xi_{n,k}\circ\Psi_{n-k}(s_{k+1},\dots,s_n,t_{k+1},\dots,t_{n-1})&=({e}_1,\dots,{e}_{n-k},{f}_2,\dots,{f}_{n-k})\in\RR^{2n-2k-1},
\end{align*}
and since $P(z)=Q_0(z)Q_1(z)$, the point $(a_1,\dots,a_n,b_2,\dots,b_n)$
can be obtained from $(c_1,\dots,c_k,\allowbreak d_2,\dots,d_{k})$ and
$(e_1,\dots,e_{n-k},f_2,\dots,f_{n-k})$ by equating the coefficients
of
\begin{align*}
  &z^{n}-(a_1+i\frac{n(n+1)}{2})z^{n-1}+\dots+(-1)^{n}(a_n+ib_n)\\
  &\qquad=\big(z^{k}-(c_1+i\frac{k(k+1)}{2})z^{k-1}+\dots+(-1)^{k}(c_n+id_n)\big)\\
  &\qquad\qquad\times\big(z^{n-k}-(e_1+i\frac{n(n+1)-k(k+1)}{2})z^{n-k-1}+\dots+(-1)^{n-k}(e_{n-k}+if_{n-k})\big).
\end{align*}

\begin{figure}
  \centering
  \begin{tikzpicture}[scale=1]
    \coordinate (o) at (0,0);
    \coordinate (x) at (4,0);
    \coordinate (y) at (0,4);

    \fill[black!25] ($(y)+(-1,0)$) to[out=-45,in=90] (210:1) to[out=-90,in=210] (-90:1) to[out=30,in=110] ($(x)+(0,-1)$) -- ($(x)+(0,1)$) to[out=110,in=-45] (53:2.3) to[out=135,in=-45] ($(y)+(1,0)$) -- cycle;

    \fill[pattern=north east lines, pattern color=black!70] (y) to[out=-45,in=90] (o) to[out=30,in=110] (x) -- ($(x)+(0,0.5)$) to[out=110,in=-45] (53:2) to[out=135,in=-45] ($(y)+(0.5,0)$) -- cycle;
    
    \draw[thick] (y) to[out=-45,in=90] node[midway,anchor=south,rotate=90,outer sep=2ex] {\tiny $\Psi_|(\RR^3\times 0\times\RR_+)$} (o) to[out=30,in=110] node[pos=0] {$\bullet$} node[pos=0,anchor=east] {\tiny $\Psi_|(\RR^3\times(0,0))$} node[midway,anchor=north,outer sep=2ex] {\tiny $\Psi_|(\RR^3\times\RR_+\times 0)$} (x);

    \draw[->] (2,3) -- (1,3) node[pos=0,anchor=west] {\tiny$\smim_1>1,\smim_2> 3$};
    \draw[->] (-1.2,3.5) -- (-0.2,3.5) node[pos=0,anchor=east] {\tiny$\smim_1<1,\smim_2> 3$};
    \draw[->] (4.8,-0.3) -- (3.8,-0.3) node[pos=0,anchor=west] {\tiny$\smim_1>1,\smim_2< 3$};
    \draw[->] (-1.5,-0.7) -- (-0.5,-0.7) node[pos=0,anchor=east] {\tiny$\smim_1<1,\smim_2< 3$};
    
    \draw[thick,dashed] (-90:1) --(o)--(210:1);
  \end{tikzpicture}
  \caption{The embedding $\Psi_|\from\RR^3\times\bdy\RR_+^2\to\RR^5$
    (for the case $n=3$). We have indicated the signs of the functions
    $\smim_1-1$ and $\smim_2-3$ in a neighborhood (shown in gray) of
    $\Psi_|(\RR^3\times\bdy\RR_+^2)$. A collar extension (drawn
    striped) is also shown, which will be used to extend $\Psi_|$ to
    $\Psi_3\from\RR^3\times\RR^2_+\to\RR^5$.}\label{fig:Psi-induction-step}
\end{figure}

As before, let $\Psi_|$ denote this map
$\RR^n\times\bdy\RR^{n-1}_+\to\RR^{2n-1}$ with image
$\psi_n(\big(\RR^n\times\bdy\Pi_n\big)/S_n)$. We have to
extend it to a map $\Psi_n\from\RR^n\times\RR^{n-1}_+\to\RR^{2n-1}$
which is a diffeomorphism onto its image
$\psi_n(\big(\RR^n\times\Pi_n\big)/S_n)$. Therefore, we need to
check the following.
\begin{enumerate}[leftmargin=*]
\item \textbf{Well-defined:} For well-definedness we have to prove the
  following couple of things.

  The map $\Psi_|$ on $\RR^n\times\bdy_k\RR^{n-1}_+$ was defined from
  the subset $S=\{1,2,\dots,k\}$ via
  Equation~\eqref{eq:Psi-compatible}. We need to show that for any
  other subset $S'$ with $|S'|=k$, Equation~\eqref{eq:Psi-compatible}
  would have produced the same map. Fix any point
  $p'=(\alpha'_1,\dots,\alpha'_n,\beta'_1,\dots,\beta'_n)\in\RR^n\times
  F_{S'}$. There exists some permutation $\sigma\in S_n$ which maps
  this point to some point
  $p=(\alpha_1,\dots,\alpha_n,\beta_1,\dots,\beta_n)\in\RR^n\times
  F_{S}$. Since we quotient by $S_n$, if we start at either $p$ or
  $p'$ at the top-middle vertex of Equation~\eqref{eq:Psi-compatible}
  and proceed rightwards by three arrows, we will end up at the same
  point $(a_1,\dots,a_n,b_2,\dots,b_n)\in\RR^{2n-1}$. However, if we
  proceed leftwards by one arrow, we will end up at the point
  \[
    ((\alpha_1,\dots,\alpha_k,\beta_1,\dots,\beta_k),(\alpha_{k+1},\dots,\alpha_n,\beta_{k+1}-k,\dots,\beta_n-k))\in(\RR^k\times\Pi_k)\times(\RR^{n-k}\times\Pi_{n-k});
  \]
  this is due to way the identification
  $\RR^n\stackrel{\cong}{\longrightarrow}\RR^k\times\RR^{n-k}$ is set up
  in Equation~\eqref{eq:Psi-compatible}. Therefore, the map $\Psi_|$
  defined using the subset $S'$ will be same as the map $\Psi_|$
  defined using the subset $S=\{1,2,\dots,k\}$.

  We also need to check that the maps $\Psi_|$, as defined on
  $\RR^n\times\bdy_k\RR^{n-1}_+$ and
  $\RR^n\times\bdy_\ell\RR^{n-1}_+$, agree on their common boundary
  $\RR^n\times\bdy_{\{k,\ell\}}\RR^{n-1}_+$. Let $k<\ell$, and let
  $S=\{1,2,\dots,k\}$ and $T=\{1,2,\dots,\ell\}$. Using repeated
  applications of Equation~\eqref{eq:Psi-compatible}, it is not hard
  to show that in either case, the map
  $\Psi_|\from \RR^n\times\bdy_{\{k,\ell\}}\RR^{n-1}_+\to\RR^{2n-1}$
  is given as follows. For any point
  $p=((s_1,\dots,s_n),(t_1,\dots,t_{k-1},0,t_{k+1},\dots,t_{\ell-1},0,t_{\ell+1},\dots,t_{n-1}))\in\RR^n\times\bdy_{\{k,\ell\}}\RR^{n-1}_+$,
  let
  \begin{align*}
    (c_1,\dots,c_k,d_2,\dots,d_k)&=\Psi_k(s_1,\dots,s_k,t_1,\dots,t_{k-1})\\
    ({e}_1,\dots,{e}_{\ell-k},{f}_2,\dots,{f}_{\ell-k})&=\xi_{\ell,k}\circ\Psi_{\ell-k}(s_{k+1},\dots,s_{\ell},t_{k+1},\dots,t_{\ell-1})\\
    ({g}_1,\dots,{g}_{n-\ell},{h}_2,\dots,{h}_{n-\ell})&=\xi_{n,\ell}\circ\Psi_{n-\ell}(s_{\ell+1},\dots,s_{n},t_{\ell+1},\dots,t_{n-1}).
  \end{align*}
  where $\xi_{\ell,k}\from\RR^{2\ell-2k-1}\to\RR^{2\ell-2k-1}$ and
  $\xi_{n,\ell}\from\RR^{2n-2\ell-1}\to\RR^{2n-2\ell-1}$ are as defined
  in Equation~\eqref{eq:Xi-n-k}. Then $\Psi_|(p)=(a_1,\dots,a_n,b_2,\dots,b_n)$ is obtained by equating the coefficients of
  \begin{align*}
  &z^{n}-(a_1+i\frac{n(n+1)}{2})z^{n-1}+\dots+(-1)^{n}(a_n+ib_n)\\
  &\qquad=\big(z^{k}-(c_1+i\frac{k(k+1)}{2})z^{k-1}+\dots+(-1)^{k}(c_n+id_n)\big)\\
  &\qquad\qquad\times\big(z^{\ell-k}-(e_1+i\frac{\ell(\ell+1)-k(k+1)}{2})z^{\ell-k-1}+\dots+(-1)^{\ell-k}(e_{\ell-k}+if_{\ell-k})\big)\\
  &\qquad\qquad\times\big(z^{n-\ell}-(g_1+i\frac{n(n+1)-\ell(\ell+1)}{2})z^{n-\ell-1}+\dots+(-1)^{n-\ell}(g_{n-\ell}+ih_{n-\ell})\big).
  \end{align*}
\item \textbf{Properly embedded:} We next have to prove
  $\Psi_|\from\RR^n\times\bdy\RR^{n-1}_+\to\RR^{2n-1}$ is a proper
  smooth embedding, that is, it is a proper map which is an immersion and a
  diffeomorphism onto its image.

  For injectivity, we first check that the image of different open
  strata of $\RR^n\times\bdy\RR^{n-1}_+$ are disjoint. This follows
  from the observation that for any non-empty subset
  $J\subset\{1,2,\dots,n-1\}$, the image of the interior of
  $\RR^n\times\bdy_J\RR^{n-1}_+$ is given by
  \begin{equation}\label{eq:image-of-open-stratum}
    \{p\in\RR^{2n-1}\mid \smim_{k}(p)=\frac{k(k+1)}{2},\,\forall k\in
    J\text{ and }\smim_k(p)>\frac{k(k+1)}{2},\,\forall k\notin J\}.
  \end{equation}
  So it is now enough to show that $\Psi_|$, restricted to any open
  stratum, is injective.  Fix any stratum $\RR^n\times\bdy_J(\RR^n)$
  with $J=\{k_1<k_2<\dots<k_m\}$, and fix any point
  \[
    p=((s_1,\dots,s_n),(t_1,\dots,t_{k_1-1},0,t_{k_1+1},\dots,t_{k_2-1},0,t_{k_2+1},\dots,t_{k_m-1},0,t_{k_m+1},\dots,t_{n-1}))
  \]
  in its interior. Let
  $\Psi_|(p)=(a_1,\dots,a_n,b_2,\dots,b_n)\in\RR^{2n-1}$ and consider the polynomial
  \begin{equation}\label{eq:Pz-again}
    P(z)=  z^{n}-(a_1+ib_1)z^{n-1}+\dots+(-1)^{n}(a_n+ib_n)
  \end{equation}
  with $b_1=\frac{n(n+1)}{2}$.  In the same vein as the definition of
  $\Psi_|$ (and also the second part of the above argument for
  well-definedness), for $\ell=0,1,\dots,m$, define
  \[
    (c^\ell_1,\dots,c^\ell_{k_{\ell+1}-k_\ell},d^\ell_2,\dots,d^\ell_{k_{\ell+1}-k_\ell})=\xi_{k_{\ell+1},k_\ell}\circ\Psi_{k_{\ell+1}-k_\ell}(s_{k_\ell+1},\dots,s_{k_{\ell+1}},t_{k_\ell+1},\dots,t_{k_{\ell+1}-1})
  \]
  (with the understanding that $k_0=0$ and $k_{m+1}=n$), and consider the polynomial
  \begin{equation}\label{eq:Q-ell-poly}
    Q_\ell(z)=z^{k_{\ell+1}-k_\ell}-(c^\ell_1+id^\ell_1)z^{k_{\ell+1}-k_\ell-1}+\dots+(-1)^{k_{\ell+1}-k_\ell}(c^\ell_{k_{\ell+1}-k_\ell}+id^\ell_{k_{\ell+1}-k_\ell})
  \end{equation}
  with $d^\ell_1=\frac{k_{\ell+1}(k_{\ell+1}+1)-k_\ell(k_\ell+1)}{2}$. Then we have
  \begin{equation}\label{eq:Pz-factorized}
    P(z)=Q_0(z)Q_1(z)\cdots Q_m(z)
  \end{equation}
  and expanding the coefficients, we get
  $(a_1,\dots,a_n,b_2,\dots,b_n)$ as a function of
  $(c^1_1,\dots,c^m_{n-k_m},\allowbreak d^1_2,\dots,d^m_{n-k_m})$. The
  maps $\Psi_{k_{\ell+1}-k_\ell}$ are embeddings and the maps
  $\xi_{k_{\ell+1},k_\ell}$ are diffeomorphisms, so the only place
  non-injectivity might arise is in this final map. Let
  $\alpha_1+i\beta_1,\dots,\alpha_n+i\beta_n$ be the roots of $P(z)$
  arranged in increasing order of their imaginary parts, that is,
  $\beta_1\leq\beta_2\leq\cdots\leq\beta_n$. From
  Equation~\eqref{eq:image-of-open-stratum}, we know:
  \[
    \sum_{j=1}^{k}\beta_j=\frac{k(k+1)}{2},\,\forall k\in J\cup\{n\},\qquad
    \sum_{j=1}^k \beta_j> \frac{k(k+1)}{2},\,\forall k\in\{1,2,\dots,n-1\}\setminus J
  \]
  Therefore, for every $k\in J$,
  \begin{equation}\label{eq:sum-beta}
    \begin{split}
      \beta_{k}&=\sum_{j=1}^{k}\beta_j-\sum_{j=1}^{k-1}\beta_j\leq \frac{k(k+1)}{2}-\frac{(k-1)k}{2}=k\\
      \beta_{k+1}&=\sum_{j=1}^{k+1}\beta_j-\sum_{j=1}^{k}\beta_j\geq \frac{(k+1)(k+2)}{2}-\frac{k(k+1)}{2}=k+1.
    \end{split}
  \end{equation}
  Therefore, the set of $k$ smallest $\beta_j$'s is well-defined, for
  every $k\in J$. Note that for every $1\leq\ell\leq m$, the polynomial
  $Q_0(z)Q_1(z)\cdots Q_{\ell-1}(z)$ has roots $\alpha_j+i\beta_j$,
  for the $k_\ell$ smallest $\beta_j$'s. Therefore, the factorization
  from Equation~\eqref{eq:Pz-factorized} is the unique one of that
  form, thus completing the proof of injectivity.

  Being an immersion is a local condition. Since
  $\RR^n\times\bdy\RR^{n-1}_+$ is not a smooth manifold, rather the
  boundary of a manifold with corners, let us clarify what we mean by
  an immersion. For any point $p\in\RR^n\times\bdy\RR^{n-1}_+$, we
  will construct a neighborhood $U$ of $p$ inside
  $\RR^n\times\RR^{n-1}$ and an extension of $\Psi_|$ to $U$ which is
  a smooth embedding. Continuing from the proof of injectivity, fix a point
  $p$ in some open stratum $\RR^n\times\mathring{\bdy}_J\RR^{n-1}_+$,
  and let us reuse the same notation.  Instead of parametrizing
  $\Psi_|(\RR^n\times\mathring{\bdy}_J(\RR^n)$ by the parameters $s_j$
  ($1\leq j\leq n$) and $t_j$ ($j\in\{1,\dots,n-1\}\setminus J$), let
  us parametrize it by the variables $c^\ell_j$ ($0\leq \ell\leq m$,
  $1\leq j\leq k_{\ell+1}-k_\ell$) and $d^\ell_j$ ($0\leq \ell\leq m$,
  $2\leq j\leq k_{\ell+1}-k_\ell$), which is a valid reparametrization
  since the maps $\Psi_{k_{\ell+1}-k_\ell}$ are smooth embeddings and the
  maps $\xi_{k_{\ell+1},k_\ell}$ are diffeomorphisms. As before,
  create and set additional variables
  $d^\ell_1=\frac{k_{\ell+1}(k_{\ell+1}+1)-k_\ell(k_\ell+1)}{2}$, for
  $0\leq\ell\leq m$. Define the polynomials $Q_\ell(z)$ as in
  Equation~\eqref{eq:Q-ell-poly}, and if we define the polynomial
  $P(z)$ using Equations~\eqref{eq:Pz-again}
  and~\eqref{eq:Pz-factorized}, this determines the variables
  $a_1,\dots,a_n,b_1,\dots,b_n$ as smooth functions of the variables
  $(c^\ell_j,d^\ell_j)$ (with $b_1=\frac{n(n+1)}{2}$). We now let the
  variables $d^\ell_1$ vary in small neighborhoods, and apply the same
  function to get the variables $a_j,b_j$ (of course, now $b_1$ also
  varies). Since the inequalities from Equation~\eqref{eq:sum-beta}
  still hold up to some small $\epsilon>0$, we still have
  $\beta_k<\beta_{k+1}$ for all $k\in J$, and therefore, the
  factorization from Equation~\eqref{eq:Pz-factorized} is still
  well-defined, and so the function
  $(c^\ell_j,d^\ell_j)\mapsto(a_j,b_j)$ is still injective. Since the 
  roots of a polynomial are smooth functions of its coefficients, this
  is a local diffeomorphism, and therefore its linearization has rank
  $2n$ near $p$. If we restrict to the affine subspace
  $\sum_\ell d^\ell_1=\frac{n(n+1)}{2}$ (corresponding to the affine
  subspace $b_1=\frac{n(n+1)}{2}$ in the target), we get a local
  diffeomorphism from some neighborhood $U$ of $p$ in
  $\RR^n\times \RR^{n-1}$ to some neighborhood of $\Psi_|(p)$ in
  $\RR^{2n-1}$.

  To finish the argument that $\Psi_|$ is a smooth embedding, we need to
  prove that it is a diffeomorphism onto its image. Since it is already
  an injective immersion, we simply have to show that the inverse map
  is continuous. This is again the statement that roots of a
  polynomial are continuous functions of its coefficients.
  
  The statement that $\Psi_|\from\RR^n\times\bdy\RR^{n-1}_+\to\RR^{2n-1}$
  is proper is automatic since it is an embedding with closed image
  $\{\bar\smim=0\}$, where $\bar\smim$ is the function defined in
  Equation~\eqref{eq:bar-smim}.
\item\label{item:permu-extendable-collar} \textbf{Extendable:} Finally, we have to extend $\Psi_|$ to a
  proper smooth embedding $\Psi_n\from\RR^n\times\RR_+\to\RR^{2n-1}$ with
  image $\psi_n(\big(\RR^n\times\Pi_n\big)/S_n)$. Let $F\from\RR^n\times\RR_+^{n-1}\to\RR_+$ be the function
  \[
    (s_1,\dots,s_n,t_1,\dots,t_{n-1})\mapsto t_1t_2\cdots t_{n-1}.
  \]
  Let $\bK^{n-1}=\RR^{n-1}\setminus\RR^{n-1}_*$ be the union of
  the coordinate axes.  Since $\Psi_|$ is a proper embedding, by the
  collar neighborhood theorem, $\Psi_|$ extends to an embedding---call
  it $\wt{\Psi}$---of some neighborhood $U$ of
  $\RR^n\times\bdy\RR^{n-1}_+$ inside $\RR^n\times\RR^{n-1}$. (Such an
  extension can be obtaining by patching together the local
  extensions---as constructed during the immersion proof
  earlier---using partitions of unity.) By rescaling if necessary,
  we may further assume $U$ contains the subspace
  $F^{-1}([0,\epsilon])$ for some small $\epsilon>0$, and $\wt{\Psi}$
  restricts to a proper smooth embedding on that subspace.  See
  Figure~\ref{fig:Psi-induction-step} for the case $n=3$, which shows
  $\wt{\Psi}(U)$ (inside $\RR^5$) in gray,
  $\wt{\Psi}(U\cap\big(\RR^3\times\bK^2\big))$ as black lines
  (which are solid or dashed depending on whether they are in
  $\Psi_|(\RR^3\times\bdy\RR^2_+)$ or not), and
  $\wt{\Psi}(F^{-1}([0,\epsilon]))$ striped. Then
  $\wt{\Psi}(F^{-1}(\epsilon/2))$ is a smoothly properly embedded
  $\RR^{2n-2}$ inside $\RR^{2n-1}$; let $A$ denote the component of
  its complement that \emph{does not} contain
  $\wt{\Psi}(U\cap\big(\RR^n\times\bK^{n-1}\big))$. Using
  Propositions~\ref{prop:proper-h-cobordism}
  and~\ref{prop:collaring-unique} with $m=2n-2$, we get a proper smooth embedding
  $F^{-1}([\epsilon/2,\infty))\to \RR^{2n-1}$ with image $A$, which
  agrees with $\wt{\Psi}$ on some neighborhood of $F^{-1}(\epsilon/2)$
  inside $F^{-1}([\epsilon/2,\epsilon])$; therefore, it glues with
  $\wt{\Psi}$ on $F^{-1}([0,\epsilon/2])$ and produces a
  proper smooth embedding $\Psi_n\from\RR^n\times\RR^{n-1}_+\to\RR^{2n-1}$.

  The only thing left to check is that $\Psi_n$ has the correct image, namely the subspace
  \[
    \psi_n(\big(\RR^n\times\Pi_n\big)/S_n)= \{\smim_1\geq
    1,\dots,\smim_k\geq k(k+1)/2,\dots,\smim_{n-1}\geq
    (n-1)n/2\}=\{\bar\smim\geq 0\}\subset \RR^{2n-1},
  \]
  where $\bar{\smim}$ is the function defined in
  Equation~\eqref{eq:bar-smim}. By the Jordan-Brouwer theorem,
  $\RR^{2n-1}\setminus\Psi_|(\RR^n\times\bdy\RR^{n-1}_+)$ has two
  components. Let $B$ be the component containing
  $\wt{\Psi}(F^{-1}([0,\epsilon]))$, and therefore,
  $B=\Psi_n(\RR^n\times\RR^{n-1}_+)$. Let
  $C=\RR^{2n-1}\setminus\bar{B}$. The construction of $\Psi_|$ ensures
  that $\bar\smim=0$ precisely on $\Psi_|(\RR^n\times\RR_+^{n-1})$,
  hence $\bar\smim\neq 0$ on $B\cup C$. So we have to show
  $\bar\smim>0$ somewhere on $B$ and $\bar\smim<0$ somewhere on
  $C$. Consider the point
  \[
    p=((0,\dots,0),(1,2,\dots,n))\in\RR^n\times\Pi_n\subset \RR^n\times\Affine^{n-1}.
  \]
  It is contained in the affine subspaces
  $\RR^n\times\bdy \HalfSpace_S\subset \RR^n\times\Affine^{n-1}$ from
  Item~\ref{item:permuto-halfspace} for $S=\{1,2,\dots,k\}$,
  $1\leq k<n$. Consider a small open ball $V$ around $p$ in
  $\RR^n\times\Affine^{n-1}$. These $(n-1)$ hyperplanes cut $V$ into
  $2^{n-1}$ regions, which are distinguished by the signs of the
  following $(n-1)$ functions:
  \[
    \beta_1-1,\beta_1+\beta_2-3,\dots,\sum_{j=1}^k\beta_j-\frac{k(k+1)}{2},\dots,\sum_{j=1}^{n-1}\beta_j-\frac{(n-1)n}{2}.
  \]
  The unique region where all the signs are positive is the one that
  contains $\RR^n\times\mathring{\Pi}_{n-1}$; therefore, the image of
  that region under the map $\psi_n\circ\pi$ (from
  Equation~\eqref{eq:Psi-diagram}) has $\bar\smim>0$, and the image of
  the other $2^{n-1}-1$ regions has $\bar\smim<0$. From the immersion
  proof from before, the map $\wt{\Psi}^{-1}\circ\psi_n\circ\pi$ is a
  diffeomorphism from the small open ball $V$ around $p$ to a (not
  necessarily round) small open ball $W$ around
  $((0,\dots,0),(0,\dots,0))$ inside $U$ inside
  $\RR^n\times\RR^{n-1}$. Under this local map, the union of the
  hyperplanes
  \[
    \bigcup_{\substack{S=\{1,\dots,k\}\\1\leq
        k<n}}V\cap\big(\RR^n\times\bdy\HalfSpace_S\big)
  \]
  maps to
  $W\cap\big(\RR^n\times\bK^{n-1}\big)$. Therefore, one of the
  $2^{n-1}$ regions maps into $\wt{\Psi}^{-1}(B)$, while the other
  $2^{n-1}-1$ regions map into $\wt{\Psi}^{-1}(C)$. Since
  $\bar\smim$ has the same sign on $C$, on these latter $2^{n-1}-1$
  regions, $\bar\smim$ must have the same sign, which then must be
  negative. (We are using $n\geq 3$, so we can distinguish the numbers
  $2^{n-1}-1$ and $1$.) Consequently on $B$, the function $\bar\smim$
  must be positive, thus completing the proof. See also
  Figure~\ref{fig:Psi-induction-step}.\qedhere
\end{enumerate}
\end{proof}

\subsection{Embedding and framing the moduli spaces}\label{sec:final-step-for-permutahedron}
In this section, we will smoothly embed and frame the moduli spaces $X_I$,
which were defined to be permutohedra in
Equation~\eqref{eq:XI-definition}. As in
Equation~\eqref{eq:XI-subset}, let $I=\{p_1<\dots<p_n\}$. Consider
the smooth embedding
\begin{equation}\label{eq:jmath-basic-emb}
  \jmath_I\from X_I\into \RR^n\times\Pi_n,\qquad x\mapsto
  ((p_1,p_2,\dots,p_n),x),
\end{equation}
which also respects the $\codim{n-1}$-manifold structure. As in the
previous sections, using the linear ordering of the elements of $I$,
it will useful to identify the first factor $\RR^n$ with
$\prod_I\RR$ and to treat the second factor $\Pi_n$
as embedded in $\prod_I\RR$.

Compose with the map $\Psi_n^{-1}\circ\psi_n\circ\pi$ from
Proposition~\ref{prop:homeo} to get a smooth map of
$\codim{n-1}$-manifolds
\begin{equation}\label{eq:main-embedding}
  \Psi_n^{-1}\circ\psi_n\circ\pi\circ\jmath_I\from X_I\to \RR^n\times\RR_+^{n-1}.
\end{equation}
Since the points $p_i$ are distinct, every non-trivial element of the
symmetric group $S_n$ sends the subset
$\jmath_I(X_I)\subset \RR^n\times\Pi_n$ to a disjoint subset, and
therefore, the map from Equation~\eqref{eq:main-embedding} is a smooth
embedding. To fit our requirements regarding embeddings of moduli
spaces, we need to replace $\RR$ with $\mathring\RR_+$. Fix a
diffeomorphism $f\from\RR\to\mathring\RR_+$, and consider the
diffeomorphism
\[
  F\from\RR^n\times\RR^{n-1}_+\to\mathring\RR_+^n\times\RR_+^{n-1}, \ \ 
(s_1,\dots,s_n,t_1,\dots,t_{n-1})\mapsto
(f(s_1),\dots,f(s_n),t_1,\dots,t_{n-1}).
\]
Then we embed the moduli spaces by the map
\begin{equation}\label{eq:main-embedding-log}
  \iota_I\defeq F\circ\Psi_n^{-1}\circ\psi_n\circ\pi\circ\jmath_I\from X_I\into \mathring\RR_+^n\times\RR_+^{n-1}.
\end{equation}

We will also need to choose a framing of the normal bundle of this
embedding. At each point $x\in X_I$, this will consist of an internal
frame with $n$ vectors which will span a complement to the tangent
space to $\iota_I(X_I)$ in $\mathring\RR_+^n\times\RR_+^{n-1}$ at
$\iota_I(x)$. The vectors in the frame are indexed by the set $I$; so
we will let $(v_{p_1}(x),\dots,v_{p_n}(x))$ denote the frame at $x$ so
that the vector $v_{p_i}(x)$ corresponds to the point $p_i\in I$. Let
$(e_{p_1},\dots,e_{p_n})$ be the unit vectors in $\prod_I\RR$; they
frame the normal bundle of the embedding $\jmath_I$ from
Equation~\eqref{eq:jmath-basic-emb}. Then define
\begin{equation}\label{eq:XI-framing}
  v_{p_i}(x)=(d\wh{\iota})_x(e_{p_i})\qquad 1\leq i\leq n,x\in X_I,
\end{equation}
where $\wh\iota=F\circ\Psi_n^{-1}\circ\psi_n\circ\pi$.

All that remains is to check that these embeddings and framings
satisfy the required coherence conditions on their lower-dimensional
strata. Fix any non-empty proper subset $J\subset I$ of cardinality
$k$. Then there is a corresponding facet in $\bdy_kX_I$ which is
identified with $X_J\times X_{I\setminus J}$. To show that the
embeddings are coherent, we need to check the following diagram
commutes,
\begin{equation}\label{eq:XI-coherent-embed}
  \vcenter{\hbox{
  \begin{tikzpicture}[xscale=7,yscale=-1.3]

    \node (XJ) at (0,0) {$X_J\times X_{I\setminus J}$};
    \node (XI) at (0,1) {$\bdy_kX_I$};

    \node (Rn) at (1,2) {$\mathring\RR_+^n\times\RR^{k-1}_+\times\{0\}\times\RR^{n-k-1}_+$,};
    \node (Rnfull) at (0,2) {$\mathring\RR_+^n\times\RR^{n-1}$};
    \node (Rk) at (1,0) {$\mathring\RR_+^k\times\RR^{k-1}_+\times\mathring\RR_+^{n-k}\times\RR^{n-k-1}_+$}; 
    \node (Rkswitched) at (1,1) {$\mathring\RR_+^k\times\mathring\RR_+^{n-k}\times\RR^{k-1}_+\times\RR^{n-k-1}_+$}; 

    \draw[right hook->] (XJ) -- (XI);
    \draw[left hook->] (Rn) -- (Rnfull);
    \draw[right hook->] (XJ) -- (Rk) node[midway,anchor=south] {\small $(\iota_J,\iota_{I\setminus J})$};
    \draw[right hook->] (XI) -- (Rnfull) node[midway,anchor=east] {\small $\iota_I$};
    \draw[->] (Rk) -- (Rkswitched) node[midway,anchor=west] {\small $\cong$};
    \draw[->] (Rkswitched) -- (Rn) node[midway,anchor=west] {\small $\cong$};
    
  \end{tikzpicture}}}
\end{equation}
where the identifications on the rightmost column are the usual ones
by rearranging the factors. This follows from the commutativity of the following diagram: 
\[
\vcenter{\hbox{\begin{tikzpicture}[xscale=5,yscale=-1.5]
      
      \node (XI) at (2,0) {$X_I$};
      \node (pi) at (2,1) {$\displaystyle\prod_I\RR\times X_I$};
      \node (piemb) at (2,2) {$\RR^n\times\RR^{n-1}_+$};

      \node (XJ) at (0,0) {$X_J\times X_{I\setminus J}$};
      \node (fssplit) at (0,1) {$(\displaystyle\prod_J\RR\times X_J)\times(\displaystyle\prod_{I\setminus J}\RR\times X_{I\setminus J})$};
      \node (fsemb) at (0,2) {$(\RR^k\times \RR^{k-1}_+)\times(\RR^{n-k}\times \RR^{n-k-1}_+)$};
      \node (fsembemb) at (1.2,2) {$\RR^n\times (\RR^{k-1}_+\times\{0\}\times \RR^{n-k-1}_+)$};

      \node (piemb1) at (2,3) {$\mathring\RR_+^n\times\RR^{n-1}_+$};
      \node (fsemb1) at (0,3) {$(\mathring\RR_+^k\times \RR^{k-1}_+)\times(\mathring\RR_+^{n-k}\times \RR^{n-k-1}_+)$};
      \node (fsembemb1) at (1.2,3) {$\mathring\RR_+^n\times (\RR^{k-1}_+\times\{0\}\times \RR^{n-k-1}_+)$};

      \draw[right hook->] (XJ) -- (XI);
      \draw[right hook->] (XJ) -- (fssplit) node[midway,anchor=east] {\tiny $(\jmath_J,\jmath_{I\setminus J})$};
      \draw[right hook->] (XI) -- (pi) node[midway,anchor=west] {\tiny $\jmath_I$};
      
  \draw[right hook->] (fssplit) -- (pi);

  \draw[->] (fssplit) -- (fsemb) node[midway,anchor=east] {\tiny$(\Psi_k^{-1}\psi_k\pi,\Psi_{n-k}^{-1}\psi_{n-k}\pi)$};
  \draw[->] (pi) -- (piemb) node[midway,anchor=west] {\tiny$\Psi_n^{-1}\psi_n\pi$};

  \draw[->] (fsemb) -- (fsembemb) node[midway,anchor=south] {\tiny$\cong$};
  \draw[right hook->] (fsembemb) -- (piemb);

  \draw[->] (fsemb1) -- (fsembemb1) node[midway,anchor=south] {\tiny$\cong$};
  \draw[right hook->] (fsembemb1) -- (piemb1);

  \draw[->] (fsemb) -- (fsemb1) node[midway,anchor=east] {\tiny $(F,F)$};
  \draw[->] (fsembemb) -- (fsembemb1) node[midway,anchor=east] {\tiny $F$};
  \draw[->] (piemb) -- (piemb1) node[midway,anchor=west] {\tiny $F$};
  
\end{tikzpicture}}}
\]
The central pentagon commutes by Equation~\eqref{eq:Psi-compatible};
the top rectangle commutes by definition (Equation~\eqref{eq:jmath-basic-emb});
and the bottom two rectangles commute since the map $F$ was defined
using the map $f$ on each $\RR$ component.

To see that the framings are coherent, we have to show that the normal
framing of $\iota_I$ from Equation~\eqref{eq:XI-framing} agrees with
the product framing on the subspace $X_J\times X_{I\setminus
  J}$. However, the normal framings of the embeddings $\jmath_I$ from
Equation~\eqref{eq:jmath-basic-emb} are given by the unit vectors, and
so they are indeed coherent. (Recall that the vectors in the frames are
indexed by the elements of $I$.) Since the normal framings for
$\iota_I$ are defined using those unit vectors via
Equation~\eqref{eq:XI-framing}, the commutativity of
Diagram~\eqref{eq:XI-coherent-embed} implies that the framings are
coherent.

\begin{example}
  Let us fix the diffeomorphism $f\from \RR\to\mathring{\RR}_+$ to be
  the exponential map. For $n=1$, the map $\iota_I$ embeds the point
  $X_I$ as $e^{p_1}\in\mathring{\RR}_+$, with normal frame
  $v_{p_1}=e^{p_1}\ve^+_1$, where $\ve^+_1$ is the unit vector
  in $\mathring{\RR}_+$ (reusing the notation from
  Section~\ref{sec:base}).

  For $n=2$, let $(\alpha_1,\alpha_2,\beta_1)$ be the coordinates on
  $\RR^2\times\Pi_2$ (with $\beta_2=3-\beta_1$), let $(a_1,a_2,b_2)$
  be the coordinates on $\RR^3$, and let $(s_1,s_2,t)$ be the
  coordinates on $\RR^2\times\RR_+$, as in the proof of
  Proposition~\ref{prop:homeo}. The map $\jmath_I$ embeds the interval
  $X_I$ in $\RR^2\times\Pi_2$ by the map
  \[
    x\mapsto (\alpha_1=p_1,\alpha_2=p_2,\beta_1=1+x),\qquad x\in[0,1],
  \]
  with normal frame given by $(e_{p_1}=\tfrac{\del}{\del\alpha_1},e_{p_2}=\tfrac{\del}{\del\alpha_2})$.
  The map $\psi_2\circ\pi\from \RR^2\times\Pi_2\to \RR^3$ is given by
  \[
    (\alpha_1,\alpha_2,\beta_1)\mapsto
    (a_1=\alpha_1+\alpha_2,a_2=\alpha_1\alpha_2-\beta_1(3-\beta_1),b_2=\alpha_1(3-\beta_1)+\alpha_2\beta_1),
  \]
  and the map $\Psi_2\from \RR^2\times\RR_+\to\RR^3$ from Equation~\eqref{eq:Psi2-explicit-map} is given by
  \[
    (s_1,s_2,t)\mapsto(a_1=s_1+s_2,a_2=s_1s_2-2-t,b_2=2s_1+s_2).
  \]
  Therefore, the composition $\wh\iota=F\circ\Psi_2^{-1}\circ\psi_2\circ\pi\from\RR^2\times\Pi_2\to\mathring{\RR}_+^2\times\RR_+$ is given by
  \[
    (\alpha_1,\alpha_2,\beta_1)\mapsto \big(e^{(2-\beta_1)\alpha_1+(\beta_1-1)\alpha_2}, e^{(\beta_1-1)\alpha_1+(2-\beta_1)\alpha_2},(\beta_1-1)(2-\beta_1)((\alpha_1-\alpha_2)^2+1)\big),
  \]
  and hence the embedding $\iota_I=\wh\iota\circ\jmath_I$ of the
  interval $X_I\cong[0,1]$ in $\mathring{\RR}_+^2\times\RR_+$ is given
  by
  \[
    x\mapsto \big(e^{(1-x)p_1+xp_2},e^{xp_1+(1-x)p_2},x(1-x)((p_1-p_2)^2+1)\big),\qquad x\in[0,1].
  \]
  Let $(\ve^+_1,\ve^+_3,\ve^+_2)$ denote the standard frame in
  $\mathring{\RR}_+^2\times\RR_+$ (as in
  Section~\ref{sec:base}). Then the normal framing to $\iota_I$ is given by
  \begin{align*}
    v_{p_1}(x)&=(d\wh\iota)_x(\tfrac{\del}{\del\alpha_1})=(1-x)e^{(1-x)p_1+xp_2}\ve^+_1+xe^{xp_1+(1-x)p_2}\ve^+_3+2x(1-x)(p_1-p_2)\ve^+_2,\\
    v_{p_2}(x)&=(d\wh\iota)_x(\tfrac{\del}{\del\alpha_2})=xe^{(1-x)p_1+xp_2}\ve^+_1+(1-x)e^{xp_1+(1-x)p_2}\ve^+_3+2x(1-x)(p_2-p_1)\ve^+_2.
  \end{align*}
  Note that the endpoints $x=0$ and $x=1$ produce the product embeddings
  (and product framings) of $X_{\{p_1\}}\times X_{\{p_2\}}$ and
  $X_{\{p_2\}}\times X_{\{p_1\}}$, respectively, in
  $\mathring{\RR}^2_+\cong\mathring{\RR}^2_+\times\{0\}\subset\mathring{\RR}^2_+\times\RR_+$.
\end{example}

\section{The Cohen-Jones-Segal construction} 
\label{sec:cjs}

We have now constructed all moduli spaces $\bModuli([D])$, as
$l$-dimensional $\langle l\rangle$-manifolds, along with neat
embeddings in $\E^d_l$, as well as their their (external) framings. It
remains to put them together into a framed flow category, and then run
the Cohen-Jones-Segal construction to obtain the knot Floer stable
homotopy types, following the set-up in \cite{LipshitzSarkar}.

\subsection{Framed flow categories} We review here some definitions
from \cite[Section 3.2]{LipshitzSarkar}. One slight difference is that
\cite{LipshitzSarkar} uses the composition order, while we are using
the concatenation order, and therefore, our factors will be ordered and indexed
differently. The other main difference is
that we allow our categories to have infinitely many objects.

\begin{definition}
  A {\em flow category} $\Cat$ is a category with objects $\Cat=\Ob(\Cat)$ 
  equipped with a function $\gr\from \Cat \to \Z$ (called the grading),
  such that:
  \begin{itemize}
  \item $\Hom(x, x) = \{\Id\}$ for all $x \in \Cat$;
  \item For all distinct $x, y \in \Cat$ with $\gr(x)-\gr(y)=k$, the morphism space
    $\Hom(x, y)$ is a compact $(k-1)$-dimensional
    $\langle k-1 \rangle$-manifold; in particular, it is
    empty for $\gr(x) \leq \gr(y)$;
  \item For distinct $x, y, z \in \Cat$ with $\gr(x)-\gr(y)=m$, the composition
    \[
      \circ\from \Hom(x, y) \times \Hom (y, z) \to \Hom(x, z)
    \]
    is an embedding into $\del_m \Hom(x, z)$. Moreover,
    \[
      \circ^{-1}(\del_i \Hom(x, z)) = \begin{cases}
        \del_i\Hom(x, y) \times \Hom(y, z) &\text{for} \ i <m \\
        \Hom(x, y) \times  \del_{i-m}\Hom(y,z) &\text{for} \ i >m; 
      \end{cases}
    \]
    (Since we are using the concatenation order, for
    $p\in\Hom(x,y),q\in\Hom(y,z)$, we will denote their composition to
    be $p*q$, in a similar vein to
    Item~(\ref{item:domain-concatenation-grid}) from
    Section~\ref{sec:grid-background}.)
  \item For distinct $x, z \in \Cat$, the composition $\circ$ induces a diffeomorphism
    \[
      \del_m \Hom(x, z) \cong \coprod_{\{y \mid \gr(y) =\gr(x)-i\}} \Hom(x, y) \times \Hom(y,z).
    \]
  \end{itemize} 
\end{definition}

Given a flow category $\Cat$ and $x, y \in \Cat$, we define the {\em
  compactified moduli space from $x$ to $y$} as
\[
  \bM(x, y) = \begin{cases}
    \emptyset & \text{if } x=y,\\
    \Hom(x, y)& \text{if } x\neq y.
  \end{cases}
\]
Furthermore, for $i \in \Z$, we let
$\Cat_i=\{ x \in \Cat \mid \gr(x) = i\}$, topologized as a discrete
space. Then, for $i, j \in \Z$, we define
\[
  \bM(i, j) = \coprod_{x \in \Cat_i, y \in \Cat_j}  \bM(x, y).
\]
We call a flow category \emph{compact} if $\bM(i,j)$ is compact for all $i,j$.

Next we will define neat embeddings of flow categories. For this,
recall the space
$\E^d_l=\RR^d\times\RR_+\times\dots\times\RR_+\times\RR^d\cong
\RR_+^l\times\RR^{d(l+1)}$ from Section~\ref{sec:neat}. It is convenient to set
\[
  \E^d_{i,j}=\E^d_{i-j-1}
\]
for any pair of integers $i>j$.
\begin{definition}
  \label{def:necat}
  A {\em neat embedding} $\iota$ of a flow category $\Cat$ relative $d \in \NN$ is a collection of neat embeddings 
  \[
    \iota_{x, y}\from\bM(x, y) \hookrightarrow \E^d_{\gr(x),\gr(y)},
  \]
  defined for every $x, y \in \Cat$, such that
  \begin{itemize}
  \item For all $i, j \in \Z$, the union of all $\iota_{x, y}$ for $x \in \Cat_i, y \in \Cat_j$ induces a neat embedding of $\bM(i, j)$;
  \item For all $x, y, z \in \Cat$ and for all $(p, q)\in \bM(x, y) \times \bM(y,z)$, we have
    \[
      \iota_{x, z} (p*q) = (\iota_{x, y}(p),0,\iota_{y,z} (q))\in \E^d_{\gr(x),\gr(y)}\times \R_+ \times \E^d_{\gr(y),\gr(z)} = \E^d_{\gr(x),\gr(z)}.
    \]
  \end{itemize}
\end{definition}

Given a neat embedding $\iota$ of $\Cat$, and objects $x, y \in \Cat$,
we let $\nu_{x, y}$ denote the normal bundle to $\bM(x, y)$ under the
embedding $\iota_{x, y}$.
\begin{definition}
  \label{def:ffc}
  A {\em framed flow category} is a flow category $\Cat$
  together with a neat embedding $\iota$ (relative some $d$), and also
  equipped with framings for the normal bundles $\nu_{x, y}$ for all
  $x, y \in \Cat$, such that the product framing of
  $\nu_{x,y}\times \nu_{y,z}$ equals the pullback framing of
  $\circ^* \nu_{x, z}$ for all $x, y, z.$
\end{definition}

We introduce the following notion of a framed flow subcategory, which is analogous to a  subcomplex of a chain complex.

\begin{definition}\label{def:flowcat-sub-quotient}
  Let $\Cat$ be a framed flow category. Suppose we have a full subcategory $\Dat \subseteq \Cat$ such that
  $\Hom(x, y) =\emptyset$ for all $x \in \Dat$ and
  $y \in \Cat \setminus \Dat$. We let the embedding and framings of the morphism sets in $\Dat$ be 
  obtained by restriction from $\Cat$. We then call $\Dat$ a {\em framed
    flow subcategory} of $\Cat$.
\end{definition}

We can similarly define a {\em framed flow quotient category} of
$\Cat$, starting from some $\Dat \subseteq \Cat$ such that
$\Hom(x, y) =\emptyset$ for all $x \in \Cat \setminus \Dat$ and
$y \in \Dat$.

\begin{remark}
  In \cite[Definition 3.29]{LipshitzSarkar}, framed flow subcategories
  and quotient categories are called downward closed and upward closed
  subcategories, respectively.
\end{remark}

\subsection{Spectra}\label{sec:spectra-background}
In this section, we review notions related to classical (sequential)
spectra. For details, see \cite{Adams,BousfieldFriedlander,AbramKriz}.

A {\em spectrum} $X$ is a collection of based spaces $\{X_n\}_{n \geq 0}$ along with
structure maps $\sigma^X_n=\sigma_n\from X_n\smas S^1\to X_{n+1}$. The
map $\sigma_n$ induces maps $\wt{H}_k(X_n)\to\wt{H}_{k+1}(X_{n+1})$
and $\pi_k(X_n)\to\pi_{k+1}(X_{n+1})$; the homology and homotopy
groups of a spectrum $X$ are defined to be
\[
  H_k(X)=\colim_n \wt{H}_{k+n}(X_n)\qquad \pi_k(X)=\colim_n\pi_{k+n}(X_n).
\]
There are shift functors on spectra, given by $\sh(X)_n=X_{n+1}$ and
$\sh^{-1}(X)_n=X_{n-1}$ if $n>0$, and the basepoint if $n=0$.  

Given a spectrum $X$ and a based space $S$, the spectrum $S\smas X$ is
defined as
\[
  (S\smas X)_n\defeq S\smas X_n,\qquad \sigma^{S\smas X}_n\defeq \text{id}_S\smas \sigma^X_n\from S\smas X_n\smas S^1\to S\smas X_{n+1};
\]
and the spectrum $\Hom(S,X)$ is defined as
\begin{gather*}
  \Hom(S,X)_n\defeq \Hom(S,X_n),\text{ the based mapping space, and}\\
  \sigma^{\Hom(S,X)}_n\from \Hom(S,X_n)\smas S^1\to\Hom(S,X_{n+1}),\text{ given by }f\smas p\mapsto \big(s\mapsto \sigma^X_n(f(s)\smas p)\big).
\end{gather*}
If $S=S^k$, the functor $S\smas \cdot$ is called the $k\th$ suspension
functor $\Sigma^k$, and the the functor $\Hom(S,\cdot)$ is called the
$k\th$ loop functor $\Omega^k$.  The simplest example of a spectrum is
the sphere spectrum $\SphereS$ with
\[
  \SphereS_n\defeq S^n=\underbrace{S^1\smas\dots\smas S^1}_n,\qquad \sigma_n\defeq\Id\from S^n\smas S^1\to S^{n+1}.
\]
More generally, the {\em suspension spectrum} of a based space $X$ is the
spectrum $X\smas \SphereS$; its homology and homotopy groups are simply the reduced
homology and stable homotopy groups of the original space $X$:
\[
  H_*(X\smas\SphereS)\cong \wt{H}_*(X)\qquad \pi_*(X\smas\SphereS)\cong\pi_*^s(X).
\]
For any $k\in\ZZ$, define the spectrum $\SphereS^k$ as
$\Sigma^k\SphereS$ if $k\geq 0$, and $\Omega^{-k}\SphereS$ if
$k\leq 0$.

A {\em map of spectra}, $f\from X\to Y$, is a collection of maps
$f_n\from X_n\to Y_n$ such that
$f_{n+1}\circ\sigma^X_n=\sigma^Y_{n+1}\circ(f_n\smas S^1)\from
X_n\smas S^1\to Y_{n+1}$. The map is called a {\em weak
equivalence} if it induces
an isomorphism $\pi_*(X)\to\pi_*(Y)$; we will usually suppress the
adjective `weak'. If both $X$ and $Y$ are $n$-connected (that is, have
$\pi_k=0$ for all $k\leq n$) for some $n\in\ZZ$, then this is
equivalent to inducing isomorphism on homology $H_*(X)\to H_*(Y)$. The
stable homotopy category of spectra is obtained by inverting all the
equivalences. The functors $H_*$ and $\pi_*$ factor through the stable
homotopy category. Spectra also have a notion of homotopy colimits,
defined levelwise; and homotopy colimits also preserve
equivalences. In the stable homotopy category, there are isomorphisms
\[
  \Sigma^kX\simeq \sh^k(X)\qquad \Omega^kX \simeq \sh^{-k}(X).
\]
(Note that these isomorphisms are not induced by the natural-looking
levelwise maps $(S^k\smas X)_n=S^k\smas X_n\to (\sh^k(X))_n=X_{n+k}$
and $(\sh^{-k}(X))_n=X_{n-k}\to(\Omega^kX)_n=\Omega^kX_n$, since they
do not induce maps on spectra.) In particular, the $k\th$ suspension
functor $\Sigma^k$ and loop functor $\Omega^k$ are inverses to one
another: $X\cong \Omega^k\Sigma^kX$.  This isomorphism in the stable
homotopy category is realized by the equivalence
$X\to\Omega^k\Sigma^k X$, given levelwise
$X_n\to \Hom(S^k,S^k\smas X_n)$ by $x\mapsto (p\mapsto p\smas x)$;
therefore, we will be justified in referring to $\Omega^k$ as the
$k\th$ desuspension functor, and sometimes abusing notation to write
$\Sigma^{-k}$ instead of $\Omega^k$.

\subsection{From framed flow categories to spectra} 
\label{sec:tospectra}
We now review how to build a CW complex, and then a spectrum, from a
framed flow category (under some mild restrictions). We follow
\cite[Section 3.3]{LipshitzSarkar} (with the ordering of factors
reversed, and some additional minor modifications), which is in turn
inspired from \cite{CJS}.

Let $(\Cat, \iota, \phi)$ be a compact framed flow category, with
$\iota$ denoting the neat embedding, and $\phi$ the normal
framings. For now, we assume that the grading function
$\gr\from \Cat \to \Z$ is bounded, with image in some interval
$[B+1 , A]$ with $A, B \in \Z$. Let us further impose $B<0$. Let
\begin{equation}
  \label{eq:Cdba}
  C_d(A,B) \defeq (A-B)d-B-1.
\end{equation}

We construct a CW complex $|\Cat|_{\iota, \phi, A,B}$. We start with a single $0$-cell---the
basepoint---and then for each $x \in \Cat$, we attach a cell
$\Cell(x)$, inductively on the grading $\gr(x)=m$. The cell $\Cell(x)$
will have dimension $C_d(A,B) +m$.

Let us choose $\epsilon > 0$ sufficiently small so that for all $i$
and $j$, the embedding $\iota_{i, j}$ of $\bM_{i, j}$ into
$\E^d_{i,j}$ extends to an embedding of
$\bM_{i, j}\times [-\epsilon, \epsilon]^{(i-j)d}$ using the normal
framings. Choose $R$ sufficiently large so that for all $i$ and $j$,
the image
$\iota_{i,j}(\bM(i, j) \times [-\epsilon, \epsilon]^{(i-j)d})$ lies in
\[
  [-R, R]^d \times [0, R] \times \dots \times [0, R] \times [-R, R]^d \subset \RR^d\times\RR_+\times\dots\times\RR_+\times\RR^d=\E^d_{i,j}.
\]

Let us suppose we attached all the lower dimensional cells and we want to attach $\Cell(x)$, where $\gr(x)=m$. Define
\[
  \Cell(x)=\underbrace{[-\epsilon,\epsilon]^d\times
    \{0\}\times\dots\times\{0\}\times[-\epsilon,\epsilon]^d}_{\subset\E^d_{A,m}}\times
  \{0\}\times \underbrace{[-R, R]^d \times [0, R] \times \dots \times
    [0, R] \times [-R, R]^d}_{\subset\E^d_{m,B}} \subset \E^d_{A,B}.
\]
(There are $A-m$ instances of $\{0\}$ and $[-\epsilon,\epsilon]^d$,
$m-B$ instances of $[-R,R]^d$, and $m-B-1$ instances of $[0,R]$.)  For
any $1\leq i<m-B$, define $\bdy_i \Cell(x)\subset \Cell(x)$ to be the
subset where the coordinate in the $i\th$ $[0,R]$ factor is $0$. (This
is a slight abuse of notation since we are not defining $\Cell(x)$ as
a $\langle m-B-1\rangle$-manifold; however, had we replaced all
instances of $[-R,R]$, $[0,R]$, and $[-\epsilon,\epsilon]$ with $\RR$,
$\RR_+$, and $(-\epsilon,\epsilon)$, respectively, then $\Cell(x)$
would have had a natural $\langle m-B-1\rangle$-manifold structure,
and $\bdy_i\Cell(x)$ would have been the correct definition.)

To see how to attach $\Cell(x)$ to a lower cell $\Cell(y)$ where
$\gr(y)=l$, consider the neat embedding $\iota_{x, y}$ (extended using
the framing $\phi$):
\[
  \bM(x, y) \times [-\epsilon,\epsilon]^d \times\{0\}\times\dots\times
\{0\} \times [-\epsilon,\epsilon]^d\into[-R, R]^d \times [0, R] \times \cdots \times [0, R] \times [-R,
R]^d\subset\E^d_{m,l}.
\]
Extending by $\Id$ on
$[-\epsilon,\epsilon]^d\times\dots\times\{0\}\times[-\epsilon,\epsilon]^d\subset\E^d_{A,m}$
and $[-R,R]^d\times\dots\times[0,R]\times[-R,R]^d\subset\E^d_{l,B}$,
we get an embedding of 
\begin{gather*}
  \bM(x,y)\times\Cell(y)=\bM(x,y)\times \underbrace{[-\epsilon,\epsilon]^d\times\dots\times\{0\}\times[-\epsilon,\epsilon]^d}_{\subset\E^d_{A,m}}\times
  \{0\}\times\underbrace{[-\epsilon,\epsilon]^d\times\dots\times\{0\}\times[-\epsilon,\epsilon]^d}_{\subset\E^d_{m,l}}\\
  \times\{0\}\times \underbrace{[-R, R]^d \times \dots \times
    [0, R] \times [-R, R]^d}_{\subset\E^d_{l,B}}
\end{gather*}
into 
\begin{gather*}
  \bdy_{m-l}\Cell(y)=\underbrace{[-\epsilon,\epsilon]^d\times\dots\times\{0\}\times[-\epsilon,\epsilon]^d}_{\subset\E^d_{A,m}}\times
  \{0\}\times\underbrace{[-R,R]^d\times\dots\times[0,R]\times[-R,R]^d}_{\subset\E^d_{m,l}}\\
  \times\{0\}\times \underbrace{[-R, R]^d \times \dots \times
    [0, R] \times [-R, R]^d}_{\subset\E^d_{l,B}}.
\end{gather*}
Let $\Cell_y(x)\subset\bdy_{m-l}\Cell(x)$ be the image of the
embedding. We then define the attaching map from $\del \Cell(x)$ to
the lower skeleton to be the projection to $\Cell(y)$ on each
$\Cell_y(x) \cong \bM(x, y)\times \Cell(y)$, and the basepoint map on
$\bdy\Cell(x)\setminus\bigcup_y\Cell_y(x)$.

After attaching all the cells $\Cell(x)$, we obtain the desired CW
complex $|\Cat|_{\iota, \phi, A,B}$. Its dependence on $A$ and $B$ is
explained in \cite[Lemma 3.26]{LipshitzSarkar}. It is proved there
that, if we have $B' \leq B$ and $A ' \geq A$, then there is a
homotopy equivalence
\begin{equation}
  \label{eq:Sp}
  \Sigma^{C_d(A', B')-C_d(A,B)} |\Cat|_{\iota, \phi, A,B} \xrightarrow{\sim} |\Cat|_{\iota, \phi, A',B'}.
\end{equation}
In more detail, consider
\[
  \E^d_{A,B}=\RR^d\times\RR_+\times\RR^d\times\dots\times\RR_+\times\RR^d,
\]
and let $T_d(A,B)$ be the set of the $d(A-B)$ $\RR$-factors appearing
in the above formula, union the set of the $(-B-1)$ $\RR_+$-factors
appearing in position below $0$, that is, in positions
$-1,-2,\dots,B+1$. Then $T_d(A,B)$ is a set of cardinality $C_d(A,B)$,
and let
\begin{equation}\label{eq:SdAB-sphere}
  S_d(A,B)=\bigwedge_{T_d(A,B)} S^1
\end{equation}
be the corresponding $C_d(A,B)$-dimensional sphere. The homotopy equivalence from Equation~\eqref{eq:Sp} is actually
\begin{equation}
  \label{eq:Sp-again}
  \big(\bigwedge_{\substack{T_d(A',B')\\\setminus T_d(A,B)}}S^1\big) \smas |\Cat|_{\iota, \phi, A,B} \xrightarrow{\sim} |\Cat|_{\iota, \phi, A',B'}.
\end{equation}

This observation allows us to
define the following spectrum canonically:
\[
  \Omega^{C_d(A,B)}(|\Cat|_{\iota, \phi, A,B}\smas\SphereS)=\Hom(S_d(A,B),|\Cat|_{\iota, \phi, A,B}\smas\SphereS).
\]
Given $B'\leq B$ and $A'\geq A$, we get an equivalence
\begin{equation}\label{eq:Cat-Catprime-spectral-relation}
\begin{split}
  \Omega^{C_d(A,B)}|\Cat|_{\iota, \phi, A,B}\smas\SphereS&\xrightarrow{\sim} \Omega^{C_d(A,B)}\Omega^{C_d(A',B')-C_d(A,B)}\Sigma^{C_d(A',B')-C_d(A,B)}|\Cat|_{\iota, \phi, A,B}\smas\SphereS\\
  &\xrightarrow{\sim} \Omega^{C_d(A',B')}|\Cat|_{\iota,\phi,A',B'}\smas\SphereS.
\end{split}
\end{equation}
Here, the first map is induced from the map $X\to \Omega^k\Sigma^k X$ from Section~\ref{sec:spectra-background}, with $k=C_d(A',B')-C_d(A,B)$, and $\Omega^k,\Sigma^k$
being the functors $\Hom(S,\cdot)$, and $S\smas\cdot$, with
$S=\bigwedge_{T_d(A',B')\setminus T_d(A,B)}S^1$, and the second map is
induced by Equation~\eqref{eq:Sp-again}. Now define the spectrum
\begin{equation}
  \label{eq:SDef}
  \Sp(\Cat, \iota, \phi) \defeq \hocolim_{A,B} \Omega^{C_d(A,B)}|\Cat|_{\iota, \phi, A,B}\smas\SphereS,
\end{equation}
where the homotopy colimit is taken using the maps from
Equation~\eqref{eq:Cat-Catprime-spectral-relation}, as $A \to \infty$
and $B \to -\infty$.

So far we have worked under the assumption that the grading function
$\gr$ is bounded. Let us relax this assumption by requiring only that
$\gr$ is bounded below, by some constant $B+1$. For every $K\in \Z$,
there is a compact framed flow subcategory $\Cat_{\leq K}$ of $\Cat$,
whose objects are those $x \in \Cat$ with $\gr(x) \leq K$. For any
$K\leq A$, we get spaces $|\Cat_{\leq K}|_{\iota,\phi,A,B}$, and for
any $K \leq K' \leq A$, we have inclusions
\[
  |\Cat_{\leq K}|_{\iota, \phi, A,B} \hookrightarrow  |\Cat_{\leq K'}|_{\iota, \phi, A,B}.
\]
Taking the homotopy colimit as $A,K\to \infty$ and $B\to-\infty$ subject to the restriction $K\leq A$, we define
\begin{equation}\label{eq:triple-hocolim}
  \Sp(\Cat, \iota, \phi) \defeq \hocolim_{A,B,K}  \Omega^{C_d(A,B)}|\Cat_{\leq K}|_{\iota, \phi, A,B}\smas\SphereS.
\end{equation}
Thus, we have spectra associated to compact framed flow categories
even when $\gr$ is only bounded below. 

Given a compact framed flow category $(\Cat,\iota,\phi)$, there is an \emph{associated
chain complex $C_*(\Cat,\iota,\phi)$} whose chain group is freely generated by the
objects with homological grading given by the $\gr$ function, and
whose differential is given by
\begin{equation}\label{eq:flowcat-to-chaincx}
  \bdy x=\sum_{\substack{y\in\Cat\\\gr(y)=\gr(x)-1}}\#\bM(x,y) y.
\end{equation}
(Here $\#\bM(x,y)$ is the signed count of points in the compact framed
$0$-manifold $\bM(x,y)$.) When $\gr$ is bounded below, it follows
from~\cite[Lemma 3.24]{LipshitzSarkar} that this chain complex is isomorphic to the shifted reduced cellular chain complex of $|\Cat|_{\iota,\phi,A,B}$,
\begin{equation}\label{eq:flowcat-to-chaincx}
  C_*(\Cat,\iota,\phi)\cong\Sigma^{-C_d(A,B)}\wt{C}^{\mathrm{CW}}_*(|\Cat|_{\iota,\phi,A,B}),
\end{equation}
and therefore, its homology is isomorphic to the
homology of the spectrum $\Sp(\Cat,\iota,\phi)$:
\begin{equation}\label{eq:flowcat-to-homology}
  H_*(\Cat,\iota,\phi)\cong H_*(\Sp(\Cat,\iota,\phi)).
\end{equation}

\begin{remark}
  If $\gr$ is not bounded below, we have compact framed flow categories
  $\Cat_{\geq L}$ with objects those $x \in \Cat$ with
  $\gr(x) \geq L$. There are associated spectra
  $\Sp(\Cat_{\geq L}, \iota, \phi)$. Projections between CW complexes
  (collapsing the lower dimensional cells up to some degree) induce
  maps
  \[
    \Sp(\Cat_{\geq L'}, \iota, \phi) \leftarrow \Sp(\Cat_{\geq L},
    \iota, \phi)
  \]
  for all $L' \leq L$. This gives an inverse system
  of spectra, i.e., a pro-spectrum as in \cite{CJS}. In this case, we
  may define $\Sp(\Cat, \iota, \phi)$ to be this pro-spectrum. This
  would be useful if one were interested in a minus version of the
  knot Floer spectrum (which will not be discussed in the current
  paper).
\end{remark}

\subsection{Filtrations}
\label{sec:filteredsp}
We make the following definition, which is somewhat non-standard, but sufficient for our purposes.

\begin{definition}
  Let $H$ be a partially ordered set, and $X$ a spectrum. Let $H^+$ be
  $H$ with an additional terminal element $\infty$. An {\em
    $H$-filtration on $X$} is a functor from $H^+$ to the category of
  spectra, such that $\infty$ is mapped to $X$.

  More concretely, an $H$-filtration is a collection of spectra $X(h)$
  for $h \in H$, together with maps
  \[
    \psi_{h, h'} \from X(h) \to X(h') \ \text{for all } h \leq h'
  \]
  and
  \[
    \psi_h\from X(h) \to X \ \text{for all } h,
  \]
  satisfying: $\psi_{h, h} = \text{id}$,
  $\psi_{h', h''}\circ \psi_{h, h'}=\psi_{h, h''}$ and
  $\psi_h = \psi_{h'} \circ \psi_{h, h'}$, whenever
  $h \leq h' \leq h''$.
\end{definition}

\begin{definition}
  Let $X$ and $Y$ be $H$-filtered spectra with maps
  $\psi_h,\psi_{h, h'}$ and $\phi_h,\phi_{h, h'}$, respectively.
  \begin{enumerate}[label=(\alph*),leftmargin=*]
  \item An {\em $H$-filtered map} from $X$ to $Y$ consists of maps of
    spectra $f\from X \to Y$ and $f_h\from X(h) \to Y(h)$ for all
    $h \in H$, such that
    $\phi_{h, h'} \circ f_h = f_{h'} \circ \psi_{h, h'}$ and
    $\phi_h \circ f_h = f \circ \psi_h$ for all $h \leq h'$. (By a
    slight abuse of notation, we use $f$ to also denote to the whole
    data of a $H$-filtered map.)
  \item An $H$-filtered map $f\from X \to Y$ is called an {\em
      $H$-filtered equivalence} if $f$ and $f_h$ (for all
    $h \in H$) are equivalences of spectra.
  \end{enumerate}
\end{definition}

Filtrations on spectra arise in our setting as follows. 
\begin{definition}\label{def:H-filtered-flow-cat}
  Let $H$ be a partially ordered set. An {\em $H$-filtered framed flow
    category} consists of a framed flow category $(\Cat, \iota, \phi)$
  and a collection of framed flow subcategories
  $(\Cat(h), \iota, \phi)$ for $h \in H$, with
  $\Cat(h) \subseteq \Cat(h')$ whenever $h \leq h'$. (By a slight
  abuse of notation, we still use $\iota$ and $\phi$ to denote their
  restrictions.)
\end{definition}

Suppose we have a compact $H$-filtered framed flow category, with $\gr$
bounded below. Then, we obtain inclusions of the form
\[
  |\Cat(h)_{\leq K}|_{\iota, \phi, A,B} \hookrightarrow |\Cat_{\leq
    K}|_{\iota, \phi, A,B}.
\]
Taking homotopy colimits, we arrive at an $H$-filtration on
$\Sp(\Cat, \iota, \phi)$ made of the spectra
$\Sp(\Cat(h), \iota, \phi)$ for $h \in H$.

Furthermore, given an $H$-filtered framed flow category and $h \in H$,
we can define the associated graded framed flow category $g\Cat(h)$
using objects that are in $\Cat(h)$ but not in any $\Cat(h')$ for
$h' < h$. If compact with grading bounded below, the corresponding
spectrum is denoted $g\Sp(\Cat(h), \iota, \phi)$. The associated
graded spectrum for the whole filtration is set to be
\[
  g\Sp(\Cat, \iota, \phi) = \bigvee_{h \in H} g\Sp(\Cat(h), \iota, \phi).
\]

\subsection{The knot Floer spectrum}
\label{sec:kfs}
We now define the multi-filtered spectrum $\GS(\Grid)$ from a grid
diagram $\Grid$, as advertised in the introduction.

In Section~\ref{sec:gluing} we glued together the moduli spaces
$\bM_0(D)$ for $D$ in the same equivalence class, with the result
being smooth $\langle k \rangle$-manifolds $\bModuli([D])$. Since
$\vN =\vec{0}$, the thick dimension of these moduli spaces equals
their actual dimension $k$, so the internal framings are empty. We
have neat embeddings of $\bM([D])$ into $\E^d_l$, as well as normal
(external) framings for these embeddings.

Recall from Section~\ref{sec:grid-complex-background} that the grid
complex $\Cp(\Grid)$ has generators
\[
  [x, j_2, \dots, j_n] = U_2^{-j_2} \dots U_n^{-j_n} x
\]
for $x \in \S$ and $j_2, \dots, j_n \in \NN$. The generators also have
Alexander gradings $(A_1, \dots, A_{\ell}) \in (\hZ)^\ell$, one for
each component of the link $L$ represented by $\Grid$.

We define a framed flow category $\Cat(\Grid)$ as follows. The objects are the generators of $\Cp$, and we let $\gr$ be the Maslov grading. Given two objects $[x, j_2, \dots, j_n]$ and $[y, i_2, \dots, i_n]$, consider the equivalence class $[D]$  made of domains $D \in \pdomains(x, y)$ with $\Os(D) = (j_2 - i_2, \dots, j_n - i_n)$. We let 
\begin{equation}\label{eq:gridmodulispaces-final}
  \bM([x, j_2, \dots, j_n], [y, i_2, \dots, i_n])=\bM([D]).
\end{equation}
The
enumeration of strata in Section~\ref{sec:enum} ensures that the
conditions in the definition of a flow category are
satisfied. Furthermore, the neat embeddings and the external framings
turn $\Cat(\Grid)$ into a framed flow category. The compatibility
conditions in Definitions~\ref{def:necat} and \ref{def:ffc} are
satisfied because the moduli spaces (along with their embeddings and
framings) in Section~\ref{sec:construction} were constructed
coherently with respect to the lower dimensional strata.

There are finitely many generators in each Maslov grading, so
$\Cat(\Grid)$ is compact, and the generators are bounded below in
Maslov grading; therefore, we obtain a {\em knot Floer spectrum}
\[
  \GS(\Grid)\defeq \Sp(\Cat(\Grid)).
\]

For each $h=(h_1, \dots, h_\ell) \in (\hZ)^\ell$, we define a framed flow subcategory $\Cat(\Grid, h) \subset \Cat(\Grid)$, using only the generators of $\Cp$ with Alexander multi-grading $\leq h$. From here we obtain spectra $\GS(\Grid, h)$, producing a $(\hZ)^\ell$-filtration on $\GS(\Grid).$

There is an additional structure given by the $U_i$ maps.  Consider the framed flow quotient category obtained from  $\Cat(\Grid)$ by removing the objects $[x, j_2, \dots, j_n]$ where $j_i =0$. By mapping 
\[ [x, j_2, \dots, j_i, \dots, j_n] \mapsto [x, j_2, \dots, j_i -1,
  \dots, j_n] \] we get an isomorphism between this quotient category
and $\Sigma^2\Cat(\Grid)$, a category which is the same as
$\Cat(\Grid)$ except the Maslov grading is shifted by $2$. At the
level of the associated CW complexes and then spectra, we obtain a map
\[ U_i \from \GS(\Grid) \to \Sigma^2\GS(\Grid)\]
given by collapsing the cells corresponding to generators $[x, j_1, \dots, j_n]$ where $j_i =0$. 

The $U_i$ maps interact with the multi-filtration as follows. Suppose the marking $U_i$ lies on the $k^{\text{\tiny th}}$ component of the link. Then, we get commutative diagrams
\[ 
  \xymatrix{
    \GS(\Grid, h) \ar[r]^-{U_i} \ar[d] & \Sigma^2\GS(\Grid, h - \ve_k) \ar[d]\\
    \GS(\Grid) \ar[r]^-{U_i} &  \Sigma^2\GS(\Grid),
  }
\]
where $\ve_k$ is the unit vector in the $k\th$ coordinate.

\subsection{Other versions}
Two other variants of grid complexes, $\Chat$ and $\Ct$, were
mentioned at the end of Section~\ref{sec:grid-complex-background}. We
construct knot Floer spectra $\widehat{\GS}(\Grid)$ and
$\widetilde{\GS}(\Grid)$ in the same way as we did for $\Cp$, but
using fewer generators to define the framed Floer categories. In the
case of $\Chat$, we only use those $[x, j_1, \dots, j_n]$ where
$j_i=0$ for one index $i$ chosen from the $O$-markings on each link
component. In the case of $\Ct$, we only use the generators where
$j_i=0$ for all $i$. We then take the framed flow subcategories of
$\Cat(\Grid)$ with those generators as objects.

We also have associated graded framed flow categories, with the
resulting spectra denoted $\gGS(\Grid)$, $\ghGS(\Grid)$ and
$\gtGS(\Grid).$ In particular, we have a decomposition
\[
  \ghGS(\Grid) = \bigvee_{h \in (\hZ)^{\ell}} \ghGS(\Grid, h).
\]
By construction, the (reduced) homology of $\ghGS(\Grid, h)$ is the link Floer homology in Alexander multi-grading $h$:
\[
  H_i(\ghGS(\Grid, h); \Z) = \widehat{\HFL}_{i}(L, h).
\]

\begin{remark}
  \label{rem:limitedU}
  If $O_i$ and $O_j$ are two markings on the same link component, one
  can show that the $U_i$ and $U_j$ actions on a grid complex are
  filtered chain homotopic. (See Lemma 2.11 in
  \cite{MOSzT-hf-combinatorial} for a proof for the minus version.)
  Although we will not pursue it here, one should be able to adapt the
  proof to the Floer spectra. This would imply that it suffices to
  consider one $U_i$ map on $\GS(\Grid)$ for each link component not
  containing $O_1$. For the link component containing $O_1$, the
  corresponding map is trivial. In particular, in the case of knots,
  all the $U_i$ maps should be trivial. Another consequence is that
  the tilde version $\tGS(\Grid)$ should be equivalent to the wedge
  sum of several (shifted) copies of the hat version $\hGS(\Grid)$.
\end{remark}

\section{Invariance}
\label{sec:invariance}

In this section we prove Theorem~\ref{thm:main}. That is, we fix the
grid $\Grid$ and the marking $O_1$, and prove that the filtered
stable homotopy type of $\GS(\Grid)$ is independent of the further
auxiliary choices. Indeed, we will prove that framed flow categories
associated to two sets of choices are related by a map, which induces
an equivalence between the filtered spectra. Towards this end, we will
discuss the notion of maps of framed flow categories in
Section~\ref{sec:invariance-bimodules}, mention the relevant
modifications in the definition for grid diagrams when we study more
general stratified spaces in the presence of bubbles in
Section~\ref{sec:invariance-bimodules-bubbles}, and finally list all
the auxiliary choices and prove invariance in
Section~\ref{sec:invariance-choices}.

\subsection{Bimodules}\label{sec:invariance-bimodules}
Morphisms of framed flow categories are called bimodules, and they
induce maps of spectra associated to the the two framed flow
categories. Different versions of this notion have appeared in the
literature; see \cite[Def.~3.6]{Large},
\cite[Section~4]{ABFoundations}, \cite[Def.~3.1]{CoteKartal} and
\cite[Def.~6.1]{Bonciocat}. (It also appears implicitly in
\cite[Lemma~3.32]{LipshitzSarkar}.) Many of these definitions work
with flow categories where the moduli spaces are stably tangentially
framed; this is different from the notion that we have used in this
paper---our moduli spaces were embedded in specific Euclidean spaces
$\E^d_{i,j}$ and normally framed. Therefore, we will give a different
definition of bimodules of framed flow categories, modeled on normally
framed moduli spaces; this definition will also be more amenable to
the modifications in presence of bubbles (in
Section~\ref{sec:invariance-bimodules-bubbles}).

Towards this end, we need a mild generalization of our notion of
$\langle n\rangle$-manifolds from Section~\ref{sec:nmflds}. For any
poset $P$, a $\langle P\rangle$-manifold is a multifaceted
manifold $X$, together with multifacets $\bdy_i X$ for all $i\in P$
such that
\begin{itemize}
\item $\bigcup_{i\in P}\bdy_i X=\bdy X$;
\item $\bdy_i X\cap \bdy_j X$ is a multifacet of both $\bdy_i X$ and $\bdy_j X$ for all $i\neq j$;
\item $\bdy_iX\cap\bdy_j X=\varnothing$ if $i,j$ are not comparable in the poset $P$.
\end{itemize}
Reusing notation from Equation~\eqref{eq:deliX}, we will let
$\bdy_I X\defeq \bigcap_{i\in I}\bdy_i X$ for any subset $I\subset P$;
unless $I$ is a chain (i.e., a totally ordered subset) of $P$, the multifacet 
$\bdy_I X$ is empty.

The special case when
$P=n\defeq\{1<2<\dots<n\}$ produces $\langle n\rangle$-manifolds; we
will be interested in the case when $P$ is the poset $2\ltimes n$
whose elements are
$\{0\}\times\{1,2,\dots,n\}\cup\{1\}\times\{0,1,\dots,n-1\}$, and the
partial order is given by $ (i_0,i_1)\leq (j_0,j_1)$ if and only if
$i_0\leq j_0,i_1\leq j_1$.

We are now ready to present the definition of flow bimodules
$\Bat\from\Cat\to\Cat'$, which is essentially
\cite[Def.~3.1]{CoteKartal}. Let $\bM(x, y)$ and $\bM'(x, y)$ denote the compactified moduli spaces in the flow categories $\Cat$ and $\Cat'$, respectively.

The central idea is that to each
$x\in\Cat,y\in\Cat'$, a flow bimodule will associate a multifaceted
manifold $\bN(x,y)$, as well as maps
$\bM(x,y)\times\bN(y,z)\to\bdy\bN(x,z)$ (for any
$x,y\in\Cat,z\in\Cat'$) and $\bN(x,y)\times\bM'(y,z)\to\bdy\bN(x,z)$
(for any $x\in\Cat,y,z\in\Cat'$). If $\gr(x)-\gr(z)=k$, then
$\bN(x,z)$ will be $k$-dimensional and its boundary will consist of
$2k$ multifacets, which are naturally indexed by the poset
$2\ltimes k$: for $m=1,\dots,k$, the multifacet $\bdy_{0,m}\bN(x,z)$
is the image of the maps $\bM(x,y)\times\bN(y,z)$ with
$\gr(y)=\gr(x)-m$, and for $m=0,\dots,k-1$, the multifacet
$\bdy_{1,m}\bN(x,z)$ is the image of the maps
$\bN(x,y)\times\bM'(y,z)$ with $\gr(y)=\gr(x)-m$. See the left half of Figure~\ref{fig:bimodules-explanatory}.

\begin{definition}\label{def:bimodule-basic}
  Let $\Cat$ and $\Cat'$ be two flow categories. A {\em flow
    bimodule} $\Bat\from\Cat\to\Cat'$ consists of the following data:
  \begin{itemize}
  \item For all $x\in\Cat,y\in\Cat'$ with $\gr(x)-\gr(y)=k$, a compact
    $k$-dimensional
    $\langle 2\ltimes k\rangle$-manifold $\bN(x,y)$;
  \item For all $x,y\in\Cat,z\in\Cat'$ with $\gr(x)-\gr(y)=m$, a map
    \[
      \circ \from \bM(x,y)\times\bN(y,z)\to \bN(x,z),
    \]
    which is an embedding into $\bdy_{0,m}\bN(x,z)$, satisfying
    \[
      \circ^{-1}(\bdy_{i_0,i_1}\bN(x,z))=\begin{cases}
        \bdy_{i_1}\bM(x,y)\times\bN(y,z)&\text{for $i_0=0,i_1<m$}\\
        \bM(x,y)\times\bdy_{0,i_1-m}\bN(y,z)&\text{for $i_0=0,i_1>m$}\\
        \bM(x,y)\times\bdy_{1,i_1-m}\bN(y,z)&\text{for $i_0=1,i_1\geq m$};
      \end{cases}
    \]
    moreover, these maps induce a diffeomorphism
    \[
      \bdy_{0,m}\bN(x,z)\cong\coprod_{\{y\mid\gr(y)=\gr(x)-m\}}\bM(x,y)\times\bN(y,z);
    \]
  \item For all $x\in\Cat,y,z\in\Cat'$ with $\gr(x)-\gr(y)=m$, a map
    \[
      \circ \from \bN(x,y)\times\bM'(y,z)\to \bN(x,z),
    \]
    which is an embedding into $\bdy_{1,m}\bN(x,z)$, satisfying
    \[
      \circ^{-1}(\bdy_{i_0,i_1}\bN(x,z))=\begin{cases}
        \bdy_{0,i_1}\bN(x,y)\times\bM'(y,z)&\text{for $i_0=0,i_1\leq m$}\\
        \bdy_{1,i_1}\bN(x,y)\times\bM'(y,z)&\text{for $i_0=1,i_1<m$}\\
        \bN(x,y)\times\bdy_{i_1-m}\bM'(y,z)&\text{for $i_0=1,i_1> m$};
      \end{cases}
    \]
    moreover, these maps induce a diffeomorphism
    \[
      \bdy_{1,m}\bN(x,z)\cong\coprod_{\{y\mid\gr(y)=\gr(x)-m\}}\bN(x,y)\times\bM'(y,z).
    \]
  \end{itemize}
\end{definition}

Note that we may view the $\langle 2\ltimes k\rangle$-manifold
$\bN(x,y)$ as a $\langle k\rangle$-manifold by declaring
$\bdy_i\bN(x,y)=\bdy_{0,i}\bN(x,y)\amalg\bdy_{1,i-1}\bN(x,y)$. This
forgets a little bit of its structure, but after that, a flow bimodule
is simply a flow category $\Bat$ with $\Cat'$ as a subcategory and
$\Sigma\Cat$ as the corresponding quotient category (where
$\Sigma\Cat$ is the flow category obtained from $\Cat$ by increasing
the grading of each object by $1$).

Recall from Definitions~\ref{def:necat} and~\ref{def:ffc} that we defined
framed flow categories by coherently embedding and framing the moduli
spaces $\bM(i,j)$ in the spaces $\E^d_{i,j}$. These spaces
$\E^d_{i,j}$ had the salient feautures that they were also
$\langle i-j-1\rangle$-manifolds, and had maps---indeed
diffeomorphisms---of the form 
\[\
  \E^d_{i,j}\times\E^d_{j,k}\to\bdy_{i-j}\E^d_{i,k}\subset
  \E^d_{i,k}.
\]
While not necessary, it was also useful that the spaces
$\E^d_{i,j}$ were polyhedra, that is, intersections of finitely many
half-spaces in some $\RR^n$.

In order to define framed flow
bimodules connecting framed flow categories, we need to embed and
frame these spaces $\bN(x,y)$ in certain new spaces, which we now
describe. As before, let
\[
  \bN(i,j)=\coprod_{x\in\Cat_i,y\in\Cat'_j}\bN(x,y),
\]
and call a flow bimodule \emph{compact} if $\bN(i,j)$ is compact for
all $i,j$.  Fix integers $d,e$, which we may assume are even in order
to make sign calculations easier. We will now construct polyhedra
$\FF^{d,e}_{i,j}$ (with similar properties as the spaces $\E^d_{i,j}$)
where the spaces $\bN(i,j)$ will be embedded. Indeed, we will define
spaces $\FF^{d,e}_n$, and set $\FF^{d,e}_{i,j}=\FF^{d,e}_{i-j}$.

Recall the $n$-dimensional permutohedron
$\Pi_{n+1}\subset\Affine^{n}\subset\RR^{n+1}$ from
Section~\ref{sec:permutohedron-basic}. Consider $(n+1)$ of its vertices
\begin{align*}
  w_0&=v_{[(n+1)12\cdots n]}=(2,3,\dots,n+1,1),\\
  w_1&=v_{[1(n+1)2\cdots n]}=(1,3,\dots,n+1,2),\\
  &\qquad\qquad\cdots\\
  w_{n}&=v_{[12\cdots n(n+1)]}=(1,2,\dots,n,n+1).
\end{align*}
(The left half of Figure~\ref{fig:bimodules-explanatory} sheds light
on our motive for considering these vertices.)  Consider the facets
$F_S$ that contain at least one of these vertices. From
Item~\ref{item:permuto-facet-vertices}, there are exactly $2n$ such
facets $F_{\{1\}}$, $F_{\{n+1\}}$, $F_{\{1,2\}}$, $F_{\{1,n+1\}}$,
$F_{\{1,2,3\}}$, $F_{\{1,2,n+1\}},\ldots$; that is, $S$ is either of the form $\{1,2,\dots, k\}$ or $\{1, 2, \dots, k-1, n+1\}$. Consider the corresponding
half-spaces $\HalfSpace_S\subset\Affine^n$, and define $\wt{\Delta}^n$
to be the intersection of those half-spaces. The polyhedron
$\wt{\Delta}^n$ has vertices $w_0,\dots,w_{n}$, and has $2n$ facets,
which we denote $G_S=\wt{\Delta}^n\cap\bdy\HalfSpace_S$. See
Figures~\ref{fig:tilde-delta-first-few}
and~\ref{fig:tilde-delta-explicit}.

\begin{figure}
  \centering
  \begin{tikzpicture}
    \foreach\n/\xsh in {1/0,2/0.8,3/2}{
      \begin{scope}[xshift=5.5*\xsh cm]

        \ifnum\n=0
        \node at (0,0) {$\bullet$};
        \node[anchor=south] at (0,0) {\small $w_0$};
        \fi

        \ifnum \n=1
        \draw[thick] (0,0)--(0,1) node[pos=0] {$\bullet$} node[pos=0,anchor=north,align=center] {\small $w_0=G_{\{2\}}$\\\small$=\bdy_{1,0}$} node[pos=1] {$\bullet$} node[pos=1,anchor=south,align=center] {\small $w_1=G_{\{1\}}$\\\small $=\bdy_{0,1}$};
        \fi

        \ifnum \n=2
        \coordinate (w1) at (-1,1);
        \coordinate (w2) at (0,0);
        \coordinate (w3) at (1,1);
        \coordinate (d) at (0,2);

        \fill[black!30] (w1)--(w2)--(w3);
        \draw[thick] (w1)--(w2)--(w3);
        \draw[thick,dashed] (w1)--(w3);
        \draw[thick,-latex] (w1)--($(w1)+(d)$);
        \draw[thick,-latex](w3)--($(w3)+(d)$) node[pos=1,anchor=south,align=center] {\small $e_1-e_2$\\\small$=\vv_1$};
        \foreach \i in {1,2,3}{\node at (w\i) {$\bullet$};}

        \node[anchor=east] at (w1) {\small $w_0$};
        \node[anchor=north] at (w2) {\small $w_1$};
        \node[anchor=west] at (w3) {\small $w_2$};

        \node[anchor=east,align=right] at ($(w1)+0.5*(d)$) {\small $G_{\{3\}}$\\\small $=\bdy_{1,0}$};
        \node[anchor=north east,align=right] at ($(w1)!0.5!(w2)$) {\small $G_{\{1,3\}}$\\\small$=\bdy_{1,1}$};
        \node[anchor=north west,align=left] at ($(w2)!0.5!(w3)$) {\small $G_{\{1\}}$\\\small$=\bdy_{0,1}$};
        \node[anchor=west,align=left] at ($(w3)+0.5*(d)$) {\small $G_{\{1,2\}}$\\\small $=\bdy_{0,2}$};
        
        \fi

        \ifnum \n=3

        \begin{scope}[scale=3,x={(-1cm,0.2cm)},y={(0.8cm,0.4cm)},z={(0,1cm)}]
          \coordinate (w1) at (0,0,0);
          \coordinate (w2) at (0,-0.3,0.3333);
          \coordinate (w3) at (-0.3,0,0.6667);
          \coordinate (w4) at (0,0,1);

          \coordinate (e1) at (0.7,0,0);
          \coordinate (e2) at (0,0.7,0);

          \fill[black!30] (w1)--(w2)--(w4)--(w3);
          \draw[thick] (w1)--(w2)--(w3)--(w4);
          \draw[thick,dashed] (w3)--(w1)--(w4)--(w2);

          \foreach\i in {1,2}{\draw[thick,-latex] (w\i)--($(w\i)+(e1)$);}
          \foreach\i in {1,3}{\draw[thick,-latex] (w\i)--($(w\i)+(e2)$);}

          \draw[thick,-latex] (w4)--($(w4)+(e1)$) node[inner sep=0,outer sep=0,pos=1,anchor=south east,align=right] {\small $e_2-e_3$\\\small$=\vv_2$};
          \draw[thick,-latex] (w4)--($(w4)+(e2)$) node[inner sep=0,outer sep=0,pos=1,anchor=south west,align=left] {\small $e_1-e_2$\\\small$=\vv_1$};

          \foreach\i in {1,2,3,4}{\node at (w\i) {$\bullet$};}

        \node[anchor=north] at (w1) {\small $w_0$};
        \node[anchor=south east] at (w2) {\small $w_1$};
        \node[anchor=north west] at (w3) {\small $w_2$};
        \node[anchor=south] at (w4) {\small $w_3$};

        \node[anchor=east,align=left] at ($(w1)!0.5!(w2)+(e1)$) {\small $G_{\{1,4\}}$\\\small $=\bdy_{1,1}$};
        \node[align=right] at ($(w1)!0.5!(w3)+(e2)$) {\small $G_{\{1,2,4\}}$\\\small $=\bdy_{1,2}$};
        \node[align=left] at ($(w2)!0.5!(w4)+(e1)$) {\small $G_{\{1\}}$\\\small$=\bdy_{0,1}$};
        \node[align=right] at ($(w3)!0.5!(w4)+(e2)$) {\small $G_{\{1,2\}}$\\\small $=\bdy_{0,2}$};
        \node at ($(w4)+0.8*(e1)+0.8*(e2)$) {\small $G_{\{1,2,3\}}=\bdy_{0,3}$};
          
        \end{scope}
        \fi

        \node[anchor=north] at (0,-1) {$n=\n$};
        
      \end{scope}
    }
  \end{tikzpicture}
  \caption{The polyhedra $\wt{\Delta}^n$ for small values of $n$. (We
    have skipped the $n=0$ case of a point.) We have also shown the
    vertices $w_i$ and the facets $G_S=\bdy_{i_0,i_1}$ (except the
    facet $G_{\{4\}}=\bdy_{1,0}$ for $n=3$, which is at the bottom and
    hence not labeled). The vectors $\vv_1,\dots,\vv_{n-1}$ are
    labeled and the simplex $\Delta^n$ is shaded and its edges are
    dashed (the dashed lines are not edges of
    $\wt{\Delta}^n$).}\label{fig:tilde-delta-first-few}
\end{figure}

\begin{figure}
  \centering
  \begin{tikzpicture}[scale=0.8,%
    x={(-0.4cm,-0.6cm)},
    y={(1cm,0)},
    z={(0cm,1cm)}]

    \draw[thick,gray,->] (0,0,0)--(4,0,0) node[pos=1,anchor=north east] {\small $x_1$};
    \draw[thick,gray,->] (0,0,0)--(0,4,0) node[pos=1,anchor=west] {\small $x_2$};
    \draw[thick,gray,->] (0,0,0)--(0,0,4) node[pos=1,anchor=south] {\small $x_3$};

    \foreach \sigma/\x/\y/\z/\pos [count=\c from 0] in {
      312/2/3/1/north west,
      132/1/3/2/west,
      123/1/2/3/south}{

      \coordinate (v\sigma) at (\x,\y,\z);
      \node at (v\sigma) {$\bullet$};
      \node[anchor=\pos] at (v\sigma) {\small $v_{[\sigma]}=w_\c=(\x,\y,\z)$};
    }

    \fill[black!30] (v123)--(v132)--(v312);
    
    \draw[thick] (v312)--(v132)--(v123);
    \draw[thick,dashed] (v312)--(v123);

    \draw[thick,-latex] (v123)--++(1,-1,0); 
    \draw[thick,-latex] (v312)--++(1,-1,0) node[inner sep=0,outer sep=0,pos=1,anchor=east,align=right] {\small $e_1-e_2$\\\small$=\vv_1$};
    
  \end{tikzpicture}
  \caption{Continuing from Figure~\ref{fig:permuto-basics}, we have
    shown the (non-compact) polyhedron $\wt{\Delta}^2$ sitting in the
    ambient $\RR^3$, and have labeled the vertices. The other
    conventions are same as in
    Figure~\ref{fig:tilde-delta-first-few}.}\label{fig:tilde-delta-explicit}
\end{figure}

It is also useful to note that this polyhedron $\wt{\Delta}^n$ is the
Minkowski sum of $\Delta^n$ and $\RR_+^{n-1}$. Specifically, the
$n$-simplex $\Delta^n\subset\Affine^n$ is the convex hull $[w_0,\dots,w_n]$ of the
vertices $w_0,\dots,w_{n}$, and $\RR_+^{n-1}$ is the positive cone $\RR_+\langle \vv_1,\dots,\vv_{n-1}\rangle$ of 
the vectors $\vv_1=e_1-e_2,\vv_2=e_2-e_3,\dots,\vv_{n-1}=e_{n-1}-e_{n}$ (where $e_i$ are the
standard unit vectors of the ambient $\RR^{n+1}$). Then $\wt{\Delta}^n$
is the Minkowski sum
\begin{equation}\label{eq:minkowski-sum}
\wt{\Delta}^n=\Delta^n+\RR_+^{n-1}=\{x+y\mid x\in [w_0,\dots,w_n],y\in\RR_+\langle \vv_1,\dots,\vv_{n-1}\rangle\}.
\end{equation}
More specifically, consider the $2^{n-1}$ subsets
$I\subset\{1,2,\dots,n-1\}$, and let
$\Delta_I^n=[(w_i)_{i\in I\cup\{0,n\}}]$ be the face of
$\Delta^n$ which is the convex hull of vertices $w_i$ for
$i\in I\cup\{0,n\}$. (These are precisely the $2^{n-1}$ faces of
$\Delta^n$ that contain $w_0,w_n$.) Also let
$\bdy_I\RR_+^{n-1}=\RR_+\langle (\vv_i)_{i\notin I}\rangle$ be the positive cone
of the vectors $\vv_i$ for $i\notin I$.
Then we have a decomposition into Minkowski sums
\[
  \wt{\Delta}^n=\bigcup_{I\subset \{1,\dots,n-1\}}(\Delta_I^n+\bdy_I\RR_+^{n-1}).
\]
Moreover, $\Delta_I^n$ is perpendicular to $\bdy_I\RR_+^{n-1}$, so the Minkowski sum $\Delta_I^n+\bdy_I\RR_+^{n-1}$ is diffeomorphic to the product $\Delta_I^n\times\bdy_I\RR_+^{n-1}$, and we have a diffeomorphism
\begin{equation}\label{eq:minkow-is-product}
  \wt{\Delta}^n\cong\bigcup_{I\subset \{1,\dots,n-1\}}(\Delta_I^n\times\bdy_I\RR_+^{n-1}).
\end{equation}
See again Figures~\ref{fig:tilde-delta-first-few}
and~\ref{fig:tilde-delta-explicit}. Figure~\ref{fig:tilde-delta-decompose}
shows the decomposition of $\wt{\Delta}^3$ into these four pieces.

\begin{figure}
  \centering
  \begin{tikzpicture}
    
    \foreach \xsh [count=\case from 0] in {0,1,-1,0}{
      \begin{scope}[xshift=3.8*\case cm]

        \ifnum\case=0
        \node[anchor=north,align=center] at (0,-0.8) {\small $\Delta_{\{1,2\}}^3\times\bdy_{\{1,2\}}\RR_+^2$\\\small $[w_0,w_1,w_2,w_3]\times\RR_+\langle\rangle$};
        \fi
        \ifnum\case=1
        \node[anchor=north,align=center] at (0,-0.8) {\small $\Delta_{\{1\}}^3\times\bdy_{\{1\}}\RR_+^2$\\\small $[w_0,w_1,w_3]\times\RR_+\langle\vv_2\rangle$};
        \fi
        \ifnum\case=2
        \node[anchor=north,align=center] at (0,-0.8) {\small $\Delta_{\{2\}}^3\times\bdy_{\{2\}}\RR_+^2$\\\small $[w_0,w_2,w_3]\times\RR_+\langle\vv_1\rangle$};
        \fi
        \ifnum\case=3
        \node[anchor=north,align=center] at (0,-0.8) {\small $\Delta_{\varnothing}^3\times\bdy_{\varnothing}\RR_+^2$\\\small $[w_0,w_3]\times\RR_+\langle\vv_1,\vv_2\rangle$};
        \fi
        
        \begin{scope}[scale=3,xshift=0.3*\xsh cm,x={(-1cm,0.2cm)},y={(0.8cm,0.4cm)},z={(0,1cm)}]
          \coordinate (w0) at (0,0,0);
          \coordinate (w1) at (0,-0.3,0.3333);
          \coordinate (w2) at (-0.3,0,0.6667);
          \coordinate (w3) at (0,0,1);

          \coordinate (e1) at (0.7,0,0);
          \coordinate (e2) at (0,0.7,0);

          \ifnum\case=0
          
          \draw[thick] (w2)--(w0)--(w1)--(w2)--(w3)--(w1);
          \draw[thick,dashed] (w0)--(w3);

          \foreach\i in {0,1,2,3}{\node at (w\i) {$\bullet$};}
          \node[anchor=north] at (w0) {\small $w_0$};
          \node[anchor=south east] at (w1) {\small $w_1$};
          \node[anchor=north west] at (w2) {\small $w_2$};
          \node[anchor=south] at (w3) {\small $w_3$};

          \fi

          \ifnum\case=1
          
          \draw[thick] (w0)--(w1)--(w3)--(w0);

          \foreach\i in {0,1}{\draw[thick,-latex] (w\i)--($(w\i)+(e1)$);}
          \draw[thick,-latex] (w3)--($(w3)+(e1)$) node[inner sep=0,outer sep=0,pos=1,anchor=south east] {\small$\vv_2$};

          \foreach\i in {0,1,3}{\node at (w\i) {$\bullet$};}
          \node[anchor=north] at (w0) {\small $w_0$};
          \node[anchor=south east] at (w1) {\small $w_1$};
          \node[anchor=south] at (w3) {\small $w_3$};

          \fi

          \ifnum\case=2
          
          \draw[thick] (w0)--(w2)--(w3)--(w0);

          \foreach\i in {0,2}{\draw[thick,-latex] (w\i)--($(w\i)+(e2)$);}
          \draw[thick,-latex] (w3)--($(w3)+(e2)$) node[inner sep=0,outer sep=0,pos=1,anchor=south west] {\small$\vv_1$};

          \foreach\i in {0,2,3}{\node at (w\i) {$\bullet$};}
          \node[anchor=north] at (w0) {\small $w_0$};
          \node[anchor=north west] at (w2) {\small $w_2$};
          \node[anchor=south] at (w3) {\small $w_3$};

          \fi

          \ifnum\case=3
          
          \draw[thick] (w0)--(w3);

          \foreach\i in {0}{\draw[thick,-latex] (w\i)--($(w\i)+(e1)$);}
          \foreach\i in {0}{\draw[thick,-latex] (w\i)--($(w\i)+(e2)$);}

          \draw[thick,-latex] (w3)--($(w3)+(e1)$) node[inner sep=0,outer sep=0,pos=1,anchor=south east] {\small$\vv_2$};
          \draw[thick,-latex] (w3)--($(w3)+(e2)$) node[inner sep=0,outer sep=0,pos=1,anchor=south west] {\small$\vv_1$};

          \foreach\i in {0,3}{\node at (w\i) {$\bullet$};}
          \node[anchor=north] at (w0) {\small $w_0$};
          \node[anchor=south] at (w3) {\small $w_3$};

          \fi

        \end{scope}

      \end{scope}}
  \end{tikzpicture}
  \caption{Decomposition of $\wt{\Delta}^3$ into four pieces.}\label{fig:tilde-delta-decompose}
\end{figure}

These $\wt{\Delta}^n$ are simple polyhedra, that is, each vertex is
contained in the minimal number of $n$ facets. Therefore, they are
multifaceted manifolds. They can be endowed with the structure of a
$\langle 2\ltimes n\rangle$-manifold by declaring
\begin{equation}
  \bdy_{i_0,i_1}\wt{\Delta}^n=\begin{cases}
    G_{\{1,2,\dots,i_1\}}&\text{if $i_0=0$,}\\
    G_{\{1,2,\dots,i_1-1,n+1\}}&\text{if $i_0=1$.}
    \end{cases}
\end{equation}
Define
\begin{equation}\label{eq:F-d-e-n}
  \FF^{d,e}_n=\wt{\Delta}^n\times\RR^e\times\RR^{dn}
\end{equation}
with the induced $\langle 2\ltimes n\rangle$-manifold structure, and set
\begin{equation}\label{eq:F-d-e-i-j}
\FF^{d,e}_{i,j}=\FF^{d,e}_{i-j}
\end{equation}
for all $i\geq j$.

Consider the facet $\bdy_{0,k}\wt{\Delta}^n=G_{\{1,\dots,k\}}$. The map
\[
  \begin{tikzpicture}[scale=1]
    \node (a) at (0,0) {$\RR^k\times\RR^{n+1-k}$};
    \node (b) at (6,0) {$\RR^{n+1}$};
    \draw[->] (a) -- (b) node[midway,anchor=south] {\footnotesize $+(1,2,\dots,k,k,k,\dots,k)$};
  \end{tikzpicture}
\]
identifies the product
$\RR_+\langle \vv_1,\dots,\vv_{k-1}\rangle \times \wt{\Delta}^{n-k}$
with the facet $G_{\{1,\dots,k\}}$. (This map is similar to the one
from Item~\ref{item:permuto-facet-identify}, except we add
$(1,2,\dots,k)$ instead of $(0,0,\dots,0)$ to the first $\RR^k$ factor
in order to translate the cone
$\RR_+\langle \vv_1,\dots,\vv_{k-1}\rangle$ and make it start from
the point $(1,2,\dots,k)\in\RR^k$.)  Similarly, for the facet
$\bdy_{1,k-1}\wt{\Delta}^n=G_{\{1,\dots,k-1,n+1\}}$, we use the map
\[
  \begin{tikzpicture}[scale=1]
    \node (a) at (0,0) {$\RR^k\times\RR^{n+1-k}$};
    \node (b) at (6,0) {$\RR^{n+1}$};
    \node (c) at (13,0) {$\RR^{n+1}$};
    \draw[->] (a) -- (b) node[midway,anchor=south] {\footnotesize $+(0,\dots,0,k+1,k+2,\dots,n+1)$};
    \draw[->] (b) -- (c) node[midway,anchor=south] {\footnotesize $(x_1,\dots,x_n)\mapsto (x_1,\dots,x_{k-1},x_{k+1},\dots,x_n,x_k)$};
  \end{tikzpicture}
\]
where we have postcomposed with a shuffle that sends the $k\th$
$\RR$-factor to the end. (This is again similar to what we did in
Item~\ref{item:permuto-facet-identify}.)  This map identifies
$\wt{\Delta}^{k-1}\times\RR_+\langle \vv_{k+1},\dots,\vv_{n}\rangle$
with the facet $G_{\{1,\dots,k-1,n+1\}}$. That is, we get
identifications
\begin{equation}
  \bdy_{0,k}\wt{\Delta}^n\cong\RR_+^{k-1}\times\wt{\Delta}^{n-k},\qquad\qquad\bdy_{1,k-1}\wt{\Delta}^n\cong\wt{\Delta}^{k-1}\times\RR_+^{n-k}.
\end{equation}
Then for any $i\geq j\geq k$, we get induced identifications
\begin{equation}\label{eq:Fij-product-structure}
  \bdy_{0,i-j}\FF^{d,e}_{i,k}\cong\E^d_{i,j}\times\FF^{d,e}_{j,k},\qquad\qquad\bdy_{1,i-j}\FF^{d,e}_{i,k}\cong\FF^{d,e}_{i,j}\times\E^d_{j,k}.
\end{equation}
by just shuffling the factors.

\begin{figure}
  \centering
  \begin{tikzpicture}

    \begin{scope}[xscale=4]
      \node[anchor=east,draw] at (0,2) {$x$};
      \node[anchor=west,draw] at (1,0) {$y$};

      \node[anchor=east] at (0,1) {$\bdy_{0,1}$};
      \node[anchor=east] at (0,0) {$\bdy_{0,2}$};
      \node[anchor=west] at (1,2) {$\bdy_{1,0}$};
      \node[anchor=west] at (1,1) {$\bdy_{1,1}$};

      \foreach \i/\j in {0/1,0/0,1/2,1/1}{\node at (\i,\j) {$\bullet$};}
      
      \node[anchor=north east] (c) at (0,-0.2) {$\Cat$};
      \node[anchor=north west] (cp) at (1,-0.2) {$\Cat'$};
      
      \node[anchor=east] at (-0.1,1.5) {\textbf 1};
      \node[anchor=east] at (-0.1,0.5) {\textbf 2};
      \node at ($(c)!0.5!(cp)$) {\textbf 3};

      \draw[thick] (0,2) to[out=0,in=180] node[inner sep=0,outer sep=0,midway,anchor=south] {\small $w_0=v_{[312]}$} (1,2) to[out=-105,in=105] (1,1) to[out=-105,in=105] (1,0);
      \draw[thick,dashed] (0,2) to[out=-60,in=60] (0,1) to[out=0,in=180] node[inner sep=0,outer sep=0,midway,anchor=south] {\small $w_1=v_{[132]}$} (1,1) to[out=-120,in=120] (1,0);
      \draw[thick,densely dotted] (0,2) to[out=-75,in=75] (0,1) to[out=-75,in=75] (0,0) to[out=0,in=180] node[inner sep=0,outer sep=0,midway,anchor=south] {\small $w_2=v_{[123]}$} (1,0);
    \end{scope}

    \begin{scope}[xshift=7cm,xscale=4]
      \node[anchor=east,draw] at (0,2) {$x$};
      \node[anchor=west,draw] at (1,0) {$y$};

      \node[anchor=west] at (1.1,1) {$\RR_+$};
      \node[anchor=west] at (1.1,1.5) {$\RR^d$};
      \node[anchor=west] at (1.1,0.5) {$\RR^d$};

      \foreach \i/\j in {0/1,0/0,1/2,1/1}{\node at (\i,\j) {$\bullet$};}
      
      \node[anchor=north east] (c) at (0,-0.2) {$\Cat$};
      \node[anchor=north west] (cp) at (1,-0.2) {$\Cat'$};
      \node at ($(c)!0.5!(cp)$) {$\RR^e$};

      \draw[thick,dashed] (0,2)--
      node[pos=0.8,anchor=south west, inner sep=0,outer sep=0] {\begin{tikzpicture}[scale=0.2]
          \coordinate (w1) at (-1,1);
          \coordinate (w2) at (0,0);
          \coordinate (w3) at (1,1);
          \coordinate (d) at (0,2);
          \fill[black!20] (w1)++(d)--(w1)--(w2)--(w3)--++(d);
          \draw[thick,solid] (w1)--(w2)--(w3);
          \draw[thick,solid,-latex] (w1)--++(d);
          \draw[thick,solid,-latex] (w3)--++(d);
          \end{tikzpicture}}
      (1,0);

      \draw[thick,densely dotted] (0,2)--
      node[pos=0.3,anchor=south west, inner sep=1pt,outer sep=0] {$\wt{\Delta}^1=\begin{tikzpicture}[scale=0.2]
          \coordinate (w1) at (-1,1);
          \coordinate (w2) at (0,0);
          \coordinate (w3) at (1,1);
          \coordinate (d) at (0,2);
          \draw[thick,solid] (w1)--(w2);
          \end{tikzpicture}$}
        (1,1) (0,1)--
      node[pos=0.5,anchor=north east, inner sep=-2pt,outer sep=0] {$\wt{\Delta}^1=\begin{tikzpicture}[scale=0.2]
          \coordinate (w1) at (-1,1);
          \coordinate (w2) at (0,0);
          \coordinate (w3) at (1,1);
          \coordinate (d) at (0,2);
          \draw[thick,solid] (w2)--(w3);
          \end{tikzpicture}$}        
        (1,0);

    \end{scope}

  \end{tikzpicture}
  \caption{Left: We consider $x\in\Cat$ (left column) and $y\in\Cat'$
    (right column) with $\gr(x)-\gr(y)=2$. The boundary of $\bN(x,y)$
    corresponds to broken trajectories, so there are four types
    $\bdy_{0,1},\bdy_{0,2},\bdy_{1,0},\bdy_{1,1}$, indexed by
    $2\ltimes 2$, depending on where they break (shown by black
    dots). The completely broken trajectories are $\bdy_I$ where $I$
    is a maximal chain of $2\ltimes 2$, so there are exactly three:
    $\bdy_{1,0}\cap\bdy_{1,1}$ (solid lines),
    $\bdy_{0,1}\cap\bdy_{1,1}$ (dashed lines), and
    $\bdy_{0,1}\cap\bdy_{0,2}$ (dotted lines), and they correspond to
    the vertices $w_0,w_1,w_2$ of $\wt{\Delta}^2$, respectively. (If
    we number the `gaps' between gradings as \textbf{1}, \textbf{2},
    and the `gap' between $\Cat$ and $\Cat'$ as \textbf{3}, then the
    permutation $\sigma$ in $w_i=v_{[\sigma]}$ can easily be
    determined from the broken trajectory.) Right: The
    $\langle 2\ltimes 2\rangle$-manifold $\bN(x,y)$ is embedded in
    $\FF^{d,e}_2=\wt{\Delta}^2\times\RR^e\times\RR^{2d}$, and its
    multifacets $\bdy_{0,1},\bdy_{0,2},\bdy_{1,0},\bdy_{1,1}$ are
    embedded in $\RR^d\times\wt{\Delta}^1\times\RR^e\times\RR^d$,
    $\RR^d\times\RR_+\times\RR^d\times\RR^e$,
    $\RR^e\times\RR^d\times\RR_+\times\RR^d$, and
    $\wt{\Delta}^1\times\RR^e\times\RR^d$, respectively; here the two
    copies of $\RR^d$ are associated to the two `gaps' between
    gradings, $\RR^e$ is associated to the `gap' between $\Cat$ and
    $\Cat'$, the half-line $\RR_+$ is associated to the grading between $\gr(x)$
    and $\gr(y)$, the two copies of the interval $\wt{\Delta}^1$ are
    associated to the two dotted lines, and $\wt{\Delta}^2$ is
    associated to the dashed line.}\label{fig:bimodules-explanatory}
\end{figure}

We will require $\bN(i,j)$ to be neatly embedded in $\FF^{d,e}_{i,j}$.
The notion of neat embedding is similar to
Definition~\ref{def:neatangle}. Being a polyhedron, $\FF^{d,e}_n$ is
Thom-Mather stratified (as in Definition~\ref{def:stratified}) with
$Y_I\defeq\mathring{\bdy}_I\FF^{d,e}_n$ as the open strata for
$I\subset 2\ltimes n$; we may further assume that for any open stratum $Y_I$, the projection map from its tubular neighborhood,
\[
  \pi_{Y_I}\from T_{Y_I}\to Y_I,
\]
is the orthogonal projection. A smooth embedding of a 
$\langle 2\ltimes n\rangle$-manifold $X$ into $\FF^{d,e}_n$ is called
{\em neat} if
\begin{itemize}
\item It respects the strata, that is, for every $i\in 2\ltimes n$, $\bdy_iX=X\cap\bdy_i\FF^{d,e}_n$.
\item For every $I \subset 2\ltimes n$,
  \[
    T_{Y_I} \cap X = \pi_{Y_I}^{-1}(\mathring{\del}_IX).
  \]
\end{itemize}
Now we are all set to define framed flow bimodules; compare Definitions~\ref{def:necat}
and~\ref{def:ffc}.

\begin{definition}\label{def:bimodule}
  Let $\Cat=(\Cat, \iota, \phi)$ and $\Cat'=(\Cat', \iota', \phi')$ be
  two framed flow categories, both relative some $d\in\NN$. A {\em
    framed flow bimodule} $\Bat=(\Bat,\jmath,\psi)$ from $\Cat$ to
  $\Cat'$ consists of the following data:
  \begin{itemize}
  \item A flow bimodule $\Bat\from\Cat\to\Cat'$;
  \item Neat embeddings $\jmath_{i,j}\from\bN(i,j)\to\FF^{d,e}_{i,j}$ (for some fixed integer $e$) for all $i\geq j$;
  \item Framings $\psi_{i,j}$ of their normal bundles $\mu_{i,j}$;
  \end{itemize}
  satisfying, for all $i\geq j\geq k$,
  \begin{itemize}
  \item The following diagrams commute:
    \[
    \begin{tikzpicture}
      \foreach\i in {0,1}{
        \begin{scope}[xshift=8*\i cm,xscale=2.5,yscale=1.2]
          \ifnum\i=0
          \node (a) at (0,0) {$\bM(i,j)\times\bN(j,k)$};
          \node (d) at (0,-1) {$\E^d_{i,j}\times\FF^{d,e}_{j,k}$};
          \else
          \node (a) at (0,0) {$\bN(i,j)\times\bM'(j,k)$};
          \node (d) at (0,-1) {$\FF^{d,e}_{i,j}\times\E^{d}_{j,k}$};
          \fi
          \node (b) at (1.2,0) {$\bdy_{\i,i-j}\bN(i,k)$};
          \node (e) at (1.2,-1) {$\bdy_{\i,i-j}\FF^{d,e}_{i,k}$};
          \node (c) at (2,0) {$\bN(i,k)$};
          \node (f) at (2,-1) {$\FF^{d,e}_{i,k}$};

          \draw[->] (d)--(e) node[midway,anchor=south] {\small $\cong$};
          
          \draw[right hook->] (b)--(c);
          \draw[right hook->] (e)--(f);
          \draw[->] (a)--(b);

          \ifnum\i=0
          \draw[right hook->] (a)--(d) node[midway,anchor=east] {\small $\iota_{i,j}\times\jmath_{j,k}$};
          \else
          \draw[right hook->] (a)--(d) node[midway,anchor=east] {\small $\jmath_{i,j}\times\iota'_{j,k}$};
          \fi
          \draw[right hook->] (b)--(e);
          \draw[right hook->] (c)--(f) node[midway,anchor=west] {\small $\jmath_{i,k}$};
          
      \end{scope}}
  \end{tikzpicture}
  \]
\item In the above two cases, the product framing
  $\phi_{i,j}\times\psi_{j,k}$ on $\nu_{i,j}\times\mu_{j,k}$ and the
  product framing $\psi_{i,j}\times\phi'_{j,k}$ on
  $\mu_{i,j}\times\nu'_{j,k}$ agree with the pullback framing of
  $\psi_{i,k}$ on $\nu_{i,k}$.
  \end{itemize}
\end{definition}

See the right half of Figure~\ref{fig:bimodules-explanatory}. In line
with earlier notation, for $x\in\Cat_i,y\in\Cat'_j$, we will denote
the restrictions of $\jmath_{i,j},\mu_{i,j},\psi_{i,j}$ to $\bN(x,y)$
as $\jmath_{x,y},\mu_{x,y},\psi_{x,y}$.

\begin{example}\label{ex:identity-bimodule}
  The only explicit example of a framed flow bimodule that we will
  provide is that of the identity bimodule connecting a framed flow
  category $\Cat=(\Cat,\iota,\phi)$ to itself. Assume $\Cat$ is
  embedded relative $d$, and fix $e=0$.

  For the $0$-dimensional spaces, if $\gr(x)=\gr(y)$, define
  $\bN(x,y)$ to be empty if $x\neq y$, and to be a point $p_x$
  embedded in $\FF^{d,0}_0=\RR^0$, otherwise.

  For the $1$-dimensional spaces, if $\gr(x)=\gr(y)+1$, define
  $\bN(x,y)=\Delta^1\times\bM(x,y)$, embedded in
  $\FF^{d,0}_1=\wt{\Delta}^1\times\RR^d=\Delta^1\times\RR^d$ by the
  product embedding $(\Id,\iota_{x,y})$; the normal bundle $\mu_{x,y}$
  is the pullback of $\nu_{x,y}$ under the projection map
  $\Delta^1\times\bM(x,y)\to\bM(x,y)$, and framing $\psi_{x,y}$ is
  given by $\phi_{x,y}$. Its boundary consists of two pieces:
  $\bdy_{0,1}=\bM(x,y)\times p_y$ and $\bdy_{1,0}=p_x\times\bM(x,y)$,
  and they are embedded in $\bdy_{0,1}\FF^{d,0}_1$ and
  $\bdy_{1,0}\FF^{d,0}_1$ by the product embedding as well.

  In general, if $\gr(x)=\gr(y)+k$, the space $\bM(x,y)$ is a
  $\langle k-1\rangle$-manifold, so it has closed strata $\bdy_I\bM(x,y)$ for
  $I\subset\{1,\dots,k-1\}$, which are embedded and framed in
  $\bdy_I\RR_+^{k-1}\times\RR^{kd}$. Equation~\eqref{eq:minkow-is-product} produces a decomposition
  \[
    \FF^{d,0}_k\cong \bigcup_{I\subset\{1,\dots,k-1\}}(\Delta^k_I\times\bdy_I\RR^{k-1}_+\times\RR^{kd}),
  \]
  where $\Delta^k_I=[(w_i)_{i\in I\cup\{0,k\}}]$ is the convex hull of
  vertices $w_i$ fo $i\in I\cup\{0,k\}$ as before.  Define $\bN(x,y)$
  to be corresponding union
  \[
    \bN(x,y)\defeq \bigcup_{I\subset\{1,\dots,k-1\}}(\Delta^k_I\times\bdy_I\bM(x,y)),
  \]
  with the subspace $\Delta^k_I\times\bdy_I\bM(x,y)$ embedded in the
  subspace
  $\Delta^k_I\times\bdy_I\RR^{k-1}_+\times\RR^{kd}\subset\FF^{d,0}_k$
  by the product embedding $(\Id,\iota_{x,y})$ and framed by
  $\phi_{x,y}$. The smooth structure and the
  $\langle 2\ltimes k\rangle$-manifold structure on $\bN(x,y)$ are
  induced as a subspace of $\FF^{d,0}_k$. See
  Figure~\ref{fig:identity-bimodule}.
\end{example}

\begin{figure}
  \centering
  \begin{tikzpicture}[x={(1cm,0cm)},y={(0cm,1cm)},z={(-0.6cm,-0.3cm)}]
    \coordinate (w0) at (-1,1);
    \coordinate (w1) at (0,0);
    \coordinate (w2) at (1,1);

    \coordinate (p) at (0,0,-1);
    \coordinate (q) at (0,0,1);

    \coordinate (h) at (0,1.5);

    \fill[black!20] ($(p)+(w0)$)--($(p)+(w1)$)--($(p)+(w2)$) ..controls ($(p)+(w2)+(0,1)$) and ($(h)+(w2)+(0,-0.1,-0.3)$) .. ($(h)+(w2)$)--($(h)+(w0)$);

    \fill[black!50] ($(q)+(w0)$)--($(q)+(w1)$)--($(q)+(w2)$) ..controls ($(q)+(w2)+(0,1)$) and ($(h)+(w2)+(0,0.2,0.6)$) .. ($(h)+(w2)$)--($(h)+(w0)$) ..controls($(h)+(w0)+(0,0.2,0.6)$) and ($(q)+(w0)+(0,1)$)..($(q)+(w0)$);
    
    \foreach \i in {0,1,2}{
      \draw[ultra thin,->] (w\i)++(0,0,-2.5)--++(0,0,5.5);
      \ifnum\i=1
      \else
      \draw[ultra thin,->] (w\i)++(0,2,-2.5)--++(0,0,5.5);
      \fi
      \node at ($(p)+(w\i)$) {$\bullet$};
      \node at ($(q)+(w\i)$) {$\bullet$};
    }

    \foreach\j in {-2,2}{
      \draw[ultra thin] ($(w0)+(0,0,\j)$)--($(w1)+(0,0,\j)$)--($(w2)+(0,0,\j)$);
      \draw[ultra thin,->] ($(w0)+(0,0,\j)$)--++(0,2.5);
      \draw[ultra thin,->] ($(w2)+(0,0,\j)$)--++(0,2.5);
    }

    \foreach \x in {p,q}{
      \draw[thick] ($(\x)+(w0)$)--($(\x)+(w1)$)--($(\x)+(w2)$);
      \draw[thick,dashed] ($(\x)+(w0)$)--($(\x)+(w2)$);
    }

    \draw ($(h)+(w0)$)--($(h)+(w2)$) coordinate[midway] (hmid);
    \foreach \i in {0,2}{
      \draw[thick] ($(p)+(w\i)$) ..controls ($(p)+(w\i)+(0,1)$) and ($(h)+(w\i)+(0,-0.1,-0.3)$) .. ($(h)+(w\i)$) .. controls ($(h)+(w\i)+(0,0.2,0.6)$) and ($(q)+(w\i)+(0,1)$) .. ($(q)+(w\i)$);
    }

    \node[anchor=south] at ($(w2)+(0,2.5,-2)$) {\small $\RR_+$};
    \node[anchor=north east] at ($(w1)+(0,0,3)$) {\small $\RR^{2d}$};

    \node[anchor=south west] at ($(h)+(w2)$) {\small $[w_0,w_2]\times\bM(x,y)$};
    \node[anchor=north west] at ($(q)+(w1)$) {\small $[w_0,w_1,w_2]\times\bdy\bM(x,y)$};

  \end{tikzpicture}
  \caption{If $\gr(x)-\gr(y)=2$, the space $\bN(x,y)$ is the union of
    two pieces $[w_0,w_2]\times\bM(x,y)$ and
    $[w_0,w_1,w_2]\times\bdy\bM(x,y)$ (separated by dashed lines), and
    is embedded in
    $\FF^{d,0}_2=\big([w_0,w_2]\times\RR_+\times\RR^{2d}\big)\cup
    \big([w_0,w_1,w_2]\times\RR^{2d}\big)$. We drew here the case
    where $\bM(x, y)$ is an interval and hence $\bN(x,y)$ is a
    hexagon.}\label{fig:identity-bimodule}
\end{figure}

If $\Bat=(\Bat,\jmath,\psi)$ is a compact framed flow bimodule
connecting compact framed flow categories $\Cat=(\Cat,\iota,\phi)$ and
$\Cat'=(\Cat',\iota',\phi')$, then there is an associated chain map
$f_{\Bat}\from C_*(\Cat)\to C_*(\Cat')$, defined on generators
$x\in\Cat$ by
\begin{equation}\label{eq:flowbimodule-to-chainmap}
  f_{\Bat}(x)=\sum_{\substack{y\in\Cat'\\\gr(y)=\gr(x)}}\#\bN(x,y) y.
\end{equation}
Compare Equation~\eqref{eq:flowcat-to-chaincx}.

\begin{proposition}\label{prop:bimodule}
  Let $\Cat=(\Cat,\iota,\phi)$ and $\Cat'=(\Cat',\iota',\phi')$ be two
  compact framed flow categories with gradings bounded below.  A
  compact framed flow bimodule $\Bat=(\Bat,\jmath,\psi)$ connecting
  them induces a map of spectra $F_{\Bat}\from S(\Cat) \to S(\Cat')$
  so that the induced maps on homology
  $H(F_{\Bat})\from H(S(\Cat))\to H(S(\Cat'))$ and
  $H(f_{\Bat})\from H(\Cat)\to H(\Cat')$ agree.
\end{proposition}

\begin{proof}
  Mimicking the construction from Section~\ref{sec:tospectra}, we will
  associate a CW complex to $\Bat$. Again, first assume both
  $\Cat,\Cat'$ have gradings supported in some interval $[B+1,A]$,
  with $A,B\in\ZZ$ and $B<0$, and set $D_{d,e}(A,B)=e+(A-B)d-B-1$.  Our CW
  complex $|\Bat|_{\jmath,\psi,A,B}$ again has a single $0$-cell, and
  for each $x\in\Cat\amalg\Cat'$, a cell $\Dell(x)$ of dimension
  $\gr(x)+D_d(A,B)$ if $x\in\Cat'$, or $1+\gr(x)+D_d(A,B)$ if
  $x\in\Cat$.

  As before, choose small $\epsilon>0$ and large $R$ such that
  $\iota_{i,j}$ (respectively $\iota'_{i,j}$) extends via the normal
  framings to embeddings of
  $\bM(i,j)\times[-\epsilon,\epsilon]^{(i-j)d}$ (respectively,
  $\bM'(i,j)\times[-\epsilon,\epsilon]^{(i-j)d}$) inside
  $[0,R]^{i-j-1}\times[-R,R]^{(i-j)d}\subset\RR_+^{i-j-1}\times\RR^{(i-j)d}\cong\E^d_{i,j}$. Just
  as $[0,R]^{i-j-1}\times[-R,R]^{(i-j)d}$ plays the role of a large
  compact subset of $\E^d_{i,j}$, we will pick a suitably large
  compact subset of $\FF^{d,e}_{i,j}$ as follows: Consider
  $\RR_+^{n-1}=\RR_+\langle \vv_1,\dots,\vv_{n-1}\rangle$ from
  Equation~\eqref{eq:minkowski-sum}, and let
  \[
    [0,R]^{n-1}=[0,R]\langle \vv_1,\dots,\vv_{n-1}\rangle=\big\{\sum_i t_i\vv_i\mid 0\leq t_i\leq R,\forall i\big\}\subset\RR_+^{n-1}.
  \]
  Let
  \[
    \wt{\Delta}^n_R=\Delta^n+[0,R]^{n-1}\subset \Delta^n+\RR_+^{n-1}=\wt{\Delta}^n
  \]
  be the corresponding compact subset; the corresponding compact
  subset of $\FF^{d,e}_{i,j}$ (from Equations~\eqref{eq:F-d-e-n}
  and~\eqref{eq:F-d-e-i-j}) is
  \[
    \wt{\Delta}^{i-j}_R\times[-R,R]^e\times[-R,R]^{(i-j)d}\subset
    \wt{\Delta}^{i-j}\times\RR^e\times\RR^{(i-j)d}=\FF^{d,e}_{i,j}.
  \]
  We will assume that the embeddings $\jmath_{i,j}$ extend via the
  normal framing to embeddings of
  $\bN(i,j)\times[-\epsilon,\epsilon]^e\times[-\epsilon,\epsilon]^{(i-j)d}$
  inside $\wt{\Delta}^{i-j}_R\times[-R,R]^e\times[-R,R]^{(i-j)d}$.

  Let $T_e$ be the set of $e$ $\RR$-factors appearing in $\RR^e$, and let
  $S^e=\bigwedge_{T_e}S^1$ denote the corresponding
  $e$-dimensional sphere. Then $T_e\amalg T_d(A,B)$ is a set of
  cardinality $D_{d,e}(A,B)$, and $S^e\smas S_d(A,B)$
  is the corresponding $D_{d,e}(A,B)$-dimensional sphere.

  The CW complex $|\Bat|_{\jmath,\psi,A,B}$ has
  $\Sigma^e|\Cat'|_{\iota',\phi',A,B}=S^e\smas|\Cat'|_{\iota',\phi',A,B}$
  as a subcomplex. In more detail, for any $x\in\Cat'$ with $\gr(x)=m$, define the cell
  \[
    \Dell(x)=[-\epsilon,\epsilon]^{(A-m)d}\times[0,R]^{m-B-1}\times[-\epsilon,\epsilon]^e\times[-R,R]^{(m-B)d};
  \]
  letting $\bdy_i\Dell(x)$ denote the subset where the coordinate in the $i\th$ $[0,R]$-factor is $0$, we get an embedding
  \[
    \bM'(x,y)\times\Dell(y)\into\bdy_{m-l}\Dell(x)
  \]
  for any $y\in\Cat'$ with $\gr(y)=l<m$. The attaching map on
  $\bdy\Dell(x)$ maps the image $\Dell_y(x)$ of this embedding to
  $\Dell(y)$ and $\bdy\Dell(x)\setminus\bigcup_y\Dell_y(x)$ to the
  basepoint. (This construction is exactly as described in
  Section~\ref{sec:tospectra}, but with an extra factor of
  $[-\epsilon,\epsilon]^e$.)

  Now for $x\in\Cat$ with $\gr(x)=m$, define the cell
  \begin{equation}\label{eq:dell-x}
    \Dell(x)=\underbrace{[-\epsilon,\epsilon]^{(A-m)d}}_{\subset\E^d_{A,m}}\times\underbrace{\wt{\Delta}^{m-B}_R\times[-R,R]^e\times[-R,R]^{(m-B)d}}_{\subset\FF^{d,e}_{m,B}}\subset\FF^{d,e}_{A,B};
  \end{equation}
  see Figure~\ref{fig:dell-x}.
  There is a map $\Dell(x)\to\wt{\Delta}^{m-B}$ by first projecting to
  the $\wt{\Delta}^{m-B}_R$ factor, followed by the inclusion
  $\wt{\Delta}^{m-B}_R\into \wt{\Delta}^{m-B}$. By an abuse of
  notation, for any $(i_0,i_1)\in 2\ltimes (m-B)$, let
  $\bdy_{i_0,i_1}\Dell(x)$ denote the preimage of
  $\bdy_{i_0,i_1}\wt{\Delta}^{m-B}$ under this map.
  
  For $y\in\Cat$ with $\gr(y)=l<m$, we have an embedding
  \[
    \bM(x,y)\times[-\epsilon,\epsilon]^{(m-l)d}\into [0,R]^{m-l-1}\times[-R,R]^{(m-l)d}\subset\E^d_{m,l};
  \]
  extending by identity, we get an embedding
  \begin{gather*}
    \bM(x,y)\times\Dell(y)\cong[-\epsilon,\epsilon]^{(A-m)d}\times \big(\bM(x,y)\times[-\epsilon,\epsilon]^{(m-l)d}\big) \times \big(\wt{\Delta}^{l-B}_R\times[-R,R]^e\times[-R,R]^{(l-B)d}\big)\\
    \into\underbrace{[-\epsilon,\epsilon]^{(A-m)d}}_{\subset\E^d_{A,m}}\times \underbrace{[0,R]^{m-l-1}\times[-R,R]^{(m-l)d}}_{\subset\E^d_{m,l}} \times\underbrace{\wt{\Delta}^{l-B}_R\times[-R,R]^e\times[-R,R]^{(l-B)d}}_{\subset\FF^{d,e}_{l,B}}=\bdy_{0,m-l}\Dell(x),
  \end{gather*}
  and let $\Dell_y(x)$ be its image. Similarly, for $y\in\Cat'$ with $\gr(y)=l\leq m$, we have an embedding
  \[
    \bN(x,y)\times[-\epsilon,\epsilon]^e\times[-\epsilon,\epsilon]^{(m-l)d}\into \wt{\Delta}^{m-l}_R\times[-R,R]^e\times[-R,R]^{(m-l)d}\subset\FF^{d,e}_{m,l};
  \]
  once again, extending by identity, we get an embedding
  \begin{gather*}
    \bN(x,y)\times\Dell(y)\cong[-\epsilon,\epsilon]^{(A-m)d}\times\big(\bN(x,y)\times[-\epsilon,\epsilon]^e\times[-\epsilon,\epsilon]^{(m-l)d}\big)\times\big([0,R]^{l-B-1}\times[-R,R]^{(l-B)d}\big)\\
    \into \underbrace{[-\epsilon,\epsilon]^{(A-m)d}}_{\subset\E^d_{A,m}}\times \underbrace{\wt{\Delta}^{m-l}_R\times[-R,R]^e\times[-R,R]^{(m-l)d}}_{\subset\FF^{d,e}_{m,l}} \times\underbrace{[0,R]^{l-B-1}\times[-R,R]^{(l-B)d}}_{\subset\E^{d}_{l,B}} =\bdy_{1,m-l}\Dell(x),
  \end{gather*}
  and let $\Dell_y(x)$ again denote its image. As before, the
  attaching map on $\bdy\Dell(x)$ maps $\Dell_y(x)$ to $\Dell(y)$ (for
  all $y\in\Cat\amalg\Cat'$), and maps
  $\bdy\Dell(x)\setminus\bigcup_y\Dell_y(x)$ to the basepoint.

  \begin{figure}
    \centering
    \begin{tikzpicture}
      \begin{scope}[scale=3,x={(-1cm,0.2cm)},y={(0.8cm,0.4cm)},z={(0,1cm)}]
          \coordinate (w1) at (0,0,0);
          \coordinate (w2) at (0,-0.3,0.3333);
          \coordinate (w3) at (-0.3,0,0.6667);
          \coordinate (w4) at (0,0,1);

          \coordinate (e1) at (0.7,0,0);
          \coordinate (e2) at (0,0.7,0);

          \fill[black!30] ($(w3)!0.5!(w4)$)--($(w2)!0.75!(w4)+(e1)$)--($(w4)+(e1)$)
          --($(w4)+(e1)+(e2)$)--($(w4)+(e2)$)--($(w3)!0.5!(w4)+(e2)$)--cycle;

          
          \draw[thick] (w1)--(w2)--(w3)--(w4);

          \foreach\i in {1,2,4}{\draw[thick] (w\i)--($(w\i)+(e1)$);}
          \foreach\i in {1,3,4}{\draw[thick] (w\i)--($(w\i)+(e2)$);}

          \draw[thick] ($(w1)+(e1)$)--($(w2)+(e1)$)--($(w4)+(e1)$)--($(w4)+(e1)+(e2)$)--($(w4)+(e2)$)--($(w3)+(e2)$)--($(w1)+(e2)$);
          
          \node at ($(w4)+0.5*(e1)+0.5*(e2)$) {\small $\bdy_{0,3}$};
          
        \end{scope}
      \end{tikzpicture}
      \caption{The cell $\Dell(x)$ from Equation~\eqref{eq:dell-x};
        here $n=m-B=3$, and we have only shown the $\wt{\Delta}^{3}_R$
        factor. (Compare the $n=3$ case in Figure~\ref{fig:tilde-delta-first-few}.) The top facet is $\bdy_{0,3}\Dell(x)$, and
        its $\delta$-neighborhood $\wt{\Dell}(x)=N_\delta\bdy_{0,3}\Dell(x)$ is
        shown in gray.}\label{fig:dell-x}
    \end{figure}
  
    By construction, $\Sigma^e|\Cat'|_{\iota',\phi',A,B}$ is a
    subcomplex of $|\Bat|_{\jmath,\psi,A,B}$, and let
    $\wt{|\Cat|}_{\iota,\phi,A,B}$ be the corresponding quotient
    complex. Therefore, we have a homotopy equivalence
    \begin{equation}\label{eq:puppe-homotopy-1}
      \wt{|\Cat|}_{\iota,\phi,A,B}\simeq
      \Cone\big(\Sigma^e|\Cat'|_{\iota',\phi',A,B}\to
      |\Bat|_{\jmath,\psi,A,B}\big).
    \end{equation}
    The quotient complex also has a cell $\Dell(x)$ for each
    $x\in\Cat$, with attaching map sending $\Dell_y(x)$ to $\Dell(y)$
    for all $y\in\Cat$, and the rest
    $\bdy\Dell(x)\setminus\bigcup_y\Dell_y(x)$ to the
    basepoint. Consider the subspace
    $\bdy_{0,m-B}\Dell(x)\cong[0,R]^{m-B-1}$, and let
    $\wt{\Dell}(x)=N_\delta\bdy_{0,m-B}\Dell(x)\cong[0,\delta]\times\bdy_{0,m-B}\Dell(x)$
    be a small (closed) $\delta$-neighborhood, see
    Figure~\ref{fig:dell-x}.  Comparing with
    Equation~\eqref{eq:dell-x}, we get
    \[
      \wt{\Dell}(x)\cong
      [-\epsilon,\epsilon]^{(A-m)d}\times
      [0,\delta]\times[0,R]^{m-B-1}\times[-R,R]^e\times[-R,R]^{(m-B)d}.
    \]
    These $\delta$-neighborhoods are constructed inductively. We
    construct $\delta$-neighborhoods
    $N_\delta\bdy_{0,m-B}\wt{\Delta}^{m-B}_R\cong[0,\delta]\times
    \bdy_{0,m-B}\wt{\Delta}^{m-B}_R$ of
    $\bdy_{0,m-B}\wt{\Delta}_R^{m-B}$ inside $\wt{\Delta}_R^{m-B}$,
    such that, restricted to any facet
    $\bdy_{0,m-l}\wt{\Delta}^{m-B}_R\cong
    [0,R]^{m-l-1}\times\wt{\Delta}^{l-B}_R$, it is $[0,R]^{m-l-1}$
    times the $\delta$-neighorbood
    $N_\delta\bdy_{0,l-B}\wt{\Delta}^{l-B}_R\cong[0,\delta]\times
    \bdy_{0,l-B}\wt{\Delta}^{l-B}_R$ of
    $\bdy_{0,l-B}\wt{\Delta}^{l-B}_R$ inside $\wt{\Delta}^{l-B}_R$;
    and then define the $\delta$-neighborhood
    $N_\delta\bdy_{0,m-B}\Dell(x)$ to be this $\delta$-neighborhood
    $N_\delta\bdy_{0,m-B}\wt{\Delta}^{m-B}_R$ times
    $[-\epsilon,\epsilon]^{(A-m)d}\times[-R,R]^e\times[-R,R]^{(m-B)d}$. The
    embedding $\bM(x,y)\times\Dell(y)\into\bdy_{0,m-l}\Dell(x)$
    restricts to give an embedding
    \[
      \bM(x,y)\times\wt{\Dell}(y)\into \bdy_{0,m-l}\wt{\Dell}(x),
    \]
    with image say $\wt{\Dell}_y(x)$. Therefore, we can define a new
    CW complex with cells $\wt{\Dell}(x)$ for $x\in\Cat$, and
    attaching maps sending $\wt{\Dell}_y(x)$ to $\wt{\Dell}(y)$ and
    the rest $\bdy\wt{\Dell}(x)\setminus\bigcup_y\wt{\Dell}_y(x)$ to
    the basepoint. This gives precisely the construction from
    Section~\ref{sec:tospectra}, but with an extra factor of
    $[0,\delta]\times[-\epsilon,\epsilon]^e$, and therefore, this new CW
    complex is $\Sigma\Sigma^e|\Cat|_{\iota,\phi,A,B}$, where the
    first suspension corresponds to the $[0,\delta]$-factor, and the
    second suspension is smash product with $S^e$ from earlier. On the
    other hand, we have a quotient map
    \begin{equation}\label{eq:puppe-homotopy-2}   
      \wt{|\Cat|}_{\iota,\phi,A,B}\xrightarrow{\sim} \Sigma\Sigma^e|\Cat|_{\iota,\phi,A,B},
    \end{equation}
    that maps each $\Dell(x)$ to $\wt{\Dell}(x)$ by quotienting
    $\Dell(x)\setminus\wt{\Dell}(x)$ to the basepoint. This map is
    identity on the cellular chain complex, and hence a homotopy
    equivalence.

    Combining Equations~\eqref{eq:puppe-homotopy-1}
    and~\eqref{eq:puppe-homotopy-2}, we get a homotopy equivalence
    \[
      \Sigma\Sigma^e|\Cat|_{\iota,\phi,A,B}\simeq \Cone\big(\Sigma^e|\Cat'|_{\iota',\phi',A,B}\to
      |\Bat|_{\jmath,\psi,A,B}\big).
    \]
    Quotienting $|\Bat|_{\jmath,\psi,A,B}$ to the basepoint, we get a
    map
    $\Cone\big(\Sigma^e|\Cat'|_{\iota',\phi',A,B}\to
    |\Bat|_{\jmath,\psi,A,B}\big)\to \Sigma
    \Sigma^e|\Cat'|_{\iota',\phi',A,B}$; composing, we get our
    required map
    \[
      F_\Bat\from\Sigma\Sigma^e|\Cat|_{\iota,\phi,A,B}\to\Sigma\Sigma^e|\Cat'|_{\iota',\phi',A,B}.
    \]
    (This is the standard Puppe construction.)  As in
    Equation~\eqref{eq:flowcat-to-chaincx}, the shifted reduced
    cellular chain complex of $|\Bat|_{\jmath,\psi,A,B}$ is isomorphic
    to the mapping cone of the map $f_\Bat$ from
    Equation~\eqref{eq:flowbimodule-to-chainmap}
    \[
      \Cone(f_\Bat)\cong\Sigma^{-D_d(A,B)}\wt{C}^{\mathrm{CW}}_*(|\Bat|_{\jmath,\psi,A,B});
    \]
    Therefore, $F_\Bat$ induces the chain map
    $f_\Bat\from C_*(\Cat)\to C_*(\Cat')$ (after shifting gradings by
    $1+D_d(A,B)$).

    We pass to spectra as before. (The cone appearing in
    Equation~\eqref{eq:puppe-homotopy-1} is a special case of homotopy
    colimit, so is taken levelwise when constructing spectra.) By
    taking the suspension spectra, and desuspending
    $1+D_d(A,B)=1+e+C_d(A,B)$ times, we get induced maps (still
    denoted $F_\Bat$),
    \[
      F_\Bat\from \Omega\Omega^e\Sigma\Sigma^e\Omega^{C_d(A,B)}|\Cat|_{\iota,\phi,A,B}\smas\SphereS\to \Omega\Omega^e\Sigma\Sigma^e\Omega^{C_d(A,B)}|\Cat'|_{\iota',\phi',A,B}\smas\SphereS.
    \]
    Via the equivalence between $X$ and $\Omega\Sigma X$ for any spectrum $X$, we get the map
    \[
      F_\Bat\from \Omega^{C_d(A,B)}|\Cat|_{\iota,\phi,A,B}\smas\SphereS\to \Omega^{C_d(A,B)}|\Cat'|_{\iota',\phi',A,B}\smas\SphereS.
    \]
    Taking homotopy colimits as $A\to\infty$ and $B\to-\infty$, we get our map on spectra
    \begin{gather*}
      F_\Bat=\Sp(\Cat,\iota,\phi)=\hocolim_{A,B} \Omega^{C_d(A,B)}|\Cat|_{\iota, \phi, A,B}\smas\SphereS\\
      \qquad\qquad\qquad\to \Sp(\Cat',\iota',\phi')=\hocolim_{A,B} \Omega^{C_d(A,B)}|\Cat'|_{\iota', \phi', A,B}\smas\SphereS.
    \end{gather*}

    Finally, if the gradings of $\Cat$ and $\Cat'$ were only bounded
    below, we modify the construction as in
    Equation~\eqref{eq:triple-hocolim}. We truncate $\Cat,\Cat',\Bat$
    to only objects with $\gr\leq K$, and only consider $A\geq K$, and
    then take homotopy colimits:
    \begin{gather*}
      F_\Bat=\Sp(\Cat,\iota,\phi)=\hocolim_{A,B,K} \Omega^{C_d(A,B)}|\Cat_{\leq K}|_{\iota, \phi, A,B}\smas\SphereS\\
      \qquad\qquad\qquad\to \Sp(\Cat',\iota',\phi')=\hocolim_{A,B,K} \Omega^{C_d(A,B)}|\Cat'_{\leq K}|_{\iota', \phi', A,B}\smas\SphereS.\qedhere
    \end{gather*}
\end{proof}

Mimicking Definition~\ref{def:flowcat-sub-quotient}, a {\em framed flow
sub-bimodule} of a framed flow bimodule $\Bat\from\Cat\to\Cat'$ is a
pair of framed flow subcategories $\Dat\subset\Cat$,
$\Dat'\subset\Cat'$, such that $\bN(x,y)=\varnothing$ for all
$x\in\Dat$ and $y\in\Cat'\setminus\Dat'$. (The embeddings and framings
on the framed flow sub-bimodule are induced from those on $\Bat$.)

We can now extend the notion of framed flow bimodules to the
$H$-filtered setting. Specifically, if $\Cat$ and $\Cat'$ are
$H$-filtered framed flow categories (as in
Definition~\ref{def:H-filtered-flow-cat}), an $H$-filtered framed flow
bimodule $\Bat$ between them is a framed flow bimodule
$\Bat\from\Cat\to\Cat'$ which restricts on the objects of $\Cat(h)$
and $\Cat'(h)$ to give framed flow sub-bimodules
$\Bat(h)\from\Cat(h)\to\Cat'(h)$, for all $h\in H$.  In this case,
Proposition~\ref{prop:bimodule} produces an $H$-filtered map of spectra.

\subsection{Bimodules from grid diagrams}\label{sec:invariance-bimodules-bubbles}

Assume $(\Cat,\iota,\phi)$ and $(\Cat',\iota',\phi')$ are two framed
flow categories associated to some fixed grid diagram $\Grid$, both
constructed following the steps in Section~\ref{sec:construction}; let
$\bM_{\vN,\vlambda}(D)$ and $\bM'_{\vN,\vlambda}(D)$ be the moduli
spaces used in the two constructions. In this section, we outline the
steps that will be needed in order to construct a framed flow bimodule
$(\Bat,\jmath,\psi)$ connecting them. The construction is similar, so
we will only describe the major waypoints, and will skip the details.

For each triple $(D, \vN, \vlambda) \in \DNlambda$, there is a new
``interpolating'' moduli space $\Noduli_{\vN, \vlambda}(D)$, with
compactification $\bNoduli_{\vN, \vlambda}(D)$. This plays the role
of the moduli space of continuation maps in Floer theory with domain
$D$, and $|\vN|$ marked points on the boundary, with the $N_j$ points
corresponding to $H_j$ or $V_j$ bubbles, which have been grouped
according to the partition $\lambda_j$. There is no $\R$ translation
for continuation maps, so compared to Equation~\eqref{eq:dimension},
the dimension is given by
\[
  \dim\Noduli_{\vN, \vlambda}(D) = \gr(D,\vN,\vlambda)=\mu(D)+|\vlambda|.
\]
Recall from
Section~\ref{sec:moduli} that in the case of ordinary moduli spaces,
we did not consider the trivial case
$(D, \vN, \vlambda)=(c_x, \vec{0}, \vec{0})$. This has
$\gr(c_x, \vec{0}, \vec{0})=0$ and therefore the formal dimension of
$\Moduli_{\vec{0}, \vec{0}}(c_x)$ should be $-1$. The interpolating
moduli space $\Noduli_{\vec{0}, \vec{0}}(c_x)$ has dimension zero and
we no longer ignore it. Rather, we define it to be a point $p_x$.

These new spaces are stratified, and similar to Equation~\eqref{eq:moduli-bdy}, the strata of $\bNoduli_{\vN, \vlambda}(D)$ are given by
\begin{equation}\label{eq:noduli-strata}
 \prod_{j=1}^{i-1}\Moduli_{\vN^j+\coefficients{E^j}+\coefficients{F^j}, \vlambda^j}(D^j) \times  \Noduli_{\vN^i+\coefficients{E^i}+\coefficients{F^i}, \vlambda^i}(D^i)\times  \prod_{j=i+1}^{r}\Moduli'_{\vN^j+\coefficients{E^j}+\coefficients{F^j}, \vlambda^j}(D^j)
 \end{equation}
for $1 \leq i \leq r$, subject to similar conditions.  Similarly to
Section~\ref{sec:enum}, each $\bNoduli_{\vN, \vlambda}(D)$ has a
single codimension zero stratum, namely $\Noduli_{\vN, \vlambda}(D)$;
and similarly to Section~\ref{sec:codim1}, the codimension-one strata
correspond to trajectory breaking into two pieces
\[
  \Moduli_{\vN^1,\vlambda^1}(D^1)\times\Noduli_{\vN^2,\vlambda^2}(D^2)\qquad\text{or}\qquad \Noduli_{\vN^1,\vlambda^1}(D^1)\times\bM'_{\vN^2,\vlambda^2}(D^2)
\]
with $D^1*D^2=D$, $\vN^1+\vN^2=\vN$, and
$\lambda^1_j*\lambda^2_j=\lambda_j$ for all $j=2,\dots,n$; or a single boundary degeneration (Type II)
\begin{equation}\label{eq:noduli-II}
  \Noduli_{\vN+\ve_j,\vlambda'}(D^1),\text{ with }D^1+H_j=D\text{ or }D^1+V_j=D
\end{equation}
and $\lambda'_s=\lambda_s$ for all $s\neq j$ and $\lambda'_j\in\UE(\lambda_j)$; or an elementary coarsening of partitions (Type III)
\begin{equation}\label{eq:noduli-III}
  \Noduli_{\vN,\vlambda'}(D)
\end{equation}
with $\lambda'_s=\lambda_s$ for all $s\neq j$ and $\lambda'_j\in\EC(\lambda_j)$. Specifically, the codimension-one strata where the $\bM$ or $\bM'$ factor is zero-dimensional is either a Type I product
\begin{equation}\label{eq:noduli-I}
  \Moduli_{0}(R)\times\Noduli_{\vN,\vlambda}(E)\qquad\text{or}\qquad \Noduli_{\vN,\vlambda}(E)\times\bM'_{0}(R)
\end{equation}
where $R$ is a rectangle and $R*E=D$, resp.~$E*R =D$; or a Type IV product  
\begin{equation}\label{eq:noduli-IV}
  \Moduli_{N\ve_j,(N)_j}(c_x)\times\Noduli_{\vM,\vmu}(D)\qquad\text{or}\qquad \Noduli_{\vM,\vmu}(D)\times\bM'_{N\ve_j,(N)_j}(c_y)
\end{equation}
where $M_s=N_s$ and $\mu_j=\lambda_j$ for $j\neq s$,
$N+M_j=N_j$, and $(N)*\mu_j=\lambda_j$,
resp.~$\mu_j*(N)=\lambda_j$. Note that the terms appearing in
$\dII(D,\vN,\vlambda)$, $\dIII(D,\vN,\vlambda)$,
$\dI(D,\vN,\vlambda)$, and $\dIV(D,\vN,\vlambda)$ (from
Formula~\eqref{eq:delta123}) correspond to the strata from
Formulas~\eqref{eq:noduli-II}--\eqref{eq:noduli-IV}.

Similarly to Section~\ref{sec:modelsmoduli}, each
$\bN_{\vN,\vlambda}(D)$ is a closed stratum of $\bN_0(\wt{D})$ with
$\wt{D}=D+\wt{E}+\wt{F}$; we define
$\bN([\wt{D}])=\bigcup\bN_0(\wt{D})$, which are
$2\ltimes (\gr(x)-\gr(y))$-manifolds. The local models will be based
on the stratified space $\tZ_N$ with strata $\tZ(p^-,p^0,p^+;\lambda)$
from Equation~\eqref{eq:tZ-strata}; the more general local model is
$\tZ_{\vN}=\prod_j \Sym^{N_j}(\CC)$ (compare Equation~\eqref{eq:ZvN}))
with strata $\tZ(\vp^-,\vp^0,\vp^+;\vlambda)$; and the most general
local model is of the form
$\R^a \times \R_+^{r-1} \times \prod_{j=1}^{i-1}Z_{\vN^i} \times
\tZ_{\vN^i} \times \prod_{j=i+1}^r Z_{\vN^r}$ (compare
Equation~\eqref{eq:mostgeneral}). The standard frame of any stratum
inside this space is same as the standard frame from
Definition~\ref{def:standardframe3}. We require the local model of the
stratum from Equation~\eqref{eq:noduli-strata} inside $\bN([\wt{D}])$
to be same as the local model of
\begin{gather*}
  \RR^a\times\{0\}\times \prod_{j=1}^{i-1}Z(0,\vN^j+\Os(E^j)+\Os(F^j),0;\vlambda^j) \times
  \tZ(0,\vN^i+\Os(E^i)+\Os(F^i),0;\vlambda^i)\\
  \qquad\qquad\qquad\times \prod_{j=i+1}^r Z(0,\vN^j+\Os(E^j)+\Os(F^j),0;\vlambda^j)
\end{gather*}
inside
\[
  \RR^a\times\RR_+^{r-1}\times \prod_{j=1}^{i-1}Z_{\vN^j+\Os(E^j)+\Os(F^j)} \times
  \tZ_{\vN^i+\Os(E^i)+\Os(F^i)} \times \prod_{j=i+1}^r Z_{\vN^j+\Os(E^j)+\Os(F^j)}.
\]

Similarly to Section~\ref{sec:neat}, $\bN_{\vN,\vlambda}(D)$ is neatly
embedded and framed inside $\FF^{d,e}_l$ where $l=\mu(D)+2|\vN|$ is
the thick dimension. The data of the neat embedding consists of an 
$l$-dimensional thickening such that the local model of each stratum
inside the thickening is identified with the local model of that
stratum inside $\bN([\wt{D}])$; this is recorded via the internal
frame, which is the image of the standard frame under this
identification. The external frame is a framing of the thickening
inside the ambient space $\FF^{d,e}_l$. We require the neat embeddings
and framings to be coherent with respect to the lower dimensional
strata (cf.~Definitions~\ref{def:compatible-neat}
and~\ref{def:compatible-ext-framing}). (For the stratum from
Equation~\eqref{eq:noduli-strata}, the neat embeddings and framings on
the $\Moduli$ and $\Moduli'$ factors are induced from $(\iota,\phi)$
and $(\iota',\phi')$, respectively.)

Finally, for grid generators $[x,j_2,\dots,j_n]$ and $[y,i_2,\dots,i_n]$, we set
\[
  \bN([x,j_2,\dots,j_n],[y,i_2,\dots,i_n])=\bN([D])
\]
with induced embedding $\jmath$ and framing $\psi$, similarly to
Equation~\eqref{eq:gridmodulispaces-final}.

\subsection{Invariance under auxiliary choices}\label{sec:invariance-choices}

Finally we are ready to prove Theorem~\ref{thm:main}. After fixing the
grid $\Grid$ (with numbered markings $O_1,\dots,O_n,X_1,\dots,X_n$)
and and designating the marking $O_1$ as distinguished, the further
auxiliary choices in the construction of the filtered Floer spectrum
$\GS(\Grid)$ are listed below, reusing the notaion from the steps in
Section~\ref{sec:construction}.
\begin{enumerate}[leftmargin=*, label=(I-\arabic*)]

\item \label{item:choiceDim} The (even) dimension $d \gg 0$ chosen at the the start of Section~\ref{sec:neat};

\item \label{item:choicePsi} The diffeomorphism $\Psi_n$ from \eqref{eq:Psi}, which is used to embed the permutohedron in Section~\ref{sec:permutohedron}. In turn, to construct $\Psi_n$, we needed:
\begin{enumerate}[leftmargin=*, label=(\alph*)]
\item \label{item:choiceEmb} The proper smooth embedding in Proposition~\ref{prop:proper-h-cobordism}. Any two such differ by an element in the relative diffeomorphism group $\operatorname{Diff}_{\del} (\R^m \times \R_+)$;

\item \label{item:choiceCollar1} The embedding $i'$ in Proposition~\ref{prop:collaring-unique};

\item \label{item:choiceCollar2} The collar neighborhood in part~(\ref{item:permu-extendable-collar}) of the construction of $\Psi_n$, where we extended $\Psi_|$.
\end{enumerate}

\item \label{item:choiceDelta} The construction of the neighborhoods $U(\bdy\bModuli_{\vN,\vlambda}(D))$ and $V(\bdy\bModuli_{\vN,\vlambda}(D))$ in Step~\ref{item:exponentiate}, involving choice of small $\epsilon_Y>0$;

\item \label{item:choiceSmoothing} The smoothing in Step~\ref{item:smooth}, involving choice of different small $\epsilon_Y>0$ (from Lemma~\ref{lemma:Nbhd}) and $\delta>0$ (from Definition~\ref{def:smoothn});

\item \label{item:choiceb} The element $\mf{b}\in \Hom(\CDP'_{k},\tOmega^{k-1}_\fr)$ in Step~\ref{item:change};

\item \label{item:choiceManifold} The framed manifold $M(F,\vP,\vnu)$
  representing $-\mf{b}(F,\vP,\vnu)$, also in Step~\ref{item:change}, along with its thickening;

\item \label{item:choiceFilling} The filling $\bModuli'_{\vN, \vlambda}(D)$ (along with its embedding and framing) in Step~\ref{item:fill};

\item \label{item:choiceThick}The thickening in Step~\ref{item:newinternal};

\item \label{item:choiceEpsR} The values of $\epsilon$, $R$, $A$ and $B$ in the construction of a spectrum from a flow category in Section~\ref{sec:tospectra}.

\end{enumerate}

We seek to prove that making different choices in each of these cases produces equivalent (filtered) spectra.

\begin{lemma}
\label{lem:ah}
The filtered spectrum $\GS(\Grid)$ is independent of the choices in \ref{item:choiceDim} and \ref{item:choiceEpsR}, up to filtered equivalence. 
\end{lemma}

\begin{proof}
For \ref{item:choiceDim}, we can turn a construction for  $d$ to one for a higher $d'$ by a stabilization move in the sense of \cite[Section 3.1]{LOS2}. Given a framed flow category $\Cat$ with a neat embedding relative $d$, we get one relative $d'$ by post-composing each embedding   
\[
  \iota_{x, y} \from \bM(x, y) \hookrightarrow \E^d_{\gr(x),\gr(y)}
\]
with the standard inclusion of $\E^d_{\gr(x),\gr(y)}$ into
$\E^{d'}_{\gr(x),\gr(y)}$. We also let the new framings be obtained
from the old by letting them be trivial in the new directions. It is
proved in \cite[Lemma 3.26]{LipshitzSarkar} that the resulting CW
complex is obtained from the original one by suspending
${C_{d'}(A,B) - C_d(A,B)} = {(d'-d)(A-B)}$ times, (in the notation of
Equation~\eqref{eq:Cdba}), or more specifically, by smash product with
$\bigwedge_{T_{d'}(A,B)\setminus T_d(A,B)}S^1$ (in notation of
Equation~\eqref{eq:SdAB-sphere}).  Since in the definition of the
spectrum in Equation~\eqref{eq:SDef} we de-suspended $C_d(A,B)$ times,
or more specifically, we looped with respect to
$S_d(A,B)=\bigwedge_{T_d(A,B)}S^1$, it follows that the two
spectra are equivalent. The same argument applies to their filtered
pieces.

Independence of the quantities in \ref{item:choiceEpsR} was proved (for general framed flow categories) in Lemmas 3.25 and 3.26 in  \cite{LipshitzSarkar}.
\end{proof}

\begin{lemma}
\label{lem:contra}
The filtered spectrum $\GS(\Grid)$ is independent of the choices in \ref{item:choicePsi}, \ref{item:choiceDelta},  and \ref{item:choiceSmoothing}, up to filtered equivalence. 
\end{lemma}

\begin{proof}
We claim that all of these choices are taken from a contractible set. This is clearly the case for \ref{item:choiceDelta} and \ref{item:choiceSmoothing}. It is also the case for \ref{item:choiceCollar1} and \ref{item:choiceCollar2} from \ref{item:choicePsi}, because the space of collars is contractible by \cite{Cerf}. As for part \ref{item:choiceEmb} from \ref{item:choicePsi}, we will prove in Lemma~\ref{lem:DiffCon} below that $\operatorname{Diff}_{\del} (\R^m \times \R_+)$ is contractible.

In all these cases, if we make two choices we can relate the respective framed flow categories $(\Cat_0, \iota_0, \phi_0)$ and $(\Cat_1, \iota_1, \phi_1)$ by a smooth $1$-parameter family $(\Cat_t, \iota_t, \phi_t)_{t \in [0,1]}$. The flow categories have the same objects, the morphism spaces are diffeomorphic (in a way compatible with gluing), and the neat embeddings and framings vary smoothly. This yields isomorphic CW complexes $|\Cat_t|_{\iota(t), \phi(t), B, A}$, and hence equivalent spectra; compare \cite[Lemma 3.25]{LipshitzSarkar}.
\end{proof}

To complete the proof of Lemma~\ref{lem:contra}, it remains to study $\operatorname{Diff}_{\del} (\R^m \times \R_+)$. We will be using the weak topology on this group (i.e., that of uniform $C^{\infty}$ convergence on compact subsets). The weak topology is sufficient for our purposes because in the end the diffeomorphism of $\R^m \times \R_+$ is used to construct an embedding of the permutohedron into a Euclidean space (with corners). The permutohedron is compact, and hence when we relate two elements of $\operatorname{Diff}_{\del} (\R^m \times \R_+)$ by a path, we only have to show that their restriction to a compact subset is varying smoothly.

\begin{lemma}
\label{lem:DiffCon}
The space $\operatorname{Diff}_{\del} (\R^m \times \R_+)$ (in the weak topology) is contractible.
\end{lemma}

\begin{proof}
We will relate an arbitrary element $f \in \operatorname{Diff}_{\del} (\R^m \times \R_+)$ to the identity by a  continuous path, which is canonical up to a contractible set of choices. 

We first apply a theorem of Kirby and Siebenmann  \cite[Theorem A.2]{KS-book} about the uniqueness of collars to map $f(\R^m \times [0, \epsilon))$ into $\R^m \times [0, \epsilon)$. This gives an isotopy $f_t$ of $\R^m \times \R_+$ so that $f_0 = f$ and $f_1$ is the identity on $\R^m \times [0, \epsilon)$. (Also, $f_t$ is constantly the identity on the boundary.)

Next, we relate $f_1$ to the identity by making the collar $\R^m \times [0, \epsilon)$ bigger and bigger. Precisely, consider the re-scaling map \[\psi_t\from \R^m \times \R_+ \to \R^m \times \R_+, \ \psi_t(x, s) = (x, ts)\]  
and let $h\from [1,2) \to [1, \infty)$ be an increasing homeomorphism. Then, for $t \in [1,2]$, the diffeomorphism
\[ f_t \from  \R^m \times \R_+ \to \R^m \times \R_+, \ \ f_t =\psi_{h(t)} \circ f_1 \circ \psi_{h(t)}^{-1}\]
is the identity on the collar $\R^m \times [0, h(t) \epsilon)$. Furthermore, in the limit as $t\to 2$, we have that $f_t$ converges to the identity (which we denote by $f_2$) in the weak topology. 

Altogether, the family $(f_t)_{t \in [0,2]}$ gives a path of diffeomorphisms from $f_0$ to the identity.
\end{proof}

\begin{lemma}
\label{lem:cobo}
The filtered spectrum $\GS(\Grid)$ is independent of the choices in \ref{item:choiceb}, \ref{item:choiceManifold}, \ref{item:choiceFilling}, and \ref{item:choiceThick}, up to filtered equivalence. 
\end{lemma}

\begin{proof}
Suppose $(\Cat, \iota, \phi)$ and
  $(\Cat', \iota', \phi')$ are two framed flow categories for the grid
  that were constructed by making different choices in \ref{item:choiceb},
  \ref{item:choiceManifold}, \ref{item:choiceFilling}, or
  \ref{item:choiceThick}. We will construct a compact
  framed flow bimodule $(\Bat,\jmath,\psi)$ (as in
  Definition~\ref{def:bimodule}) connecting them, and show that the induced map
  (from Proposition~\ref{prop:bimodule}) is a filtered  equivalence. 
  
Since we already have invariance under
  \ref{item:choicePsi}, we may assume that the two framed flow
  categories agree on $\DNlambdad$. The $0$-dimensional moduli spaces
  were constructed in Section~\ref{sec:base} and the only choice was
  the choice of embedding of points in contractible spaces (the frames
  were chosen explicitly); therefore, we may assume that the two
  framed flow categories agree on $\DNlambda'_0$ as well. 
  
  As discussed in Section~\ref{sec:invariance-bimodules-bubbles}, in
  order to construct the framed flow bimodule $(\Bat,\jmath, \psi)$,
  we need to construct the stratified spaces $\bN_{\vN,\vlambda}(D)$,
  along with coherent neat embeddings and framings. The construction
  follows the same steps as in Section~\ref{sec:construction}. For
  Step~\ref{item:emb}, we will construct $\bN_{\vN,\vlambda}(D)$ for
  all triples $(D,\vN,\vlambda)\in\DNlambdad\cup\DNlambda'_{0}$. Since $(\Cat, \iota, \phi)$ and $(\Cat', \iota', \phi')$
  agree on $\DNlambdad\cup\DNlambda'_0$, we set $e=0$ for the moment, and define
  $\bN_{\vN,\vlambda}(D)$ as in Example~\ref{ex:identity-bimodule}. In
  particular, the $0$-dimensional spaces $\bN_0(c_x)$ for
  $(c_x,0,0)\in\DNlambda_{-1}$ are points $p_x$, embedded in
  $\RR^0$. This ensures that the chain map
  $f_\Bat\from C_*(\Cat)\to C_*(\Cat')$ (from
  Equation~\eqref{eq:flowbimodule-to-chainmap}) is the identity.
  
  When we go to the next steps, we may need to use a larger value of $e$. Starting from the above construction with  $e=0$, we  post-compose by the standard inclusion
  $\FF^{d,0}_{i,j}\into\FF^{d,e}_{i,j}$, and define the new framing to be 
  the old one, plus the standard frame in the new $\RR^e$ direction.

  The rest of the steps are almost identical to what was done in Section~\ref{sec:construction}; we summarize
  them below. Assume $\bN_{\vN,\vlambda}(D)$ have been constructed (with
  coherent neat embeddings and coherent external framings) for all
  $(D,\vN,\vlambda)\in\DNlambdad\cup\DNlambda'_{\leq k-1}$. For any
  $(D,\vN,\vlambda)\in\DNlambda'_k$, its $k$-dimensional boundary
  $\bdy\bN_{\vN,\vlambda}(D)$ is already constructed. By smoothing, we
  get an element
  $[\bdy'\bN_{\vN,\vlambda}(D)]\in\tOmega^k_\fr$. Together, these
  produce a cochain (indeed a cocycle) 
  $\mf{o}_k\in\Hom(\CDP'_{k+1},\tOmega^k_\fr)$. Therefore, $\mf{o}_k$
  is the coboundary of some $\mf{b}\in\Hom(\CDP'_{k},\tOmega^k_\fr)$;
  change each $k$-dimensional spaces $\bN_{\vM,\vmu}(E)$ for
  $(E,\vM,\vmu)\in\DNlambda'_{k-1}$ by $-\mf{b}(E,\vM,\vmu)$, and then
  proceed inductively to construct the spaces $\bN_{\vN,\vlambda}(D)$
  for all $(D,\vN,\vlambda)\in\DNlambda'_k$.
  
  In the end, we obtain the framed flow bimodule $(\Bat,\jmath,\psi)$
  connecting $(\Cat,\iota,\phi)$ to $(\Cat',\iota',\phi')$. As
  discussed above, the induced chain map $f_\Bat$ is identity, so the
  induced map on spectra is an equivalence. The same goes for the maps
  on the filtration levels of the spectra.
\end{proof}

\begin{remark}
The method in the proof of Lemma~\ref{lem:cobo} can also provide alternate proofs of
  invariance under \ref{item:choiceDelta} and
  \ref{item:choiceSmoothing}.
\end{remark}

\begin{proof}[Proof of Theorem~\ref{thm:main}]
This is a combination of Lemmas~\ref{lem:ah}, \ref{lem:contra} and \ref{lem:cobo}.
\end{proof}

\section{Examples}
\label{sec:examples}

In this section we will compute the knot Floer spectra in several examples. In all the cases we considered, the spectra are determined by the information already existing in the knot Floer complexes. At the end we will discuss how one may look for an example where this is not the case. 

\subsection{Generalities} 
We will make repeated use of the following well-known result.
\begin{proposition}
\label{prop:hurewicz}
Suppose that the (reduced) homology of a spectrum $\GS$ is finitely
generated free Abelian and supported in at most two consecutive
gradings; i.e. it is isomorphic to $\Z^k \oplus \Z^l$, with $\Z^k$ in
homological grading $d$ and $\Z^l$ in homological grading $d+1$. Then
$\GS$ is homotopy equivalent to the wedge sum of $k$ copies of
$\SphereS^d$ and $l$ copies of $\SphereS^{d+1}$.
\end{proposition}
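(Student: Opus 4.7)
The strategy is to build an explicit map
\[
f \colon \Bigl(\bigvee_{k} S^d\Bigr) \vee \Bigl(\bigvee_{l} S^{d+1}\Bigr) \to \GS
\]
inducing an isomorphism on reduced homology, and invoke the Whitehead theorem for spectra. (I assume, as is the case for all spectra appearing in this paper, that $\GS$ is bounded below, so that the Hurewicz and Whitehead theorems for spectra are available.) The construction proceeds in two stages, corresponding to the two nontrivial homological degrees, together with the verification that a single obstruction vanishes.

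First, since $\tH_i(\GS;\Z)=0$ for $i<d$, the Hurewicz theorem for spectra gives $\pi_d(\GS)\cong H_d(\GS)\cong \Z^k$. Choose $k$ generators and wedge them into a map $\alpha\colon \bigvee_k S^d \to \GS$, which by construction induces an isomorphism on $H_d$. Let $C$ denote the cofiber of $\alpha$, so that we have a cofiber sequence
\[
\bigvee_k S^d \xrightarrow{\ \alpha\ } \GS \xrightarrow{\ \pi\ } C \xrightarrow{\ \partial\ } \bigvee_k S^{d+1}.
\]
The long exact sequence in reduced homology, together with $\alpha_*$ being an isomorphism on $H_d$, forces $\tH_i(C)=0$ for $i\leq d$, $\tH_{d+1}(C)\cong \tH_{d+1}(\GS)\cong \Z^l$, and, crucially, that the connecting map $\partial_*\colon \tH_{d+1}(C)\to \tH_{d+1}\bigl(\bigvee_k S^{d+1}\bigr)$ is zero.

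Next, the Hurewicz theorem applied to $C$ yields $\pi_{d+1}(C)\cong \Z^l$. Choose generators and assemble them into $\beta\colon \bigvee_l S^{d+1}\to C$ inducing an isomorphism on $H_{d+1}$. To lift $\beta$ through $\pi$ to a map $\tilde\beta\colon \bigvee_l S^{d+1}\to \GS$, it suffices to show that $\partial\circ\beta\colon \bigvee_l S^{d+1}\to \bigvee_k S^{d+1}$ is null-homotopic. But this is a stable map between wedges of $(d+1)$-spheres, hence is classified by an integer matrix (via $\pi_0(\SphereS)=\Z$), and is therefore determined up to homotopy by its induced map on $H_{d+1}$. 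Since $\partial_*=0$, we have $(\partial\circ\beta)_*=0$, so $\partial\circ\beta\simeq *$, and the lift $\tilde\beta$ exists.

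Finally, set $f=\alpha\vee\tilde\beta$. Then $f_*$ is an isomorphism on $H_d$ directly from $\alpha$, and on $H_{d+1}$ because $\pi\circ\tilde\beta=\beta$ and both $\pi$ and $\beta$ induce isomorphisms on $H_{d+1}$; all other reduced homology groups vanish on both sides. The Whitehead theorem for bounded-below spectra then shows that $f$ is a homotopy equivalence. The only delicate point is the vanishing of the obstruction $\partial\circ\beta$, and that rests entirely on the classical identification of stable self-maps between wedges of spheres of the same dimension with their matrix of degrees.
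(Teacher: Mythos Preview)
Your proof is correct and follows essentially the same approach as the paper: both use Hurewicz to build a map from $\bigvee_k S^d$, analyze the cofiber, and observe that the connecting map $\partial$ is null-homotopic because a stable map between wedges of equidimensional spheres is determined by its degree matrix (what the paper calls the ``Hopf theorem''). The only cosmetic difference is that the paper first identifies the cofiber $C\simeq \bigvee_l S^{d+1}$ via Hurewicz--Whitehead and then rotates the cofiber sequence to exhibit $\GS$ as the cone of the null map $\bigvee_l S^d\to \bigvee_k S^d$, whereas you lift $\beta$ through $\pi$ and apply Whitehead directly to $\alpha\vee\tilde\beta$.
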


\begin{proof}
  By the Hurewicz theorem we have $\pi_d(\GS)\cong \Z^k$, so there is
  a map $f\from \bigvee^k \SphereS^d \to \GS$ inducing an isomorphism on $H_d$. The
  cone of this map has homology $\Z^l$ supported in degree $d+1$;
  applying the Hurewicz theorem again, together with Whitehead's
  theorem, tells us that this cone is equivalent to
  $\bigvee^{l} \SphereS^{d+1}$. From the coexact sequence of $f$ it follows that
  $ \GS$ is equivalent to the cone of a map
  $\bigvee^{l} \SphereS^d \to \bigvee^k \SphereS^d$. This map is zero on homology, and
  therefore zero by the Hopf theorem. The conclusion follows.
\end{proof}

Consider now a grid diagram $\Grid$, and suppose it represents a knot
$K$ (rather than a link with more components). In that case the
spectra $\GS(\Grid)$ and $\hGS(\Grid)$ coincide. Furthermore, in view
of Remark~\ref{rem:limitedU}, we expect the $U_i$ actions on the
spectrum $\GS(\Grid)$ to be trivial. Thus, we will just focus on
$\GS(\Grid)$ as a filtered spectrum. If we ignore the filtration, the
homology of $\GS(\Grid)$ is just $\widehat{\HF}(S^3) =\Z$, so by
Proposition~\ref{prop:hurewicz} we have that $\GS(\Grid)$ is
equivalent to $\SphereS^0$. Thus, the interesting information is in the
filtered pieces $\GS(\Grid, h)$ for $h \in \Z$, and the maps
$\GS(\Grid, h-1) \to \GS(\Grid, h)$. Note also that the cone of the
map $\GS(\Grid, h-1) \to \GS(\Grid, h)$ is equivalent to the
associated graded spectrum $\gGS(\Grid, h)=\ghGS(\Grid, h)$.

The filtered chain complex underlying $\GS(\Grid)$ is $\Cp = \Chat$,
which is filtered chain homotopy equivalent to $\CFKhat(K)$, the hat
version of the knot Floer complex in the notation of \cite[Section
11.3]{LOT} and \cite{Msurvey}.\footnote{Warning: In the original paper
  \cite{OS-knots} where they defined knot Floer homology, Ozsv\'ath
  and Szab\'o used the notation $\CFKhat(K)$ for the associated graded
  complex that is denoted $g\CFKhat(K)$ here, and whose homology is
  $\HFKhat(K)$. Compare \cite[Remark 3.17]{Msurvey}.} The Alexander
filtration on $\CFKhat(K)$ is denoted
\[ \cdots \subseteq \Filt_{h-1} \CFKhat(K) \subseteq \Filt_{h}
  \CFKhat(K) \subseteq \cdots \] We will use the notation $\HFK(K, h)$
for the homology $H_*(\Filt_h \CFKhat(K)) = H_*(\GS(\Grid, h))$.  The
associated graded of the filtered complex $\CFKhat(K)$ is denoted
$g\CFKhat(K)$, with homology $\HFKhat_*(K, h) = H_*(\gGS(\Grid, h))$.

The filtered complex $\CFKhat(K)$ can be defined from any Heegaard
diagram for the knot $K$, not necessarily a grid diagram; for example,
we can use a doubly-pointed diagram and ask for disks to not pass
through one basepoint. As such, we can appeal to the computations of
knot Floer complexes from the literature, or to the computer program
\cite{HFKCalc}. It turns out that in many cases, the information in
the filtered complex $\widehat{\CFK}(K)$ determines the filtered
equivalence class of the spectrum $\GS(\Grid)$. To formulate a general
condition that makes this hold, consider the inclusion
$\Filt_h \CFKhat(K) \to \CFKhat(K)$ which induces on homology a map
$\HFK(K, h) \to \HFhat(S^3) \cong \Z$. Let
\[
  \HFKnew (K, h) = \ker (\HFK(K, h) \to \Z).
\]
Following \cite{OS-tau}, we let $\tau(K)$ be the minimum value of $h$ such that the map $\HFK(K, h) \to \HFhat(S^3) \cong \Z$ is nontrivial. It follows that
\begin{equation}
\label{eq:tau} \HFK(K, h) \cong \begin{cases}
\HFKnew(K, h) & \text{if } h < \tau(K),\\
\HFKnew (K, h) \oplus \Z & \text{if } h \geq \tau(K).
\end{cases} 
\end{equation}
(In the second case, the isomorphism
$\HFK(K, h)\cong\HFKnew (K, h) \oplus \Z$ is not canonical; we only
have a canonical short exact sequence
$0\to\HFKnew (K, h)\to\HFK(K, h)\to\Z\to0$.)

\begin{definition}
\label{def:constrained}
A knot $K \subset S^3$ is called {\em Floer constrained} if the following conditions are satisfied:
\begin{enumerate}[label=(\arabic*),ref=(\arabic*)]
\item\label{item:floer-constrained-1} For every $h \in \Z$, the homology $\HFK(K, h)$ is torsion-free;\footnote{This condition holds in practice. To date, no torsion has been observed in the knot Floer homologies of knots in $S^3$: neither in $\HFKhat(K)$ nor in $\HFK(K, h)$.} 
\item\label{item:floer-constrained-2} For any fixed $h \in \Z$, the homology groups $\HFKnew(K, h)$ are supported in at most two homological degrees and, if there are two, then these degrees are consecutive;
\item\label{item:floer-constrained-3} (Monotonicity) There are no integers $h < h'$ and $k > k'$ such that $\HFKhat_k(K, h)$ and $\HFKhat_{k'}(K, h')$ are both non-zero.  
\end{enumerate}
\end{definition}

\begin{proposition}
\label{prop:constrained}
For any Floer constrained knot,  the filtered homotopy type of $\GS(\Grid)$ is determined by the filtered chain homotopy type of $\CFKhat(K)$. \end{proposition}

\begin{proof}
  The filtration on $\CFKhat(K)$ induces a spectral sequence from
  $\bigoplus_{i \leq h} \HFKhat(K, i)$ to $\HFK(K, h)$, so
  $\HFK(K, h)$ is supported (in homological grading) on a subset of
  the union of the supports of $ \HFKhat(K, i)$ for $i \leq h$. Using
  this fact, we deduce from Condition~\ref{item:floer-constrained-3} in
  Definition~\ref{def:constrained} that monotonicity also holds for
  $\CFK(K, h)$: there are no $h < h'$ and $k > k'$ such that
  $\HFK_k(K, h)$ and $\HFK_{k'}(K, h')$ are both non-zero.

At the spectrum level, the map $\HFK(K, h) \to  \Z$ is realized by the inclusion $\GS(\Grid, h) \to \GS(\Grid) \simeq \SphereS^0.$ Let us define a spectrum $\GSnew(\Grid, h)$ by
\[
  \GSnew(\Grid, h)  = \begin{cases}
    \GS(\Grid, h) & \text{if } h < \tau(K),\\
    \Omega\Cone( \GS(\Grid, h) \to \SphereS^0) & \text{if } h \geq \tau(K).
  \end{cases}
\]
The homology of $\GSnew(\Grid, h)$ is $\HFKnew(K,
h)$. Condition~\ref{item:floer-constrained-1} implies that
$\HFKnew(K, h)$ is torsion-free. Using Proposition~\ref{prop:hurewicz}
and Condition~\ref{item:floer-constrained-2}, we see that
$\GSnew(\Grid, h)$ must be a wedge of spheres, of at most two
(consecutive) dimensions.

We claim that each $\GS(\Grid, h)$ is also a wedge of spheres. For
$h < \tau(K)$, this is clearly true, because
$\GS(\Grid, h)\simeq \GSnew(\Grid, h)$. For $h \geq \tau(K)$, notice
first that, since $\HFK(K, \tau(K)-1)$ maps trivially to
$\HFhat(S^3) \cong \Z$ but $\HFK(K, \tau(K))$ does not, the associated
graded $\HFKhat(K, \tau(K))$ must have some generator in degree
$0$. By monotonicity, we have that $\HFKhat(K, h)$ is supported in
degrees $\geq 0$ for $h > \tau(K)$. Since every element of
$\HFKnew(K, h)$ is eventually mapped to zero in
$\HFKhat(K, h') \cong \Z$ (for $h' \gg 0$), it must be cancelled by an
element of $\HFKhat$ that lies in homological degree one higher (and
also in a higher Alexander grading). Therefore, for $h \geq \tau(K)$
we have $\HFKnew(K, h)$ must be supported in degrees $\geq -1$. Since
$\Sigma\GSnew(\Grid, h)=\Sigma\Omega\Cone(\GS(\Grid, h) \to
\SphereS^0)$ is equivalent to $\Cone(\GS(\Grid, h) \to \SphereS^0)$,
we get a cofibration sequence
\[
  \GS(\Grid, h) \to \SphereS^0 \to \Sigma \GSnew(\Grid, h) \to\Sigma\GS(\Grid, h)\to \cdots
\]
which shows that $\Sigma\GS(\Grid, h)$ is equivalent to the cone of a map
$\SphereS^0 \to \Sigma \GSnew(\Grid, h)$. This map sends $\SphereS^0$ to a wedge of
spheres of non-negative dimensions, and therefore it is determined by
its effect on homology. Since we know that
$\HFK(K, h) = H_*( \GS(\Grid, h))$ is torsion-free by
Condition~\ref{item:floer-constrained-1}, we deduce that
$\GS(\Grid, h)$ is a wedge of spheres, and in fact
\[
  \GS(\Grid, h) \simeq \GSnew(\Grid, h) \vee \SphereS^0
\]
for $h \geq \tau(K)$. Thus, $\GS(\Grid, h)$ is determined by its homology.

To fully determine the filtered homotopy type of $\GS(\Grid)$, we also
need the maps $\GS(\Grid, h) \to \GS(\Grid, h+1)$. By the monotonicity
of $\HFK(K, h)$, these are maps between wedges of spheres of
non-decreasing dimensions, and therefore they are determined by what
they do on homology.
\end{proof}

We proceed to give several examples of Floer constrained knots.

\subsection{The unknot}
The simplest example is when $\Grid$ represents the unknot $U$. The knot Floer homology $\widehat{\HFK}(U)$ is isomorphic to $\Z$, supported in Maslov and Alexander degrees $0$. The spectrum $\GS(\Grid, h)$ has homology $\HFK(K, h) \cong \Z$ (in degree $0$) for $h \geq 0$, and trivial for $h < 0$. We deduce that
\[ \GS(\Grid, h) \cong \begin{cases}
\SphereS^0 & \text{if } h \geq 0, \\
* &\text{otherwise}.\end{cases} \]
Moreover, for $h \geq 0$, the maps $\GS(\Grid, h) \to \GS(\Grid, h+1)$ are isomorphisms.

\subsection{Thin and almost thin knots}
For a knot $K\subset S^3$, we will represent its knot Floer homology 
\[\HFKhat(K) = \bigoplus_{k, h \in \Z}  \HFKhat_k(K, h)\]
by a collection of Abelian groups in the plane, with the two coordinate axes being the Alexander grading $A=h$ and the homological (Maslov) grading $\gr = k$.

A knot $K$ is called {\em Floer homologically thin} (or {\em thin}) if its knot Floer homology is supported in a single diagonal line given by $A - \gr = \delta$, for some $\delta \in \Z$. Thin knots were first studied in \cite{RasmussenThesis}, \cite{RasmussenSurvey}.  All alternating and quasi-alternating knots are thin \cite{OS-alt}, \cite{MO-quasi}. 

More generally, for any $K$, we define its {\em diagonal support} to be 
\[ \deltasupp(K) = \{\delta \mid \HFKhat_k(K, h) \neq 0 \text{ for some } k, h \in \Z \text{ with } h-k = \delta \} \]
and its {\em Floer thickness} as
\[
  \thick(K) = 1+\max \deltasupp(K) - \min \deltasupp(K).
\]
Thus, $K$ is thin iff $\thick(K) = 1$. We can use thickness to define almost thin knots, see also Figure~\ref{fig:almostthin}.
\begin{definition}
A knot $K$ is called {\em almost thin} if $\thick(K) = 2.$
\end{definition}

\begin{figure}
  \centering
  \begin{tikzpicture}[scale=0.7]
    \draw[->] (-2,0)--++(5,0) node[pos=1,anchor=west] {\small $A$};
    \draw[->] (0,-2)--++(0,5)  node[pos=1,anchor=south] {\small $\gr$};

    \draw[dashed] (1,-2)-- node[pos=0,anchor=east] {\small $h$}++(0,5) (1.5,-2)-- node[pos=0,anchor=west] {\small $h+1$}++(0,5);

    \draw[black!50] (-2,-1.5)-- node[black,pos=0,anchor=east] {\small $A-\gr=\delta+1$} ++(4.5,4.5) (-2,-1)-- node[black,pos=0,anchor=east] {\small $A-\gr=\delta$}++(4,4);

    \foreach \i/\j in {1/1.5,1/2,1.5/2,1.5/2.5}{
      \node at (\i,\j) {$\bullet$};}
    
  \end{tikzpicture}
\caption{The knot Floer homology of an almost thin knot. The two diagonals represent the support of $\HFKhat$. The homology $\CFK(K, h)$ is that of the complex in the half-plane $\{A\leq h\}$.}\label{fig:almostthin}
\end{figure}

\begin{proposition}
Let $K$ be a knot that is either thin or almost thin. Suppose that the groups $\HFKhat(K, h)$ and $\HFK(K, h)$ are torsion-free for all $h$. Then $K$ is Floer constrained. 
\end{proposition}

\begin{proof}
  Condition~\ref{item:floer-constrained-1} in
  Definition~\ref{def:constrained} is satisfied by
  hypothesis. Moreover, since $\thick(K) \leq 2$, we have that in any
  Alexander grading $A=h$, the homology $\HFKhat(K, h)$ is supported
  in at most two consecutive gradings $h-\delta-1$ and
  $h-\delta$. This implies that
  Condition~\ref{item:floer-constrained-3} is satisfied.

  It remains to check Condition~\ref{item:floer-constrained-2}. We
  prove by induction on $h$ that $\HFKnew(K, h)$ is supported in
  degrees $ h-\delta-1$ and $h-\delta$. For $h \ll 0$, we have that
  $\HFKnew(K, h) = \HFK(K, h) = 0$. For the inductive step, suppose
  the claim is true for $h$ and let us prove it for $h+1$. From
  Equation~\eqref{eq:tau} we see that $\HFK(K, h)$ is supported in degrees
  $ h-\delta-1$, $h-\delta$ and possibly $0$ (where the last comes
  from an element mapping nontrivially to $\HFhat(S^3)$). Then, the
  long sequence
  \[
    \cdots\to \HFK_k(K,h) \to \HFK_k(K,h+1) \to \HFKhat_k(K, h+1) \to \cdots,
  \]
  together with the almost thin condition, tells us that that
  $\HFK(K, h+1)$ can only be supported in degrees
  $h-\delta -1, h-\delta, h+1-\delta$ and $0$. Thus, $\HFKnew(K, h+1)$
  is supported in some of the degrees
  $ h-\delta -1, h-\delta, h+1-\delta$. To reach the conclusion, it
  remains to exclude degree $h-\delta-1$. If
  $\HFKnew_{h-\delta-1}(K,h+1) \neq 0$, then by looking at the same
  long exact sequence for higher $h$, and taking into account the
  almost thin condition on $\HFKhat$, we deduce that the elements of
  $\HFKnew_{h-\delta-1}(K,h+1) \subset \HFK_{h-\delta-1}(K,h+1)$
  survive under the maps to any higher $\HFK_{h-\delta-1}(K,h')$. For
  $h' \gg 0$, we get that they survive to $\HFhat(S^3)$, which
  contradicts the definition of $\HFKnew$. The claim is proven.
\end{proof}

\subsection{L-space knots}
Another important class of knots are L-space knots, which were introduced in  \cite{OS-lens}. These are the knots such that large surgeries on them are L-spaces (manifolds such that $\HFhat$ has rank one in each $\Spinc$ structure). For example, all positive torus knots are L-space knots.

\begin{proposition}[Ozsv\'ath-Szab\'o \cite{OS-lens}]
\label{prop:LS}
Given an L-space knot $K$ there exist two increasing sequences of integers
\[ n_{-l} < \dots < n_l, \ \ \delta_{-l} < \dots < \delta_l=0\]
such that for all $j$ between $-l$ and $l$ we have
\[
  \widehat{\HFK}_{\delta_j}(K, n_j) \cong \Z,
\]
and all other groups $\widehat{\HFK}_k(K, h)$ are zero.
\end{proposition}

\begin{example}
See Figure~\ref{fig:T34} for the knot Floer homology of the torus knot $T(3,4)$ and its mirror.
\end{example}

\begin{figure}
  \centering
  \begin{tikzpicture}[scale=0.5]
    \foreach \case in {0,1}{
      \begin{scope}[xshift=10*\case cm,yshift=-6*\case cm]
        \draw[->] (-3.5,0)--(3.5,0) node[pos=1,anchor=west] {\small $A$};

        \ifnum\case=0
        \draw[->] (0,-6.5)--(0,0.5)  node[pos=1,anchor=south] {\small $\gr$};
        \else
        \draw[->] (0,-0.5)--(0,6.5)  node[pos=1,anchor=south] {\small $\gr$};
        \fi
        
        \foreach \i in {-3,-2,-1,1,2,3}{
          \ifnum\case=0
          \node[anchor=south] at (\i,0) {\tiny $\i$};
          \else
          \node[anchor=north] at (\i,0) {\tiny $\i$};
          \fi
          \node at (\i,0) {\tiny $|$};
        }

        \ifnum\case=0
        \foreach \i in {-6,...,-1}{
          \node[anchor=east] at (0,\i) {\tiny $\i$};
          \node at (0,\i) {$-$};
        }
        \else
        \foreach \i in {6,...,1}{
          \node[anchor=west] at (0,\i) {\tiny $\i$};
          \node at (0,\i) {$-$};
        }
        \fi
        
        \foreach \i/\j [count=\c from 0] in {3/0,2/-1,0/-2,-2/-5,-3/-6}{
          \ifnum\case=0
          \node[inner sep=1pt] (v\c) at (\i,\j) {$\bullet$};
          \else
          \node[inner sep=1pt] (v\c) at (-\i,-\j) {$\bullet$};
          \fi
        }
        
        \foreach \i/\j in {1/2,3/4}{
          \ifnum\case=0
          \draw[->] (v\i)--(v\j);
          \else
          \draw[->] (v\j)--(v\i);
          \fi
        }
    \end{scope}}
  \end{tikzpicture}
\caption{The knot Floer homology of $T(3,4)$ (left) and its mirror (right). Each dot is a generator of $\HFKhat$. The arrows represent the extra differentials in $\CFKhat$ going over the $z$ basepoint (or $X$-markings on the grid). }\label{fig:T34}
\end{figure}

\begin{proposition}
L-space knots and their mirrors are Floer constrained.
\end{proposition}

\begin{proof}
  If $K$ is an L-space knot, we use Proposition~\ref{prop:LS}. Since
  $\delta_l=0$, we must have $\tau(K) = n_l$; that is, in $\CFKhat(K)$
  all the elements in Alexander gradings less than $n_l$ cancel in
  pairs, with the surviving element being in Alexander grading
  $n_l$. Furthermore, because of the monotonicity of the sequences
  $(n_j)$ and $(\delta_j)$, the cancellation can only happen between
  elements in Alexander gradings $n_i$ and $n_{i+1}$, for various
  $i$. It follows that $\HFK(K, h)$ is either trivial or a copy of
  $\Z$ in degree $\delta_j$. This easily implies
  Conditions~\ref{item:floer-constrained-1}
  and~\ref{item:floer-constrained-2} of
  Definition~\ref{def:constrained}. Condition~\ref{item:floer-constrained-3}
  is a direct consequence of Proposition~\ref{prop:LS}.

  For the mirrors, recall from \cite[Proposition 3.7]{OS-knots} that
  \[
    \HFKhat_k(K, h) \cong \HFKhat^{-k}(m(K), -h),
  \]
  where on the right hand side we have Floer cohomology. When $K$ is
  an L-space knot, Proposition~\ref{prop:LS} implies (almost) the same
  conclusion for the mirror $m(K)$, with $n_i$ and $\delta_i$ replaced
  by $-n_{-i}$ and $-\delta_{-i}$. (We use here the universal
  coefficients formula for cohomology.) The one difference is that we
  now have $\delta_{-l} = 0$ instead of $\delta_l =0$. Therefore, in
  this case the surviving element in $\HFhat(S^3)$ comes at the
  beginning: $\tau(m(K)) = n_{-l}$. It follows that $\HFK(K, h)$ is
  either trivial or consists of copies of $\Z$ in at most two
  gradings: $\delta_j$ and $0$. (The latter appears for example for
  $h=-2$ on the right hand side of Figure~\ref{fig:T34}.)
  Nevertheless, we still have that $\HFKnew(K, h)$ is either trivial
  or is $\Z$ in degree $\delta_j$. As before, we deduce that the
  conditions in Definition~\ref{def:constrained} are satisfied.
\end{proof}

\subsection{Another example}
Most small knots are thin, almost thin, or L-space; or, even if they
do not fit into any of these categories, they may still be Floer
constrained. (An example of this kind is the knot $13n5016$.) One has
to go quite far in the list of knots to find one that is not Floer
constrained. An example is the $14$-crossing knot $14n26580$.  Its
knot Floer homology, computed using the program \cite{HFKCalc}, is
depicted in Figure~\ref{fig:14n26580}. Observe that the conditions in
Definition~\ref{def:constrained} are satisfied, with one exception:
the monotonicity of $\HFKhat$ in Alexander gradings $1$ through $3$.

\begin{figure}
  \centering
  \begin{tikzpicture}[scale=0.5,xscale=2]
    \draw[->] (-4.5,0)--(4.5,0) node[pos=1,anchor=west] {\small $A$};
    
    \draw[->] (0,-7.5)--(0,1.5)  node[pos=1,anchor=south] {\small $\gr$};

    \foreach \i in {-4,-3,-2,-1,1,2,3,4}{
      \node[anchor=south] at (\i,0) {\tiny $\i$};
      \node at (\i,0) {\tiny $|$};
    }

    \foreach \i in {-7,...,-1,1}{
      \node[anchor=east] at (0,\i) {\tiny $\i$};
      \node at (0,\i) {$-$};
    }
    
    \foreach \i/\j [count=\c from 0] in {4/1,3.1/0,2.9/0,2/1,2/0,2/-1,
      1.1/0,0.9/0,1/-1,0.1/-1,-0.1/-1,0/-2,
      -0.9/-2,-1.1/-2,-1/-3,-2/-3,-2/-4,-2/-5,
      -2.9/-6,-3.1/-6,-4/-7
    }{
      \node[inner sep=1pt] (v\c) at (\i,\j) {$\bullet$};
    }
        
    \foreach \i/\j in {0/1,2/5,3/6,7/9,8/11,10/12,
    13/15,14/16,17/18,19/20}{
      \draw[->] (v\i)--(v\j);
    }

    \draw[dashed] (2,0) circle (0.5 and 1.5);
    \draw[dashed] (-2,-4) circle (0.5 and 1.5);
  \end{tikzpicture}
\caption{The knot Floer homology of $14n26580$. The two ovals are places where it looks like there are two options for the spectrum $\gGS(\Grid, h)$. However, only one option remains after taking into account the differentials.}\label{fig:14n26580}
\end{figure}

Interestingly, if we look only at $\HFKhat$, there seems to be some
flexibility in the spectrum in two places: Alexander gradings $-2$ and
$2$, where the support is in three homological gradings so
Proposition~\ref{prop:hurewicz} cannot be applied. When $A=-2$, from
knowing only the homology $\HFKhat(K,-2)$, there are two possible
choices for the associated graded spectrum $\gGS(\Grid, -2)$: it could
be $\SphereS^{-5} \vee \SphereS^{-4} \vee \SphereS^{-3}$ or
$\Sigma^{-7}\mathbb{CP}^2 \vee \SphereS^{-4}$. (We abuse notation and
write $\mathbb{CP}^2$ to denote its suspension spectrum
$\mathbb{CP}^2\smas\SphereS$.) However, the additional information in
$\HFK(K, h)$ eliminates the second choice, because the conditions in
Definition~\ref{def:constrained} hold in that grading range, and we
can use the arguments in the proof of
Proposition~\ref{prop:constrained}. More concretely, we can argue that
that the second Steenrod square $\Sq_2$ (viewed as a stable homology
operation) cannot map the generator in grading $-3$ to the generator
in grading $-5$, since it would not commute with the extra
differentials (shown by arrows in Figure~\ref{fig:14n26580}).

In Alexander grading $2$, we have two possibilities for the spectrum
$\gGS(\Grid, 2)$: it could be $\SphereS^{-1} \vee \SphereS^0 \vee \SphereS^1$ or
$\Sigma^{-3}\mathbb{CP}^2 \vee \SphereS^{0}$. This time, because of the
failure of monotonicity for $\HFKhat$, both possibilities are
compatible with the extra arrows. They each give rise to a unique
possibility for the filtered spectrum $\GS(\Grid)$: the only ambiguity
is in the map
\[
  \GS(\Grid, 1) \simeq \SphereS^0  \to \GS(\Grid, 2) \simeq \SphereS^{-1} \vee \SphereS^0
\]
or, more precisely, in its composition with projection to the $\SphereS^{-1}$
summand. The resulting map $\SphereS^0 \to \SphereS^{-1}$ can be either the zero map
(in which case $\gGS(\Grid, 2)$ is a wedge of spheres) or the Hopf map
(in which case $\gGS(\Grid, 2)$ has the $\Sigma^{-3}\mathbb{CP}^2$
summand). Our expectation is that the first case occurs. While the
Hopf map is not excluded by the structure of the filtration on
$\GS(\Grid)$, it would be excluded if we could enhance the spectrum to
also include domains that go over $O_1$. Indeed, apart from the 
arrows in Figure~\ref{fig:14n26580} (which come from going over $z$
basepoints, i.e., the $X$-markings), we would also have arrows from
going over $w$ basepoints, i.e., the $O$-markings. The two kinds of
arrows are related by the symmetry $(A, \gr) \to (-A, \gr-2A)$ in the
plane, under which the two ovals in Figure~\ref{fig:14n26580}
are interchanged. We conjecture that this symmetry lifts to the level
of spectra. Assuming that, the fact that $\gGS(\Grid, -2)$ was a wedge
of spheres would imply the same about $\gGS(\Grid, 2)$.

\subsection{An algebraic example}
Given the (proved and conjectural) constraints on the filtered
spectrum $\GS(\Grid)$ from the chain level, the reader may wonder if
these always force the knot Floer spectra to be wedges of spheres. We
give here a potential counterexample: a model for a chain complex
$\CFKhat(K)$ that satisfies all the structural properties of a knot
Floer complex, and may produce an interesting Floer spectrum. Of
course, the catch is that we have not found any knot $K$ with this
knot Floer complex.

Consider the complex shown in Figure~\ref{fig:potential}. Suppose it
appears as $\CFKhat(K)$ for a knot $K$. Then, the homology
$\HFKhat(K, -1) =\HFK(K, -1)$ is supported in gradings $-2$ and
$0$. There are two possibilities for the spectrum $\GS(\Grid, -1)$: it
could be either $\SphereS^{-2} \vee \SphereS^0$ or $\Sigma^{-4} \CP^2$, according to
whether the attaching map between the two cells is zero or the Hopf
map. Because of compatibility with the differentials, each possibility
uniquely determines the rest of the filtered spectrum $\GS(\Grid)$. In
the first case, all the spectra $\GS(\Grid, h)$ are wedges of spheres,
whereas in the more interesting second case, we have
\[
  \GS(\Grid, -1) \simeq \Sigma^{-4} \CP^2, \ \ \GS(\Grid, 0) \simeq \SphereS^0 \vee \Sigma^{-3} \CP^2, \ \ \GS(\Grid, 1) \simeq \SphereS^0.
\]
Further, the map $\GS(\Grid, -1) \to \GS(\Grid, 0)$ is trivial,
whereas the map $\GS(\Grid, 0) \to \GS(\Grid, 1)$ maps the summand
$\SphereS^0$ isomorphically to $\SphereS^0$, and vanishes on the other summand.
\begin{figure}
  \centering
  \begin{tikzpicture}[scale=0.5,xscale=2]
    \draw[->] (-1.5,0)--(1.5,0) node[pos=1,anchor=west] {\small $A$};
    
    \draw[->] (0,-2.5)--(0,2.5)  node[pos=1,anchor=south] {\small $\gr$};

    \foreach \i in {-1,1}{
      \node[anchor=south] at (\i,0) {\tiny $\i$};
      \node at (\i,0) {\tiny $|$};
    }

    \foreach \i in {-2,-1,1,2}{
      \node[anchor=east] at (0,\i) {\tiny $\i$};
      \node at (0,\i) {$-$};
    }
    
    \foreach \i/\j [count=\c from 0] in {1/2,1/0,0.1/1,-0.1/1,0/0,0.1/-1,-0.1/-1,-1/0,-1/-2}{
      \node[inner sep=1pt] (v\c) at (\i,\j) {$\bullet$};
    }
        
    \foreach \i/\j in {0/2,1/5,3/7,6/8}{
      \draw[->] (v\i)--(v\j);
    }

  \end{tikzpicture}
\caption{A potential knot Floer complex that does not uniquely determine its Floer spectrum.}  \label{fig:potential}
\end{figure}

\begin{remark}
The discussion in this section makes clear the difficulties in finding a knot whose Floer spectrum contains more information than the knot Floer complex. Once the functoriality properties of knot Floer spectra (under knot cobordisms) are established, an easier thing to look for would be a non-trivial map between spectra that is trivial on homology. Many such maps exist even between spheres (e.g., the Hopf map).
\end{remark}
\bibliographystyle{plain}
\bibliography{gridbibfile}

\begin{thebibliography}{10}

\bibitem{AB}
Mohammed Abouzaid and Andrew Blumberg.
\newblock Arnold {C}onjecture and {M}orava {K}-theory.
\newblock {P}reprint, \url{arXiv:2103.01507v1}.

\bibitem{ABFoundations}
Mohammed Abouzaid and Andrew Blumberg.
\newblock Foundation of {F}loer homotopy theory {I}: {F}low categories.
\newblock {P}reprint, \url{arXiv:2404.03193v2}.

\bibitem{AbramKriz}
William Abram and Igor Kriz.
\newblock On shift desuspensions of {L}ewis-{M}ay-{S}teinberger spectra.
\newblock {\em New York J. Math.}, 18:55--58, 2012.

\bibitem{Adams}
J.~F. Adams.
\newblock {\em Stable homotopy and generalised homology}.
\newblock Chicago Lectures in Mathematics. University of Chicago Press,
  Chicago, Ill.-London, 1974.

\bibitem{Bloom}
Jonathan~M. Bloom.
\newblock A link surgery spectral sequence in monopole {F}loer homology.
\newblock {\em Adv. Math.}, 226(4):3216--3281, 2011.

\bibitem{Bonciocat}
Ciprian~Mircea Bonciocat.
\newblock Revisiting the {C}ohen-{J}ones-{S}egal construction in {M}orse-{B}ott
  theory.
\newblock {P}reprint, \url{arXiv:2409.11278v3}.

\bibitem{BorsukUlam}
Karol Borsuk and Stanislaw Ulam.
\newblock On symmetric products of topological spaces.
\newblock {\em Bull. Amer. Math. Soc.}, 37(12):875--882, 1931.

\bibitem{BousfieldFriedlander}
A.~K. Bousfield and E.~M. Friedlander.
\newblock Homotopy theory of {$\Gamma $}-spaces, spectra, and bisimplicial
  sets.
\newblock In {\em Geometric applications of homotopy theory ({P}roc. {C}onf.,
  {E}vanston, {I}ll., 1977), {II}}, volume 658 of {\em Lecture Notes in Math.},
  pages 80--130. Springer, Berlin-New York, 1978.

\bibitem{Brown}
Morton Brown.
\newblock A proof of the generalized {S}choenflies theorem.
\newblock {\em Bull. Amer. Math. Soc.}, 66:74--76, 1960.

\bibitem{Cerf}
Jean Cerf.
\newblock Topologie de certains espaces de plongements.
\newblock {\em Bull. Soc. Math. France}, 89:227--380, 1961.

\bibitem{CJS}
R.~L. Cohen, J.~D.~S. Jones, and G.~B. Segal.
\newblock Floer's infinite-dimensional {M}orse theory and homotopy theory.
\newblock In {\em The {F}loer memorial volume}, volume 133 of {\em Progr.
  Math.}, pages 297--325. Birkh\"{a}user, Basel, 1995.

\bibitem{Cohen}
Ralph~L. Cohen.
\newblock The {F}loer homotopy type of the cotangent bundle.
\newblock {\em Pure Appl. Math. Q.}, 6(2, Special Issue: In honor of Michael
  Atiyah and Isadore Singer):391--438, 2010.

\bibitem{CoteKartal}
Laurent C{\^o}t{\'e} and Yusuf~Bari{\,s} Kartal.
\newblock Equivariant {F}loer homotopy via {M}orse-{B}ott theory.
\newblock {P}reprint, \url{arXiv:2309.15089v2}.

\bibitem{Douady}
Adrien Douady.
\newblock Vari\'et\'es \`a{} bord anguleux et voisinages tubulaires.
\newblock In {\em S\'eminaire {H}enri {C}artan, 1961/62}, pages Exp. 1, 11.
  \'Ecole Norm. Sup., Paris, 1964.

\bibitem{GWPL}
Christopher~G. Gibson, Klaus Wirthm\"uller, Andrew~A. du~Plessis, and Eduard
  J.~N. Looijenga.
\newblock {\em Topological stability of smooth mappings}, volume Vol. 552 of
  {\em Lecture Notes in Mathematics}.
\newblock Springer-Verlag, Berlin-New York, 1976.

\bibitem{HomSurvey}
Jennifer Hom.
\newblock A survey on {H}eegaard {F}loer homology and concordance.
\newblock {\em J. Knot Theory Ramifications}, 26(2):1740015, 24, 2017.

\bibitem{Janich}
Klaus J\"anich.
\newblock On the classification of {$O(n)$}-manifolds.
\newblock {\em Math. Ann.}, 176:53--76, 1968.

\bibitem{KLS}
Tirasan Khandhawit, Jianfeng Lin, and Hirofumi Sasahira.
\newblock Unfolded {S}eiberg-{W}itten {F}loer spectra, {I}: {D}efinition and
  invariance.
\newblock {\em Geom. Topol.}, 22(4):2027--2114, 2018.

\bibitem{KirbyLF}
Robion~C. Kirby.
\newblock On the set of non-locally flat points of a submanifold of codimension
  one.
\newblock {\em Ann. of Math. (2)}, 88:281--290, 1968.

\bibitem{KS-book}
Robion~C. Kirby and Laurence~C. Siebenmann.
\newblock {\em Foundational essays on topological manifolds, smoothings, and
  triangulations}, volume No. 88 of {\em Annals of Mathematics Studies}.
\newblock Princeton University Press, Princeton, NJ; University of Tokyo Press,
  Tokyo, 1977.
\newblock With notes by John Milnor and Michael Atiyah.

\bibitem{Kragh}
Thomas Kragh.
\newblock Parametrized ring-spectra and the nearby {L}agrangian conjecture.
\newblock {\em Geom. Topol.}, 17(2):639--731, 2013.
\newblock With an appendix by Mohammed Abouzaid.

\bibitem{Large}
Tim Large.
\newblock {\em Spectral {F}ukaya {C}ategories for {L}iouville {M}anifolds}.
\newblock PhD thesis, 2021.
\newblock Thesis (Ph.D.)--Massachusetts Institute of Technology.

\bibitem{Laures}
Gerd Laures.
\newblock On cobordism of manifolds with corners.
\newblock {\em Trans. Amer. Math. Soc.}, 352(12):5667--5688, 2000.

\bibitem{LLS}
Tyler Lawson, Robert Lipshitz, and Sucharit Sarkar.
\newblock Khovanov homotopy type, {B}urnside category and products.
\newblock {\em Geom. Topol.}, 24(2):623--745, 2020.

\bibitem{LOT}
Robert Lipshitz, Peter~S. Ozsvath, and Dylan~P. Thurston.
\newblock Bordered {H}eegaard {F}loer homology.
\newblock {\em Mem. Amer. Math. Soc.}, 254(1216):viii+279, 2018.

\bibitem{LipshitzSarkar}
Robert Lipshitz and Sucharit Sarkar.
\newblock A {K}hovanov stable homotopy type.
\newblock {\em J. Amer. Math. Soc.}, 27(4):983--1042, 2014.

\bibitem{LOS2}
Andrew Lobb, Patrick Orson, and Dirk Sch\"{u}tz.
\newblock A calculus for flow categories.
\newblock {\em Adv. Math.}, 409(part B):Paper No. 108665, 58, 2022.

\bibitem{MSpectrum}
Ciprian Manolescu.
\newblock Seiberg-{W}itten-{F}loer stable homotopy type of three-manifolds with
  {$b_1=0$}.
\newblock {\em Geom. Topol.}, 7:889--932, 2003.

\bibitem{Msurvey}
Ciprian Manolescu.
\newblock An introduction to knot {F}loer homology.
\newblock In {\em Physics and mathematics of link homology}, volume 680 of {\em
  Contemp. Math.}, pages 99--135. Amer. Math. Soc., Providence, RI, 2016.

\bibitem{MO-quasi}
Ciprian Manolescu and Peter Ozsv\'{a}th.
\newblock On the {K}hovanov and knot {F}loer homologies of quasi-alternating
  links.
\newblock In {\em Proceedings of {G}\"{o}kova {G}eometry-{T}opology
  {C}onference 2007}, pages 60--81. G\"{o}kova Geometry/Topology Conference
  (GGT), G\"{o}kova, 2008.

\bibitem{MOS-combinatorial}
Ciprian Manolescu, Peter Ozsv{\'a}th, and Sucharit Sarkar.
\newblock A combinatorial description of knot {F}loer homology.
\newblock {\em Ann. of Math. (2)}, 169(2):633--660, 2009.

\bibitem{MOSzT-hf-combinatorial}
Ciprian Manolescu, Peter Ozsv{\'a}th, Zolt{\'a}n Szab{\'o}, and Dylan Thurston.
\newblock On combinatorial link {F}loer homology.
\newblock {\em Geom. Topol.}, 11:2339--2412, 2007.

\bibitem{MOT}
Ciprian Manolescu, Peter~S. Ozsv\'{a}th, and Dylan~P. Thurston.
\newblock Grid diagrams and {H}eegaard {F}loer invariants.
\newblock {\em Ann. of Math. (2)}, 201(1):1--78, 2025.

\bibitem{Mather}
John Mather.
\newblock Notes on topological stability.
\newblock {\em Bull. Amer. Math. Soc. (N.S.)}, 49(4):475--506, 2012.

\bibitem{Mazur}
B.~C. Mazur.
\newblock On embeddings of spheres.
\newblock {\em Acta Math.}, 105:1--17, 1961.

\bibitem{Morse}
Marston Morse.
\newblock A reduction of the {S}choenflies extension problem.
\newblock {\em Bull. Amer. Math. Soc.}, 66:113--115, 1960.

\bibitem{OS-alt}
Peter Ozsv\'{a}th and Zolt\'{a}n Szab\'{o}.
\newblock Heegaard {F}loer homology and alternating knots.
\newblock {\em Geom. Topol.}, 7:225--254, 2003.

\bibitem{OS-tau}
Peter Ozsv\'{a}th and Zolt\'{a}n Szab\'{o}.
\newblock Knot {F}loer homology and the four-ball genus.
\newblock {\em Geom. Topol.}, 7:615--639, 2003.

\bibitem{OS-knots}
Peter Ozsv\'{a}th and Zolt\'{a}n Szab\'{o}.
\newblock Holomorphic disks and knot invariants.
\newblock {\em Adv. Math.}, 186(1):58--116, 2004.

\bibitem{OS-lens}
Peter Ozsv\'{a}th and Zolt\'{a}n Szab\'{o}.
\newblock On knot {F}loer homology and lens space surgeries.
\newblock {\em Topology}, 44(6):1281--1300, 2005.

\bibitem{OS-links}
Peter Ozsv\'{a}th and Zolt\'{a}n Szab\'{o}.
\newblock Holomorphic disks, link invariants and the multi-variable {A}lexander
  polynomial.
\newblock {\em Algebr. Geom. Topol.}, 8(2):615--692, 2008.

\bibitem{OSSurvey}
Peter Ozsv\'{a}th and Zolt\'{a}n Szab\'{o}.
\newblock An overview of knot {F}loer homology.
\newblock In {\em Modern geometry: a celebration of the work of {S}imon
  {D}onaldson}, volume~99 of {\em Proc. Sympos. Pure Math.}, pages 213--249.
  Amer. Math. Soc., Providence, RI, 2018.

\bibitem{OSS-book}
Peter~S. Ozsv\'{a}th, Andr\'{a}s~I. Stipsicz, and Zolt\'{a}n Szab\'{o}.
\newblock {\em Grid homology for knots and links}, volume 208 of {\em
  Mathematical Surveys and Monographs}.
\newblock American Mathematical Society, Providence, RI, 2015.

\bibitem{PS}
Noah Porcelli and Ivan Smith.
\newblock Bordism of flow modules and exact lagrangians.
\newblock {P}reprint, \url{arXiv:2401.11766v3}.

\bibitem{RasmussenSurvey}
Jacob Rasmussen.
\newblock Knot polynomials and knot homologies.
\newblock In {\em Geometry and topology of manifolds}, volume~47 of {\em Fields
  Inst. Commun.}, pages 261--280. Amer. Math. Soc., Providence, RI, 2005.

\bibitem{RasmussenThesis}
Jacob~Andrew Rasmussen.
\newblock {\em Floer homology and knot complements}.
\newblock PhD thesis, 2003.
\newblock Thesis (Ph.D.)--Harvard University.

\bibitem{SarkarShellable}
Sucharit Sarkar.
\newblock Grid diagrams and shellability.
\newblock {\em Homology Homotopy Appl.}, 14(2):77--90, 2012.

\bibitem{SS}
Hirofumi Sasahira and Matthew Stoffregen.
\newblock {\em Seiberg-{W}itten {F}loer spectra for {$b_1>0$}}, volume~17 of
  {\em Memoirs of the European Mathematical Society}.
\newblock EMS Press, Berlin, 2025.

\bibitem{Siebenmann}
L.~C. Siebenmann.
\newblock Infinite simple homotopy types.
\newblock {\em Nederl. Akad. Wetensch. Proc. Ser. A 73 = Indag. Math.},
  32:479--495, 1970.

\bibitem{HFKCalc}
Zolt\'{a}n Szab\'{o}.
\newblock Knot {F}loer homology calculator.
\newblock \url{https://web.math.princeton.edu/~szabo/HFKcalc.html}, 2017.

\bibitem{Tao}
Yan Tao.
\newblock Obstruction complexes in grid homology.
\newblock {\em Pacific J. Math.}, 331(2):353--381, 2024.

\bibitem{Thom}
R.~Thom.
\newblock Ensembles et morphismes stratifi\'es.
\newblock {\em Bull. Amer. Math. Soc.}, 75:240--284, 1969.

\bibitem{Thom2}
Ren\'e Thom.
\newblock Propri\'et\'es diff\'erentielles locales des ensembles analytiques
  (d'apr\`es {H}. {W}hitney).
\newblock In {\em S\'eminaire {B}ourbaki, {V}ol.\ 9}, pages Exp. No. 281,
  69--80. Soc. Math. France, Paris, 1995.

\bibitem{Whitney}
Hassler Whitney.
\newblock Local properties of analytic varieties.
\newblock In {\em Differential and {C}ombinatorial {T}opology ({A} {S}ymposium
  in {H}onor of {M}arston {M}orse)}, pages 205--244. Princeton Univ. Press,
  Princeton, N. J., 1965.

\bibitem{Ziegler}
G\"unter~M. Ziegler.
\newblock {\em Lectures on polytopes}, volume 152 of {\em Graduate Texts in
  Mathematics}.
\newblock Springer-Verlag, New York, 1995.

\end{thebibliography}
\vskip10pt
\end{document}